\numberwithin{equation}{section}
\DeclareMathOperator\GL{GL}
\DeclareMathOperator\id{id}
\DeclareMathOperator\Hom{Hom}
\DeclareMathOperator\rk{rk}
\DeclareMathOperator\gr{gr}
\DeclareMathOperator\coker{coker}
\DeclareMathOperator\Spec{Spec}
\DeclareMathOperator\Spf{Spf}
\DeclareMathOperator\Sp{Sp}
\DeclareMathOperator\Tor{Tor}
\DeclareMathOperator\wt{wt}
\DeclareMathOperator\Ext{Ext}
\DeclareMathOperator\Fil{Fil}
\DeclareMathOperator\Res{Res}
\DeclareMathOperator\Ad{Ad}
\DeclareMathOperator\Rep{Rep}
\DeclareMathOperator\Aut{Aut}
\DeclareMathOperator\End{End}
\DeclareMathOperator\rec{rec}
\newcommand{\Ccal}{\mathcal{C}}
\newcommand{\Dcal}{\mathcal{D}}
\newcommand{\Ecal}{\mathcal{E}}
\newcommand{\Fcal}{\mathcal{F}}
\newcommand{\Gcal}{\mathcal{G}}
\newcommand{\Hcal}{\mathcal{H}}
\newcommand{\Lcal}{\mathcal{L}}
\newcommand{\Mcal}{\mathcal{M}}
\newcommand{\Ncal}{\mathcal{N}}
\newcommand{\Ocal}{\mathcal{O}}
\newcommand{\Rcal}{\mathcal{R}}
\newcommand{\Scal}{\mathcal{S}}
\newcommand{\Tcal}{\mathcal{T}}
\newcommand{\Wcal}{\mathcal{W}}
\newcommand{\Zcal}{\mathcal{Z}}
\newcommand{\lieg}{\mathfrak{g}}
\newcommand{\lieb}{\mathfrak{b}}
\newcommand{\lieu}{\mathfrak{u}}
\newcommand{\liet}{\mathfrak{t}}
\newcommand{\Q}{\mathbb{Q}}
\newcommand{\Z}{\mathbb{Z}}
\newcommand{\F}{\mathbb{F}}
\newcommand{\C}{\mathbb{C}}
\newcommand{\Abb}{\mathbb{A}}
\newcommand{\Fbb}{\mathbb{F}}
\newcommand{\Gbb}{\mathbb{G}}
\newcommand{\Ubb}{\mathbb{U}}
\newcommand{\Cfrak}{\mathfrak{C}}
\newcommand{\Ufrak}{\mathfrak{U}}
\newcommand{\Xfrak}{\mathfrak{X}}
\newcommand{\Zfrak}{\mathfrak{Z}}
\newcommand{\gfrak}{\mathfrak{g}}
\newcommand{\mfrak}{\mathfrak{m}}
\newcommand{\pfrak}{\mathfrak{p}}
\newcommand{\tfrak}{\mathfrak{t}}
\newcommand{\val}{\mathrm{val}}
\newcommand{\Ind}{\mathrm{Ind}}
\newcommand{\rbar}{\overline{r}}
\newcommand{\Spm}{\mathrm{Spm}}
\newcommand{\diag}{\mathrm{diag}}
\newcommand{\ab}{\mathrm{ab}}
\newcommand{\an}{\mathrm{an}}
\newcommand{\BdR}{\mathrm{B}_{\mathrm{dR}}}
\newcommand{\BpdR}{\mathrm{B}_{\mathrm{pdR}}}
\newcommand{\BpHT}{\mathrm{B}_{\mathrm{pHT}}}
\newcommand{\cris}{\mathrm{cris}}
\newcommand{\deltabar}{\underline{\delta}}
\newcommand{\epsilonbar}{\underline{\epsilon}}
\newcommand{\dR}{\mathrm{dR}}
\newcommand{\pdR}{\mathrm{pdR}}
\newcommand{\gtilde}{\widetilde{\mathfrak{g}}}
\newcommand{\HT}{\mathrm{HT}}
\newcommand{\pHT}{\mathrm{pHT}}
\newcommand{\Qp}{\Q_p}
\newcommand{\Fp}{\F_p}
\newcommand{\oK}{\Ocal_K}
\newcommand{\rhobar}{\overline{\rho}}
\newcommand{\reg}{\mathrm{reg}}
\newcommand{\rig}{\mathrm{rig}}
\newcommand{\Sen}{\mathrm{Sen}}
\newcommand{\Sets}{\mathrm{Sets}}
\newcommand{\sm}{\mathrm{sm}}
\newcommand{\tri}{\mathrm{tri}}
\newcommand{\varphibar}{\underline{\varphi}}
\newcommand{\ver}{\mathrm{ver}}
\newcommand{\dbl}{{\mathchoice{\mbox{\rm [\hspace{-0.15em}[}}
                              {\mbox{\rm [\hspace{-0.15em}[}}
                              {\mbox{\scriptsize\rm [\hspace{-0.15em}[}}
                              {\mbox{\tiny\rm [\hspace{-0.15em}[}}}}
\newcommand{\dbr}{{\mathchoice{\mbox{\rm ]\hspace{-0.15em}]}}
                              {\mbox{\rm ]\hspace{-0.15em}]}}
                              {\mbox{\scriptsize\rm ]\hspace{-0.15em}]}}
                              {\mbox{\tiny\rm ]\hspace{-0.15em}]}}}}
\newtheorem{theo}{Theorem}[subsection]
\newtheorem{conj}[theo]{Conjecture}
\newtheorem{cor}[theo]{Corollary}
\newtheorem{defn}[theo]{Definition}
\newtheorem{lem}[theo]{Lemma}
\newtheorem{prop}[theo]{Proposition}
\newtheorem{rem}[theo]{Remark}
\newtheorem{theo0}{Theorem}[section]
\author{Christophe Breuil, Eugen Hellmann and Benjamin Schraen}
\address{Christophe Breuil\\
C.N.R.S.\\
Laboratoire de Math\'ematiques d'Orsay\\
Universit\'e Paris-Sud\\
Universit\'e Paris-Saclay\\
F-91405 Orsay\\
France\\
Christophe.Breuil@math.u-psud.fr}
\address{Eugen Hellmann\\
Mathematisches Institut\\
Universit\"at M\"unster\\
Einsteinstrasse 62\\
D-48149 M\"unster\\
Germany\\
e.hellmann@uni-muenster.de}
\address{Benjamin Schraen\\
CMLS, \'Ecole polytechnique \\
CNRS \\
Universit\'e Paris-Saclay\\
91128 Palaiseau C\'edex\\
France\\
benjamin.schraen@math.cnrs.fr}
\title{A local model for the trianguline variety and applications}
\thanks{We thank Laurent Berger, Lucas Fresse, G\'erard Laumon, Emmanuel Letellier, Michael Rapoport, Simon Riche, Claude Sabbah, Olivier Schiffmann, Tobias Schmidt and Peter Scholze for discussions or answers to questions. E.~H. is partially supported by SFB-TR 45 and SFB 878 of the D.F.G., B.~S. and C.~B. are supported by the C.N.R.S}
\begin{document}

\begin{abstract}
We describe the completed local rings of the trianguline variety at certain points of integral weights in terms of completed local rings of algebraic varieties related to Grothendieck's simultaneous resolution of singularities. We derive several local consequences at these points for the trianguline variety: local irreducibility, description of all local companion points in the crystalline case, combinatorial description of the completed local rings of the fiber over the weight map, etc. Combined with the patched Hecke eigenvariety (under the usual Taylor-Wiles assumptions), these results in turn have several global consequences: classicality of crystalline strictly dominant points on global Hecke eigenvarieties, existence of all expected companion constituents in the completed cohomology, existence of singularities on global Hecke eigenvarieties.
\end{abstract}

\maketitle
\tableofcontents

\vspace{1cm}

\section{Introduction}\label{intro}

Let $p$ be a prime number and $n\geq 2$ an integer. The aim of this paper is to prove several new results in the theory of $p$-adic overconvergent automorphic forms on unitary groups and in the locally analytic $p$-adic Langlands programme for $\GL_n$. To a definite unitary group over a totally real number field, one can associate several rigid analytic Hecke eigenvarieties. 
A $p$-adic overconvergent eigensystem of finite slope is a point on such an eigenvariety and we say that it is crystalline if its associated $p$-adic Galois representation is crystalline at $p$-adic places. Under standard Taylor-Wiles hypothesis and mild genericity hypothesis, we prove, among other results, that {\it any} crystalline overconvergent eigensystem of finite slope and dominant weight comes from a classical automorphic form. Moreover, we show that such an overconvergent eigenform is a singular point on its Hecke eigenvariety once its associated refinement is critical enough (in a specific sense).

Finally we address the problem of \emph{companion forms}.  It is a well known phenomenon in the theory of $p$-adic automorphic forms that there can exist several eigenforms of distinct weight with the same associated Galois representation, i.e.~with the same system of Hecke eigenvalues for the Hecke action away from $p$. Under the same assumptions as above we explicitly describe all such companion forms of a fixed classical form (and in fact we determine the locally analytic representations generated by these companion forms) in terms of combinatorial data (elements of the Weyl group) attached to the associated Galois representation. This description was conjectured by one of us (C.B.) in \cite{BreuilAnalytiqueII}.  

The key insight is, that the properties of $p$-adic automorphic forms we are interested in, are encoded in the geometry of a rigid analytic space that parametrizes certain representations of a local Galois group. We show that the local geometry of this so called \emph{trianguline variety} can be studied in terms of varieties that are familiar from geometric representation theory. 

We now describe our main results and methods in more detail. 

Let $F^+$ be a totally real number field, $F$ an imaginary quadratic extension of $F^+$ and $G$ a unitary group in $n$ variables over $F^+$ which splits over $F$ and over all $p$-adic places of $F^+$, and which is compact at all infinite places of $F^+$. Denote by $S_p$ the set of places of $F^+$ dividing $p$. 
There is an admissible locally $\Qp$-analytic representation of $G(F^+\otimes_{\Q}\Qp)\simeq \prod_{v\mid p}\GL_n(F_v^+)$ on a space $\widehat S(U^p,L)_{\mathfrak{m}^S}^{\rm an}$ (the space of overconvergent $p$-adic automorphic forms of tame level $U^p$) associated to the following data: 
an open compact subgroup $U^p=\prod_{v\nmid p}U_v$ of $G(\Abb_{F^+}^{p\infty})$, a finite set $S$ 
of finite places of $F^+$ that split in $F$ containing $S_p$ and the $v\!\nmid \!p$ such that $U_v$ is not maximal, and a mod $p$ irreducible representation $\rhobar:{\rm Gal}(\overline {F}/F)\longrightarrow \GL_n(\overline \Fp)$
(here $\mathfrak{m}^S$ is a maximal Hecke ideal related to $\rhobar$ and $S$, and $L$ is a ``sufficiently large'' finite extension of $\Qp$). Moreover, there is a rigid analytic variety $Y(U^p,\rhobar)$ over $L$ (called the Hecke eigenvariety) that parametrizes the systems of Hecke eigenvalues of finite slope in the representation $\widehat S(U^p,L)_{\mathfrak{m}^S}^{\rm an}$.

A point $x\in Y(U^p,\rhobar)$ can be uniquely characterized by a pair $(\rho,\deltabar)$ where $\rho$ is a Galois deformation of $\rhobar$ on a finite extension of $L$ and $\deltabar=(\deltabar_{v})_{v\vert p}=(\delta_{v,i})_{(v,i)\in S_p\times \{1,\dots,n\}}$ is a locally $\Qp$-analytic character of $((F^+\otimes_{\Q}\Qp)^\times)^n$, the diagonal torus of $G(F^+\otimes_{\Q}\Q_p)\cong \prod_{v\vert p}\GL_n(F^+_v)$. 
We are interested in points $x=(\rho,\deltabar)$ that are \emph{crystalline generic}, by which we mean that it satisfies the following two conditions:
First, the eigenvalues $(\varphi_{v,i})_{i\in \{1,\dots,n\}}$ of $\varphi^{q_v}$ (the linearization of the crystalline Frobenius on $D_{\cris}(\rho_v)$) satisfy $\varphi_{v,i}\varphi_{v,j}^{-1}\notin \{1,q_v\}$ for $i\ne j$ and $v\vert p$, where $q_v$ is the cardinality of the residue field of $F_v^+$. And second, we demand that the Hodge-Tate weights of $\rho$ at places above $p$ are regular (i.e.~all the Sen endomorphisms of the restrictions of $\rho$ to decomposition groups over $p$ are separable). 
Under these assumptions (and in fact under much weaker assumptions on $\rho$), one can associate to $x=(\rho,\deltabar)$, for each $v\vert p$, two permutations $w_{v},w_{x,v}\in \Scal_n^{[F^+_v:\Qp]}$: the first one measuring the relative positions of the weights of the $\delta_{v,i}$, $i\in \{1,\dots,n\}$ (suitably normalized) with the antidominant order (see before Lemma \ref{thetaw}) and the second one measuring the relative positions of two flags (see before Proposition \ref{preceq} and Proposition \ref{uniquetri}) coming from the $p$-adic Hodge Theory of $\rho_v$. We set:
$$w:=(w_{v})_{v\in S_p}\ \text{and}\ w_{x}:=(w_{x,v})_{v\in S_p}\in \Scal:=\prod_{v\vert p}\Scal_n^{[F^+_v:\Qp]}.$$ 
When $w$ is the longest element $w_0$ in $\Scal$, or equivalently when the algebraic weight of $\deltabar$ is dominant, and $\rho_v$ is crystalline for each $v\in S_p$, we say that $x=(\rho,\deltabar)\in Y(U^p,\rhobar)$ is \emph{crystalline generic strictly dominant}. 
Finally, we say that $x'=(\rho,\deltabar')\in Y(U^p,\rhobar)$ is a companion point of $x$ if $\deltabar'\deltabar^{-1}$ is a $\Qp$-algebraic character. It is conjectured in \cite[Conj.6.5]{BreuilAnalytiqueII} that the companion points of $x$ are parametrized by $w'\in \Scal$ such that $w_x\preceq w'$ where $\preceq$ is the Bruhat order (note that $w'$ is $w'w_0$ with the convention in {\it loc.cit.}). We write $x_{w'}$ for the conjectural companion point associated to $w'$ (we have $x=x_{w_0}$).

Consider the following assumptions, called ``standard Taylor-Wiles hypothesis'' above:
\begin{enumerate}
\item[(i)]$p>2$;
\item[(ii)]the field $F$ is unramified over $F^+$ and $G$ is quasi-split at all finite places of $F^+$;
\item[(iii)]$U_v$ is hyperspecial when the finite place $v$ of $F^+$ is inert in $F$;
\item[(iv)]$\rhobar({\rm Gal}(\overline F/F(\zeta_p))$ is adequate (\cite[Def.2.3]{Thorne}).
\end{enumerate}

\begin{theo0}[Theorem \ref{classicality}]\label{1}
Assume (i) to (iv). If $x=(\rho,\deltabar)\in Y(U^p,\rhobar)$ is generic crystalline strictly dominant, then $x$ comes from a classical automorphic form of $G(\Abb_{F^+})$. In particular $\rho$ is automorphic.
\end{theo0}
We point out that the assumption that $x$ is strictly dominant is a necessary assumption. However, if $x=(\rho,\deltabar)\in Y(U^p,\rhobar)$ is generic crystalline (but not necessarily strictly dominant) there exists a generic crystalline strictly dominant point $x'=(\rho,\deltabar')\in Y(U^p,\rhobar)$ (see Remark \ref{Remdomcompaionpoint}) and hence our result still implies that $\rho$ is automorphic (though the point $x$ does not necessarily come from a classical automorphic form itself). 

\begin{theo0}[Theorem \ref{singularity}]\label{2}
Assume (i) to (iv) and $U^p$ small enough. If $x\in Y(U^p,\rhobar)$ is generic crystalline strictly dominant such that $w_x$ is not a product of pairwise distinct simple reflections, then $x$ is a singular point on $Y(U^p,\rhobar)$.
\end{theo0}

\begin{theo0}[Theorem \ref{companion}]\label{3}
Assume (i) to (iv) and $U^p$ small enough. If the Galois representation $\rho:\!{\rm Gal}(\overline {F}/F)\!\longrightarrow \GL_n(L)$ comes from a generic crystalline strictly dominant point $x=(\rho,\deltabar)\in Y(U^p,\rhobar)$, then all companion constituents associated to $\rho$ in \cite[\S6]{BreuilAnalytiqueI}, \cite[Conj.6.1]{BreuilAnalytiqueII} occur (up to twist) as $G(F^+\otimes_{\Q}\Qp)$-subrepresentations of $\widehat S(U^p,L)_{\mathfrak{m}^S}^{\rm an}[\mathfrak{m}_\rho]$. In particular all companion points $x_{w'}$ of $x$ for $w_x\preceq w'$ exist in $Y(U^p,\rhobar)$.
\end{theo0}

Several cases or variants of Theorem \ref{1} and Theorem \ref{2} were already known. In the setting of Coleman-Mazur's eigencurve Theorem \ref{1} was proven by Kisin (\cite{Kisinoverconvergent}). When $w_x=w_0$ Theorem \ref{1} was proven by Chenevier (\cite[Prop.4.2]{Chenevierfern}), and when $w_x$ is a product of distinct simple reflections Theorem \ref{1} was proven in \cite[Th.1.1]{BHS2} under slightly more restrictive conditions on the $\varphi_{v,i}$. In the setting of the completed $H^1$ of usual modular curves Theorem \ref{3} was proven in \cite{BreuilEmerton}. When $n=2$ Theorem \ref{3} was proven by Ding (\cite{Ding3}, see also \cite{Ding1}), and when $n>2$ a few companion constituents were known to exist (\cite{BreuilAnalytiqueII}, \cite{Ding2}).

We now explain the main steps in the proofs of the above three theorems, and in doing so we also describe our local results.

The first step is that one can replace in all statements the representation $\widehat S(U^p,L)_{\mathfrak{m}^S}^{\rm an}$ by the patched locally $\Qp$-analytic representation $\Pi_\infty^{\rm an}$ of $G(F^+\otimes_{\Q}\Qp)$ constructed in \cite{CEGGPS} and the eigenvariety $Y(U^p,\rhobar)$ by the patched eigenvariety $X_p(\rhobar)$ constructed in \cite[\S3.2]{BHS1} (these objects only exist under hypothesis (i) to (iv)). Recall that $X_p(\rhobar)$ is obtained from $\Pi_\infty^{\rm an}$ in the same way as $Y(U^p,\rhobar)$ is obtained from $\widehat S(U^p,L)_{\mathfrak{m}^S}^{\rm an}$ (see {\it loc.cit.}). It was shown in \cite[\S3.6]{BHS1} that $X_p(\rhobar)$ is a union of irreducible components of $\Xfrak_{\rhobar^p}\times \prod_{v\vert p}X_{\rm tri}(\rhobar_v)\times \Ubb^g$ where $\Xfrak_{\rhobar^p}$ is the rigid analytic generic fiber of the framed deformation space of $\rhobar$ at the places of $S\backslash S_p$, $\Ubb^g$ is an open polydisc and $X_{\rm tri}(\rhobar_v)$ is the so-called trianguline variety at $v\vert p$, i.e. the closure of points $(r,\deltabar)$ where $r$ is a trianguline deformation of $\rhobar_v$ and $\deltabar$ a triangulation on $D_{\rm rig}(r_v)$ seen as a locally $\Qp$-analytic character of the diagonal torus of $G(F_v^+)\simeq \GL_n(F_v^+)$.

We say that a character $\deltabar$ of $((F_v^+)^\times)^n$ is generic if $\delta_{i}\delta_{j}^{-1}$ and $\delta_{i}\delta_{j}^{-1}\vert\ \!\ \vert_v$ are not $\Qp$-algebraic characters of $(F_v^+)^\times$ for $i\ne j$, where $\vert\ \!\ \vert_v$ is the norm character of $F_v^+$. 
Our main local result is the following theorem.

\begin{theo0}[Corollary \ref{irreducible}]\label{1local}
Let $x=(r,\deltabar)\in X_{\rm tri}(\rhobar_v)$ such that $\deltabar$ is generic locally algebraic with distinct weights, then the rigid variety $X_{\rm tri}(\rhobar_v)$ is normal (hence irreducible) and Cohen-Macaulay in an affinoid neighbourhood of $x$.
\end{theo0}

The proof of Theorem \ref{1local} follows from the key discovery that the formal completion $\widehat X_{\rm tri}(\rhobar_v)_x$ of $X_{\rm tri}(\rhobar_v)$ at the point $x$ can be recovered, up to formally smooth morphisms, from varieties studied in geometric representation theory. It follows from our assumption on the Sen weights of $r$ that this representation is almost de Rham in the sense of Fontaine (\cite{FonAr}). As an extension of almost de Rham representations is still almost de Rham, every deformation of $r$ on a nilpotent thickening of $L$ is almost de Rham. Let $r_{\dR}^+:=\BdR^+\otimes_{\Qp}r$ and $r_{\dR}:=\BdR\otimes_{\Qp}r$ be the $\BdR^+$ and $\BdR$-representations associated to $r$. A result of Fontaine tells us that there exists an equivalence of categories $W\mapsto (D_{\pdR}(W),\nu_W)$ between the category of almost de Rham $\BdR$-representations and the category of pairs $(D,N)$ where $D$ is a finite dimensional $\Qp$-vector space and $N$ a nilpotent endomorphism of $D$. The set of Galois stable $\BdR^+$-lattices in $W$ is then in natural bijection with the set of separated exhaustive filtrations of $D_{\pdR}(W)$ stable under $\nu_W$. Moreover, when the Sen weights of the $\BdR^+$-lattice are multiplicity free, the corresponding filtration of $D_{\pdR}(W)$ is a complete flag. Let $\Sp A\subset X_{\rm tri}(\rhobar_v)$ be a nilpotent thickening of the point $x$. Then the representation $r_A$ is almost de Rham, and we can use a key result of Kedlaya-Pottharst-Xiao (\cite{KPX}) and Liu (\cite{Liuslope}) on global triangulations to construct a complete flag of $D_{\pdR}(r_{A,\dR})$ stable under $\nu_{r_{A,\dR}}$. 
These constructions give us two natural flags in $D_{\pdR}(r_{A,\dR})$ that are stable under the same endomorphism $\nu_{r_{A,\dR}}$ of $D_{\pdR}(r_{A,\dR})$. It it therefore natural to consider the following construction.

Denote by $\lieg\simeq \mathfrak {gl}_n^{[F_v^+:\Qp]}$ (resp. $\lieb$) the $L$-Lie algebra of $G:=({\rm Res}_{F^+_v/\Qp}\GL_{n/F_v^+})_L$ (resp. of the Borel subgroup of upper triangular matrices) and let:
$$\tilde\lieg:=\{(gB,\psi)\in G/B\times \lieg\mid\Ad(g^{-1})\psi\in \lieb\}\subseteq G/B\times \lieg.$$
Then $\tilde\lieg$ is a smooth irreducible algebraic variety over $\Spec L$ of dimension $\dim G$ and the projection $\tilde\lieg\longrightarrow \lieg$ is called Grothendieck's simultaneous resolution of singularities. The fiber product $X:=\tilde\lieg\times_{\lieg}\tilde\lieg$ is equidimensional of dimension $\dim G$ and its irreducible components $X_{w'}$ are parametrized by $w'\in \Scal_n^{[F^+_v:\Qp]}$ (the Weyl group of $G$). Under our hypothesis on $x$, the $L\otimes_{\Qp}F_v^+$-module $D_{\pdR}(r_{\dR})$ is free of rank $n$ and equipped with a nilpotent endomorphism $N$ and with two flags: the first one $\Dcal_\bullet$ comes from the triangulation on $D_{\rm rig}(r)$, the second one $\Fil_\bullet$ being the Hodge filtration associated to $r_{\dR}$. These two flags are preserved by the endomorphism $N$, so that we can define a point $x_{\pdR}:=(\Dcal_\bullet,\Fil_\bullet,N)$ of $X(L)$ (modulo a choice of basis on $D_{\pdR}(r)$). In fact, we obtain a map:
$$\widehat X_{\rm tri}(\rhobar_v)_x\longrightarrow \widehat X_{x_{\pdR}}$$
and we can show that it factors through $\widehat X_{w,x_{\pdR}}$ (and that $x_{\pdR}\in X_{w}(L)$) where $w\in \Scal_n^{[F_v^+:\Qp]}$ measures the relative positions of the weights of the $\delta_{i}$, $i\in \{1,\dots,n\}$ with the antidominant order. It remains to prove that this map is formally smooth to deduce the first of the following two statements, which themselves imply Theorem \ref{1local}.

\begin{theo0}[see (\ref{THEdiagram})]\label{1localbis}
Let $x$ as in Theorem \ref{1local}, up to formally smooth morphisms the formal schemes $\widehat X_{\rm tri}(\rhobar_v)_x$ and $\widehat X_{w,x_{\pdR}}$ are isomorphic.
\end{theo0}

\begin{theo0}[see \S\ref{Springer2}]\label{1localter}
The algebraic varieties $X_{w'}$ are normal and Cohen-Macaulay for any $w'\in \Scal_n^{[F_v^+:\Qp]}$.
\end{theo0}

The Cohen-Macaulay property in Theorem \ref{1localter} was already known and due to Bezrukavni\-kov-Riche (\cite{BezRiche}) but the normality (see Theorem \ref{normality}) is a new result (to the knowledge of the authors).
Theorem \ref{1} then follows almost immediately from Theorem \ref{1local} using \cite[Th.3.9]{BHS2} (we refer to the introduction of {\it loc.cit.} for some details on this implication). 

Theorem \ref{1localbis} has many other consequences on the local geometry of $X_{\rm tri}(\rhobar_v)$. For instance we can deduce that the weight map is flat in a neighbourhood of $x$ and, when $r$ is de Rham, one can give an explicit bound for the dimension of the tangent space of $X_{\rm tri}(\rhobar_v)$ at $x$, generalizing \cite[Th.1.3]{BHS2}, see \S\ref{further}. When $x$ is moreover crystalline and strictly dominant, one can also completely describe the local companion points of $x$ on $X_{\rm tri}(\rhobar_v)$, i.e. those $x'=(r,\deltabar')\in X_{\rm tri}(\rhobar_v)$ such that $\deltabar'\deltabar^{-1}$ is $\Qp$-algebraic. We obtain the following result, which is a purely local analogue of Theorem \ref{3}.

\begin{theo0}[Theorem \ref{companionptsconjecture}]\label{2local}
Let $x=(r,\deltabar)\in X_{\rm tri}(\rhobar_v)$ as in Theorem \ref{1local} and $w_x\in \Scal_n^{[F_v^+:\Qp]}$ measuring the relative positions of $\Dcal_\bullet$ and $\Fil_\bullet$. Assume $x$ crystalline strictly dominant, then the local companion points of $x$ are parametrized by $w'\in \Scal_n^{[F_v^+:\Qp]}$ with $w_x\preceq w'$.
\end{theo0}

The existence of companion points on $X_{\rm tri}(\rhobar_v)$ for $w_x\preceq w'$ is proven by a Zariski-density argument which doesn't involve Theorem \ref{1localbis}. But the fact that there can't be others (for other values of $w'$), i.e. that these points exhaust {\it all} companion points of $x$ on $X_{\rm tri}(\rhobar_v)$, relies on the geometry of $X_{w'}$ via Theorem \ref{1localbis} (see Lemma \ref{closurerelationsXw}).

The description of the local geometry in Theorem \ref{1localbis} allows us to derive another result about the geometry of $X_{\rm tri}(\rhobar_v)$. Denote by $R_r$ the complete local ring parametrizing (equal characteristic) framed deformations of $r$ over local artinian $L$-algebras of residue field $L$ and by ${\rm Z}(\Spec R_r)$ the free abelian group generated by irreducible closed subschemes of $\Spec R_r$. If $A$ is quotient of $R_r$ define:
\begin{equation}\label{cycleintro}
[\Spec A]:=\sum_{\mathfrak{p}\ {\rm minimal}}m(\mathfrak{p},A)[\Spec A/\mathfrak{p}]\in {\rm Z}(\Spec R_r)
\end{equation}
where the sum is over the minimal prime ideals $\mathfrak{p}$ of $A$ and $m(\mathfrak{p},A)\in \Z_{\geq 0}$ is the length of $A_{\mathfrak{p}}$ as $A_{\mathfrak{p}}$-module. For any rigid variety $Y$, denote by $\widehat\Ocal_{Y,y}$ its completed local ring at $y\in Y$. When $\deltabar$ is generic, the projection $(r',\deltabar')\mapsto r'$ induces a closed immersion $\Spec \widehat\Ocal_{X_{\tri}(\rhobar_v),(r,\deltabar)}\hookrightarrow \Spec R_r$. The projection $(r',\deltabar')\mapsto \deltabar'$ induces a morphism from $X_{\rm tri}(\rhobar_v)$ to the rigid space of locally $\Qp$-analytic characters of the diagonal torus of $G(F_v^+)$ and we let $X_{\rm tri}(\rhobar_v)_{\deltabar}$ be the fiber above $\deltabar$. We obtain a closed immersion $\Spec\widehat\Ocal_{X_{\tri}(\rhobar_v)_{\deltabar},(r,\deltabar)}\hookrightarrow \Spec R_r$. The quite striking result is that, though $X_{\rm tri}(\rhobar_v)$ is reduced, the fiber $X_{\rm tri}(\rhobar_v)_{\deltabar}$ can be highly nonreduced, even ``contain'' Kazhdan-Lusztig multiplicities! The following result was inspired by Emerton-Gee's geometric ``Breuil-M\'ezard'' conjecture (\cite[Conj.4.2.1]{GeeEmerton}). Its proof uses Theorem \ref{2local} and relies (again) on the geometry of $X_w$ via Theorem \ref{1localbis} (see \S\ref{Springer3}).

\begin{theo0}[Theorem \ref{resultBM}]\label{3local}
For any crystalline generic deformation $r$ of $\rhobar_v$ with distinct Hodge-Tate weights and any absolutely irreducible constituent $\Pi$ of a locally $\Qp$-analytic principal series of $\GL_n(F_v^+)$, there exists a unique codimension $[F_v^+:\Qp]\frac{n(n+3)}{2}$-cycle $\Ccal_{r,\Pi}$ in ${\rm Z}(\Spec R_r)$ such that, for all locally $\Qp$-analytic characters $\deltabar$, we have:
$$[\Spec\widehat\Ocal_{X_{\tri}(\rhobar_v)_{\deltabar},(r,\deltabar)}]=\sum_{\Pi}m_{\deltabar,\Pi}\Ccal_{r,\Pi}\ \ {in} \ \ {\rm Z}(\Spec R_r)$$
where $[\Spec\widehat\Ocal_{X_{\tri}(\rhobar_v)_{\deltabar},(r,\deltabar)}]:=0$ if $(r,\deltabar)\notin X_{\tri}(\rhobar_v)$ and $m_{\deltabar,\Pi}$ is the multiplicity (possibly $0$) of $\Pi$ in the locally $\Qp$-analytic principal series representation obtained by inducing the character $\deltabar$ (suitably normalized).
\end{theo0}

We now sketch the proof of Theorem \ref{3} (see \S\ref{companionconst}). The key idea is to define another set of cycles $[\Lcal(w')]$ on the patched eigenvariety $X_p(\rhobar)$ that satisfy the same multiplicity formula as in Theorem \ref{3local} and such that:
$$[\Lcal(w')]\ne 0 \Longleftrightarrow \Hom_{G(F^+\otimes_{\Q}\Qp)}(\Pi_{w'},\Pi_\infty^{\rm an}[\mathfrak{m}_\rho])\ne 0,$$
where the $\Pi_{w'}$ are the locally analytic principal series representations that conjecturally occur in $\widehat{S}(U^p,L)_{\mfrak^S}^{\rm an}[\mfrak_\rho]$.

Roughly, the uniqueness assertion in Theorem \ref{3local} then should force these cycles to agree with the cycles $\Ccal_{r,\Pi_{w'}}$ which then will imply Theorem \ref{3}. Unfortunately we can not directly conclude like this, as the cycles $[\Lcal(w')]$ are defined on a space $X_p(\rhobar)$ that is only known to be a union of irreducible components of $X_{\rm tri}(\rhobar)$ (or rather of  $\Xfrak_{\rhobar^p}\times \prod_{v\vert p}X_{\rm tri}(\rhobar_v)\times \Ubb^g$). As this problem causes the proof of Theorem \ref{3} to be a bit involved, we sketch here some of the main inputs in more detail for the convenience of the reader.

Fix $\rho$ as in Theorem \ref{3}. For each $x=(\rho,\deltabar)\in Y(U^p,\rhobar)\hookrightarrow X_p(\rhobar)$ (generic crystalline) strictly dominant and each $w'\succeq w_x$ write $x_{w'}=(\rho,\deltabar_{w'})$ and let $\Pi_{w'}$ be the (irreducible) socle of the locally $\Qp$-analytic principal series obtained by inducing $\deltabar_{w'}$ (suitably normalized). 

Fixing $x$, we hence need to prove that $\Hom_{G(F^+\otimes_{\Q}\Qp)}(\Pi_{w'},\Pi_\infty^{\rm an}[\mathfrak{m}_\rho])\ne 0$ for $w_x\preceq w'$. Since $x=x_{w_0}$ is known to be classical by Theorem \ref{1}, we already have:
$$\Hom_{G(F^+\otimes_{\Q}\Qp)}(\Pi_{w_0},\Pi_\infty^{\rm an}[\mathfrak{m}_\rho])\ne 0$$
(note that $\Pi_{w_0}$ is the unique locally $\Qp$-algebraic constituent among the $\Pi_{w'}$). 

Denote by $X_p(\rhobar)_{\wt(\deltabar)}$ the fiber of $X_p(\rhobar)$ over the weight $\wt(\deltabar)$ of $\deltabar$ seen as an element of the Lie algebra of the torus of $G(F^+\otimes_{\Q}\Qp)$ and let $\Xfrak_\infty:=\Xfrak_{\rhobar^p}\times \Xfrak_{\rhobar_p}\times \Ubb^g$ where $\Xfrak_{\rhobar_p}$ is the rigid analytic generic fiber of the framed deformation space of $\rhobar$ at the places of $S_p$, then we have a closed immersion $\Spec\widehat\Ocal_{X_p(\rhobar)_{\wt(\deltabar)},x}\hookrightarrow \Spec \widehat\Ocal_{\Xfrak_\infty,\rho}$ similar to the one above with $X_{\tri}(\rhobar_v)_{\deltabar}$. For any $\widehat\Ocal_{X_p(\rhobar)_{\wt(\deltabar)},x}$-module $\Mcal$ of finite type, we define $[\Mcal]\in Z(\Spec \widehat\Ocal_{\Xfrak_\infty,\rho})$ as in (\ref{cycleintro}) but summing over the minimal prime ideals $\mathfrak{p}$ of $\widehat\Ocal_{X_p(\rhobar)_{\wt(\deltabar)},x}$ and replacing $m(\mathfrak{p},A)$ by the length of the $(\widehat\Ocal_{X_p(\rhobar)_{\wt(\deltabar)},x})_{\mathfrak{p}}$-module $\Mcal_{\mathfrak{p}}$. Recall that there is a coherent Cohen-Macaulay sheaf $\Mcal_\infty$ on $X_p(\rhobar)$ (\cite[Lem.3.8]{BHS2}). Taking its pull-back $\widehat\Mcal_{\infty,\wt(\deltabar),x}$ on $\Spec\widehat\Ocal_{X_p(\rhobar)_{\wt(\deltabar)},x}$, we first prove that we have a formula in $Z(\Spec \widehat\Ocal_{\Xfrak_\infty,\rho})$:
\begin{equation}\label{formula1}
[\widehat\Mcal_{\infty,\wt(\deltabar),x}]=\sum_{w_x\preceq w'}P_{1,w_0w'}(1)[\Lcal(w')]
\end{equation}
where $P_{x,y}$ for $x,y\in \prod_{v\vert p}\Scal_n^{[F^+_v:\Qp]}$ are the Kazhdan-Lusztig polynomials and $\Lcal(w')$ are certain finite type $\widehat\Ocal_{X_p(\rhobar)_{\wt(\deltabar)},x}$-modules such that:
$$\Lcal(w')\ne 0\Longleftrightarrow \Hom_{G(F^+\otimes_{\Q}\Qp)}(\Pi_{w'},\Pi_\infty^{\rm an}[\mathfrak{m}_\rho])\ne 0.$$
Formula (\ref{formula1}) essentially comes from representation theory (in particular the structure of Verma modules) and doesn't use Theorem \ref{1localbis}. By an argument analogous to the one for Theorem \ref{3local} (using Theorem \ref{1localbis}), we have {\it nonzero} codimension $[F^+:\Q]\frac{n(n+1)}{2}$-cycles $\Cfrak(w')$ in $Z(\Spec \widehat\Ocal_{\Xfrak_\infty,\rho})$ such that:
\begin{equation}\label{formula2}
[\widehat\Ocal_{X_p(\rhobar)_{\wt(\deltabar)},x}]=\sum_{w_x\preceq w'}P_{1,w_0w'}(1)\Cfrak(w').
\end{equation}
Moreover we know that the cycle $\Cfrak(w_0)$ is irreducible and that $[\Lcal(w_0)]\in \Z_{\geq 0}\Cfrak(w_0)$ (roughly because the support of the locally $\Qp$-algebraic vectors lies in the locus of crystalline deformations). Consequently we can deduce Theorem \ref{3} from the fact that $P_{1,w_0w'}(1)\ne 0$, if we know that $\mathfrak{C}(w')$ is contained in the support of $\Lcal(w')$ for $w_x\preceq w'$. 

We prove this last assertion by a descending induction on the length of the Weyl group element $w_x$. 
Assume first that $\lg(w_x)=\lg(w_0)-1$. In that case $x$ is smooth on $X_p(\rhobar)$ and then $\Mcal_\infty$ is locally free at $x$. Hence $\widehat\Mcal_{\infty,\wt(\deltabar),x}\simeq \widehat\Ocal_{X_p(\rhobar)_{\wt(\deltabar)},x}^r$ for some $r>0$ and we can combine (\ref{formula2}) (multiplied by the integer $r$) with (\ref{formula1}). Using $\Cfrak(w_0)\ne 0$, $\Cfrak(w_x)\ne 0$ and $[\Lcal(w_0)]\in \Z_{\geq 0}\Cfrak(w_0)$, it is then not difficult to deduce $[\Lcal(w_x)]\ne 0$, hence $\Lcal(w_x)\ne 0$ and then $\Hom_{G(F^+\otimes_{\Q}\Qp)}(\Pi_{w_x},\Pi_\infty^{\rm an}[\mathfrak{m}_\rho])\ne 0$ and $x_{w_x}\in Y(U^p,\rhobar)$. 

By a Zariski-density argument analogous to the one in the proof of Theorem \ref{2local}, we can then deduce $[\Lcal(w')]\ne 0$ for any $w'\succeq w_x$ such that $\lg(w')\geq \lg(w_0)-1$ and any $w_x$ such that $\lg(w_x)\leq \lg(w_0)-1$. In particular we have the companion points $x_{w'}$ on $Y(U^p,\rhobar)$ for $w'\succeq w_x$ and $\lg(w')=\lg(w_0)-1$ and formulas analogous to (\ref{formula1}) and (\ref{formula2}) localizing and completing at $x_{w'}$ instead of $x=x_{w_0}$.

Assume now $\lg(w_x)=\lg(w_0)-2$, we can repeat the argument of the case $\lg(w_x)=\lg(w_0)-1$ but using the analogues of (\ref{formula1}), (\ref{formula2}) at $x_{w'}=(\rho,\deltabar_{w'})$ for $w'\succeq w_x$ and $\lg(w')=\lg(w_0)-1=\lg(w_x)+1$. 
The results on the local geometry of the trianguline variety imply that $X_p(\rhobar)$ is smooth at the points $x_{w'}$ with $\lg(w')\geq \lg(w_0)-1$ and hence:
$$\widehat\Mcal_{\infty,\wt(\deltabar_{w'}),x_{w'}}\simeq \widehat\Ocal_{X_p(\rhobar)_{\wt(\deltabar_{w'})},x_{w'}}^r$$
with $r$ in fact being \emph{the same} integer for all the $w'$ (including $x=x_{w_0}$). Combining equations 
(\ref{formula1}), (\ref{formula2}) for the points $x_{w'}$ with  $\lg(w')\geq \lg(w_0)-1$ we can deduce that $[\Lcal(w_x)]\ne 0$. Moreover, by a Zariski-density argument $[\Lcal(w')]\ne 0$ for $w'\succeq w_x$ such that $\lg(w')\geq \lg(w_0)-2$ and $w_x$ such that $\lg(w_x)\leq\lg(w_0)-2$. By a decreasing induction on $\lg(w_x)$, we finally obtain (using a very similar argument) all predicted companion constituents.

Finally, once we have Theorem \ref{3}, in particular once we have the companion point $x_{w_x}$ of $x$ in $Y(U^p,\rhobar)$, the argument of the proof of \cite[Cor.5.18]{BHS2} can go through {\it mutatis mutandis} and yields that the tangent space of $X_p(\rhobar)$ at $x$ has dimension strictly larger than $\dim X_p(\rhobar)$ under the assumption on $w_x$ in Theorem \ref{2} (in {\it loc.cit.} we assumed the crystalline modularity conjectures essentially because they guaranteed the existence of $x_{w_x}$ on $Y(U^p,\rhobar)$ by \cite[Prop.3.27]{BHS1}).

\noindent {\bf Notations:} We finish this introduction with the main notation.

If $K$ and $L$ are two finite extensions of $\Qp$, we say that $L$ splits $K$ when $\Hom(K,L)$ (= homomorphisms of $\Qp$-algebras $K\rightarrow L$) has cardinality $[K:\Qp]$ and we then set $\Sigma:=\Hom(K,L)=\{\tau:K\hookrightarrow L\}$. If $L$ is any finite extension of $\Qp$ we denote by $\Ocal_L$ its ring of integers, by $k_L$ its residue field and by $\Ccal_L$ the category of local artinian $L$-algebra with residue field isomorphic to $L$. If $A$ is a (commutative) local ring, we let $\mathfrak{m}_A$ be its maximal ideal. 

For $K$ a finite extension of $\Qp$, we write $K_0\subseteq K$ for the maximal unramified extension in $K$, $\overline{K}$ for an algebraic closure of $K$ and we set $|x|_K:=q^{-e\val(x)}$ for $x\in \overline K$ where $q:=p^f$, $f:=[K_0:\Qp]$, $e:=[K:K_0]$ and val is normalized by $\val(p)=1$. We set $K_n:=K(\mu_{p^n})\subset \overline{K}$ for $n\geq1$, $K_\infty:=\cup_nK_n$, $C$ the completion of $\overline{K}$ for $|\cdot|_K$, $\Gcal_K:=\mathrm{Gal}(\overline{K}/K)$ and $\Gamma_K:=\mathrm{Gal}(K_\infty/K)$. We denote by $\varepsilon:\, \Gcal_K\twoheadrightarrow \Gamma_K\rightarrow\Z_p^\times$ the $p$-adic cyclotomic character. We let $\rec_K:\,K^\times\rightarrow\Gcal_K^{\ab}$ be the reciprocity map normalized so that a uniformizer of $K$ is sent to a geometric Frobenius and we still write $\varepsilon$ for $\varepsilon\circ \rec_K$ (a character of $K^\times$). Recall that $\varepsilon=N_{K/\Qp}|N_{K/\Qp}|_{\Qp}$ where $N_{K/\Qp}$ is the norm. If $a\in L^\times$ (where $L$ is any extension of $K$) we denote by ${\rm unr}(a)$ the unramified character of $K^\times$ sending a uniformizer of $K$ to $a$ (so $|\cdot|_K={\rm unr}(q^{-1})$). When ${\rm unr}(a)$ extends to $\mathcal{G}_K^{\rm ab}$ via $\rec_K$, we still write ${\rm unr}(a)$ for the induced character of $\mathcal{G}_K$ and $\mathcal{G}_K^{\rm ab}$.

If $A$ is an affinoid $L$-algebra, for example an object of $\Ccal_L$, and $\delta:\,K^\times\rightarrow A^\times$ a continuous - or equivalently locally $\Qp$-analytic - character, the weight of $\delta$ is by definition the $\Qp$-linear morphism $\wt(\delta):\,K\rightarrow A$, $x\mapsto \frac{d}{dt}\delta(\exp(tx))\vert_{t=0}$. We can also see $\wt(\delta)$ either as an $A$-linear map $A\otimes_{\Qp}K\rightarrow A$, or as an $A\otimes_{\Qp}K$-linear map $A\otimes_{\Qp}K\rightarrow A\otimes_{\Qp}K$, that is as (the multiplication by) an element of $A\otimes_{\Qp}K$ (we recover the previous point of view by composing with the $A$-linear trace map $A\otimes_{\Qp}K\rightarrow A$). Alternatively, if $L$ splits $K$, we can write $A\otimes_{\Qp}K=A\otimes_L(L\otimes_{\Qp}K)\buildrel\sim\over \rightarrow \oplus_{\tau\in \Sigma}A$ and see $\wt(\delta):A\otimes_{\Qp}K\rightarrow A$ as $(\wt_\tau(\delta))_{\tau\in \Sigma}\in \oplus_{\tau\in \Sigma}A\simeq A\otimes_{\Qp}K$ where $\wt_\tau(\delta):=\wt(\delta)(1_\tau)\in A$, $1_\tau\in A\otimes_{\Qp}K\simeq \oplus_{\tau\in \Sigma}A$ being $1$ on the $\tau$-component and $0$ elsewhere.

If $A$ is an affinoid algebra, we write $\mathcal{R}_{A,K}$ for the Robba ring associated to $K$ with $A$-coefficients (see \cite[Def.6.2.1]{KPX} though our notation is slightly different) and $\Rcal_{K}$ when $A=\Q_p$. Given a continuous character $\delta: K^\times \rightarrow A^\times$ we write $\Rcal_{A,K}(\delta)$ for the rank one $(\varphi,\Gamma_K)$-module on $\Sp A$ defined by $\delta$, see \cite[Cons.6.2.4]{KPX}. 

If $X$ is a scheme locally of finite type over a field $L$ or a rigid analytic space over $L$, we denote by $X^{\rm red}$ the associated reduced Zariski-closed subspace (with the same underlying set). If $x$ is a point of $X$, we let $k(x)$ be the residue field of $x$, $\Ocal_{X,x}$ the local ring at $x$, $\widehat \Ocal_{X,x}$ its $\mathfrak{m}_{\Ocal_{X,x}}$-adic completion and $\widehat{X}_x$ the affine formal scheme $\Spf \widehat\Ocal_{X,x}$ (so the underlying topological space of $\widehat{X}_x$ is just a point). We will often (tacitly) use the following: assume $L$ is of characteristic $0$ and $x$ is a closed point of $X$, then seeing $x$ as a closed point of $X_{k(x)}:=X\times_Lk(x)$ one has $\widehat \Ocal_{X,x}\buildrel\sim\over\longrightarrow \widehat \Ocal_{X_{k(x)},x}$, in particular $\widehat \Ocal_{X,x}$ is a noetherian complete local $k(x)$-algebra of residue field $k(x)$.

If $A$ is an excellent local ring (e.g. $A=\Ocal_{X,x}$ where $X$ is a scheme locally of finite type over a field or a rigid analytic variety) and $\widehat A$ its $\mathfrak{m}_A$-adic completion, we will (sometimes tacitly) use the following equivalences: $A$ is reduced if and only if $\widehat A$ is (\cite[Sch.7.8.3(v)]{EGAIV2}), $A$ is equidimensional if and only if $\widehat A$ is (\cite[Sch.7.8.3(x)]{EGAIV2}), $A$ is Cohen-Macaulay if and only if $\widehat A$ is (\cite[Prop.16.5.2]{EGAIV1}), $A$ is normal if and only if $\widehat A$ is (\cite[Sch.7.8.3(v)]{EGAIV2}). Moreover the map $\Spec \widehat A\longrightarrow \Spec A$ sends surjectively minimal prime ideals of $\widehat A$ to minimal prime ideals of $A$ (as it is a faithfully flat morphism).

If $\mathfrak{g}$ is a Lie algebra over a field $k$, we still denote by $\mathfrak{g}$ the $k$-scheme defined by $A\mapsto\mathfrak{g}(A)=A\otimes_{k}\mathfrak{g}$ for $A$ a $k$-algebra. We denote by $k[\varepsilon]:=k[Y]/(Y^2)$ the dual numbers. If $G$ is a group scheme and $A$ is a ring, we denote by $\Rep_A(G)$ the full subcategory of the category of $G_A$-modules (\cite[\S I.2.7]{Jantzen}) whose objects are finite free $A$-modules. If $V$ is an $A$-module and $I\subseteq A$ an ideal, we denote by $V[I]\subseteq V$ the $A$-submodule of elements of $V$ cancelled by all the elements of $I$.

\section{The geometry of some schemes related to the Springer resolution}\label{Springer}

We recall, and sometimes improve, several results of geometric representation theory concerning varieties related to Grothendieck's and Springer's resolution of singularities, in particular we prove a new normality result (Theorem \ref{normality}). All these results will be crucially used in \S\ref{localmodel} to describe the local rings of the trianguline variety at certain points.

\subsection{Preliminaries}\label{Springer0}

We recall the definition of a certain scheme $X$ associated to a split reductive group $G$ and related to Grothendieck's simultaneous resolution of singularities.

We fix $G$ a split reductive group over a field $k$. We assume that the characteristic of $k$ is \emph{good} for $G$, i.e.~${\rm char}(k)=0$ or ${\rm char}(k)>h!$ where $h$ is the Coxeter number of $G$ (though, for applications, we will only need the case ${\rm char}(k)=0$). We fix $B\subset G$ a Borel subgroup and denote by $T\subset B$ a maximal torus and by $U\subset B$ the unipotent radical of $B$. We write $W=N_G(T)/T$ for the Weyl group of $(G,T)$ and $w_0\in W$ for the longest element. We denote by $\lg(-)$ the length function on $W$ and by $\preceq$ the Bruhat order. We write $\lieg$, $\lieb$, $\liet$ and $\lieu$ for the Lie algebra (over $k$) of respectively $G$, $B$, $T$ and $U$ and we denote by $\Ad:G\rightarrow \Aut(\lieg)$ the adjoint representation. Finally we write $w\cdot \lambda:=w(\lambda+\rho)-\rho$ for the usual \emph{dot} action of $W$ on $X^\ast(T)$, where $\rho$ denotes half the sum of the positive roots with respect to $B$. 

We equip the product $G/B\times G/B=G/B\times_k G/B$ with an action of $G$ by diagonal left multiplication. Let $w\in W$ and $\dot w\in N_G(T)\subset G(k)$ some lift of $w$. Write:
$$U_w:=G(1,\dot{w})B\!\times \!B\ \subset \ G/B\times G/B$$
Then $G/B\times G/B=\amalg_{w\in W}U_w$. It is well known that $U_w$ (a $G$-equivariant Schubert cell) is a locally closed subscheme, smooth of dimension $\dim G-\dim B+\lg(w)$.

Let $\tilde\lieg$ be the $k$-scheme defined by:
\begin{equation}\label{gtilde}
\tilde\lieg:=\{(gB,\psi)\in G/B\times \lieg\mid\Ad(g^{-1})\psi\in \lieb\}\subseteq G/B\times \lieg.
\end{equation}
It has dimension $\dim G=\dim \lieg$ and we have a canonical isomorphism of $k$-schemes:
\begin{equation}\label{isogtilde}
G\times^B\lieb\buildrel\sim\over\longrightarrow \tilde \lieg, \ \ (g,\psi)\longmapsto (gB,\Ad(g)\psi)
\end{equation}
where $G\times^B\lieb$ is the quotient of $G\times\lieb$ for the right action of $B$ defined by $(g,\psi)b:=(gb,\Ad(b^{-1})\psi)$. We deduce from (\ref{isogtilde}) that the morphism $\tilde\lieg\longrightarrow G/B$, $(gB,\psi)\longmapsto gB$ makes $\tilde\lieg$ a vector bundle over $G/B$. In particular the $k$-scheme $\tilde\lieg$ is smooth and irreducible. 

Given a vector bundle over a scheme and its corresponding locally free module of finite type, recall that a subvector bundle corresponds to a locally free submodule which is locally a direct factor, or equivalently such that the quotient by this submodule is still locally free. Using the isomorphism $G\times^B\lieg\buildrel\sim\over\longrightarrow G/B\times \lieg$, $(g,\psi)\longmapsto (gB,\Ad(g)\psi)$, we easily see from (\ref{isogtilde}) that $\tilde\lieg$ is a subvector bundle of the trivial vector bundle $G/B\times \lieg$ over $G/B$.

Now recall {\it Grothendieck's simultaneous resolution of singularities}:
$$q:\tilde\lieg\longrightarrow \lieg,\ \ (gB,\psi)\longmapsto \psi$$
or equivalently $G\times^B\lieb\longrightarrow \lieg$, $(g,\psi)\longmapsto  \Ad(g)\psi$. Recall that $\psi\in \lieg$ is called \emph{regular} if its orbit under the adjoint representation of $G$ has the maximal possible dimension. Let us write $\lieg^{\reg}$ (resp.~$\lieg^{\rm reg-ss}$) for the open $k$-subscheme of $\lieg$ consisting of the regular (resp.~the regular semi-simple) elements. Similarly, we will write $\liet^{\rm reg}\subset \liet$ for the open $k$-subscheme of regular elements in the Lie algebra of the torus $T$.
\begin{prop}\label{Grothsimres}
\noindent (i) The morphism $q$ is proper and surjective.\\
\noindent (ii) The restriction of $q$ to $q^{-1}(\lieg^{\rm reg})$ is quasi-finite.\\
\noindent (iii) The restriction of $q$ to $q^{-1}(\lieg^{\rm reg-ss})$ is \'etale of degree $|W|$. 
\end{prop}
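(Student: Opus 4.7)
For part (i), the plan is to establish properness by identifying $\tilde\lieg$ as a closed subscheme of $G/B\times \lieg$ cut out by the closed condition $\Ad(g^{-1})\psi\in \lieb$, and then to invoke properness of the projection $G/B\times \lieg\to\lieg$, which holds because $G/B$ is projective. For surjectivity, I observe that for any $t\in\liet$ and $g\in G$, the pair $(gB,\Ad(g)t)$ lies in $\tilde\lieg$ and maps under $q$ to $\Ad(g)t$; hence the image contains $\Ad(G)\cdot\liet$. Since $\lieg^{\rm reg-ss}\subseteq\Ad(G)\cdot\liet$ is a dense open subset of $\lieg$ (every regular semisimple element is $G$-conjugate into $\liet$) and $q(\tilde\lieg)$ is closed by properness, surjectivity follows.

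For (ii), the plan is to show that for $\psi\in \lieg^{\reg}$ the scheme-theoretic fiber $q^{-1}(\psi)$ is finite. Via the description $q^{-1}(\psi)\cong\{gB\in G/B : \psi\in\Ad(g)(\lieb)\}$, this fiber parametrizes Borel subalgebras of $\lieg$ containing $\psi$. Writing $\psi=s+n$ for the Jordan decomposition, a Borel subalgebra contains $\psi$ iff it contains both $s$ and $n$. The Borels containing $s$ form a smooth closed subvariety of $G/B$ identified with the flag variety of the connected centralizer $M:=Z_G(s)^\circ$, and within this subvariety the locus of Borels also containing the nilpotent $n\in \Lie(M)$ is a Springer fiber for $M$. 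Regularity of $\psi$ in $\lieg$ forces $n$ to be regular nilpotent in $\Lie(M)$, and Springer fibers over regular nilpotent elements are finite, giving the claimed quasi-finiteness.

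For (iii), the plan has two steps: first count geometric points in the fibers, then upgrade to étaleness. For $\psi\in\lieg^{\rm reg-ss}$, the unique Cartan subalgebra containing $\psi$ is $\mathfrak{h}_\psi:=Z_\lieg(\psi)$, and Borel subalgebras containing $\psi$ correspond bijectively to Weyl chambers for the root system of $(\lieg,\mathfrak{h}_\psi)$, a set on which $W$ acts simply transitively; hence the fiber has exactly $|W|$ geometric points. To promote this to étaleness of degree $|W|$, the key tool is the Grothendieck-style commutative diagram
\[
\begin{array}{ccc}
\tilde\lieg & \longrightarrow & \liet\\
q\,\downarrow & & \downarrow\\
\lieg & \longrightarrow & \liet/W
\end{array}
\]
where the top arrow sends $(gB,\psi)$ to the image of $\Ad(g^{-1})\psi$ in $\liet=\lieb/\lieu$. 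The top arrow is smooth, since using the identification $\tilde\lieg\cong G\times^B\lieb$ it factors as $G\times^B\lieb\to G\times^B\liet\cong G/B\times\liet$ followed by projection onto $\liet$. By Kostant's theorem (valid in good characteristic) the bottom arrow is smooth over $\lieg^{\reg}$. One then verifies, using the fiberwise count above together with a dimension comparison, that the induced map $\tilde\lieg\to\lieg\times_{\liet/W}\liet$ is an isomorphism over $\lieg^{\rm reg-ss}$; since $\liet^{\rm reg}\to\liet^{\rm reg}/W$ is an étale $W$-torsor, base change then yields that $q$ is étale of degree $|W|$ over $\lieg^{\rm reg-ss}$.

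The main obstacle I anticipate is the schematic verification in (iii): establishing that the natural map $\tilde\lieg|_{\lieg^{\rm reg-ss}}\to(\lieg\times_{\liet/W}\liet)|_{\lieg^{\rm reg-ss}}$ is an isomorphism rather than merely a bijection on geometric points is the technical heart, and it relies essentially on Kostant's smoothness result, whose proof uses the good-characteristic hypothesis in a crucial way (through the Kostant section). By comparison, the properness and surjectivity in (i) and the fiber count in (ii) reduce to classical dimension and centralizer arguments.
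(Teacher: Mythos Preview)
The paper does not prove this proposition; it cites Kiehl--Weissauer (Theorems 8.3 and 9.1) and Slodowy (\S II.4.7). Your sketch is essentially the standard direct argument found in those references, so the comparison is between a citation and an actual proof outline.

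One correction in (ii): the Borel subalgebras containing a semisimple element $s$ do \emph{not} form a single copy of the flag variety of $M=Z_G(s)^\circ$, but rather a disjoint union of $|W|/|W_M|$ copies of it, indexed by $W_M\backslash W$. (Already for $G=\mathrm{SL}_2$ with $s$ regular semisimple, $M$ is a torus with flag variety a point, yet two Borels contain $s$.) Your conclusion survives unchanged: within each component the locus of Borels additionally containing $n$ is a Springer fiber for $M$, and regularity of $\psi$ forces $n$ regular nilpotent in $\Lie(M)$, so each such fiber is a single reduced point and $q^{-1}(\psi)$ has exactly $|W|/|W_M|$ points.

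For (iii), you have correctly located the crux. Your ``fiberwise count plus dimension comparison'' gives a finite bijective morphism between smooth varieties, which suffices in characteristic zero but not in positive characteristic (Frobenius is a counterexample). A clean way to close this, valid in good characteristic, is a direct differential computation: by $G$-equivariance one may take the point $(B,\psi)$ with $\psi\in\liet^{\rm reg}$, and then via $\tilde\lieg\cong G\times^B\lieb$ the differential $dq$ sends $(X,Y)\in\lieg\oplus\lieb$ (modulo the tangent to the $B$-orbit) to $[X,\psi]+Y$; regularity of $\psi$ gives $[\lieg,\psi]\supseteq\bigoplus_\alpha\lieg_\alpha$, whence $[\lieg,\psi]+\lieb=\lieg$ and $dq$ is surjective, hence bijective by equality of dimensions. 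This shows $q$ is \'etale over $\lieg^{\rm reg-ss}$ directly, and then the fiber count gives the degree $|W|$.
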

\begin{proof}
For (i) and (ii) see for example \cite[Th.8.3(3) \& Th.8.3(4)]{KiehlWeissauer} and its proof. For (iii) see \cite[Th.9.1]{KiehlWeissauer}. See also \cite[\S II.4.7]{Slodowy}.
\end{proof}
In the following we will sometimes use the notation $\tilde\lieg^{\rm reg}$ and $\tilde\lieg^{\rm reg-ss}$ instead of $q^{-1}(\lieg^{\rm reg})$ and $q^{-1}(\lieg^{\rm reg-ss})$. We finally define the most important $k$-scheme for us:
\begin{equation}\label{X}
X:=\tilde\lieg\times_\lieg\tilde\lieg=\{(g_1B,g_2B,\psi)\in G/B\times G/B\times \lieg\ |\Ad(g_1^{-1})\psi\in \lieb,\Ad(g_2^{-1})\psi\in \lieb\}
\end{equation}
where the fiber product is with the map $q$. If we want to specify the base field $k$, we sometimes write $X_k$ instead of $X$.

\subsection{Analysis of the global geometry}\label{Springer1}

We describe the global geometry of the scheme $X$. Most results in this section are fairly well known, but we include proofs in order to fix notation and for the convenience of the reader. 

Let us write:
\begin{equation}\label{pimap}
\pi:X\hookrightarrow G/B\times G/B\times\lieg\twoheadrightarrow G/B\times G/B
\end{equation}
for the projection to $G/B\times G/B$. We write $\kappa_{i}:X\longrightarrow \liet$, $i\in \{1,2\}$, for the morphism:
\begin{equation}\label{defnkappai}
(g_1B,g_2B,\psi)\longmapsto \overline{\Ad(g_i^{-1})\psi}\in \lieb/\lieu=\liet
\end{equation}
where $\overline{\psi}$ denotes the image of $\psi\in \lieb$ under the canonical projection $\lieb\twoheadrightarrow\liet$.
For $w\in W$ let $V_w:=\pi^{-1}(U_w)\subset X$.

\begin{prop}\label{irredcomposofA}
The projection $V_w\longrightarrow U_w$ induced by $\pi$ is a geometric vector bundle of relative dimension $\dim B-\lg(w)$.
 \end{prop}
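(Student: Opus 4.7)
The plan is to exploit the diagonal $G$-action on $G/B \times G/B$ which has $U_w$ as a single orbit, and show that $\pi: V_w \to U_w$ is a $G$-equivariant Zariski-locally trivial affine space bundle by identifying it with an associated bundle $G \times^{H_w} E_w \to G/H_w \cong U_w$.

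First, I would extend the diagonal $G$-action on $G/B \times G/B$ to an action on $X$ by $g \cdot (g_1B, g_2B, \psi) := (gg_1B, gg_2B, \Ad(g)\psi)$; the defining conditions of $X$ in (\ref{X}) are manifestly preserved. The map $\pi$ is then $G$-equivariant, so $V_w \to U_w$ is $G$-equivariant, and since $U_w$ is a single $G$-orbit with basepoint $(B, \dot w B)$ having stabilizer $H_w := B \cap \dot w B \dot w^{-1}$, there is a natural isomorphism
\[
V_w \;\cong\; G \times^{H_w} E_w, \qquad E_w := \pi^{-1}(B, \dot w B) = \lieb \cap \Ad(\dot w)\lieb,
\]
where $H_w$ acts on $E_w$ by the restriction of the adjoint representation (which is linear).

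Next I would compute $\dim E_w$. The decomposition $\lieg = \liet \oplus \bigoplus_{\alpha \in \Phi} \lieg_\alpha$ gives $\lieb = \liet \oplus \bigoplus_{\alpha \in \Phi^+} \lieg_\alpha$ and $\Ad(\dot w) \lieb = \liet \oplus \bigoplus_{\alpha \in w(\Phi^+)} \lieg_\alpha$, so
\[
E_w = \liet \oplus \bigoplus_{\alpha \in \Phi^+ \cap\, w(\Phi^+)} \lieg_\alpha.
\]
Since $|\Phi^+ \cap w(\Phi^+)| = |\Phi^+| - \lg(w^{-1}) = |\Phi^+| - \lg(w)$, we get $\dim E_w = \dim \liet + |\Phi^+| - \lg(w) = \dim B - \lg(w)$, which is the claimed relative dimension.

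Finally I would check that $G \times^{H_w} E_w \to G/H_w$ is a Zariski-locally trivial vector bundle, which follows from the fact that $H_w$, being a smooth connected subgroup of $G$ containing the maximal torus $T$ (indeed $H_w \supset T$ and $H_w = T \cdot \prod_{\alpha \in \Phi^+ \cap w(\Phi^+)} U_\alpha$ is connected solvable), the quotient map $G \to G/H_w$ admits Zariski-local sections; hence any linear $H_w$-representation yields a Zariski-locally trivial geometric vector bundle on $G/H_w$, and $E_w$ is such a representation. Identifying $G/H_w \xrightarrow{\sim} U_w$ via $g \mapsto g \cdot (B, \dot w B)$ concludes the argument. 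The only mildly delicate point is the local triviality of $G \to G/H_w$, but this is standard for connected solvable subgroups (one can also argue by reducing to $B \to G/B$ being locally trivial and noting that $H_w \subset B$).
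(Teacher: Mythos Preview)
Your proof is correct, but it takes a different route from the paper's. The paper does not use the $G$-homogeneity of $U_w$ explicitly; instead it observes that $X$ is the scheme-theoretic intersection, inside the trivial bundle $G/B\times G/B\times\lieg$, of the two subvector bundles $Y_i=\{(g_1B,g_2B,\psi)\mid \Ad(g_i^{-1})\psi\in\lieb\}$, and invokes a separate lemma (Lemma~\ref{intersectionofvbs}): the intersection of two subvector bundles over a reduced base is again a vector bundle as soon as the fibrewise intersections have constant dimension. The computation of the fibre over $(gB,g\dot wB)$ as $\Ad(g)(\liet\oplus(\lieu\cap\Ad(\dot w)\lieu))$ --- the same space you call $E_w$ --- then supplies both the constant-dimension hypothesis and the relative dimension $\dim B-\lg(w)$.

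Your approach via the associated bundle $G\times^{H_w}E_w$ is more structural: it immediately exhibits the $G$-equivariant trivialisation and identifies the structure group of the bundle. The paper's approach, on the other hand, gives directly that $V_w$ is a \emph{sub}vector bundle of the ambient trivial bundle and produces the explicit fibre description (\ref{fiberofx}), which is reused repeatedly later (e.g.\ in the proofs of Theorem~\ref{globalstructurethm}, Lemma~\ref{fibdim}, Lemma~\ref{preimageofkappairred}, Proposition~\ref{inegtangent}). The one place you should be slightly more careful is in passing from the set-theoretic bijection $G\times^{H_w}E_w\to V_w$ to a scheme isomorphism; this is justified by pulling back along the $H_w$-torsor $G\to G/H_w\cong U_w$ and checking that $V_w\times_{U_w}G\cong G\times E_w$ compatibly with the $H_w$-action, which is straightforward.
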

\begin{proof}
We consider the trivial vector bundle:
$$G/B\times G/B\times\lieg\longrightarrow G/B\times G/B.$$
This vector bundle contains the two subvector bundles:
\begin{align*}
Y_1&:=\{(g_1B,g_2B,\psi)\in G/B\times G/B\times\lieg \ | \Ad(g_1^{-1})\psi\in \lieb\}\\
Y_2&:=\{(g_1B,g_2B,\psi)\in G/B\times G/B\times\lieg \ | \Ad(g_2^{-1})\psi\in \lieb\}
\end{align*}
($Y_i$ are subvector bundles of $G/B\times G/B\times\lieg$ for the same reason that $\tilde\lieg$ is a subvector bundle of $G/B\times\lieg$, see \S\ref{Springer0}). By definition $X=\tilde\lieg\times_\lieg\tilde\lieg$ is the scheme theoretic intersection of the two subvector bundles $Y_1$ and $Y_2$ inside $G/B\times G/B\times\lieg$. By Lemma \ref{intersectionofvbs} below, it is enough to show that for a given point $y=(gB,g\dot wB)\in U_w\subset G/B\times G/B$ the dimension of $\pi^{-1}(y)$ only depends on $w\in W$. We prove this last fact. The two conditions $\Ad(g^{-1})\psi\in \lieb,\Ad(\dot w^{-1}g^{-1})\psi\in \lieb$ translate into:
\begin{equation}\label{computationoffiber}
\Ad(g^{-1})\psi\in \lieb\cap \Ad(\dot w)\lieb\simeq \liet\oplus (\lieu\cap \Ad(\dot w)\lieu),
\end{equation}
or in other words:
\begin{equation}\label{fiberofx}
\pi^{-1}(y)= y\times \Ad(g)\big(\liet\oplus (\lieu\cap \Ad(\dot w)\lieu)\big)\subseteq U_w \times \lieg
\end{equation}
which is an affine space of dimension $\dim B-\lg(w)$.
\end{proof}

\begin{lem}\label{intersectionofvbs}
Let ${\bf V}\longrightarrow Y$ be a geometric vector bundle over a reduced scheme $Y$ which is locally of finite type over a field, and ${\bf W}_{1}$, ${\bf W}_{2}\subseteq {\bf V}$ subvector bundles. Assume that for all closed points $y\in Y$ the intersection of the fibers ${\bf W}_{1,y}\cap{\bf W}_{2,y}$ in ${\bf V}_{y}$ (where $\ast_{y}:=\ast\times_{Y}\Spec k(y)$) is an affine space of constant dimension $r$ over $k(y)$. Then the scheme theoretic intersection ${\bf W}_1\cap {\bf W}_2\subset {\bf V}$ is a geometric vector bundle of rank $r$. 
\end{lem}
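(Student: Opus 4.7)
The question is local on $Y$, so I may assume $Y=\Spec A$ with $A$ a reduced noetherian ring and that $\mathbf{V}$ is trivial; shrinking further I may also assume the ranks of $\mathcal{W}_1,\mathcal{W}_2$ are constant. Translating to modules, the data becomes a finitely generated free $A$-module $\mathcal{V}$ together with two direct summands $\mathcal{W}_1,\mathcal{W}_2\subseteq\mathcal{V}$. The key object is the natural four-term exact sequence
\[
0\longrightarrow \mathcal{W}_1\cap\mathcal{W}_2\longrightarrow \mathcal{V}\xrightarrow{\phi}(\mathcal{V}/\mathcal{W}_1)\oplus(\mathcal{V}/\mathcal{W}_2)\longrightarrow \mathcal{Q}\longrightarrow 0,
\]
where $\phi(v)=(v\bmod\mathcal{W}_1,\,v\bmod\mathcal{W}_2)$ and $\mathcal{Q}:=\mathcal{V}/(\mathcal{W}_1+\mathcal{W}_2)$.

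The heart of the argument is to prove that $\mathcal{Q}$ is locally free. The elementary dimension formula $\dim(W_1\cap W_2)+\dim(W_1+W_2)=\dim W_1+\dim W_2$ applied fiberwise turns the hypothesis $\dim_{k(y)}(\mathbf{W}_{1,y}\cap\mathbf{W}_{2,y})=r$ at every closed $y$ into the constancy of $\dim_{k(y)}\mathcal{Q}_y$ on closed points. Because $Y$ is locally of finite type over a field it is Jacobson, so any nonempty closed subset meets the closed points, and closed points are dense; combined with the upper semi-continuity of fiber rank for a coherent sheaf this forces $\dim_{k(y)}\mathcal{Q}_y$ to be the same constant at every point of $Y$. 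Reducedness of $Y$ now enters crucially: a finitely generated module of constant fiber rank over a reduced noetherian ring is locally free. Hence $\mathcal{Q}$ is locally free.

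Given this, the four-term sequence splits into two short exact sequences of locally free sheaves. In particular $\mathcal{V}/(\mathcal{W}_1\cap\mathcal{W}_2)=\im(\phi)$ is locally free, so $\mathcal{W}_1\cap\mathcal{W}_2$ is itself a locally free direct summand of $\mathcal{V}$, of rank $r$. It remains to identify the subvector bundle of $\mathbf{V}$ attached to $\mathcal{W}_1\cap\mathcal{W}_2$ with the scheme-theoretic intersection $\mathbf{W}_1\cap\mathbf{W}_2$. For this I dualize the four-term sequence: since all its terms are now locally free, the dual
\[
0\longrightarrow \mathcal{Q}^\vee\longrightarrow (\mathcal{V}/\mathcal{W}_1)^\vee\oplus(\mathcal{V}/\mathcal{W}_2)^\vee\longrightarrow \mathcal{V}^\vee\longrightarrow (\mathcal{W}_1\cap\mathcal{W}_2)^\vee\longrightarrow 0
\]
is again exact, and the image of the middle map is $(\mathcal{V}/\mathcal{W}_1)^\vee+(\mathcal{V}/\mathcal{W}_2)^\vee\subseteq\mathcal{V}^\vee$. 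But this sum is precisely the subsheaf of $\mathcal{V}^\vee$ generating the sum of the ideals of $\mathbf{W}_1$ and $\mathbf{W}_2$ in $\Sym(\mathcal{V}^\vee)$, and by exactness it equals the kernel of $\mathcal{V}^\vee\twoheadrightarrow(\mathcal{W}_1\cap\mathcal{W}_2)^\vee$. Passing to symmetric algebras then identifies $\mathbf{W}_1\cap\mathbf{W}_2$ with the subvector bundle of $\mathbf{V}$ attached to $\mathcal{W}_1\cap\mathcal{W}_2$, which has rank $r$, as required.

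I expect the main obstacle to be the second paragraph: the hypothesis constrains the situation only at closed points, and both the Jacobson property of $Y$ (to propagate constant rank from closed points to every point) and its reducedness (to pass from constant fiber rank to local freeness) are used in an essential way. Everything after that is formal manipulation with the four-term sequence and its dual.
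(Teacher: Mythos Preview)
Your proof is correct and follows essentially the same approach as the paper. Both arguments hinge on showing that $\mathcal{Q}=\mathcal{V}/(\mathcal{W}_1+\mathcal{W}_2)$ is locally free via constant fiber rank on a reduced Jacobson base, and then deducing that $\mathcal{W}_1\cap\mathcal{W}_2$ is locally free from a four-term exact sequence; the paper uses the sequence coming from $\alpha:\mathcal{W}_1\to\mathcal{V}/\mathcal{W}_2$ (whose cokernel is the same $\mathcal{Q}$), and simply asserts the final identification of the scheme-theoretic intersection with the bundle of $\mathcal{W}_3=\ker\alpha$, whereas you spell out the dualization argument in detail.
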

\begin{proof}
Let us write $\mathcal{V}$, $\mathcal{W}_1$ and $\mathcal{W}_2$ for the corresponding locally free sheaves on $Y$ and recall that $ \mathcal{V}/\mathcal{W}_2$ is also locally free. We consider the morphism given by the composition:
$$\alpha:\mathcal{W}_1\longrightarrow \mathcal{V}\twoheadrightarrow \mathcal{V}/\mathcal{W}_2.$$
The coherent sheaf $\coker(\alpha)$ is again locally free on $Y$: indeed by assumption for all closed points $y\in Y$ the dimension of $\coker(\alpha)_y$ is given by $\rk \mathcal{V}-\rk\mathcal{W}_1-\rk\mathcal{W}_2+r$, and the assumptions on $Y$ imply that a coherent sheaf of fiberwise constant rank is locally free. This last fact follows from the following classical statement: let $A$ be a reduced noetherian Jacobson ring and $M$ a finite type $A$-module such that $\dim_{A/{\mathfrak m}}M/{\mathfrak m}M$ is constant for all maximal ideals $\mathfrak m$ of $A$, then $M$ is a locally free $A$-module (which is a consequence of Nakayama's Lemma and of the fact that the intersection of the maximal ideals of a reduced Jacobson ring is $0$).

Now consider the  sheaf $\mathcal{W}_3:=\ker\alpha$. Then the sequence:
$$0\longrightarrow \mathcal{W}_3\longrightarrow \mathcal{W}_1\longrightarrow \mathcal{V}/\mathcal{W}_2\longrightarrow \coker\alpha\longrightarrow 0$$
is exact and all sheaves but $\mathcal{W}_3$ are known to be locally free. It follows that $\mathcal{W}_3$ is locally free as well. It is easily checked that the geometric vector bundle associated with $\mathcal{W}_3$ equals the intersection ${\bf W}_1\cap{\bf W}_2$.
\end{proof}

\begin{defn}
For $w\in W$, let $X_w$ be the closed subset of $X$ defined as the Zariski-closure of $V_w$ in $X$.
\end{defn}

If we want to specify the base field $k$, we sometimes write $X_{w,k}\subset X_k$ instead of $X_w\subset X$.

\begin{lem}\label{closurerelationsXw}
Let $w,w'\in W$, then $X_w\cap V_{w'}\ne \emptyset$ implies $w'\preceq w$.
\end{lem}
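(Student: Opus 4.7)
The statement reduces immediately to a known closure relation on the double flag variety $G/B\times G/B$, so my plan is to push the problem down via $\pi$ to the base.

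First I would note that $V_w=\pi^{-1}(U_w)$ by the very definition of $V_w$, and that $\pi$ is in particular continuous for the Zariski topology on $X$ and on $G/B\times G/B$. Since $X_w$ is the Zariski-closure of $V_w$ in $X$, continuity of $\pi$ gives the inclusion
\[
\pi(X_w) \;=\; \pi(\overline{V_w}) \;\subseteq\; \overline{\pi(V_w)} \;=\; \overline{U_w}
\]
in $G/B\times G/B$.

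Next, suppose $X_w\cap V_{w'}\ne\emptyset$ and pick a point $x$ in the intersection. Applying $\pi$, we obtain a point
\[
\pi(x)\in \pi(X_w)\cap \pi(V_{w'}) \;\subseteq\; \overline{U_w}\cap U_{w'},
\]
so in particular $\overline{U_w}\cap U_{w'}\ne\emptyset$. But the $U_v$, $v\in W$, form a locally closed stratification of $G/B\times G/B$ by $G$-orbits (identified, via projection to the first factor, with the Schubert cells $B\dot v B/B$ in $G/B$), and $\overline{U_w}$ is a union of such strata. Hence $\overline{U_w}\cap U_{w'}\ne\emptyset$ forces $U_{w'}\subseteq\overline{U_w}$.

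Finally I would invoke the standard description of closure relations between Bruhat cells: $U_{w'}\subseteq \overline{U_w}$ is equivalent to $w'\preceq w$ in the Bruhat order on $W$. This gives the desired conclusion. The argument is purely formal once the equality $V_w=\pi^{-1}(U_w)$ and the classical Bruhat closure relation are in hand; there is no real obstacle beyond correctly matching the convention for $U_w=G(1,\dot w)B\times B$ with the Schubert-cell convention for $\preceq$.
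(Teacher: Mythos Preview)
Your proof is correct and follows essentially the same route as the paper: push down via $\pi$, use $\pi(X_w)\subseteq\overline{U_w}$, and invoke the Bruhat closure relations. The only difference is that the paper also proves the reverse inclusion $\overline{U_w}\subseteq\pi(X_w)$ (via the zero section $\overline{U_w}\times\{0\}\subseteq X_w$), which is not needed for this lemma but is used later in the paper.
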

\begin{proof}
We first claim that $\pi(X_w)$ is the Zariski-closure $\overline{U_w}$ of the Schubert cell $U_w$ in $G/B\times G/B$. Indeed $V_w=\pi^{-1}(U_w)\subseteq \pi^{-1}(\overline{U_w})$ implies $X_w=\overline{V_w}\subseteq \pi^{-1}(\overline{U_w})$ and hence $\pi(X_w)\subseteq \overline{U_w}$. Conversely we have $U_w\times \{0\}\subseteq V_w\subseteq G/B\times G/B\times \lieg$ and hence $\overline{U_w}\times \{0\}\subseteq \overline{V_w}=X_w$ which implies $\overline{U_w}\subseteq \pi(X_w)$. Since $\pi(V_{w'})=U_{w'}$ we then have:
$$X_w\cap V_{w'}\ne \emptyset\Rightarrow \pi(X_w)\cap \pi(V_{w'})\ne\emptyset\Rightarrow \overline{U_w}\cap U_{w'}\neq\emptyset \Rightarrow w'\preceq w$$
the last implication being the well known closure relations for Schubert varieties.
\end{proof}

\begin{prop}\label{completeintersections}
The scheme $X$ is locally a complete intersection and its irreducible components are given by the $X_w$ for $w\in W$. In particular $X$ is Cohen-Macaulay and $\dim X=\dim X_w=\dim \lieg=\dim G$. . 
\end{prop}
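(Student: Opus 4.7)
The plan is to combine the vector bundle description from Proposition~\ref{irredcomposofA} with the realization of $X$ as a scheme-theoretic intersection of two subvector bundles $Y_1,Y_2$ inside the ambient $E:=G/B\times G/B\times\lieg$, as was already introduced in the proof of that proposition.

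First I would identify the irreducible components. By Proposition~\ref{irredcomposofA}, $V_w\to U_w$ is a geometric vector bundle of rank $\dim B-\lg(w)$, and $U_w$ is an irreducible smooth Schubert cell of dimension $\dim G-\dim B+\lg(w)$. Hence $V_w$ is irreducible of dimension $\dim G$, and so $X_w=\overline{V_w}$ is irreducible of dimension $\dim G$. Since $\{U_w\}_{w\in W}$ is a set-theoretic stratification of $G/B\times G/B$ and $\pi^{-1}(U_w)=V_w$, the $V_w$ partition $X$, and in particular $X=\bigcup_{w\in W}X_w$, so $\dim X=\dim G$. Every irreducible component of $X$ is thus contained in some $X_w$, hence equal to it by a dimension count. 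To see the $X_w$ are pairwise distinct, it suffices to exclude $X_w\subseteq X_{w'}$ for $w\neq w'$; such an inclusion would imply $V_w\subseteq X_{w'}$, hence $w\preceq w'$ by Lemma~\ref{closurerelationsXw}, and by symmetry $w=w'$.

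Next I would prove that $X$ is locally a complete intersection. From the proof of Proposition~\ref{irredcomposofA}, $X=Y_1\cap Y_2$ where $Y_1,Y_2\subset E$ are the two subvector bundles defined by the linear conditions $\Ad(g_i^{-1})\psi\in\lieb$, each of codimension $\dim\lieu$ in $E$. In particular, locally on $G/B\times G/B$ each $Y_i$ is cut out in a trivialization of $E$ by a regular sequence of length $\dim\lieu$, so locally $X$ is cut out in the smooth scheme $E$ by $2\dim\lieu$ equations. The expected dimension is
$$\dim E-2\dim\lieu=(3\dim G-2\dim B)-2(\dim G-\dim B)=\dim G,$$
which matches $\dim X$ by the previous step. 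Since $E$ is smooth (hence Cohen-Macaulay) and $X$ has codimension exactly $2\dim\lieu$ in $E$ at every point, the standard criterion (any $r$-generated ideal of codimension $r$ in a Cohen-Macaulay local ring is generated by a regular sequence) shows that these $2\dim\lieu$ local equations form a regular sequence. Hence $X\subset E$ is a local complete intersection in a smooth scheme, and in particular Cohen-Macaulay.

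The only delicate input is the equality $\dim X=\dim G=\dim E-2\dim\lieu$ at every point of $X$: this is the step that upgrades the trivial bound $\dim(Y_1\cap Y_2)\geq\dim E-2\dim\lieu$ to an equality and guarantees that $Y_1$ and $Y_2$ meet properly. Once this equality is provided by Proposition~\ref{irredcomposofA} via the fibration $V_w\to U_w$, the local complete intersection property, the Cohen-Macaulay property, and the identification of the irreducible components as the $X_w$ all follow formally.
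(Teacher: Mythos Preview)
Your proof is correct and follows essentially the same approach as the paper: identify the $X_w$ as the irreducible components via Proposition~\ref{irredcomposofA}, then deduce the local complete intersection property from the codimension count $2\dim\lieu$ inside the smooth ambient $G/B\times G/B\times\lieg$. The only (harmless) difference is in showing that no $X_w$ is contained in $X_{w'}$ for $w\neq w'$: the paper argues directly that equal dimension forces $X_w=X_{w'}$, whence the dense opens $V_w,V_{w'}$ would meet, contradicting the disjointness of $\pi^{-1}(U_w)$ and $\pi^{-1}(U_{w'})$; you instead pass through Lemma~\ref{closurerelationsXw}. Your ``by symmetry'' tacitly uses the same equal-dimension step (containment of irreducibles of the same dimension is equality) before applying the lemma in the other direction, so you might make that explicit.
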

\begin{proof}
It is obvious that the $X_w$ cover $X$ (set-theoretically). By Lemma \ref{irredcomposofA} and the irreducibility of the $U_w$, the $V_w$ are irreducible. Moreover, the dimension of $X_w$ equals the dimension of $V_w$ which is equal to $\dim U_w+\dim B-\lg(w)=\dim G = \dim X$. As the $V_w$ are pairwise disjoint is also follows that none of the $X_w$ is contained in another one for dimension reasons. We deduce that the $X_w$ are the irreducible components of $X$.

The scheme $X\subseteq G/B\times G/B\times \lieg$ is hence equidimensional (of dimension $\dim G$) and cut out by $2\dim \lieu$ equations in the smooth scheme $G/B\times G/B\times \lieg$. As $2\dim \lieu=\dim (G/B\times G/B\times\lieg)-\dim X$, it is a local complete intersection. 
\end{proof}

Let us write:
$$\widetilde V_w:=X_w\backslash \bigcup_{w'\neq w}X_{w'}=X\backslash \bigcup_{w'\neq w}X_{w'}\subseteq V_w.$$
Then $\widetilde V_w$ is an open subset of $X$ and hence it has a canonical structure of an open subscheme. Moreover $X_w$ is still the Zariski-closure of $\widetilde V_w$ in $X$. We define a scheme structure on $X_w$ by defining $X_w$ to be the scheme theoretic image of $\widetilde V_w$ in $X$.

For $i\in \{1,2\}$ we define ${\rm pr}_i:X=\tilde\lieg\times_{\lieg}\tilde\lieg\longrightarrow \tilde\lieg$, $(g_1B,g_2B,\psi)\longmapsto (g_iB,\psi)$.

\begin{theo}\label{globalstructurethm}
(i) The scheme $X$ is reduced. In particular the irreducible components $X_w$ (with their scheme structure) are reduced.\\
(ii) For $i\in \{1,2\}$ the projection ${\rm pr}_i:X\longrightarrow \tilde\lieg$ induces a proper and birational morphism ${\rm pr}_{i,w}:X_w\longrightarrow \tilde\lieg$ which is an isomorphism above $\tilde\lieg^{\rm reg}=q^{-1}(\lieg^{\rm reg})\subseteq\tilde\lieg$.
\end{theo}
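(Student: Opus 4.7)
The plan is to deduce (i) from Serre's criterion using the local complete intersection structure established in Proposition \ref{completeintersections}, and then derive (ii) by combining a degree computation over the regular semisimple locus with Zariski's main theorem.

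For (i), Proposition \ref{completeintersections} gives that $X$ is Cohen-Macaulay, hence satisfies Serre's condition $S_1$. By Serre's criterion for reducedness it therefore suffices to verify $R_0$, i.e., regularity of $X$ at the generic point of each irreducible component $X_w$. The open subset $\widetilde V_w = X \setminus \bigcup_{w' \neq w} X_{w'}$ contains this generic point, and since the $V_{w'}$ partition $X$ (as preimages under $\pi$ of the disjoint Schubert cells $U_{w'}$) any point lying in a single $X_{w''}$ must already lie in $V_{w''}$; hence $\widetilde V_w \subseteq V_w$. Proposition \ref{irredcomposofA} identifies $V_w$ as a vector bundle over the smooth scheme $U_w$, so $V_w$ (and therefore $\widetilde V_w$) is smooth. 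This yields $R_0$, so $X$ is reduced, and then each $X_w$ is reduced because it is the scheme-theoretic image of the reduced scheme $\widetilde V_w$.

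For (ii), properness of ${\rm pr}_{i,w}$ follows by base change from properness of $q$ (Proposition \ref{Grothsimres}(i)) and composition with the closed immersion $X_w \hookrightarrow X$. To obtain the isomorphism over $\tilde\lieg^{\rm reg}$, I first restrict to the open dense subset $\tilde\lieg^{\rm reg-ss} \subseteq \tilde\lieg^{\rm reg}$. By Proposition \ref{Grothsimres}(iii), $q$ is a Galois étale cover of degree $|W|$ (with Galois group $W$) over this locus, and the $W$-action induces an isomorphism
$$W \times \tilde\lieg^{\rm reg-ss} \buildrel\sim\over\longrightarrow \tilde\lieg^{\rm reg-ss} \times_{\lieg^{\rm reg-ss}} \tilde\lieg^{\rm reg-ss} = {\rm pr}_i^{-1}(\tilde\lieg^{\rm reg-ss}), \quad (w, x) \longmapsto (x, w \cdot x).$$
This exhibits ${\rm pr}_i^{-1}(\tilde\lieg^{\rm reg-ss})$ as the disjoint union of $|W|$ copies of $\tilde\lieg^{\rm reg-ss}$, each mapping isomorphically to $\tilde\lieg^{\rm reg-ss}$ under ${\rm pr}_i$. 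Identifying these $|W|$ sheets with the strata $V_w \cap {\rm pr}_i^{-1}(\tilde\lieg^{\rm reg-ss})$ through the relative-position decomposition of $G/B \times G/B$ shows that ${\rm pr}_{i,w}$ is an isomorphism over $\tilde\lieg^{\rm reg-ss}$, and in particular birational.

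To upgrade this isomorphism from $\tilde\lieg^{\rm reg-ss}$ to all of $\tilde\lieg^{\rm reg}$, I would invoke Zariski's main theorem. Proposition \ref{Grothsimres}(ii) gives that $q$ is quasi-finite over $\lieg^{\rm reg}$, so ${\rm pr}_{i,w}$ is proper and quasi-finite, hence finite, over $\tilde\lieg^{\rm reg}$. Since $\tilde\lieg$ is smooth (hence normal) and $X_w$ is reduced by (i), the classical fact that a finite birational morphism to a normal scheme is an isomorphism delivers the claim. The step I expect to require the most care is matching the $|W|$ sheets of ${\rm pr}_i^{-1}(\tilde\lieg^{\rm reg-ss})$ with the closed subschemes $X_w$ via the relative-position stratification: this identification is essentially tautological but convention-sensitive, and it is the point at which the Weyl group labelling of the irreducible components of $X$ is tied to the $W$-torsor structure on $\tilde\lieg^{\rm reg}$.
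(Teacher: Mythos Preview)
Your argument for (ii) is correct and essentially matches the paper's: properness by base change, birationality via the $|W|$ sheets over $\tilde\lieg^{\rm reg-ss}$, and the passage from $\tilde\lieg^{\rm reg-ss}$ to $\tilde\lieg^{\rm reg}$ via Zariski's main theorem. The paper phrases birationality slightly differently (it just notes ${\rm pr}_{1,w}$ is \'etale of degree~$1$ at one point $x_w\in V_w$), but the content is the same.

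There is, however, a genuine gap in your proof of (i). You infer that $\widetilde V_w$ is smooth from the set-theoretic inclusion $\widetilde V_w\subseteq V_w$ together with the smoothness of $V_w$. But $V_w = X\times_{G/B\times G/B}U_w$ is only a \emph{locally closed} subscheme of $X$, not an open one, and the open subscheme $\widetilde V_w\subset X$ need not coincide with the corresponding open of $V_w$ as schemes. Concretely, to identify them you would need the map $\pi|_{\widetilde V_w}:\widetilde V_w\to G/B\times G/B$ to factor scheme-theoretically through the closed subscheme $\overline{U_w}$, i.e.\ the pullback of the ideal of $\overline{U_w}$ to vanish on $\widetilde V_w$; since you do not yet know $\widetilde V_w$ is reduced, this pullback could be a nonzero nilpotent ideal. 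A toy model for the failure: take $X=\Spec k[x]/(x^2)\hookrightarrow\Abb^1$, $U=\{0\}$. Then $X$ is Cohen--Macaulay and equidimensional, $V:=\pi^{-1}(U)=\Spec k$ is smooth of the same dimension, $\widetilde V=X$ is open and set-theoretically equal to $V$, yet $X$ is not reduced. In your setting one can in fact show $V_w\cap\widetilde V_w$ is exactly the reduction $(\widetilde V_w)^{\rm red}$, so smoothness of $V_w$ tells you nothing beyond that.

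The paper avoids this by exhibiting, for each $w$, an honest smooth point of $X$ lying in $V_w$: since ${\rm pr}_1:X\to\tilde\lieg$ is the base change of $q:\tilde\lieg\to\lieg$, Proposition~\ref{Grothsimres}(iii) makes ${\rm pr}_1$ \'etale over $\tilde\lieg^{\rm reg-ss}$, so $X$ is smooth there; and the fiber computation (\ref{fiberofx}) shows that any $x=(gB,\psi)$ with $\Ad(g^{-1})\psi\in\liet^{\rm reg}$ has a preimage in every $V_w$. This is precisely the input you are already using in your proof of (ii), so the cleanest fix is to move that $|W|$-sheet discussion earlier and use it to supply the smooth points needed for $R_0$.
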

\begin{proof}
\noindent (i) The scheme $X$ is Cohen-Macaulay and hence it is reduced if it is generically reduced, see \cite[Prop.5.8.5]{EGAIV2}. We prove that $X$ is generically smooth, i.e. that each irreducible component $X_w$ contains a point at which $X$ is smooth. Indeed, by (iii) of Proposition \ref{Grothsimres} the morphism ${\rm pr}_1:X\longrightarrow \tilde\lieg$ is \'etale of degree $|W|$ over $\tilde\lieg^{\rm reg-ss}$, as it is the base change of the morphism $\tilde\lieg^{\rm reg-ss}=\lieg^{\rm reg-ss}\times_{\lieg}{\tilde\lieg}\longrightarrow \lieg^{\rm reg-ss}$ (along itself). It is hence enough to show that there exists a point $x\in \tilde\lieg^{\rm reg-ss}$ such that each of the $|W|$ components $X_w$ of $X$ contains a pre-image of $x$. However, by (\ref{computationoffiber}), any point $x=(gB,\psi)\in \tilde\lieg$ with $\Ad(g^{-1})\psi\in \liet^{\rm reg}$ has the property that $V_w$ contains a preimage of $x$ for any $w\in W$. Moreover, we have the following consequence: let $x_w\in V_w$ be such a preimage of $x$ (which is in fact unique), then ${\rm pr}_1$ is \'etale of degree $1$ at $x_w$. Finally the open subscheme $\widetilde V_w\subset X$ is reduced as $X$ is. Hence the same is true for the scheme theoretic image $X_w$ of $\widetilde V_w$ in $X$. Note that since $V_w$ is reduced by Proposition \ref{irredcomposofA}, $X_w$ is also the scheme theoretic image of $V_w$ in $X$.

\noindent (ii) The morphism ${\rm pr}_{1,w}$ is certainly proper since it is the composition of a closed immersion and the proper morphism ${\rm pr}_{1}$ (the latter following by base change from (i) of Proposition \ref{Grothsimres}). Moreover, we have seen in (i) that $X_w$ contains a point $x_w$ such that ${\rm pr}_{1,w}$ is \'etale of degree $1$ at $x_w$. Since both schemes $X_w$ and $\tilde\lieg$ are irreducible, it follows that ${\rm pr}_{1,w}$ is birational. On the other hand base change from (ii) of Proposition \ref{Grothsimres} implies that ${\rm pr}_{1}$, and hence also ${\rm pr}_{1,w}$, is quasi-finite above $\tilde\lieg^{\rm reg}$. By \cite[Th.8.11.1]{EGAIV3} it follows that the morphism:
$${\rm pr}_{1,w}:{\rm pr}_{1,w}^{-1}(\tilde\lieg^{\rm reg})\longrightarrow \tilde\lieg^{\rm reg}$$
is then finite, being both quasi-finite and proper. Since it is also birational and $\tilde\lieg^{\rm reg}$ is normal, then it is an isomorphism by \cite[Lem.8.12.10.1]{EGAIV3}. The claim for ${\rm pr}_2$ is proven along the same lines. 
\end{proof}

\subsection{Analysis of the local geometry}\label{Springer2}

We give an analysis of the local geometry of the irreducible components $X_w$ of the scheme $X$. In particular we prove the new result that they are normal.

We denote by $\kappa_{i,w}$ the restriction to $X_w\subset X$ of the morphisms $\kappa_i:X\rightarrow \liet$ defined in (\ref{defnkappai}).

\begin{lem}\label{fibdim}
For $i\in \{1,2\}$ the fibers of the morphisms $\kappa_i$ and $\kappa_{i,w}$ are equidimensional of dimension $\dim G-\dim T$.
\end{lem}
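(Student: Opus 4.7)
My plan is to treat $\kappa_1$ (the case of $\kappa_2$ being symmetric via the involution $(g_1B, g_2B, \psi) \mapsto (g_2B, g_1B, \psi)$ of $X$, which permutes the $V_w$), and to compute all fibers on the finite locally closed stratification $X = \bigsqcup_{w\in W} V_w$ supplied by Proposition~\ref{irredcomposofA}.

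The first and essential step is to read $\kappa_1$ explicitly off each stratum. From (\ref{computationoffiber})--(\ref{fiberofx}) the fiber of $\pi$ over $y = (g_1B, g_1\dot wB) \in U_w$ is $\{y\} \times \Ad(g_1)\bigl(\liet \oplus (\lieu \cap \Ad(\dot w)\lieu)\bigr)$; writing $\Ad(g_1^{-1})\psi = s + u$ with $s \in \liet$ and $u \in \lieu \cap \Ad(\dot w)\lieu$, the map $\kappa_1$ sends $(g_1B, g_2B, \psi)$ to $s$ (this is intrinsic because the ambiguity $g_1 \mapsto g_1 b$ with $b \in B$ acts by $\Ad(b^{-1})$, which preserves the $\liet$-component modulo $\lieu$). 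Equivalently, the morphism $(\pi, \kappa_1)\colon V_w \to U_w \times \liet$ is a Zariski locally trivial affine bundle with fiber $\lieu \cap \Ad(\dot w)\lieu$ of dimension $\dim \lieu - \lg(w)$. Consequently, for any $t \in \liet$, the piece $\kappa_1^{-1}(t) \cap V_w$ is irreducible of dimension
\[ \dim U_w + \dim \lieu - \lg(w) = (\dim G - \dim B + \lg(w)) + \dim \lieu - \lg(w) = \dim G - \dim T, \]
uniformly in $w$, and in particular $\kappa_1|_{V_w}$ (hence also $\kappa_{1,w}$) is surjective onto $\liet$.

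Second, the equidimensionality of $\kappa_1^{-1}(t)$ itself follows formally from the stratification: $\kappa_1^{-1}(t) = \bigsqcup_w \bigl(\kappa_1^{-1}(t) \cap V_w\bigr)$ is a finite disjoint union of irreducible locally closed subsets of equal dimension $\dim G - \dim T$, and each piece is open in its closure inside $\kappa_1^{-1}(t)$ (because $V_w$ is locally closed in $X$), so no one of these closures can be properly contained in another. The nonempty closures are therefore exactly the irreducible components of $\kappa_1^{-1}(t)$, which is thus equidimensional of dimension $\dim G - \dim T$.

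Finally, for $\kappa_{1,w}\colon X_w \to \liet$ surjectivity follows from step one, and $X_w$ is irreducible of dimension $\dim G$ by Proposition~\ref{completeintersections}, so the fiber dimension theorem forces every irreducible component of $\kappa_{1,w}^{-1}(t)$ to have dimension $\ge \dim G - \dim T$; the reverse inequality is automatic from the inclusion $\kappa_{1,w}^{-1}(t) \subseteq \kappa_1^{-1}(t)$ together with step two. The main conceptual step — and the only one requiring any work — is the first, where the description of $\pi$-fibers in (\ref{fiberofx}) is used to identify $\kappa_1|_{V_w}$ with a linear projection of the vector bundle $V_w \to U_w$ onto its $\liet$-component; once this is in hand the rest is a routine dimension count.
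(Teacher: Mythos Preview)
Your proof is correct and takes a genuinely different route from the paper. The paper argues indirectly: it first gets the lower bound $\dim\kappa_1^{-1}(t)\ge\dim G-\dim T$ from dominance on each component, then uses the $\Gbb_m$-action $\lambda\cdot(g_1B,g_2B,\psi)=(g_1B,g_2B,\lambda\psi)$ (making $\kappa_1$ equivariant) together with Chevalley semicontinuity to reduce the upper bound to the single fiber $\kappa_1^{-1}(0)$, and only then performs the stratification computation on that fiber. You instead compute \emph{all} fibers at once by observing that $(\pi,\kappa_1)\vert_{V_w}:V_w\to U_w\times\liet$ is a surjective morphism of vector bundles over $U_w$ (linear in $\psi$, surjective on fibers by (\ref{fiberofx})), so that $\kappa_1^{-1}(t)\cap V_w$ is an affine bundle over $U_w$ of the right dimension for every $t$; this is essentially the content of the paper's later Lemma~\ref{preimageofkappairred} brought forward. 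Your approach is more elementary in that it avoids the $\Gbb_m$-action and semicontinuity; the paper's approach has the minor advantage of doing the bundle analysis only once (over $t=0$) rather than in families, but yours makes the uniformity in $t$ transparent and dovetails nicely with what is needed afterwards.

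One small remark: the phrase ``Zariski locally trivial affine bundle'' is a bit stronger than what you actually verify or use; all that matters is that $(\pi,\kappa_1)\vert_{V_w}$ is a surjection of vector bundles over $U_w$, so the fibers $\kappa_1^{-1}(t)\cap V_w$ are torsors under a sub-vector-bundle and hence irreducible of constant dimension. This is immediate from your own description, so no harm is done.
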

\begin{proof}
We prove the claim for $\kappa_1$, the proof for the other cases being strictly analogous. Note first that the scalar multiplication:
\begin{equation}\label{actiongm}
\lambda \cdot (g_1B,g_2B,\psi)=(g_1B,g_2B,\lambda\psi)\ \ \ {\rm and}\ \ \ \lambda \cdot t=\lambda t
\end{equation}
defines an action of the multiplicative group $\Gbb_m$ on $X\subset G/B\times G/B\times\lieg$ and on $\liet$ such that the morphism $\kappa_1$ is $\Gbb_m$-equivariant. Moreover, it is important to observe that if $\psi$ is a point of $\lieg$, the orbit map $\Gbb_m\rightarrow \mathfrak{g}$ deduced from this action extends uniquely to a map $\Abb^1\rightarrow \lieg$. As $X$ is a closed subscheme of $G/B\times G/B\times\lieg$, it is the same for an orbit map $\Gbb_m\rightarrow X$ and it is clear that such a map sends the point $0\in\Abb^1$ in $\kappa_1^{-1}(0)$.

As the restriction of $\kappa_1$ to each irreducible component of $X$ is dominant (even surjective as follows e.g. from (\ref{fiberofx})), we deduce that for $t\in \liet$ each irreducible component of $\kappa_1^{-1}(t)$ has dimension at least $\dim G-\dim \liet=\dim G-\dim T$, see e.g.~\cite[Lem.13.1.1]{EGAIV3}. Let $E\subset X$ denote the set of points $x\in X$ such that there is a component of $\kappa_1^{-1}(\kappa_1(x))$ containing $x$ and of dimension strictly larger than $\dim G-\dim T$. By \cite[Th.3.1.3]{EGAIV3} the subset $E$ is closed and we claim that $E=\emptyset$. Assume this is not the case and choose a point $x\in E$. The set $E$ is invariant under the action (\ref{actiongm}) of $\Gbb_m$ as $\kappa_1$ is $\Gbb_m$-equivariant. Let $\Abb^1\rightarrow X$ be the unique extension of the orbit map associated to $x$. As $E$ is $\Gbb_m$-invariant and closed, this map factors through $E$. From (\ref{actiongm}), we deduce that $E$ contains a point $x'$ such that $x'\in \kappa_1^{-1}(0)$. As $x'\in E$ it is enough to show that $\kappa_1^{-1}(\kappa_1(x'))=\kappa_1^{-1}(0)$ is equidimensional of dimension $\dim G-\dim T$, which will then be a contradiction.

We are thus reduced to prove that (the reduced subscheme underlying):
$$\kappa_1^{-1}(0)=\{(g_1B,g_2B,\psi)\in G/B\times G/B\times \lieg\ | \Ad(g_1^{-1})\psi\in \lieu, \Ad(g_2^{-1})\in \lieu\}$$
is equidimensional of dimension $\dim G-\dim T$. However, the same argument as in Proposition \ref{irredcomposofA} (see (\ref{computationoffiber})) yields that:
$$\pi^{-1}(U_w)\cap \kappa_1^{-1}(0)=(\pi^{-1}(U_w)\times_X \kappa_1^{-1}(0))^{\rm red}\longrightarrow U_w$$
is a geometric vector bundle with characteristic fiber $\lieu\cap \Ad(w)\lieu$. And hence $\kappa_1^{-1}(0))$ is a finite union of locally closed subsets of dimension $\dim(G)-\dim(T)$ (see also the beginning of \S\ref{Springer3} below).
\end{proof}

We recall a criterion for flatness often referred to as \emph{miracle flatness}.

\begin{lem}\label{miracelflatness}
Let $f:Y\rightarrow Z$ be a morphism of noetherian schemes and assume that $Z$ is regular and $Y$ is Cohen-Macaulay. Assume that the fibers of $f$ are equidimensional of dimension $\dim Y-\dim Z$. Then $f$ is flat.
\end{lem}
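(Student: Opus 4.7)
The plan is to reduce to the local criterion for flatness and apply it via the Koszul complex. Since flatness can be checked at stalks, I would first pass to an arbitrary point $y\in Y$ with image $z:=f(y)\in Z$ and consider the induced local homomorphism $R:=\Ocal_{Z,z}\to S:=\Ocal_{Y,y}$. Here $R$ is a regular Noetherian local ring of dimension $n:=\dim R$, $S$ is a Noetherian Cohen-Macaulay local ring, and the fiber dimension hypothesis together with standard dimension theory for morphisms of schemes (of finite type over a field, which is the setting of the intended applications) translates into the local identity $\dim S=n+\dim(S/\mfrak_R S)$.

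Next I would choose a regular system of parameters $x_1,\dots,x_n$ of $R$. Their images in $S$ generate the ideal $\mfrak_R S$, so the dimension identity rewrites as $\dim S/(x_1,\dots,x_n)S=\dim S-n$. This says exactly that $x_1,\dots,x_n$ form part of a system of parameters in $S$. The Cohen-Macaulay assumption on $S$ then guarantees that any part of a system of parameters is a regular sequence, so $(x_1,\dots,x_n)$ is $S$-regular.

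I would conclude using the Koszul complex. Because $R$ is regular, $K_\bullet(x_1,\dots,x_n;R)$ is a free $R$-resolution of the residue field $k:=R/\mfrak_R$, and tensoring over $R$ with $S$ yields the Koszul complex $K_\bullet(x_1,\dots,x_n;S)$, which is acyclic in positive degrees precisely because the sequence is $S$-regular. Hence $\Tor_i^R(k,S)=0$ for all $i\geq 1$, and the local criterion for flatness (e.g.\ Matsumura, \emph{Commutative Ring Theory}, Thm.~22.3) gives that $S$ is flat over $R$.

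The main delicate point is the passage from the global equidimensionality of the fibers to the local identity $\dim S=\dim R+\dim(S/\mfrak_R S)$ at each $y\in Y$; once this is established, the rest is a standard combination of Cohen-Macaulayness with Koszul homology, and requires no further input from the geometry of $X$.
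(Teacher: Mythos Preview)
Your proof is correct and follows essentially the same route as the paper: reduce to local rings, choose a regular system of parameters for $R$, use the fiber-dimension hypothesis together with Cohen--Macaulayness of $S$ to see that these parameters form an $S$-regular sequence, and conclude flatness. The only cosmetic difference is in the last step: the paper quotes the local criterion for flatness in the form of \cite[Prop.~15.1.21]{EGAIV1} (flatness modulo a regular sequence lifts), whereas you phrase the same thing via vanishing of $\Tor_i^R(k,S)$ computed by the Koszul resolution; these are equivalent formulations of the same local criterion.
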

\begin{proof}
Let $y\in Y$ map to $z\in Z$ and let $R$ (resp.~$S$) denote the local rings of $Z$ at $z$ (resp.~of $Y$ at $y$), so $S$ is an $R$-algebra. By assumption the ring $R$ is regular of dimension, say, $d$ and the ring $S$ is Cohen-Macaulay. Let $f_1,\dots, f_d\in R$ be a system of generators of the maximal ideal of $R$ (which exists since $R$ is regular). The assumptions on the fiber dimension implies that $\dim S/(f_1,\dots,f_d)S=\dim S-d$. As $S$ is Cohen-Macaulay it follows from \cite[Cor.16.5.6]{EGAIV1} that the sequence $f_1,\dots,f_d$ is an $S$-regular sequence. But as $R/(f_1,\dots,f_d)$ is a field, the $R/(f_1,\dots,f_d)$-algebra $S/(f_1,\dots,f_d)S$ is flat over $R/(f_1,\dots,f_d)$. Hence $S$ is flat over $R$ by \cite[Prop.15.1.21]{EGAIV1} (applied with $A=R$ and $B=M=S$).
\end{proof}

\begin{prop}\label{flatnessofkappa}
The schemes $X_w$ are Cohen-Macaulay and the morphisms $\kappa_i$ and $\kappa_{i,w}$ are flat for $i\in \{1,2\}$.
\end{prop}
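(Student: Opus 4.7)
The plan is to deduce both assertions from the miracle flatness criterion (Lemma~\ref{miracelflatness}), using Cohen-Macaulayness of the source together with the fiber-dimension computations already in hand.

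First I would handle the morphism $\kappa_i:X\to\liet$. Proposition~\ref{completeintersections} gives that $X$ is a local complete intersection in the smooth scheme $G/B\times G/B\times\lieg$, hence Cohen-Macaulay, of dimension $\dim G$. The target $\liet$ is an affine space over $k$, so regular. Lemma~\ref{fibdim} shows that the fibers of $\kappa_i$ are equidimensional of dimension $\dim G-\dim T=\dim X-\dim\liet$. All hypotheses of Lemma~\ref{miracelflatness} are satisfied, and $\kappa_i$ is flat.

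For $\kappa_{i,w}:X_w\to\liet$ the strategy is strictly parallel, but requires the Cohen-Macaulay property of $X_w$ as input. This is the substantive point: Proposition~\ref{completeintersections} yields CM only for $X$ itself; the individual irreducible components $X_w$ (equipped with the reduced scheme structure coming from Theorem~\ref{globalstructurethm}(i)) are not a priori CM merely from being equidimensional of the correct dimension inside a CM ambient space. Here I would invoke the theorem of Bezrukavnikov-Riche (\cite{BezRiche}), as flagged in the introduction, which establishes exactly that the components $X_w$ are Cohen-Macaulay. With this in place, and since Lemma~\ref{fibdim} also covers the fibers of $\kappa_{i,w}$, giving them equidimension $\dim G-\dim T=\dim X_w-\dim\liet$, a second application of Lemma~\ref{miracelflatness} to $\kappa_{i,w}$ produces flatness.

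The main obstacle is thus the Cohen-Macaulayness of the $X_w$: nothing from the preceding analysis of the global geometry (the irreducibility and dimension count in Proposition~\ref{completeintersections}, the proper birational maps to $\tilde\lieg$ from Theorem~\ref{globalstructurethm}(ii), or even the flatness of $\kappa_i$ on the ambient $X$) yields it automatically, since $X_w$ can genuinely have singularities away from $\pr_{i,w}^{-1}(\tilde\lieg^{\rm reg})$. A geometric input such as Bezrukavnikov-Riche (or, alternatively, a Bott-Samelson style resolution together with a Kempf-type rationality argument) is needed. Once that is granted, the flatness assertions reduce mechanically to miracle flatness with the correct dimension counts, and the proposition follows.
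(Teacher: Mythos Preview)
Your proposal matches the paper's approach: flatness of both $\kappa_i$ and $\kappa_{i,w}$ is deduced from miracle flatness (Lemma~\ref{miracelflatness}) using Cohen-Macaulayness of the source together with the fiber-dimension bound from Lemma~\ref{fibdim}, and the Cohen-Macaulayness of $X_w$ is imported from Bezrukavnikov--Riche.

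One point you gloss over that the paper treats explicitly: the result in \cite{BezRiche} is stated for positive characteristic (and is mentioned there as liftable to characteristic~$0$, but without details), whereas the applications in this paper need ${\rm char}(k)=0$. The paper therefore includes a spreading-out argument: one works over $A=\Z_{(p)}$ for $p$ good, builds the model $X_{A,w}$ as the scheme-theoretic closure of the corresponding $V_w$-bundle over $A$, checks it is flat over $\Spec A$ with special fiber $X_{\Fbb_p,w}$ (this identification uses \cite[Rem.~2.11.1]{BezRiche}), and then transfers Cohen-Macaulayness from the special fiber to the generic fiber via \cite[Prop.~16.5.5]{EGAIV1} and a standard localization. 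Your sketch should acknowledge that invoking Bezrukavnikov--Riche directly only handles positive characteristic, and that a short lifting step is needed for the characteristic-zero case actually used later.
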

\begin{proof}
Assume that ${\rm char}(k)>0$. Then the claim that $X_w$ is Cohen-Macaulay is a result of Bezrukavnikov and Riche, see \cite[Th.2.2.1]{BezRiche} (where the scheme $X_w$ is called $Z_w$). It is already mentioned in \cite[Rem.2.2.2(2)]{BezRiche} that it is possible to lift this result to ${\rm char}(k)=0$, nevertheless we include some details here. It is enough to prove the claim over \emph{any} field of characteristic $0$.

Let $A:=\Z_{(p)}$, then $A$ is a discrete valuation ring with residue field $\Fbb_p$ of characteristic $p>h!$ (recall that $h$ is the Coxeter number of $G$) and fraction field $k=\Q$. As $G$ is a Chevalley group there exists a reductive group $G_A$ over $A$ and a Borel subgroup $B_A$ over $A$ which are models respectively for $G$ and $B$. We denote by $\lieg_A$ (resp.~$\lieb_A$) the Lie algebra of $G_A$ (resp.~$B_A$) considered as $A$-scheme. We define a model $X_A$ of $X_k$ over $A$ as the closed subscheme (see also \cite[\S2.1]{BezRiche}):
$$\{(g_1B_A,g_2B_A,\psi)\in G_A/B_A\times G_A/B_A\times \lieg_A\ | \Ad(g_1^{-1})\psi\in\lieb_A,\Ad(g_2^{-1})\psi\in\lieb_A\}$$
of $G_A/B_A\times G_A/B_A\times \lieg_A$ and we let $\pi_A:X_A\longrightarrow G_A/B_A\times G_A/B_A$ be the canonical projection. Finally we denote by $U_{A,w}\subset G_A/B_A\times G_A/B_A$ the Schubert cell defined by the $G_A$-orbit of $(1,\dot w)\in G_A/B_A\times G_A/B_A$ for $w\in W$.

The same argument as in Proposition \ref{irredcomposofA} shows that $\pi_A^{-1}(U_{A,w})\longrightarrow U_{A,w}$ is a vector bundle. We write $X_{A,w}$ for the scheme theoretic image of $\pi_A^{-1}(U_{A,w})$ in $X_A$, which is also the scheme theoretic image of $\pi_A^{-1}(U_{k,w})$ in $X_A$. It is easy to deduce that $X_{A,w}$ is flat over $\Spec A$ and that the generic fiber of $X_{A,w}$ is identified with $X_{k,w}$. Moreover \cite[Rem.2.11.1]{BezRiche} asserts that (recall our schemes $X_w$ are denoted $Z_w$ in {\it loc.cit.}):
$$X_{A,w}\times_{\Spec A}\Spec \Fbb_p=X_{\Fbb_p,w}.$$
By \cite[Prop.16.5.5]{EGAIV1} it follows that the $A$-flat scheme $X_{A,w}$ is Cohen-Macaulay as its special fiber $X_{\Fbb_p,w}$ is. It then follows e.g. from \cite[Prop.18.8]{Eisenbud} that the generic fiber $X_{k,w}$ is Cohen-Macaulay as well.
 
Finally, we deduce from Lemma \ref{miracelflatness} that $\kappa_{i,w}$ is flat for $i\in \{1,2\}$ using the fact that $X_w$ is Cohen-Macaulay and that $\kappa_{i,w}$ has equidimensional fibers by Lemma \ref{fibdim}. The proof for $\kappa_i$ is the same using Proposition \ref{completeintersections}.
\end{proof}

We now state two lemmas which will be used in the main result, Theorem \ref{normality} below. For simplicity we now write $w$ instead of $\dot w$. 

We first compare the maps $\kappa_{1}$ and $\kappa_2$ using the decomposition of $G/B\times G/B$ into Bruhat cells. Recall that $\liet/W:=\Spec (R_{\liet}^W)$ where $R_{\liet}$ is the affine ring of $\liet$.

\begin{lem}\label{WeylgroupactionandXw}
Let $w\in W$, then $\kappa_{2,w}=\Ad(w^{-1})\circ\kappa_{1,w}$, where $\Ad(w):\liet\longrightarrow \liet$ is the morphism induced by the adjoint action of $W$ on $\liet$. In particular the diagram:
\begin{equation}\label{kappa12}
\begin{aligned}
\begin{xy}
\xymatrix{X_w\ar[r]^{\kappa_{1,w}}\ar[d]_{\kappa_{2,w}}& \liet\ar[d] \\
\liet\ar[r] & \liet/W}
\end{xy}
\end{aligned}
\end{equation}
where the two morphisms $\liet\longrightarrow \liet/W$ are both the canonical projection, commutes.
\end{lem}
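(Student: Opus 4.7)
The plan is to verify the identity $\kappa_{2,w} = \Ad(w^{-1}) \circ \kappa_{1,w}$ first on the (schematically) dense open subscheme $V_w \subset X_w$, where points have an explicit form, and then extend by a separation-plus-reducedness argument.

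First, I would pick a point $(gB, g\dot w B, \psi) \in V_w$. By definition of $V_w$ we have $\Ad(g^{-1})\psi \in \mathfrak{b}$ and $\Ad(\dot w^{-1} g^{-1})\psi \in \mathfrak{b}$. Decompose $\Ad(g^{-1})\psi = t + n$ with $t \in \mathfrak{t}$ and $n \in \mathfrak{u}$ (using $\mathfrak{b} = \mathfrak{t} \oplus \mathfrak{u}$). Then by definition $\kappa_1(gB, g\dot wB, \psi) = t$, while
\[
\kappa_2(gB, g\dot wB, \psi) = \overline{\Ad(\dot w^{-1})(t+n)} \in \mathfrak{b}/\mathfrak{u} = \mathfrak{t}.
\]
Now $\Ad(\dot w^{-1})(t) \in \mathfrak{t}$ (since $N_G(T)$ normalizes $T$, the adjoint action induces the usual $W$-action on $\mathfrak{t}$), and $\Ad(\dot w^{-1})(n)$ is a sum of root vectors, so its $\mathfrak{t}$-component (along the root space decomposition of $\mathfrak{g}$) vanishes. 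Hence the image of $\Ad(\dot w^{-1})(t+n)$ in $\mathfrak{t} = \mathfrak{b}/\mathfrak{u}$ equals $\Ad(\dot w^{-1})(t)$. Since the Weyl group action on $\mathfrak{t}$ is, by definition, $w^{-1} \cdot t = \Ad(\dot w^{-1})(t)$ (independent of the lift since $T$ acts trivially on $\mathfrak{t}$ via $\Ad$), we obtain
\[
\kappa_2(gB, g\dot wB, \psi) = \Ad(w^{-1})\bigl(\kappa_1(gB, g\dot w B, \psi)\bigr).
\]
Thus the two morphisms $\kappa_{2,w}$ and $\Ad(w^{-1}) \circ \kappa_{1,w}$ from $X_w$ to $\mathfrak{t}$ agree set-theoretically and in fact scheme-theoretically on $V_w$ (the computation being functorial in the ground ring).

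To conclude on all of $X_w$, I use that $X_w$ is reduced by Theorem \ref{globalstructurethm}(i) and that $V_w$ (equivalently $\widetilde V_w$) is a dense open subscheme of $X_w$. Two morphisms from a reduced scheme to a separated scheme (here $\mathfrak{t} \cong \mathbb{A}^{\dim T}_k$) that coincide on a schematically dense open subscheme coincide globally. This gives the identity $\kappa_{2,w} = \Ad(w^{-1}) \circ \kappa_{1,w}$ on $X_w$.

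Finally, commutativity of the diagram (\ref{kappa12}) is immediate from this identity: the quotient map $\mathfrak{t} \to \mathfrak{t}/W$ is $W$-invariant, so postcomposing $\kappa_{1,w}$ with it yields the same morphism as postcomposing $\Ad(w^{-1}) \circ \kappa_{1,w} = \kappa_{2,w}$ with it. There is no real obstacle here; the only delicate point is checking that the $\mathfrak{t}$-component of $\Ad(\dot w^{-1})(n)$ is zero, which is just the fact that $\Ad(\dot w^{-1})$ permutes root spaces.
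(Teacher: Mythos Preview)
Your proof is correct and follows essentially the same approach as the paper: verify the identity on the dense open $V_w$ via the explicit computation $\Ad((g\dot w)^{-1})\psi = \Ad(\dot w^{-1})\Ad(g^{-1})\psi$, then extend by density and separatedness of $\liet$. The paper is terser (it does not spell out the root-space decomposition of $\Ad(\dot w^{-1})n$, and it works with $S$-valued points after an fppf cover rather than invoking reducedness of $X_w$), but the content is the same.
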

\begin{proof}
It is enough to show that the equality $\kappa_{2,w}=\Ad(w^{-1})\circ\kappa_{1,w}$ holds on $V_w=\pi^{-1}(U_w)$ as $V_w$ is dense in $X_w$ and $\liet$ is affine hence separated. Let $x\in \pi^{-1}(U_w)(S)$ be an $S$-valued point. After replacing $S$ by some fppf cover, we may assume that there exists some $g\in G(S)$ such that $x=(gB,g wB,\psi)$ with $\psi\in \lieg(S)$. Then we have in $\lieg(S)$:
$$\Ad((gw)^{-1})\psi=\Ad(w^{-1})\Ad(g^{-1})\psi.$$
The claim follows from the remark that the image of the left hand side in $\liet(S)$ is by definition $\kappa_2(x)$ while the image of the right hand side equals $\Ad(w^{-1})\kappa_1(x)$.
\end{proof}

Given $w\in W$ we denote by $\liet^{w}\subset \liet$ the closed subscheme defined as the fixed point scheme of $\Ad(w):\liet\rightarrow \liet$. It is clear that $\liet^w$ is smooth and irreducible (and in fact isomorphic to an affine space over $k$). 

\begin{lem}\label{preimageofkappairred}
Consider the morphism for $i\in \{1,2\}$ (see (\ref{pimap}) and (\ref{defnkappai})):
$$(\pi,\kappa_i):X\longrightarrow G/B\times G/B\times \liet.$$
Then the restriction of $(\pi,\kappa_i)$ to $V_w$ induces a smooth map:
$$f_i:V_w\longrightarrow U_w\times \liet.$$
with irreducible fibers. In particular $V_w\cap \kappa_i^{-1}(\liet^{w'})=(V_w\times_X \kappa_i^{-1}(\liet^{w'}))^{\rm red}$ is irreducible for $i\in \{1,2\}$ and all $w,w'\in W$. 
\end{lem}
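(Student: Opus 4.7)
The plan is to identify the map $f_i$ as a vector bundle, and then deduce the irreducibility of $V_w\cap \kappa_i^{-1}(\liet^{w'})$ from the irreducibility of the base $U_w\times \liet^{w'}$ together with the irreducibility of the fibers of a vector bundle.

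First I unpack $f_1$ using the proof of Proposition \ref{irredcomposofA}: by equation (\ref{fiberofx}), over a point $y=(gB,g\dot wB)\in U_w$ the fiber of $\pi$ consists of triples $(y,\Ad(g)(t+v))$ with $t\in \liet$ and $v\in \lieu\cap \Ad(\dot w)\lieu$, and by the definition (\ref{defnkappai}) of $\kappa_1$, the value of $\kappa_1$ at such a point is exactly $t$. Thus $f_1=(\pi,\kappa_1):V_w\to U_w\times \liet$ has geometric fibers canonically identified with the affine space $\lieu\cap \Ad(\dot w)\lieu$. To upgrade this pointwise description to a global vector bundle structure, I use the $G$-equivariant identification $V_w\cong G\times^H(\liet\oplus(\lieu\cap \Ad(\dot w)\lieu))$ where $H:=B\cap \dot wB\dot w^{-1}$; since $\Ad(H)$ acts trivially on $\liet=\lieb/\lieu$, the $H$-equivariant linear projection $\liet\oplus(\lieu\cap\Ad(\dot w)\lieu)\to \liet$ induces precisely $f_1$, exhibiting it as a vector bundle of rank $\dim \lieu-\lg(w)$ over $U_w\times \liet$. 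In particular $f_1$ is smooth with geometrically irreducible (affine) fibers.

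For $f_2$, Lemma \ref{WeylgroupactionandXw} gives $\kappa_2=\Ad(w^{-1})\circ \kappa_1$ on $V_w\subseteq X_w$, so $f_2=(\id_{U_w}\times \Ad(w^{-1}))\circ f_1$. Composing with the linear automorphism $\Ad(w^{-1})$ of $\liet$ preserves smoothness and the irreducibility of the fibers.

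For the last assertion, I note that $V_w\cap \kappa_1^{-1}(\liet^{w'})=f_1^{-1}(U_w\times \liet^{w'})$. The scheme $\liet^{w'}$ is smooth and irreducible (it is an affine space), so $U_w\times \liet^{w'}$ is smooth and irreducible as a product of smooth irreducible schemes. The restriction of the vector bundle $f_1$ to this base is a vector bundle over a smooth irreducible base, so its total space is smooth (hence reduced) and irreducible; this total space is the underlying reduced scheme of $V_w\times_X \kappa_1^{-1}(\liet^{w'})$. For $i=2$ the same argument applies after observing that $\Ad(w)(\liet^{w'})=\liet^{ww'w^{-1}}$, so that $V_w\cap \kappa_2^{-1}(\liet^{w'})=V_w\cap \kappa_1^{-1}(\liet^{ww'w^{-1}})$.

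The only technical step is the passage from the pointwise description of fibers to the vector bundle structure on $f_i$, but once one writes $V_w$ as the $G$-homogeneous vector bundle $G\times^H(\liet\oplus(\lieu\cap \Ad(\dot w)\lieu))$ this is immediate, so I do not expect any genuine obstacle.
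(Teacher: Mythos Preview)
Your proof is correct and takes a somewhat different, more structural route than the paper. The paper does not identify $f_1$ explicitly as a vector bundle: instead it computes the fibers pointwise as affine spaces of constant dimension, invokes miracle flatness (Lemma \ref{miracelflatness}) to conclude $f_1$ is flat (both source and target being smooth), and then uses that a flat morphism with smooth fibers is smooth. For the irreducibility of $f_1^{-1}(U_w\times\liet^{w'})$ the paper argues topologically: smoothness over a smooth base reduces irreducibility to connectedness, and connectedness is shown by a direct open-cover argument using that $f_1$ is open with irreducible fibers.

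Your approach bypasses both of these steps by recognizing $V_w\cong G\times^{H}(\liet\oplus(\lieu\cap\Ad(\dot w)\lieu))$ with $H=B\cap\dot wB\dot w^{-1}$, and using that $H$ acts trivially on $\liet$ to factor off the $\liet$-direction and exhibit $f_1$ as (the pullback along $U_w\times\liet\to U_w$ of) the associated vector bundle $G\times^{H}(\lieu\cap\Ad(\dot w)\lieu)\to U_w$. This makes smoothness automatic and the irreducibility of $f_1^{-1}(U_w\times\liet^{w'})$ immediate from the irreducibility of the base. Your treatment of $f_2$ via $\kappa_2=\Ad(w^{-1})\circ\kappa_1$ on $V_w$ is also clean. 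The trade-off is that your argument leans on the $G$-equivariant structure of $V_w$ (which is implicit in Proposition \ref{irredcomposofA} but not spelled out there), whereas the paper's argument uses only the pointwise fiber computation already available and the general Lemma \ref{miracelflatness}; the latter is perhaps more self-contained given what has been proved up to this point, but yours is conceptually tidier.
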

\begin{proof}
It is enough to prove the statement for $i=1$. We deduce from (\ref{fiberofx}) that for $x=(gB,g wB,t)\in U_w\times \liet$ the fiber $f_1^{-1}(x)$ is isomorphic to the affine space $t+(\lieu\cap \Ad(w)\lieu)\subset \lieb$, hence in particular is smooth and irreducible of dimension only depending on $w$. It now follows from Lemma \ref{miracelflatness} that $f_1$ is a flat morphism (note that both $U_w\times \liet$ and $V_w$ are smooth using Proposition \ref{irredcomposofA} for the latter). On the other hand a flat morphism of algebraic varieties over a field is smooth if it has smooth fibers, see e.g.~\cite[\S III Th.10.2]{Hartshorne}. It follows that $f_1$ is smooth and has irreducible fibers. It remains to show that $V_w\cap \kappa_1^{-1}(\liet^{w'})$ is irreducible. We have $V_w\cap \kappa_1^{-1}(\liet^{w'})=f_1^{-1}(U_w\times\liet^{w'})$ and $f_1^{-1}(U_w\times\liet^{w'})$ will be irreducible if it is connected since $f_1$ is smooth and $U_w\times\liet^{w'}$ is smooth. Let us prove that $f_1^{-1}(U_w\times\liet^{w'})$ is connected. Consider two disjoint open subsets $A,B\subset f_1^{-1}(U_w\times\liet^{w'})$ that cover $f_1^{-1}(U_w\times\liet^{w'})$. As $f_1$ is smooth, it is flat, hence open and $f_1(A)$ and $f_1(B)$ are two open subsets of $U_w\times \liet^{w'}$. If their intersection is nonempty, there is $x\in U_w\times \liet^{w'}$ such that $f_1^{-1}(x)$ is not connected, which contradicts the irreducibility of the fibers. Hence $f_1(A)$ and $f_1(B)$ are disjoint. But the connectedness of $U_w\times \liet^{w'}$ implies that either $f_1(A)$ or $f_1(B)$, and hence either $A$ or $B$, is empty, which proves that $f_1^{-1}(U_w\times\liet^{w'})$ is connected.
\end{proof}

We now prove the main result of this section. We recall that we have defined various maps: $\pi\vert_{X_w}:X_w\longrightarrow G/B\times G/B$ (surjective onto $\overline{U_w}$), ${\rm pr}_{i,w}={\rm pr}_i\vert_{X_w}:X_w\longrightarrow \tilde\lieg$ (proper birational surjective) and $\kappa_{i,w}=\kappa_i\vert_{X_w}:X_w\longrightarrow \liet$ (flat equidimensional surjective) where $\kappa_{i}$ is the composition of ${\rm pr}_{i}$ with $\kappa:\tilde\lieg\longrightarrow \liet$, $(gB,\psi)\longmapsto \overline{\Ad(g^{-1})\psi}$.

\begin{theo}\label{normality}
The schemes $X_w$ are normal.
\end{theo}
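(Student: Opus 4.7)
I plan to apply Serre's normality criterion $R_1+S_2$. Since $X_w$ is Cohen-Macaulay by Proposition \ref{flatnessofkappa}, it is $S_2$, so it remains to verify that $X_w$ is regular in codimension $1$. For this, I identify an open smooth subset $\Omega\subseteq X_w$ whose complement has codimension $\geq 2$: take $\Omega:=V_w\cup\pr_{1,w}^{-1}(\tilde\lieg^{\reg})$, where $V_w$ is smooth by Proposition \ref{irredcomposofA} and $\pr_{1,w}^{-1}(\tilde\lieg^{\reg})\cong\tilde\lieg^{\reg}$ is smooth by Theorem \ref{globalstructurethm}(ii).

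The complement $X_w\setminus\Omega$ is covered by the boundary strata $V_{w'}\cap X_w$ for $w'\prec w$. Combining Lemma \ref{WeylgroupactionandXw} applied to $X_w$ (giving $\kappa_2=\Ad(w^{-1})\kappa_1$) and to $X_{w'}\supseteq V_{w'}$ (giving $\kappa_2=\Ad((w')^{-1})\kappa_1$), any $x\in V_{w'}\cap X_w$ satisfies $\kappa_1(x)\in\liet^{w(w')^{-1}}$, whence $V_{w'}\cap X_w\subseteq V_{w'}\cap\kappa_1^{-1}(\liet^{w(w')^{-1}})$. By Lemma \ref{preimageofkappairred}, this ambient irreducible locus has dimension $\dim X_w - d$, where $d$ is the codimension in $\liet$ of the fixed subspace $\liet^{w(w')^{-1}}$; by the standard formula this equals the minimal number of reflections expressing $w(w')^{-1}$. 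Thus $d\geq 2$ unless $w(w')^{-1}$ is a single reflection $s_\gamma$, in which case $d=1$.

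When $d\geq 2$ the stratum already has codimension $\geq 2$ in $X_w$. When $w(w')^{-1}=s_\gamma$, I show that the generic point of the irreducible ambient locus $V_{w'}\cap\kappa_1^{-1}(\liet^{s_\gamma})$ has $\psi$ regular in $\lieg$: writing a representative as $(B,\dot w' B,\psi)$ with $\psi=t+n$, $t\in\liet^{s_\gamma}$ and $n\in\lieu\cap\Ad(\dot w')\lieu$, the Bruhat length criterion $\lg(w)>\lg(w')$ applied to $w=s_\gamma w'$ forces $(w')^{-1}\gamma>0$, so $\lieg_\gamma\subseteq\lieu\cap\Ad(\dot w')\lieu$ and the generic $n$ has non-zero $\lieg_\gamma$-component; a direct centralizer computation in the rank-one subalgebra $\liet+\lieg_\gamma+\lieg_{-\gamma}$ then shows that for such generic $t,n$ the centralizer of $\psi$ in $\lieg$ equals $\ker\gamma\oplus\lieg_\gamma$, of dimension $\rk(G)$, so $\psi$ is regular. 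If $V_{w'}\cap X_w$ equals the ambient locus, then its generic point is thus in $\pr_{1,w}^{-1}(\tilde\lieg^{\reg})\subseteq\Omega$, so the non-regular locus is closed of codimension $\geq 1$ in the irreducible stratum, hence of codimension $\geq 2$ in $X_w$. If instead $V_{w'}\cap X_w$ is a proper closed subset of the ambient locus, its dimension drops by at least one and it already has codimension $\geq 2$ in $X_w$. In either sub-case $X_w\setminus\Omega$ has codimension $\geq 2$, and Serre's criterion yields the normality of $X_w$. The principal obstacle is the final centralizer calculation confirming the regularity of $t+n$ with $\gamma(t)=0$ and $n$ of non-zero $\lieg_\gamma$-component, which reduces to a short $\lieg$-rank-one verification.
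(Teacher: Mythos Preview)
Your proof follows the same strategy as the paper's: Serre's criterion via Cohen--Macaulayness, the smooth open set $V_w\cup\pr_{1,w}^{-1}(\tilde\lieg^{\reg})$, the containment $V_{w'}\cap X_w\subseteq\kappa_1^{-1}(\liet^{ww'^{-1}})$ from Lemma~\ref{WeylgroupactionandXw}, the reduction to $ww'^{-1}=s_\gamma$, and the fact that $(w')^{-1}\gamma>0$ forces $\lieg_\gamma\subseteq\lieu\cap\Ad(\dot w')\lieu$. The paper organizes the endgame slightly differently---it begins with a codimension-$1$ irreducible component $C\subseteq X_w\setminus V_w$ and shows $C\cap V_{w'}$ must \emph{equal} the ambient locus $V_{w'}\cap\kappa_1^{-1}(\liet^{s_\gamma})$, thereby avoiding your case split---but this is a cosmetic difference.

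One imprecision worth fixing: your claim that the centralizer of $\psi=t+n$ equals $\ker\gamma\oplus\lieg_\gamma$ for \emph{generic} $n\in\lieu\cap\Ad(\dot w')\lieu$ is not literally correct when this space properly contains $\lieg_\gamma$ (for example $G=\GL_3$, $w'=1$, $\gamma$ the highest root: the generic $t+n$ is regular but its centralizer is not this subspace). The fix is immediate and is what the paper does: take $n\in\lieg_\gamma\setminus\{0\}$ specifically, for which $t$ generic in $\liet^{s_\gamma}=\ker\gamma$ gives the Jordan decomposition $t+n$ with commuting parts, and then your rank-one computation correctly yields centralizer $\ker\gamma\oplus\lieg_\gamma$ of dimension $\rk(G)$. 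Regularity being open, one regular point in the irreducible ambient locus suffices.
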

\begin{proof}
As $X_w$ is Cohen-Macaulay it remains to show by Serre's criterion (\cite[Th.5.8.6]{EGAIV2}) that $X_w$ is smooth in codimension $1$. Both $V_w$ and ${\rm pr}_{1,w}^{-1}(\tilde\lieg^{\rm reg})$ are smooth open subsets of $X_w$: the first one by Proposition \ref{irredcomposofA} and $\pi(X_w)=\overline{U_w}$ (see the proof of Proposition \ref{closurerelationsXw}), the second one by (ii) of Theorem \ref{globalstructurethm} and the smoothness of $\tilde\lieg^{\rm reg}$ (which is an open subset of the smooth scheme $\tilde\lieg$). Hence it is enough to show that the complement of the smooth open subscheme $V_w\cup {\rm pr}_{1,w}^{-1}(\tilde\lieg^{\rm reg})$ in $X_w$ is of codimension strictly larger than $1$.

Let $C$ be an irreducible component of the closed subset $X_w\backslash V_w$ of $X_w$ such that $C$ has codimension $1$ in $X_w$. It is enough to show that $C$ can't be contained in the (smaller) closed subset $X_w\backslash (V_w\cup {\rm pr}_{1,w}^{-1}(\tilde\lieg^{\rm reg}))$. As $C$ is covered by the finitely many locally closed subsets $C\cap V_{w'}$ for $w'\ne w$, we easily deduce that there exists some $w'$ such that $C':=C\cap V_{w'}$ is Zariski-open dense in $C$. It is enough to show that $C'$ contains points of ${\rm pr}_{1,w}^{-1}(\tilde\lieg^{\rm reg})$, i.e. that $C'$ contains points $(g_1B,g_2B,\psi)$ with $\psi\in \lieg^{\rm reg}$. Note that since $C$ is irreducible so is its open subset $C'$.

Let $x=(g_1B,g_2B,\psi)\in C'\subseteq X_w\cap X_{w'}$, by Lemma \ref{WeylgroupactionandXw} we have:
\begin{equation}\label{compareWeylgroupelements}
\kappa_{2}(x)=\Ad(w^{-1})\kappa_{1}(x)=\Ad({w'}^{-1})\kappa_1(x).
\end{equation} 
It follows that $\kappa_1(C')\subset \liet^{\tilde w}$ where $\tilde w:=ww'^{-1}\in W$, hence $C'\subseteq V_{w'}\cap \kappa_1^{-1}(\liet^{\tilde w})$. As $w\neq w'$ we find that $\liet^{\tilde w}\neq \liet$ and hence $\liet^{\tilde w}\subset \liet$ is a closed subset of codimension at least $1$. By Lemma \ref{preimageofkappairred} the map $\kappa_{1,w'}:V_{w'}\longrightarrow \liet$ is smooth, hence the preimage $V_{w'}\cap \kappa_1^{-1}(\liet^{\tilde w})$ of $\liet^{\tilde w}\subset \liet$ in $V_{w'}$ has codimension in $V_{w'}$ equal to the codimension of $\liet^{\tilde w}$ in $\liet$. As $C$ has codimension $1$ in $X_w$ we have:
\begin{align*}
\dim C'=\dim C\cap V_{w'}=\dim C=\dim X_{w}-1=\dim V_{w}-1=\dim V_{w'}-1
\end{align*}
and it follows from $C'\subseteq V_{w'}\cap \kappa_1^{-1}(\liet^{\tilde w})$ that $V_{w'}\cap \kappa_1^{-1}(\liet^{\tilde w})$ has codimension $\leq 1$ in $V_{w'}$. We thus see that $\liet^{\tilde w}\subset \liet$ must have codimension exactly $1$ in $\liet$, and that $V_{w'}\cap \kappa_1^{-1}(\liet^{\tilde w})$ must also have codimension $1$ in $V_{w'}$. 

We claim that $C'=V_{w'}\cap \kappa_1^{-1}(\liet^{\tilde w})$. Indeed, $V_{w'}\cap \kappa_1^{-1}(\liet^{\tilde w})$ is Zariski-closed of codimension $1$ in $V_{w'}$ and is irreducible by the last assertion in Lemma \ref{preimageofkappairred}. On the other hand it contains the closed subset $C'=C\cap V_{w'}$ of $V_{w'}$ which is also of codimension $1$ in $V_{w'}$. Hence these two closed subsets of $V_{w'}$ are the same.

As $\liet^{\tilde w}\subset \liet$ has codimension $1$, it follows that $\tilde w=s_\alpha$ where $s_\alpha$ is the reflection associated to a positive root $\alpha$. But $\emptyset \ne C'\subseteq X_w\cap V_{w'}$ implies $ w'\preceq w=s_\alpha w'$ by Lemma \ref{closurerelationsXw}, hence $\lg(w')< \lg(s_\alpha w')$ and \cite[\S0.3(4)]{HumBGG} implies that $w'^{-1}\alpha$ is a positive root. Equivalently the root $\alpha$ is positive with respect to the Borel subgroup $w'B{w'}^{-1}$, i.e. we have $\lieg_{\alpha}\subseteq \lieb \cap \Ad(w')\lieb$ where $\lieg_\alpha\subseteq \lieg$ is the $T$-eigenspace of $\lieg$ for the adjoint action corresponding to the root $\alpha$. Applying (\ref{fiberofx}) with $g=1$ yields:
$$\pi^{-1}((B,w'B))= (B,w'B)\times (\liet \oplus (\lieu\cap \Ad(w')\lieu))=(B,w'B)\times\lieb \cap \Ad(w')\lieb\supset (B,w'B)\times(\liet\oplus \lieg_{\alpha}),$$
hence we deduce:
$$C'=V_{w'}\cap \kappa_1^{-1}(\liet^{s_\alpha})\supseteq \pi^{-1}((B,w'B))\cap \kappa_1^{-1}(\liet^{s_\alpha})\supseteq (B,w'B)\times (\liet^{s_\alpha}\oplus \lieg_\alpha).$$
The claim then follows as one easily checks that $\liet^{s_\alpha}\oplus \lieg_\alpha$ contains elements in $\lieg^{\rm reg}$. 
\end{proof}

We end this section by formulating a general conjecture about the set-theoretic intersections $X_w\cap V_{w'}$ for $w,w'\in W$.

\begin{conj}\label{conjinter}
Let $w,w'\in W$ with $w'\preceq w$ and $\tilde w=ww'^{-1}$, then we have:
$$X_w\cap V_{w'}=V_{w'}\cap\kappa_1^{-1}(\liet^{\tilde w}).$$
\end{conj}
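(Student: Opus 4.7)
\smallskip
\noindent\textbf{Proof plan.}
The inclusion $X_w\cap V_{w'}\subseteq V_{w'}\cap\kappa_1^{-1}(\liet^{\tilde w})$ is immediate from Lemma~\ref{WeylgroupactionandXw}: any $x\in X_w\cap V_{w'}\subseteq X_w\cap X_{w'}$ satisfies $\Ad(w^{-1})\kappa_1(x)=\kappa_2(x)=\Ad(w'^{-1})\kappa_1(x)$, whence $\kappa_1(x)\in\liet^{\tilde w}$. This is the same observation that appears in the proof of Theorem~\ref{normality}.

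For the reverse inclusion the plan is a dimension and irreducibility comparison. By Lemma~\ref{preimageofkappairred} the right-hand side $V_{w'}\cap\kappa_1^{-1}(\liet^{\tilde w})$ is irreducible of dimension
$$d:=\dim G-\dim T+\dim\liet^{\tilde w}.$$
By Proposition~\ref{flatnessofkappa} combined with Lemma~\ref{fibdim}, the morphism $\kappa_{1,w}:X_w\to\liet$ is flat with equidimensional fibers of dimension $\dim G-\dim T$, so the preimage $\kappa_{1,w}^{-1}(\liet^{\tilde w})=X_w\cap\kappa_1^{-1}(\liet^{\tilde w})$ is equidimensional of dimension $d$ as well. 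Since $X_w\cap V_{w'}$ is closed in $V_{w'}$ and is contained, by the forward inclusion, in the irreducible $d$-dimensional variety $V_{w'}\cap\kappa_1^{-1}(\liet^{\tilde w})$, it suffices to exhibit a single irreducible closed subset of $X_w\cap V_{w'}$ of dimension $d$: such a subset is closed of full dimension in an irreducible variety of the same dimension and must therefore fill it out entirely.

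The production of this $d$-dimensional irreducible subset is the main obstacle. By $G$-equivariance it is enough to compute the fiber of $\pi\vert_{X_w}:X_w\to\overline{U_w}$ over the point $(B,w'B)\in U_{w'}$ and check that it equals $\{\psi\in\lieb\cap\Ad(w')\lieb:\overline{\psi}\in\liet^{\tilde w}\}$, where $\overline{\psi}$ denotes the image in $\liet=\lieb/\lieu$. Concretely, given $\psi_0=t_0+u_0$ with $t_0\in\liet^{\tilde w}$ and $u_0\in\lieu\cap\Ad(w')\lieu$, one must realize $(B,w'B,\psi_0)$ as a limit of points in $V_w$. The natural attack is induction on $\lg(w)-\lg(w')$. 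The base $w=w'$ is trivial, and the codimension-one step $w=s_\alpha w'$ with $\alpha$ a positive root satisfying $w'^{-1}\alpha>0$ is essentially settled in the final paragraphs of the proof of Theorem~\ref{normality}, where the inclusion $(B,w'B)\times(\liet^{s_\alpha}\oplus\lieg_\alpha)\subseteq X_w\cap V_{w'}$ is exhibited via the explicit fiber description~\eqref{fiberofx}. For the inductive step, given a saturated Bruhat chain $w'=w^{(0)}\prec w^{(1)}\prec\cdots\prec w^{(k)}=w$ with $w^{(i+1)}=s_{\beta_i}w^{(i)}$, one would like to iterate the codimension-one deformations through the intermediate cells, for instance via a Bott--Samelson-type resolution of $X_w$ along the chain.

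The combinatorial difficulty is that $\liet^{\tilde w}$ is generally strictly smaller than $\bigcap_i\liet^{s_{\beta_i}}$, so the individual codimension-one degenerations do not compose directly. A potentially cleaner alternative exploits Carter's theorem that $\dim\liet-\dim\liet^{\tilde w}$ equals the reflection length of $\tilde w$, and that a minimal factorization $\tilde w=s_{\gamma_1}\cdots s_{\gamma_r}$ may be chosen with all $\gamma_j$ vanishing on $\liet^{\tilde w}$. Using such a factorization, one would try to construct an explicit $r$-parameter family in $V_w$ limiting to a generic point of the desired fiber over $(B,w'B)$, and then conclude via semicontinuity of fiber dimension together with the irreducibility argument of the second paragraph.
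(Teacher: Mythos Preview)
The statement you are attempting to prove is presented in the paper as an open \emph{conjecture}, not a theorem; there is no proof. Immediately after stating it, the authors only remark that the forward inclusion $X_w\cap V_{w'}\subseteq V_{w'}\cap\kappa_1^{-1}(\liet^{\tilde w})$ follows from Lemma~\ref{WeylgroupactionandXw}, exactly as in your first paragraph. The reverse inclusion is left open, and the conjecture is invoked only hypothetically later (Remark~\ref{remconjinter}, Remark~\ref{specialcases}(iii)).

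Your reduction of the reverse inclusion to exhibiting a single $d$-dimensional irreducible piece of $X_w\cap V_{w'}$ is sound, but the proposal does not complete it. In particular, the claim that the codimension-one case is ``essentially settled in the final paragraphs of the proof of Theorem~\ref{normality}'' misreads that argument: there the containment $(B,w'B)\times(\liet^{s_\alpha}\oplus\lieg_\alpha)\subseteq C'$ is obtained \emph{after} establishing $C'=V_{w'}\cap\kappa_1^{-1}(\liet^{s_\alpha})$, and that equality is deduced from the \emph{hypothesis} that $C'$ sits inside a codimension-one component of $X_w\setminus V_w$. The logic runs the other way; it does not show, for arbitrary $w'\prec w$ with $\lg(w)-\lg(w')=1$, that $V_{w'}\cap\kappa_1^{-1}(\liet^{s_\alpha})\subseteq X_w$. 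Beyond this, you yourself flag that the inductive step is blocked by $\liet^{\tilde w}\subsetneq\bigcap_i\liet^{s_{\beta_i}}$, and neither the Bott--Samelson nor the Carter-factorization route is carried through. What you have written is a plausible outline of attack on an open problem, not a proof.
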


Obviously Lemma \ref{WeylgroupactionandXw} implies that the left hand side is contained in the right hand side.

\subsection{Characteristic cycles}\label{Springer3}

We show that the fibers $\kappa_{i,w}^{-1}(0)\subset X_w$ are related to Springer's resolution and have a rich combinatorial geometric structure that will be used in \S\ref{locallyBM}.

We now assume ${\rm char}(k)=0$. Let $\lieg/G:=\Spec (R_{\lieg}^G)$ where $\lieg=\Spec R_{\lieg}$ and note that the natural map $\liet/W\longrightarrow \lieg/G$ is an isomorphism of smooth affine spaces (see e.g.~\cite[(10.1.8)]{Hottaetal}). We have a canonical morphism $\bar\kappa:X\longrightarrow \lieg/G$ given by the composition of the canonical map $X\simeq \tilde\lieg\times_\lieg\tilde\lieg\longrightarrow \lieg$ with the projection $\lieg\twoheadrightarrow \lieg/G$. Again for $w\in W$ we write $\bar\kappa_w$ for the restriction of $\bar\kappa$ to $X_w\subset X$ and point out that $\bar\kappa_w$ is the diagonal map in the commutative diagram (\ref{kappa12}). Note that $\bar\kappa_w$ is surjective as all maps in (\ref{kappa12}) are. We define the following reduced scheme over $k$:
\begin{equation}\label{Z}
Z:=(X\times_{\lieg/G}\{0\})^{\rm red}=(\bar\kappa^{-1}(0))^{\rm red}\subset X.
\end{equation}
The scheme $Z$ is known as the \emph{Steinberg variety} (see \cite{Steinberg}) and we easily check that we have:
$$Z\simeq \tilde\Ncal\times_\Ncal\tilde\Ncal$$ 
where $\Ncal\subset \lieg$ is the nilpotent cone, $\tilde\Ncal:=\{(gB,\psi)\in G/B\times \Ncal\mid\Ad(g^{-1})\psi\in \lieu\}$ (a smooth scheme over $k$) and where $q:\tilde\Ncal\longrightarrow \Ncal$, $(gB,\psi)\longmapsto \psi$ is the \emph{Springer resolution} of the (singular) scheme $\Ncal$. We also have as in (\ref{isogtilde}):
\begin{equation}\label{isogtildeN}
G\times^B\lieu\buildrel\sim\over\longrightarrow \tilde \Ncal, \ \ (g,\psi)\longmapsto (gB,\Ad(g)\psi).
\end{equation}

We analyze the irreducible components of $Z$ as we did for $X$ in \S\ref{Springer1}. For $w\in W$ let us write $V'_w:=\pi^{-1}(U_w)\cap Z$ (set-theoretic intersection in $X$) and $Z_w$ for the Zariski-closure of $V'_w$ in $Z$ with its reduced scheme structure.

\begin{prop}\label{irreducibleZ}
The scheme $Z$ is equidimensional of dimension $\dim G-\dim T$ and its irreducible components are given by the $Z_w$ for $w\in W$. 
\end{prop}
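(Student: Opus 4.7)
The plan is to follow the template of Proposition \ref{completeintersections}. I will show that each $V'_w$ is irreducible of dimension $\dim G-\dim T$, that $Z=\bigcup_{w\in W}Z_w$ is a finite union of irreducible closed subsets of this common dimension, and that the $Z_w$ are pairwise distinct; the proposition then follows.

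First I describe the projection $\pi|_{V'_w}:V'_w\to U_w$. Since $\lieg/G\simeq\liet/W$, a point of $X$ lies in $Z=(\bar\kappa^{-1}(0))^{\rm red}$ if and only if $\psi\in\Ncal$; on $V_w$, where $\Ad(g^{-1})\psi\in\lieb$, this amounts to the vanishing of the image of $\Ad(g^{-1})\psi$ under the projection $\lieb\twoheadrightarrow\liet$. Substituting this condition into the fiber description \eqref{fiberofx}, the $\liet$-summand disappears and the fiber of $\pi|_{V'_w}$ over $(gB,gwB)\in U_w$ becomes the affine space $\Ad(g)(\lieu\cap\Ad(w)\lieu)$. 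Arguing as in Proposition \ref{irredcomposofA} (via Lemma \ref{intersectionofvbs}) this gives $V'_w\to U_w$ the structure of a geometric vector bundle of rank $\dim(\lieu\cap\Ad(w)\lieu)=\dim\lieu-\lg(w)$. Since $U_w$ is irreducible of dimension $\dim G-\dim B+\lg(w)$, it follows that $V'_w$ is irreducible of dimension
\[
\dim U_w+\dim\lieu-\lg(w)=\dim G-\dim B+\dim\lieu=\dim G-\dim T.
\]
As the $V_w$ cover $X$, their intersections with $Z$ cover $Z$ set-theoretically, and so $Z=\bigcup_{w\in W}Z_w$ is a finite union of irreducible closed subsets each of dimension $\dim G-\dim T$.

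It remains to verify that the $Z_w$ are pairwise distinct, which together with the equal dimension identifies them with the irreducible components of $Z$. Since $V_w$ is locally closed in $X$, the subset $V'_w=V_w\cap Z$ is locally closed in $Z$, and is Zariski-open in its own closure $Z_w$. If we had $Z_w=Z_{w'}$ for some $w\neq w'$, then $V'_w$ and $V'_{w'}$ would be two non-empty open subsets of the irreducible space $Z_w$ and would necessarily intersect; this contradicts $V'_w\cap V'_{w'}\subseteq V_w\cap V_{w'}=\emptyset$.

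I do not expect any serious obstacle, since this is essentially a direct adaptation of the irreducible components analysis of Proposition \ref{completeintersections} to the nilpotent setting. The only genuinely new ingredient is the identification of nilpotency of $\Ad(g^{-1})\psi$ with vanishing of its $\liet$-component under the decomposition $\lieb=\liet\oplus\lieu$, which is standard; and unlike for $X$ itself there is no Cohen--Macaulay or complete-intersection assertion to establish here.
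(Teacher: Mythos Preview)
Your proposal is correct and follows essentially the same approach as the paper, which simply refers back to Propositions \ref{irredcomposofA} and \ref{completeintersections}; you have merely written out the adaptation to the nilpotent setting in full. The fiber computation you give over $U_w$ (dropping the $\liet$-summand from \eqref{fiberofx}) is exactly the one the paper uses implicitly, and in fact appears verbatim in the proof of Lemma \ref{fibdim}.
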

\begin{proof}
The proof is the same as the proof of the corresponding statements in Proposition \ref{irredcomposofA} and Proposition \ref{completeintersections}.
\end{proof}

\begin{rem}\label{notknown}
{\rm Contrary to the case of the $X_w$ (see Proposition \ref{flatnessofkappa}), it doesn't seem to be known whether the irreducible components $Z_w$ are Cohen-Macaulay. Moreover, even assuming this, the proof of Theorem \ref{normality} doesn't extend, and we do not know either if the $Z_w$ are normal.}
\end{rem}

We write ${\rm Z}^0(Z)$ for the free abelian group generated by the irreducible closed subvarieties of codimension $0$ in $Z$, i.e.~for the free abelian group on the irreducible components of $Z$. For $w\in W$ we denote by $[Z_w]$ the component $Z_w$ viewed in ${\rm Z}^0(Z)$. By Proposition \ref{irreducibleZ} the $[Z_w]$ form a basis of ${\rm Z}^0(Z)$ (which is thus isomorphic to $\Z[W]$). Given a scheme $Y$ whose underlying topological space is a union of irreducible components of $Z$ we can define an associated class:
\begin{equation}\label{class}
[Y]:=\sum_{w\in W}m(Z_{w},Y)[Z_w]\in{\rm Z}^0(Z)
\end{equation}
where $m(Z_w,Y)$ is the multiplicity of $Z_w$ in $Y$, i.e. is the length as an $\Ocal_{Y,\eta_w}$-module of the local ring $\Ocal_{Y,\eta_w}$ of $Y$ at the generic point $\eta_w$ of $Z_w$.

We set $\overline{X}:=X\times_{\lieg/G}\{0\}$ and for $w\in W$:
$$\overline{X}_w:=X_w\times_{\lieg/G}\{0\}=\bar\kappa_w^{-1}(0)=X_w\times_X\overline X\subset X_w\subset X$$
(note that we {\it do not} take the reduced associated schemes). We obviously have $\overline{X}_w^{\rm red}\subset Z$. Moreover, each irreducible component of $\overline{X}_w$ has dimension at least $\dim Z=\dim X_w-\dim \lieg/G$ by an application of \cite[\S II Exer.3.22]{Hartshorne} to the surjective morphism $\bar\kappa_w:X_w\longrightarrow \lieg/G$. Hence each irreducible component of $\overline X_w$ has dimension $\dim Z$ and is thus some $Z_{w'}$ for $w'\in W$. We are interested in computing the class $[\overline X_w]\in {\rm Z}^0(Z)$, but for this we need some preliminaries.

Let us denote by $\Ocal$ the usual BGG-category of representations of $U(\lieg)$, see e.g. \cite[\S1.1]{HumBGG}. Given a weight $\mu$, i.e. a $k$-linear morphism $\liet\longrightarrow k$, let $M(\mu):=U(\lieg)\otimes_{U(\lieb)}k(\mu)$ denote the Verma module of (highest) weight $\mu$ where $U(-)$ is the enveloping algebra and $k(\mu):U(\lieb)\twoheadrightarrow U(\liet)\simeq \liet\buildrel\mu\over\longrightarrow k$. We know that $M(\mu)$ has a unique irreducible quotient $L(\mu)$ (see e.g. \cite[\S1.2]{HumBGG}). Let $w\in W$, then the irreducible constituents of $M(ww_0\cdot 0)=M(-w(\rho)-\rho)=M(w\cdot(-2\rho))$ are of the form $L(w'w_0\cdot 0)$ for $w'\in W$ and the constituent $L(w'w_0\cdot 0)$ occurs in $M(ww_0\cdot 0)$ with multiplicity $P_{w_0w,w_0w'}(1)$, see e.g. \cite[\S8.4]{HumBGG}. Here $P_{x,y}(T)\in\Z_{\geq 0}[T]$ is the Kazhdan-Lusztig polynomial associated to $x,y\in W$. Recall that $P_{x,y}\ne 0$ if and only if $x\preceq y$ and that $P_{x,x}(1)=1$. In particular $L(w'w_0\cdot 0)$ occurs in $M(ww_0\cdot 0)$ if and only if $w_0w\preceq w_0w'$ if and only if $w'\preceq w$ (the last equivalence following from the definition of the Bruhat order, see e.g. \cite[\S0.4]{HumBGG}, and from $\lg(w_0w)=\lg(w_0)-\lg(w)$, see e.g. \cite[\S0.3]{HumBGG}).

We write $\Ocal(0)$ for the full subcategory of $\Ocal$ consisting of objects of trivial infinitesimal character (\cite[\S1.12]{HumBGG}), for instance $M(ww_0\cdot 0)$ and $L(ww_0\cdot 0)$ are in $\Ocal(0)$ for $w\in W$. The Beilinson-Bernstein correspondence defines an exact functor which is an equivalence of artinian categories:
\begin{equation}\label{BBG}
{\rm BB}_G:\Ocal(0)\buildrel\sim\over \longrightarrow \Dcal{\rm -Mod}_{G/B\times G/B}^{\rm rh}
\end{equation}
to the category $\Dcal{\rm -Mod}_{G/B\times G/B}^{\rm rh}$ of regular holonomic $G$-equivariant $\Dcal$-modules on $G/B\times G/B$ (see e.g.~\cite[\S6]{Hottaetal} and \cite[\S11]{Hottaetal}). We write $\mathfrak{M}(ww_0\cdot 0):={\rm BB}_G(M(ww_0\cdot 0))$ and $\mathfrak{L}(ww_0\cdot 0):={\rm BB}_G(L(ww_0\cdot 0))$ for $w\in W$.

\begin{rem}\label{fromBtoG}
{\rm In fact, in \cite[\S11]{Hottaetal} (and in most references on the subject), it is rather constructed an equivalence ${\rm BB}_B:\Ocal(0)\buildrel\sim\over\longrightarrow \Dcal{\rm -Mod}_{G/B}^{\rm rh}$ to the category of $B$-equivariant regular holonomic $\Dcal$-modules on $G/B$. However, if one embeds $G/B$ into $G/B\times G/B$ via $gB\mapsto (B,gB)$, then one can use the left diagonal action of $G$ to extend a regular holonomic $B$-equivariant $\Dcal$-module on $G/B$ to a regular holonomic $G$-equivariant $\Dcal$-module on $G/B\times G/B$.  This yields an equivalence of categories between $\Dcal{\rm -Mod}_{G/B}^{\rm rh}$ and $\Dcal{\rm -Mod}_{G/B\times G/B}^{\rm rh}$, see \cite[Lem.1.4(ii)]{Tanisaki}. The composition of ${\rm BB}_B$ with this equivalence gives the functor ${\rm BB}_G$.}
\end{rem}

By \cite[Prop.3.3.4]{ChrissGinz}, the Steinberg variety $Z$ is identified with the union in the cotangent bundle of $G/B\times G/B$ of the conormal bundles of the diagonal $G$-orbits of $G/B\times G/B$. Recall these diagonal $G$-orbits are the $U_w$ for $w\in W$ (see \S\ref{Springer0}), so in particular we have:
$$T^*_{U_w}(G/B\times G/B)\subseteq Z\subseteq T^*(G/B\times G/B)$$
where $T^*_{U_w}(G/B\times G/B)$ is the conormal bundle of $U_w$ in $G/B\times G/B$ and $T^*(G/B\times G/B)$ is the cotangent bundle of $G/B\times G/B$. In fact, by \cite[Prop.3.3.5]{ChrissGinz} the irreducible component $Z_w$ of $Z$ is identified with the Zariski-closure of $T^*_{U_w}(G/B\times G/B)$ in $Z$.

To any coherent $\Dcal$-module $\mathfrak{M}$ on $G/B\times G/B$ one can associate a coherent $\Ocal_{T^*(G/B\times G/B)}$-module ${\rm gr}(\mathfrak{M})$ on $T^*(G/B\times G/B)$ (which depends on the choice of a good filtration on $\mathfrak{M}$). The schematic support of ${\rm gr}(\mathfrak{M})$ defines a closed subscheme ${\rm Ch}(\mathfrak{M})$ of $T^*(G/B\times G/B)$ such that each irreducible component of ${\rm Ch}(\mathfrak{M})$ is of dimension greater or equal than $\dim Z=\dim(G/B\times G/B)$ (\cite[Cor.2.3.2]{Hottaetal}). The closed subscheme ${\rm Ch}(\mathfrak{M})$ still depends on the choice of good filtration on $\mathfrak{M}$ however the associated cycle in the group ${\rm Z}(T^*(G/B\times G/B))$ depends only on $\mathfrak{M}$ (see e.g.~\cite[p.60]{Hottaetal}). The following result is well-known (see e.g. \cite[\S1.4]{Tanisaki}).

\begin{prop}\label{rab}
If $\mathfrak{M}$ is in $\Dcal{\rm -Mod}_{G/B\times G/B}^{\rm rh}$ then ${\rm Ch}(\mathfrak{M})^{\rm red}\subseteq Z\subseteq T^*(G/B\times G/B)$.
\end{prop}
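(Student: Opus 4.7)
The plan is to exploit the $G$-equivariance of $\mathfrak{M}$ together with holonomicity. Recall that the Bruhat decomposition $G/B\times G/B=\coprod_{w\in W}U_w$ is a stratification into finitely many locally closed smooth $G$-orbits. I would argue that the characteristic variety of any $G$-equivariant regular holonomic $\Dcal$-module on $G/B\times G/B$ is contained in $\bigcup_{w\in W}\overline{T^\ast_{U_w}(G/B\times G/B)}=Z$, where the last equality is the identification, recalled just before the statement, of the Steinberg variety with the closure of the union of the conormal bundles to the $G$-orbits (see \cite[Prop.3.3.3 \& Prop.3.3.4]{ChrissGinz}).

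First I would choose a $G$-equivariant good filtration on $\mathfrak{M}$; this is possible because $G$ is connected and acts on $G/B\times G/B$, and the argument is the standard averaging procedure in the equivariant setting. With such a filtration, the associated graded ${\rm gr}(\mathfrak{M})$ is a $G$-equivariant coherent sheaf on $T^\ast(G/B\times G/B)$ (with respect to the lifted $G$-action on the cotangent bundle), and therefore its schematic support ${\rm Ch}(\mathfrak{M})$ is a closed $G$-invariant subscheme of $T^\ast(G/B\times G/B)$.

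Second, since $\mathfrak{M}$ is holonomic, the involutivity theorem of Gabber (or Sato--Kawai--Kashiwara) implies that every irreducible component of ${\rm Ch}(\mathfrak{M})^{\rm red}$ is a conic Lagrangian subvariety of $T^\ast(G/B\times G/B)$ of dimension $\dim(G/B\times G/B)$. Now I would invoke the standard fact (a direct consequence of the classification of conic $G$-invariant irreducible Lagrangian subvarieties when $G$ acts with finitely many orbits, see e.g.~\cite[\S1.3]{Hottaetal} or \cite[Ch.I \S1.9]{Kashiwara}) that every such component must equal the closure $\overline{T^\ast_{U_w}(G/B\times G/B)}$ of the conormal bundle to some orbit $U_w$. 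Concretely: the projection of each Lagrangian component to $G/B\times G/B$ is the closure of a single $G$-orbit $U_w$ (by $G$-invariance and irreducibility), and over the smooth locus $U_w$ the component must coincide with the conormal bundle $T^\ast_{U_w}(G/B\times G/B)$ by the Lagrangian condition together with $G$-invariance on the fiber.

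Combining these two steps, every irreducible component of ${\rm Ch}(\mathfrak{M})^{\rm red}$ lies in $\bigcup_{w}\overline{T^\ast_{U_w}(G/B\times G/B)}=Z$, which yields the inclusion ${\rm Ch}(\mathfrak{M})^{\rm red}\subseteq Z$. The main point of potential subtlety is the existence of a $G$-equivariant good filtration and the classification of $G$-invariant conic Lagrangians; both are classical and well documented in the references given, so I expect the proof in the paper to simply cite \cite{Hottaetal} or \cite{Tanisaki} for these facts.
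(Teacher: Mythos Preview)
Your argument is correct, but it differs from the paper's approach. You work directly on $G/B\times G/B$: after noting that ${\rm Ch}(\mathfrak{M})$ is $G$-invariant and that holonomicity forces each irreducible component to be a conic Lagrangian, you invoke the classification of $G$-invariant conic Lagrangians on a variety with finitely many orbits, which immediately gives the inclusion into $Z=\bigcup_w\overline{T^\ast_{U_w}(G/B\times G/B)}$. The paper instead reduces to the single flag variety: it uses the equivalence between $G$-equivariant regular holonomic $\Dcal$-modules on $G/B\times G/B$ and $B$-equivariant ones on $G/B$ (Remark~\ref{fromBtoG}), quotes the known fact (\cite[\S1.3]{Tanisaki}) that for $\mathfrak{M}'\in\Dcal\text{-Mod}^{\rm rh}_{G/B}$ one has ${\rm Ch}(\mathfrak{M}')^{\rm red}\subseteq q^{-1}(\lieu)^{\rm red}\subset\tilde\Ncal=T^\ast(G/B)$, and then lifts this back via the fibration isomorphism $Z\simeq G\times^B q^{-1}(\lieu)^{\rm red}$ together with ${\rm Ch}(\mathfrak{M})\simeq G\times^B{\rm Ch}(\mathfrak{M}')$. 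Your route is more self-contained; the paper's route has the advantage of making explicit the compatibility ${\rm Ch}(\mathfrak{M})\simeq G\times^B{\rm Ch}(\mathfrak{M}')$, which fits naturally with how the paper passes between $G/B$ and $G/B\times G/B$ elsewhere. One minor point: your ``averaging procedure'' for producing a $G$-equivariant good filtration is not really needed (and not entirely standard in this setting); the $G$-invariance of ${\rm Ch}(\mathfrak{M})^{\rm red}$ follows directly from the independence of the characteristic variety from the choice of good filtration together with the $G$-equivariance of $\mathfrak{M}$.
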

\begin{proof}
We only give a sketch. First, we have an isomorphism of $k$-schemes:
\begin{equation}\label{fibration}
Z\buildrel\sim\over\longrightarrow G\times^B q^{-1}(\mathfrak{u})^{\rm red},\ ((g_1,\psi_1),(g_2,\psi_2))\longmapsto (g_1,(g_1^{-1}g_2,\psi_2))
\end{equation}
where we have used (\ref{isogtildeN}) for $\tilde\Ncal$ and its subscheme $q^{-1}(\mathfrak{u})^{\rm red}$, and where $B$ acts on $G\times q^{-1}(\mathfrak{u})^{\rm red}$ by $(h_1,(h_2,\psi))b:=(h_1b,(b^{-1}h_2,\psi))$. Secondly, the $k$-scheme $\tilde\Ncal$ can be identified with $T^*G/B$ (see e.g. \cite[\S10.3]{Hottaetal}) and if $\mathfrak{M}'$ is in $\Dcal{\rm -Mod}_{G/B}^{\rm rh}$, then we have ${\rm Ch}(\mathfrak{M}')^{\rm red}\subseteq q^{-1}(\mathfrak{u})^{\rm red}$ and not just ${\rm Ch}(\mathfrak{M}')^{\rm red}\subseteq\tilde\Ncal=T^*G/B$ (see e.g. \cite[\S1.3]{Tanisaki}). Thirdly, if $\mathfrak{M}$ is in $\Dcal{\rm -Mod}_{G/B\times G/B}^{\rm rh}$ and if $\mathfrak{M}'$ is the associated $\Dcal$-module in $\Dcal{\rm -Mod}_{G/B}^{\rm rh}$ by the equivalence of Remark \ref{fromBtoG}, then one can check that ${\rm Ch}(\mathfrak{M})\simeq G\times^B {\rm Ch}(\mathfrak{M}')$. In particular ${\rm Ch}(\mathfrak{M})^{\rm red}$ is in $Z$ by (\ref{fibration}).
\end{proof}

Let $\mathfrak{M}$ be in $\Dcal{\rm -Mod}_{G/B\times G/B}^{\rm rh}$, then from Proposition \ref{rab} and what is before we deduce that ${\rm Ch}(\mathfrak{M})^{\rm red}$ is a closed subspace of $Z$ whose underlying topological space is a union of irreducible components of $Z$. We set (see (\ref{class})):
$$[\mathfrak{M}]:=[{\rm Ch}(\mathfrak{M})]\in {\rm Z}^0(Z)$$
(the so-called {characteristic cycle} of $\mathfrak M$) and recall that the map $\mathfrak{M}\longmapsto [\mathfrak{M}]$ is additive by \cite[Th.2.2.3]{Hottaetal}.

\begin{rem} 
{\rm It was conjectured by Kazhdan and Lusztig in the case $G={\rm SL}_{n}$ (and $k=\C$) that $[\mathfrak{L}(ww_0\cdot 0)]=Z_{w}$, equivalently that the characteristic cycles $[\mathfrak{L}(w\cdot 0)]$ for $w\in W$ are irreducible. It turned out that this is wrong for $n\geq 8$ (but true for $n\leq 7$), see \cite{KashiSaito}.}
\end{rem}

\begin{prop}\label{CCw}
For $w\in W$ we have $[\overline{X}_{w}]=[\mathfrak{M}(ww_0\cdot 0)]$ in ${\rm Z}^0(Z)$.
\end{prop}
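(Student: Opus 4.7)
The plan is to identify both sides as specializations at $0\in\lieg/G$ of a single coherent sheaf on $X$, namely (the structure sheaf of) $X_w$, using a flat deformation of the Beilinson-Bernstein correspondence.

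First, by Proposition \ref{flatnessofkappa} the morphism $\bar\kappa_w:X_w\to\lieg/G\simeq\liet/W$ is flat (as the composition of the flat $\kappa_{1,w}:X_w\to\liet$ with the finite flat $\liet\to\liet/W$), so $\overline X_w=\bar\kappa_w^{-1}(0)$ is a local complete intersection of the expected dimension $\dim G-\dim T=\dim Z$ in $X_w$. In particular every irreducible component of $\overline X_w$ is some $Z_{w'}$, and the multiplicity $m(Z_{w'},\overline X_w)$ equals the length of $\Ocal_{X_w,\eta_{w'}}/\mathfrak m_0\Ocal_{X_w,\eta_{w'}}$ as an $\Ocal_{X_w,\eta_{w'}}$-module, where $\mathfrak m_0$ is the augmentation ideal of $k[\liet/W]$ and $\eta_{w'}$ is the generic point of $Z_{w'}$.

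Next, on the representation-theoretic side, I would lift $M(ww_0\cdot 0)$ to a ``universal'' deformed Verma module $\tilde M_w$ over $k[\liet/W]\simeq Z(\mathfrak g)$ (via Harish-Chandra), flat over this base and specializing at the augmentation ideal to $M(ww_0\cdot 0)$. Applying the deformed (Harish-Chandra/Rees) Beilinson-Bernstein functor to $\tilde M_w$, and extending to $G/B\times G/B$ via the left $G$-action as in Remark \ref{fromBtoG}, one obtains a coherent sheaf $\tilde{\mathfrak M}_w$ supported on $X=\tilde\lieg\times_\lieg\tilde\lieg$ which is flat over $k[\liet/W]$. By the standard interpretation of characteristic cycles via associated gradeds, specialization of this sheaf at $0\in\liet/W$ computes the characteristic cycle: $[\tilde{\mathfrak M}_w\otimes^{\mathbf L}_{k[\liet/W]}k]=[\mathfrak M(ww_0\cdot 0)]$ in $\mathrm Z^0(Z)$.

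The key claim is then $[\tilde{\mathfrak M}_w]=[\Ocal_{X_w}]$ in $\mathrm Z^0(X)$, i.e.~generic multiplicity one at $X_w$ and zero at every other $X_{w'}$. I would verify this by restricting to the dense open $V_w=\pi^{-1}(U_w)\subset X_w$ (smooth by Proposition \ref{irredcomposofA}): over the Schubert cell $U_w\subset G/B\times G/B$ the deformed standard module is completely explicit (essentially a free module of rank one on a cell induced from a $B$-representation on a point), so its associated graded is $\Ocal_{V_w}$ with multiplicity one; since $X_w$ is reduced by Theorem \ref{globalstructurethm}(i) and $V_w$ is dense in $X_w$, this generic computation pins down the cycle $[\tilde{\mathfrak M}_w]\in\mathrm Z^0(X)$ completely, the only contribution being from $X_w$ itself. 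Specializing at $0\in\lieg/G$ then yields, thanks to the flatness of $\bar\kappa_w$, $[\Ocal_{X_w}\otimes^{\mathbf L}_{k[\liet/W]}k]=[\Ocal_{\overline X_w}]=[\overline X_w]$, whence the claimed equality $[\overline X_w]=[\mathfrak M(ww_0\cdot 0)]$ in $\mathrm Z^0(Z)$.

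The main obstacle will be the identification $[\tilde{\mathfrak M}_w]=[\Ocal_{X_w}]$: this requires setting up the deformed Beilinson-Bernstein functor carefully together with a good filtration on $\tilde M_w$, and matching the $\mathcal D$-module pushforward from the Bruhat cell with the geometric description of $X_w$ as the closure of $V_w\to U_w$ (Proposition \ref{irredcomposofA}). The flatness of the Verma deformation over $k[\liet/W]$ and the smoothness of $V_w$ reduce this matching to the generic point, where it becomes a direct computation.
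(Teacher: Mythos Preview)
The paper does not give an argument here; it simply cites \cite[Prop.~2.13.7]{BezRiche} and \cite[(6.2.3)]{Ginzburg}. Your outline is in the spirit of those references (a flat degeneration over $\liet/W$ identifying the structure sheaf of $X_w$ with a ``universal Verma'' and then specializing at $0$), and the endgame---using flatness of $\bar\kappa_w$ to pass from $[\Ocal_{X_w}]$ to $[\overline X_w]$---is correct.

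There is, however, a genuine gap at the key step $[\tilde{\mathfrak M}_w]=[\Ocal_{X_w}]$ in ${\rm Z}^0(X)$. Your verification takes place on $V_w=\pi^{-1}(U_w)$, which is open and dense in $X_w$ but is only \emph{locally closed} in $X$. Computing the stalk there tells you the multiplicity of $\tilde{\mathfrak M}_w$ along the component $X_w$ is $1$; it says nothing about the multiplicities along the other components $X_{w'}$, $w'\neq w$. The sentence ``since $X_w$ is reduced and $V_w$ is dense in $X_w$, this generic computation pins down the cycle completely'' conflates two different things: it pins down $[\Ocal_{X_w}]$, not $[\tilde{\mathfrak M}_w]$. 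Indeed the whole content of the proposition is that the boundary contributions match, so one cannot simply ignore the boundary. Concretely, the $\mathcal D$-module $j_{w,!}\Ocal_{U_w}$ is not zero on $\overline{U_w}\smallsetminus U_w$, and its associated graded can (and does) contribute to components $Z_{w'}$ with $w'\prec w$; you must explain why the \emph{deformed} object has no such extra support on $X$.

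The standard way to close this gap is to argue over the generic (or regular) point of $\liet/W$ rather than over $V_w$: for a regular integral parameter the universal Verma is irreducible, its Beilinson--Bernstein localization is the clean extension from $U_w$, and the associated graded is supported on a single irreducible component; by flatness of $\tilde{\mathfrak M}_w$ over $k[\liet/W]$ the support of $\tilde{\mathfrak M}_w$ is then the closure of this generic fiber, which is exactly $X_w$ (cf.\ Theorem \ref{globalstructurethm}(ii)). This, together with the multiplicity-one computation on $V_w$, does give $[\tilde{\mathfrak M}_w]=[\Ocal_{X_w}]$. A secondary point you glide over is the precise construction of the ``deformed (Harish-Chandra/Rees) Beilinson--Bernstein functor'' landing in $\mathrm{Coh}(X)$ and the identification of its fiber at $0\in\liet/W$ with $\gr$ of the usual localization; this is not formal and is one of the main inputs in \cite{BezRiche}.
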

\begin{proof}
This is \cite[Prop.2.13.7]{BezRiche}, see also \cite[(6.2.3)]{Ginzburg}. 
\end{proof}

The following theorem is well known.

\begin{theo}\label{summaryofrepntheory}
(i) The three classes:
$$([Z_{w}])_{w\in W}, \big([\mathfrak{M}(ww_0 \cdot0)]\big)_{w\in W}\ \text{and}\ \big([(\mathfrak{L}(ww_0\cdot 0)]\big)_{w\in W}$$
are a basis of the finite free $\Z$-module ${\rm Z}^0(Z)$. \\
(ii) For $w\in W$ we have:
$$[\mathfrak{M}(ww_0\cdot 0)]=\sum_{w'} P_{w_0w,w_0w'}(1)[\mathfrak{L}(w'w_0\cdot 0)]\in {\rm Z}^0(Z).$$
(iii) There are integers $a_{w,w'}\in \Z_{\geq 0}$ only depending on $w,w'\in W$ such that:
$$[\mathfrak{L}(ww_0\cdot 0)]=\sum_{w'} a_{w,w'}[Z_{w'}] \in {\rm Z}^0(Z).$$
Moreover, $a_{w,w}=1$ and $a_{w,w'}=0$ unless $w'\preceq w$. Finally if $w' \preceq w$ and $U_{w'}$ is contained in the smooth locus of the closure $\overline{U_w}$ of $U_w$ in $G/B\times G/B$, then $a_{w,w'}=0$.
\end{theo}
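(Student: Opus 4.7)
My plan is to establish (ii) first, then (iii), and deduce (i) at the end from the unitriangularity of the two resulting change-of-basis matrices in the Bruhat order. For (ii), I would exploit the exactness of the Beilinson--Bernstein equivalence (\ref{BBG}): the Kazhdan--Lusztig composition multiplicity recalled just before (\ref{BBG}), $[M(ww_0\cdot 0):L(w'w_0\cdot 0)] = P_{w_0w,w_0w'}(1)$, yields the identity $[\mathfrak{M}(ww_0\cdot 0)] = \sum_{w'} P_{w_0w,w_0w'}(1)[\mathfrak{L}(w'w_0\cdot 0)]$ in the Grothendieck group $K_0(\Dcal\text{-Mod}^{\mathrm{rh}}_{G/B\times G/B})$. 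Since the characteristic cycle is additive on short exact sequences of holonomic $\Dcal$-modules by \cite[Th.2.2.3]{Hottaetal}, it factors through $K_0$, and applying it to this identity produces the formula in (ii).

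For (iii), I would first identify $\mathfrak{L}(ww_0\cdot 0)$, via the Beilinson--Bernstein correspondence and the classification of simple $G$-equivariant regular holonomic $\Dcal$-modules on $G/B\times G/B$, with the minimal (Goresky--MacPherson) extension of the rank-one connection corresponding to trivial infinitesimal character on the Schubert stratum $U_w$ to its closure $\overline{U_w}$. Four standard features of its characteristic cycle then yield (iii): (a) non-negativity of the $a_{w,w'}\in\Z_{\geq 0}$, valid for any holonomic $\Dcal$-module since CC is defined from lengths of graded pieces; (b) containment of the support in $\pi^{-1}(\overline{U_w})\cap Z$, the union of $Z_{w'}$ with $w'\preceq w$, so $a_{w,w'}=0$ for $w'\not\preceq w$; (c) the coefficient of the top conormal $Z_w$ equals the generic rank (here $1$) of the underlying connection, so $a_{w,w}=1$; (d) if $U_{w'}$ lies in the smooth locus of $\overline{U_w}$, the IC sheaf $\mathfrak{L}(ww_0\cdot 0)$ is locally constant near $U_{w'}$, and Kashiwara's local index theorem (expressing the multiplicity of $T^*_{U_{w'}}(G/B\times G/B)$ in CC in terms of local Euler obstructions) then forces $a_{w,w'}=0$.

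Once (ii) and (iii) are known, (i) is formal. By Proposition \ref{irreducibleZ}, $([Z_w])_{w\in W}$ is a $\Z$-basis of ${\rm Z}^0(Z)$. The matrix from $([\mathfrak{L}(ww_0\cdot 0)])_{w\in W}$ to $([Z_w])_{w\in W}$ is lower-unitriangular in the Bruhat order by (iii), so the $[\mathfrak{L}(ww_0\cdot 0)]$ form a basis. Similarly, since $P_{x,y}(1)=0$ unless $x\preceq y$, $P_{x,x}(1)=1$, and $w_0w\preceq w_0w'$ is equivalent to $w'\preceq w$, the matrix from $([\mathfrak{M}(ww_0\cdot 0)])_{w\in W}$ to $([\mathfrak{L}(w'w_0\cdot 0)])_{w'\in W}$ is again lower-unitriangular, so the $[\mathfrak{M}(ww_0\cdot 0)]$ also form a basis. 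The main obstacle is the smooth-locus vanishing in step (d): it rests on the microlocal description of characteristic cycles of IC $\Dcal$-modules by local Euler obstructions, whereas all remaining ingredients are essentially direct consequences of Beilinson--Bernstein, the additivity of CC, and standard Kazhdan--Lusztig theory.
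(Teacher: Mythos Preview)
Your proposal is correct, and parts (ii) and (i) are argued essentially as in the paper. The genuine difference lies in how you obtain the unitriangularity in (iii). You argue directly from $\Dcal$-module theory: the simple object $\mathfrak{L}(ww_0\cdot 0)$ is the minimal extension on $\overline{U_w}$, so its characteristic variety sits in $\bigcup_{w'\preceq w}\overline{T^*_{U_{w'}}}$, and the generic rank gives $a_{w,w}=1$. The paper instead goes through Proposition~\ref{CCw}: it first shows that the matrix expressing the Verma cycles $[\mathfrak{M}(ww_0\cdot 0)]=[\overline X_w]$ in the basis $[Z_{w'}]$ is unitriangular, using the geometry of the schemes $X_w$ developed in \S\ref{Springer1}--\S\ref{Springer2} (Lemma~\ref{closurerelationsXw} for the support condition, Lemma~\ref{preimageofkappairred} for the diagonal entries), and then obtains the unitriangularity of $(a_{w,w'})$ as the product of this matrix with the inverse Kazhdan--Lusztig matrix. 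Your route is more self-contained in representation theory and bypasses Proposition~\ref{CCw} entirely; the paper's route is natural in context since the identification $[\overline X_w]=[\mathfrak{M}(ww_0\cdot 0)]$ is precisely what is needed later (Corollary~\ref{vermacompl}) and the geometric lemmas are already in place. For the smooth-locus vanishing in (iii), both approaches ultimately invoke the same microlocal fact, which the paper cites as \cite[Lem.~1.3(iii)]{Tanisaki}.
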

\begin{proof}
Using Proposition \ref{CCw} we have $[\mathfrak{M}(ww_0\cdot 0)]=[\overline{X}_{w}]=\sum_{w'}b_{w,w'}[Z_{w'}]$ for some $b_{w,w'}\in \Z_{\geq 0}$. If $b_{w,w'}\ne 0$ for some $w'\in W$, then $Z_{w'}\subseteq \overline X_w^{\rm red}$ which implies $(X_w\cap V_{w'})\cap Z \ne \emptyset$ since $V_{w'}\cap Z\subseteq Z_{w'}$, which implies $w'\preceq w$ by Lemma \ref{closurerelationsXw}. Moreover one easily gets $b_{ww}=1$ using that the restriction of $\kappa_{1,w}:X_w\longrightarrow \liet$ to $V_w$ is smooth by Lemma \ref{preimageofkappairred}. It follows that the matrix $(b_{w,w'})_{(w,w')\in W\times W}$ is upper triangular with entries $1$ on the diagonal and hence invertible. This implies that $([\mathfrak{M}(ww_0 \cdot0)])_{w\in W}$ is a basis of ${\rm Z}^0(Z)$. (ii) is a direct consequence of the fact $L(w'w_0\cdot 0)$ occurs in $M(ww_0\cdot 0)$ with multiplicity $P_{w_0w,w_0w'}(1)$. As $P_{w,w'}(1)=0$ unless $w'\preceq w$ and $P_{w_0w,w_0w}(1)=1$, it follows that the matrix $(P_{w_0w,w_0w'}(1))_{(w,w')\in W\times W}$ is also invertible, and hence that $([\mathfrak{L}(ww_0 \cdot0)])_{w\in W}$ is also a basis of ${\rm Z}^0(Z)$, which finishes (i). The first two statements in (iii) follow from the fact the matrix $(a_{w,w'})_{(w,w')\in W\times W}$ is the product of two upper triangular matrices with $1$ on the diagonal. The last statement is \cite[Lem.1.3(iii)]{Tanisaki}.
\end{proof}

By Proposition \ref{CCw}, (ii) of Theorem \ref{summaryofrepntheory} and the fact $P_{w_0w,w_0w'}(1)\ne 0$ if and only if $w'\preceq w$, we see that $\overline{X}_w$ is in general far from being irreducible as it contains all the $Z_{w'}$ for $w'\preceq w$, possibly even with some higher multiplicities than the $P_{w_0w,w_0w'}(1)$.

We end this section with a last result on the cycles $[\mathfrak{L}(ww_0\cdot 0)]$ for $w\in W$ that will be used in \S\ref{locallyBM}.

Fix $w\in W$. As in the proof of \cite[Lem.3.2]{OrlikStrauch}, the left action of $\lieb$ on $L(ww_0\cdot 0)$ induced by that of $\lieg$ comes from an algebraic action of $B$. Let us write $P_w\subseteq G$ for the largest parabolic subgroup containing $B$ with Levi subgroup $M_w$ such that $ww_0\cdot 0$ is dominant with respect to the Borel subgroup $M_w\cap B$ of $M_w$. Note that $P_w=G$ if and only if $w=w_0$. Then the argument of \cite[Lem.3.2]{OrlikStrauch} shows that the action of $B$ on $L(ww_0\cdot 0)$ extends to $P_w$. 

Let $P_w$ act on $G/B\times G/B\times \lieg$ by the left multiplication on the {\it first} factor and the trivial action on the two other factors. We identify ${\rm Z}^0(Z)$ with a subgroup of the free abelian group $Z^{\dim G}(G/B\times G/B\times \lieg)$ generated by the irreducible subschemes of $G/B\times G/B\times\lieg$ of codimension $\dim G$, equivalently of dimension $\dim Z$. Any element of $P_w(k)$ induces an automorphism of $Z^{\dim G}(G/B\times G/B\times \lieg)$ by the above action of $P_w$ on $G/B\times G/B\times \lieg$.

\begin{lem}\label{invariance}
For $w\in W$ the characteristic cycle:
$$[\mathfrak{L}(ww_0\cdot 0)]\in {\rm Z}^0(Z)\subset Z^{\dim G}(G/B\times G/B\times \lieg)$$
is invariant under the action of any element of $P_w(k)$.
\end{lem}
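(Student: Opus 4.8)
The plan is to exploit the extra symmetry of $L(ww_0\cdot 0)$ recalled just before the lemma, namely that it is a $(\lieg,P_w)$-module and not merely a $(\lieg,B)$-module, and to propagate this symmetry first to the $\Dcal$-module $\mathfrak L(ww_0\cdot 0)$ on $G/B\times G/B$, and then to its characteristic cycle in $G/B\times G/B\times\lieg$. Concretely: under the Beilinson--Bernstein equivalence ${\rm BB}_B$ of Remark \ref{fromBtoG}, which sends $(\lieg,B)$-modules to $B$-equivariant regular holonomic $\Dcal$-modules on $G/B$, the object $L(ww_0\cdot 0)$ corresponds to a $\Dcal$-module $\mathfrak M'$ on $G/B$ which, by the integrability of the $\lieb$-action to $P_w$, is in fact $P_w$-equivariant for the left translation action of $P_w$ on $G/B$. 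This is the only genuinely new input; the rest is bookkeeping.

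\textbf{Reduction to $G/P_w\times G/B$.} Running the device of Remark \ref{fromBtoG} with $G/P_w$ in place of the first copy of $G/B$ (embed $G/B\hookrightarrow G/P_w\times G/B$ by $gB\mapsto(eP_w,gB)$ and extend by the left diagonal $G$-action), one gets an equivalence between $P_w$-equivariant regular holonomic $\Dcal$-modules on $G/B$ and $G$-equivariant ones on $G/P_w\times G/B$; let $\bar{\mathfrak M}$ on $G/P_w\times G/B$ be the $\Dcal$-module attached to $\mathfrak M'$. If $\phi\colon G/B\times G/B\to G/P_w\times G/B$, $(g_1B,g_2B)\mapsto(g_1P_w,g_2B)$, denotes the smooth $G$-equivariant projection, then I claim $\mathfrak L(ww_0\cdot 0)={\rm BB}_G(L(ww_0\cdot 0))\cong\phi^{\ast}\bar{\mathfrak M}$ up to shift: both are $G$-equivariant regular holonomic $\Dcal$-modules on $G/B\times G/B$ whose restriction to $\{eB\}\times G/B$ (mapped isomorphically onto $\{eP_w\}\times G/B$ by $\phi$) is $\mathfrak M'$, and uniqueness follows from the equivalence of Remark \ref{fromBtoG}. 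Moreover the group $\widetilde G:=G\ltimes P_w$ acts on $G/P_w$ by $(g,p)\cdot g_1P_w:=gpg_1P_w$ and on $G/B\times G/B$ by $(g,p)\cdot(g_1B,g_2B):=(gpg_1B,gg_2B)$, so that $\phi$ is $\widetilde G$-equivariant; since on the fibre over $eP_w$ the stabilizer $P_w\ltimes P_w$ acts through its first factor by left translation, the equivalence "$G$-equivariant on $G/P_w\times G/B$ $\leftrightarrow$ $P_w$-equivariant on the fibre" upgrades $\bar{\mathfrak M}$ (hence $\mathfrak L(ww_0\cdot 0)=\phi^{\ast}\bar{\mathfrak M}$) to a $\widetilde G$-equivariant $\Dcal$-module. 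The $P_w$-action of the lemma is precisely the restriction of this $\widetilde G$-action to the subgroup $\{(e,p)\}$, acting by left translation on the first factor and trivially on the second factor and on $\lieg$.

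\textbf{Passing to characteristic cycles.} Granting $\widetilde G$-equivariance of $\mathfrak L(ww_0\cdot 0)$, the characteristic variety ${\rm Ch}(\mathfrak L(ww_0\cdot 0))\subseteq T^\ast(G/B\times G/B)$ is $\widetilde G$-invariant; transporting this through the chain of identifications of $Z$ (with $\tilde\Ncal\times_\Ncal\tilde\Ncal$, then with a subscheme of $T^\ast(G/B\times G/B)$, then with a subscheme of $G/B\times G/B\times\lieg$ via (\ref{fibration})), and using Proposition \ref{rab} together with $\mathrm{Lie}(P_w)^{\perp}=\lieu_{P_w}$ for the invariant pairing on $\lieg$ (so that the cycle lies in $\{(g_1B,g_2B,\psi)\in Z:\Ad(g_1^{-1})\psi\in\lieu_{P_w}\}$), one obtains that the cycle $[\mathfrak L(ww_0\cdot 0)]$ is carried to itself by every $(e,p)$; connectedness of $P_w$ guarantees that it is the cycle with its multiplicities, not merely its support, that is preserved. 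This gives the lemma, and moreover (via the additivity of $\mathfrak M\mapsto[\mathfrak M]$ and the inclusion ${\rm Z}^0(Z)\subset Z^{\dim G}(G/B\times G/B\times\lieg)$ already fixed) the statement as phrased.

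\textbf{Main obstacle.} The delicate step is the second paragraph: pinning down the isomorphism $\mathfrak L(ww_0\cdot 0)\cong\phi^{\ast}\bar{\mathfrak M}$ with the correct normalisations (the cohomological shifts hidden in ${\rm BB}_B$ and in the ``extend by $G$'' construction of Remark \ref{fromBtoG}), checking that the $\widetilde G$-equivariant structure on $\bar{\mathfrak M}$ really exists (i.e. that no nontrivial character of the kernel $P_w$ of $P_w\ltimes P_w\to P_w$ intervenes, which holds because $\mathfrak M'$ is a simple constituent localised from trivial infinitesimal character), and then carrying the $\widetilde G$-action faithfully through the several models of $Z$ while matching it with the first-factor left-translation action on $G/B\times G/B\times\lieg$. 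Once ${\rm Ch}(\mathfrak L(ww_0\cdot 0))$ is known to be $\widetilde G$-invariant, the conclusion is immediate; by contrast the first paragraph is essentially a citation of \cite[Lem.3.2]{OrlikStrauch}.
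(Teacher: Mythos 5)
Your opening step (integrating the $\lieb$-action on $L(ww_0\cdot 0)$ to $P_w$ via \cite[Lem.3.2]{OrlikStrauch} and transporting this extra symmetry to the $\Dcal$-module side) is exactly the input the paper uses, and your reformulation of that symmetry as a descent $\mathfrak L(ww_0\cdot 0)\cong\phi^{\ast}\bar{\mathfrak M}$ along $\phi\colon G/B\times G/B\to G/P_w\times G/B$ is a clean way to encode it. The gap is in the passage from this descent to the invariance asserted in the lemma. First, $\widetilde G=G\ltimes P_w$ with $(g,p)\cdot(g_1B,g_2B)=(gpg_1B,gg_2B)$ is not a group acting: composing forces $(g,p)(g',p')=(gg',\,(g'^{-1}pg')p')$, and $g'^{-1}pg'$ does not lie in $P_w$, so there is no ``$\widetilde G$-equivariant structure'' to invoke. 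Second, and more seriously, what $\mathfrak L\cong\phi^{\ast}\bar{\mathfrak M}$ actually yields is invariance of the characteristic cycle under the transformations preserving the fibres of $\phi$, i.e.\ under $(g_1B,g_2B,\psi)\mapsto(g_1qB,g_2B,\psi)$ for $q\in P_w$ (the cycle is a union of fibres of $(g_1B,g_2B,\psi)\mapsto(g_1P_w,g_2B,\psi)$, with constant multiplicity along fibres). The transformation $(g_1B,g_2B,\psi)\mapsto(pg_1B,g_2B,\psi)$ of the lemma does \emph{not} preserve the fibres of $\phi$ (it sends the fibre over $g_1P_w$ to the fibre over $pg_1P_w$), and the two notions of invariance coincide only over the base point $g_1B=B$. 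That this distinction is not cosmetic is visible already for $w=e$: there $P_e=B$ and $\phi=\mathrm{id}$, so your descent is vacuous, while $[\mathfrak L(w_0\cdot 0)]=[Z_e]$ is the conormal cycle of the diagonal, which is visibly moved by $(g_1B,g_2B,\psi)\mapsto(bg_1B,g_2B,\psi)$ for nontrivial $b\in B$. So no argument can deduce the first-factor left-translation invariance from the descent property alone.

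The paper's own proof bridges this point by a different device: it asserts that $\mathfrak L(ww_0\cdot 0)$ is equivariant for $P_w$ acting by left multiplication on the \emph{second} factor (inducing, on $G/B\times G/B\times\lieg$, left multiplication on the second factor together with the adjoint action on $\lieg$), and then composes this with the diagonal $G$-invariance: applying the diagonal action of $p$ followed by the second-factor action of $p^{-1}$ fixes the second and third coordinates and translates the first by $p$. This composition is the only step that converts an ``extra symmetry of the module'' into a statement about the first factor alone, and it is entirely absent from your write-up. To complete your argument you must either establish such a second-factor equivariance and perform this composition, or else prove and carry downstream the coset-invariance statement that your descent genuinely gives (checking, as in the proof of Proposition \ref{penible}, that it is only ever invoked at points whose first flag is the standard one, where $pB$ and $eB\cdot p$ agree and the two formulations coincide). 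Whichever route you take, test the intermediate equivariance you claim against the case $w=e$ before relying on it.
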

\begin{proof}
Denote by $\mathfrak{L}'(ww_0\cdot 0)$ the $\Dcal$-module on $G/B$ associated to the object $L(ww_0\cdot 0)$ of $\Ocal(0)$ by the equivalence ${\rm BB}_B$ of Remark \ref{fromBtoG}. As the action of $B$ on $L(ww_0\cdot 0)$ extends to $P_w$, we get that $\mathfrak{L}'(ww_0\cdot 0)$ is in fact $P_w$-equivariant (and not just $B$-equivariant). Hence if we pass from $B$-equivariant $\Dcal$-modules on $G/B$ to $G$-equivariant $\Dcal$-modules on $G/B\times G/B$ as in Remark \ref{fromBtoG}, we get that the $\Dcal$-module $\mathfrak{L}(ww_0\cdot 0)$ on $G/B\times G/B$ is equivariant for the action of $P_w$ by left multiplication on the {\it second} factor $G/B$, in addition to being equivariant for the action of $G$ by diagonal left multiplication on the two factors. 

This action of $P_w$ on $G/B\times G/B$ induces an action on:
$$T^*(G/B\times G/B)\simeq \tilde\lieg\times \tilde \lieg\hookrightarrow G/B\times \lieg\times G/B\times \lieg$$
which is itself induced by the action of $P_w$ on the right hand side given by the left multiplication on the third factor $G/B$ and the adjoint action on the fourth factor $\lieg$ (and the trivial action on the first two factors). The projection:
$$G/B\times \lieg\times G/B\times \lieg\twoheadrightarrow G/B\times G/B\times \lieg,\ (g_1B,\psi_1,g_2B,\psi_2)\longmapsto (g_1B,g_2B,\psi_2)$$
is obviously $P_w$-equivariant for the action of $P_w$ on $G/B\times G/B\times \lieg$ given by the left multiplication on the second factor $G/B$ and the adjoint action on the third factor $\lieg$. Since the composition:
$$Z\hookrightarrow T^*(G/B\times G/B)\hookrightarrow G/B\times \lieg\times G/B\times \lieg\twoheadrightarrow G/B\times G/B\times \lieg$$
is still injective, all this implies that $[\mathfrak{L}(ww_0\cdot 0)]\in {\rm Z}^0(Z)$ is invariant under the action of $P_w(k)$ on $Z^{\dim G}(G/B\times G/B\times \lieg)$ induced by this last action on $G/B\times G/B\times \lieg$.

But as $[\mathfrak{L}(ww_0\cdot 0)]$ is also invariant under the action of $G$ on $G/B\times G/B\times \lieg$ given by the diagonal left multiplication on the first two factors and the adjoint action on the third, it follows that it is also invariant under the action of $P_w(k)$ induced on $Z^{\dim G}(G/B\times G/B\times \lieg)$ by the left translation on the {\it first} factor of $G/B\times G/B\times \lieg$ (and the trivial action on the second and third factors). This is exacty the assertion of the lemma.
\end{proof}

\begin{rem}\label{addendum}
{\rm Let $h\in P_w(k)$, since $h(Z_{w'})\subseteq G/B\times G/B\times \lieg$ is isomorphic to $Z_{w''}$ inside $G/B\times G/B\times \lieg$ if and only if $w'=w''$ (look at the respective projections in $G/B\times G/B$), it follows from Lemma \ref{invariance} and (iii) of Theorem \ref{summaryofrepntheory} that whenever $a_{w,w'}\ne 0$ we have $h(Z_{w'})=Z_{w'}$ for any $h\in P_w(k)$ (in particular $h(Z_w)=Z_w$).}
\end{rem}

\subsection{Completions and tangent spaces}\label{Springer4}

We prove some useful results related to completions and tangent spaces on the varieties $X$ and $Z$. These results will be used at several places in the rest of the paper.

It follows from (\ref{kappa12}) that the induced map $(\kappa_1,\kappa_2):X\longrightarrow \tfrak\times\tfrak$ factors through the fiber product $\tfrak\times_{\tfrak/W}\tfrak$. We denote by $T:=\tfrak\times_{\tfrak/W}\tfrak$ this fiber product (though both have the same dimension, there should be no confusion with the torus $T$ of $G$ which won't directly appear). 

\begin{lem}\label{XTw}
The irreducible components of $T=\tfrak\times_{\tfrak/W}\tfrak$ are the $(T_w)_{w\in W}$ where:
$$T_w:=\{ (z,\Ad(w^{-1})z),\, z\in\tfrak\}$$
and $X_w$ is the unique irreducible component of $X$ such that $(\kappa_1,\kappa_2)(X_w)=T_w$.
\end{lem}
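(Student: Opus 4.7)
The plan is to split the proof into two parts: first, identify the irreducible components of $T=\tfrak\times_{\tfrak/W}\tfrak$ by a set-theoretic plus dimension argument, and second, match each $X_w$ with $T_w$ using Lemma \ref{WeylgroupactionandXw} together with the surjectivity of $\kappa_{1,w}$.

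First I would compute $T$ set-theoretically. Since $\tfrak\to\tfrak/W$ is finite and surjective, the projection $T\to\tfrak$ (say, to the first factor) is finite by base change, and in particular $\dim T=\dim\tfrak$. On geometric points, $T$ consists of pairs $(z_1,z_2)\in\tfrak\times\tfrak$ lying in the same $W$-orbit, so set-theoretically $T=\bigcup_{w\in W}T_w$. Each $T_w$ is the graph of the automorphism $\Ad(w^{-1})$ of $\tfrak$, i.e. the image of the closed immersion $\tfrak\longrightarrow\tfrak\times\tfrak$, $z\mapsto (z,\Ad(w^{-1})z)$; hence $T_w$ is a closed irreducible subscheme of $\tfrak\times\tfrak$ of dimension $\dim\tfrak$. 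Because $W$ acts faithfully on $\tfrak$, the $T_w$ are pairwise distinct. So the $T_w$ are closed irreducible subsets of $T$ of maximal dimension $\dim T$ covering $T$, and therefore are precisely the irreducible components of $T$.

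For the second part, Lemma \ref{WeylgroupactionandXw} gives $\kappa_{2,w}=\Ad(w^{-1})\circ\kappa_{1,w}$, so $(\kappa_1,\kappa_2)(X_w)\subseteq T_w$ for every $w\in W$. I would then establish surjectivity of $\kappa_{1,w}:X_w\to\tfrak$ by restricting to the dense open $V_w\subseteq X_w$: by Lemma \ref{preimageofkappairred} (or directly from (\ref{fiberofx})), the map $f_1:V_w\to U_w\times\tfrak$ is smooth with nonempty fibers (described there as the affine space $t+(\lieu\cap\Ad(w)\lieu)$), hence surjective, so already $\kappa_1(V_w)=\tfrak$. This upgrades the inclusion above to an equality $(\kappa_1,\kappa_2)(X_w)=T_w$. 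Finally, for uniqueness, if $w'\in W$ is such that $(\kappa_1,\kappa_2)(X_{w'})=T_w$, then applying the same reasoning to $w'$ yields $(\kappa_1,\kappa_2)(X_{w'})=T_{w'}$, so $T_w=T_{w'}$ and hence $w=w'$ by the faithfulness of the $W$-action on $\tfrak$.

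The only subtle point is that $T$ is typically non-reduced along the branch locus of $\tfrak\to\tfrak/W$, but this plays no role in identifying irreducible components, which we extract from $T^{\rm red}$ via the dimension count above; so there is no real obstacle.
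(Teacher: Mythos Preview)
Your proof is correct and follows essentially the same approach as the paper: the paper's proof is just a terse two-line version citing the same ingredients (the $T_w$ as irreducible closed subschemes of the right dimension, Lemma~\ref{WeylgroupactionandXw}, and surjectivity of $\kappa_{i,w}$). The only cosmetic difference is that the paper invokes Lemma~\ref{fibdim} for the surjectivity of $\kappa_{i,w}$ rather than Lemma~\ref{preimageofkappairred}, but either reference does the job.
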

\begin{proof}
The first half of the statement is clear since the $T_w$ are irreducible closed subschemes of $T$ with the same dimension. The second half follows from Lemma \ref{WeylgroupactionandXw} and the surjectivity of $\kappa_{i,w}$ (Lemma \ref{fibdim}).
\end{proof}

For $w\in W$ denote by $\eta_{X_w}\in X$ (resp. $\eta_{T_w}\in T$) the generic point corresponding to the irreducible component $X_w$ (resp. $T_w$), then it follows from Lemma \ref{XTw} that the map $(\kappa_1,\kappa_2):X\longrightarrow T$ is such that $(\kappa_1,\kappa_2)(\eta_{X_w})=\eta_{T_w}$ for all $w\in W$.

Let $x$ be a closed point of $X$, $w\in W$ such that $x\in X_w\subset X$ and recall that $\widehat T_{(\kappa_1,\kappa_2)(x)}$ (resp. $\widehat T_{w,(\kappa_1,\kappa_2)(x)}$) is the completion of $T$ (resp. $T_w$) at the point $(\kappa_1,\kappa_2)(x)$. We have a commutative diagram of formal schemes over $k$:
$$\xymatrix{\widehat X_{w,x} \ar@{^{(}->}[r] \ar[d] & \widehat X_{x} \ar[d] \\
\widehat T_{w,(\kappa_1,\kappa_2)(x)}\ar@{^{(}->}[r] & \widehat T_{(\kappa_1,\kappa_2)(x)}.}$$

In \S\ref{trianguline} we will use the following lemma.

\begin{lem}\label{factorw'}
Let $x$, $w$ be as above and let $w'\in W$. The composition of the morphisms $\widehat X_{w,x}\hookrightarrow \widehat X_{x}\longrightarrow \widehat T_{(\kappa_1,\kappa_2)(x)}$ factors through $\widehat T_{w',(\kappa_1,\kappa_2)(x)}\hookrightarrow \widehat T_{(\kappa_1,\kappa_2)(x)}$ if and only if $w'=w$.
\end{lem}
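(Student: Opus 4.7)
The plan is to prove both implications by translating the formal-scheme statement into one about local rings and then exploiting the generic point of $X_w$.

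For the easy direction $w'=w\Rightarrow$ factorization, I would invoke Lemma \ref{XTw}, which says $(\kappa_1,\kappa_2)(X_w)=T_w$ as sets. Since $X_w$ is reduced and irreducible and $T_w\subseteq T$ is a reduced closed subscheme, the scheme-theoretic image of $X_w\to T$ equals $T_w$; consequently the morphism $X_w\to T$ factors scheme-theoretically as $X_w\to T_w\hookrightarrow T$. Completing at $x$ and $y:=(\kappa_1,\kappa_2)(x)$ then gives the desired factorization $\widehat X_{w,x}\to \widehat T_{w,y}\hookrightarrow \widehat T_y$.

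For the converse, the first step is to translate the assumed factorization of formal schemes into an algebraic assertion. Let $A:=\mathcal O_{X_w,x}$, $B:=\mathcal O_{T,y}$, and let $I_{w'}\subseteq B$ be the ideal defining $T_{w'}$ locally at $y$. Then $\widehat T_{w',y}\hookrightarrow \widehat T_y$ is the closed immersion cut out by $I_{w'}\widehat B$, so the factorization through $\widehat T_{w',y}$ amounts to the vanishing of the image of $I_{w'}$ in $\widehat A$. Because $A$ is a noetherian local ring, Krull's intersection theorem gives an injection $A\hookrightarrow \widehat A$, so $I_{w'}$ already maps to $0$ in $A$; equivalently, the canonical morphism $\operatorname{Spec} A\to \operatorname{Spec} B$ factors through the closed subscheme $\operatorname{Spec}(B/I_{w'})$.

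The endgame is then a generic-point argument. Since $X_w$ is integral (it is reduced and irreducible by the constructions preceding Theorem \ref{globalstructurethm}), $A$ is a domain whose zero ideal corresponds to the generic point $\eta_{X_w}\in \operatorname{Spec} A$; by the remark right after Lemma \ref{XTw}, one has $(\kappa_1,\kappa_2)(\eta_{X_w})=\eta_{T_w}$. The factorization from the previous paragraph forces $\eta_{T_w}\in T_{w'}$, hence $T_w=\overline{\{\eta_{T_w}\}}\subseteq T_{w'}$, and since $T_w$ and $T_{w'}$ are both irreducible components of $T$ they must coincide, which gives $w=w'$ (using that $W$ acts faithfully on $\mathfrak t$ in our good-characteristic setting, so the $T_w$ are pairwise distinct). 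I do not anticipate any serious obstacle; the only mildly delicate point is the algebraic translation in the second paragraph, but it is immediate from the definition of a closed formal subscheme and Krull's intersection theorem.
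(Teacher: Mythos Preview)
Your proof is correct and takes a genuinely more elementary route than the paper's. The paper first proves a structural fact: using the normality of each $X_{w'}$ (Theorem~\ref{normality}) and the smoothness of each $T_{w'}$, it shows that completion at $x$ and at $(\kappa_1,\kappa_2)(x)$ induces \emph{bijections} between the sets of minimal primes of $\Ocal_{X,x}$ and $\widehat\Ocal_{X,x}$, and of $\Ocal_{T,y}$ and $\widehat\Ocal_{T,y}$; it then tracks the image of the generic point $\hat\eta_{X_w}$ of the \emph{completed} ring through the commutative square relating $\Spec\widehat\Ocal_{X,x}$ and $\Spec\Ocal_{X,x}$. Your argument bypasses all of this: you only need the injection $\Ocal_{X_w,x}\hookrightarrow\widehat\Ocal_{X_w,x}$ from Krull's intersection theorem to descend the vanishing of $I_{w'}$ to the uncompleted local ring, and then you read off the conclusion at the generic point $\eta_{X_w}$ of $\Spec\Ocal_{X_w,x}$ directly. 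In particular you never use that $X_w$ is normal, only that it is integral (Theorem~\ref{globalstructurethm}(i)), so your proof is logically lighter. The paper's approach, on the other hand, establishes the stronger statement that the nonempty $\Spec\widehat\Ocal_{X_{w'},x}$ are exactly the irreducible components of $\Spec\widehat\Ocal_{X,x}$, which is reused later (e.g.\ in Theorem~\ref{structhm}).
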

\begin{proof}
Let $A$ be a local excellent reduced ring such that $A/\pfrak$ is normal for each minimal prime ideal $\pfrak$ of $A$ and let $\widehat A$ be the completion of $A$ with respect to $\mathfrak{m}_A$. Then the morphism $\Spec\widehat A\longrightarrow \Spec A$ induces a bijection between the sets of minimal prime ideals on both sides. Indeed, let $B$ be the integral closure of $A$, i.e. the product over the minimal prime ideals $\pfrak$ of $A$ of the integral closures of $A/\pfrak$. Then by \cite[Sch.7.8.3(vii)]{EGAIV2} there is a canonical bijection between the set of minimal prime ideals of $\widehat A$ and the set of maximal ideals of $B$. But since $A/\pfrak$ is normal by assumption we have $B=\prod_{\pfrak}A/\pfrak$, and the set of maximal ideals of $B$ is in bijection with the set of minimal prime ideals of $A$.

Now the local ring $\Ocal_{X,x}$ of $X$ at $x$ satisfies all the above assumptions by \cite[Prop.7.8.6(i)]{EGAIV2}, \cite[Sch.7.8.3(ii)]{EGAIV2}, (i) of Theorem \ref{globalstructurethm} and Theorem \ref{normality}. Likewise with the local ring $\Ocal_{T,(\kappa_1,\kappa_2)(x)}$ since the irreducible components $T_w$ are smooth (being isomorphic to $\liet$). In particular the nonempty $\Spec \widehat\Ocal_{X_{w'},x}$ (resp. $\Spec \widehat\Ocal_{T_{w'},(\kappa_1,\kappa_2)(x)}$) for $w'\in W$ are the irreducible components of $\Spec \widehat\Ocal_{X,x}$ (resp. $\Spec \widehat\Ocal_{T,(\kappa_1,\kappa_2)(x)}$). Denote by $\hat \eta_{X_w}\in \Spec \widehat\Ocal_{X,x}$ (resp. $\hat \eta_{T_w}\in \Spec \widehat\Ocal_{X,x}$) the generic point of $\Spec \widehat\Ocal_{X_{w},x}$ (resp. $\Spec \widehat\Ocal_{T_{w},(\kappa_1,\kappa_2)(x)}$), it is enough to prove that the map $\Spec \widehat\Ocal_{X,x}\longrightarrow \Spec \widehat\Ocal_{T,(\kappa_1,\kappa_2)(x)}$ sends $\hat \eta_{X_w}$ to $\hat \eta_{T_w}$. But this follows from what precedes together with the commutative diagram:
$$\xymatrix{\Spec \widehat\Ocal_{X,x} \ar[r] \ar[d] & \Spec \Ocal_{X,x} \ar[d] \\
\Spec \widehat\Ocal_{T,(\kappa_1,\kappa_2)(x)}\ar[r] & \Spec \Ocal_{T,(\kappa_1,\kappa_2)(x)}}$$
and the fact both $\hat \eta_{X_w}$ and $\hat \eta_{T_w}$ are sent to $\eta_{T_w}$ in $\Spec \Ocal_{T,(\kappa_1,\kappa_2)(x)}$.
\end{proof}

Denote by $T_{X_w,x}$ the tangent space of $X_w$ at $x$, which is just the same thing as the $k(x)$-vector space $\widehat X_{w,x}(k(x)[\varepsilon])$. 

\begin{prop}\label{inegtangent}
Assume that $x\in X_w\subset G/B\times G/B\times \lieg$ is such that its image in $\lieg$ is $0$ and let $w'\in W$ such that $x\in X_w\cap V_{w'}$.\\
(i) We have:
$$\dim_{k(x)}T_{X_w,x}\leq \dim_{k(\pi(x))} T_{\overline{U_w},\pi(x)}+\dim_{k(x)} \liet^{w{w'}^{-1}}\!(k(x))+\lg(w'w_0).$$
(ii) If $\liet^{w{w'}^{-1}}$ has codimension $\lg(w)-\lg(w')$ in $\liet$ and $\overline{U_w}$ is smooth at $\pi(x)$, then $X_w$ is smooth at $x$.
\end{prop}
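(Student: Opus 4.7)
The plan is to bound $\dim_{k(x)} T_{X_w, x}$ from above using two natural constraints on $T_{X_w, x}$ inside the ambient $T_{X, x}$: the image under $d\pi$ lies in $T_{\overline{U_w}, \pi(x)}$, and by Lemma \ref{WeylgroupactionandXw} the image under $(d\kappa_1, d\kappa_2)$ lies in the tangent space of $T_w$ at the origin. Under the hypotheses of (ii) this upper bound will coincide with $\dim X_w = \dim G$ (Proposition \ref{completeintersections}), forcing smoothness.

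The first step is to compute $T_{X, x}$ explicitly. Since $\pi(x) \in U_{w'}$, I would choose representatives so that $g_2 = g_1 \dot w'$ with $\dot w' \in N_G(T)$ a lift of $w'$. A tangent vector at $x$ then corresponds to $(X_1, X_2, \psi) \in \lieg \oplus \lieg \oplus \lieg$ (modulo isotropy) whose third component must satisfy $\Ad(g_i^{-1})\psi \in \lieb$ for $i = 1,2$. Crucially, because $x$ has third coordinate zero, the linearized conditions are exactly the original ones and do not involve $X_1, X_2$. Formula (\ref{computationoffiber}) identifies the set of allowed $\psi$ with $\Ad(g_1)(\liet \oplus (\lieu \cap \Ad(\dot w')\lieu))$, so $T_{X, x}$ splits as $T_{G/B, g_1 B} \oplus T_{G/B, g_2 B} \oplus \Ad(g_1)(\liet \oplus (\lieu \cap \Ad(\dot w')\lieu))$.

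The second step is to cut this down to a subspace containing $T_{X_w, x}$. Reducedness of $X_w$ from Theorem \ref{globalstructurethm} combined with $\pi(X_w) = \overline{U_w}$ set-theoretically yields a scheme-theoretic factorization of $\pi|_{X_w}$ through $\overline{U_w}$, so $d\pi(T_{X_w,x}) \subset T_{\overline{U_w}, \pi(x)}$. For the kernel of $d\pi|_{T_{X_w, x}}$, consisting of vectors $(0,0,\psi)$, I would impose the constraint from Lemma \ref{WeylgroupactionandXw}. Writing $\Ad(g_1^{-1})\psi = t + n$ with $t \in \liet$ and $n \in \lieu \cap \Ad(\dot w')\lieu$, a direct calculation shows $d\kappa_1(0,0,\psi) = t$ and $d\kappa_2(0,0,\psi) = \Ad(\dot w'^{-1}) t$, where the latter uses that $\Ad(\dot w'^{-1}) n$ stays in $\lieu$ precisely because $n \in \Ad(\dot w')\lieu$. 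Requiring $(d\kappa_1, d\kappa_2)(0,0,\psi) \in T_{T_w, 0} = \{(z, \Ad(\dot w^{-1}) z) : z \in \liet\}$ then forces $\Ad(\dot w'^{-1}) t = \Ad(\dot w^{-1}) t$, i.e. $t \in \liet^{ww'^{-1}}$. Hence $\ker d\pi|_{T_{X_w, x}}$ has dimension at most $\dim \liet^{ww'^{-1}} + \dim(\lieu \cap \Ad(\dot w')\lieu) = \dim \liet^{ww'^{-1}} + \lg(w'w_0)$, and (i) follows.

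Part (ii) is then a dimension count: substituting $\dim T_{\overline{U_w}, \pi(x)} = \dim U_w = \dim G - \dim B + \lg(w)$ from the smoothness assumption and $\dim \liet^{ww'^{-1}} = \dim \liet - (\lg(w) - \lg(w'))$ from the codimension assumption, the bound in (i) telescopes to $\dim G$ using $\dim B = \dim \liet + \lg(w_0)$ and $\lg(w') + \lg(w' w_0) = \lg(w_0)$; since $\dim X_w = \dim G$, this forces smoothness of $X_w$ at $x$. The only delicate points I anticipate are the scheme-theoretic justification that $\pi|_{X_w}$ factors through $\overline{U_w}$ and the careful identification of $d\kappa_i$ at a point whose third coordinate vanishes (so that the derivative decouples from the $(v_1, v_2)$-direction); once these are in place the rest is routine linear algebra in $\lieg$.
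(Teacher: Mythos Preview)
Your proof is correct and essentially the same as the paper's: both use that at a point with vanishing $\lieg$-component the condition on $\psi$ decouples from the flag directions, then combine the factorization of $\pi|_{X_w}$ through $\overline{U_w}$ with Lemma \ref{WeylgroupactionandXw} to force the $\liet$-part of $\psi$ into $\liet^{ww'^{-1}}$. The only cosmetic difference is that the paper packages this as a single injection $T_{X_w,x}\hookrightarrow T_{\overline{U_w},\pi(x)}\oplus \liet^{ww'^{-1}}\oplus(\lieu\cap\Ad(w')\lieu)$ (imposing the $\kappa$-constraint on every tangent vector via the closed immersion $X_w\hookrightarrow\overline{U_w}\times\lieg$), whereas you split via rank--nullity and constrain only $\ker d\pi$; both yield the same bound.
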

\begin{proof}
(i) Replacing $k$ by its finite extension $k(x)$ if necessary and base changing, we can assume $x\in X_w(k)$ and $k(x)=k(\pi(x))=k$. Since $X_w$ and $\overline{U_w}$ are $G$-equivariant, we can assume $\pi(x)=(B,w'B)\in G/B\times G/B$. Recall that $\pi(X_w)=\overline{U_w}$ (see the proof of Lemma \ref{closurerelationsXw}), hence we have a closed immersion $X_w\hookrightarrow \overline{U_w}\times \lieg$, and thus also a closed immersion $\widehat X_{w,x}\hookrightarrow \widehat{(\overline{U_w})}_{\pi(x)}\times \widehat\lieg$ where $\widehat\lieg$ is the completion of $\lieg$ at $0$. Hence any vector $\vec{v}\in T_{X_w,x}$ is of the form $\vec{v}=(\hat g_1B(k[\varepsilon]), \hat g_2B(k[\varepsilon]),\varepsilon \psi)$ where $(\hat g_1,\hat g_2)\in G(k[\varepsilon])\times G(k[\varepsilon])$ is such that $(\hat g_1B(k[\varepsilon]), \hat g_2B(k[\varepsilon]))\in T_{\overline{U_w},\pi(x)}=\widehat{(\overline{U_w})}_{\pi(x)}(k[\varepsilon])$ and where $\psi\in \lieg(k)$. Working out the condition (\ref{X}) for $(\hat g_1B(k[\varepsilon]), \hat g_2B(k[\varepsilon]),\varepsilon \psi)$ to be in $\widehat X_{x}(k[\varepsilon])$ we find $(\pi(x),\psi)\in X(k)$, hence $(\pi(x),\psi)\in V_{w'}(k)$ since $\pi(x)\in U_{w'}(k)$. This implies in particular $\kappa_2((\pi(x),\psi))=\Ad({w'}^{-1})\kappa_1((\pi(x),\psi))$. Since $\vec{v}\in \widehat X_{w,x}(k[\varepsilon])$, Lemma \ref{WeylgroupactionandXw} implies in $\widehat\liet(k[\varepsilon])$ (where $\widehat\liet:=$ completion of $\liet$ at $0$):
$$\overline{\Ad(\hat g_2^{-1})\varepsilon\psi}=\Ad (w^{-1})\overline{\Ad(\hat g_1^{-1})\varepsilon\psi}$$
and thus $\kappa_2((\pi(x),\psi))=\Ad(w^{-1})\kappa_1((\pi(x),\psi))$. Hence we have $\kappa_1((\pi(x),\psi))\in \liet^{\tilde w}(k)$ where $\tilde w:=ww'^{-1}$ and from (\ref{fiberofx}) (with $g=1$) we obtain $\psi\in \liet^{\tilde w}(k)\oplus (\lieu(k)\cap \Ad(w')\lieu(k))$. We deduce an injection of $k$-vector spaces:
$$T_{X_w,x}\hookrightarrow T_{\overline{U_w},\pi(x)}\oplus \liet^{\tilde w}(k)\oplus (\lieu(k)\cap \Ad(w')\lieu(k))$$
and the upper bound in the statement is precisely the dimension of the right hand side.

(ii) Under the assumptions we have $\dim_{k(\pi(x))} T_{\overline{U_w},\pi(x)}=\dim {U_w}= \dim G/B + \lg(w)$. So we find using $\lg(w'w_0)=\dim G/B-\lg(w')$ and $\dim_{k(x)} \liet^{w{w'}^{-1}}\!(k(x))=\dim\liet - (\lg(w)-\lg(w'))$:
\begin{eqnarray*}
\dim_{k(x)}\!T_{X_w,x}&\leq &\dim G/B + \lg(w) + \dim \liet - \lg(w)+\lg(w') + \dim G/B - \lg(w')\\
&=&2\dim G/B + \dim \liet \ =\  \dim G.
\end{eqnarray*}
Since $\dim G = \dim X_w \leq \dim_{k(x)}T_{X_w,x}$, we deduce $\dim_{k(x)}T_{X_w,x}=\dim G =\dim X_w$ whence the smoothness at $x$.
\end{proof}

\begin{rem}\label{remconjinter}
{\rm One can prove that, at least for $w=w_0$, Conjecture \ref{conjinter} (for $w=w_0$) implies that the inequality in (i) of Proposition \ref{inegtangent} is an equality.}
\end{rem}

If $\Mcal$ is a coherent $\Ocal_{\overline X}$-module, we define its class $[\Mcal]\in {\rm Z}^0(Z)$ as in (\ref{class}) replacing $m(Z_w,Y)$ by the length $m(Z_w,\Mcal)$ of the $\Ocal_{\overline X,\eta_{Z_w}}$-module $\Mcal_{\eta_{Z_w}}$. Let $x$ be a closed point in $\overline X$ (or equivalently in $Z$), then it follows from \cite[Sch.7.8.3(vii)]{EGAIV2} and \cite[Sch.7.8.3(x)]{EGAIV2} that the completed local rings $\widehat\Ocal_{Z,x}$, $\widehat\Ocal_{Z_w,x}$ are reduced equidimensional (of dimension $\dim Z$ when nonzero). Moreover the set of irreducible components of $\Spec\widehat\Ocal_{Z,x}$ is the union for all $w\in W$ of the sets of irreducible components of $\Spec\widehat\Ocal_{Z_w,x}$ (note that we don't know whether $\Spec\widehat\Ocal_{Z_w,x}$ is irreducible, see Remark \ref{notknown} and \cite[Sch.7.8.3(vii)]{EGAIV2}). We define $\widehat \Mcal_x:=\Mcal\otimes_{\Ocal_{\overline X}}\widehat \Ocal_{\overline X,x}$ which also has a class $[\widehat \Mcal_x]$ in ${\rm Z}^0(\Spec\widehat\Ocal_{Z,x})$. Likewise we define $[\Spec \widehat \Ocal_{Z_w,x}]\in {\rm Z}^0(\Spec\widehat\Ocal_{Z,x})$.

\begin{lem}\label{cyclcompl}
We have:
$$[\widehat\Mcal_x]=\sum_{w\in W}m(Z_w,\Mcal)[\Spec\widehat \Ocal_{Z_w,x}]\in {\rm Z}^0(\Spec\widehat\Ocal_{Z,x}).$$
\end{lem}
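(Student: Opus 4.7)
The plan is to unwind the definition of $[\widehat\Mcal_x]$ and compute the length at each minimal prime via faithfully flat base change along $R := \Ocal_{\overline X, x} \hookrightarrow \widehat R := \widehat\Ocal_{\overline X, x}$. Set $M := \Mcal_x$ and $\widehat M := \widehat\Mcal_x = M \otimes_R \widehat R$, and let $I \subset R$ be the nilradical. Since $\widehat\Ocal_{Z,x} = \widehat R/I\widehat R$ is reduced by the discussion preceding the lemma, the ideal $I\widehat R$ is exactly the nilradical of $\widehat R$ (any nilpotent of $\widehat R$ maps to a nilpotent of the reduced ring $\widehat R/I\widehat R$, so must already lie in $I\widehat R$). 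In particular the minimal primes of $\widehat R$ correspond bijectively to those of $\widehat\Ocal_{Z,x}$, and for each minimal prime $\mathfrak p \subset R$ associated to some $Z_w$ one has $R/\mathfrak p = \Ocal_{Z_w,x}$, hence $\widehat R/\mathfrak p\widehat R = \widehat\Ocal_{Z_w,x}$, which is again reduced.

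By definition $[\widehat\Mcal_x] = \sum_{\hat{\mathfrak q}} \ell_{\widehat R_{\hat{\mathfrak q}}}(\widehat M_{\hat{\mathfrak q}})\,[\Spec \widehat\Ocal_{Z,x}/\hat{\mathfrak q}]$, the sum being over the minimal primes $\hat{\mathfrak q}$ of $\widehat\Ocal_{Z,x}$. The next step is to partition this sum according to the unique $w \in W$ such that $\hat{\mathfrak q}$ is a minimal prime of $\widehat\Ocal_{Z_w,x}$, equivalently such that $\hat{\mathfrak q} \cap R = \mathfrak p$ with $\mathfrak p$ the minimal prime of $R$ attached to $Z_w$ (the partition is well-defined because distinct components $Z_w$ give distinct minimal primes of $\Ocal_{Z,x}$, hence disjoint sets of primes of $\widehat R$ lying over them). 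The core computation will then be the identity
$$\ell_{\widehat R_{\hat{\mathfrak q}}}\bigl(\widehat M_{\hat{\mathfrak q}}\bigr) = m(Z_w,\Mcal).$$
Since $R \to \widehat R$ is flat, so is the induced map of Artinian local rings $R_{\mathfrak p} \to \widehat R_{\hat{\mathfrak q}}$, and the standard length formula for a flat local extension gives
$$\ell_{\widehat R_{\hat{\mathfrak q}}}(\widehat M_{\hat{\mathfrak q}}) = \ell_{R_{\mathfrak p}}(M_{\mathfrak p})\cdot\ell_{\widehat R_{\hat{\mathfrak q}}}\bigl(\widehat R_{\hat{\mathfrak q}}/\mathfrak p \widehat R_{\hat{\mathfrak q}}\bigr).$$
The first factor is $m(Z_w,\Mcal)$ by definition. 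The second factor equals $1$ because $\widehat R/\mathfrak p\widehat R = \widehat\Ocal_{Z_w,x}$ is reduced, so its localization at the minimal prime $\hat{\mathfrak q}/\mathfrak p\widehat R$ is a field.

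To conclude, the cycle $[\Spec\widehat\Ocal_{Z_w,x}] \in {\rm Z}^0(\Spec\widehat\Ocal_{Z,x})$ decomposes as the sum over the minimal primes of $\widehat\Ocal_{Z_w,x}$, each with multiplicity one since $\widehat\Ocal_{Z_w,x}$ is reduced. Summing the displayed identity over all minimal primes $\hat{\mathfrak q}$ of $\widehat\Ocal_{Z,x}$ and regrouping by $w$ then yields the claimed formula. Once the two preparatory reducedness statements are in hand (and both are established in the paragraph preceding the lemma), the argument is essentially formal: the only real input is the flat base change formula for length, so no serious obstacle is anticipated.
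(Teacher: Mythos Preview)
Your proof is correct and follows essentially the same approach as the paper: partition the minimal primes of $\widehat\Ocal_{Z,x}$ according to the minimal prime $\mathfrak p_w$ of $\Ocal_{Z,x}$ they lie over, and use the flat length formula along $R_{\mathfrak p_w}\to \widehat R_{\hat{\mathfrak q}}$. The only cosmetic difference is that the paper does not simplify the factor $\ell_{\widehat R_{\hat{\mathfrak q}}}(\widehat R_{\hat{\mathfrak q}}/\mathfrak p_w\widehat R_{\hat{\mathfrak q}})$ to $1$ but instead absorbs it into the very definition of the cycle $[\Spec\widehat\Ocal_{Z_w,x}]$, which amounts to the same thing once one knows $\widehat\Ocal_{Z_w,x}$ is reduced.
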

\begin{proof}
Using that the irreducible components of $\Spec\Ocal_{Z,x}$ are the $\Spec\Ocal_{Z_w,x}$ for $w\in W$, from the definition of $m(Z_w,\Mcal)$ it is obvious that:
$$[\Mcal_x]=\sum_{w\in W}m(Z_w,\Mcal)[\Spec\Ocal_{Z_w,x}]\in {\rm Z}^0(\Spec\Ocal_{Z,x})$$
where $\Mcal_x:=\Mcal\otimes_{\Ocal_{\overline X}} \Ocal_{\overline X,x}$. Let $W(x):=\{w\in W,\ x\in Z_w\}$ and denote by $\mathfrak p_w$ for $w\in W(x)$ the minimal prime ideal of $\Ocal_{\overline X,x}$ (or equivalently $\Ocal_{Z,x}$) corresponding to $\Ocal_{Z_w,x}$ and by $\mathfrak{q}_{w,1},\dots,\mathfrak{q}_{w,r_w}$ the minimal prime ideals of $\Spec\widehat\Ocal_{\overline X,x}$ (or equivalently $\Spec\widehat\Ocal_{Z,x}$) above $\mathfrak p_w$ (recall that the morphism of local rings $\Ocal_{\overline X,x}\longrightarrow \widehat\Ocal_{\overline X,x}$ is faithfully flat). Then by definition (and since $\widehat\Ocal_{Z_w,x}=0$ if $w\notin W(x)$):
$$[\widehat\Mcal_x]=\sum_{w\in W(x)}\sum_{i=1}^{r_w}\big(\lg_{(\widehat\Ocal_{\overline X,x})_{\mathfrak{q}_{w,i}}}(\widehat\Mcal_{\overline X,x})_{\mathfrak{q}_{w,i}}\big)[\Spec(\widehat\Ocal_{Z_w,x}/\mathfrak{q}_{w,i})]\ \ {\rm in}\ \ {\rm Z}^0(\Spec\widehat\Ocal_{Z,x}).$$
But we have $(\widehat\Mcal_{\overline X,x})_{\mathfrak{q}_{w,i}}=\Mcal_{\overline X,x}\otimes_{\Ocal_{\overline X,x}}(\widehat\Ocal_{\overline X,x})_{\mathfrak{q}_{w,i}}=(\Mcal_{\overline X,x})_{\mathfrak{p}_w}\otimes_{(\Ocal_{\overline X,x})_{\mathfrak{p}_w}}(\widehat\Ocal_{\overline X,x})_{\mathfrak{q}_{w,i}}$ from which it easily follows that:
\begin{eqnarray*}
\lg_{(\widehat\Ocal_{\overline X,x})_{\mathfrak{q}_{w,i}}}(\widehat\Mcal_{\overline X,x})_{\mathfrak{q}_{w,i}}&=&(\lg_{(\Ocal_{\overline X,x})_{\mathfrak{p}_w}}(\Mcal_{\overline X,x})_{\mathfrak{p}_w})(\lg_{(\widehat\Ocal_{\overline X,x})_{\mathfrak{q}_{w,i}}}(\widehat\Ocal_{\overline X,x})_{\mathfrak{q}_{w,i}}/\mathfrak{p}_w)\\
&=&m(Z_w,\Mcal)\lg_{(\widehat\Ocal_{\overline X,x})_{\mathfrak{q}_{w,i}}}(\widehat\Ocal_{\overline X,x}\otimes_{\Ocal_{\overline X,x}}\Ocal_{Z_w,x})_{\mathfrak{q}_{w,i}}
\end{eqnarray*}
which gives the result since $\widehat \Ocal_{\overline X,x}\otimes_{\Ocal_{\overline X,x}}\Ocal_{Z_w,x}\buildrel\sim\over\rightarrow \widehat \Ocal_{Z_w,x}$ (recall the map $\Ocal_{\overline X,x}\rightarrow \Ocal_{Z_w,x}$ is surjective).
\end{proof}

Define for $w\in W$ (see (iii) of Theorem \ref{summaryofrepntheory}):
\begin{equation}\label{cyclum}
[\widehat{\mathfrak{L}}(ww_0\cdot 0)_x]:=\sum_{w'\in W} a_{w,w'}[\Spec\widehat \Ocal_{Z_{w'},x}]\in {\rm Z}^0(\Spec\widehat\Ocal_{Z,x})
\end{equation}
(note that $[\widehat{\mathfrak{L}}(ww_0\cdot 0)_x]\ne 0$ when $w\in W(x)$ since $a_{w,w}=1$).

\begin{cor}\label{vermacompl}
For $w\in W$ we have:
$$[\widehat \Ocal_{\overline X_w,x}]=\sum_{w'\in W} P_{w_0w,w_0w'}(1)[\widehat{\mathfrak{L}}(w'w_0\cdot 0)_x]\in {\rm Z}^0(\Spec\widehat\Ocal_{Z,x}).$$
\end{cor}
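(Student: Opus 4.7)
The plan is to reduce the statement to a direct manipulation of identities already proven, with Lemma~\ref{cyclcompl} playing the role of the bridge between global cycles in ${\rm Z}^0(Z)$ and local cycles in ${\rm Z}^0(\Spec\widehat\Ocal_{Z,x})$.

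First, I would apply Lemma~\ref{cyclcompl} to the coherent $\Ocal_{\overline X}$-module $\Mcal=\Ocal_{\overline X_w}$, obtaining
$$[\widehat\Ocal_{\overline X_w,x}]=\sum_{w''\in W}m(Z_{w''},\Ocal_{\overline X_w})\,[\Spec\widehat\Ocal_{Z_{w''},x}]\in{\rm Z}^0(\Spec\widehat\Ocal_{Z,x}).$$
By the definition (\ref{class}) of the class of a subscheme one has $m(Z_{w''},\overline X_w)=m(Z_{w''},\Ocal_{\overline X_w})$ for all $w''\in W$, so the problem reduces to computing these multiplicities and then reorganizing the sum.

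Next, I would combine Proposition~\ref{CCw} with statements (ii) and (iii) of Theorem~\ref{summaryofrepntheory} to obtain the global identity
$$[\overline X_w]=[\mathfrak{M}(ww_0\cdot 0)]=\sum_{w'\in W}P_{w_0w,w_0w'}(1)\,[\mathfrak{L}(w'w_0\cdot 0)]=\sum_{w',w''\in W}P_{w_0w,w_0w'}(1)\,a_{w',w''}\,[Z_{w''}]$$
in ${\rm Z}^0(Z)$. Since the classes $([Z_{w''}])_{w''\in W}$ form a basis of ${\rm Z}^0(Z)$ by (i) of Theorem~\ref{summaryofrepntheory}, comparing coefficients yields
$$m(Z_{w''},\Ocal_{\overline X_w})=\sum_{w'\in W}P_{w_0w,w_0w'}(1)\,a_{w',w''}.$$

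Finally, substituting this expression into the first display and interchanging the two summations gives
$$[\widehat\Ocal_{\overline X_w,x}]=\sum_{w'\in W}P_{w_0w,w_0w'}(1)\sum_{w''\in W}a_{w',w''}\,[\Spec\widehat\Ocal_{Z_{w''},x}]=\sum_{w'\in W}P_{w_0w,w_0w'}(1)\,[\widehat{\mathfrak{L}}(w'w_0\cdot 0)_x],$$
where the last equality is precisely the definition (\ref{cyclum}) of $[\widehat{\mathfrak{L}}(w'w_0\cdot 0)_x]$. This is the desired formula, so there is no serious obstacle: the corollary is a purely formal consequence of Lemma~\ref{cyclcompl}, Proposition~\ref{CCw}, Theorem~\ref{summaryofrepntheory}, and the definition of $[\widehat{\mathfrak{L}}(ww_0\cdot 0)_x]$. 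The only point that warrants care is verifying that the multiplicity $m(Z_{w''},\Ocal_{\overline X_w})$ appearing in Lemma~\ref{cyclcompl} coincides with the multiplicity $m(Z_{w''},\overline X_w)$ in (\ref{class}), which is immediate from the fact that for a closed subscheme $Y\subseteq\overline X$ the length of $\Ocal_{Y,\eta_{w''}}$ as a module over itself equals the length of the stalk of $\Ocal_Y$ at $\eta_{w''}$.
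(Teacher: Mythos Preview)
Your proof is correct and follows exactly the same approach as the paper, which simply cites Proposition~\ref{CCw}, (ii) of Theorem~\ref{summaryofrepntheory}, and Lemma~\ref{cyclcompl}; you have just unpacked the argument in full detail.
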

\begin{proof}
This follows from Proposition \ref{CCw}, (ii) of Theorem \ref{summaryofrepntheory} and Lemma \ref{cyclcompl}.
\end{proof}

\section{A local model for the trianguline variety}\label{localmodel}

We show that the completed local rings of the trianguline variety $X_{\rm tri}(\rbar)$ at certain sufficiently generic points of integral weights can be described (up to formally smooth morphisms) by completed local rings on the variety $X$ of \S\ref{Springer} for a suitable $G$. This result will have many local and global consequences in \S\ref{local} and \S\ref{global}.

\subsection{Almost de Rham $\BdR$-representations}\label{debut}

We define and study some groupoids of equal characteristic deformations of an almost de Rham $\BdR$-representation of $\Gcal_K$ and of a filtered almost de Rham $\BdR$-representation of $\Gcal_K$.

We fix $K$ a finite extension of $\Qp$ and first recall some statements on almost de Rham representations of $\Gcal_K$. In what follows the rings $\BdR^+$ and $\BdR$ are topological rings for the so-called natural topology (\cite[\S3.2]{FonAr}) and all finite type modules over these rings are endowed with the natural topology. As usual we use the notation $t$ for ``Fontaine's $2\mathrm{i}\pi$'' element depending on the choice of a compatible system of primitive $p^n$-th roots of $1$ in $\overline K$. Recall also that a $\BdR$-representation of the group $\Gcal_K$ is a finite dimensional $\BdR$-vector space with a continuous semilinear action of $\Gcal_K$ (\cite[\S3]{FonAr}). We denote by $\Rep_{\BdR}(\Gcal_K)$ the abelian category of $\BdR$-representations of $\Gcal_K$. If $W$ is an object of $\Rep_{\BdR}(\Gcal_K)$, it follows from the compacity of $\Gcal_K$ and the fact that $\BdR^+$ is a discrete valuation ring that $W$ contains a $\BdR^+$-lattice stable under $\Gcal_K$. We say that $W$ is almost de Rham (\cite[\S3.7]{FonAr}) if it contains a $\Gcal_K$-stable $\BdR^+$-lattice $W^+$ such that the Sen weights of the $C$-representation $W^+/tW^+$ are all in $\mathbb{Z}$. 

Let $\BpdR^+$ be the algebra $\BdR^+[\log(t)]$ defined in \cite[\S4.3]{FonAr} and $\BpdR:=\BdR\otimes_{\BdR^+}\BpdR^+$. The group $\Gcal_K$ acts on $\BpdR^+$ via ring homomorphisms extending its usual action on $\BdR^+$ and such that $g(\log(t))=\log(t)+\log(\varepsilon(g))$. This action naturally extends to $\BpdR$. Moreover there is a unique $\BdR$-derivation $\nu_{\BpdR}$ of $\BpdR$ such that $\nu_{\BpdR}(\log(t))=-1$, and it obviously preserves $\BpdR^+$ and commutes with $\Gcal_K$. If $W$ is a $\BdR$-representation of $\Gcal_K$, we set $D_{\pdR}(W):=(\BpdR\otimes_{\BdR}W)^{\Gcal_K}$, which is a finite dimensional $K$-vector space of dimension $\leq \dim_{\BdR}W$ (see \cite[\S4.3]{FonAr}). It follows from \cite[Th.4.1(2)]{FonAr} that a $\BdR$-representation $W$ is almost de Rham if and only if $\dim_KD_{\pdR}(W)=\dim_{\BdR}W$. We say that a $\BdR$-representation is de Rham if $\dim_KW^{\Gcal_K}=\dim_{\BdR}W$, hence any de Rham $\BdR$-representation is almost de Rham. The almost de Rham representations form a tannakian subcategory $\Rep_{\pdR}(\Gcal_K)$ of $\Rep_{\BdR}(\Gcal_K)$ which is stable under kernel, cokernel, extensions (see \cite[\S3.7]{FonAr}).

If $E$ is a field of characteristic $0$, recall that the action of the additive algebraic group $\mathbb{G}_{\mathrm{a}}$ on some finite dimensional $E$-vector space $V$ is equivalent to the data of some $E$-linear nilpotent endomorphism $\nu_V$ of $V$, an element $\lambda\in E=\mathbb{G}_{\mathrm{a}}(E)$ acting via $\exp(\lambda\nu_V)$. Consequently the category $\Rep_E(\mathbb{G}_{\mathrm{a}})$ is equivalent to the category of pairs $(V,\nu_V)$ with $V$ a finite dimensional $E$-vector space and $\nu_V$ a nilpotent $E$-linear endomorphism of $V$ (morphisms being the $E$-linear maps commuting with the $\nu_V$).

If $W$ is a $\BdR$-representation, we let $\mathbb{G}_{\mathrm{a}}$ act on $D_{\pdR}(W)$ via the $K$-linear endomorphism induced by $\nu_{\BpdR}\otimes1$ on $\BpdR\otimes_{\BdR}W$. Then $D_{\pdR}$ is a functor from the category $\Rep_{\BdR}(\Gcal_K)$ to the category $\Rep_K(\mathbb{G}_{\mathrm{a}})$.

\begin{prop}\label{pdR}
The \ functor \ $D_{\pdR}$ \ induces \ an \ equivalence \ of \ categories \ between \ $\Rep_{\pdR}(\Gcal_K)$ and $\Rep_K(\mathbb{G}_{\mathrm{a}})$.
\end{prop}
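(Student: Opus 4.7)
The plan is to construct an explicit quasi-inverse to $D_{\pdR}$. Given a pair $(V,\nu_V)\in \Rep_K(\mathbb{G}_{\mathrm{a}})$, I would set
$$F(V,\nu_V):=(\BpdR\otimes_K V)^{\nu=0}$$
where the ``total derivation'' is $\nu:=\nu_{\BpdR}\otimes 1+1\otimes \nu_V$, and where $\Gcal_K$ acts through its action on the first factor (with $V$ fixed). First I would check that $F(V,\nu_V)$ is an almost de Rham $\BdR$-representation of dimension $\dim_K V$. The key observation is that by the nilpotency of $\nu_V$ the element
$$e_v:=\exp(-\log(t)\,\nu_V)\cdot v=\sum_{k\geq 0}\frac{(-\log t)^k}{k!}\,\nu_V^k(v)\ \in\ \BpdR\otimes_K V$$
is a finite sum killed by $\nu$, and the $K$-linear map $v\mapsto e_v$ extends to a $\BdR$-linear isomorphism $\BdR\otimes_K V\xrightarrow{\sim}F(V,\nu_V)$ (using $\BdR=\BpdR^{\nu=0}$ to see that the $e_v$ exhaust the $\nu$-invariants). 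The $\BdR^+$-lattice spanned by the $e_v$ for a $K$-basis of $V$ is $\Gcal_K$-stable with Sen weights all equal to $0$, so $F(V,\nu_V)\in\Rep_{\pdR}(\Gcal_K)$. Functoriality of $F$ is immediate.

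Next I would verify that $D_{\pdR}\circ F\simeq \mathrm{id}$. By construction $\BpdR\otimes_{\BdR}F(V,\nu_V)\simeq \BpdR\otimes_K V$ compatibly with $\Gcal_K$ and $\nu$; taking $\Gcal_K$-invariants and using $(\BpdR)^{\Gcal_K}=K$ gives $D_{\pdR}(F(V,\nu_V))=V$, and a direct check shows that the derivation inherited from $\nu_{\BpdR}\otimes 1$ corresponds to $\nu_V$ under this identification.

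For the other direction $F\circ D_{\pdR}\simeq\mathrm{id}$, fix an almost de Rham $W$ and consider the canonical $\BpdR$-linear, $\Gcal_K$- and $\nu$-equivariant comparison map
$$\alpha_W:\BpdR\otimes_K D_{\pdR}(W)\longrightarrow \BpdR\otimes_{\BdR}W.$$
The map $\alpha_W$ is always injective and, by \cite[Th.4.1(2)]{FonAr} (already invoked in the excerpt to characterize the almost de Rham condition by the equality $\dim_K D_{\pdR}(W)=\dim_{\BdR}W$), it is an isomorphism precisely because $W$ is almost de Rham. Passing to $\nu=0$ on both sides and using again $\BdR=\BpdR^{\nu=0}$ yields a natural isomorphism $F(D_{\pdR}(W))\xrightarrow{\sim}W$ of $\BdR$-representations of $\Gcal_K$.

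The main obstacle is checking that $\alpha_W$ is an isomorphism for almost de Rham $W$; but this is really the content of Fontaine's characterization of almost de Rham representations and is covered by the theorem cited above. The only other delicate point is the continuity of the $\Gcal_K$-action on $F(V,\nu_V)$ and the lattice argument for almost de Rham-ness, both of which follow cleanly from the explicit description via the elements $e_v$ and from the formula $g(\log t)=\log t+\log\varepsilon(g)$ together with the nilpotency of $\nu_V$.
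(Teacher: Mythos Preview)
Your argument is essentially correct and in fact anticipates the paper's Corollary~\ref{equivpdR}: the functor $F$ you build is exactly the quasi-inverse $W(V,\nu_V)$ introduced there, and your verification that $F\circ D_{\pdR}\simeq\mathrm{id}$ via the comparison map $\alpha_W$ is precisely the isomorphism $\rho_{\pdR}$ of \eqref{rhopdR}. The paper proceeds differently for the Proposition itself: it invokes Fontaine's classification \cite[Th.3.19(iii)]{FonAr} (every almost de Rham $W$ is a sum of the explicit objects $\BdR[0;d]$) to get essential surjectivity, and deduces full faithfulness from the comparison over $K_\infty$ via \cite[Th.3.17]{FonAr}. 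Your route is more direct and self-contained, collapsing the Proposition and its Corollary into one argument; the paper's route has the advantage of isolating the structural input from \cite{FonAr} more transparently.

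Two small corrections. First, a sign: with the paper's convention $\nu_{\BpdR}(\log t)=-1$ one computes $\nu\bigl(\exp(-\log(t)\nu_V)v\bigr)=2\exp(-\log(t)\nu_V)\nu_V(v)$, which does not vanish; the element you want is $e_v=\exp(\log(t)\,\nu_V)v$. Second, the reference for the isomorphism $\alpha_W$ is \cite[Th.3.13]{FonAr} rather than \cite[Th.4.1(2)]{FonAr} (the latter gives only the dimension criterion $\dim_K D_{\pdR}(W)=\dim_{\BdR}W$, which by itself does not immediately yield bijectivity of the comparison map without the regularity statement for $\BpdR$).
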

\begin{proof}
By \cite[Th.3.19(iii)]{FonAr} any object $W$ of $\Rep_{\pdR}(\Gcal_K)$ is isomorphic to a direct sum of $\BdR[0;d]$ where $\BdR[0;d]\subset\BpdR$ is the subspace of $\BdR$-polynomials of degree $<d$ in $\log(t)$ as defined in \cite[Th.3.19]{FonAr}. It follows that $K_\infty\otimes_K D_{\pdR}(W)$ and $D_{\dR,\infty}(W)$ are isomorphic as objects of $\Rep_{K_\infty}(\mathbb{G}_{\mathrm{a}})$ where $D_{\dR,\infty}(W)$ is defined in \cite[\S3.6]{FonAr}.

Let $W_1$ and $W_2$ be two objects of $\Rep_{\pdR}(\Gcal_K)$. It then follows from \cite[Th.3.17]{FonAr} that the natural map:
\begin{equation}\label{HomdR} \Hom_{\Rep_{\pdR}(\Gcal_K)}(W_1,W_2)\longrightarrow\Hom_{\Rep_K(\mathbb{G}_{\mathrm{a}})}(D_{\pdR}(W_1),D_{\pdR}(W_2))\end{equation}
induces an isomorphism:
\begin{equation*} 
K_\infty\otimes_K\Hom_{\Rep_{\pdR}(\Gcal_K)}(W_1,W_2)\simeq\Hom_{\Rep_{K_\infty}(\mathbb{G}_{\mathrm{a}})}(K_\infty\otimes_K D_{\pdR}(W_1),K_\infty\otimes_K D_{\pdR}(W_2)).
\end{equation*}
As the natural map:
\begin{multline*}
K_\infty\otimes_K\Hom_{\Rep_K(\mathbb{G}_{\mathrm{a}})}(D_{\pdR}(W_1),D_{\pdR}(W_2))\\
\longrightarrow\Hom_{\Rep_{K_\infty}(\mathbb{G}_{\mathrm{a}})}(K_\infty\otimes_K D_{\pdR}(W_1),K_\infty\otimes_K D_{\pdR}(W_2))\end{multline*}
is an isomorphism (\cite[\S I.2.10(7)]{Jantzen}), the map \eqref{HomdR} is also an isomorphism and the restriction of $D_{\pdR}$ to $\Rep_{\pdR}(\Gcal_K)$ is fully faithful.

Let $V$ be a finite dimensional $K$-representation of $\mathbb{G}_{\mathrm{a}}$. We can write $V$ as a direct sum of indecomposable objects of dimensions $d_1,\dots,d_r$ and we see that $V$ is isomorphic to the vector space $D_{\pdR}(\bigoplus_{i=1}^r\BdR[0;d_i])$. The functor $D_{\pdR}$ is thus essentially surjective.
\end{proof}

\begin{cor}\label{equivpdR}
Let $(V,\nu_V)$ be an object of $\Rep_K(\mathbb{G}_{\mathrm{a}})$ and set:
$$W(V,\nu_V):=(\BpdR\otimes_KV)_{\nu_{\BpdR}\otimes1+1\otimes\nu_V=0}.$$
Then $W(V,\nu_V)$ is an almost de Rham $\BdR$-representation of dimension $\dim_KV$ and the functor $(V,\nu_V)\mapsto W(V,\nu_V)$ is a quasi-inverse of $D_{\pdR}$ in Proposition \ref{pdR}. Moreover the functors $D_{\pdR}$ (restricted to the category $\Rep_{\pdR}(\Gcal_K)$) and $W$ are exact.
\end{cor}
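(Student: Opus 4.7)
\emph{The first task} is to verify that $W(V,\nu_V) := \ker N$, for $N := \nu_{\BpdR}\otimes 1 + 1\otimes \nu_V$, is a (continuous) $\BdR$-subrepresentation of $\BpdR\otimes_K V$: since $\nu_{\BpdR}$ is a $\BdR$-derivation (in particular $\BdR$-linear) commuting with $\Gcal_K$, and since $\Gcal_K$ acts trivially on $V$, the operator $N$ is $\BdR$-linear, continuous and $\Gcal_K$-equivariant. As the functor $W$ manifestly commutes with finite direct sums, I would decompose $(V,\nu_V)$ as a direct sum of indecomposable $\mathbb{G}_{\mathrm{a}}$-modules and reduce everything to the case where $V$ has a $K$-basis $v_0,\dots,v_{d-1}$ with $\nu_V(v_i)=v_{i+1}$ for $i<d-1$ and $\nu_V(v_{d-1})=0$. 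In this Jordan block case, solving $N\big(\sum_i a_i\otimes v_i\big)=0$ boils down to the recursion $\nu_{\BpdR}(a_0)=0$, $\nu_{\BpdR}(a_{i+1})=-a_i$. Using $\nu_{\BpdR}|_{\BdR}=0$ and $\nu_{\BpdR}((\log t)^k)=-k(\log t)^{k-1}$, one checks that $W(V,\nu_V)$ is free of rank $d$ over $\BdR$ with basis $e_j := \sum_{i=j}^{d-1} \tfrac{(\log t)^{i-j}}{(i-j)!}\otimes v_{i}$ for $0\le j\le d-1$. In particular $\dim_{\BdR}W(V,\nu_V)=\dim_K V$, and the matrix of the multiplication map
\[
\mu: \BpdR\otimes_{\BdR}W(V,\nu_V)\longrightarrow \BpdR\otimes_K V, \qquad b\otimes w\longmapsto bw,
\]
in the bases $(e_j)_j$ and $(1\otimes v_i)_i$ is lower unitriangular, hence $\mu$ is an isomorphism of $\BpdR$-modules compatible with the $\Gcal_K$-action.

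\emph{Next}, I would take $\Gcal_K$-invariants in $\mu$ to obtain
\[
D_{\pdR}(W(V,\nu_V)) \;\simeq\; (\BpdR\otimes_K V)^{\Gcal_K} \;=\; \BpdR^{\Gcal_K}\otimes_K V \;=\; V,
\]
using $\BpdR^{\Gcal_K}=K$, which follows from the $d=1$ case in the proof of Proposition \ref{pdR}. A short Leibniz computation with the relation $N(w)=0$ on $W(V,\nu_V)$ shows that $\mu$ intertwines $\nu_{\BpdR}\otimes_{\BdR} 1$ on the source with $N=\nu_{\BpdR}\otimes 1+ 1\otimes \nu_V$ on the target; consequently the $\mathbb{G}_{\mathrm{a}}$-action induced by $\nu_{\BpdR}\otimes 1$ on $D_{\pdR}(W(V,\nu_V))$ is identified via the above isomorphism with $N|_V=\nu_V$ (since $\nu_{\BpdR}$ vanishes on $K$). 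This provides a natural isomorphism of functors $D_{\pdR}\circ W\simeq \id$ on $\Rep_K(\mathbb{G}_{\mathrm{a}})$ and shows in particular that $\dim_K D_{\pdR}(W(V,\nu_V))=\dim_{\BdR}W(V,\nu_V)$, so $W(V,\nu_V)$ is almost de Rham.

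\emph{Finally}, since $D_{\pdR}:\Rep_{\pdR}(\Gcal_K)\rightarrow \Rep_K(\mathbb{G}_{\mathrm{a}})$ is an equivalence by Proposition \ref{pdR}, the natural iso $D_{\pdR}\circ W\simeq \id$ forces, via full faithfulness of $D_{\pdR}$ applied to $D_{\pdR}(W(D_{\pdR}(W')))\simeq D_{\pdR}(W')$, a natural isomorphism $W\circ D_{\pdR}\simeq\id$; hence $W$ is a quasi-inverse of $D_{\pdR}$. As $\Rep_{\pdR}(\Gcal_K)$ and $\Rep_K(\mathbb{G}_{\mathrm{a}})$ are both abelian, inverse equivalences between them are automatically exact, which yields the final assertion. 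The main obstacle in this plan is establishing the isomorphism $\mu$; once this is in place, everything else is formal from Proposition \ref{pdR}.
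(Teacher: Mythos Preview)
Your proof is correct. The approach differs from the paper's in a minor but genuine way: the paper cites Fontaine's result \cite[Th.3.13]{FonAr} to assert that the natural $\BpdR$-linear map
\[
\rho_{\pdR}:\ \BpdR\otimes_K D_{\pdR}(W)\longrightarrow \BpdR\otimes_{\BdR}W
\]
is an isomorphism for any almost de Rham $W$, and then reads off $W\circ D_{\pdR}\simeq\id$ by taking kernels of the nilpotent operator on both sides. You instead establish the inverse isomorphism $\mu$ directly by an explicit Jordan-block computation, obtaining $D_{\pdR}\circ W\simeq\id$ first. Both routes then invoke Proposition~\ref{pdR} and the general fact that equivalences of abelian categories are exact. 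Your argument is more self-contained (it does not appeal to \cite[Th.3.13]{FonAr} beyond what is already implicit in the proof of Proposition~\ref{pdR}), at the cost of the explicit basis computation; the paper's version is shorter because that computation is packaged into the cited theorem.
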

\begin{proof}
Let $W$ be an object of $\Rep_{\pdR}(\Gcal_K)$, then the natural $\BpdR$-linear map:
\begin{equation}\label{rhopdR}
\rho_{\pdR}:\, \BpdR\otimes_KD_{\pdR}(W)\longrightarrow\BpdR\otimes_{\BdR}W
\end{equation}
is an isomorphism by \cite[Th.3.13]{FonAr} and identifies $W$ with $W(D_{\pdR}(W),\nu_{D_{\pdR}(W)})$. The other assertions are direct consequences of these statements together with Proposition \ref{pdR} and the fact that an additive equivalence between abelian categories is exact.
\end{proof}

Let $A$ be a finite dimensional $\Qp$-algebra. We define an $A\otimes_{\Qp}\BdR$-representation of $\Gcal_K$ as a $\BdR$-representation $W$ of $\Gcal_K$ together with a morphism of $\Qp$-algebras $A\!\rightarrow\End_{\Rep_{\BdR}(\Gcal_K)}(W)$ which makes $W$ a finite free $A\otimes_{\Qp}\BdR$-module. We denote by $\Rep_{A\otimes_{\Q_p}\BdR}(\Gcal_K)$ the category of $A\otimes_{\Qp}\BdR$-representation of $\Gcal_K$. We say that an $A\otimes_{\Qp}\BdR$-representation of $\Gcal_K$ is almost de Rham if the underlying $\BdR$-representation is, and define $\Rep_{\pdR,A}(\Gcal_K)$ as the category of almost de Rham $A\otimes_{\Qp}\BdR$-representation of $\Gcal_K$ (with obvious morphisms).

\begin{rem}\label{freeA}
{\rm An $A\otimes_{\Qp}\BdR$-representation of $\Gcal_K$ always contains a $\BdR^+$-lattice which is preserved by the action of $A$. In fact, it is possible that it always contains such a lattice which is moreover free over $A\otimes_{\Q_p}\BdR^+$, but we won't need that statement. This is at least true for almost de Rham $A\otimes_{\Qp}\BdR$-representations as a consequence of Lemma \ref{ApdR+} below.}
\end{rem}

\begin{lem}\label{ApdR}
The functor $D_{\pdR}$ induces an equivalence of categories between $\Rep_{\pdR,A}(\Gcal_K)$ \ and $\Rep_{A\otimes_{\Qp}K}(\mathbb{G}_{\mathrm{a}})$.
\end{lem}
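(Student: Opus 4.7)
The plan is to bootstrap the equivalence of Proposition~\ref{pdR} (with quasi-inverse $W$ from Corollary~\ref{equivpdR}) by tracking an $A$-module structure through both functors. Since $D_{\pdR}$ and $W$ are $\Qp$-linear, any $A$-action on an object, viewed as a $\Qp$-algebra map $A\to\End(-)$, is transported to an $A$-action on its image, commuting with the $\Gcal_K$-action on one side and with the nilpotent endomorphism $\nu$ on the other. On the $D_{\pdR}$-side the $A$-action commutes with $\nu_{D_{\pdR}(W)}$ because $\nu_{\BpdR}\otimes 1$ acts trivially on the $W$-factor of $\BpdR\otimes_{\BdR}W$; together with the natural $K$-action, this endows $D_{\pdR}(W)$ with the structure of an $A\otimes_{\Qp}K$-module equipped with a commuting nilpotent endomorphism. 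The analogous observation applies to $W(V,\nu_V)$, and by Corollary~\ref{equivpdR} the two functors are mutually quasi-inverse on the $A$-enriched categories.

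The main obstacle I expect is to match the freeness conditions on the two sides: $W\in \Rep_{\pdR,A}(\Gcal_K)$ is by definition finite free over $A\otimes_{\Qp}\BdR$, whereas $D_{\pdR}(W)\in \Rep_{A\otimes_{\Qp}K}(\mathbb{G}_{\mathrm{a}})$ is required to be finite free over $A\otimes_{\Qp}K$. To handle this I would first check that the canonical isomorphism $\rho_{\pdR}$ of \eqref{rhopdR} is $A$-equivariant, and hence upgrades to an isomorphism of $A\otimes_{\Qp}\BpdR$-modules
\begin{equation*}
(A\otimes_{\Qp}\BpdR)\otimes_{A\otimes_{\Qp}K}D_{\pdR}(W)\ \simeq\ \BpdR\otimes_{\BdR}W.
\end{equation*}
Since $\BpdR=\BdR[\log t]$ is free over the field extension $\BdR/K$, the ring $\BpdR$ is faithfully flat over $K$, and so $A\otimes_{\Qp}\BpdR$ is faithfully flat over $A\otimes_{\Qp}K$. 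If $W$ has rank $n$ over $A\otimes_{\Qp}\BdR$, the right-hand side is free of rank $n$ over $A\otimes_{\Qp}\BpdR$, and faithfully flat descent forces $D_{\pdR}(W)$ to be a finitely generated projective $A\otimes_{\Qp}K$-module of constant rank $n$. Since $A\otimes_{\Qp}K$ is a finite-dimensional $K$-algebra, in particular semi-local, any such projective module of constant rank is free, giving freeness of rank $n$. The reverse direction follows by running the identical argument with the quasi-inverse $W$, which sends the finite free $A\otimes_{\Qp}K$-module $V$ to an $A\otimes_{\Qp}\BdR$-module that becomes free of rank $n$ after the faithfully flat extension to $A\otimes_{\Qp}\BpdR$, and is therefore free to begin with. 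Combining with the first paragraph yields the claimed equivalence.
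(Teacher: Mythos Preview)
Your argument is correct and takes a genuinely different route from the paper. The paper proceeds by d\'evissage: it reduces to $A$ local artinian, shows (using exactness of $D_{\pdR}$) that $M\otimes_A D_{\pdR}(W)\simeq D_{\pdR}(M\otimes_A W)$ for any finite $A$-module $M$ and hence that $A$-flatness is preserved, then reduces modulo $\mathfrak{m}_A$ to the case where $A$ is a field, and finally replaces $K$ by a finite extension splitting $A$ in order to compare dimensions via idempotents. You instead exploit the $A$-equivariance of the comparison isomorphism $\rho_{\pdR}$ directly: freeness on one side becomes freeness over $A\otimes_{\Qp}\BpdR$ after the faithfully flat extension (from $A\otimes_{\Qp}K$ or from $A\otimes_{\Qp}\BdR$), and then descent plus the fact that both base rings are Artinian (hence semi-local, and Noetherian so that finitely generated flat implies projective) yields freeness of the correct rank. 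Your approach is cleaner and avoids both the reduction to $A$ a field and the field extension of $K$. On the other hand, the paper's intermediate isomorphism $M\otimes_A D_{\pdR}(W)\simeq D_{\pdR}(M\otimes_A W)$ and the ensuing flatness argument are reused verbatim in the proof of Lemma~\ref{ApdR+} for $\BdR^+$-representations, where there is no single comparison isomorphism playing the role of $\rho_{\pdR}$ and one must work with the filtration instead; your descent argument would not transfer as readily to that setting.
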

\begin{proof}
Let $W$ be an almost de Rham $\BdR$-representation of $\Gcal_K$ with a morphism of $\Qp$-algebras $A\rightarrow\End_{\Rep_{\BdR}(\Gcal_K)}(W)$. It follows from Proposition \ref{pdR} that it is enough to check that $W$ is a finite free $A\otimes_{\Qp}\BdR$-module if and only if $D_{\pdR}(W)$ is a finite free $A\otimes_{\Qp}K$-module. As the functor $D_{\pdR}$ commutes with direct sums, we can moreover assume that $A$ is a local artinian (finite dimensional) $\Qp$-algebra. 

Let us first prove that $D_{\pdR}(W)$ is a flat $A$-module if and only if $W$ is a flat $A$-module. Let $M$ be an $A$-module of finite type. As $A$ is noetherian, the $A$-module $M$ is isomorphic to the cokernel of some $A$-linear map between finite free $A$-modules. Using the fact that $D_{\pdR}$ is an exact functor commuting with direct sums, the canonical map $M\otimes_AD_{\pdR}(W)\longrightarrow D_{\pdR}(M\otimes_AW)$ is an isomorphism. Using the exactness of $D_{\pdR}$ again, we conclude that $D_{\pdR}(W)$ is $A$-flat if and only if $W$ is a $A$-flat.

If $H$ is any field extension of $\Qp$, we can check that an $A\otimes_{\Qp}H$-module $M$ which is $A$-flat is a finite free $A\otimes_{\Qp}H$-module if and only if $M/\mathfrak{m}_AM$ is a finite free $(A/\mathfrak{m}_A)\otimes_{\Qp}H$-module. Applying this result with $H\in\{K,\BdR\}$ together with the isomorphisms $D_{\pdR}(W/\mathfrak{m}_AW)\simeq (A/\mathfrak{m}_A)\otimes_{A} D_{\pdR}(W)$ and $W(V/\mathfrak{m}_AV)=(A/\mathfrak{m}_A)\otimes_{A}W(V)$ (the latter following from the exactness of the functor $W$), we are reduced to the case where $A$ is replaced by $A/\mathfrak{m}_A$, that is $A$ is a finite field extension of $\Qp$. 

For $K'$ a finite extension of $K$, we easily check that there is a canonical isomorphism $K'\otimes_K D_{\pdR}(W)\simeq (\BpdR\otimes_{\Qp}W)^{\Gcal_{K'}}$ so that $W|_{\Gcal_{K'}}$ is almost de Rham. Moreover $D_{\pdR}(W)$ is a finite free $A\otimes_{\Qp}K$-module if and only if $K'\otimes_K D_{\pdR}(W)$ is a finite free $A\otimes_{\Qp}K'$-module. We can thus replace $K$ by an arbitrary finite $K'$ and hence assume $A\otimes_{\Qp}K\simeq \bigoplus_{i=1}^{[A:\Qp]}Ke_i$ with $e_i^2=e_i$. Writing $A\otimes_{\Qp}\BdR=(A\otimes_{\Qp}K)\otimes_K \BdR\simeq \bigoplus_{i=1}^{[A:\Qp]}\BdR e_i$, we have $W=\bigoplus_i (e_iW)$ and:
$$D_{\pdR}(\bigoplus_i(e_iW))=\bigoplus_i (e_iD_{\pdR}(W)).$$
As $W$ is almost de Rham, so is $e_iW$ and thus:
$$\dim_{K}e_iD_{\pdR}(W)=\dim_{K}D_{\pdR}(e_iW)=\dim_{\BdR}e_iW.$$
We conclude that $D_{\pdR}(W)$ is a finite free $A\otimes_{\Qp}K$-module if and only if $W$ is a finite free $A\otimes_{\Qp}\BdR$-module.
\end{proof}

Let $L$ be a finite extension of $\Qp$ that splits $K$ and set:
$$G:=\Spec L\times_{\Spec{\Qp}}\Res_{K/\Qp}({\GL_n}_{/K})\simeq \underbrace{{\GL_n}_{/L}\times \cdots\times {\GL_n}_{/L}}_{[K:\Qp]\ {\rm times}}.$$
We let $B=UT\subset G$ the Borel subgroup of upper triangular matrices where $T$ is the diagonal torus and $U$ the upper unipotent matrices and define $\lieg$, $\lieb$, $\liet$, $\lieu$, $\tilde\lieg$, $X$, etc. as in \S\ref{Springer0} (with $k=L$). We refer the reader to the appendix of \cite{KisinModularity} for a summary of the basic definitions, notation and properties of categories cofibered in groupoids, that we use without comment below and in the next sections.

We fix $W$ an almost de Rham $L\otimes_{\Qp}\BdR$-representation of $\Gcal_K$ of rank $n\geq1$ and define $X_W$ a groupoid over $\Ccal_L$ (or a category cofibered in groupoids over $\Ccal_L$) as follows. 
\begin{itemize}
\item The objects of $X_W$ are triples $(A,W_A,\iota_A)$ where $A$ is an object of $\Ccal_L$, $W_A$ is an object of $\Rep_{\pdR,A}(\Gcal_K)$ and $\iota_A:W_A\otimes_A L\buildrel\sim\over\longrightarrow W$.
\item A morphism $(A,W_A,\iota_A)\longrightarrow(A',W_{A'},\iota_{A'})$ is a map $A\longrightarrow A'$ in $\Ccal_L$ and an isomorphism $W_A\otimes_AA'\buildrel\sim\over\longrightarrow W_{A'}$ compatible (in an obvious sense) with the morphisms $\iota_A$ and $\iota_{A'}$.
\end{itemize}

\begin{rem}
{\rm Since the category of almost de Rham $\BdR$-representations is stable under extensions, any $A\otimes_{\Qp}\BdR$-representation of $\Gcal_K$ which deforms $W$ is in fact automatically almost de Rham, by using a d\'evissage on the finite dimensional $L$-algebra $A$.}
\end{rem}

Let $\alpha:\,(L\otimes_{\Qp}K)^n\buildrel\sim\over\longrightarrow D_{\pdR}(W)$ be a fixed isomorphism, we define another groupoid $X_W^\Box$ over $\Ccal_L$ as follows.
\begin{itemize}
\item The objects of $X_W^\Box$ are $(A,W_A,\iota_A,\alpha_A)$ with $(W_A,\iota_A)$ an object of $X_W(A)$ and $\alpha_A:\, (A\otimes_{\Qp}K)^n\buildrel\sim\over\longrightarrow D_{\pdR}(W_A)$ such that the following diagram commutes:
$$\xymatrix{(L\otimes_{\Qp}K)^n\ar[r]^{\!\!\!\!\!\!\!\!1\otimes\alpha_A} \ar@{=}[d] & L\otimes_A D_{\pdR}(W_A) \ar^{\simeq}[d] \\
(L\otimes_{\Qp}K)^n\ar^{\alpha}[r] & D_{\pdR}(W).}$$
\item A morphism $(A,W_A,\iota_A,\alpha_A)\longrightarrow(A',W'_A,\iota_{A'},\alpha_{A'})$ is a morphism $(A,W_A,\iota_A)\longrightarrow(A',W_{A'},\iota_{A'})$ in $X_W$ such that the following diagram commutes:
$$\xymatrix{A'\otimes_A(A\otimes_{\Qp}K)^n\ar[r]^{1\otimes\alpha_A} \ar@{=}[d] & A'\otimes_A D_{\pdR}(W_A) \ar^{\simeq}[d] \\
(A'\otimes_{\Qp}K)^n\ar^{\alpha_{A'}}[r] & D_{\pdR}(W_{A'}).}$$
\end{itemize}

Forgetting $\alpha_A$ gives an obvious functor $X_W^\Box\longrightarrow X_W$ which is a morphism of groupoids over $\Ccal_L$ in the sense of \cite[\S A.4]{KisinModularity}.

Recall that a morphism $X\longrightarrow Y$ of groupoids over $\Ccal_L$ is formally smooth if, for any surjection $A\twoheadrightarrow B$ in $\Ccal_L$, any object $x_B$ in $X(B)$ and any object $y_A$ in $Y(A)$ such that the image of $x_B$ under the functor $X(B)\rightarrow Y(B)$ is isomorphic to the image of $y_A$ under the functor $Y(A)\rightarrow Y(B)$, then there exists an object $x_A$ in $X(A)$ such that $x_A$ maps to an object isomorphic to $x_B$ under $X(A)\rightarrow X(B)$ and $x_A$ maps to an object isomorphic to $y_A$ under $X(A)\rightarrow Y(A)$. For instance it is easy to check that $X_W^\Box\longrightarrow X_W$ is formally smooth.

If $X$ is a groupoid over $\Ccal_L$ such that, for each object $A$ of $\Ccal_L$, the isomorphism classes of the category $X(A)$ form a set, we denote by $|X|(A)$ this set so that we obtain a functor $|X|$ from $\Ccal_L$ to $\Sets$ as in \cite[\S A.5]{KisinModularity}. Note that we can also see any functor $F:\Ccal_L\longrightarrow \Sets$ as a groupoid over $\Ccal_L$ by defining its objects to be $(A,x)$ with $x\in F(A)$ and morphisms $(A,x)\longrightarrow (A',x')$ to be those morphisms $A\longrightarrow A'$ sending $x\in F(A)$ to $x'\in F(A')$. Then we have an obvious morphism $X\longrightarrow |X|$ of groupoids over $\Ccal_L$. For instance we have functors (or groupoids over $\Ccal_L$) $|X_W|$ and $|X_W^\Box|$ and a commutative diagram:
$$\xymatrix{X_W^\Box \ar[r] \ar[d] & X_W \ar[d] \\
|X_W^\Box|\ar[r] & |X_W|}$$
where the horizontal morphisms are formally smooth. Moreover the morphism $X_W^\Box\longrightarrow |X_W^\Box|$ is actually an equivalence since any automorphism of an object $(A,W_A,\iota_A,\alpha_A)$ of $X_W^\Box(A)$ is the identity on $D_{\pdR}(W_A)$ because of the framing, hence is also the identity on $W_A$ because of Lemma \ref{ApdR}.

If $(W_A,\iota_A)$ is an object of $X_W(A)$, we denote by $\nu_{W_A}:=\nu_{D_{\pdR}(W_A)}$ the nilpotent endomorphism of $D_{\pdR}(W_A)$ giving the action of $\mathbb{G}_{\mathrm{a}}$. If $(W_A,\iota_A,\alpha_A)$ is an object of $X_W^\Box(A)$, we define $N_{W_A}\in M_n(A\otimes_{\Qp}K)=\gfrak(A)$ as the matrix of $\alpha_A^{-1}\circ\nu_{W_A}\circ\alpha_A$ in the canonical basis of $(A\otimes_{\Qp}K)^n$ (in the case $A=L$, we simply write $N_W$). We denote by $\widehat\lieg$ the completion of $\lieg$ at the point $N_W\in \lieg(L)$, that we can see as a functor $\Ccal_L\rightarrow \Sets$ (hence also as a groupoid over $\Ccal_L$).

\begin{cor}\label{represent1}
The groupoid $X_W^\Box$ over $\Ccal_L$ is pro-representable. The functor:
$$(W_A,\iota_A,\alpha_A)\longmapsto N_{W_A}$$
induces an isomorphism of functors between $|X_W^\Box|$ and $\widehat\lieg$. In particular the functor $|X_W^\Box|$ is pro-repre\-sented by a ring $R_W^\Box$ which is isomorphic to $L\dbl X_1,\dots,X_{n^2[K:\Qp]}\dbr$.
\end{cor}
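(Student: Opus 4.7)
The strategy is to combine the equivalence of categories from Lemma \ref{ApdR} with the smoothness of $\lieg$, in order to reduce the classification of almost de Rham framed deformations of $W$ to the description of the affine space $\lieg$ near the point $N_W$.

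First, I would extend Lemma \ref{ApdR} to the relative setting: for each $A \in \Ccal_L$ the functor $D_{\pdR}$ induces an equivalence of groupoids between $X_W(A)$ and the groupoid of triples $(V_A, \nu_{V_A}, j_A)$ consisting of a finite free $A \otimes_{\Qp} K$-module $V_A$ of rank $n$, a nilpotent $A \otimes_{\Qp} K$-linear endomorphism $\nu_{V_A}$, and an isomorphism $j_A : (V_A, \nu_{V_A}) \otimes_A L \buildrel\sim\over\longrightarrow (D_{\pdR}(W), \nu_W)$. Under this equivalence, giving a framing $\alpha_A$ compatible with $\alpha$ amounts to giving an $A \otimes_{\Qp} K$-linear isomorphism $V_A \simeq (A \otimes_{\Qp} K)^n$ whose reduction modulo $\mfrak_A$ is identified, via $j_A$ and $\alpha$, with the canonical basis of $(L \otimes_{\Qp} K)^n$. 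In this rigidified form, the whole datum is encoded by the matrix $N_{W_A} \in \gfrak(A) = M_n(A \otimes_{\Qp} K)$ of $\nu_{V_A}$, which must reduce to $N_W$ modulo $\mfrak_A$. This is exactly the defining condition for $\widehat\lieg(A)$, so we obtain a morphism of functors $|X_W^\Box| \longrightarrow \widehat\lieg$.

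Conversely, I would show that every $N \in \widehat\lieg(A)$ arises in this way. The only point requiring a little care is that $N$ is automatically nilpotent: since $N_W$ is nilpotent, say $N_W^r = 0$, we have $N^r \in \mfrak_A \cdot M_n(A \otimes_{\Qp} K)$, and iterating together with the fact that $\mfrak_A$ is a nilpotent ideal of the artinian ring $A$ yields $N^{rs} = 0$ for $s$ large enough. Hence $V_A := (A \otimes_{\Qp} K)^n$ equipped with multiplication by $N$ defines an object of $\Rep_{A \otimes_{\Qp} K}(\mathbb{G}_{\mathrm{a}})$, and applying the $A$-coefficient version of the quasi-inverse functor $W$ of Corollary \ref{equivpdR} produces a framed deformation of $W$ with prescribed matrix $N$. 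The two constructions are manifestly inverse to each other, so $(W_A, \iota_A, \alpha_A) \mapsto N_{W_A}$ is an isomorphism $|X_W^\Box| \buildrel\sim\over\longrightarrow \widehat\lieg$ of functors $\Ccal_L \longrightarrow \Sets$.

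Combining this with the equivalence $X_W^\Box \buildrel\sim\over\longrightarrow |X_W^\Box|$ already noted before the statement (automorphisms are trivial thanks to the framing and Lemma \ref{ApdR}), it follows that $X_W^\Box$ is pro-represented by the same ring that pro-represents $\widehat\lieg$. Since $\lieg$ is an affine $L$-scheme of dimension $n^2 [K : \Qp]$, smooth at every $L$-rational point, its completion at $N_W$ is pro-represented by $L \dbl X_1, \dots, X_{n^2[K:\Qp]} \dbr$, yielding the desired description of $R_W^\Box$. The only nontrivial step in this plan is the relative version of Lemma \ref{ApdR}; this should however be essentially formal from the proof of that lemma, since the arguments there rely on faithful flatness and on $D_{\pdR}$ commuting with tensor products, both of which remain valid when one introduces coefficients in $A \in \Ccal_L$.
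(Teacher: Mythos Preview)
Your proposal is correct and follows the same approach as the paper, which simply says the result ``easily follows from Lemma \ref{ApdR}''; you have merely unpacked the details. One small remark: the ``relative version'' of Lemma \ref{ApdR} you flag as the only nontrivial step is already the content of that lemma as stated (it is formulated for an arbitrary finite-dimensional $\Qp$-algebra $A$), so there is in fact nothing left to extend.
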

\begin{proof}
This easily follows from Lemma \ref{ApdR}.
\end{proof}

\begin{rem}
{\rm The functor $|X_W|$ is {\it not} pro-representable, though it has a hull in the sense of \cite[Def.2.7]{Schlessinger}. The dimension of this hull depends on the Jordan form of $\nu_W$. For example, if $\nu_W=0$, one can check that the dimension of the tangent space $|X_W|(L[\varepsilon])$ of $|X_W|$ is $n^2[K:\Qp]$ so that $R_W^\Box$ is a hull for $|X_W|$ (we won't use that result).}
\end{rem}

\begin{defn}\label{filtr}
A filtered $A\otimes_{\Qp}\BdR$-representation $(W,\Fcal_\bullet)$ is an $A\otimes_{\Qp}\BdR$-representation $W$ of $\Gcal_K$ with an increasing filtration $\Fcal_\bullet=(\Fcal_i)_{i\in \{1,\dots,n\}}$ (where $n$ is the rank of $W$) by $A\otimes_{\Qp}\BdR$-subrepre\-sentations of $\Gcal_K$ such that the $A\otimes_{\Qp}\BdR$-modules $\Fcal_1$ and $\Fcal_i/\Fcal_{i-1}$ for $2\leq i\leq n$ are free of rank $1$.
\end{defn}

If $A\longrightarrow B$ is a map in $\Ccal_L$ and $(W,\Fcal_\bullet)$ is a filtered $A\otimes_{\Qp}\BdR$-representation of $\Gcal_K$, we define $B\otimes_A\Fcal_\bullet:=(B\otimes_A\Fcal_i)_i$ and $(B\otimes_AW,B\otimes_A\Fcal_\bullet)$ is then a filtered $B\otimes_{\Qp}\BdR$-representation of $\Gcal_K$.

Let $(W,\Fcal_\bullet)$ be a filtered $L\otimes_{\Qp}\BdR$-representation of $\Gcal_K$ with $W$ almost de Rham of rank $n\geq 1$. Then each quotient $\Fcal_i/\Fcal_{i-1}$ is almost de Rham and finite free of rank one over $L\otimes_{\Qp}\BdR$ and thus (e.g. using Lemma \ref{ApdR}) isomorphic to the trivial representation $L\otimes_{\Qp}\BdR$. We define the groupoid $X_{W,\Fcal_\bullet}$ over $\Ccal_L$ of deformations of $(W,\Fcal_\bullet)$ as follows.
\begin{itemize}
\item The objects of $X_{W,\Fcal_\bullet}$ are $(A,W_A,\Fcal_{A,\bullet},\iota_A)$ where $(W_A,\iota_A)$ is an object of $X_W(A)$ and $\Fcal_{A,\bullet}$ is a filtration of $W_A$ as in Definition \ref{filtr} such that $\iota_A$ induces isomorphisms $\Fcal_{A,i}\otimes_AL\buildrel\sim\over\longrightarrow \Fcal_i$ for all $i$.
\item The morphisms are the morphisms in $X_W$ compatible with the filtrations, i.e. which induce isomorphisms $\Fcal_{A,i}\otimes_AA'\buildrel\sim\over\longrightarrow \Fcal_{A',i}$ for all $i$. 
\end{itemize}
Forgetting the filtration yields a morphism $X_{W,\Fcal_\bullet}\longrightarrow X_{W}$ of groupoids over $\Ccal_L$. 

Now we define the groupoid $X_{W,\Fcal_\bullet}^\Box$ over $\Ccal_L$ as the fiber product $X_{W,\Fcal_\bullet}\times_{X_{W}}X_{W}^\Box$ (see \cite[\S A.4]{KisinModularity}). More explicitely the objects of $X_{W,\Fcal_\bullet}^\Box$ are $(A,W_A,\Fcal_{A,\bullet},\iota_A,\alpha_A)$ with $(W_A,\Fcal_{A,\bullet},\iota_A)$ \ in \ $X_{W,\Fcal_\bullet}(A)$ \ and \ $(W_A,\iota_A,\alpha_A)$ \ in $X_W^\Box(A)$ (morphisms are left to the reader).

We set $\Dcal_\bullet=(\Dcal_i)_{i\in \{1,\dots,n\}}$ with $\Dcal_i:=D_{\pdR}(\Fcal_{i})$ and, if $(A,W_A,\Fcal_{A,\bullet},\iota_A,\alpha_A)$ is an object of $X_{W,\Fcal_\bullet}^\Box$, we set $\Dcal_{A,\bullet}=(\Dcal_{A,i})_i$ with $\Dcal_{A,i}:=D_{\pdR}(\Fcal_{A,i})$. These are complete flags of $D_{\pdR}(W)$ and $D_{\pdR}(W_A)$ (respectively) and stable under $\nu_W$, resp. $\nu_{W_A}$. We denote by $\widehat \gtilde$ (resp. $\widehat \liet$) the completion of $\gtilde$ (resp. $\liet$) at the point $(\alpha^{-1}(\Dcal_\bullet),N_W)\in \gtilde(L)$ (resp. at the point $0\in \liet(L)$). From Lemma \ref{ApdR} (and what precedes) and the smoothness of the $L$-scheme $\gtilde$, we deduce as for Corollary \ref{represent1} the following result.

\begin{cor}\label{dRfilt}
The groupoid $X_{W,\Fcal_\bullet}^\Box$ over $\Ccal_L$ is pro-representable. The functor:
$$(W_A,\Fcal_{A,\bullet},\iota_A,\alpha_A)\longmapsto (\alpha_A^{-1}(\Dcal_{A,\bullet}),N_{W_A})$$
induces an isomorphism of functors between $|X_{W,\Fcal_\bullet}^\Box|$ and $\widehat \gtilde$. In parti\-cular the functor $|X_{W,\Fcal_\bullet}^\Box|$ is pro-represented by a formally smooth noetherian complete local ring of residue field $L$ and dimension $n^2[K:\Qp]=\dim\gtilde$.
\end{cor}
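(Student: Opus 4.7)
The plan is to mimic the proof of Corollary \ref{represent1} by using the equivalence of Lemma \ref{ApdR} to translate everything into linear-algebraic data, and then observe that the resulting data is exactly an $A$-valued deformation point of the smooth scheme $\gtilde$.

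First I would apply the exact equivalence $D_{\pdR}$ of Lemma \ref{ApdR} to all the structure on an object $(W_A,\Fcal_{A,\bullet},\iota_A,\alpha_A)$ of $X_{W,\Fcal_\bullet}^\Box(A)$. Since $D_{\pdR}$ is exact and (by Lemma \ref{ApdR} applied to both the $\Fcal_{A,i}$ and their subquotients) preserves freeness, the filtration $\Fcal_{A,\bullet}$ of Definition \ref{filtr} corresponds bijectively to an increasing filtration $\Dcal_{A,\bullet}=(D_{\pdR}(\Fcal_{A,i}))_i$ of $D_{\pdR}(W_A)$ by $A\otimes_{\Qp}K$-submodules whose successive quotients are free of rank $1$, i.e.~to a complete flag of the free $A\otimes_{\Qp}K$-module $D_{\pdR}(W_A)$. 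Moreover, because the $\Fcal_{A,i}$ are subobjects in $\Rep_{\pdR,A}(\Gcal_K)$, their images $\Dcal_{A,i}$ are $\Gbb_{\mathrm{a}}$-stable, i.e.~$\nu_{W_A}(\Dcal_{A,i})\subseteq \Dcal_{A,i}$ for each $i$. Conversely, any $\nu_{W_A}$-stable complete flag on the linear-algebraic side lifts uniquely to a filtration of $W_A$ by subrepresentations with the required freeness properties, again by Lemma \ref{ApdR}.

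Next I would use the framing $\alpha_A$ to identify $(D_{\pdR}(W_A),\Dcal_{A,\bullet},\nu_{W_A})$ with the data of a complete flag $\alpha_A^{-1}(\Dcal_{A,\bullet})\in (G/B)(A)$ deforming $\alpha^{-1}(\Dcal_\bullet)$ together with a nilpotent $N_{W_A}\in \gfrak(A)$ deforming $N_W$, the stability condition of the previous paragraph translating into $\Ad(g^{-1})N_{W_A}\in\lieb(A)$ (where $gB=\alpha_A^{-1}(\Dcal_{A,\bullet})$), which is the defining equation of $\gtilde\subseteq G/B\times \gfrak$. As in the discussion preceding Corollary \ref{represent1}, any automorphism of an object of $X_{W,\Fcal_\bullet}^\Box(A)$ acts as the identity on $D_{\pdR}(W_A)$ (because it commutes with the framing $\alpha_A$), hence as the identity on $W_A$ by Lemma \ref{ApdR}; therefore the natural map $X_{W,\Fcal_\bullet}^\Box\longrightarrow |X_{W,\Fcal_\bullet}^\Box|$ is an equivalence and the assignment of the statement defines an isomorphism of functors $|X_{W,\Fcal_\bullet}^\Box|\buildrel\sim\over\longrightarrow \widehat\gtilde$.

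Finally, I would invoke the fact recalled in \S\ref{Springer0} that $\gtilde$ is a vector bundle of rank $\dim\lieb$ over $G/B$, and in particular is smooth of dimension $\dim G=n^2[K:\Qp]$; it follows that $\widehat\gtilde$ is pro-represented by a formally smooth complete noetherian local $L$-algebra of residue field $L$ and dimension $n^2[K:\Qp]$, giving the last assertion. The conceptually delicate step is the first one, namely that $D_{\pdR}$ induces a bijection between filtrations $\Fcal_{A,\bullet}$ of $W_A$ as in Definition \ref{filtr} and $\nu_{W_A}$-stable complete flags of $D_{\pdR}(W_A)$; once this freeness-preserving translation is established, the rest is a direct unraveling of the definition of $\gtilde$.
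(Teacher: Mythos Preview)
Your proposal is correct and follows essentially the same approach as the paper: the paper's proof is a one-line remark that the result follows from Lemma \ref{ApdR} and the smoothness of $\gtilde$ by the same argument as for Corollary \ref{represent1}, and your write-up simply unpacks these implicit steps (the bijection between filtrations $\Fcal_{A,\bullet}$ and $\nu_{W_A}$-stable complete flags via the equivalence of Lemma \ref{ApdR}, the triviality of automorphisms via the framing, and the identification with $\widehat\gtilde$).
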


Let $\kappa:\,\gtilde\rightarrow\tfrak$, $(gB,\psi)\mapsto \overline{\Ad(g^{-1})\psi}$ be the weight map defined in \S\ref{Springer2}, it maps the point $(\alpha^{-1}(\Dcal_\bullet),N_W)\in \gtilde(L)$ to $0\in \tfrak(L)$ (since $N_W$ is nilpotent) and induces a morphism $\widehat\kappa:\widehat\gtilde\rightarrow\widehat\liet$. We write $\kappa_{W,\Fcal_\bullet}$ for the composition of the morphisms of groupoids over $\Ccal_L$:
\begin{equation*}
X_{W,\Fcal_\bullet}^\Box\longrightarrow |X_{W,\Fcal_\bullet}^\Box|\buildrel\sim\over\longrightarrow\widehat\gtilde\buildrel\widehat\kappa\over\longrightarrow \widehat{\tfrak}
\end{equation*}
where the second map is the isomorphism of Corollary \ref{dRfilt}. One checks that $\kappa_{W,\Fcal_\bullet}$ actually factors through a map still denoted $\kappa_{W,\Fcal_\bullet}:X_{W,\Fcal_\bullet}\longrightarrow \widehat{\tfrak}$ (as changing the fixed basis replaces $(gB,\psi)\in \gtilde(A)$ by $(g'gB,\Ad(g')\psi)$ for some $g'\in G(A)$ with the notation of \S\ref{Springer0} which doesn't change the image by $\kappa$). We thus have a commutative diagram:
\begin{equation}\label{kappaWF}
\xymatrix{X_{W,\Fcal_\bullet}^\Box\ar[r]\ar_{\kappa_{W,\Fcal_\bullet}}[rd] & X_{W,\Fcal_\bullet} \ar^{\kappa_{W,\Fcal_\bullet}}[d] \\
& \widehat{\tfrak}.}
\end{equation}
The map $\kappa_{W,\Fcal_\bullet}:\, X_{W,\Fcal_\bullet}\longrightarrow\widehat{\tfrak}$ has the following functorial interpretation. Let $x_A=(W_A,\Fcal_{A,\bullet},\iota_A)$ be an object of $X_{W,\Fcal_\bullet}(A)$. The endomorphism $\nu_{W_A}$ induces an endomorphism $\nu_{A,i}$ of each $\Dcal_{A,i}/\Dcal_{A,i-1}\simeq D_{\pdR}(\Fcal_{A,i}/\Fcal_{A,i-1})$ which is an $A\otimes_{\Qp}K$-module of rank $1$. Since there is a canonical isomorphism $\End_{\Rep_{A\otimes_{\Qp}K}(\mathbb{G}_{\mathrm{a}})}(\Dcal_{A,i}/\Dcal_{A,i-1})\simeq A\otimes_{\Qp}K$, we can identify $\nu_{A,i}$ with a well-defined element of $A\otimes_{\Qp}K$. Then $\kappa_{W,\Fcal_\bullet}$ is given by the explicit formula:
\begin{equation}\label{explicit}
\kappa_{W,\Fcal_\bullet}(x_A)=(\nu_{A,1},\dots,\nu_{A,n})\in (A\otimes_{\Qp}K)^n\simeq \widehat{\tfrak}(A).
\end{equation}

\subsection{Almost de Rham $\BdR^+$-representations}\label{suite}

We define and study some groupoids of equal characteristic deformations of an almost de Rham $\BdR^+$-representation of $\Gcal_K$.

We define a $\BdR^+$-representation of $\Gcal_K$ as a finite free $\BdR^+$-module with a continuous semilinear action of the group $\Gcal_K$ and denote by $\Rep_{\BdR^+}(\Gcal_K)$ the category of $\BdR^+$-representation of $\Gcal_K$. If $W^+$ is a $\BdR^+$-representation of $\Gcal_K$, then $W^+$ is a $\Gcal_K$-stable $\BdR^+$-lattice in the $\BdR$-representation $W:=W^+\otimes_{\BdR^+}\BdR=W^+[\tfrac{1}{t}]$. We say that $W^+$ is almost de Rham if the Sen weights of the $C$-representation $W^+/tW^+$ are all in $\Z$. It follows from \cite[Th.3.13]{FonAr} that this notion only depends on $W$ and not on the chosen invariant $\BdR^+$-lattice inside $W$.

We just write $V$ instead of $(V,\nu_V)$ from now on for an object of $\Rep_K(\mathbb{G}_{\mathrm{a}})$. If $V$ is in $\Rep_K(\mathbb{G}_{\mathrm{a}})$, a {\it filtration} $\Fil^\bullet(V)=(\Fil^i(V))_{i\in \Z}$ of $V$ is by definition a decreasing, exhaustive and separated filtration by subobjects in the category $\Rep_K(\mathbb{G}_{\mathrm{a}})$. If $W$ is an object of $\Rep_{\pdR}(\Gcal_K)$ and $W^+\subset W$ a $\Gcal_K$-stable $\BdR^+$-lattice, we define a filtration $\Fil_{W^+}^\bullet(D_{\pdR}(W))$ of $D_{\pdR}(W)$ by the formula:
\begin{equation}\label{filw+}
\Fil_{W^+}^i(D_{\pdR}(W)):=(t^i\BpdR^+\otimes_{\BdR^+} W^+)^{\Gcal_K}\subset D_{\pdR}(W)\ \ \ \ (i\in \Z).
\end{equation}
It follows from \cite[Th.4.1(3)]{FonAr} that the $i$ such that $\Fil_{W^+}^i(D_{\pdR}(W))/\Fil_{W^+}^{i+1}(D_{\pdR}(W))\ne 0$ are the opposite of the Sen weights of $W^+/tW^+$ (counted with multiplicity).

\begin{prop}\label{pdR+}
Let $W$ be an object of $\Rep_{\pdR}(\Gcal_K)$. The map $W^+\longmapsto \Fil_{W^+}^\bullet(D_{\pdR}(W))$ is a bijection between the set of $\Gcal_K$-stable $\BdR^+$-lattices of $W$ and the set of filtrations of $D_{\pdR}(W)$ as a $\mathbb{G}_{\mathrm{a}}$-representation.
\end{prop}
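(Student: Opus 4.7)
The plan is to transport the problem to linear algebra on $V:=D_{\pdR}(W)$ via the isomorphism $\rho_{\pdR}:\BpdR\otimes_KV\buildrel\sim\over\longrightarrow\BpdR\otimes_{\BdR}W$ of \eqref{rhopdR}. First I would check that $W^+\mapsto\Fil_{W^+}^\bullet$ is well-defined. The submodule $t^i\BpdR^+\otimes_{\BdR^+}W^+$ is stable under $\nu_{\BpdR}\otimes 1$ because $\nu_{\BpdR}$ is a $\BdR$-derivation (in particular it kills $t$) that preserves $\BpdR^+$, and $\nu_W$ on $D_{\pdR}(W)$ is by definition the restriction of $\nu_{\BpdR}\otimes 1$; so each $\Fil_{W^+}^i(D_{\pdR}(W))$ is a $\mathbb{G}_{\mathrm{a}}$-subobject of $V$. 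The filtration is decreasing by inspection, while exhaustion and separation follow from the fact recalled after \eqref{filw+} that its jumps are the opposite Sen weights of $W^+/tW^+$, all of which are integers by the almost de Rham hypothesis.

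For the inverse, given a filtration $\Fil^\bullet(V)$ of $V$ in $\Rep_K(\mathbb{G}_{\mathrm{a}})$, I would set
$$\widetilde W^+:=\rho_{\pdR}\Bigl(\sum_{i\in\Z}t^{-i}\BpdR^+\otimes_K\Fil^i(V)\Bigr)\subseteq\BpdR\otimes_{\BdR}W.$$
A $K$-basis of $V$ adapted to $\Fil^\bullet$ shows that $\widetilde W^+$ is a free $\BpdR^+$-lattice of $\BpdR\otimes_{\BdR}W$; it is automatically $\Gcal_K$-stable since $\Gcal_K$ acts trivially on $V$ and stabilizes $\BpdR^+$. A short calculation using $\partial_W(v)=\nu_V(v)$ for $v\in V\subseteq\BpdR\otimes_{\BdR}W$ shows that the derivation $\partial_W:=\nu_{\BpdR}\otimes 1$ on $\BpdR\otimes_{\BdR}W$ corresponds under $\rho_{\pdR}$ to $\nu_{\BpdR}\otimes 1+1\otimes\nu_V$ on $\BpdR\otimes_KV$, so the $\nu_V$-stability of each $\Fil^i(V)$ forces $\widetilde W^+$ to be $\partial_W$-stable. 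I would then define $W^+:=\widetilde W^+\cap W$, where $W$ is identified with $(\BpdR\otimes_{\BdR}W)^{\partial_W=0}\subseteq \BpdR\otimes_{\BdR}W$ via $w\mapsto 1\otimes w$ (using $\ker(\nu_{\BpdR})=\BdR$).

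The main obstacle is to show that $W^+$ is actually a $\BdR^+$-lattice of $W$ of the correct rank $n$, and that the two maps are mutually inverse. The lattice property rests on the identity $(\BpdR^+)^{\partial_W=0}=\BdR^+$: choosing a basis $(e_j)$ of $V$ with $e_j\in\Fil^{i_j}(V)\setminus\Fil^{i_j+1}(V)$, the $t^{-i_j}e_j$ form a $\BpdR^+$-basis of $\widetilde W^+$, and solving a triangular system of $\log(t)$-corrections determined by the matrix of $\nu_V$ in the basis $(e_j)$ produces $\partial_W$-invariant lifts $f_j\in\widetilde W^+\cap W$ forming a $\BdR^+$-basis of $W^+$; in particular $\BpdR^+\otimes_{\BdR^+}W^+=\widetilde W^+$. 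That the two maps are mutually inverse then follows: computing $\Fil_{W^+}^i(V)=(t^i\widetilde W^+)^{\Gcal_K}$ in the coordinates $(e_j)$ and using the classical identity $(t^k\BpdR^+)^{\Gcal_K}=K$ for $k\leq 0$ and $0$ for $k>0$ (a direct consequence of Sen's theorem) gives $\Fil_{W^+}^i(V)=\sum_{i_j\geq i}Ke_j=\Fil^i(V)$; the opposite composition is tautological from the construction.
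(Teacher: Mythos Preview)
Your strategy matches the paper's: define the inverse by $\Psi(\Fil^\bullet)=\widetilde W^+\cap W$ with $\widetilde W^+=\rho_{\pdR}(\sum_it^{-i}\BpdR^+\otimes_K\Fil^i(V))$, then verify both compositions. Your verification that $\Fil_{\Psi(\Fil^\bullet)}^\bullet=\Fil^\bullet$ via the explicit basis $f_j=t^{-i_j}\exp(\log(t)\nu_V)e_j$ is correct and more hands-on than the paper, which at this point re-invokes the equality \eqref{rhodR} for the newly built lattice.

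The gap is the sentence ``the opposite composition is tautological from the construction.'' It is not. For an \emph{arbitrary} $\Gcal_K$-stable lattice $W^+$ you must show $\Psi(\Fil_{W^+}^\bullet)=W^+$, i.e.\ that $\rho_{\pdR}^{-1}(\BpdR^+\otimes_{\BdR^+}W^+)=\sum_it^{-i}\BpdR^+\otimes_K\Fil_{W^+}^i(V)$. Your basis argument only establishes this identity for lattices already of the form $\Psi(\Fil^\bullet)$; for a general $W^+$ you have no adapted basis to hand. The paper supplies exactly this missing equality by appealing to \cite[Th.3.13]{FonAr} (the equality in \eqref{rhodR}), which is the substantive input.

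You can repair your argument without quoting Fontaine as follows. From the definition of $\Fil_{W^+}^i$ one sees at once that $\widetilde W^+\subseteq\BpdR^+\otimes_{\BdR^+}W^+$, whence $\Psi(\Fil_{W^+}^\bullet)\subseteq W^+$ after intersecting with $W=(\BpdR^+\otimes_{\BdR^+}W^+)^{\partial_W=0}$. Both sides are $\BdR^+$-lattices of rank $n$ in $W$ and, since you have already shown $\Phi\Psi=\id$, they induce the \emph{same} filtration on $V$; in particular the sums of the filtration jumps agree, so $\det_{\BdR^+}\Psi(\Fil_{W^+}^\bullet)=\det_{\BdR^+}W^+$ inside $\bigwedge^nW$. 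An inclusion of rank-$n$ lattices with equal determinant is an equality, giving $\Psi\Phi=\id$. Either add this determinant step or cite \eqref{rhodR} as the paper does; as written, the injectivity direction is unproved.
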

\begin{proof}
Let $W^+$ be a $\Gcal_K$-stable $\BdR^+$-lattices of $W$. We define a decreasing filtration on the left hand side of (\ref{rhopdR}) by:
\begin{equation}\label{filotimes}
\Fil^i_{W^+}(\BpdR\otimes_KD_{\pdR}(W)):=\sum_{i_1+i_2=i} t^{i_1}\BpdR^+\otimes_K\Fil_{W^+}^{i_2}(D_{\dR}(W))\ \ \ \ (i\in \Z)
\end{equation}
and recall from the proof of Corollary \ref{equivpdR} that $W\simeq W(D_{\pdR}(W),\nu_{D_{\pdR}(W)})=(\BpdR\otimes_KD_{\pdR}(W))_{\nu=0}$ where $\nu:=\nu_{\BpdR}\otimes1+1\otimes\nu_{D_{\pdR}(W)}$. From the proof of \cite[Th.3.13]{FonAr} we see that (see (\ref{rhopdR}) for $\rho_{\pdR}$):
\begin{equation}\label{rhodR}
\rho_{\pdR}(\Fil^i_{W^+}(\BpdR\otimes_KD_{\pdR}(W)))\subseteq t^i\BpdR^+\otimes_{\BdR^+} W^+\ \ \ \ (i\in \Z).
\end{equation}
Moreover the bottom horizontal arrow in the commutative diagram on page 62 of \cite{FonAr} is actually in our case an isomorphism (see \cite[\S2.6]{FonAr}) which implies that \eqref{rhodR} is in fact an equality for all $i\in \Z$. Consequently we see that for $W^+\subset W$ a $\Gcal_K$-stable $\BdR^+$-lattice, we have:
$$W^+=W\cap \rho_{\pdR}(\Fil^0_{W^+}(\BpdR\otimes_K D_{\pdR}(W)))\subset \BpdR\otimes_{\BdR}W$$
which proves that the map $W^+\mapsto \Fil_{W^+}^\bullet(D_{\pdR}(W))$ is injective. 

Conversely let $\Fil^\bullet(D_{\pdR}(W))$ be a filtration of $D_{\pdR}(W)$, set $\Fil^0(\BpdR\otimes_KD_{\pdR}(W)):=\sum_{i\in \Z}t^{-i}\BpdR^+\otimes_K \Fil^i(D_{\pdR}(W))$ and define:
$$ W^+_{\Fil^\bullet}:=W\cap \rho_{\pdR}(\Fil^0(\BpdR\otimes_KD_{\pdR}(W)))=\rho_{\pdR}(\Fil^0(\BpdR\otimes_KD_{\pdR}(W))_{\nu=0})\subset W.$$
The $\BdR^+$-module $W^+_{\Fil^\bullet}$ is clearly $\Gcal_K$-stable. Moreover a $\BdR^+$-submodule $H$ of $W$ is a $\BdR^+$-lattice if and only if $\bigcup_n t^{-n}H=W$ and $\bigcap_n t^nH=0$. Together with $W\simeq \rho_{\pdR}((\BpdR\otimes_KD_{\pdR}(W))_{\nu=0})$ this implies that $\rho_{\pdR}((t^n\BpdR^+\otimes_K D_{\pdR}(W))_{\nu=0})$ is a $\BdR^+$-lattice of $W$ for each $n\in \Z$. Let $i_0\!:=\!\max\{i, \, \Fil^i(D_{\pdR}(W))\!=\!D_{\pdR}(W)\}$ and $i_1\!:=\!\min\{ i,\, \Fil^i(D_{\pdR}(W))\!=\!0\}$, then we have:
$$\rho_{\pdR}((t^{-i_0}\BpdR^+\otimes_K D_{\pdR}(W))_{\nu=0})\subseteq W^+_{\Fil^\bullet}\subseteq \rho_{\pdR}((t^{-i_1}\BpdR^+\otimes_KD_{\pdR}(W))_{\nu=0})$$
which implies that $W^+_{\Fil^\bullet}$ is a $\BdR^+$-lattice of $W$. One easily checks that $\rho_{\pdR}$ induces an isomorphism $\Fil^0(\BpdR\otimes_KD_{\pdR}(W))\buildrel \sim \over \longrightarrow \BpdR^+\otimes_{\BdR^+} \Fil^0(\BpdR\otimes_KD_{\pdR}(W))_{\nu =0}$ which implies $\Fil^0(\BpdR\otimes_KD_{\pdR}(W))=\Fil^0_{W^+_{\Fil^\bullet}}(\BpdR\otimes_KD_{\pdR}(W))$ by the first part of the proof (apply the equality (\ref{rhodR}) for $i=0$ with $W^+_{\Fil^\bullet}$), from which one gets $\Fil^\bullet(D_{\pdR}(W))=\Fil^\bullet_{W^+_{\Fil^\bullet}}(D_{\pdR}(W))$. This gives the surjectivity.
\end{proof}

From now on, if $V$ is an object of $\Rep_K(\mathbb{G}_{\mathrm{a}})$ and $\Fil^\bullet=\Fil^\bullet (V)$ a filtration of $V$, we denote by $W^+(V,\Fil^\bullet)$ the $\Gcal_K$-stable $\BdR^+$-lattice of $(\BpdR\otimes_KV)_{\nu_{\BpdR}\otimes1+1\otimes\nu_V=0}$ associated to $\Fil^\bullet$ via Proposition \ref{pdR+}.

Let $A$ be a finite dimensional $\Qp$-algebra. We define an $A\otimes_{\Qp}\BdR^+$-representation as a $\BdR^+$-representation $W^+$ of $\Gcal_K$ together with a morphism of $\Qp$-algebras $A\rightarrow\End_{\Rep_{\BdR^+}(\Gcal_K)}(W^+)$ which makes $W^+$ a finite free $A\otimes_{\Qp}\BdR^+$-module. We say that an $A\otimes_{\Qp}\BdR^+$-representation of $\Gcal_K$ is almost de Rham if the underlying $\BdR^+$-representation is. We define the category of {\it filtered} $A\otimes_{\Qp}K$-representations of $\mathbb{G}_{\mathrm{a}}$ as the category of $(V,\Fil^\bullet)$ where $V$ is an object of $\Rep_{A\otimes_{\Qp}K}(\mathbb{G}_{\mathrm{a}})$ and $\Fil^\bullet=\Fil^\bullet(V)=(\Fil^i(V))_{i\in \Z}$ a decreasing, exhaustive and separated filtration of $V$ by subobjects $\Fil^i(V)$ of $\Rep_{A\otimes_{\Qp}K}(\mathbb{G}_{\mathrm{a}})$ such that the graded pieces $\gr^i_{\Fil^\bullet}(V):=\Fil^i(V)/\Fil^{i+1}(V)$ are free of rank $1$ over $A\otimes_{\Qp}K$ for $i\in \Z$ (the obvious definition of morphisms being left to the reader).

\begin{lem}\label{ApdR+}
The functor defined by $W^+\longmapsto (D_{\pdR}(W^+[\tfrac{1}{t}]),\Fil_{W^+}^\bullet)$, where one sets $\Fil_{W^+}^\bullet=\Fil_{W^+}^\bullet(D_{\pdR}(W^+[\tfrac{1}{t}]))$ as defined in (\ref{filw+}), induces an equivalence between the category of almost de Rham $A\otimes_{\Qp}\BdR^+$-representations of $\Gcal_K$ and the category of filtered $A\otimes_{\Qp}K$-representations of $\mathbb{G}_{\mathrm{a}}$. Moreover, if $W^+$ is an almost de Rham $A\otimes_{\Qp}\BdR^+$-representation of $\Gcal_K$ and $M$ is an $A$-module of finite type (note that $M\otimes_A W^+$ is then a $\BdR^+$-representation), then for each $i\in\Z$ there is a natural $A$-linear isomorphism of $\BdR^+$-representations:
$$M\otimes_A\gr^i_{\Fil_{W^+}^\bullet}(D_{\pdR}(W^+[\tfrac{1}{t}]))\simeq\gr^i_{\Fil^\bullet_{M\otimes_AW^+}}(D_{\pdR}(M\otimes_{A}W^+[\tfrac{1}{t}])).$$
\end{lem}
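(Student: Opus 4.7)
The plan is to combine Lemma \ref{ApdR} with Proposition \ref{pdR+}. Given an almost de Rham $A\otimes_{\Qp}\BdR^+$-representation $W^+$, set $W:=W^+[\tfrac1t]$; then Lemma \ref{ApdR} gives an $A\otimes_{\Qp}K$-free $\mathbb{G}_{\mathrm{a}}$-representation $V:=D_{\pdR}(W)$ whose rank equals the $A\otimes_{\Qp}\BdR^+$-rank of $W^+$, and Proposition \ref{pdR+} attaches to the underlying $\BdR^+$-lattice $W^+\subset W$ a filtration $\Fil^\bullet_{W^+}$ of $V$ by $\nu_V$-stable $K$-subspaces. Functoriality of formula (\ref{filw+}) together with the fact that $A$ acts on $W^+$ by $\BdR^+$-linear $\Gcal_K$-equivariant endomorphisms forces each $\Fil^i_{W^+}$ to be an $A\otimes_{\Qp}K$-submodule of $V$, so that the pair $(V,\Fil^\bullet_{W^+})$ lies in the target category as soon as we verify that the graded pieces are free of rank one.

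I would next prove the ``Moreover'' statement, which simultaneously forces the graded pieces to be free of rank one over $A\otimes_{\Qp}K$. For $M$ a free $A$-module the isomorphism is immediate from the commutation of $D_{\pdR}$ with direct sums and the linearity of (\ref{filw+}). For a general finite type $M$, choose a presentation $A^r\to A^s\to M\to 0$; tensoring with $W^+$ is exact because $W^+$ is $A$-flat (being free over $A\otimes_{\Qp}\BdR^+$), inverting $t$ is exact, and $D_{\pdR}$ is exact on $\Rep_{\pdR}(\Gcal_K)$ by Corollary \ref{equivpdR}. The main technical point will be showing that the filtration formation $W^+\mapsto\Fil^\bullet_{W^+}$ commutes with the base change $M\otimes_A(-)$, which reduces via (\ref{filw+}) to the exactness of $\Gcal_K$-invariants on the functor $t^i\BpdR^+\otimes_{\BdR^+}(-)$ restricted to almost de Rham $\BdR^+$-representations; this in turn follows from the $\Gcal_K$-equivariant splitting of the filtration (\ref{filotimes}) constructed in the proof of Proposition \ref{pdR+}. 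Applied to a rank one free $W^+$ (where the assertion is direct from the definitions), this forces each $\gr^i_{\Fil^\bullet_{W^+}}$ to be free of rank one over $A\otimes_{\Qp}K$ by a d\'evissage on the rank.

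Full faithfulness of the functor then follows by combining the full faithfulness of $D_{\pdR}$ in Lemma \ref{ApdR} with the bijection of Proposition \ref{pdR+}: a morphism $f:W\to W'$ carries $W^+$ into $W'^+$ if and only if $D_{\pdR}(f):V\to V'$ is compatible with the filtrations. For essential surjectivity, given a filtered $A\otimes_{\Qp}K$-representation $(V,\Fil^\bullet)$, one constructs $W:=(\BpdR\otimes_K V)_{\nu_{\BpdR}\otimes 1+1\otimes\nu_V=0}$ via Corollary \ref{equivpdR} and then $W^+:=W^+(V,\Fil^\bullet)\subset W$ via Proposition \ref{pdR+}; the $A$-action on $V$ extends canonically to $W$ and preserves $W^+$ by functoriality of the correspondence in Proposition \ref{pdR+}. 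Freeness of $W^+$ over $A\otimes_{\Qp}\BdR^+$ is tested modulo $t$ by Nakayama, and the already-established compatibility of graded pieces exhibits $W^+/tW^+$ as a successive extension of rank one free $A\otimes_{\Qp}C$-modules, which suffices.
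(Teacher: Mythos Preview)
Your key technical step --- exactness of $W^+\mapsto\Fil^i_{W^+}(D_{\pdR}(W))$ (equivalently of the graded pieces) on almost de Rham $\BdR^+$-representations --- is not justified. You appeal to a ``$\Gcal_K$-equivariant splitting of the filtration (\ref{filotimes}) constructed in the proof of Proposition~\ref{pdR+}'', but no such splitting is constructed there: that proof only shows that $\rho_{\pdR}$ carries $\Fil^i_{W^+}(\BpdR\otimes_K D_{\pdR}(W))$ isomorphically onto $t^i\BpdR^+\otimes_{\BdR^+}W^+$, and the sum defining (\ref{filotimes}) is not direct in any evident $\Gcal_K$-equivariant way. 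Since taking $\Gcal_K$-invariants is only left exact in general, this is precisely the point that requires work. The paper proceeds differently: it introduces $\BpHT := C[t,t^{-1},\log t]$ with its evident filtration, exhibits for any almost de Rham $W^+$ a natural injection $\gr^i_{\Fil^\bullet_{W^+}}(D_{\pdR}(W))\hookrightarrow \gr^i(D_{\pHT}(W^+/tW^+))$, and shows by a dimension count that this is an isomorphism. The exactness of $W^+\mapsto\gr^\bullet_{\Fil^\bullet_{W^+}}(D_{\pdR}(W))$ then follows from the known exactness of $D_{\pHT}$ on $C$-representations with integral Sen weights (\cite[Th.~4.2]{FonAr}). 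From this the ``Moreover'' statement, $A$-flatness of $\gr^\bullet$, and the analogous exactness for the inverse functor $(V,\Fil^\bullet)\mapsto W^+(V,\Fil^\bullet)$ all follow as in the proof of Lemma~\ref{ApdR}.

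There are also smaller problems downstream. The ``d\'evissage on the rank'' is not available: a general $W^+$ of rank $n$ has no reason to sit in a short exact sequence of free $A\otimes_{\Qp}\BdR^+$-representations of lower rank. And for essential surjectivity, exhibiting $W^+/tW^+$ as a successive extension of rank-one free $A\otimes_{\Qp}C$-modules does not by itself give freeness, since extensions of free modules need not be free. The paper instead shows that $W^+$ is $A$-flat if and only if $\gr^\bullet_{\Fil^\bullet_{W^+}}(D_{\pdR}(W))$ is $A$-flat, and then reduces the freeness question over $A\otimes_{\Qp}\BdR^+$ (resp.\ $A\otimes_{\Qp}K$) to the case where $A$ is a field, exactly as in the second half of the proof of Lemma~\ref{ApdR}.
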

\begin{proof}
Let $\BpHT=C[t,t^{-1},\log(t)]$ as in \cite[\S2.7]{FonAr} and, for $i\in \Z$, set:
$$\Fil^i(\BpHT):=t^iC[t,\log(t)]\subset \BpHT.$$
Note that $\BpHT\cong \oplus_{i\in \Z}\gr^i(\BpHT)$ where:
\begin{equation}\label{isos}
\gr^i(\BpHT):=\Fil^i(\BpHT)/\Fil^{i+1}(\BpHT)=t^iC[\log(t)]\cong t^i\BpdR^+/t^{i+1}\BpdR^+.
\end{equation}
For a $C$-representation $U$ of $\Gcal_K$, set :
$$\begin{array}{rcl}
D_{\pHT}(U)&:=&(\BpHT\otimes_{C}U)^{\Gcal_K}\\
\Fil^i(\!D_{\pHT}(U))&:=&(\Fil^i\!\BpHT\otimes_{C}U)^{\Gcal_K}\\
\gr^i(D_{\pHT}(U))&:=&\Fil^i(D_{\pHT}(U))/\Fil^{i+1}(D_{\pHT}(U))\ \cong \ (\gr^i(\BpHT)\otimes_{C}U)^{\Gcal_K}.
\end{array}$$
Let $W^+$ be a $\BdR^+$-representation of $\Gcal_K$ and set $W:=W^+[\tfrac{1}{t}]$ and $\overline{W^+}:=W^+/tW^+$, which is a $C$-representation of $\Gcal_K$. Left exactness of $\Gcal_K$-invariants and the last isomorphism in (\ref{isos}) give a natural injection $\gr^i_{\Fil^\bullet_{W^+}}(D_{\pdR}(W))\hookrightarrow \gr^i(D_{\pHT}(\overline{W^+}))$. If $W^+$ is almost de Rham, we have:
\begin{multline*}
\dim_K D_{\pdR}(W)=\sum_i\dim_K\gr^i_{\Fil^\bullet_{W^+}}(D_{\pdR}(W))\leq\sum_i\dim_K\gr^i(D_{\pHT}(\overline{W^+}))\\
\leq\dim_KD_{\pHT}(\overline{W^+})=\dim_C\overline{W^+}=\dim_{\BdR}(W)=\dim_KD_{\pdR}(W)
\end{multline*}
where the first equality on the second line follows from the fact that the Sen weights of $\overline{W^+}$ are in $\Z$ (i.e. $\overline{W^+}$ is almost Hodge-Tate in the sense of \cite[\S2.7]{FonAr}). We thus see that $\gr^i_{\Fil^\bullet_{W^+}}(D_{\pdR}(W))=\gr^i(D_{\pHT}(\overline{W^+}))$, and consequently that there is a functorial isomorphism $\gr_{\Fil^\bullet_{W^+}}^\bullet(D_{\pdR}(W)):=\oplus_{i\in \Z}\gr^i_{\Fil^\bullet_{W^+}}(D_{\pdR}(W))\simeq D_{\pHT}(\overline{W^+})$ on the category of almost de Rham $\BdR^+$-representations. As the functor $D_{\pHT}$ is exact on the category of $C$-representations with Sen weights in $\Z$ (see for example \cite[Th.4.2]{FonAr}), we conclude that the functor $W^+\longmapsto\gr_{\Fil_{W^+}^\bullet}^\bullet(D_{\pdR}(W))$ from the category of almost de Rham $\BdR^+$-representations of $\Gcal_K$ to the category of finite dimensional $K$-vector spaces is exact. Equivalently if $0\rightarrow W_1^+\rightarrow W_2^+\rightarrow W_3^+\rightarrow0$ is a short exact sequence of almost de Rham $\BdR^+$-representations of $\Gcal_K$ and if $W_i:=W^+_i[\tfrac{1}{t}]$ for $i\in \{1,2,3\}$, we have a strict exact sequence of filtered $K$-representations of $\mathbb{G}_{\mathrm{a}}$:
$$0\longrightarrow (D_{\pdR}(W_1),\Fil_{W_1^+}^\bullet)\longrightarrow (D_{\pdR}(W_2),\Fil_{W_2^+}^\bullet)\longrightarrow (D_{\pdR}(W_3),\Fil_{W_3^+}^\bullet)\longrightarrow 0.$$

Using that a $\BdR^+$-submodule of a free $\BdR^+$-module of finite type is also free of finite type (as $\BdR^+$ is a discrete valuation ring), we get in particular that an exact sequence $W_1^+\rightarrow W_2^+\rightarrow W_3^+\rightarrow0$ of almost de Rham $\BdR^+$-representations yields an exact sequence:
$$\gr_{\Fil^\bullet_{W_1^+}}^\bullet(D_{\pdR}(W_1))\longrightarrow\gr_{\Fil^\bullet_{W_2^+}}^\bullet(D_{\pdR}(W_2))\longrightarrow\gr_{\Fil^\bullet_{W_3^+}}^\bullet(D_{\pdR}(W_3))\longrightarrow0.$$
We can then argue exactly as in the proof of Lemma \ref{ApdR} and obtain both the last statement of the lemma (writing $M$ as the cokernel of a linear map between free $A$-modules of finite type) and the fact that if $W^+$ is an almost de Rham $A\otimes_{\Qp}\BdR^+$-representation of $\Gcal_K$ then $\gr_{\Fil^\bullet_{W^+}}^\bullet(D_{\pdR}(W))$ is a flat $A$-module.

Conversely if $0\rightarrow (V_1,\Fil^\bullet_1)\rightarrow (V_2,\Fil^\bullet_2)\rightarrow (V_3,\Fil^\bullet_3)\rightarrow0$ is a strict exact sequence of filtered $K$-representations of $\mathbb{G}_{\mathrm{a}}$, then it follows from the definition of $(V,\Fil^\bullet)\mapsto W^+(V,\Fil^\bullet)$ that there is an exact sequence of almost de Rham $\BdR^+$-representations of $\Gcal_K$:
$$0\longrightarrow W^+(V_1,\Fil^\bullet_1)\longrightarrow W^+(V_2,\Fil^\bullet_2)\longrightarrow W^+(V_3,\Fil^\bullet_3).$$
Considering the image of $W^+(V_2,\Fil^\bullet_2)$ in $W^+(V_3,\Fil^\bullet_3)$ (which is still a $\BdR^+$-representation as $\BdR^+$ is a discrete valuation ring) and applying the exact functor $W^+\mapsto\gr_{\Fil^\bullet_{W^+}}^\bullet(D_{\pdR}(W))$, we deduce that we have a short exact sequence:
$$0\rightarrow W^+(V_1,\Fil^\bullet_1)\rightarrow W^+(V_2,\Fil^\bullet_2)\rightarrow W^+(V_3,\Fil^\bullet_3)\rightarrow 0.$$ 

We can then argue again as in the proof of Lemma \ref{ApdR} and check that for each $A$-module $M$ of finite type and each filtered $A\otimes_{\Qp}K$-representation $V$ of $\mathbb{G}_{\mathrm{a}}$, there is a natural isomorphism $M\otimes_A W^+(V,\Fil^\bullet)\simeq W^+(M\otimes_A V,M\otimes_A\Fil^\bullet)$. If $(V,\Fil^\bullet)$ is a filtered $A\otimes_{\Qp}K$-representations of $\mathbb{G}_{\mathrm{a}}$, then the $A$-module $W^+(V,\Fil^\bullet)$ is $A$-flat if we can prove that $M\mapsto (M\otimes_A V,M\otimes_A\Fil^\bullet)$ sends short exact sequences of finite type $A$-modules to strict exact sequences of filtered $K$-representa\-tions of $\mathbb{G}_{\mathrm{a}}$. But this is a direct consequence of the above flatness of $\gr_{\Fil^\bullet}^\bullet(V)$ (together with Proposition \ref{pdR+}). 

Thus we have proven that $W^+$ is $A$-flat if and only if $\gr_{\Fil_{W^+}^\bullet}^\bullet(D_{\pdR}(W^+[\tfrac{1}{t}]))$ is $A$-flat. The rest of the proof is then essentially similar to the second half of the proof of Lemma \ref{ApdR} (using that one can embed $\BdR^+$ into $\BdR$) and yields that $W^+$ is finite free over $A\otimes_{\Qp}\BdR^+$ if and only if $\gr_{\Fil_{W^+}^\bullet}^\bullet(D_{\pdR}(W^+[\tfrac{1}{t}]))$ is finite free over $A\otimes_{\Qp}K$.
\end{proof}

Let $L$ be a finite extension of $\Qp$ splitting $K$ and recall that if $A$ is an object of $\Ccal_L$, we have $A\otimes_{\Qp}K\simeq  \oplus_{\tau\in \Sigma}A$. Let $W_A^+$ be an almost de Rham $A\otimes_{\Qp}\BdR^+$-representation of $\Gcal_K$ and set $W_A:=W_A^+[\tfrac{1}{t}]$. If $\tau\in\Sigma$ and $i\in \Z$, set:
$$\begin{array}{rcl}
D_{\pdR,\tau}(W_A)&:=&D_{\pdR}(W_A)\otimes_{A\otimes_{\Qp}K,1\otimes\tau}A\\
\Fil_{W_A^+}^i(D_{\pdR,\tau}(W_A))&:=&\Fil_{W_A^+}^i(D_{\pdR}(W_A))\otimes_{A\otimes_{\Qp}K,1\otimes\tau}A\\
\gr^i_{\Fil^\bullet_{W_A^+}}(D_{\pdR,\tau}(W_A))&:=&\Fil_{W_A^+}^i(D_{\pdR,\tau}(W_A))/\Fil_{W_A^+}^{i+1}(D_{\pdR,\tau}(W_A)).
\end{array}$$
It follows from Lemma \ref{ApdR+} that they are all free $A$-modules of finite type.

Now let $W^+$ be an almost de Rham $L\otimes_{\Qp}\BdR^+$-representation of $\Gcal_K$ of rank $n$, $W:=W^+[\tfrac{1}{t}]$ and, for each $\tau\in\Sigma$, denote by $-h_{\tau,1}\geq\cdots\geq-h_{\tau,n}$ the integers $i$ such that:
$$\gr^i_{\Fil^\bullet_{W^+}}(D_{\pdR,\tau}(W)):=\Fil_{W^+}^i(D_{\pdR,\tau}(W))/\Fil_{W^+}^{i+1}(D_{\pdR,\tau}(W))\ne 0$$
(counted with multiplicity). Let $A$ be in $\Ccal_L$, $W_A^+$ an almost de Rham $A\otimes_{\Qp}\BdR^+$-representation of $\Gcal_K$ and $\iota_A:\,W_A^+\otimes_AL\buildrel \sim \over \longrightarrow W^+$ an isomorphism of $L\otimes_{\Qp}\BdR^+$-representations of $\Gcal_K$. The following result is a direct consequence of the last statement of Lemma \ref{ApdR+}.

\begin{cor}\label{filtfree}
For each $\tau\in\Sigma$ and $i\in\Z$ we have:
$$\gr^i_{\Fil^\bullet_{W_A^+}}(D_{\pdR,\tau}(W_A))\otimes_A L\buildrel \sim \over \longrightarrow\gr^i_{\Fil^\bullet_{W^+}}(D_{\pdR,\tau}(W)).$$
In particular $\gr^i_{\Fil^\bullet_{W_A^+}}(D_{\pdR,\tau}(W_A))\neq0$ if and only if there exists $j$ such that $i=-h_{\tau,j}$.
\end{cor}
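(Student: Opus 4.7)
The plan is to deduce the corollary directly from the last statement of Lemma \ref{ApdR+} by applying it to the $A$-module $M = L$, viewed as an $A$-module via the residue map $A \twoheadrightarrow A/\mathfrak{m}_A = L$, and then to perform the $\tau$-component base change.

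First, I would observe that the isomorphism $\iota_A$ identifies $L \otimes_A W_A^+$ with $W^+$ as $L \otimes_{\Qp} \BdR^+$-representations of $\Gcal_K$. Plugging $M = L$ into the last statement of Lemma \ref{ApdR+} thus yields a natural $L$-linear isomorphism
$$L \otimes_A \gr^i_{\Fil^\bullet_{W_A^+}}(D_{\pdR}(W_A)) \;\simeq\; \gr^i_{\Fil^\bullet_{W^+}}(D_{\pdR}(W))$$
of $L \otimes_{\Qp} K$-modules. Since tensor product commutes with itself, and since
$$L \otimes_A (A \otimes_{\Qp} K) = L \otimes_{\Qp} K \buildrel\sim\over\longrightarrow \bigoplus_{\tau \in \Sigma} L$$
in a way compatible with the projections indexed by $\tau \in \Sigma$, base-changing via $1 \otimes \tau : A \otimes_{\Qp} K \to A$ on the left and $1 \otimes \tau : L \otimes_{\Qp} K \to L$ on the right gives the first asserted isomorphism
$$\gr^i_{\Fil^\bullet_{W_A^+}}(D_{\pdR,\tau}(W_A)) \otimes_A L \;\buildrel\sim\over\longrightarrow\; \gr^i_{\Fil^\bullet_{W^+}}(D_{\pdR,\tau}(W)).$$

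For the ``in particular'' part, I would combine this with the freeness already recorded after Lemma \ref{ApdR+}: the graded piece $\gr^i_{\Fil^\bullet_{W_A^+}}(D_{\pdR}(W_A))$ is free over $A \otimes_{\Qp} K \simeq \bigoplus_{\tau \in \Sigma} A$, so each $\tau$-component $\gr^i_{\Fil^\bullet_{W_A^+}}(D_{\pdR,\tau}(W_A))$ is free over $A$. A finitely generated free $A$-module is nonzero if and only if its reduction modulo $\mathfrak{m}_A$ is nonzero; by the isomorphism just established this reduction is $\gr^i_{\Fil^\bullet_{W^+}}(D_{\pdR,\tau}(W))$, which by the very definition of the integers $h_{\tau,1},\ldots,h_{\tau,n}$ is nonzero precisely when $i = -h_{\tau,j}$ for some $j \in \{1,\ldots,n\}$.

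The argument is entirely formal once Lemma \ref{ApdR+} is in hand, so there is no real obstacle; the only thing to be slightly careful about is the compatibility of the two base changes (by $M = L$ over $A$, and by $\tau$ over $K$), which is straightforward since the $K$-algebra structure on $D_{\pdR}(W_A)$ comes from $\BpdR$ and is independent of the coefficient ring~$A$.
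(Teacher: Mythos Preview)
Your proof is correct and follows exactly the approach the paper has in mind: the paper simply states that the corollary is a direct consequence of the last statement of Lemma \ref{ApdR+}, and you have spelled out precisely how (take $M=L$, then pass to $\tau$-components, and use the freeness noted just before the corollary for the ``in particular'').
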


We can define groupoids $X_{W^+}$ and $X_{W^+}^\Box$ over $\Ccal_L$ of respectively deformations and framed deformations of $W^+$ exactly as we defined $X_{W}$ and $X_{W}^\Box$ in \S\ref{debut} by replacing $W$, $W_A$ in $X_W$ by $W^+$, $W_A^+$ with $W_A^+$ an almost de Rham $A\otimes_{\Qp}\BdR^+$-representation of $\Gcal_K$. Note that $X_{W^+}^\Box=X_{W^+}\times_{X_W}X_{W}^\Box$. We have $X_{W^+}^\Box\longrightarrow X_{W^+}$ and inverting $t$ induces morphisms $X_{W^+}\longrightarrow X_W$, $X_{W^+}^\Box\longrightarrow X_{W}^\Box$ of groupoids over $\Ccal_L$ together with an obvious commutative diagram. We will make $X_{W^+}^\Box$ more explicit under one more assumption on $W^+$.

\begin{defn}\label{regular}
Let $W^+$ be an almost de Rham $L\otimes_{\Qp}\BdR^+$-representation of rank $n$. We say that $W^+$ is {\rm regular} if for each $\tau\in\Sigma$ the $h_{\tau,i}$ are pairwise distinct, i.e. $h_{\tau,1}<\cdots < h_{\tau,n}$.
\end{defn}

Assume that $W^+$ is moreover regular. Let $A$ be an object of $\Ccal_L$ and $(W_A^+,\iota_A,\alpha_A)$ an object of $X_{W^+}^\Box(A)$. We define a complete flag:
$$\Fil_{W^+_A,\bullet}=\Fil_{W^+_A,\bullet}(D_{\pdR}(W_A)):=(\Fil_{W^+_A,i}(D_{\pdR}(W_A)))_{i\in \{1,\dots,n\}}$$
of the free $A\otimes_{\Qp}K$-module $D_{\pdR}(W_A)$ by the formula:
\begin{equation}\label{completeflag}
\Fil_{W^+_A,i}(D_{\pdR}(W_A)):=\bigoplus_{\tau\in \Sigma} \Fil_{W^+_A}^{-h_{\tau,i}}(D_{\pdR,\tau}(W_A))\ \ \ \ i\in \{1,\dots,n\}
\end{equation}
and it follows from Corollary \ref{filtfree} that each $\Fil_{W^+_A,i}(D_{\pdR}(W_A))/\Fil_{W^+_A,i-1}(D_{\pdR}(W_A))$ is a free $A\otimes_{\Qp}K$-module of rank $1$. Since $\Fil_{W^+_A,\bullet}$ is stable under the endomorphism $\nu_{W_A}$ of $D_{\pdR}(W_A)$, the pair $(\alpha_A^{-1}(\Fil_{W^+_A,\bullet}),N_{W_A})$ defines an element of $\gtilde(A)$ where $N_{W_A}\in \gfrak(A)$ is as in \S\ref{debut}. Denote by $\widehat \gtilde$ the completion of $\gtilde$ at the point $(\Fil_{W^+,\bullet},N_W)\in \gtilde(L)$ (note that the formal scheme $\widehat \gtilde$ here is in general {\it different} from the formal scheme also denoted $\widehat\gtilde$ in \S\ref{debut} since we complete at different points of $\gtilde(L)$, see \S\ref{trianguline} for the mix of the two!). 

Like for Corollary \ref{dRfilt}, we deduce the following result from Lemma \ref{ApdR+}.

\begin{theo}\label{dRlattice}
The groupoid $X_{W^+}^\Box$ is pro-representable. The functor:
$$(W_A^+,\iota_A,\alpha_A)\longmapsto (\alpha_A^{-1}(\Fil_{W^+_A,\bullet}),N_{W_A})$$
induces an isomorphism of functors between $|X_{W^+}^\Box|$ and $\widehat \gtilde$. In parti\-cular the functor $|X_{W^+}^\Box|$ is pro-represented by a formally smooth noetherian complete local ring of residue field $L$ and dimension $n^2[K:\Qp]=\dim\gtilde$.
\end{theo}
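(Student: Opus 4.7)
The plan is to mimic the proof of Corollary \ref{dRfilt}, replacing the input Lemma \ref{ApdR} by its $\BdR^+$-analogue Lemma \ref{ApdR+}. First I would observe that the groupoid $X_{W^+}^\Box$ is pro-representable as soon as the natural map $X_{W^+}^\Box \longrightarrow |X_{W^+}^\Box|$ is an equivalence: any automorphism $u$ of a framed object $(W_A^+,\iota_A,\alpha_A)$ induces, via the equivalence of Lemma \ref{ApdR+}, an automorphism of the associated filtered $A\otimes_{\Qp}K$-representation of $\Gbb_{\mathrm{a}}$; by the framing $\alpha_A$ this automorphism is the identity on $D_{\pdR}(W_A)$, and hence, applying the quasi-inverse $W^+(\,\cdot\,,\Fil^\bullet)$ of Lemma \ref{ApdR+}, $u$ itself is the identity.

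Next I would exhibit the functor to $\widehat{\tilde{\lieg}}$. By Lemma \ref{ApdR+} the datum of an almost de Rham $A\otimes_{\Qp}\BdR^+$-representation deforming $W^+$ is equivalent to the datum of a filtered $A\otimes_{\Qp}K$-representation of $\Gbb_{\mathrm{a}}$ deforming $(D_{\pdR}(W),\Fil^\bullet_{W^+})$. The framing $\alpha_A$ identifies the underlying $A\otimes_{\Qp}K$-module with $(A\otimes_{\Qp}K)^n$, and (because $W^+$ is regular) Corollary \ref{filtfree} ensures that the filtration \eqref{completeflag} is a complete flag whose graded pieces are free of rank one over $A\otimes_{\Qp}K$; hence $\alpha_A^{-1}(\Fil_{W_A^+,\bullet})$ defines an $A$-point of $G/B$ lifting the $L$-point $\alpha^{-1}(\Fil_{W^+,\bullet})$. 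The requirement that $\nu_{W_A}$ preserves $\Fil_{W_A^+,\bullet}$ translates, once transported by $\alpha_A$, exactly into the condition $\Ad(g^{-1})\,N_{W_A}\in \lieb$ that cuts out $\tilde{\lieg}$ inside $G/B\times\lieg$, see \eqref{gtilde}. This produces a well-defined morphism of functors $|X_{W^+}^\Box|\longrightarrow \widehat{\tilde{\lieg}}$.

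To check that this morphism is an isomorphism I would construct the inverse directly: given a point $(gB,\psi)\in \tilde{\lieg}(A)$ lifting $(\alpha^{-1}(\Fil_{W^+,\bullet}),N_W)$, the transport $g\cdot\bigl(\Fil^\bullet\,\text{of}\,(A\otimes_{\Qp}K)^n\bigr)$ defines a complete flag of $(A\otimes_{\Qp}K)^n$ deforming $\Fil_{W^+,\bullet}$, while $\psi$ gives a nilpotent endomorphism preserving it. Applying the quasi-inverse of Lemma \ref{ApdR+} yields an almost de Rham $A\otimes_{\Qp}\BdR^+$-representation $W_A^+$ together with a canonical framing $\alpha_A$ reducing to $\alpha$ modulo $\mathfrak{m}_A$; the two constructions are inverse to each other by Lemma \ref{ApdR+}. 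Finally, since $\tilde{\lieg}$ is a smooth irreducible $L$-scheme of dimension $n^2[K:\Qp]$ (see \S\ref{Springer0}), its completion at any $L$-valued point is pro-represented by a formally smooth noetherian complete local ring of residue field $L$ and dimension $n^2[K:\Qp]$, which gives the last assertion.

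The only subtle point, and the one I would check most carefully, is the regularity hypothesis: without it the filtration \eqref{completeflag} would have graded pieces of rank $>1$ and we would only land in a partial flag variety. Regularity is precisely what makes the target scheme $\tilde{\lieg}$ (rather than some parabolic analogue) and what allows the compatibility with Lemma \ref{ApdR+} (via Corollary \ref{filtfree}) to go through.
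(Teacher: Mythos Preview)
Your proposal is correct and follows exactly the approach the paper intends: the paper's proof is a single sentence (``Like for Corollary~\ref{dRfilt}, we deduce the following result from Lemma~\ref{ApdR+}''), and you have simply unpacked what that sentence means, using the equivalence of Lemma~\ref{ApdR+} together with Corollary~\ref{filtfree} and the regularity hypothesis to pass between deformations of $W^+$ and points of $\widehat{\tilde\lieg}$. Your discussion of why regularity is essential (to turn the $\Z$-indexed filtration into a complete flag via \eqref{completeflag}) is the right point to highlight, and your observation that automorphisms are trivial because of the framing is the same argument the paper uses for $X_W^\Box$ in \S\ref{debut}.
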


As in \S\ref{debut} we write $\kappa_{W^+}$ for the composition of the morphisms of groupoids over $\Ccal_L$:
$$X_{W^+}^\Box\longrightarrow |X_{W^+}^\Box|\buildrel\sim\over\longrightarrow\widehat\gtilde\buildrel\widehat\kappa\over\longrightarrow \widehat{\tfrak}$$
where the second map is the isomorphism of Corollary \ref{dRlattice} and $\widehat \kappa$ is induced by $\kappa:\,\gtilde\rightarrow\tfrak$ (where $\widehat{\tfrak}$ is the completion of $\liet$ at $0$). By the same argument as in \S\ref{debut} the morphism $\kappa_{W^+}$ again factors through a map still denoted $\kappa_{W^+}:X_{W^+}\longrightarrow \widehat{\tfrak}$ so that we have a commutative diagram:
$$ \xymatrix{ X_{W^+}^\Box\ar[r]\ar_{\kappa_{W^+}}[rd] & X_{W^+}\ar^{\kappa_{W^+}}[d]\\
& \widehat{\tfrak}.}$$

\subsection{Trianguline $(\varphi,\Gamma_K)$-modules over $\mathcal{R}_K[\frac{1}{t}]$}\label{triangulinet}

We define and study some groupoids of equal characteristic deformations of a $(\varphi,\Gamma_K)$-module over $\Rcal_{L,K}[\tfrac{1}{t}]$ and of a triangulated $(\varphi,\Gamma_K)$-module over $\Rcal_{L,K}[\tfrac{1}{t}]$.

We define a $(\varphi,\Gamma_K)$-module over $\Rcal_K[\tfrac{1}{t}]$ as a finite free $\Rcal_K[\tfrac{1}{t}]$-module $\Mcal$ with a semilinear endomorphism $\varphi$ and a semilinear action of the group $\Gamma_K$ commuting with $\varphi$ and such that there exists an $\Rcal_K$-lattice $D$ of $\Mcal$ stable under $\varphi$ and $\Gamma_K$ which is a $(\varphi,\Gamma_K)$-module over $\Rcal_K$ in the usual sense (see e.g. \cite{KPX}). Let $A$ be a finite dimensional $\Qp$-algebra, we define a $(\varphi,\Gamma_K)$-module over $\Rcal_{A,K}[\tfrac{1}{t}]$ as a finite free $\Rcal_{A,K}[\tfrac{1}{t}]$-module with an additional structure of $(\varphi,\Gamma_K)$-module over $\Rcal_K[\tfrac{1}{t}]$ such that the actions of $\varphi$ and $\Gamma_K$ are $A$-linear. We denote by $\Phi\Gamma_{K}^+$ the category of $(\varphi,\Gamma_K)$-modules over $\Rcal_{K}$, $\Phi\Gamma_{K}$ the category of $(\varphi,\Gamma_K)$-modules over $\Rcal_{K}[\tfrac{1}{t}]$ and $\Phi\Gamma_{A,K}$ the category of $(\varphi,\Gamma_K)$-modules over $\Rcal_{A,K}[\tfrac{1}{t}]$ (with obvious morphisms).

\begin{rem}\label{freeAagain}
{\rm Here again (compare Remark \ref{freeA}), it is possible that a $(\varphi,\Gamma_K)$-module in $\Phi\Gamma_{A,K}$ always contains an $\Rcal_{A,K}$-lattice stable under $\varphi$ and $\Gamma_K$, but we don't need this result (note that it always contains an $\Rcal_{K}$-lattice stable under $\varphi$, $\Gamma_K$ and $A$). This is true at least for those objects in $\Phi\Gamma_{A,K}$ giving rise to almost de Rham $\BdR$-representations of $\Gcal_K$, see Remark \ref{freeAlast}.}
\end{rem}

\begin{defn}
Let $A$ be a finite dimensional $\Qp$-algebra and $\Mcal$ an object of $\Phi\Gamma_{A,K}$. We say that $\Mcal$ is of character type if there exists a continuous character $\delta:\,K^\times\rightarrow A^\times$ such that $\Mcal\simeq \mathcal{R}_{A,K}(\delta)[\tfrac{1}{t}]$.
\end{defn}

From now on we assume moreover that $L$ splits $K$, that $L\subseteq A$ and that $A$ is local. For $\tau\in \Sigma$ we also fix a Lubin-Tate element $t_\tau\in \mathcal{R}_{L,K}$ as in \cite[Not.6.2.7]{KPX} (recall that the ideal $t_\tau\mathcal{R}_{L,K}$ only depends on $\tau$). 

We say that a continuous character $\delta:\, K^\times\rightarrow A^\times$ is $\Qp$-algebraic, or more simply algebraic, if it has the following form: for each $\tau\in \Sigma$, there exists an integer $k_\tau$ such that $\delta(z)=\prod_{\tau\in \Sigma}\tau(z)^{k_\tau}$ for $z\in K^\times$. If ${\bf k}:=(k_{\tau})_\tau\in\Z^{[K:\Qp]}$, we write $z^{{\bf k}}$ this character. A continuous character $K^\times\rightarrow A^\times$ is said to be constant if it factors through $K^\times\rightarrow L^\times\subset A^\times$ (i.e. is a constant family viewed as a family of characters over $\Sp A$). Note that with this terminology any algebraic character is constant.

Let $\delta:\,K^\times\rightarrow L^\times$ be continuous. It follows from \cite[Cor.6.2.9]{KPX} that every non zero $(\varphi,\Gamma_K)$-submodule of $\Rcal_{L,K}(\delta)[\tfrac{1}{t}]$ is of the form $t^{{\bf k}}\Rcal_{L,K}(\delta)$ for some ${\bf k}=(k_\tau)_\tau\in\Z^{[K:\Qp]}$ where $t^{{\bf k}}:=\prod_{\tau} t_{\tau}^{k_\tau}\in \Rcal_{L,K}$.

Let $\Delta_K$ be the torsion subgroup of $\Gamma_K$ and fix $\gamma_K\in\Gamma_K$ a topological generator of $\Gamma_K/\Delta_K$. If $\Mcal$ is an object of $\Phi\Gamma_{K}$, we define $H^i_{\varphi,\gamma_K}(\Mcal)$ as the cohomology of the complex:
\begin{equation}\label{phigammacomplex}
\Mcal^{\Delta_K}\xrightarrow{(\varphi-1),(\gamma_K-1)} \Mcal^{\Delta_K}\oplus \Mcal^{\Delta_K}\xrightarrow{(1-\gamma_K,\varphi-1)} \Mcal^{\Delta_K}.
\end{equation}
If $\Mcal$ is an object of $\Phi\Gamma_{A,K}$ then the groups $H^i_{\varphi,\gamma_K}(\Mcal)$ are $A\otimes_{\Qp}K$-modules. Moreover if $D\subset \Mcal$ is a $(\varphi,\Gamma_K)$-submodule such that $\Mcal=D[\tfrac{1}{t}]$, then we have the formula:
\begin{equation}\label{coho}
H^i_{\varphi,\gamma_K}(\Mcal)=\varinjlim_n H^i_{\varphi,\gamma_K}(t^{-n}D)
\end{equation}
where $H^i_{\varphi,\gamma_K}(t^{-n}D)$ is the cohomology of the $(\varphi,\Gamma_K)$-module $t^{-n}D$ over $ \mathcal{R}_{K}$ (which is also given by (\ref{phigammacomplex}), see \cite{Liuduality}). In particular one has:
\begin{equation}\label{1cocycle}
H^1_{\varphi,\gamma_K}(\Mcal)=\varinjlim_n H^1_{\varphi,\gamma_K}(t^{-n}D)\simeq \varinjlim_n \Ext^1_{\Phi\Gamma_{K}^+}(\Rcal_K,t^{-n}D)\simeq \Ext^1_{\Phi\Gamma_{K}}(\Rcal_K[\tfrac{1}{t}],\Mcal)
\end{equation}
where the second isomorphism is the usual explicit computation of extensions in terms of $1$-cocycles (see \cite[Lem.2.2]{CheTri}) and where the last isomorphism is easy to check. If $\Mcal$ is in $\Phi\Gamma_{A,K}$, the embedding $\Rcal_{K}[\tfrac{1}{t}]\subset \Rcal_{A,K}[\tfrac{1}{t}]$ yields by pull-back a $K$-linear map:
\begin{equation}\label{extA}
\Ext^1_{\Phi\Gamma_{A,K}}(\Rcal_{A,K}[\tfrac{1}{t}],\Mcal)\longrightarrow \Ext^1_{\Phi\Gamma_{K}}(\Rcal_K[\tfrac{1}{t}],\Mcal)
\end{equation}
which is easily checked to be injective. By (\ref{1cocycle}) any extension in $\Ext^1_{\Phi\Gamma_{K}}(\Rcal_K[\tfrac{1}{t}],\Mcal)$ is given by a $1$-cocycle in $H^1_{\varphi,\gamma_K}(\Mcal)$, which in turn can be used to construct an explicit extension in $\Ext^1_{\Phi\Gamma_{A,K}}(\Rcal_{A,K}[\tfrac{1}{t}],\Mcal)$ (arguing as in \cite[Lem.2.2]{CheTri}). It follows that (\ref{extA}) is surjective, hence is an isomorphism of $K$-vector spaces.

The functor $\Mcal\mapsto H_{\varphi,\gamma_K}^0(\Mcal)$ is left exact and we check using (\ref{coho}) that $H_{\varphi,\gamma_K}^0(\Rcal_{A,K}[\frac{1}{t}])=A\otimes_{\Qp}K$. For any continous $\delta:\,K^\times\rightarrow A^\times$, by a d\'evissage on $\Rcal_{A,K}(\delta)[\tfrac{1}{t}]$ or $\Rcal_{A,K}(\delta)$ and the left exactness of $H_{\varphi,\gamma_K}^0$, (\ref{coho}) together with \cite[Prop.6.2.8(1)]{KPX} (see also \cite[\S2.3]{Nakamura}) imply the following inequalities:
\begin{equation}\label{ineqrobba}
\dim_KH_{\varphi,\gamma_K}^0(\Rcal_{A,K}(\delta))\leq \dim_KH_{\varphi,\gamma_K}^0(\Rcal_{A,K}(\delta)[\tfrac{1}{t}])\leq \dim_KA\otimes_{\Qp}K.
\end{equation}

The following Lemma follows by induction from \cite[Prop.2.14]{Bergdall}.

\begin{lem}\label{torsioncohomology}
Let ${\bf k}=(k_\tau)_{\tau\in \Sigma}\in\Z_{\geq0}^{[K:\Qp]}$, $\delta:K^\times \rightarrow L^\times$ a continuous character and $j\in \{0,1\}$.\\
(i) If $\wt_\tau(\delta)\notin \{1-k_\tau,\dots,0\}$ for each $\tau\in \Sigma$ we have $H_{\varphi,\gamma_K}^j(\Rcal_{L,K}(\delta)/t^{{\bf k}}\Rcal_{L,K}(\delta))=0$.\\
(ii) If $\wt_\tau(\delta)\in \{1-k_\tau,\dots,0\}$ for each $\tau\in \Sigma$ we have $\dim_LH_{\varphi,\gamma_K}^j(\Rcal_{L,K}(\delta)/t^{{\bf k}}\Rcal_{L,K}(\delta))=[K:\Qp]$.
\end{lem}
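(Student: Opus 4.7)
The plan is to proceed by induction on $n := \sum_{\tau \in \Sigma} k_\tau$, with base case $n=0$ trivial since the module is zero. For the inductive step, pick $\tau_0 \in \Sigma$ with $k_{\tau_0} \geq 1$ and set ${\bf k}' := {\bf k} - e_{\tau_0}$ (where $e_{\tau_0}$ is the standard basis vector). Consider the short exact sequence of $(\varphi,\Gamma_K)$-modules
\begin{equation*}
0 \longrightarrow t^{{\bf k}'}\Rcal_{L,K}(\delta)/t^{\bf k}\Rcal_{L,K}(\delta) \longrightarrow \Rcal_{L,K}(\delta)/t^{\bf k}\Rcal_{L,K}(\delta) \longrightarrow \Rcal_{L,K}(\delta)/t^{{\bf k}'}\Rcal_{L,K}(\delta) \longrightarrow 0.
\end{equation*}
Multiplication by $t^{{\bf k}'}$ identifies the leftmost term with $\Rcal_{L,K}(\delta')/t_{\tau_0}\Rcal_{L,K}(\delta')$ for a character $\delta'$ differing from $\delta$ by an algebraic (and possibly unramified) twist of weight $k'_\tau$ at $\tau$; in particular $\wt_{\tau_0}(\delta') = \wt_{\tau_0}(\delta) + k_{\tau_0}-1$ up to the sign convention of \cite[Not.~6.2.7]{KPX}. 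Bergdall's Proposition~2.14 (\cite{Bergdall}) then gives that the cohomology of this leftmost term vanishes unless $\wt_{\tau_0}(\delta) = 1-k_{\tau_0}$, in which case each of $H^0$ and $H^1$ has dimension $[K:\Qp]$.

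For case (i), the hypothesis $\wt_\tau(\delta) \notin \{1-k_\tau,\dots,0\}$ for all $\tau$ passes to ${\bf k}'$, since the analogous set at each $\tau$ shrinks (losing only $1-k_{\tau_0}$ at $\tau_0$); in particular $\wt_{\tau_0}(\delta) \neq 1-k_{\tau_0}$, so both outer terms of the long exact sequence have vanishing cohomology (by Bergdall and the inductive hypothesis respectively), forcing the same on the middle. For case (ii), we strengthen the inductive statement to claim $\dim_L H^j = [K:\Qp]$ as soon as the set $\Sigma^+ := \{\tau\in\Sigma \mid \wt_\tau(\delta)\in\{1-k_\tau,\dots,0\}\}$ is nonempty, and $H^j=0$ otherwise (which recovers case (i) as $\Sigma^+=\emptyset$). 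Within this strengthened statement one chooses $\tau_0$ with $\wt_{\tau_0}(\delta)=1-k_{\tau_0}$ whenever available: then the Bergdall contribution from the left has dimension $[K:\Qp]$, while the right-hand term is handled by the induction (with $\Sigma^+$ either the same or decreased but still containing another index). When no such $\tau_0$ exists, one instead picks any $\tau_0\in\Sigma^+$, so the Bergdall contribution vanishes and the claim reduces directly to the same strengthened statement for ${\bf k}'$ (with $\Sigma^+$ preserved).

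The main obstacle is showing that the long exact sequence collapses correctly in the mixed situation, i.e.\ that the several potential Bergdall contributions accumulating through successive steps of the induction combine to yield a single $[K:\Qp]$-dimensional space rather than summing to $[K:\Qp]\cdot |\Sigma^+|$. This comes down to verifying that the relevant connecting homomorphism in the long exact sequence is an isomorphism whenever both outer groups are nonzero, which can be checked either by an explicit cocycle-level argument using the description of $H^1_{\varphi,\gamma_K}$ in terms of extensions (as in \cite[Lem.~2.2]{CheTri}), or by combining the vanishing of the Euler characteristic $\chi(\Rcal_{L,K}(\delta)/t^{\bf k}\Rcal_{L,K}(\delta)) = 0$ (additive over the filtration and zero on each Bergdall piece) with a Tate-style duality between $H^0$ and $H^1$ for these torsion $(\varphi,\Gamma_K)$-modules, which forces $\dim H^0 = \dim H^1$ at every inductive step.
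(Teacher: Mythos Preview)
Your inductive scheme using \cite[Prop.~2.14]{Bergdall} is exactly the paper's route, and case~(i) is handled correctly. Case~(ii), however, has a genuine gap. First, the dimension you quote from Bergdall is incorrect: his Proposition~2.14 states that $H^j_{\varphi,\gamma_K}\big(\Rcal_{L,K}(\delta')/t_{\tau_0}\Rcal_{L,K}(\delta')\big)$ is \emph{one}-dimensional (not $[K:\Qp]$-dimensional) when $\wt_{\tau_0}(\delta')=0$. This makes your strengthened hypothesis false in general; the correct intermediate claim is $\dim_L H^j = |\Sigma^+|$, which recovers (i) and (ii) when $\Sigma^+=\emptyset$ and $\Sigma^+=\Sigma$ respectively. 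Second, even with the corrected numbers your ``main obstacle'' persists (now one needs the connecting map to be \emph{zero} so that the one-dimensional contributions add), and neither of your proposed fixes resolves it: the Euler characteristic together with $\dim H^0=\dim H^1$ is consistent with any common value and determines nothing, and the cocycle argument is not carried out.

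A clean way through is to separate the embeddings first: the ideals $t_\tau\Rcal_{L,K}$ for distinct $\tau\in\Sigma$ are pairwise coprime, giving a $(\varphi,\Gamma_K)$-equivariant splitting $\Rcal_{L,K}(\delta)/t^{\bf k}\Rcal_{L,K}(\delta)\cong\bigoplus_{\tau}\Rcal_{L,K}(\delta)/t_\tau^{k_\tau}\Rcal_{L,K}(\delta)$. One then inducts on $k_\tau$ for each $\tau$ independently, where at every step exactly one of the two outer terms in the long exact sequence vanishes (the Bergdall piece is nonzero only at the single step $k_\tau=1-\wt_\tau(\delta)$, and precisely there the inductive term for $k_\tau-1$ vanishes since $\wt_\tau(\delta)\notin\{2-k_\tau,\dots,0\}$), so no connecting map ever needs to be analyzed.
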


\begin{lem}\label{isorank1}
Let $\delta_i:K^\times \rightarrow A^\times$ for $i=1,2$ be two continuous characters. If there is an isomoprhism $\Rcal_{A,K}(\delta_1)[\tfrac{1}{t}]\simeq\Rcal_{A,K}(\delta_2)[\tfrac{1}{t}]$, then the character $\delta_2\delta_1^{-1}$ is a constant algebraic character $K^\times\rightarrow L^\times$.
\end{lem}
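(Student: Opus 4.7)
My plan is to reduce the statement to the classification of rank-$1$ $(\varphi,\Gamma_K)$-modules over $\Rcal_{A,K}$ itself (\cite[Prop.6.2.8]{KPX}). Setting $\eta:=\delta_2\delta_1^{-1}$ and tensoring the hypothesized isomorphism with $\Rcal_{A,K}(\delta_1^{-1})$ over $\Rcal_{A,K}[\tfrac{1}{t}]$, I reduce to proving that any isomorphism $\Rcal_{A,K}[\tfrac{1}{t}]\simeq\Rcal_{A,K}(\eta)[\tfrac{1}{t}]$ of $(\varphi,\Gamma_K)$-modules forces $\eta$ to be constant algebraic. Such an isomorphism is the same datum as a nonzero $\varphi,\Gamma_K$-invariant element $e\in\Rcal_{A,K}(\eta)[\tfrac{1}{t}]$ which generates this module over $\Rcal_{A,K}[\tfrac{1}{t}]$. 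Writing $e=f\cdot e_\eta$ where $e_\eta$ is the canonical basis of $\Rcal_{A,K}(\eta)$, one has $f\in\Rcal_{A,K}[\tfrac{1}{t}]^\times$ and the invariance of $e$ amounts to $f$ being a joint eigenvector of $\varphi$ and $\Gamma_K$ with eigenvalues in $A^\times$ prescribed by the values of $\eta$.

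The heart of the argument is the following claim: any such unit eigenvector $f$ is of the form $f=c\cdot t^{{\bf m}}$ for some $c\in A^\times$ and ${\bf m}\in\Z^{[K:\Qp]}$. Granting this, the $\Rcal_{A,K}$-submodule $\Rcal_{A,K}\cdot e=c\cdot t^{{\bf m}}\cdot\Rcal_{A,K}\cdot e_\eta=t^{{\bf m}}\Rcal_{A,K}(\eta)\subset\Rcal_{A,K}(\eta)[\tfrac{1}{t}]$ is on the one hand isomorphic to the trivial $(\varphi,\Gamma_K)$-module $\Rcal_{A,K}$ via $r\mapsto re$ (since $e$ is invariant), and on the other hand is isomorphic to $\Rcal_{A,K}(\eta\cdot z^{{\bf k}})$ for the algebraic character $z^{{\bf k}}$ whose value data encodes the $\varphi,\Gamma_K$-eigenvalues of $t^{{\bf m}}$. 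The classification of rank-$1$ $(\varphi,\Gamma_K)$-modules over $\Rcal_{A,K}$ then forces $\eta\cdot z^{{\bf k}}=1$, so $\eta=z^{-{\bf k}}$ is a constant algebraic character $K^\times\rightarrow L^\times$.

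To prove the claim, I would proceed by induction on the length of the local artinian $L$-algebra $A$. In the base case $A=L$, the $\Rcal_{L,K}$-submodule $\Rcal_{L,K}\cdot f\subset\Rcal_{L,K}[\tfrac{1}{t}]$ is $(\varphi,\Gamma_K)$-stable (since $f$ is an eigenvector), hence equals $t^{{\bf m}}\Rcal_{L,K}$ for some ${\bf m}\in\Z^{[K:\Qp]}$ by \cite[Cor.6.2.9]{KPX}. This gives $f=u\cdot t^{{\bf m}}$ with $u\in\Rcal_{L,K}^\times$; the factor $u$ is again a $\varphi,\Gamma_K$-eigenvector, and $\Rcal_{L,K}\cdot u=\Rcal_{L,K}$ viewed as a rank-$1$ $(\varphi,\Gamma_K)$-module has trivial eigenvalues by \cite[Prop.6.2.8]{KPX}, so $u$ is invariant and $u\in\Rcal_{L,K}^{\varphi,\Gamma_K=1}=L^\times$. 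For the inductive step, pick an ideal $I\subset A$ with $\mathfrak{m}_A I=0$ and $I\neq 0$. By induction applied to $A/I$, we have $f\equiv\bar c\cdot t^{{\bf m}}\pmod{I}$ for some $\bar c\in(A/I)^\times$; write $f=t^{{\bf m}}(c+h)$ with $c\in A^\times$ a lift of $\bar c$ and $h\in I\otimes_L\Rcal_{L,K}[\tfrac{1}{t}]$, using $\mathfrak{m}_AI=0$ to identify $I\cdot\Rcal_{A,K}$ with $I\otimes_L\Rcal_{L,K}$. The eigenvector condition on $f$ then becomes, using $I^2=0$, a system $(\varphi-1)(h)=\psi$ and $(\gamma-1)(h)=\psi_\gamma$ for constants $\psi,\psi_\gamma\in I$ encoding the discrepancy between the prescribed eigenvalues and those of $c\cdot t^{{\bf m}}$.

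The main obstacle will be to show $\psi=0$ and $\psi_\gamma=0$ for all $\gamma\in\Gamma_K$; once this is done, $h$ is $\varphi,\Gamma_K$-invariant in $I\otimes_L\Rcal_{L,K}[\tfrac{1}{t}]$, hence a constant in $I\otimes_L L=I$ (by the base case argument applied to $\Rcal_{L,K}[\tfrac{1}{t}]$), which can be absorbed into $c\in A^\times$ to complete the induction. This vanishing is a cohomological injectivity property: the natural map from continuous additive homomorphisms $K^\times\rightarrow I$ to $I\otimes_L H^1_{\varphi,\gamma_K}(\Rcal_{L,K}[\tfrac{1}{t}])$ sending a character to the class of its constant values has trivial kernel. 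I would verify it using the direct-limit description (\ref{coho}) together with Lemma \ref{torsioncohomology} applied to the successive subquotients $t^{-n}\Rcal_{L,K}/t^{-n-1}\Rcal_{L,K}$, reducing ultimately to the elementary fact that a nonzero constant in $\Rcal_{L,K}$ is not a $(\varphi-1)$-coboundary (as may be checked directly on constant terms of power series expansions).
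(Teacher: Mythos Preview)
Your approach diverges substantially from the paper's, and there is a genuine gap when $[K:\Qp]>1$. You write $e=f\cdot e_\eta$ and assert that $f\in\Rcal_{A,K}[\tfrac{1}{t}]^\times$ is a joint $(\varphi,\Gamma_K)$-eigenvector \emph{with eigenvalues in $A^\times$} prescribed by $\eta$. This presupposes that $\gamma(e_\eta)$ is an $A^\times$-scalar multiple of $e_\eta$ for every $\gamma\in\Gamma_K$. In the cyclotomic theory this fails for $[K:\Qp]>1$: a rank-one module admitting such a scalar basis would be determined by a pair in $A^\times\times\Hom_{\rm cont}(\Gamma_K,A^\times)$, hence by only two parameters since $\Gamma_K=\Gal(K(\mu_{p^\infty})/K)$ has $\Z_p$-rank one, whereas \cite[Lem.6.2.13]{KPX} shows that rank-one modules are classified by $\Hom_{\rm cont}(K^\times,A^\times)$, which has $[K:\Qp]+1$ parameters. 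Concretely, for non-constant ${\bf m}\in\Z^{[K:\Qp]}$ the element $t^{{\bf m}}$ is \emph{not} a $\varphi$-eigenvector (Frobenius permutes the $t_\tau$'s), so already in your base case the conclusion ``$f=c\cdot t^{{\bf m}}$ with $c\in L^\times$'' is false as written. Your inductive step then inherits the problem: the quantities $(\gamma-1)(h)$ are not constants in $I$ but lie in $I\otimes_L\Rcal_{L,K}[\tfrac{1}{t}]$ a priori, and the reduction to a two-dimensional space of ``constant cocycles'' collapses.

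The paper's argument sidesteps any explicit eigenvector computation. After twisting to $\delta_1=1$ and clearing a single power of $t$ to obtain an honest embedding $\Rcal_{A,K}\hookrightarrow\Rcal_{A,K}(\delta_2)$, it uses the upper bound (\ref{ineqrobba}) on $\dim_K H^0_{\varphi,\gamma_K}$ to force $H^0_{\varphi,\gamma_K}(\Rcal_{A,K})=H^0_{\varphi,\gamma_K}(\Rcal_{A,K}(\delta_2))$. An $A$-flatness argument shows the embedding remains injective modulo $\mathfrak{m}_A$, which identifies $\overline{\delta_2}$ as an explicit algebraic character $z^{-{\bf k}}$. Then Lemma~\ref{torsioncohomology}(i) (applied by d\'evissage) shows that $\Rcal_{A,K}$ already lands in $t^{{\bf k}}\Rcal_{A,K}(\delta_2)$, and since these are isoclinic of the same rank and slope over $\Rcal_K$ they coincide, whence $\delta_2=z^{-{\bf k}}$ by \cite[Lem.6.2.13]{KPX}. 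No induction on the length of $A$ and no $H^1$-computation is needed.
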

\begin{proof}
We can twist by $\delta_1^{-1}$ and assume that $\delta_1$ is trivial, so that we have an isomorphism $\Rcal_{A,K}[\tfrac{1}{t}]\buildrel\sim\over\longrightarrow \Rcal_{A,K}(\delta_2)[\tfrac{1}{t}]$. The induced embedding $\Rcal_{A,K}\hookrightarrow \Rcal_{A,K}(\delta_2)[\tfrac{1}{t}]$ factors through $t^{-k}\Rcal_{A,K}(\delta_2)$ for some integer $k\gg0$. Consequently, replacing $\delta_2$ by $\delta_2 N_{K/\Qp}^{-k}$ we can assume that there exists an embedding $\Rcal_{A,K}\hookrightarrow \Rcal_{A,K}(\delta_2)$ such that $\Rcal_{A,K}[\tfrac{1}{t}]\buildrel \sim\over\rightarrow \Rcal_{A,K}(\delta_2)[\tfrac{1}{t}]$.

We deduce $A\otimes_{\Qp}K\simeq H_{\varphi,\gamma_K}^0(\Rcal_{A,K})\hookrightarrow H_{\varphi,\gamma_K}^0(\Rcal_{A,K}(\delta_2))$, and hence we obtain an isomorphism $H_{\varphi,\gamma_K}^0(\Rcal_{A,K})\!\buildrel \sim\over\rightarrow \!H_{\varphi,\gamma_K}^0(\Rcal_{A,K}(\delta_2))$ by (\ref{ineqrobba}). As $A$ is a finite $\Qp$-algebra, we have $\Rcal_{A,K}=\Rcal_{K}\otimes_{\Q_p}A$. Consequently $\Rcal_{A,K}$ and $\Rcal_{A,K}(\delta_2)$ are free $A$-modules, $\Rcal_{A,K}$ is a direct factor of $\Rcal_{A,K}(\delta_2)$ as an $A$-module and hence $A/\mathfrak{m}_A\otimes_A\Rcal_{A,K}\hookrightarrow A/\mathfrak{m}_A\otimes_A\Rcal_{A,K}(\delta_2)$ which implies that $\delta_2$ modulo $\mathfrak{m}_A$ is an algebraic character $\eta=\prod_\tau \tau^{-k_\tau}:K^\times\rightarrow L^\times$ for some ${\bf k}=(k_\tau)_\tau\in\Z_{\geq 0}^{[K:\Qp]}$. Let $D:=t^{\bf k}\Rcal_{A,K}(\delta_2)\subseteq\Rcal_{A,K}(\delta_2)$. We have $H_{\varphi,\gamma_K}^0(D)\subseteq H_{\varphi,\gamma_K}^0(\Rcal_{A,K}(\delta_2))$. As $\wt_\tau(\delta_2\ {\rm modulo}\ \mathfrak{m}_A)=-k_\tau$, by (i) of Lemma \ref{torsioncohomology} and a d\'evissage on $A$ using the left exactness of $H_{\varphi,\gamma_K}^0$ we obtain $H_{\varphi,\gamma_K}^0(\Rcal_{A,K}(\delta_2)/D)=0$, so that:
$$H_{\varphi,\gamma_K}^0(D)=H_{\varphi,\gamma_K}^0(\Rcal_{A,K}(\delta_2))=H_{\varphi,\gamma_K}^0(\Rcal_{A,K}).$$
As $H_{\varphi,\gamma_K}^0(\Rcal_{A,K})$ contains a generator of $\Rcal_{A,K}$, we obtain $\Rcal_{A,K}\subseteq D$ as $\Rcal_{A,K}$-submodules of $\Rcal_{A,K}(\delta_2)$. But $\Rcal_{A,K}$ and $D$ are two isocline $(\varphi,\Gamma_K)$-modules over $\Rcal_{K}$ with the same rank and the same slope, hence they are equal (see for example \cite[Th.1.6.10]{Kedslopfil}) and thus $\delta_2=\eta$ by \cite[Lem.6.2.13]{KPX}.
\end{proof}

Recall from \cite[Prop.2.2.6(2)]{BerBpaires} that there exists a covariant functor $W^+_{\dR}$ from the category of $(\varphi,\Gamma_K)$-modules over $\Rcal_{K}$ to the category of $\BdR^+$-representations of $\Gcal_K$ (see the proof of Lemma \ref{freen} below for details on its definition). Let $\Mcal$ be a $(\varphi,\Gamma_K)$-module over $\Rcal_K[\tfrac{1}{t}]$ and $D\subset \Mcal$ a $(\varphi,\Gamma_K)$-submodule such that $\Mcal=D[\tfrac{1}{t}]$. Then it is easily checked that $W_{\dR}(\Mcal):=\BdR\otimes_{\BdR^+}W_{\dR}^+(D)$ does not depend on the choice of $D$ and defines a functor $W_{\dR}$ from the category of $(\varphi,\Gamma_K)$-modules over $\Rcal_K[\tfrac{1}{t}]$ to the category of $\BdR$-representations of $\Gcal_K$. Moreover the functoriality of the construction in {\it loc.cit.} implies that if $D$ (resp. $\Mcal$) is a $(\varphi,\Gamma_K)$-module over $\Rcal_{A,K}$ (resp. $\Rcal_{A,K}[\tfrac{1}{t}]$), then $W_{\dR}^+(D)$ (resp. $W_{\dR}(\Mcal)$) has a natural structure of an $A\otimes_{\Qp}\BdR^+$-module (resp. $A\otimes_{\Qp}\BdR$-module).

\begin{lem}\label{freen}
(i) Let $D$ be a $(\varphi,\Gamma_K)$-module of rank $n$ over $\Rcal_{A,K}$. Then $W_{\dR}^+(D)$ is a finite free $A\otimes_{\Qp}\BdR^+$-module of rank $n$. In particular $W_{\dR}^+(D)$ is an $A\otimes_{\Qp}\BdR^+$-representation of $\Gcal_K$.\\
(ii) Let $\Mcal$ be a $(\varphi,\Gamma_K)$-module of rank $n$ over $\Rcal_{A,K}[\tfrac{1}{t}]$. Then $W_{\dR}(\Mcal)$ is a finite free $A\otimes_{\Qp}\BdR$-module of rank $n$. In particular $W_{\dR}(\Mcal)$ is an $A\otimes_{\Qp}\BdR$-representation of $\Gcal_K$.
\end{lem}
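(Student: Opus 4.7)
The plan is to unwind Berger's construction of $W_{\dR}^+$ and combine it with the semi-local structure of $A \otimes_{\Qp} \BdR^+$. First, I would recall (following \cite{BerBpaires}) that for $n$ sufficiently large, Berger's cyclotomic localization provides a ring homomorphism $\iota_n : \Rcal_K^{[r_n, s_n]} \to \BdR^+$ factoring through $K_n \dbl t \dbr$, and the definition of $W_{\dR}^+$ unwinds to an identification $W_{\dR}^+(D) \simeq \BdR^+ \otimes_{\Rcal_K^{[r_n, s_n]}, \iota_n} D^{[r_n, s_n]}$, where $D^{[r_n, s_n]}$ denotes the restriction of $D$ to the annulus of radii $[r_n, s_n]$. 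Since $A$ is finite over $\Qp$, one has $\Rcal_{A, K}^{[r_n, s_n]} = A \otimes_{\Qp} \Rcal_K^{[r_n, s_n]}$, and the above tensor product can be rewritten as
\[W_{\dR}^+(D) \simeq (A \otimes_{\Qp} \BdR^+) \otimes_{\Rcal_{A, K}^{[r_n, s_n]}, \iota_n} D^{[r_n, s_n]},\]
displaying $W_{\dR}^+(D)$ as an $A \otimes_{\Qp} \BdR^+$-module obtained by extension of scalars from $D^{[r_n, s_n]}$.

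By the definition of a $(\varphi, \Gamma_K)$-module of rank $n$ over $\Rcal_{A, K}$ (see \cite{KPX}), each $D^{[r_n, s_n]}$ is finite projective of constant rank $n$ over $\Rcal_{A, K}^{[r_n, s_n]}$. Since base change preserves finite projectivity and constant rank, $W_{\dR}^+(D)$ is finite projective of rank $n$ over $A \otimes_{\Qp} \BdR^+$. I would next analyze the structure of $A \otimes_{\Qp} \BdR^+$: since $L/\Qp$ is finite and $\BdR^+$ contains $\overline{\Qp}$, one has $L \otimes_{\Qp} \BdR^+ \simeq \prod_{\sigma : L \hookrightarrow \BdR^+} \BdR^+$, a finite product of copies of the complete discrete valuation ring $\BdR^+$. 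Hence
\[A \otimes_{\Qp} \BdR^+ \simeq \prod_{\sigma} A \otimes_{L, \sigma} \BdR^+,\]
and each factor is a noetherian local ring with maximal ideal generated by $\mfrak_A$ and $t$, and residue field $C$. A finite projective module over a finite product of local rings is free of constant rank on each factor, so $W_{\dR}^+(D) \simeq (A \otimes_{\Qp} \BdR^+)^n$, establishing (i).

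For (ii), choose a $(\varphi, \Gamma_K)$-submodule $D \subset \Mcal$ over $\Rcal_{A, K}$ stable under $A$ with $\Mcal = D[\tfrac{1}{t}]$; such a $D$ has rank $n$ since inverting $t$ preserves the rank. Then $W_{\dR}(\Mcal) = W_{\dR}^+(D)[\tfrac{1}{t}] = W_{\dR}^+(D) \otimes_{A \otimes_{\Qp} \BdR^+} (A \otimes_{\Qp} \BdR)$, so (ii) follows immediately from (i). The main potential obstacle is verifying that Berger's construction commutes with extension of scalars in $A$ in the precise form used above; but this is a formal consequence of the functoriality of $\iota_n$ combined with the identification $\Rcal_{A, K}^{[r_n, s_n]} = A \otimes_{\Qp} \Rcal_K^{[r_n, s_n]}$, so I would expect the verification to be routine rather than substantive.
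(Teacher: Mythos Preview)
Your treatment of (i) is correct and in fact more direct than the paper's. The paper argues indirectly: it first shows that $W_{\dR}^+$ commutes with $M\otimes_A(-)$ for finite $A$-modules $M$ (using that $W_{\dR}^+$ is right exact and commutes with direct sums), then shows that $W_{\dR}^+$ takes short exact sequences of $(\varphi,\Gamma_K)$-modules over $\Rcal_K$ to short exact sequences of $\BdR^+$-representations (via the fact that the $D_i^r$ are free over the Bezout ring $\Rcal_K^r$), deduces $A$-flatness of $W_{\dR}^+(D)$, and finally bootstraps to freeness over $A\otimes_{\Qp}\BdR^+$ by a d\'evissage in the style of Lemma~\ref{ApdR}. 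Your route---$D^r$ is projective of rank $n$ over $\Rcal_{A,K}^r$, base change along $\iota_n$ preserves this, and $A\otimes_{\Qp}\BdR^+$ is a finite product of local rings so constant-rank projective is free---is shorter. The paper's exactness argument has the side benefit of establishing functorial properties of $W_{\dR}^+$ that are reused later, but for the bare statement your approach is preferable.

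There is a genuine gap in your deduction of (ii) from (i). You write ``choose a $(\varphi,\Gamma_K)$-submodule $D\subset\Mcal$ over $\Rcal_{A,K}$'', which in the paper's conventions means $D$ is \emph{free} over $\Rcal_{A,K}$. But the definition of a $(\varphi,\Gamma_K)$-module $\Mcal$ over $\Rcal_{A,K}[\tfrac{1}{t}]$ only guarantees an $\Rcal_K$-lattice stable under $\varphi$, $\Gamma_K$ and $A$, not one that is free over $\Rcal_{A,K}$. The paper flags exactly this point (Remark~\ref{freeAagain}) and explicitly warns in the proof that ``we cannot directly deduce (ii) from (i) in general''; it instead reruns the flatness-and-d\'evissage argument with $\BdR$ in place of $\BdR^+$. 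The existence of a free $\Rcal_{A,K}$-lattice is only established later (Remark~\ref{freeAlast}), and then only under an additional almost-de-Rham hypothesis---which itself relies on (ii), so invoking it here would be circular. To repair your argument you would need either to prove (ii) independently as the paper does, or to supply an unconditional construction of a free $\Rcal_{A,K}$-lattice in $\Mcal$.
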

\begin{proof}
We only prove (i), the proof of (ii) being totally analogous (note however that we cannot directly deduce (ii) from (i) in general, see Remark \ref{freeA}). It follows from \cite[Prop.2.2.6]{BerBpaires} that the rank of $W_{\dR}^+(D)$ over $\BdR^+$ is the same as the rank of $D$ over $\Rcal_{K}$. Hence it is enough to prove that $W_{\dR}^+(D)$ is a free $A\otimes_{\Qp}\BdR^+$-module. By the same kind of argument as in the proof of Lemma \ref{ApdR} or Lemma \ref{ApdR+}, we see that it is sufficient to prove that $W_{\dR}^+(D)$ is a flat $A$-module. This is shown in two steps. First we show that for every $A$-module $M$ of finite type, there is an $A$-linear isomorphism of $\BdR^+$-representations $M\otimes_AW_{\dR}^+(D)\simeq W_{\dR}^+(M\otimes_AD)$, secondly we show that the functor $W_{\dR}^+$ sends short exact sequences of $(\varphi,\Gamma_K)$-modules over $\Rcal_K$ to short exact sequences of $\BdR^+$-representations. The first point is a direct consequence of the fact that $W_{\dR}^+$ commutes with finite direct sums and sends right exact sequences to right exact sequences (this last fact following from the very definition of $W_{\dR}^+$ in \cite[Prop.2.2.6(2)]{BerBpaires}). The second is contained in \cite[Th.1.36]{NakamuraTri}, but we briefly recall the argument. Let $0\rightarrow D_1\rightarrow D_2\rightarrow D_3\rightarrow0$ be a short exact sequence of $(\varphi,\Gamma_K)$-modules over $\Rcal_K$ and let $r\geq\max\{r(D_i),1\leq i\leq 3\}$ where $r(D_i)$ is defined in \cite[Th.I.3.3]{BerphiN}. For $1\leq i\leq 3$, let $D_i^r$ be the $\Rcal_K^r$-submodule of $D_i$ defined in \cite[Th.I.3.3]{BerphiN} where $\Rcal_K^r$ is the ring ${\bf B}_{\rig,K}^{\dagger,r}$ of {\it loc.cit.} (recall that $\Rcal_K$ is denoted there ${\bf B}_{\rig,K}^\dagger$). Then $W_{\dR}^+(D_i)=\BdR^+\otimes_{\Rcal_K^r}D_i^r$ by \cite[Prop.2.2.6(2)]{BerBpaires}. It easily follows from the properties defining these $D_i^r$ in {\it loc.cit.} and the fact that $\Rcal_K^r$ is a Bezout ring that we have a short exact sequence of free $\Rcal_K^r$-modules of finite type:
$$0\longrightarrow D_1^r\longrightarrow D_2^r \longrightarrow D_3^r\longrightarrow0.$$
In particular we have $\Tor_1^{\Rcal_K^r}(\BdR^+,D_3^r)=0$ and thus the short sequence:
$$ 0\longrightarrow W_{\dR}^+(D_1)\longrightarrow W_{\dR}^+(D_2)\longrightarrow W_{\dR}^+(D_3)\longrightarrow0$$
is still exact.
\end{proof}

By \cite[Th.1.36]{NakamuraTri} (or the proof of Lemma \ref{freen}) the functors $D\mapsto W_{\dR}^+(D)$, resp. $\Mcal\mapsto W_{\dR}(\Mcal)$ send short exact sequences in $\Phi\Gamma_{K}^+$, resp. $\Phi\Gamma_{K}$ to short exact sequences in $\Rep_{\BdR^+}(\Gcal_K)$, resp. $\Rep_{\BdR}(\Gcal_K)$.

If $\delta:\,K^\times\rightarrow A^\times$ is a continuous character, we say that $\delta$ is smooth if $\wt(\delta)=0$ and locally $\Qp$-algebraic, or more simply locally algebraic, if it is the product of a smooth character and an algebraic character. Equivalently $\delta$ is locally algebraic if and only if $\wt_\tau(\delta)\in \Z\subset A$ for all $\tau\in \Sigma$.

\begin{lem}\label{weight}
Let $\delta:\,K^\times\rightarrow A^\times$ be continuous and $\Mcal:=\Rcal_{A,K}(\delta)[\tfrac{1}{t}]$.\\
(i) Assume that $\overline{\delta}:=\delta\ {\rm modulo}\ \mathfrak{m}_A:K^\times\rightarrow L^\times$ is smooth. Then the $\BdR$-representation $W_{\dR}(\Mcal)$ is almost de Rham and we have:
$$\wt(\delta)=\nu_{W_{\dR}(\Mcal)}\in A\otimes_{\Qp}K\simeq\End_{\Rep_{A\otimes_{\Qp}K}(\mathbb{G}_{\mathrm{a}})}(D_{\pdR}(W_{\dR}(\Mcal))).$$
(ii) More generally assume that $\overline{\delta}$ is locally algebraic, then $W_{\dR}(\Mcal)$ is almost de Rham and we have $\wt(\delta)=\wt(\overline{\delta})+\nu_{W_{\dR}(\Mcal)}\in A\otimes_{\Qp}K$.
\end{lem}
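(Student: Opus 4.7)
Part (i) is the special case of (ii) in which $\wt(\overline\delta)=0$, so we concentrate on proving (ii). The strategy is to identify, in terms of $\wt(\delta)\in A\otimes_{\Qp}K$, both the Sen weights of a $\Gcal_K$-stable $\BdR^+$-lattice in $W_{\dR}(\Mcal)$ and the nilpotent operator $\nu_{W_{\dR}(\Mcal)}$ on $D_{\pdR}(W_{\dR}(\Mcal))$.

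\textbf{Almost de Rham.} Take the $\Gcal_K$-stable $\BdR^+$-lattice $W^+:=W_{\dR}^+(\Rcal_{A,K}(\delta))\subset W_{\dR}(\Mcal)$ provided by Lemma \ref{freen}. A standard computation, based on the explicit description of the $\Gamma_K$-action on $\Rcal_{A,K}(\delta)$ and the definition of the Sen functor, identifies the Sen operator on the rank one $A\otimes_{\Qp}C$-representation $W^+/tW^+$ with multiplication by $\wt(\delta)$. Its Sen weights are the eigenvalues of this operator, equal to $(\wt_\tau(\overline\delta))_{\tau\in\Sigma}\in L\otimes_{\Qp}K$ after reduction modulo $\mfrak_A$; these are integers under the locally algebraic assumption on $\overline\delta$, so $W_{\dR}(\Mcal)$ is almost de Rham.

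\textbf{Reduction via tensor additivity.} Both sides of $\wt(\delta)=\wt(\overline\delta)+\nu_{W_{\dR}(\Mcal)}$ are additive for tensor products of rank one characters: the left hand side trivially, and the right hand side because $W_{\dR}$ and $D_{\pdR}$ are tensor functors and the $\mathbb{G}_{\mathrm{a}}$-action on a tensor product is diagonal. Decompose $\delta=\delta_L\cdot\delta_0$ where $\delta_L:K^\times\xrightarrow{\overline\delta}L^\times\hookrightarrow A^\times$ is the canonical constant lift of $\overline\delta$ and $\delta_0:=\delta\cdot\delta_L^{-1}$ takes values in $1+\mfrak_A$. Writing $\overline\delta=\overline\delta_{\sm}\cdot z^{\mathbf{k}}$, multiplication by $t^{\mathbf{k}}$ gives an isomorphism $\Rcal_{L,K}(\overline\delta)[\tfrac{1}{t}]\simeq\Rcal_{L,K}(\overline\delta_{\sm})[\tfrac{1}{t}]$ of $(\varphi,\Gamma_K)$-modules; the latter is \'etale, so its $W_{\dR}$ is de Rham and $\nu$ vanishes for the $\delta_L$ factor, while $\wt(\delta_L)=\wt(\overline\delta)$. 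The formula thus holds for $\delta_L$, reducing us to proving $\wt(\delta_0)=\nu$ for $\delta_0\in1+\mfrak_A$ with trivial residue.

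\textbf{The case $\delta_0\in1+\mfrak_A$ and main obstacle.} Since $\mfrak_A$ is nilpotent and $L$ has characteristic zero, $\exp:\mfrak_A\simeq1+\mfrak_A$ is a bijection, so we write $\delta_0=\exp(\theta)$ for a continuous additive character $\theta:K^\times\to\mfrak_A$, giving $\wt(\delta_0)=\wt(\theta)\in\mfrak_A\otimes_{\Qp}K$. Trivializing the rank one module, $W_{\dR}(\Rcal_{A,K}(\delta_0)[\tfrac{1}{t}])\simeq A\otimes_{\Qp}\BdR$ with $\Gcal_K$-action twisted by $\tilde\delta_0=\exp(\tilde\theta)$ where $\tilde\theta:=\theta\circ\rec_K^{-1}$. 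By the almost de Rham property there exists $\xi\in\mfrak_A\otimes_{\Qp}\BpdR$ satisfying $g(\xi)-\xi=\tilde\theta(g)$ for all $g\in\Gcal_K$, and $v:=\exp(\xi)\otimes1$ generates $D_{\pdR}$ as a free rank one $A\otimes_{\Qp}K$-module. The defining relation $(\nu_{\BpdR}\otimes1+1\otimes\nu)(v)=0$ of Corollary \ref{equivpdR} then yields $\nu=-\nu_{\BpdR}(\xi)\in A\otimes_{\Qp}K$, so it remains to prove $\nu_{\BpdR}(\xi)=-\wt(\theta)$. This final identity is the main obstacle: one decomposes $\tilde\theta$, modulo coboundaries, into a piece trivializable in $\BdR^+$ (which contributes nothing since $\nu_{\BpdR}$ annihilates $\BdR$) and a logarithmic piece proportional in each $\tau\in\Sigma$ to $\log\varepsilon(\cdot)$ with coefficient $\wt_\tau(\theta)\in\mfrak_A$; the logarithmic piece is trivialized by a suitable combination of $\log t\in\BpdR$-terms, using $g(\log t)-\log t=\log\varepsilon(g)$ and $\nu_{\BpdR}(\log t)=-1$, and the formula follows upon tracking the decomposition along the splitting $A\otimes_{\Qp}K\simeq A^\Sigma$. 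The delicate part is this cocycle decomposition and its compatibility with the $\tau$-structure on $\BpdR$.
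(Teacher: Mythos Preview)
Your argument has a gap where you write $\tilde\theta:=\theta\circ\rec_K^{-1}$ and treat it as a map on $\Gcal_K$: a continuous character $K^\times\to 1+\mfrak_A$ need not extend to $\Gcal_K^{\ab}$ via local reciprocity, since its value on a uniformizer is unconstrained. A related slip is the claim that $\Rcal_{L,K}(\overline\delta_{\sm})$ is \'etale: \'etaleness (slope $0$) is governed by the value of $\overline\delta_{\sm}$ at a uniformizer, which for a merely smooth character is arbitrary, and in any case \'etale does not imply de Rham. The paper addresses all of this at the outset by writing $\delta=\delta_1\delta_2$ with $\delta_2|_{\Ocal_K^\times}=1$ and $\delta_1\circ\rec_K^{-1}$ extending to $\Gcal_K$, then using that $W_{\dR}^+$ is insensitive to the $\varphi$-action (hence to $\delta_2$) to conclude $W_{\dR}(\Mcal)\simeq(A\otimes_{\Qp}\BdR)(\delta_1)$ as a genuine Galois twist.

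With that repair your route would work, but it still differs from the paper's. Rather than the explicit cocycle computation in $\BpdR$ that you flag as the main obstacle, the paper identifies $\nu_{W_{\dR}(\Mcal)}$ directly with the Sen endomorphism: in case (i) all Sen weights of $W^+/tW^+$ vanish, so Fontaine's results give a $\mathbb{G}_{\mathrm{a}}$-equivariant isomorphism $K_\infty\otimes_K D_{\pdR}(W)\simeq\Delta_{\Sen}(W^+/tW^+)$, and the Sen endomorphism on $\Delta_{\Sen}((A\otimes_{\Qp}C)(\delta_1))$ is multiplication by $\wt(\delta_1)=\wt(\delta)$. This bypasses the cocycle decomposition and the $\tau$-compatibility you worry about. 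For (ii) the paper then twists off a constant de Rham character to reduce to (i), similar in spirit to your handling of $\delta_L$.
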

\begin{proof}
We can write $\delta=\delta_1\delta_2$ where $\delta_1,\delta_2:K^\times\rightarrow A^\times$ are two continuous characters such that $\delta_1\circ\rec_K^{-1}$ can be extended to a character of $\Gcal_K$ and $\delta_2|_{\mathcal{O}_K^{\times}}=1$. As $W_{\dR}^+(D)$ doesn't depend on the Frobenius $\varphi$ on the $(\varphi,\Gamma_K)$-module $D:=\Rcal_{A,K}(\delta)$ (see \cite[Prop.2.2.6(2)]{BerBpaires}), it follows from the construction of $D$ (see \cite[\S6.2.4]{KPX}) that $W_{\dR}(\Mcal)\simeq W_{\dR}(\Rcal_{A,K}(\delta_1)[\tfrac{1}{t}])$ (i.e. $W_{\dR}(\Mcal)$ doesn't depend on $\delta_2$). Since $\wt(\delta_1)=\wt(\delta)$, we can replace $\delta$ by $\delta_1$. The $\BdR$-representation $W_{\dR}(\Mcal)$ is isomorphic to $(A\otimes_{\Qp}\BdR)(\delta)$, i.e. we twist by $\delta$ the action of $\Gcal_K$ on $A\otimes_{\Qp}\BdR$. If $\overline{\delta}\circ\rec_K^{-1}$ is a de Rham character of $\Gcal_K$, the $\BdR$-representation $(L\otimes_{\Qp}\BdR)(\overline{\delta})$ is de Rham, hence almost de Rham, and thus $(A\otimes_{\Qp}\BdR)(\delta)$ is almost de Rham as an extension of almost de Rham representations (use a d\'evissage on $A$).

\noindent (i) Since the $C$-representation $(A\otimes_{\Qp}\BdR^+)(\delta)/t(A\otimes_{\Qp}\BdR^+)(\delta)$ has all its Sen weights $0$, we have isomorphisms:
$$D_{\pdR}((A\otimes_{\Qp}\BdR)(\delta))\buildrel\sim\over\leftarrow (\BdR^+[\log(t)]\otimes_{\BdR^+}(A\otimes_{\Qp}\BdR^+)(\delta))^{\Gcal_K}\buildrel\sim\over\rightarrow (C[\log(t)]\otimes_{C}(A\otimes_{\Qp}C)(\delta))^{\Gcal_K}$$
in $\Rep_{A\otimes_{\Qp}K}(\mathbb{G}_{\mathrm{a}})$ (the nilpotent operator being defined everywhere analogously to the one on $D_{\pdR}$ and the second isomorphism following from an examination of the proof of \cite[Lem.3.14]{FonAr}). Sen's theory shows that we also have an isomorphism in $\Rep_{A\otimes_{\Qp}K_\infty}(\mathbb{G}_{\mathrm{a}})$:
$$K_\infty\otimes_K (C[\log(t)]\otimes_{C}(A\otimes_{\Qp}C)(\delta))^{\Gcal_K}\buildrel\sim\over\longrightarrow \Delta_{\Sen}((A\otimes_{\Qp}C)(\delta))$$
where the nilpotent operator on the right hand side is given by the Sen endomorphism (see e.g. \cite[\S2.2]{FonAr} together with \cite[Prop.2.8]{FonAr}). But we know that the Sen endomorphism on $\Delta_{\Sen}((A\otimes_{\Qp}C)(\delta))$ is just the multiplication by $\wt(\delta)\in A\otimes_{\Qp}K$.

\noindent (ii) We can write $\delta=\delta_1\delta_2\delta_3$ where $\delta_1\circ\rec_K^{-1}$ can be extended to $\Gcal_K$, $\delta_2:K^\times \rightarrow L^\times \subseteq A^\times$ is constant such that $\delta_2\circ\rec_K^{-1}$ can be extended to a de Rham character of $\Gcal_K$ and $\delta_3|_{\mathcal{O}_K^{\times}}=1$. We thus have $W_{\dR}(\Mcal)\simeq W_{\dR}(\Rcal_{A,K}(\delta_1\delta_2)[\tfrac{1}{t}])\cong (A\otimes_{\Qp}\BdR)(\delta_1\delta_2)\cong (A\otimes_{\Qp}\BdR)(\delta_1)$ which is almost de Rham by (i). By (i) again, we also deduce $\nu_{W_{\dR}(\Mcal)}=\wt({\delta_1})=\wt({\delta})-\wt({\delta_2})=\wt({\delta})-\wt(\overline{\delta})$.
\end{proof}

\begin{lem}\label{trivloc}
The $\BdR$-representation $W_{\dR}(\Rcal_{A,K}(\delta)[\tfrac{1}{t}])$ is trivial if and only if $\delta$ is locally algebraic.
\end{lem}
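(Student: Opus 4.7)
For the direction ``locally algebraic implies trivial'', I would write $\delta = z^{\mathbf{k}} \delta^{\mathrm{sm}}$ with $\mathbf{k} \in \Z^{|\Sigma|}$ and $\delta^{\mathrm{sm}}$ smooth, and observe that multiplication by $t^{\mathbf{k}}$ gives a natural isomorphism $\Rcal_{A,K}(\delta)[\tfrac{1}{t}] \simeq \Rcal_{A,K}(\delta^{\mathrm{sm}})[\tfrac{1}{t}]$ of $(\varphi,\Gamma_K)$-modules over $\Rcal_{A,K}[\tfrac{1}{t}]$, which reduces the problem to the smooth case. When $\delta$ is smooth, $\overline{\delta}$ is smooth with $\wt(\delta) = 0$, so Lemma~\ref{weight}(i) shows that $W_{\dR}(\Mcal)$ is almost de Rham with $\nu_{W_{\dR}(\Mcal)} = \wt(\delta) = 0$. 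By Corollary~\ref{equivpdR} (applied with vanishing nilpotent), one therefore has $W_{\dR}(\Mcal) \simeq \BdR \otimes_K D_{\pdR}(W_{\dR}(\Mcal))$, and since Lemma~\ref{ApdR} guarantees that $D_{\pdR}(W_{\dR}(\Mcal))$ is free of rank one over $A \otimes_{\Qp} K$, this forces $W_{\dR}(\Mcal)$ to be trivial as an $A \otimes_{\Qp} \BdR$-representation of $\mathcal{G}_K$.

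For the converse, I would suppose $W := W_{\dR}(\Mcal)$ is trivial. Then $D_{\pdR}(W) \simeq A \otimes_{\Qp} K$ carries the zero $\mathbb{G}_{\mathrm{a}}$-action, so $\nu_W = 0$. Reducing modulo $\mathfrak{m}_A$, the representation $\overline{W} := L \otimes_A W \simeq W_{\dR}(\Rcal_{L,K}(\overline{\delta})[\tfrac{1}{t}])$ is still trivial, hence in particular almost de Rham. The $\mathcal{G}_K$-stable $\BdR^+$-lattice $W_{\dR}^+(\Rcal_{L,K}(\overline{\delta})) \subset \overline{W}$ supplied by Lemma~\ref{freen}(i) is therefore an almost de Rham $\BdR^+$-representation, so its Sen weights at each $\tau \in \Sigma$ lie in $\Z$. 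By the standard identification of the Hodge-Tate-Sen weights of a rank-one $(\varphi,\Gamma_K)$-module $\Rcal_{L,K}(\overline{\delta})$ with $(\wt_\tau(\overline{\delta}))_{\tau \in \Sigma}$, this forces $\wt_\tau(\overline{\delta}) \in \Z$ for every $\tau$, i.e.\ $\overline{\delta}$ is locally algebraic. Lemma~\ref{weight}(ii) then applies and yields $\wt(\delta) = \wt(\overline{\delta}) + \nu_W = \wt(\overline{\delta}) \in \bigoplus_{\tau \in \Sigma} \Z \subset A \otimes_{\Qp} K$, so that $\delta$ is itself locally algebraic.

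The only point not directly covered by results recalled in \S\ref{debut}--\S\ref{triangulinet} is the identification of the Sen weights of $W_{\dR}^+(\Rcal_{L,K}(\overline{\delta}))$ with $(\wt_\tau(\overline{\delta}))_{\tau \in \Sigma}$. This is standard Sen theory and can be extracted from Berger's construction of $W_{\dR}^+$ recalled in \cite[Prop.~2.2.6]{BerBpaires} combined with the definition of the $\Gamma_K$-action on $\Rcal_{L,K}(\overline{\delta})$ from \cite[\S6.2]{KPX}: the key point is that the infinitesimal action of $\Gamma_K$ on the reduction mod $t$ of $W_{\dR}^+(\Rcal_{L,K}(\overline{\delta}))$ is multiplication by $\wt_\tau(\overline{\delta})$ on the $\tau$-component. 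Everything else in the argument is a straightforward application of Lemma~\ref{weight}, Lemma~\ref{ApdR}, Lemma~\ref{freen} and Corollary~\ref{equivpdR}.
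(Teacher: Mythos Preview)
Your proof is correct but follows a different route from the paper's. The paper handles both directions simultaneously by recycling the decomposition $\delta=\delta_1\delta_2$ from the proof of Lemma~\ref{weight} (with $\delta_1\circ\rec_K^{-1}$ extending to a character of $\Gcal_K$ and $\delta_2|_{\Ocal_K^\times}=1$) together with the identification $W_{\dR}(\Rcal_{A,K}(\delta)[\tfrac{1}{t}])\simeq (A\otimes_{\Qp}\BdR)(\delta_1)$ established there; triviality of the latter twist is then equivalent to $\delta_1$ being de Rham, which for a one-dimensional character is exactly local algebraicity of $\delta_1$, hence of $\delta$ (as $\delta_2$ is smooth). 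Your argument instead treats the two implications separately, feeding the \emph{statement} of Lemma~\ref{weight} into the equivalence of Corollary~\ref{equivpdR} for the forward direction, and passing through Sen weights of $W_{\dR}^+(\Rcal_{L,K}(\overline\delta))$ for the converse. The paper's route is shorter and sidesteps the Sen-weight identification you had to import; your route has the advantage of staying at the level of the lemmas' statements rather than reopening their proofs, and the Sen-weight point you flag is in any case implicit in the last line of the proof of Lemma~\ref{weight}(i).
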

\begin{proof}
As in the proof of Lemma \ref{weight}, we can write any $\delta$ as $\delta_1\delta_2$ where $\delta_1\circ\rec_K^{-1}$ can be extended to $\Gcal_K$ and $\delta_2|_{\mathcal{O}_K^{\times}}=1$ and we have $W_{\dR}(\Rcal_{A,K}(\delta)[\tfrac{1}{t}])\cong W_{\dR}(\Rcal_{A,K}(\delta_1)[\tfrac{1}{t}])\cong (A\otimes_{\Qp}\BdR)(\delta_1)$. We have $(A\otimes_{\Qp}\BdR)(\delta_1)\cong A\otimes_{\Qp}\BdR$ if and only if $\delta_1$ is de Rham if and only if $\delta_1$ is the product of a smooth character with an algebraic character (namely $(\delta_1\overline \delta_1^{-1})\overline \delta_1$). Since $\delta_2$ is smooth, this proves the statement.
\end{proof}

\begin{defn}\label{trianguline1/t}
Let $\Mcal$ be an object of $\Phi\Gamma_{A,K}$ and $n\geq 1$ its rank. We say that $\Mcal$ is trianguline if $\Mcal$ admits an increasing filtration $\mathcal{M}_\bullet=(\mathcal{M}_i)_{i\in \{1,\dots,n\}}$ by subobjects in $\Phi\Gamma_{A,K}$ such that $\Mcal_1$ and $\Mcal_i/\Mcal_{i-1}$ for $i\in \{2,\dots,n\}$ are of character type. Such a filtration $\Mcal_\bullet$ is called a triangulation of $\Mcal$ and, if $\Mcal_i/\Mcal_{i-1}\cong \Rcal_{A,K}(\delta_i)[\tfrac{1}{t}]$ where $\delta_i:K^\times\rightarrow A^\times$, then $\deltabar:=(\delta_i)_{i\in \{1,\dots,n\}}$ is called a parameter of $\Mcal_\bullet$.
\end{defn}

It follows directly from the definition that a triangulation of $\Mcal$ is a filtration by direct factors of the $\Rcal_{A,K}[\tfrac{1}{t}]$-module $\Mcal$. We say that a parameter $\deltabar=(\delta_i)_{i\in \{1,\dots,n\}}$ is locally algebraic if each $\delta_i$ is. If a triangulation $\Mcal_\bullet$ admits a locally algebraic parameter, then by Lemma \ref{isorank1} all parameters of $\Mcal_\bullet$ are locally algebraic.

Fix $\Mcal$ a trianguline $(\varphi,\Gamma_K)$-module over $\mathcal{R}_{L,K}[\tfrac{1}{t}]$ together with a triangulation $\mathcal{M}_\bullet$ of $\Mcal$. We define the groupoid $X_{\Mcal,\Mcal_\bullet}$ over $\Ccal_L$ as follows.
\begin{itemize}
\item The objects of $X_{\Mcal,\Mcal_\bullet}$ are quadruples $(A,\Mcal_A,\mathcal{M}_{A,\bullet},j_A)$ where $A$ is in $\Ccal_L$, $\Mcal_{A}$ is a trianguline $(\varphi,\Gamma_K)$-module over $\Rcal_{A,K}[\tfrac{1}{t}]$, $\mathcal{M}_{A,\bullet}$ a triangulation of $\Mcal_A$ and $j_A$ an isomorphism $\Mcal_A\otimes_AL\buildrel\sim\over\longrightarrow \Mcal$ which induces isomorphisms $\Mcal_{A,i}\otimes_AL\buildrel\sim\over\longrightarrow \Mcal_i$ for all $i$.
\item A morphism $(A,\Mcal_A,\mathcal{M}_{A,\bullet},j_A)\longrightarrow (A',\Mcal_{A'},\mathcal{M}_{A',\bullet},j_{A'})$ is a map $A\longrightarrow A'$ in $\Ccal_L$ and an isomorphism $\Mcal_A\otimes_AA'\buildrel\sim\over\longrightarrow \Mcal_{A'}$ compatible (in an obvious sense) with the morphisms $j_A$, $j_{A'}$ and with the triangulations, i.e. which induces isomorphisms $\Mcal_{A,i}\otimes_AA'\buildrel\sim\over\longrightarrow \Mcal_{A',i}$ for all $i$.
\end{itemize}

Denote by $\Tcal$ the rigid analytic space over $\Qp$ parametrizing continuous characters of $K^\times$ and $\Tcal_L$ its base change from $\Qp$ to $L$. Fix a triple $(\Mcal,\Mcal_\bullet,\deltabar)$ where $\Mcal$ is a trianguline $(\varphi,\Gamma_K)$-module of rank $n\geq 1$ over $\Rcal_{L,K}[\tfrac{1}{t}]$, $\Mcal_\bullet$ a triangulation of $\Mcal$ and $\deltabar=(\delta_1,\dots,\delta_n)$ with $\delta_i:K^\times\rightarrow L^\times$ a parameter of $\Mcal_\bullet$. Note that we can see $\underline\delta$ as a continuous character $(K^\times)^n\longrightarrow L^\times$, i.e. as an element of $\Tcal_L^n(L)$. The functor of deformations of $\deltabar$, i.e. the functor:
$$A\longmapsto\{{\rm continuous\ characters}\ \underline\delta_A=(\delta_{A,1},\dots,\delta_{A,n}):(K^\times)^n\rightarrow A^\times,\ \delta_{A,i}\ {\rm modulo}\ \mathfrak{m}_A=\delta_i\ \forall \ i\}$$
is pro-represented by the completion $\widehat{\Tcal^n_{\deltabar}}$ of $\Tcal_L^n$ at the point $\deltabar\in \Tcal_L^n(L)$. If $A$ is in $\Ccal_L$ and $(\Mcal_A,\Mcal_{A,\bullet},j_A)$ is an object of $X_{\Mcal,\Mcal_\bullet}(A)$, it follows from Lemma \ref{isorank1} that there exists a unique character $\deltabar_A\in \widehat{\Tcal^n_{\deltabar}}(A)$ which is a parameter for $\Mcal_{A,\bullet}$ and satisfies $\deltabar_A=\deltabar$ modulo $\mathfrak{m}_A$. The map:
$$(A,\Mcal_A,\Mcal_{A,\bullet},j_A)\longmapsto(A,\deltabar_A)$$
gives rise to a morphism $\omega_{\deltabar}:\,X_{\Mcal,\Mcal_\bullet}\longrightarrow\widehat{\Tcal^n_{\deltabar}}$ of groupoids over $\Ccal_L$. Note that, if $\deltabar'$ is another parameter of $\Mcal_\bullet$, then $\deltabar'\deltabar^{-1}$ is (constant) algebraic by Lemma \ref{isorank1} and we have an obvious commutative diagram:
\begin{equation}\label{changedelta}
\begin{gathered}
\begin{xy}
(0,0)*+{X_{\Mcal,\Mcal_\bullet}}="a"; (-20,-13)*+{\widehat{\Tcal^n_{\deltabar}}}="b";
(20,-13)*+{\widehat{\Tcal^n_{\deltabar'}}.}="c";
{\ar^{\times \deltabar'\deltabar^{-1}}"b";"c"};
{\ar_{\omega_{\deltabar}}"a";"b"}; {\ar^{\omega_{\deltabar'}}"a";"c"}
\end{xy}
\end{gathered}
\end{equation}
We also define the groupoid $X_{\Mcal}$ over $\Ccal_L$ by forgetting everywhere the triangulations in $X_{\Mcal,\Mcal_\bullet}$ (that is, we only consider deformations of the $(\varphi,\Gamma_K)$-module $\Mcal$). We have a ``forget the triangulation'' morphism $X_{\Mcal,\Mcal_\bullet}\longrightarrow X_{\Mcal}$ of groupoids over $\Ccal_L$.

Fix $\Mcal$ and $\Mcal_\bullet$ as above, then by (ii) of Lemma \ref{freen} (with $A=L$) $\Fcal_i:=W_{\dR}(\Mcal_i)$ for $i\in \{1,\dots,n\}$ defines a filtration $\Fcal_\bullet=(\Fcal_i)_{i\in \{1,\dots,n\}}$ of $W:=W_{\dR}(\Mcal)$ in the sense of Definition \ref{filtr}. Assume moreover that $\Mcal_\bullet$ possesses a locally algebraic parameter. It then follows from Lemma \ref{weight} that each $\BdR$-representation $\Fcal_i/\Fcal_{i-1}$ is almost de Rham and hence that $W$ is also almost de Rham (as it is an extension of almost de Rham $\BdR$-representations). It moreover follows from (ii) of Lemma \ref{freen} that the functor $W_{\dR}$ defines a commutative diagram of morphisms of groupoids over $\Ccal_L$:
$$\xymatrix{X_{\Mcal,\Mcal_\bullet} \ar[r] \ar[d] & X_{W,\Fcal_\bullet} \ar[d] \\
X_{\Mcal}\ar[r] & X_W.}$$

Now we fix an isomorphism $\alpha:\,(L\otimes_{\Qp}K)^n\buildrel\sim\over\longrightarrow D_{\pdR}(W)$ as in \S\ref{debut}, so that we have the groupoids $X_{W}^\Box$ and $X_{W,\Fcal_\bullet}^\Box$ over $\Ccal_L$ (see \S\ref{debut}). We define the fiber products of groupoids over $\Ccal_L$ (see \cite[\S A.4]{KisinModularity} and \S\ref{debut}):
$$X_{\Mcal}^\Box:=X_{\Mcal}\times_{X_W}X_W^\Box\ \ \ {\rm and}\ \ \ X_{\Mcal,\Mcal_\bullet}^\Box:=X_{\Mcal,\Mcal_\bullet}\times_{X_{W,\Fcal_\bullet}}X_{W,\Fcal_\bullet}^\Box\cong X_{\Mcal,\Mcal_\bullet}\times_{X_W}X_{W}^\Box.$$
We fix a parameter $\deltabar=(\delta_i)_{i\in \{1,\dots,n\}}$ of $\Mcal_\bullet$, for $A\in \Ccal_L$ the natural map:
\begin{equation}\label{morpht}
\underline\delta_A=(\delta_{A,i})_{i\in \{1,\dots,n\}}\longmapsto (\wt(\delta_{A,i})-\wt(\delta_i)_{i\in \{1,\dots,n\}})\in (A\otimes_{\Qp}K)^n\cong \widehat{\liet}(A)
\end{equation}
induces a morphism of formal schemes $\wt-\wt(\deltabar):\widehat{\Tcal^n_{\deltabar}}\longrightarrow \widehat{\liet}$.

\begin{cor}\label{commugroup}
The diagram of groupoids over $\Ccal_L$:
$$\xymatrix{X_{\Mcal,\Mcal_\bullet}\ar[r]\ar_{\omega_{\deltabar}}[d] & X_{W,\Fcal_\bullet}\ar^{\kappa_{W,\Fcal_\bullet}}[d]\\
\widehat{\Tcal^n_{\deltabar}}\ar^{\wt-\wt(\deltabar)}[r] & \widehat{\liet}}$$
is commutative.
\end{cor}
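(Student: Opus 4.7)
The strategy is to compute both compositions on an object $(A,\Mcal_A,\Mcal_{A,\bullet},j_A)$ of $X_{\Mcal,\Mcal_\bullet}(A)$ and show the two resulting elements of $\widehat{\liet}(A)\subset (A\otimes_{\Qp}K)^n$ coincide termwise under the isomorphism $A\otimes_{\Qp}K\cong\bigoplus_{\tau\in\Sigma}A$. Let $\deltabar_A=(\delta_{A,1},\dots,\delta_{A,n})\in\widehat{\Tcal^n_{\deltabar}}(A)$ denote the unique parameter of $\Mcal_{A,\bullet}$ lifting $\deltabar$ (which exists and is unique by Lemma \ref{isorank1}), so that by definition of the parameter we have isomorphisms $\Mcal_{A,i}/\Mcal_{A,i-1}\simeq \Rcal_{A,K}(\delta_{A,i})[\tfrac{1}{t}]$ in $\Phi\Gamma_{A,K}$ for $i\in\{1,\dots,n\}$.

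First I would compute the top-right composition. By construction, the functor $W_{\dR}$ sends $(\Mcal_A,\Mcal_{A,\bullet},j_A)$ to the object $(W_A,\Fcal_{A,\bullet},\iota_A)$ of $X_{W,\Fcal_\bullet}(A)$ with $W_A:=W_{\dR}(\Mcal_A)$ and $\Fcal_{A,i}:=W_{\dR}(\Mcal_{A,i})$. By the exactness of $W_{\dR}$ on the short exact sequences $0\to\Mcal_{A,i-1}\to\Mcal_{A,i}\to\Mcal_{A,i}/\Mcal_{A,i-1}\to 0$ (recalled after Lemma \ref{freen}) we obtain a canonical isomorphism
$$\Fcal_{A,i}/\Fcal_{A,i-1}\simeq W_{\dR}(\Mcal_{A,i}/\Mcal_{A,i-1})\simeq W_{\dR}(\Rcal_{A,K}(\delta_{A,i})[\tfrac{1}{t}]).$$
Applying $D_{\pdR}$ gives $\Dcal_{A,i}/\Dcal_{A,i-1}\simeq D_{\pdR}(W_{\dR}(\Rcal_{A,K}(\delta_{A,i})[\tfrac{1}{t}]))$, and under this identification the endomorphism $\nu_{A,i}$ of $\Dcal_{A,i}/\Dcal_{A,i-1}$ induced by $\nu_{W_A}$ corresponds to the nilpotent endomorphism $\nu_{W_{\dR}(\Rcal_{A,K}(\delta_{A,i})[\tfrac{1}{t}])}$ viewed as an element of $\End_{\Rep_{A\otimes_{\Qp}K}(\mathbb{G}_{\mathrm{a}})}(D_{\pdR}(-))\simeq A\otimes_{\Qp}K$. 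By the explicit formula (\ref{explicit}) we conclude
$$\kappa_{W,\Fcal_\bullet}\bigl(W_A,\Fcal_{A,\bullet},\iota_A\bigr)=(\nu_{A,1},\dots,\nu_{A,n})\in(A\otimes_{\Qp}K)^n\cong\widehat{\liet}(A).$$

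Next I would compute the bottom-left composition. By definition of $\omega_{\deltabar}$ and of the map (\ref{morpht}) we have
$$(\wt-\wt(\deltabar))\circ\omega_{\deltabar}\bigl(\Mcal_A,\Mcal_{A,\bullet},j_A\bigr)=\bigl(\wt(\delta_{A,i})-\wt(\delta_i)\bigr)_{i\in\{1,\dots,n\}}\in(A\otimes_{\Qp}K)^n.$$
Since $\Mcal_\bullet$ possesses a locally algebraic parameter, each $\delta_i=\delta_{A,i}\bmod\mathfrak{m}_A$ is locally algebraic, so Lemma \ref{weight}(ii) applies to $\delta_{A,i}$ and yields
$$\wt(\delta_{A,i})-\wt(\delta_i)=\nu_{W_{\dR}(\Rcal_{A,K}(\delta_{A,i})[\tfrac{1}{t}])}\in A\otimes_{\Qp}K.$$

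Finally I would combine the two computations: the right-hand side of the last equality equals $\nu_{A,i}$ by the identification established in the first step, so the two compositions give the same tuple in $\widehat{\liet}(A)$ for every $A$ in $\Ccal_L$. This identification is functorial in the object of $X_{\Mcal,\Mcal_\bullet}(A)$ by the functoriality of $W_{\dR}$ and $D_{\pdR}$, so the diagram commutes as morphisms of groupoids over $\Ccal_L$. There is no serious obstacle here; the essential input is Lemma \ref{weight}(ii), which converts the analytic weight datum on the parameter into the algebraic $\nu$-datum on $D_{\pdR}$ of the associated rank-one $\BdR$-representation, and the compatibility of $W_{\dR}$ with the triangulation filtration.
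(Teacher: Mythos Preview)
Your proposal is correct and follows exactly the approach indicated in the paper's own proof, which simply cites the explicit formula (\ref{explicit}) for $\kappa_{W,\Fcal_\bullet}$ together with (ii) of Lemma \ref{weight}. You have accurately unwound what these two ingredients mean and how they combine, supplying the details (exactness of $W_{\dR}$ on the graded pieces of the triangulation, identification of $\nu_{A,i}$ with $\nu_{W_{\dR}(\Rcal_{A,K}(\delta_{A,i})[1/t])}$) that the paper leaves implicit.
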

\begin{proof}
This is a consequence of (\ref{explicit}) and of (ii) of Lemma \ref{weight}.
\end{proof}

From Corollary \ref{commugroup} we obtain a morphism of groupoids over $\Ccal_L$:
\begin{equation}\label{morfiber}
X_{\Mcal,\Mcal_\bullet}\longrightarrow \widehat{\Tcal^n_{\deltabar}}\times_{\widehat\tfrak} X_{W,\Fcal_\bullet}.
\end{equation}
Writing $\Tcal\simeq {\mathbb G}_{\rm m}^{\rig}\times \Wcal$ where $\Wcal$ is the rigid analytic space over $\Qp$ parametrizing continuous characters of $\oK^\times$, we see that the right hand side of (\ref{morfiber}) is isomorphic to $\widehat {{\mathbb G}_{\rm m}^n} \times X_{W,\Fcal_\bullet}$ (with obvious notation).

\subsection{A formally smooth morphism}\label{smoothsection}

We prove that under certain genericity assumptions the morphism (\ref{morfiber}) is formally smooth. 

We keep all the previous notation (in particular we assume from now on that $L$ splits $K$). Let $A$ be in $\Ccal_L$ and $\Mcal$ be an object of $\Phi\Gamma_{A,K}$. Recall from \S\ref{triangulinet} that we have $\Ext^1_{\Phi\Gamma_{A,K}}(\Rcal_{A,K}[\tfrac{1}{t}],\Mcal)\simeq H^1_{\varphi,\gamma_K}(\Mcal)$. Moreover, if $W$ is an almost de Rham $A\otimes_{\Q_p}\BdR$-representation of $\Gcal_K$, there are natural isomorphisms:
\begin{equation}\label{ext1h1}
\Ext^1_{\Rep_{A,\pdR}(\Gcal_K)}(A\otimes_{\Q_p}\BdR,W)\simeq \Ext^1_{\Rep_{A\otimes_{\Q_p}\BdR}(\Gcal_K)}(A\otimes_{\Q_p}\BdR,W)\simeq H^1(\Gcal_K,W)
\end{equation}
where the last $A\otimes_{\Qp}K$-module is usual continuous group cohomology, the first isomorphism comes from the fact that $\Rep_{A,\pdR}(\Gcal_K)$ is stable under extension in $\Rep_{A\otimes_{\Q_p}\BdR}(\Gcal_K)$ and the second is the usual explicit description by $1$-cocycles. In particular it follows that the exact functor $\Mcal\longmapsto W_{\dR}(\Mcal)$ from $\Phi\Gamma_{A,K}$ to $\Rep_{A\otimes_{\Q_p}\BdR}(\Gcal_K)$ (see \S\ref{triangulinet}) gives a functorial $A\otimes_{\Qp}K$-linear map:
\begin{multline}\label{functh1}
H^1_{\varphi,\gamma_{K}}(\Mcal)\simeq \Ext^1_{\Phi\Gamma_{A,K}}(\Rcal_{A,K}[\tfrac{1}{t}],\Mcal)\\
\longrightarrow \Ext^1_{\Rep_{A\otimes_{\Q_p}\BdR}(\Gcal_K)}(A\otimes_{\Q_p}\BdR,W)\simeq H^1(\Gcal_K,W_{\dR}(\Mcal)).
\end{multline}
Moreover the equivalence of categories $D_{\pdR}$ of Proposition \ref{pdR} between $\Rep_{\pdR}(\Gcal_K)$ and $\Rep_K(\mathbb{G}_{\mathrm{a}})$ induces functorial isomorphisms by an explicit computation:
\begin{eqnarray*}
H^0(\Gcal_K,W)\simeq \Hom_{\Rep_{\pdR}(\Gcal_K)}(\BdR,W)&\simeq &\ker(\nu_W)\\ 
H^1(\Gcal_K,W)\simeq \Ext^1_{\Rep_{\pdR}(\Gcal_K)}(\BdR,W)&\simeq&\coker(\nu_W)
\end{eqnarray*}
where $\nu_W$ is the $K$-linear nilpotent endomorphism of $D_{\pdR}(W)$. In particular we see the functor $W\longmapsto H^1(\Gcal_K,W)$ is right exact on $\Rep_{\pdR}(\Gcal_K)$. Since the functor $W\longmapsto H^0(\Gcal_K,W)$ is exact on the category of de Rham $\BdR$-representations $W$ of $\Gcal_K$, it follows that $W\longmapsto H^1(\Gcal_K,W)$ is also exact on the category of de Rham $\BdR$-representations of $\Gcal_K$.

\begin{lem}\label{maprank1}
Let $\delta:\,K^\times\rightarrow L^\times$ be a continuous character such that $\delta$ and $\varepsilon\delta^{-1}$ are not algebraic. Let ${\bf k}=(k_\tau)_{\tau}\in\Z_{\geq0}^{[K:\Q_p]}$.\\
(i) We have $H^0_{\varphi,\gamma_K}(t^{-{\bf k}}\Rcal_{L,K}(\delta))=H^2_{\varphi,\gamma_K}(\Rcal_{L,K}(\delta))=0$.\\
(ii) If $\wt_\tau(\delta)\notin \{1,\dots,k_\tau\}$ for each $\tau\in \Sigma$, then $H^1_{\varphi,\gamma_K}(\Rcal_{L,K}(\delta))\longrightarrow H^1_{\varphi,\gamma_K}(t^{-{\bf k}}\Rcal_{L,K}(\delta))$ is an isomorphism.\\
(iii) If $\wt_\tau(\delta)\in \{1,\dots,k_\tau\}$ for each $\tau\in \Sigma$, then $H^1_{\varphi,\gamma_K}(\Rcal_{L,K}(\delta))\longrightarrow H^1_{\varphi,\gamma_K}(t^{-{\bf k}}\Rcal_{L,K}(\delta))$ is the zero map.
\end{lem}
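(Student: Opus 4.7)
For part (i), the vanishing of $H^0_{\varphi,\gamma_K}(t^{-\mathbf{k}}\Rcal_{L,K}(\delta))$ will follow from the non-algebraicity of $\delta$: any non-zero invariant vector $g$ generates a $(\varphi,\Gamma_K)$-submodule $g\cdot\Rcal_{L,K}$ of $\Rcal_{L,K}(\delta)[\tfrac{1}{t}]$ which, by \cite[Cor.6.2.9]{KPX} (recalled just before Lemma \ref{torsioncohomology}), must be of the form $t^{\mathbf{m}}\Rcal_{L,K}(\delta) \simeq \Rcal_{L,K}(\delta\prod_\tau\tau^{m_\tau})$ for some $\mathbf{m}\in\Z^{[K:\Qp]}$. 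Since its generator is $(\varphi,\Gamma_K)$-fixed, this submodule is also isomorphic to $\Rcal_{L,K}$, and \cite[Lem.6.2.13]{KPX} then forces $\delta = \prod_\tau \tau^{-m_\tau}$, contradicting the hypothesis. For the vanishing of $H^2_{\varphi,\gamma_K}(\Rcal_{L,K}(\delta))$, I will invoke Tate local duality for $(\varphi,\Gamma_K)$-cohomology (\cite{Liuduality}), which identifies it with the $L$-linear dual of $H^0_{\varphi,\gamma_K}(\Rcal_{L,K}(\varepsilon\delta^{-1}))$; the latter vanishes by the same argument, since $\varepsilon\delta^{-1}$ is not algebraic by assumption.

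For (ii) and (iii) I will use the short exact sequence $0 \to \Rcal_{L,K}(\delta) \to t^{-\mathbf{k}}\Rcal_{L,K}(\delta) \to Q \to 0$. Multiplication by $t^{\mathbf{k}}$ gives a $(\varphi,\Gamma_K)$-equivariant isomorphism $t^{-\mathbf{k}}\Rcal_{L,K}(\delta) \simeq \Rcal_{L,K}(\delta')$ with $\delta' := \delta\prod_\tau \tau^{-k_\tau}$ (so $\wt_\tau(\delta') = \wt_\tau(\delta)-k_\tau$), under which the submodule $\Rcal_{L,K}(\delta)$ corresponds to $t^{\mathbf{k}}\Rcal_{L,K}(\delta')$; hence $Q \simeq \Rcal_{L,K}(\delta')/t^{\mathbf{k}}\Rcal_{L,K}(\delta')$, and the weight condition $\wt_\tau(\delta') \in \{1-k_\tau,\dots,0\}$ of Lemma \ref{torsioncohomology} translates to $\wt_\tau(\delta) \in \{1,\dots,k_\tau\}$. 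In case (ii), Lemma \ref{torsioncohomology}(i) then yields $H^j_{\varphi,\gamma_K}(Q)=0$ for $j=0,1$, and the long exact sequence in cohomology gives the claimed isomorphism at once. In case (iii), Lemma \ref{torsioncohomology}(ii) provides $\dim_L H^j_{\varphi,\gamma_K}(Q) = [K:\Qp]$ for $j=0,1$; combined with the two vanishings of part (i), the long exact sequence yields an injection $\partial : H^0_{\varphi,\gamma_K}(Q) \hookrightarrow H^1_{\varphi,\gamma_K}(\Rcal_{L,K}(\delta))$. By the Euler--Poincar\'e formula for $(\varphi,\Gamma_K)$-cohomology over $\Rcal_{L,K}$ (\cite{Liuduality}, \cite{KPX}), the vanishings $H^0 = H^2 = 0$ force $\dim_L H^1_{\varphi,\gamma_K}(\Rcal_{L,K}(\delta)) = [K:\Qp]$, so $\partial$ is an isomorphism of $[K:\Qp]$-dimensional spaces and the subsequent map $H^1_{\varphi,\gamma_K}(\Rcal_{L,K}(\delta)) \to H^1_{\varphi,\gamma_K}(t^{-\mathbf{k}}\Rcal_{L,K}(\delta))$ is zero.

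The main bookkeeping step, and the principal source of potential error, is the identification of $Q$ as a torsion module of character type $\delta'$ together with the precise verification of the weight shift $\wt_\tau(\delta') = \wt_\tau(\delta) - k_\tau$ interchanging the ranges in the statement with those of Lemma \ref{torsioncohomology}. Once that is settled, both (ii) and (iii) fall out formally from the long exact sequence in $(\varphi,\Gamma_K)$-cohomology, Lemma \ref{torsioncohomology}, and, in case (iii), the Euler--Poincar\'e dimension count.
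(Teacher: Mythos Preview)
Your proof is correct and follows essentially the same approach as the paper: both use the long exact sequence of $(\varphi,\Gamma_K)$-cohomology attached to $0\to\Rcal_{L,K}(\delta)\to t^{-\mathbf{k}}\Rcal_{L,K}(\delta)\to Q\to 0$ together with Lemma \ref{torsioncohomology} applied to the twist $z^{-\mathbf{k}}\delta$, and both need $\dim_L H^1_{\varphi,\gamma_K}(\Rcal_{L,K}(\delta))=[K:\Qp]$ for part (iii). The only difference is cosmetic: the paper dispatches (i) and the $H^1$-dimension by citing \cite[Prop.~2.10 and \S5]{Nakamura}, whereas you unpack these via the explicit rank-one submodule argument, Tate duality, and the Euler--Poincar\'e formula.
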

\begin{proof}
From \cite[Prop.2.10]{Nakamura} (together with and \cite[\S5]{Nakamura}), our general hypothesis on $\delta$ implies (i) and also $\dim_LH^1_{\varphi,\gamma_K}(\Rcal_{L,K}(\delta))=\dim_LH^1_{\varphi,\gamma_K}(t^{-{\bf k}}\Rcal_{L,K}(\delta))=[K:\Q_p]$ for any ${\bf k}\in\Z^{[K:\Q_p]}$. Then the result comes from the long exact cohomology sequence associated to:
$$0\longrightarrow\Rcal_{L,K}(\delta)\longrightarrow t^{-{\bf k}}\Rcal_{L,K}(\delta)\longrightarrow t^{-{\bf k}}\Rcal_{L,K}(\delta)/\Rcal_{L,K}(\delta)\longrightarrow0.$$
together with Lemma \ref{torsioncohomology} (replacing $\delta$ by $z^{-{\bf k}}\delta$).
\end{proof}

\begin{lem}\label{Lsurj}
Let $\delta:\,K^\times\rightarrow L^\times$ be a locally algebraic character such that $\delta$ and $\varepsilon\delta^{-1}$ are not algebraic. Then the map in (\ref{functh1}):
$$H^1_{\varphi,\gamma_K}(\Rcal_{L,K}(\delta)[\tfrac{1}{t}])\longrightarrow H^1(\Gcal_K,W_{\dR}(\Rcal_{L,K}(\delta)[\tfrac{1}{t}]))\simeq H^1(\Gcal_K,L\otimes_{\Q_p}\BdR)$$
is an isomorphism.
\end{lem}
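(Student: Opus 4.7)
The plan is to reduce to the case $\delta$ smooth, match $L$-dimensions on both sides, and deduce the isomorphism from a commutative square.

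Since $\delta$ is locally algebraic, write $\delta=\delta_{\rm sm}\cdot z^{{\bf h}}$ with $\delta_{\rm sm}$ smooth and ${\bf h}:=(\wt_\tau(\delta))_\tau$. Multiplication by $t^{-{\bf h}}$ provides an isomorphism $\Rcal_{L,K}(\delta)[\tfrac{1}{t}]\simeq \Rcal_{L,K}(\delta_{\rm sm})[\tfrac{1}{t}]$ in $\Phi\Gamma_{L,K}$ which intertwines (\ref{functh1}) by functoriality of $W_{\dR}$. The hypotheses ``$\delta,\varepsilon\delta^{-1}$ not algebraic'' translate into ``$\delta_{\rm sm}\ne 1$ and $\delta_{\rm sm}\ne |N_{K/\Q_p}|$'' and remain true for each algebraic shift $z^{-n}\delta_{\rm sm}$, so we may assume $\delta$ is smooth.

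By (\ref{coho}) we have $H^1_{\varphi,\gamma_K}(\Rcal_{L,K}(\delta)[\tfrac{1}{t}])=\varinjlim_n H^1_{\varphi,\gamma_K}(t^{-n{\bf 1}}\Rcal_{L,K}(\delta))$, and after identifying $t^{-n{\bf 1}}\Rcal_{L,K}(\delta)\cong \Rcal_{L,K}(z^{-n}\delta)$ (as used in the proof of Lemma \ref{maprank1}) the $n$-th transition is the map of Lemma \ref{maprank1}(ii) attached to the character $z^{-n}\delta$ with ${\bf k}={\bf 1}$. Since $\wt_\tau(z^{-n}\delta)=-n\notin\{1\}$ for every $\tau$ and $n\ge 0$, each transition is an isomorphism, so $H^1_{\varphi,\gamma_K}(\Rcal_{L,K}(\delta)[\tfrac{1}{t}])\simeq H^1_{\varphi,\gamma_K}(\Rcal_{L,K}(\delta))$, of $L$-dimension $[K:\Q_p]$ by \cite[Prop.2.10]{Nakamura}. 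Dually, the discussion following (\ref{ext1h1}) together with Proposition \ref{pdR} identifies $H^1(\Gcal_K, L\otimes_{\Q_p}\BdR)\simeq \coker(\nu_{L\otimes\BdR}) = D_{\pdR}(L\otimes\BdR)=L\otimes_{\Q_p} K$ (since $\nu=0$ on the trivial representation), again of $L$-dimension $[K:\Q_p]$.

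Given matching dimensions, it suffices to show the map is surjective. We propose to use the commutative square
\[
\xymatrix{
H^1_{\varphi,\gamma_K}(\Rcal_{L,K}(\delta)) \ar[r] \ar[d]_{\sim} & H^1(\Gcal_K, W_{\dR}^+(\Rcal_{L,K}(\delta))) \ar[d] \\
H^1_{\varphi,\gamma_K}(\Rcal_{L,K}(\delta)[\tfrac{1}{t}]) \ar[r] & H^1(\Gcal_K, L\otimes_{\Q_p}\BdR)
}
\]
whose horizontals come from (\ref{functh1}) and verticals from inverting $t$. The left vertical is iso by the previous paragraph. The right vertical is iso because the cokernel $(L\otimes\BdR)/W_{\dR}^+(\Rcal_{L,K}(\delta))$ is a successive extension of Tate-twisted $C$-representations further twisted by the finite-order character $\delta\circ\rec_K^{-1}$, whose Galois cohomology all vanishes by Tate--Sen (the twisted character $\varepsilon^{-i}\cdot(\delta\circ\rec_K^{-1})|_{\Gamma_K}$ is nontrivial for $i\ge 1$, since the finite-order restriction of $\delta\circ\rec_K^{-1}$ cannot equal the infinite-image $\varepsilon^{i}|_{\Gamma_K}$). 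The main obstacle is then to show the top horizontal is surjective; this can be approached either by an explicit cocycle comparison between the $(\varphi,\gamma_K)$-complex (\ref{phigammacomplex}) and a Koszul-type complex computing $H^*(\Gcal_K, W_{\dR}^+(\Rcal_{L,K}(\delta)))$, or by invoking Berger's equivalence \cite{BerBpaires} between $(\varphi,\Gamma)$-modules over $\Rcal_K$ and $B$-pairs, which allows any class in $H^1(\Gcal_K, W_{\dR}^+(\Rcal_{L,K}(\delta)))$ to be realized as coming from a $(\varphi,\Gamma)$-module extension.
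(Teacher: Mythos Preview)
Your reduction to the smooth case, the dimension count on both sides, and the verification that the right vertical arrow is an isomorphism (via Tate's theorem on the graded pieces $(L\otimes C)(-i)(\delta_1)$ for $i\ge 1$) are all fine. The gap is entirely in step~4, which you yourself flag as ``the main obstacle'' but do not resolve.

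Your approach (b) does not work as stated. Berger's equivalence identifies $(\varphi,\Gamma_K)$-modules over $\Rcal_K$ with $B$-pairs $(W_e,W_{\dR}^+)$, so an extension in $\Phi\Gamma_K^+$ is the same as a \emph{compatible pair} of extensions on the $W_e$- and $W_{\dR}^+$-sides. A class in $H^1(\Gcal_K,W_{\dR}^+(\Rcal_{L,K}(\delta)))$ gives you only the $W_{\dR}^+$-side; to lift it to a $(\varphi,\Gamma_K)$-extension you must produce a class in $H^1(\Gcal_K,W_e(\Rcal_{L,K}(\delta)))$ mapping to the same element of $H^1(\Gcal_K,W_{\dR}(\delta))$, and nothing you have said guarantees this. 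Approach (a), the ``explicit cocycle comparison'', is not carried out and so cannot be evaluated.

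The paper attacks the problem from the opposite direction: since the dimensions agree, it suffices to show the map $H^1_{\varphi,\gamma_K}(\Rcal_{L,K}(\delta))\to H^1(\Gcal_K,W_{\dR}(\Rcal_{L,K}(\delta)))$ is \emph{injective}. Via Nakamura's $B$-pair cohomology the kernel is $H^1_g(\Gcal_K,W(\delta))$, and by the duality \cite[Prop.~2.11]{NakamuraTri} its vanishing is equivalent to the vanishing of the map $H^1(\Gcal_K,W(\delta^{-1}\varepsilon))\to H^1(\Gcal_K,W_e(\Rcal_{L,K}(\delta^{-1}\varepsilon)))$. Since $W_e$ only depends on $\Rcal_{L,K}(\delta^{-1}\varepsilon)[\tfrac{1}{t}]$, this last map factors through $H^1_{\varphi,\gamma_K}(\Rcal_{L,K}(\delta^{-1}\varepsilon))\to H^1_{\varphi,\gamma_K}(t^{-{\bf k}}\Rcal_{L,K}(\delta^{-1}\varepsilon))$ for any ${\bf k}$, and the paper chooses ${\bf k}$ so that Lemma~\ref{maprank1}(iii) forces this transition map to be zero. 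This is exactly the kind of input (on the $W_e$-side, via duality) that your argument is missing.
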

\begin{proof}
Replacing $\delta$ by $z^{-{\bf k}}\delta$ for some ${\bf k}\in\Z_{\geq0}^{[K:\Q_p]}$ we can assume $\wt_\tau(\delta)\leq 0$ for all $\tau$. Then (ii) of Lemma \ref{maprank1} and (\ref{coho}) imply that the inclusion $\Rcal_{L,K}(\delta)\subset\Rcal_{L,K}(\delta)[\tfrac{1}{t}]$ induces an isomorphism \ \ $H^1_{\varphi,\gamma_K}(\Rcal_{L,K}(\delta))\buildrel\sim\over\longrightarrow H^1_{\varphi,\gamma_K}(\Rcal_{L,K}(\delta)[\tfrac{1}{t}])$. \ \ In \ \ particular \ \ we \ \ have $\dim_LH^1_{\varphi,\gamma_K}(\Rcal_{L,K}(\delta)[\tfrac{1}{t}])=[K:\Q_p]$ (see the proof of Lemma \ref{maprank1}). Lemma \ref{trivloc} implies $W_{\dR}(\Rcal_{L,K}(\delta)[\tfrac{1}{t}])\simeq L\otimes_{\Q_p}\BdR$ and it easily follows from \cite[Th.1]{TateDrie} and \cite[Th.2]{TateDrie} that $\dim_LH^1(\Gcal_K,L\otimes_{\Q_p}\BdR)=[K:\Q_p]$. Thus it is enough to prove that the map:
\begin{equation}\label{mapdelta}
H^1_{\varphi,\gamma_K}(\Rcal_{L,K}(\delta))\longrightarrow H^1(\Gcal_K,W_{\dR}(\Rcal_{L,K}(\delta)))
\end{equation}
is an isomorphism. Since these two $L$-vector spaces are both of dimension $[K:\Qp]$, it is enough to prove that the kernel of (\ref{mapdelta}) is zero.

Let $W(\delta):=(W_e(\Rcal_{L,K}(\delta)),W_{\dR}^+(\Rcal_{L,K}(\delta)))$ be the $L$-${\rm B}$-pair associated to $\Rcal_{L,K}(\delta)$ following \cite[\S1.4]{NakamuraTri} (which generalizes \cite{BerBpaires}) and $H^1(\Gcal_K,W(\delta))$ the $L\otimes_{\Qp}K$-module defined in \cite[Def.2.1]{NakamuraTri}. 
We have an isomorphism
\begin{equation}\label{LBpaircomesin}
H^1_{\varphi,\gamma_K}(\Rcal_{L,K}(\delta))\simeq H^1(\Gcal_K,W(\delta))
\end{equation}
by \cite[Prop.2.2(2)]{NakamuraTri} together with \cite[Th.1.36]{NakamuraTri} (and the interpretation of $H^1_{\varphi,\gamma_K}(D)$ as extensions of $\Rcal_{L,K}$ by $D$). The kernel of $H^1(\Gcal_K,W(\delta))\rightarrow H^1(\Gcal_K,W_{\dR}(\Rcal_{L,K}(\delta)))$ is denoted by $H^1_g(\Gcal_K,W(\delta))$ in \cite[Def.2.4]{NakamuraTri}. It follows \ from \ \cite[Prop.2.11]{NakamuraTri} \ that \ its \ vanishing \ is \ equivalent \ to \ an \ isomorphism $H^1_e(\Gcal_K,W(\delta^{-1}\varepsilon))\buildrel\sim\over\longrightarrow H^1(\Gcal_K,W(\delta^{-1}\varepsilon))$ where $H^1_e(\Gcal_K,W(\delta^{-1}\varepsilon))$ is defined in \cite[Def.2.4]{NakamuraTri}, or equivalently to the vanishing of the map (see \cite[Def.2.1]{NakamuraTri}):
\begin{equation}\label{mapdeltae}
H^1(\Gcal_K,W(\delta^{-1}\varepsilon))\longrightarrow H^1(\Gcal_K,W_e(\Rcal_{L,K}(\delta^{-1}\varepsilon))).
\end{equation}
However it follows from the definition of $W_e(\Rcal_{L,K}(\delta^{-1}\varepsilon))$ (see \cite[Prop.2.2.6(1)]{BerBpaires}) that it only depends on $\Rcal_{L,K}(\delta)[\tfrac{1}{t}]$, hence we have for any ${\bf k}\in \Z_{\geq0}^{[K:\Q_p]}$:
$$W_e(\Rcal_{L,K}(\delta^{-1}\varepsilon))=W_e(t^{-{\bf k}}\Rcal_{L,K}(\delta^{-1}\varepsilon))=W_e(\Rcal_{L,K}(z^{-{\bf k}}\delta^{-1}\varepsilon))$$
and the map (\ref{mapdeltae}) factors as:
\begin{multline*}
H^1(\Gcal_K,W(\delta^{-1}\varepsilon))\longrightarrow H^1(\Gcal_K,W(z^{-{\bf k}}\delta^{-1}\varepsilon))\longrightarrow H^1(\Gcal_K,W_e(\Rcal_{L,K}(z^{-{\bf k}}\delta^{-1}\varepsilon)))\\
\cong H^1(\Gcal_K,W_e(\Rcal_{L,K}(\delta^{-1}\varepsilon))).
\end{multline*}
As for the first isomorphism in (\ref{LBpaircomesin}), the first map is also:
$$H^1_{\varphi,\gamma_K}(\Rcal_{L,K}(\delta^{-1}\varepsilon))\longrightarrow H^1_{\varphi,\gamma_K}(\Rcal_{L,K}(z^{-{\bf k}}\delta^{-1}\varepsilon))\cong H^1_{\varphi,\gamma_K}(t^{-{\bf k}}\Rcal_{L,K}(\delta^{-1}\varepsilon))$$
which is zero by (iii) of Lemma \ref{maprank1} since we can choose ${\bf k}=(k_\tau)_\tau\in \Z_{\geq 0}^{[K:\Q_p]}$ such that $k_\tau\geq -\wt_\tau(\delta)+{1}$ for all $\tau$ (and recall $\wt_\tau(\delta)\leq 0$ hence $-\wt_\tau(\delta)+{1}\geq 1$). Thus (\ref{mapdeltae}) is {\it a fortiori} zero.
\end{proof}

\begin{lem}\label{Asurj}
Let $A$ be an object of $\Ccal_L$ and let $\delta:\,K^\times\rightarrow A^\times$ be a continuous character such that $\overline{\delta}$ and $\varepsilon\overline{\delta}^{-1}$ are not algebraic where $\overline\delta:=\delta$ modulo $\mathfrak{m}_A$.\\
(i) We have $H^0_{\varphi,\gamma_K}(\Rcal_{A,K}(\delta)[\tfrac{1}{t}])=H^2_{\varphi,\gamma_K}(\Rcal_{A,K}(\delta)[\tfrac{1}{t}])=0$.\\
(ii) Assume moreover $\overline\delta$ locally algebraic, then the map:
$$H^1_{\varphi,\gamma_K}(\Rcal_{A,K}(\delta)[\tfrac{1}{t}])\longrightarrow H^1(\Gcal_K,W_{\dR}(\Rcal_{A,K}(\delta)[\tfrac{1}{t}]))$$
is surjective.\\
(iii) Assume moreover $\delta$ locally algebraic, then the map:
$$H^1_{\varphi,\gamma_K}(\Rcal_{A,K}(\delta)[\tfrac{1}{t}])\longrightarrow H^1(\Gcal_K,W_{\dR}(\Rcal_{A,K}(\delta)[\tfrac{1}{t}]))$$
is an isomorphism.
\end{lem}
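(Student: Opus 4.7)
The plan is to prove all three parts by d\'evissage on the finite length local $L$-algebra $A$. Choose an ideal $I\subseteq A$ with $\mathfrak{m}_A I=0$ and $\dim_L I=1$, so that $I\simeq L$ as $L$-vector space; write $\delta':=\delta\bmod I$, so $\overline{\delta'}=\overline\delta$. There is a short exact sequence of $(\varphi,\Gamma_K)$-modules over $\Rcal_{A,K}[\tfrac{1}{t}]$:
\begin{equation*}
0\longrightarrow \Rcal_{L,K}(\overline\delta)[\tfrac{1}{t}]\longrightarrow \Rcal_{A,K}(\delta)[\tfrac{1}{t}]\longrightarrow \Rcal_{A/I,K}(\delta')[\tfrac{1}{t}]\longrightarrow 0.
\end{equation*}
Applying the exact functor $W_{\dR}$ (Lemma \ref{freen} and the ensuing discussion) to this sequence produces a parallel short exact sequence of almost de Rham $A\otimes_{\Qp}\BdR$-representations of $\Gcal_K$; the two resulting long exact sequences are intertwined by the functorial comparison map (\ref{functh1}), and the base case $A=L$ of the induction on $\dim_L A$ is furnished by Lemma \ref{Lsurj}.

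For (i), the colimit formula (\ref{coho}) applied with $D=\Rcal_{A,K}(\delta)$ reduces the claim to $H^j_{\varphi,\gamma_K}(t^{-\mathbf{k}}\Rcal_{A,K}(\delta))=0$ for $j\in\{0,2\}$ and every $\mathbf{k}\in\Z_{\geq 0}^{[K:\Qp]}$. D\'evissage on $A$ via the twists of the above sequence by $t^{-\mathbf{k}}$ then reduces to the analogous vanishing for $A=L$ with $\overline\delta$ replaced by $z^{-\mathbf{k}}\overline\delta$, which follows from Lemma \ref{maprank1}(i): the non-algebraicity of $\overline\delta$ and $\varepsilon\overline\delta^{-1}$ transfers to $z^{-\mathbf{k}}\overline\delta$ and $\varepsilon z^{\mathbf{k}}\overline\delta^{-1}$ since the algebraicity of a character is invariant under multiplication by an algebraic character.

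For (ii), proceed by induction on $\dim_L A$; the case $A=L$ is Lemma \ref{Lsurj}. In the induction step, the commutative diagram of long exact sequences attached to the above sequence takes the shape
\begin{equation*}
\xymatrix@C=8pt{
H^1_{\varphi,\gamma_K}(\Rcal_{L,K}(\overline\delta)[\tfrac{1}{t}])\ar[r]\ar[d]^{\simeq} & H^1_{\varphi,\gamma_K}(\Rcal_{A,K}(\delta)[\tfrac{1}{t}])\ar[r]\ar[d] & H^1_{\varphi,\gamma_K}(\Rcal_{A/I,K}(\delta')[\tfrac{1}{t}])\ar[r]\ar@{->>}[d] & 0 \\
H^1(\Gcal_K,L\otimes_{\Qp}\BdR)\ar[r] & H^1(\Gcal_K,W_{\dR}(\Rcal_{A,K}(\delta)[\tfrac{1}{t}]))\ar[r] & H^1(\Gcal_K,W_{\dR}(\Rcal_{A/I,K}(\delta')[\tfrac{1}{t}]))\ar[r] & 0
}
\end{equation*}
where we have used Lemma \ref{trivloc} to identify $W_{\dR}(\Rcal_{L,K}(\overline\delta)[\tfrac{1}{t}])\cong L\otimes_{\Qp}\BdR$, as $\overline\delta$ is locally algebraic. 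The top row is right exact by (i); the bottom row is right exact by the right exactness of $H^1(\Gcal_K,-)$ on $\Rep_{\pdR}(\Gcal_K)$ recalled in the excerpt; the left vertical arrow is an isomorphism by Lemma \ref{Lsurj}; and the right vertical arrow is surjective by the inductive hypothesis applied to $\delta'$ (whose reduction modulo $\mathfrak{m}_{A/I}$ is $\overline\delta$). A standard diagram chase yields surjectivity of the middle vertical.

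For (iii), the hypothesis that $\delta$ itself is locally algebraic makes $\delta'$ locally algebraic too, so by Lemma \ref{trivloc} each $W_{\dR}$ in the bottom sequence is trivial and this sequence becomes $0\to L\otimes_{\Qp}\BdR\to A\otimes_{\Qp}\BdR\to A/I\otimes_{\Qp}\BdR\to 0$, whose $H^0$-sequence is the tautologically exact $0\to L\otimes_{\Qp}K\to A\otimes_{\Qp}K\to A/I\otimes_{\Qp}K\to 0$. Combined with the vanishing of $H^0_{\varphi,\gamma_K}$ from (i), both the top and bottom rows of the analogous commutative diagram extend to short exact sequences $0\to H^1_L\to H^1_A\to H^1_{A/I}\to 0$; induction on $\dim_L A$ (with base Lemma \ref{Lsurj}) makes the outer verticals isomorphisms and the five lemma produces the middle isomorphism. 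The main technical point throughout is keeping track of right- versus full short-exactness on both sides: on the $(\varphi,\Gamma_K)$-side this is controlled by (i), while on the almost de Rham side it rests on Proposition \ref{pdR} together with the cohomological dimension $1$ of $\Rep_K(\mathbb{G}_{\mathrm{a}})$; the strengthened hypothesis in (iii) is precisely what upgrades right exactness of $H^1$ to full short exactness on the almost de Rham side.
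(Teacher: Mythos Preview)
Your proof is correct and follows essentially the same d\'evissage strategy as the paper: both reduce to the case $A=L$ (Lemma~\ref{Lsurj}) via a filtration of $\Rcal_{A,K}(\delta)[\tfrac{1}{t}]$ by copies of $\Rcal_{L,K}(\overline\delta)[\tfrac{1}{t}]$, using part~(i) to control the top row and the (right)~exactness of $H^1(\Gcal_K,-)$ on (de~Rham) almost de~Rham representations to control the bottom row. The only cosmetic difference is that in~(i) you pass through the lattices $t^{-\mathbf{k}}\Rcal_{A,K}(\delta)$ via the colimit formula~(\ref{coho}), whereas the paper applies the d\'evissage directly at the $[\tfrac{1}{t}]$-level.
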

\begin{proof}
(i) Let $\Mcal$ be a $(\varphi,\Gamma_K)$-module over $\Rcal_{L,K}[\tfrac{1}{t}]$ which is a successive extension of $(\varphi,\Gamma_K)$-modules isomorphic to $\Rcal_{L,K}(\overline{\delta})[\tfrac{1}{t}]$ (for instance $\Mcal=\Rcal_{A,K}(\delta)[\tfrac{1}{t}]$), then it follows from (i) of Lemma \ref{maprank1} and the long exact cohomology sequence that $H^0_{\varphi,\gamma_K}(\Mcal)=H^2_{\varphi,\gamma_K}(\Mcal)=0$. \\
(ii) Let $\Mcal$ as in (i). Since the functor $W_{\dR}$ is exact and since $W_{\dR}(\Mcal)$ is almost de Rham (as it is an extension of almost de Rham ${\rm B}_{\dR}$-representations), then it follows from (the surjectivity in) Lemma \ref{Lsurj}, from the right exactness of the functor $W\longmapsto H^1(\Gcal_K,W)$ on $\Rep_{\pdR}(\Gcal_K)$ and from (i) that the map $H^1_{\varphi,\gamma_K}(\Mcal)\longrightarrow H^1(\Gcal_K,W_{\dR}(\Mcal))$ is surjective.\\
(iii) The last statement follows from the d\'evissage in (ii) together with Lemma \ref{Lsurj} and the fact $W\longmapsto H^1(\Gcal_K,W)$ is exact on the category of de Rham $\BdR$-representations of $\Gcal_K$.
\end{proof}

Denote by $\Tcal_0\subset \Tcal_L$ the subset which is the complement of the $L$-valued points $z^{\bf k}, \varepsilon(z)z^{{\bf k}}$ with ${\bf k}=(k_\tau)_{\tau}\in \Z^{[K:\Qp]}$, and by $\Tcal_0^n$ the characters $\deltabar=(\delta_1,\dots,\delta_n)$ such that $\delta_i/\delta_j\in \Tcal_0$ for $i\neq j$. Equivalently $\Tcal_0^n\subset \Tcal_L^n$ is the complement of the characters $(\delta_1,\dots,\delta_n)$ such that $\delta_i\delta_j^{-1}$ and $\varepsilon\delta_i\delta_j^{-1}$ are algebraic for $i\neq j$. Note that if a triangulation $\Mcal_\bullet$ (on a trianguline $(\varphi,\Gamma_K)$-module of rank $n\geq 1$ over $\Rcal_{L,K}[\tfrac{1}{t}]$) admits a parameter in $\Tcal_0^n(L)$, then by Lemma \ref{isorank1} all parameters of $\Mcal_\bullet$ are in $\Tcal_0^n(L)$.

We can now prove the main result of this section.

\begin{theo}\label{mainsmooth}
Let $\Mcal$ be a trianguline $(\varphi,\Gamma_K)$-module of rank $n\geq 1$ over $\Rcal_{L,K}[\tfrac{1}{t}]$, $\Mcal_\bullet$ a triangulation of $\Mcal$ and $\underline{\delta}=(\delta_i)_{i\in \{1,\dots,n\}}$ a parameter of $\Mcal_\bullet$. Assume that $\deltabar$ is locally algebraic and that $\deltabar\in\Tcal_0^n(L)$. Let $W:=W_{\dR}(\Mcal)$ and $\Fcal_\bullet:=W_{\dR}(\Mcal_\bullet)$. Then the morphism:
$$X_{\Mcal,\Mcal_\bullet}\longrightarrow \widehat{\Tcal^n_{\deltabar}}\times_{\widehat\tfrak}X_{W,\Fcal_\bullet}$$
of groupoids over $\Ccal_L$ in (\ref{morfiber}) is formally smooth.
\end{theo}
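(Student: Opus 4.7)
The plan is to verify formal smoothness via the standard infinitesimal lifting criterion: given a small surjection $A \twoheadrightarrow B$ in $\Ccal_L$ with kernel $I$ satisfying $\mfrak_A I = 0$, an object $(\Mcal_B, \Mcal_{B,\bullet}, j_B)$ of $X_{\Mcal,\Mcal_\bullet}(B)$, and a compatible object $(\deltabar_A, W_A, \Fcal_{A,\bullet}, \iota_A)$ over $A$ in the fiber product $\widehat{\Tcal^n_{\deltabar}} \times_{\widehat\tfrak} X_{W,\Fcal_\bullet}$, the aim is to construct a lift $(\Mcal_A, \Mcal_{A,\bullet}, j_A)$. I would proceed by induction on the rank $n$, building up the triangulation one step at a time. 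The base case $n=1$ is immediate from Lemma \ref{isorank1}: deformations of $\Mcal \simeq \Rcal_{L,K}(\delta_1)[\tfrac{1}{t}]$ are precisely deformations of the character $\delta_1$, parametrized by $\widehat{\Tcal_{\delta_1}}$, and the fiber product condition determines the $B_{\dR}$-side uniquely via Lemma \ref{weight}.

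For the inductive step, assuming $\Mcal_{A,n-1}$ has already been lifted compatibly with the prescribed $(\delta_{A,1},\dots,\delta_{A,n-1})$ and $\Fcal_{A,n-1}$, the remaining task is to lift the short exact sequence
$$0 \longrightarrow \Mcal_{B,n-1} \longrightarrow \Mcal_B \longrightarrow \Rcal_{B,K}(\delta_{B,n})[\tfrac{1}{t}] \longrightarrow 0$$
to an extension over $A$ with rank-one quotient $\Rcal_{A,K}(\delta_{A,n})[\tfrac{1}{t}]$ (determined by $\deltabar_A$) and whose image under $W_{\dR}$ realizes the prescribed step $\Fcal_{A,n} \subseteq W_A$. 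Such lifts of the $B$-extension form a torsor under
$$\Ext^1_{\Phi\Gamma_{A,K}}\bigl(\Rcal_{A,K}(\delta_{A,n})[\tfrac{1}{t}],\, I \otimes_L \Mcal_{n-1}\bigr) \simeq I \otimes_L H^1_{\varphi,\gamma_K}\bigl(\Mcal_{n-1} \otimes_{\Rcal_{L,K}} \Rcal_{L,K}(\delta_n^{-1})[\tfrac{1}{t}]\bigr),$$
while lifts of the corresponding $B_{\dR}$-side extension form a torsor under the analogous $I \otimes_L H^1(\Gcal_K,-)$. Thus the induction reduces to showing that the comparison map (\ref{functh1}) is surjective for the rank $n-1$ trianguline module $\Mcal_{n-1}(\delta_n^{-1})[\tfrac{1}{t}] := \Mcal_{n-1} \otimes_{\Rcal_{L,K}} \Rcal_{L,K}(\delta_n^{-1})[\tfrac{1}{t}]$.

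To establish this surjectivity I would run a second, internal, induction along the triangulation of $\Mcal_{n-1}(\delta_n^{-1})[\tfrac{1}{t}]$, whose graded pieces are $\Rcal_{L,K}(\delta_i\delta_n^{-1})[\tfrac{1}{t}]$ for $i < n$. The hypothesis $\deltabar \in \Tcal_0^n(L)$ is precisely what guarantees that each ratio $\delta_i\delta_n^{-1}$ and each $\varepsilon \delta_n\delta_i^{-1}$ is non-algebraic, which together with the local algebraicity of $\deltabar$ places us in the range of Lemma \ref{Asurj}. For each rank-one graded piece, Lemma \ref{Asurj}(iii) provides an isomorphism of the two $H^1$-groups, and Lemma \ref{Asurj}(i) yields $H^2_{\varphi,\gamma_K} = 0$; at the same time, the functor $W\mapsto H^1(\Gcal_K,W)$ is right exact on $\Rep_{\pdR}(\Gcal_K)$ (as noted in \S\ref{smoothsection}). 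A five-lemma diagram chase on the two long exact sequences, using these two vanishings to truncate them at $H^1$, propagates the isomorphism from the graded pieces to the full trianguline module $\Mcal_{n-1}(\delta_n^{-1})[\tfrac{1}{t}]$, giving exactly the surjectivity needed.

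The main subtlety I anticipate lies not in the cohomological argument itself but in the bookkeeping needed to compare the two torsor structures: namely that the rank-one $(\varphi,\Gamma_K)$-module $\Rcal_{A,K}(\delta_{A,n})[\tfrac{1}{t}]$ picked out by $\deltabar_A$ and the rank-one graded piece of $\Fcal_{A,\bullet}$ must refer to compatible objects under $W_{\dR}$. This is precisely the role of the fiber product over $\widehat\tfrak$: by Corollary \ref{commugroup} combined with Lemma \ref{weight}(ii), the constraint $\wt(\delta_{A,n}) - \wt(\delta_n) \in \widehat\tfrak(A)$ matches $\nu$ on the corresponding graded piece of $W_{\dR}$, so that the rank-one pieces on the two sides carry canonically isomorphic $A\otimes_{\Qp}\BdR$-representations and the torsor of trianguline extensions maps correctly onto the $B_{\dR}$-side torsor; combined with the surjectivity proved above, this produces the required lift $(\Mcal_A, \Mcal_{A,\bullet}, j_A)$.
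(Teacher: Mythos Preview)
Your proof is correct and follows the same inductive strategy as the paper: build the triangulation step by step, reducing each step to a surjectivity statement for $H^1_{\varphi,\gamma_K}\to H^1(\Gcal_K,W_{\dR}(-))$ via Lemma \ref{Asurj} and a d\'evissage along the triangulation. The only cosmetic difference is that you restrict to small extensions and phrase the lifting problem in terms of torsors over $L$ (invoking part (iii) of Lemma \ref{Asurj}), whereas the paper works with an arbitrary surjection $A\twoheadrightarrow B$ and packages the same step as surjectivity onto the fiber product $H^1(\Gcal_K,-)_A\times_{H^1(\Gcal_K,-)_B}H^1_{\varphi,\gamma_K}(-)_B$ via the elementary Lemma \ref{fiber}, using part (ii) of Lemma \ref{Asurj} over $A$ together with the base-change isomorphisms $H^1(-)_A\otimes_A B\simeq H^1(-)_B$.
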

\begin{proof}
Let $A\twoheadrightarrow B$ a surjective map in $\Ccal_L$, $x_B=(\Mcal_B,\Mcal_{B,\bullet},j_B)$ an object of $X_{\Mcal,\Mcal_\bullet}(B)$, $y_B=(\deltabar_B,W_B,\Fcal_{B,\bullet},\iota_B)$ its image in $\widehat{\Tcal^n_{\deltabar}}\times_{\widehat\tfrak}X_{W,\Fcal_\bullet}(B)$. Let $y_A=(\deltabar_A,W_A,\Fcal_{A,\bullet},\iota_A)$ be an object of $\widehat{\Tcal^n_{\deltabar}}\times_{\widehat\tfrak}X_{W,\Fcal_\bullet}(A)$ such that $\deltabar_A=\deltabar_B$ modulo $\ker(A\rightarrow B)$ and $B\otimes_A (W_A,\Fcal_{A,\bullet},\iota_A)\simeq (W_B,\Fcal_{B,\bullet},\iota_B)$. \!We will prove that there exists some object $x_A\!=\!(\Mcal_A,\Mcal_{A,\bullet},j_A)$ in $X_{\Mcal,\Mcal_\bullet}(A)$ whose image in $X_{\Mcal,\Mcal_\bullet}(B)$ is isomorphic to $x_B$ and whose image in $\widehat{\Tcal^n_{\deltabar}}\times_{\widehat\tfrak}X_{W,\Fcal_\bullet}(A)$ is isomorphic to $y_A$. Write $\deltabar_A=(\delta_{A,1},\dots,\delta_{A,n})$ and $\deltabar_B=(\delta_{B,1},\dots,\delta_{B,n})$. By induction on $i$ we will construct $(\varphi,\Gamma_K)$-modules $\Mcal_{A,i}$ over $\Rcal_{A,K}[\tfrac{1}{t}]$ such that $\Mcal_{A,i-1}\subset \Mcal_{A,i}$ and isomorphisms $\Rcal_{A,K}(\delta_{A,i})[\tfrac{1}{t}]\simeq \Mcal_{A,i}/\Mcal_{A,i-1}$ with compatible isomorphisms $B\otimes_A\Mcal_{A,i}\simeq\Mcal_{B,i}$, $W_{\dR}(\Mcal_{A,i})\simeq\Fcal_{A,i}$ (compatible meaning with $B\otimes_A\Fcal_{A,i}\simeq \Fcal_{B,i}$). For $i=1$ one can take $\Mcal_{A,1}:=\Rcal_{A,K}(\delta_{A,1})[\tfrac{1}{t}]$. For $i\in \{2,\dots,n\}$ set:
\begin{eqnarray*}
\Ext^1_{\Rep_{A\otimes_{\Q_p}\BdR}(\Gcal_K),i}&:=&\Ext^1_{\Rep_{A\otimes_{\Q_p}\BdR}(\Gcal_K)}\big(W_{\dR}(\Rcal_{A,K}(\delta_{A,i})[\tfrac{1}{t}]),\Fcal_{A,i-1}\big)\\
\Ext^1_{\Rep_{B\otimes_{\Q_p}\BdR}(\Gcal_K),i}&:=&\Ext^1_{\Rep_{B\otimes_{\Q_p}\BdR}(\Gcal_K)}\big(W_{\dR}(\Rcal_{B,K}(\delta_{B,i})[\tfrac{1}{t}]),\Fcal_{B,i-1}\big)\\
\Ext^1_{\Phi\Gamma_{B,K},i}&:=&\Ext^1_{\Phi\Gamma_{B,K}}(\Rcal_{B,K}(\delta_{B,i})[\tfrac{1}{t}],\Mcal_{B,i-1}).
\end{eqnarray*}
Assuming that $\Mcal_{A,i-1}$ is known for a fixed $i\geq 2$, the existence of $\Mcal_{A,i}$ is then obviously a consequence of the surjectivity of the map:
\begin{multline*}
\Ext^1_{\Phi\Gamma_{A,K}}(\Rcal_{A,K}(\delta_{A,i})[\tfrac{1}{t}],\Mcal_{A,i-1})\buildrel W_{\dR}\times B\otimes_A 1\over\longrightarrow \\ \Ext^1_{\Rep_{A\otimes_{\Q_p}\BdR}(\Gcal_K),i}\times_{\Ext^1_{\Rep_{B\otimes_{\Q_p}\BdR}(\Gcal_K),i}} \Ext^1_{\Phi\Gamma_{B,K},i}
\end{multline*}
which itself follows by (\ref{1cocycle}), (\ref{extA}) and (\ref{ext1h1}) from the surjectivity of:
\begin{multline}\label{mainsurj}
H^1_{\varphi,\gamma_K}(\Mcal_{A,i-1}(\delta_{A,i}^{-1}))\longrightarrow \\ H^1\big(\Gcal_K,W_{\dR}(\Mcal_{A,i-1}(\delta_{A,i}^{-1}))\big)\times_{H^1(\Gcal_K,W_{\dR}(\Mcal_{B,i-1}(\delta_{B,i}^{-1})))} H^1_{\varphi,\gamma_K}(\Mcal_{B,i-1}(\delta_{B,i}^{-1})).
\end{multline}
For $i\ne j$, the characters $\delta_{A,j}\delta_{A,i}^{-1}$ satisfy the hypotheses of Lemma \ref{Asurj}, consequently Lemma \ref{Asurj} (both (i) and (ii) are needed) together with right exactness of the functor $W\mapsto H^1(\Gcal_K,W)$ on the category $\Rep_{\pdR}(\Gcal_K)$ imply the surjectivity of the map:
\begin{equation*}
H^1_{\varphi,\gamma_K}(\Mcal_{A,i-1}(\delta_{i,A}^{-1}))\longrightarrow H^1(\Gcal_K,W_{\dR}(\Mcal_{A,i-1}(\delta_{A,i}^{-1}))).
\end{equation*}

For $W_A$ in $\Rep_{\pdR,A}(\Gcal_K)$ we have an isomorphism $D_{\pdR}(W_A)\otimes_AB\simeq D_{\pdR}(W_A\otimes_A B)$ in $\Rep_{B\otimes_{\Qp}K}(\mathbb{G}_{\mathrm{a}})$ (see the proof of Lemma \ref{ApdR}) from which it follows that $\coker(\nu_{W_A})\otimes_AB=\coker(\nu_{W_A\otimes_AB})$ where $\nu_{W_A}$ (resp. $\nu_{W_A\otimes_AB}$) is the nilpotent endomorphism on $D_{\pdR}(W_A)$ (resp. $D_{\pdR}(W_A\otimes_AB)$). Since we have functorial isomorphisms $H^1(\Gcal_K,W_A)\simeq\coker(\nu_{W_A})$ of $A\otimes_{\Qp}K$-modules, it follows that $H^1(\Gcal_K,W_A)\otimes_AB\simeq H^1(\Gcal_K,W_A\otimes_AB)$, and in particular that $H^1(\Gcal_K,W_{\dR}(\Mcal_{A,i-1}(\delta_{A,i}^{-1})))\otimes_AB\simeq H^1(\Gcal_K,W_{\dR}(\Mcal_{B,i-1}(\delta_{B,i}^{-1})))$.

If $0\rightarrow \Mcal_1\rightarrow \Mcal\rightarrow \Mcal_2\rightarrow 0$ is an exact sequence in $\Phi\Gamma_{A,K}$ such that $H^0_{\varphi,\gamma_K}(\Mcal_i)=H^2_{\varphi,\gamma_K}(\Mcal_i)=H^0_{\varphi,\gamma_K}(\Mcal_i\otimes_AB)=H^2_{\varphi,\gamma_K}(\Mcal_i\otimes_AB)=0$ and $H^1_{\varphi,\gamma_K}(\Mcal_i)\otimes_AB\buildrel\sim\over\rightarrow H^1_{\varphi,\gamma_K}(\Mcal_i\otimes_AB)$ for $i\in\{1,2\}$, then the long exact cohomology sequence for $H^{\bullet}_{\varphi,\gamma_K}$ and an easy diagram chase yield an isomorphism $H^1_{\varphi,\gamma_K}(\Mcal)\otimes_AB\buildrel\sim\over\rightarrow H^1_{\varphi,\gamma_K}(\Mcal\otimes_AB)$. By (i) of Lemma \ref{Asurj}, $H^0_{\varphi,\gamma_K}$ and $H^2_{\varphi,\gamma_K}$ cancel $\Rcal_{A,K}(\delta_{A,j}\delta_{A,i}^{-1})[\tfrac{1}{t}]$ and $\Rcal_{B,K}(\delta_{B,j}\delta_{B,i}^{-1})[\tfrac{1}{t}]$ for $i\ne j$, and more generally any $\Mcal$ which is a successive extension of $\Rcal_{L,K}({\delta_{j}\delta_{i}^{-1}})[\tfrac{1}{t}]$ for $i\ne j$. By the same argument as in the first part of the proof of Lemma \ref{ApdR} using that the functor $H^1_{\varphi,\gamma_K}$ is then exact on the subcategory of such objects $\Mcal$ and commutes with direct sums, we obtain isomorphisms $H^1_{\varphi,\gamma_K}(\Mcal_{A,i-1}(\delta_{A,i}^{-1}))\otimes_AB\buildrel\sim\over\longrightarrow H^1_{\varphi,\gamma_K}(\Mcal_{B,i-1}(\delta_{B,i}^{-1}))$ (note that $\Mcal_{A,i-1}(\delta_{A,i}^{-1})$ is a successive extension of $\Rcal_{A,K}(\delta_{A,j}\delta_{A,i}^{-1})[\tfrac{1}{t}]$ for $j\leq i-1$). 

The surjectivity of the map \eqref{mainsurj} is then a consequence of Lemma \ref{fiber} below applied with $M=H^1_{\varphi,\gamma_K}(\Mcal_{A,i-1}(\delta_{A,i}^{-1}))$ and $N=H^1(\Gcal_K,W_{\dR}(\Mcal_{A,i-1}(\delta_{A,i}^{-1})))$.
\end{proof}

\begin{lem}\label{fiber}
Let $A$ be a ring, $I\subset A$ some ideal and $B:=A/I$. Let $f:\,M\twoheadrightarrow N$ be a surjective $A$-linear map between two $A$-modules. Then the map $M\longrightarrow(M\otimes_AB)\times_{N\otimes_AB}N$ sending $m\in M$ to $(m\otimes 1, f(m))$ is surjective.
\end{lem}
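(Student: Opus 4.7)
The plan is to argue by a direct lifting and correction argument, which is the standard way to handle such fiber-product surjectivity statements.

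First I would unpack what an element of the fiber product means. Fix $(\overline{m}, n) \in (M \otimes_A B) \times_{N \otimes_A B} N$; concretely $\overline{m} \in M/IM$, $n \in N$, and the compatibility condition says that $f(\overline{m}) = \overline{n}$ in $N/IN$, where $\overline{n}$ denotes the image of $n$. My goal is to produce some $m \in M$ with $m \otimes 1 = \overline{m}$ in $M/IM$ and $f(m) = n$ in $N$.

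Next I would choose an arbitrary lift $m_0 \in M$ of $\overline{m}$. Then $f(m_0) \in N$ has the same image as $n$ in $N/IN$, so $f(m_0) - n \in IN$. Using the surjectivity hypothesis on $f$, write $f(m_0) - n = \sum_{i} a_i f(m_i)$ for finitely many $a_i \in I$ and $m_i \in M$. Setting $m := m_0 - \sum_i a_i m_i$ gives $f(m) = n$ on the nose, and the correction $\sum_i a_i m_i$ lies in $IM$, so $m \otimes 1 = m_0 \otimes 1 = \overline{m}$ in $M/IM = M \otimes_A B$. Thus $m$ maps to $(\overline{m}, n)$ under the map of the statement, which proves surjectivity.

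There is no real obstacle here — the only input beyond formal manipulation is the surjectivity of $f$, which is used precisely to absorb the error term $f(m_0) - n \in IN$ back into $M$ via elements of $IM$. The compatibility requirement built into the fiber product is exactly what makes this absorption possible, and that is why the conclusion holds without any flatness or finiteness assumption on $M$ or $N$.
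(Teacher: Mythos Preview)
Your proof is correct and follows essentially the same direct lifting-and-correction argument as the paper. The only cosmetic difference is that the paper first lifts the $N$-coordinate (choosing $\tilde y\in M$ with $f(\tilde y)=y$) and then corrects the $M\otimes_A B$-coordinate using an element of $\ker(f)$, whereas you first lift the $M\otimes_A B$-coordinate and then correct the $N$-coordinate using that $IN=f(IM)$; these are dual versions of the same elementary argument.
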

\begin{proof}
Let $P:=\ker(f)$, tensoring with $B$ we obtain a short exact sequence $P\otimes_AB\rightarrow M\otimes_AB\rightarrow N\otimes_AB\rightarrow0$. Let $(x,y)\in(M\otimes_AB)\times_{N\otimes_AB}N$. There exists $\tilde{y}\in M$ such that $f(\tilde{y})=y$. Let $u:=x-\tilde{y}\otimes 1\in M\otimes_AB$. The image of $u$ in $N\otimes_AB$ is zero, hence there exists $v\in P\otimes_AB$ whose image in $M\otimes_AB$ is equal to $u$. Let $\tilde{u}\in P\subseteq M$ lifting $v$, then $\tilde{u}\otimes 1 = u$ in $M\otimes_AB$. We have $f(\tilde{y}+\tilde{u})=f(\tilde{y})=y$ and $(\tilde{y}+\tilde{u})\otimes 1=(x-u)+u=x$ in $M\otimes_AB$: this proves that $\tilde{y}+\tilde{u}\in M$ maps to $(x,y)\in (M\otimes_AB)\times_{N\otimes_AB}N$.
\end{proof}

We say that a morphism $X\longrightarrow Y$ of groupoids over $\Ccal_L$ is a closed immersion if it is relatively representable (\cite[Def.A.5.2]{KisinModularity}) and if, for any object $y\in Y(A_y)$, the object $x\in X(A_x)$ representing the fiber product $\tilde{y}\times_YX$ (see \cite[\S A.5]{KisinModularity} for the notation) is such that the map $A_y\rightarrow A_x$ is a surjection in $\Ccal_L$.

\begin{prop}\label{closedimmersion}
Let $\Mcal$ be a trianguline $(\varphi,\Gamma_K)$-module of rank $n\geq 1$ over $\Rcal_{L,K}[\tfrac{1}{t}]$, $\Mcal_\bullet$ a triangulation of $\Mcal$ and $\underline{\delta}=(\delta_i)_{i\in \{1,\dots,n\}}$ a parameter of $\Mcal_\bullet$. Assume that $\deltabar\in\Tcal_0^n(L)$, then the morphism $X_{\Mcal,\Mcal_\bullet}\longrightarrow X_{\Mcal}$ of groupoids over $\Ccal_L$ is relatively representable and is a closed immersion.
\end{prop}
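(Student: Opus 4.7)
I will prove Proposition \ref{closedimmersion} by establishing two facts: \emph{uniqueness of deformations of the triangulation}, which gives relative representability as a subfunctor, and \emph{closedness of that subfunctor}, which gives the closed immersion property. Fix $(A_y, \Mcal_{A_y}, j_{A_y}) \in X_\Mcal(A_y)$ and consider the fiber product $F := \widetilde{(\Mcal_{A_y}, j_{A_y})} \times_{X_\Mcal} X_{\Mcal,\Mcal_\bullet}$. An object of $F(B)$ consists of $\phi: A_y \to B$ in $\Ccal_L$ and a triangulation $\Mcal_{B,\bullet}$ of $\Mcal_B := \phi^*\Mcal_{A_y}$ deforming $\Mcal_\bullet$; an automorphism of such an object over $\id_B$ that projects to the identity in $X_\Mcal(B)$ is tautologically the identity, since the forget functor $X_{\Mcal,\Mcal_\bullet} \to X_\Mcal$ is the identity on the underlying map of $\Mcal_B$. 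Hence $F$ is set-valued, and $(\phi, \Mcal_{B,\bullet}) \mapsto \phi$ provides a map $F \to \Hom_{\Ccal_L}(A_y,-)$.

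For uniqueness, I induct on $n$. Given two triangulations $\Mcal_{B,\bullet}$ and $\Mcal'_{B,\bullet}$ of $\Mcal_B$ deforming $\Mcal_\bullet$, the composition
\[\psi: \Mcal_{B,1} \hookrightarrow \Mcal_B \twoheadrightarrow \Mcal_B/\Mcal'_{B,1}\]
lies in $\Hom_{\Phi\Gamma_{B,K}}(\Mcal_{B,1}, \Mcal_B/\Mcal'_{B,1})$ and vanishes modulo $\mathfrak m_B$. Writing $\Mcal_{B,1} \simeq \Rcal_{B,K}(\delta_{B,1})[\tfrac{1}{t}]$, this Hom identifies with $H^0_{\varphi,\gamma_K}$ of $\Mcal_B/\Mcal'_{B,1}$ twisted by $\Rcal_{B,K}(\delta_{B,1}^{-1})[\tfrac{1}{t}]$. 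A dévissage along the triangulation $\Mcal'_{B,\bullet}/\Mcal'_{B,1}$ of $\Mcal_B/\Mcal'_{B,1}$ reduces vanishing to that of $H^0_{\varphi,\gamma_K}(\Rcal_{B,K}(\delta'_{B,j}\delta_{B,1}^{-1})[\tfrac{1}{t}])$ for $j \geq 2$, which is Lemma \ref{Asurj}(i): the residue $\delta_j\delta_1^{-1}$ and $\varepsilon\delta_1\delta_j^{-1}$ are both non-algebraic, exactly by the hypothesis $\deltabar \in \Tcal_0^n(L)$. Thus $\psi = 0$, so $\Mcal_{B,1} \subseteq \Mcal'_{B,1}$, and symmetry gives equality. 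Applying the induction hypothesis to the induced triangulations on $\Mcal_B/\Mcal_{B,1}$ completes the argument, so $F \hookrightarrow \Hom_{\Ccal_L}(A_y,-)$ is a subfunctor.

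For closedness, I again induct on $n$. For $n=1$ the map is an equivalence. For $n \geq 2$, let $F^{(1)} \subset \Hom_{\Ccal_L}(A_y,-)$ be the subfunctor of $\phi$'s such that $\phi^*\Mcal_{A_y}$ admits a saturated rank-one sub-$(\varphi,\Gamma_K)$-module of character type deforming $\Mcal_1$; by the uniqueness just proved, such a submodule is unique when it exists. I verify that $F^{(1)}$ is pro-represented by a quotient $A_y/J_1$ via Schlessinger-type criteria: compatibility with fiber products over small surjections reduces to an obstruction calculation, where the obstruction to lifting the pair $(\Mcal_1 \hookrightarrow \Mcal)$ through a small extension $B' \twoheadrightarrow B$ lies in an $H^1_{\varphi,\gamma_K}$ group computed from $\Mcal_1$ and $\Mcal/\Mcal_1$; the surjectivity in Lemma \ref{Asurj}(ii), together with the commutation of $H^1_{\varphi,\gamma_K}$ with base change established in the proof of Theorem \ref{mainsmooth}, controls its behavior. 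Over $A_y/J_1$ the universal rank-one submodule $\Mcal_{A_y/J_1,1}$ exists, and the quotient $\Mcal_{A_y/J_1}/\Mcal_{A_y/J_1,1}$ is a deformation of $\Mcal/\Mcal_1$, whose triangulation $\Mcal_\bullet/\Mcal_1$ has parameter $(\delta_2,\ldots,\delta_n) \in \Tcal_0^{n-1}(L)$. Applying the inductive hypothesis yields a further closed immersion $A_y/J_1 \twoheadrightarrow A_y/J$ representing $F$, and compositions of closed immersions are closed immersions.

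The main obstacle is the Schlessinger-type verification for $F^{(1)}$: one must show that the functor of liftings of the inclusion $\Mcal_1 \hookrightarrow \Mcal$ is controlled by a finite-dimensional $L$-vector space of obstructions behaving well under base change, even though the parameter $\delta_{B,1}$ of the lifted submodule is itself a deforming quantity. This ultimately rests on Lemma \ref{Asurj}(ii), which allows one to relate the algebraic obstructions (in $\Phi\Gamma_{A,K}$) to linear-algebraic obstructions (in $\Rep_{A\otimes_{\Qp}\BdR}(\Gcal_K)$), mirroring the strategy already deployed in the proof of Theorem \ref{mainsmooth}.
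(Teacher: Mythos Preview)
Your uniqueness argument (induction on $n$ via the $H^0$-vanishing of Lemma~\ref{Asurj}(i)) is correct and is exactly what the paper does, citing the analogue of \cite[Prop.~2.3.6]{BelChe}. The overall architecture for closedness---show that the fiber $F$ is a subfunctor of $h_{A_y}:=\Hom_{\Ccal_L}(A_y,-)$, then show this subfunctor is represented by a quotient of $A_y$---is also the right one, and matches the paper, which simply cites \cite[Prop.~2.3.9]{BelChe} adapted to $\Rcal_{L,K}[\tfrac{1}{t}]$ for the second step.

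The gap is in your justification for why $F^{(1)}$ is represented by a quotient $A_y/J_1$. You say ``compatibility with fiber products over small surjections reduces to an obstruction calculation, where the obstruction to lifting the pair $(\Mcal_1 \hookrightarrow \Mcal)$ through a small extension $B' \twoheadrightarrow B$ lies in an $H^1_{\varphi,\gamma_K}$ group'' and then invoke Lemma~\ref{Asurj}(ii). But Lemma~\ref{Asurj}(ii) concerns the surjection onto $H^1(\Gcal_K, W_{\dR}(-))$: it is the input for \emph{formal smoothness} (Theorem~\ref{mainsmooth}), not for closedness, and the $\BdR$-side plays no role in this proposition. More importantly, vanishing of an obstruction to lifting the submodule along $B'\twoheadrightarrow B$ does not by itself cut out a closed subfunctor of $h_{A_y}$: even when such an obstruction vanishes, different lifts $\tilde\phi \in h_{A_y}(B')$ of a fixed $\phi \in F^{(1)}(B)$ may or may not lie in $F^{(1)}(B')$. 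What you actually need is exactly the fiber-product statement you name but do not verify: given $\phi_i \in F^{(1)}(B_i)$ agreeing in $h_{A_y}(B_0)$, the glued map lies in $F^{(1)}(B_1 \times_{B_0} B_2)$. By the uniqueness you already proved, the rank-one submodules over $B_1$ and $B_2$ coincide over $B_0$, so one glues them directly; the glued object is free of rank one and of character type because the characters glue (representability of $\widehat{\Tcal_{\delta_1}}$). This uses only Lemma~\ref{Asurj}(i). Since $A_y$ is artinian, stability under fiber products yields a smallest ideal $J_1$ with $A_y/J_1 \in F^{(1)}$, and $A_y/J_1$ then represents $F^{(1)}$. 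This direct gluing is precisely the content of the argument in \cite[Prop.~2.3.9]{BelChe} that the paper invokes.
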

\begin{proof}
Since a triangulation $\Mcal_{A,\bullet}$ deforming $\Mcal_\bullet$ on a deformation $\Mcal_A$ of $\Mcal$ is unique if it exists by a proof analogous to \cite[Prop.2.3.6]{BelChe} (using (i) of Lemma \ref{Asurj}), we have an equivalence of groupoids over $\Ccal_L$:
\begin{equation}\label{fiberprodplus}
X_{\Mcal,\Mcal_\bullet} \buildrel\sim\over\longrightarrow X_{\Mcal}\times_{|X_{\Mcal}|}|X_{\Mcal,\Mcal_\bullet}|.
\end{equation}
A proof analogous to \cite[Prop.2.3.9]{BelChe} but ``inverting $t$ everywhere'' shows that the morphism $|X_{\Mcal,\Mcal_\bullet}|\longrightarrow |X_{\Mcal}|$ is relatively representable. This implies that the morphism $X_{\Mcal,\Mcal_\bullet}\longrightarrow X_{\Mcal}$ is relatively representable as well. The last statement follows easily from this together with (\ref{fiberprodplus}) and the fact that $|X_{\Mcal,\Mcal_\bullet}|$ is a subfunctor of $|X_{\Mcal}|$.
\end{proof}

\begin{lem}\label{pourBMA}
Let $\Mcal$ be a trianguline $(\varphi,\Gamma_K)$-module of rank $n\geq 1$ over $\Rcal_{L,K}[\tfrac{1}{t}]$, $\Mcal_\bullet$ a triangulation of $\Mcal$ and $\underline{\delta}=(\delta_i)_{i\in \{1,\dots,n\}}$ a locally algebraic parameter of $\Mcal_\bullet$ such that $\deltabar\in\Tcal_0^n(L)$. Let $(A,\Mcal_A,\mathcal{M}_{A,\bullet},j_A)$ be an object of $X_{\Mcal,\Mcal_\bullet}$ and $\deltabar_A=(\delta_{A,i})_{i\in \{1,\dots,n\}}$ as before (\ref{changedelta}). Assume that the nilpotent endomorphism $\nu_{W_{\dR}(\Mcal_A)}$ on $D_{\pdR}(W_{\dR}(\Mcal_A))$ is zero. Then we have $\Mcal_{A,i}=\oplus_{j=1}^i\Rcal_{A,K}(\delta_{A,j})[\tfrac{1}{t}]$ for $i\in \{1,\dots,n\}$ (i.e. the $(\varphi,\Gamma_K)$-module $\Mcal_{A}$ is ``split'').
\end{lem}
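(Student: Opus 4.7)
The plan is to proceed by induction on $i$, the base case $i=1$ being immediate since $\Mcal_{A,1}\simeq\Rcal_{A,K}(\delta_{A,1})[\tfrac{1}{t}]$. Assuming $\Mcal_{A,i-1}=\bigoplus_{j=1}^{i-1}\Rcal_{A,K}(\delta_{A,j})[\tfrac{1}{t}]$, the extension $\Mcal_{A,i}$ of $\Rcal_{A,K}(\delta_{A,i})[\tfrac{1}{t}]$ by $\Mcal_{A,i-1}$ is classified by an element of
\begin{equation*}
\bigoplus_{j=1}^{i-1}\Ext^1_{\Phi\Gamma_{A,K}}\big(\Rcal_{A,K}(\delta_{A,i})[\tfrac{1}{t}],\Rcal_{A,K}(\delta_{A,j})[\tfrac{1}{t}]\big)\simeq\bigoplus_{j=1}^{i-1}H^1_{\varphi,\gamma_K}\big(\Rcal_{A,K}(\chi_{j,i})[\tfrac{1}{t}]\big)
\end{equation*}
via (\ref{1cocycle}) and (\ref{extA}), where $\chi_{j,i}:=\delta_{A,j}\delta_{A,i}^{-1}$. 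The goal is to show each component vanishes.

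First I would unpack the hypothesis $\nu_{W_{\dR}(\Mcal_A)}=0$. Since $W_{\dR}$ is exact (Lemma \ref{freen}) and $D_{\pdR}$ is exact on almost de Rham representations, the filtration $W_{\dR}(\Mcal_{A,\bullet})$ induces a filtration of $D_{\pdR}(W_{\dR}(\Mcal_A))$ whose rank-one graded pieces $D_{\pdR}(W_{\dR}(\Rcal_{A,K}(\delta_{A,j})[\tfrac{1}{t}]))$ inherit zero nilpotent operator. By Lemma \ref{weight}(ii) this operator equals $\wt(\delta_{A,j})-\wt(\delta_j)\in A\otimes_{\Qp}K$, so $\wt(\delta_{A,j})=\wt(\delta_j)$. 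Since $\delta_j$ is locally algebraic, so is $\delta_{A,j}$, and hence so is each $\chi_{j,i}$. Moreover neither $\chi_{j,i}$ nor $\varepsilon\chi_{j,i}$ can be algebraic, since otherwise the same would hold for their reductions modulo $\mathfrak{m}_A$, contradicting $\deltabar\in\Tcal_0^n(L)$.

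Next I would show that the extension
\begin{equation*}
0\longrightarrow W_{\dR}(\Mcal_{A,i-1})\longrightarrow W_{\dR}(\Mcal_{A,i})\longrightarrow W_{\dR}(\Rcal_{A,K}(\delta_{A,i})[\tfrac{1}{t}])\longrightarrow0
\end{equation*}
of $A\otimes_{\Qp}\BdR$-representations splits. Applying $D_{\pdR}$ and using Lemma \ref{ApdR}, this translates under the equivalence with $\Rep_{A\otimes_{\Qp}K}(\mathbb{G}_{\mathrm{a}})$ into a short exact sequence of $A\otimes_{\Qp}K$-modules on which $\nu$ vanishes throughout (by the hypothesis on $W_{\dR}(\Mcal_A)$); since its quotient $D_{\pdR}(W_{\dR}(\Rcal_{A,K}(\delta_{A,i})[\tfrac{1}{t}]))$ is a free $A\otimes_{\Qp}K$-module, the sequence splits as $A\otimes_{\Qp}K$-modules, hence also in $\Rep_{A\otimes_{\Qp}K}(\mathbb{G}_{\mathrm{a}})$, and therefore back in $\Rep_{A\otimes_{\Qp}\BdR}(\Gcal_K)$ via Lemma \ref{ApdR}. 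Projecting onto each direct summand shows the vanishing of the class in each component of $\bigoplus_{j=1}^{i-1}H^1(\Gcal_K,W_{\dR}(\Rcal_{A,K}(\chi_{j,i})[\tfrac{1}{t}]))$.

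Finally I would invoke Lemma \ref{Asurj}(iii): since each $\chi_{j,i}$ is locally algebraic and neither $\chi_{j,i}$ nor $\varepsilon\chi_{j,i}$ is algebraic, the comparison map (\ref{functh1}) is an isomorphism between $H^1_{\varphi,\gamma_K}(\Rcal_{A,K}(\chi_{j,i})[\tfrac{1}{t}])$ and $H^1(\Gcal_K,W_{\dR}(\Rcal_{A,K}(\chi_{j,i})[\tfrac{1}{t}]))$. The vanishing of the $\BdR$-extension class therefore forces the $(\varphi,\Gamma_K)$-extension class to vanish in each summand, so $\Mcal_{A,i}$ splits, completing the induction. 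The pivotal step is the derivation of local algebraicity for each $\delta_{A,j}$ from the single global hypothesis $\nu_{W_{\dR}(\Mcal_A)}=0$, since without this only the surjection of Lemma \ref{Asurj}(ii) would apply, which would not allow one to deduce the vanishing of the $(\varphi,\Gamma_K)$-class from the vanishing of the associated $\BdR$-class.
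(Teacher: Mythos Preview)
Your proof is correct and follows essentially the same approach as the paper's (very terse) argument: derive local algebraicity of the $\delta_{A,j}$ from $\nu_{W_{\dR}(\Mcal_A)}=0$, then use the equivalence of Lemma~\ref{ApdR} to split the $\BdR$-extensions and transfer this back via the isomorphism of Lemma~\ref{Asurj}(iii). One small slip: the hypothesis you need for Lemma~\ref{Asurj} is that $\overline{\chi_{j,i}}$ and $\varepsilon\overline{\chi_{j,i}}^{-1}$ (not $\varepsilon\chi_{j,i}$) are non-algebraic, but since $\deltabar\in\Tcal_0^n(L)$ is symmetric in $i,j$ this is equally available.
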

\begin{proof}
Since $\nu_{W_{\dR}(\Mcal_A)}=0$, we have in particular $\wt(\delta_{A,i})=\wt(\delta_i)$ by Corollary \ref{commugroup} and (\ref{explicit}), i.e. $\delta_{A,i}$ is locally algebraic for all $i$. The result then follows by d\'evissage from Lemma \ref{ApdR} and (iii) of Lemma \ref{Asurj} (via (\ref{1cocycle}), (\ref{extA}) and (\ref{ext1h1})). 
\end{proof}

\subsection{Trianguline $(\varphi,\Gamma_K)$-modules over $\mathcal{R}_K$}\label{trianguline}

We define and study some groupoids of equal characteristic deformations of a $(\varphi,\Gamma_K)$-module over $\Rcal_{L,K}$ with a triangulation over $\Rcal_{L,K}[\frac{1}{t}]$ and of an almost de Rham $\BdR^+$-representation of $\Gcal_K$ with a filtration over $\BdR$.

We keep the previous notation and fix a $(\varphi,\Gamma_K)$-module $D$ over $\mathcal{R}_{L,K}$. We define the groupoid $X_D$ over $\Ccal_L$ of deformations of $D$ exactly as we defined $X_{\Mcal}$ in \S\ref{triangulinet} except that we don't invert $t$ anymore (so objects are $(\varphi,\Gamma_K)$-modules which are free of finite type over $\mathcal{R}_{A,K}$ and which deform $D$). We have an obvious morphism $X_D\longrightarrow X_{D[\frac{1}{t}]}$ of groupoids over $\Ccal_L$.

We first assume that $W_{\dR}^+(D)$ is an almost de Rham ${\rm B}_{\dR}^+$-representation of $\Gcal_K$. By (i) of Lemma \ref{freen} we also have a morphism $X_D\longrightarrow X_{W_{\dR}^+(D)}$ of groupoids over $\Ccal_L$ and the diagram:
$$\xymatrix{X_{D} \ar[r] \ar[d] & X_{W_{\dR}^+(D)} \ar[d] \\
X_{D[\frac{1}{t}]}\ar[r] & X_{W_{\dR}(D[\frac{1}{t}])}}$$
is commutative. We thus have a morphism $X_D\longrightarrow X_{D[\frac{1}{t}]}\times_{X_{W_{\dR}(D[\frac{1}{t}])}} X_{W_{\dR}^+(D)}$ of groupoids over $\Ccal_L$.

\begin{prop}\label{Berger}
The morphism $X_D\longrightarrow X_{D[\frac{1}{t}]}\times_{X_{W_{\dR}(D[\frac{1}{t}])}} X_{W_{\dR}^+(D)}$ is an equivalence.
\end{prop}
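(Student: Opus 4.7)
The plan is to realize Proposition~\ref{Berger} as a deformation-theoretic upgrade of Berger's equivalence of categories between $(\varphi,\Gamma_K)$-modules over $\Rcal_K$ and $B$-pairs (\cite{BerBpaires}). For $L$-coefficients, giving a $(\varphi,\Gamma_K)$-module $D$ over $\Rcal_K$ is equivalent to giving the pair $(D[\tfrac{1}{t}], W_{\dR}^+(D))$ together with the natural identification $W_{\dR}^+(D)[\tfrac{1}{t}]\simeq W_{\dR}(D[\tfrac{1}{t}])$; the $\Rcal_K$-lattice $D$ is reconstructed from the $\BdR^+$-lattice $W_{\dR}^+(D)$ inside $W_{\dR}(D[\tfrac{1}{t}])$ via Berger's localization maps $\iota_n$. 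Proposition~\ref{Berger} will then follow from an upgrade of this equivalence to $A$-coefficients for $A\in \Ccal_L$.

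For essential surjectivity, given a triple consisting of a deformation $\Mcal_A$ of $D[\tfrac{1}{t}]$, a deformation $W_A^+$ of $W_{\dR}^+(D)$, and an identification $W_A^+[\tfrac{1}{t}]\simeq W_{\dR}(\Mcal_A)$ extending the one on $D$, I would construct $D_A$ explicitly. Choose a radius $r\geq r(D)$ large enough so that an $A$-linear, $\varphi$-stable submodule $\Mcal_A^r\subseteq \Mcal_A$ over $\Rcal_{A,K}^r:=\Rcal_K^r\otimes_{\Qp}A$ (in the sense of \cite[Th.I.3.3]{BerphiN}) is defined; for every sufficiently large integer $n$ the localization $\iota_n$ yields a $\Gcal_K$-equivariant map $\Mcal_A^r\to W_{\dR}(\Mcal_A)= W_A^+[\tfrac{1}{t}]$. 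Define $D_A^r\subseteq \Mcal_A^r$ as the preimage of $W_A^+$ under any such $\iota_n$ (the result being independent of the choice of large enough $n$ by $\varphi$-compatibility), and set $D_A := \Rcal_{A,K}\otimes_{\Rcal_{A,K}^r} D_A^r$. That $D_A$ is a finite free $(\varphi,\Gamma_K)$-module over $\Rcal_{A,K}$ lifting $D$, with $D_A[\tfrac{1}{t}]\simeq \Mcal_A$ and $W_{\dR}^+(D_A)\simeq W_A^+$, is checked by a d\'evissage on $A$ analogous to the proofs of Lemmas~\ref{ApdR}, \ref{ApdR+} and \ref{freen}: one reduces to the residue field $L$, where the statement is Berger's theorem for $D$, and then propagates along successive square-zero extensions using the exactness of $W_{\dR}^+$ on short exact sequences of $(\varphi,\Gamma_K)$-modules.

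For full faithfulness, since every $D_{A,i}\in X_D(A)$ is $t$-torsion free, the natural map $\Hom_{\Rcal_{A,K}}(D_{A,1},D_{A,2})\to \Hom_{\Rcal_{A,K}[\tfrac{1}{t}]}(D_{A,1}[\tfrac{1}{t}],D_{A,2}[\tfrac{1}{t}])$ is injective, and its image consists exactly of those morphisms respecting the $\BdR^+$-lattices after applying the $\iota_n$, since each $D_{A,i}$ can be recovered from $D_{A,i}[\tfrac{1}{t}]$ and $W_{\dR}^+(D_{A,i})$ via the same $\iota_n$-mechanism used in the construction of $D_A$ above. The main technical obstacle lies in the essential surjectivity step: verifying that the submodule $D_A^r$ is finite free over $\Rcal_{A,K}^r$ of the correct rank, stable under $\varphi$ and $\Gamma_K$, and has mod-$\mathfrak{m}_A$ reduction recovering $D^r$; once this is granted, the rest is a routine d\'evissage.
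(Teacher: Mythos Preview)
Your strategic idea—reducing to Berger's B-pair equivalence and lifting to $A$-coefficients by d\'evissage—matches the paper's. But your direct Robba-ring implementation has a gap: you invoke \cite[Th.I.3.3]{BerphiN} to produce a canonical submodule $\Mcal_A^r\subseteq \Mcal_A$, yet that theorem applies to $(\varphi,\Gamma_K)$-modules over $\Rcal_K$, not over $\Rcal_K[\tfrac{1}{t}]$. There is no canonical $\Mcal_A^r$ for an object of $\Phi\Gamma_{A,K}$; you must first choose an $\Rcal_K$-lattice $D'\subset \Mcal_A$ stable under $\varphi,\Gamma_K,A$, and $(D')^r$ need not be free over $\Rcal_{A,K}^r$. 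Indeed the paper is careful to avoid assuming that $\Mcal_A$ admits a free $\Rcal_{A,K}$-lattice (Remark~\ref{freeAagain}), and only derives this \emph{a posteriori} from the present proposition (Remark~\ref{freeAlast}). Your preimage-of-$W_A^+$ construction then depends on the choice of $D'$, and verifying independence together with $\Rcal_{A,K}^r$-freeness of the resulting $D_A^r$ is essentially reproving Berger's construction of the functor $D(W)$ from B-pairs, now with $A$-coefficients; this is not the routine d\'evissage you suggest.

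The paper sidesteps all of this by passing to the $B_e$-side of the B-pair. It first proves that $\Mcal\mapsto W_e(\Mcal)$ (which is canonically defined and independent of any lattice choice) gives an equivalence between $\Phi\Gamma_{A,K}$ and the category of free $A\otimes_{\Qp}B_e$-modules with continuous semilinear $\Gcal_K$-action; this is done by the same kind of exactness-plus-d\'evissage argument as in Lemmas~\ref{ApdR} and~\ref{freen}. Once this is in hand, the proposition is a formal consequence of Berger's dictionary $(W_e,W_{\dR}^+)\leftrightarrow D$: given $(\Mcal_A,W_A^+)$ on the right-hand side one forms the B-pair $(W_e(\Mcal_A),W_A^+)$ and applies Berger's inverse functor, and the only remaining check is that the resulting $D_A$ is free over $\Rcal_{A,K}$, which follows from a short flatness-and-reduction-mod-$\mathfrak{m}_A$ argument. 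This route avoids ever needing to manipulate $\Rcal_K^r$-models of objects over $\Rcal_K[\tfrac{1}{t}]$.
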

\begin{proof}
This is essentially a consequence of Berger's equivalence between $(\varphi,\Gamma_K)$-modules over $\mathcal{R}_{K}$ and ${\rm B}$-pairs (\cite[Th.2.2.7]{BerBpaires}), once one knows that for $A$ in $\Ccal_L$ there is a natural equivalence of categories (which preserves the rank) between $\Phi\Gamma_{A,K}$ and the category of $A\otimes_{\Qp}{\rm B}_e$-representations of $\Gcal_K$ where ${\rm B}_e:={\rm B}_{\cris}^{\varphi=1}$, i.e. free $A\otimes_{\Qp}{\rm B}_e$-modules of finite type with a continuous semi-linear action of $\Gcal_K$. 

First let $\Mcal$ be a $(\varphi,\Gamma_K)$-module over $\Rcal_{K}[\tfrac{1}{t}]$ and set $W_e(\Mcal):=W_e(D)$ for any $(\varphi,\Gamma_K)$-submodule $D\subset \Mcal$ such that $\Mcal=D[\tfrac{1}{t}]$ where $W_e(D)$ is the ${\rm B}_e$-representation of $\Gcal_K$ constructed in \cite[Prop.2.2.6(1)]{BerBpaires}, which does not depend on the choice of $D$ inside $\Mcal$. This defines a functor from $\Phi\Gamma_{K}$ to ${\rm B}_e$-representations of $\Gcal_K$ which preserves the rank. To prove that this functor is an equivalence of categories, we construct a quasi-inverse using \cite{BerBpaires}. If $W_e$ is a ${\rm B}_e$-representation of $\Gcal_K$, take any $\Gcal_K$-stable $\BdR^+$-lattice $W_{\dR}^+$ inside $W_{\dR}:=\BdR\otimes_{{\rm B}_e}W_e$ and let $W$ be the ${\rm B}$-pair $(W_e,W_{\dR}^+)$. Let $D(W)$ be the $(\varphi,\Gamma_K)$-modules over $\mathcal{R}_{K}$ canonically associated to the ${\rm B}$-pair $W$ constructed in \cite[\S2.2]{BerBpaires}. It follows from the construction in {\it loc.cit.} that $\Mcal(W_e):=D(W)[\tfrac{1}{t}]$ does not depend on the choice of the lattice $W_{\dR}^+$ inside $W_{\dR}$ and that $\Mcal\longmapsto W_e(\Mcal)$ and $W_e\longmapsto \Mcal(W_e)$ are quasi-inverse functors.

Now it has to be checked that $\Mcal$ is free over $\Rcal_{A,K}[\tfrac{1}{t}]$ if and only if $W_e(\Mcal)$ is free over $A\otimes_{\Qp}{\rm B}_e$. But by an argument analogous to the one in the proof of Lemma \ref{ApdR} using the exactness of the functors $\Mcal\longmapsto W_e(\Mcal)$ and $W_e\longmapsto \Mcal(W_e)$ (which itself easily follows from the exactness of the functors $D$ and $W$ of \cite[\S2.2]{BerBpaires}, see \cite[Th.1.36]{NakamuraTri}) and the fact that they commute to direct sums, one is reduced to the case $A=L$ which is in \cite[Th.1.36]{NakamuraTri}.

Finally it remains to be checked that if $D$ is $(\varphi,\Gamma_K)$-module with a morphism $A\rightarrow \End_{\Phi\Gamma_K^+}(D)$ and that $W_e(D[\tfrac{1}{t}])$ is a finite free $A\otimes_{\Qp}B_e$-module and $W_{\dR}^+(D)$ is a finite free $A\otimes_{\Qp}\BdR^+$-module (necessarily of same rank), then $D$ is a finite free $\Rcal_{A,K}$-module. As usual, using the exactness of the functor $D\mapsto (W_e(D), W_{\dR}^+(D))$ we show that $D$ is a flat $A$-module and $D/\mathfrak{m}_AD$ is a finite free $\Rcal_{L,K}$-module. Choose an isomorphism $\Rcal_{L,K}^n\xrightarrow{\sim} D/\mathfrak{m}_AD$ and lift it to a morphism of $\Rcal_{A,K}$-modules $\Rcal_{A,K}^n\rightarrow D$. The result follows from the two following facts: $\Rcal_{A,K}$ is a flat $A$-module (it is a free $A$-module since $\Rcal_{A,K}=A\otimes_{\Qp}\Rcal_K$) and a map between two flat $A$-modules which is an isomorphism modulo $\mathfrak{m}_A$ is an isomorphism ($A$ is artinian so there exists $m\geq0$ such that $\mathfrak{m}_A^m=0$, if $f:\, M_1\rightarrow M_2$ is such a morphism, its kernel and cokernel are $A$-modules $N$ such that $N=\mathfrak{m}_AN=\mathfrak{m}_A^mN$).
\end{proof}

\begin{rem}\label{freeAlast}
{\rm By the argument at the end of the previous proof, one also sees that a $(\varphi,\Gamma_K)$-module $D$ with an action of $A$ is free over $\Rcal_{A,K}$ if and only if $D[\tfrac{1}{t}]$ is free over $\Rcal_{A,K}[\tfrac{1}{t}]$ and $W_{\dR}^+(D)$ is free over $A\otimes_{\Q_p}\BdR^+$. Now if $\Mcal\in \Phi\Gamma_{A,K}$ is such that $W_{\dR}(\Mcal)$ is almost de Rham, it follows from Remark \ref{freeA} that $W_{\dR}(\Mcal)$ contains an invariant lattice $W_{\dR}^+$ which is free over $A\otimes_{\Qp}\BdR^+$. The image of the ${\rm B}$-pair $(W_e(\Mcal),W_{\dR}^+)$ by the functor $D$ of \cite[\S2.2]{BerBpaires} is then a free $\Rcal_{A,K}$-lattice of $\Mcal$. In particular we deduce that any such $\Mcal$ possesses a free $\Rcal_{A,K}$-lattice stable by $\varphi$ and $\Gamma_K$.}
\end{rem}

We now assume that $D$ is trianguline of rank $n\geq 1$ (but don't assume anything on $W_{\dR}^+(D)$ for the moment), see \cite[\S2.2]{BHS1} and references therein for the definition (due to Colmez) of trianguline $(\varphi,\Gamma_K)$-modules over $\Rcal_{L,K}$. We let $\Mcal:=D[\tfrac{1}{t}]$, $\Mcal_\bullet=(\Mcal_i)_{i\in \{1,\dots,n\}}$ a triangulation of $\Mcal$ and we define the fiber product of groupoids over $\Ccal_L$ (cf. \S\ref{triangulinet}):
$$X_{D,\Mcal_\bullet}:=X_D\times_{X_{\Mcal}}X_{\Mcal,\Mcal_\bullet}.$$

We assume moreover from now on that $\Mcal_\bullet$ possesses a locally algebraic parameter. We let $W^+:=W_{\dR}^+(D)$, $W:=W_{\dR}(\Mcal)=\BdR\otimes_{\BdR^+}W^+$ and $\Fcal_\bullet=(\Fcal_i)_{i\in \{1,\dots,n\}}:=(W_{\dR}(\Mcal_i))_{i\in \{1,\dots,n\}}$. Then $W$ (resp. $W^+$) is an almost de Rham $\BdR$-representation (resp. $\BdR^+$-representation) of $\Gcal_K$, see the end of \S\ref{triangulinet}. Finally we fix an isomorphism $\alpha:\,(L\otimes_{\Q_p}K)^n\buildrel\sim\over\longrightarrow D_{\pdR}(W)$. Recall we defined the following monoids over $\Ccal_L$ (and many morphisms between them): $X_W$, $X_W^\Box$, $X_{W,\Fcal_\bullet}$, $X_{W,\Fcal_\bullet}^\Box=X_{W,\Fcal_\bullet}\times_{X_W}X_W^\Box$ in \S\ref{debut}, $X_{W^+}$, $X_{W^+}^\Box=X_{W^+}\times_{X_W}X_{W}^\Box$ in \S\ref{suite}, $X_{\Mcal}^\Box=X_{\Mcal}\times_{X_W}X_W^\Box$, $X_{\Mcal,\Mcal_\bullet}^\Box=X_{\Mcal,\Mcal_\bullet}\times_{X_W}X_W^\Box$ in \S\ref{triangulinet} and we have $X_D\cong X_{\Mcal}\times_{X_W}X_{W^+}$ by Proposition \ref{Berger} just above. We now use them to define the following fiber products of groupoids over $\Ccal_L$:
$$\begin{array}{rclrcccl}
X_D^\Box\!\!\!&:=&\!\!\!X_D\times_{X_{W}}X_{W}^\Box & X_{D,\Mcal_\bullet}^\Box\!\!\!&:=&\!\!\!X_{D,\Mcal_\bullet}\times_{X_D}X_D^\Box\!\!\!&=&\!\!\!X_{D,\Mcal_\bullet}\times_{X_W}X_W^\Box\\
X_{W^+,\Fcal_\bullet}\!\!\!&:=&\!\!\!X_{W^+}\times_{X_{W}}X_{W,\Fcal_\bullet}& X_{W^+,\Fcal_\bullet}^\Box\!\!\!&:=&\!\!\!X_{W^+,\Fcal_\bullet}\times_{X_{W}}X_{W}^\Box\!\!\!&=&\!\!\!X_{W^+}\times_{X_W}X_{W,\Fcal_\bullet}^\Box.
\end{array}$$
There are many natural (and more or less obvious) morphisms between all these groupoids over $\Ccal_L$ that we don't list. We recall that, in $X_{D,\Mcal_\bullet}$ and $X_{D,\Mcal_\bullet}^\Box$ (resp. $X_{W^+,\Fcal_\bullet}$ and $X_{W^+,\Fcal_\bullet}^\Box$), we do not deform a triangulation on $D$ (resp. a filtration on $W^+$), but rather the triangulation $\Mcal_\bullet$ (resp. the filtration $\Fcal_\bullet$) on $\Mcal=D[\tfrac{1}{t}]$ (resp. on $W=W^+[\tfrac{1}{t}]$).

We assume from now on that $\Mcal_\bullet$ moreover admits a parameter in $\Tcal_0^n(L)$.

\begin{lem}\label{relrep}
(i) The morphism $X_{\Mcal}\longrightarrow X_{W}$ of groupoids over $\Ccal_L$ is relatively re\-presentable.\\
(ii) The morphism $X_{\Mcal,\Mcal_\bullet}\longrightarrow X_{W,\Fcal_\bullet}$ of groupoids over $\Ccal_L$ is relatively representable.
\end{lem}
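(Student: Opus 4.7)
The plan is to apply the standard characterization of relative representability: a morphism $F\to G$ of groupoids over $\Ccal_L$ is relatively representable iff for every $y\in G(A)$ the fiber category $\tilde y\times_G F$ has only trivial automorphisms (above the identity) and its functor of isomorphism classes is pro-representable via Schlessinger's criteria.

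For (i), the key step is to show that the functor $W_{\dR}\colon \Phi\Gamma_{A,K}\to \Rep_{A\otimes_{\Qp}\BdR}(\Gcal_K)$ is faithful, which implies that automorphisms of $(\Mcal_A,j_A)$ inducing the identity on $W_{\dR}(\Mcal_A)$ are trivial. This faithfulness follows from the chain of observations recorded in the proof of Proposition \ref{Berger}: first, the functor $\Mcal\mapsto W_e(\Mcal)$ to the category of $A\otimes_{\Qp}\mathrm{B}_e$-representations of $\Gcal_K$ is an equivalence; second, $W_e(\Mcal_A)$ injects into $W_{\dR}(\Mcal_A)$ since $\mathrm{B}_e\hookrightarrow\BdR$, so a morphism of $A\otimes_{\Qp}\mathrm{B}_e$-modules that vanishes after extension of scalars to $\BdR$ must itself vanish. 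Pro-representability of the fiber over $(W_A,\iota_A)\in X_W(A)$ is then verified by Schlessinger's criteria: the tangent space injects into $\Ext^1_{\Phi\Gamma_{L,K}}(\Mcal,\Mcal)\simeq H^1_{\varphi,\gamma_K}(\Mcal\otimes_L\Mcal^\vee)$, which is finite-dimensional by a d\'evissage along the triangulation of $\Mcal\otimes_L\Mcal^\vee$, whose graded pieces are of the form $\Rcal_{L,K}(\delta_j\delta_i^{-1})[\tfrac{1}{t}]$ (here one uses Lemma \ref{Asurj}(i) for the vanishing of off-diagonal $H^0$ needed to control the long exact sequence, together with the finiteness of $H^1_{\varphi,\gamma_K}$ of a rank-one module, e.g.\ via Lemma \ref{Asurj}(iii)). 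The gluing condition (H1) holds because $(\varphi,\Gamma_K)$-modules over $\Rcal_{A,K}[\tfrac{1}{t}]$ glue well along fiber products of coefficient rings, which one sees via Berger's equivalence applied componentwise.

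For (ii), I would combine (i) with the closed immersion $X_{\Mcal,\Mcal_\bullet}\hookrightarrow X_\Mcal$ of Proposition \ref{closedimmersion}. The morphism $X_{W,\Fcal_\bullet}\to X_W$ is itself relatively representable on elementary grounds (Corollary \ref{dRfilt} already describes its framed version as a completion of $\gtilde$, and automorphisms of a filtration on a fixed $\BdR$-module are trivial). By base change, $X_\Mcal\times_{X_W}X_{W,\Fcal_\bullet}\to X_{W,\Fcal_\bullet}$ is then relatively representable thanks to (i). Finally, the natural morphism $X_{\Mcal,\Mcal_\bullet}\to X_\Mcal\times_{X_W}X_{W,\Fcal_\bullet}$ is a closed immersion: the unique triangulation $\Mcal_{A,\bullet}$ lifting $\Mcal_\bullet$ on a given deformation $\Mcal_A$ (when it exists, by the argument of Proposition \ref{closedimmersion}) either has $W_{\dR}(\Mcal_{A,\bullet})=\Fcal_{A,\bullet}$ or does not, and the former is a closed condition on the (already pro-representable) fiber. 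Composing gives relative representability of $X_{\Mcal,\Mcal_\bullet}\to X_{W,\Fcal_\bullet}$.

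The main obstacle will be the faithfulness of $W_{\dR}$ on morphisms of deformations in (i), which requires careful book-keeping via Berger's equivalence; once this is in hand, the rest follows from standard Schlessinger-style arguments together with the closed-immersion technology already established in Proposition \ref{closedimmersion}.
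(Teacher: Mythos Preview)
Your approach to (i) is essentially that of the paper. Both arguments pass through Berger's equivalence with $\mathrm{B}_e$-representations: the paper describes the fiber over $\eta_A=(W_A,\iota_A)$ concretely as the set of $\Gcal_K$-stable free $A'\otimes_{\Qp}\mathrm{B}_e$-submodules $W_{e,A'}\subset W_A\otimes_A A'$ generating over $\BdR$ and reducing to $W_e$, which makes the triviality of automorphisms immediate (they are automorphisms of a submodule fixing the ambient module). Your faithfulness argument via the injection $A'\otimes_{\Qp}\mathrm{B}_e\hookrightarrow A'\otimes_{\Qp}\BdR$ achieves the same thing from a slightly more abstract angle. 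Both then invoke Schlessinger, bounding the tangent space by a d\'evissage along the triangulation reducing to the finite-dimensionality of $H^1_{\varphi,\gamma_K}(\Rcal_{L,K}(\delta)[\tfrac{1}{t}])$ for rank-one pieces. One minor imprecision: your bound by $\Ext^1_{\Phi\Gamma_{L,K}}(\Mcal,\Mcal)$ also involves the diagonal pieces $\delta_i\delta_i^{-1}=1$, to which Lemma~\ref{Asurj}(iii) does not apply as stated; the paper sidesteps this by only citing $\delta\in\Tcal_0$, implicitly using its lattice description of the fiber. In either case finite-dimensionality still holds (e.g.\ via $H^1(\Gcal_K,L\otimes_{\Qp}\mathrm{B}_e)$), so this is not a genuine gap.

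For (ii) your route is genuinely different. The paper simply says ``analogous to (i), replacing modules by flags of modules'': one redoes the lattice description and Schlessinger verification directly for triangulated objects. You instead factor $X_{\Mcal,\Mcal_\bullet}\to X_{W,\Fcal_\bullet}$ as the composite of a closed immersion $X_{\Mcal,\Mcal_\bullet}\hookrightarrow X_\Mcal\times_{X_W}X_{W,\Fcal_\bullet}$ with the base change of (i), together with the (elementary) relative representability of $X_{W,\Fcal_\bullet}\to X_W$. This works and is more modular, reusing Proposition~\ref{closedimmersion} rather than redoing the Schlessinger check with flags. The only step needing care is that your closed immersion really is one: by Proposition~\ref{closedimmersion} the existence of a triangulation on $\Mcal_A$ cuts out a closed locus, and on that locus the condition that the (unique) induced $W_{\dR}(\Mcal_{A,\bullet})$ agrees with the given $\Fcal_{A,\bullet}$ is the coincidence of two flags of direct summands in a free module, again closed. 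The paper's direct approach is shorter; yours avoids repeating the argument of (i).
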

\begin{proof}
We prove (i). We will use the equivalence between $\Phi\Gamma_{A,K}$ and the category of $A\otimes_{\Qp}{\rm B}_e$-representa\-tions of $\Gcal_K$ in the proof of Proposition \ref{Berger}. Let $W_e:=W_e(\Mcal)$ be the $L\otimes_{\Q_p}{\rm B}_e$-representation of $\Gcal_K$ associated to $\Mcal$ so that $W\simeq\BdR\otimes_{{\rm B}_e}W_e$. Fix $\eta_A:=(A,W_A,\iota_A)$ an object of $X_W$ and denote by $\widetilde\eta_A$ the groupoid over $\Ccal_L$ it represents. Then for each $A$-algebra $A'$ in $\Ccal_L$, the groupoid $(\widetilde\eta_A\times_{X_W}X_{\Mcal})(A')$ is equivalent to the category of $(W_{e,A'},j_{A'},\psi_{A'})$ where $W_{e,A'}$ is an $A'\otimes_{\Qp}{\rm B}_e$-representation of $\Gcal_K$, $j_{A'}:W_{e,A'}\otimes_{A'}L\buildrel\sim\over\longrightarrow W_e$ and $\psi_{A'}:\BdR\otimes_{{\rm B}_e}W_{e,A'}\buildrel\sim\over\longrightarrow W_A\otimes_AA'$ is a compatible isomorphism with the reduction maps $1\otimes j_{A'}$ and $\iota_A\otimes 1$ to $\BdR\otimes_{{\rm B}_e}W_e$ (we leave the morphisms to the reader). It is equivalent to the category of free $A'\otimes_{\Q_p}{\rm B}_e$-submodules $W_{e,A'}\subset W_A\otimes_AA'$ stable under $\Gcal_K$ such that $\BdR\otimes_{{\rm B}_e}W_{e,A'}\xrightarrow{\sim}W_A\otimes_AA'$ and such that $\iota_A\otimes1$ induces an isomorphism $W_{e,A'}\otimes_{A'}L\buildrel\sim\over\longrightarrow W_e$. On this description we see that all automorphisms in the category $\widetilde\eta_A\times_{X_W}X_{\Mcal}$ are trivial, hence $\widetilde\eta_A\times_{X_W}X_{\Mcal}\buildrel\sim\over\longrightarrow |\widetilde\eta_A\times_{X_W}X_{\Mcal}|$. But one can easily check (on that description again) that the functor $|\widetilde\eta_A\times_{X_W}X_{\Mcal}|$ from $\Ccal_L$ to sets satisfies Schlessinger's criterion for representability (\cite[Th.2.11]{Schlessinger}, for the finite dimensionality of the tangent space in {\it loc.cit.}, use the above equivalence with $\Phi\Gamma_{A',K}$ for $A'=L[\varepsilon]$ together with a d\'evissage and the finite dimensionality of $H^1_{\varphi,\gamma_K}(\Rcal_{L,K}(\delta)[\tfrac{1}{t}])$ for $\delta\in \Tcal_0(L)$, see Lemma \ref{maprank1} and its proof). Hence $\widetilde\eta_A\times_{X_W}X_{\Mcal}$ is representable. The proof of (ii) is analogous by replacing everywhere modules by flags of modules.
\end{proof}

\begin{cor}\label{relativelyrepresent}
The morphisms of groupoids $X_{\Mcal,\Mcal_\bullet}^\Box\longrightarrow X_{W,\Fcal_\bullet}^\Box$, $X_{D,\Mcal_\bullet}\longrightarrow X_{W^+,\Fcal_\bullet}$ and $X_{D,\Mcal_\bullet}^\Box\longrightarrow X_{W^+,\Fcal_\bullet}^\Box$  are relatively representable. 
\end{cor}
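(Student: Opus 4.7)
The plan is to deduce the three statements from Lemma \ref{relrep}(ii) by invoking the standard principle that relative representability of a morphism of groupoids over $\Ccal_L$ is preserved under arbitrary base change. Concretely, if $f:X\to Y$ is relatively representable and $g:Z\to Y$ is any morphism, then $X\times_Y Z\to Z$ is relatively representable: given $z\in Z(A)$ with image $y\in Y(A)$, the fiber $\tilde z\times_Z(X\times_Y Z)$ is naturally equivalent to $\tilde z\times_{\tilde y}(\tilde y\times_Y X)$, which is representable as a base change of the representable groupoid $\tilde y\times_Y X$.

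With this principle in hand, I would rewrite each of the three morphisms as a base change of the relatively representable morphism $X_{\Mcal,\Mcal_\bullet}\to X_{W,\Fcal_\bullet}$. For the first morphism, unwinding the definitions $X_{W,\Fcal_\bullet}^\Box = X_{W,\Fcal_\bullet}\times_{X_W}X_W^\Box$ and $X_{\Mcal,\Mcal_\bullet}^\Box = X_{\Mcal,\Mcal_\bullet}\times_{X_W}X_W^\Box$ together with the associativity of fiber products of groupoids gives the identification
$$X_{\Mcal,\Mcal_\bullet}^\Box\ \simeq\ X_{\Mcal,\Mcal_\bullet}\times_{X_{W,\Fcal_\bullet}}X_{W,\Fcal_\bullet}^\Box,$$
exhibiting $X_{\Mcal,\Mcal_\bullet}^\Box\to X_{W,\Fcal_\bullet}^\Box$ as the base change of $X_{\Mcal,\Mcal_\bullet}\to X_{W,\Fcal_\bullet}$ along $X_{W,\Fcal_\bullet}^\Box\to X_{W,\Fcal_\bullet}$.

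For the second morphism I would use Proposition \ref{Berger}, which provides an equivalence $X_D\simeq X_{\Mcal}\times_{X_W}X_{W^+}$. Combining this with $X_{D,\Mcal_\bullet}:=X_D\times_{X_\Mcal}X_{\Mcal,\Mcal_\bullet}$ and $X_{W^+,\Fcal_\bullet}:=X_{W^+}\times_{X_W}X_{W,\Fcal_\bullet}$ and again applying the associativity of fiber products gives
$$X_{D,\Mcal_\bullet}\ \simeq\ X_{\Mcal,\Mcal_\bullet}\times_{X_W}X_{W^+}\ \simeq\ X_{\Mcal,\Mcal_\bullet}\times_{X_{W,\Fcal_\bullet}}X_{W^+,\Fcal_\bullet},$$
so $X_{D,\Mcal_\bullet}\to X_{W^+,\Fcal_\bullet}$ is the base change of $X_{\Mcal,\Mcal_\bullet}\to X_{W,\Fcal_\bullet}$ along $X_{W^+,\Fcal_\bullet}\to X_{W,\Fcal_\bullet}$. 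The third morphism is handled by the same argument, further base changing along $X_W^\Box\to X_W$ (equivalently, observing that $X_{D,\Mcal_\bullet}^\Box\simeq X_{D,\Mcal_\bullet}\times_{X_{W^+,\Fcal_\bullet}}X_{W^+,\Fcal_\bullet}^\Box$).

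There is no real obstacle here: once Lemma \ref{relrep} and Proposition \ref{Berger} are in place, the proof is purely a matter of manipulating the defining fiber products and appealing to stability of relative representability under base change. The only point requiring mild care is the bookkeeping that verifies the various fiber product identifications genuinely coincide as groupoids over $\Ccal_L$, which is routine from the explicit descriptions of the objects in each category.
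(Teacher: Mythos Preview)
Your proposal is correct and follows essentially the same argument as the paper: both deduce all three statements from Lemma \ref{relrep}(ii) via stability of relative representability under base change, using Proposition \ref{Berger} to rewrite $X_{D,\Mcal_\bullet}$ as $X_{W^+}\times_{X_W}X_{\Mcal,\Mcal_\bullet}$ for the second morphism and handling the $\Box$-versions by further base change along $X_W^\Box\to X_W$. The only difference is cosmetic: you spell out the fiber product identifications a bit more explicitly than the paper does.
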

\begin{proof}
The first one follows by base change from (ii) of Lemma \ref{relrep}. We have $X_{D,\Mcal_\bullet}=X_D\times_{X_{\Mcal}}X_{\Mcal,\Mcal_\bullet}\cong X_{W^+}\times_{X_W}X_{\Mcal,\Mcal_\bullet}$ by Proposition \ref{Berger}, and the morphism induced by base change from $X_{\Mcal,\Mcal_\bullet}\longrightarrow X_{W,\Fcal_\bullet}$:
$$X_{D,\Mcal_\bullet}\simeq X_{W^+}\times_{X_W}X_{\Mcal,\Mcal_\bullet}\longrightarrow X_{W^+}\times_{X_W}X_{W,\Fcal_\bullet}=X_{W^+,\Fcal_\bullet}$$
is relatively representable by (ii) of Lemma \ref{relrep}. The $\Box$-version follows by base change $(-)\times_{X_W}X_W^\Box$.
\end{proof}

We now moreover fix $\deltabar=(\delta_i)_{i\in \{1,\dots,n\}}\in \Tcal_0^n(L)$ an arbitrary parameter of $\Mcal_\bullet$.

\begin{lem}\label{smdim1} 
The morphism of formal schemes $\wt-\wt(\deltabar):\widehat{\Tcal^n_{\deltabar}}\longrightarrow \widehat\tfrak$ in (\ref{morpht}) is formally smooth of relative dimension $n$.
\end{lem}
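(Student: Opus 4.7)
The plan is to decompose the morphism factor by factor and identify each factor with a projection composed with an isomorphism. Under the natural identifications $\widehat{\Tcal^n_{\deltabar}} \cong \prod_{i=1}^n \widehat{\Tcal_{\delta_i}}$ and $\widehat\tfrak \cong \prod_{i=1}^n \widehat{(L \otimes_{\Qp} K)_0}$ (where the $i$-th factor on the right corresponds to the $i$-th summand of $\liet = (L \otimes_{\Qp} K)^n$), the morphism $\wt - \wt(\deltabar)$ is the product of the individual translated weight maps $\wt - \wt(\delta_i) : \widehat{\Tcal_{\delta_i}} \to \widehat{(L \otimes_{\Qp} K)_0}$. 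Since formal smoothness and relative dimensions behave well under products, it suffices to show each such individual map is formally smooth of relative dimension $1$.

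Fixing a uniformizer $\pi$ of $K$, the group isomorphism $K^\times \cong \pi^\Z \times \oK^\times$ induces $\Tcal_L \cong \mathbb{G}_{\mathrm{m},L}^{\rig} \times \Wcal_L$ via $\delta \mapsto (\delta(\pi), \delta|_{\oK^\times})$. Since the weight of any character depends only on its restriction to $\oK^\times$, the weight map factors through the projection $\Tcal_L \to \Wcal_L$. Thus each individual translated map decomposes as
$$\widehat{\Tcal_{\delta_i}} \cong \widehat{(\mathbb{G}_{\mathrm{m}}^{\rig})_{\delta_i(\pi)}} \times \widehat{\Wcal_{\delta_i|_{\oK^\times}}} \longrightarrow \widehat{\Wcal_{\delta_i|_{\oK^\times}}} \xrightarrow{\wt - \wt(\delta_i|_{\oK^\times})} \widehat{(L \otimes_{\Qp} K)_0},$$
where the first arrow is obviously formally smooth of relative dimension $1$ (projection off a formal disc of dimension $1$). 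Hence it remains to show that the second arrow is a formal isomorphism.

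To check this, for $A \in \Ccal_L$ I would describe $\widehat{\Wcal_{\delta_i|_{\oK^\times}}}(A)$ explicitly. Since $A$ is artinian over $L$, the exponential $\exp : \mathfrak{m}_A \xrightarrow{\sim} 1 + \mathfrak{m}_A$ is an isomorphism, so every continuous character $\psi : \oK^\times \to A^\times$ lifting $\delta_i|_{\oK^\times}$ has the unique form $\psi = \delta_i|_{\oK^\times} \cdot \exp(\eta)$ for a continuous group homomorphism $\eta : \oK^\times \to \mathfrak{m}_A$. Because $\mathfrak{m}_A$ is a $\Qp$-vector space and hence torsion-free, $\eta$ is trivial on the torsion subgroup of $\oK^\times$ and factors through $\log_p : \oK^\times \to K$ (a continuous $\Z_p$-linear map whose kernel is the torsion of $\oK^\times$ and whose image is an open $\Z_p$-lattice generating $K$ over $\Qp$). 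Extending $\Qp$-linearly across $\log_p(\oK^\times) \otimes_{\Z_p}\Qp = K$, such $\eta$ correspond bijectively to $\Qp$-linear maps $d\eta : K \to \mathfrak{m}_A$, i.e., to elements of $\mathfrak{m}_A \otimes_{\Qp} K = \widehat{(L \otimes_{\Qp} K)_0}(A)$. A direct unwinding of the definition $\wt(\psi)(x) = \frac{d}{dt}\psi(\exp(tx))|_{t=0}$ yields $\wt(\psi) = \wt(\delta_i|_{\oK^\times}) + d\eta$, so the map $\psi \mapsto \wt(\psi) - \wt(\delta_i|_{\oK^\times})$ is exactly $\eta \mapsto d\eta$, a bijection on $A$-points, exhibiting it as a formal isomorphism. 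The argument is essentially bookkeeping via $\log_p$ and $\exp$; no genuine obstacle appears.
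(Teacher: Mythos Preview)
Your argument is correct and is essentially an explicit unpacking of what the paper's two-line proof calls ``easily checked'': the paper simply asserts that the global weight map $\wt:\Tcal_L^n\to\tfrak$ is smooth of relative dimension $n$ and then passes to completions, whereas you verify formal smoothness directly on $A$-points via the decomposition $\Tcal_L\cong\mathbb{G}_m^{\rig}\times\Wcal_L$ (already observed just after (\ref{morfiber})). The two approaches are the same in substance; yours just fills in the details.
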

\begin{proof}
The morphism of schemes $\wt:\Tcal_L^n\rightarrow \tfrak$ is easily checked to be smooth of relative dimension $n$, and thus so is the morphism $\wt-\wt(\deltabar):\Tcal_L^n\rightarrow \tfrak$. Thus the induced morphism of formal schemes $\widehat{\Tcal^n_{\deltabar}}\longrightarrow \widehat\tfrak$ is formally smooth of relative dimension $n$.
\end{proof}

\begin{cor}\label{formallysmooth}
The morphisms $X_{\Mcal,\Mcal_\bullet}\longrightarrow X_{W,\Fcal_\bullet}$, $X_{\Mcal,\Mcal_\bullet}^\Box\longrightarrow X_{W,\Fcal_\bullet}^\Box$, $X_{D,\Mcal_\bullet}\longrightarrow X_{W^+,\Fcal_\bullet}$ and $X_{D,\Mcal_\bullet}^\Box\longrightarrow X_{W^+,\Fcal_\bullet}^\Box$ of groupoids over $\Ccal_L$ are formally smooth. 
\end{cor}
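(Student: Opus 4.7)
The strategy is to derive all four formal smoothness statements from Theorem \ref{mainsmooth}, which provides the hard input, by successive applications of base change and the elementary facts that formal smoothness of morphisms of groupoids over $\Ccal_L$ is preserved both under composition and under base change (the latter being a direct verification on the defining lifting property).

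First I would handle the fundamental case $X_{\Mcal,\Mcal_\bullet}\to X_{W,\Fcal_\bullet}$. By Lemma \ref{smdim1} the morphism $\wt-\wt(\deltabar):\widehat{\Tcal^n_\deltabar}\to\widehat\tfrak$ is formally smooth, so base change along the morphism $\kappa_{W,\Fcal_\bullet}:X_{W,\Fcal_\bullet}\to\widehat\tfrak$ (using diagram (\ref{kappaWF})) shows that the second projection
\[
\widehat{\Tcal^n_\deltabar}\times_{\widehat\tfrak}X_{W,\Fcal_\bullet}\longrightarrow X_{W,\Fcal_\bullet}
\]
is formally smooth. Composing with the formally smooth morphism $X_{\Mcal,\Mcal_\bullet}\to\widehat{\Tcal^n_\deltabar}\times_{\widehat\tfrak}X_{W,\Fcal_\bullet}$ provided by Theorem \ref{mainsmooth} yields that $X_{\Mcal,\Mcal_\bullet}\to X_{W,\Fcal_\bullet}$ is formally smooth, as desired.

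Next I would deduce the framed analogue by base change: since $X_{\Mcal,\Mcal_\bullet}^\Box=X_{\Mcal,\Mcal_\bullet}\times_{X_{W,\Fcal_\bullet}}X_{W,\Fcal_\bullet}^\Box$ by definition, the morphism $X_{\Mcal,\Mcal_\bullet}^\Box\to X_{W,\Fcal_\bullet}^\Box$ is the base change of $X_{\Mcal,\Mcal_\bullet}\to X_{W,\Fcal_\bullet}$ along $X_{W,\Fcal_\bullet}^\Box\to X_{W,\Fcal_\bullet}$, hence formally smooth.

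For the statements involving $D$ and $W^+$, I would first invoke Proposition \ref{Berger} to rewrite $X_D\cong X_\Mcal\times_{X_W}X_{W^+}$, which yields
\[
X_{D,\Mcal_\bullet}\ =\ X_D\times_{X_\Mcal}X_{\Mcal,\Mcal_\bullet}\ \cong\ X_{W^+}\times_{X_W}X_{\Mcal,\Mcal_\bullet},
\]
while by definition $X_{W^+,\Fcal_\bullet}=X_{W^+}\times_{X_W}X_{W,\Fcal_\bullet}$. The morphism $X_{D,\Mcal_\bullet}\to X_{W^+,\Fcal_\bullet}$ is then identified with the base change of $X_{\Mcal,\Mcal_\bullet}\to X_{W,\Fcal_\bullet}$ along $X_{W^+}\to X_W$, so it is formally smooth by the first paragraph. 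The framed version $X_{D,\Mcal_\bullet}^\Box\to X_{W^+,\Fcal_\bullet}^\Box$ is obtained by one further base change along $X_W^\Box\to X_W$ and is therefore also formally smooth.

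There is essentially no obstacle here beyond Theorem \ref{mainsmooth} itself: all four assertions are bookkeeping consequences of that result combined with Lemma \ref{smdim1} and Proposition \ref{Berger}. The only mildly delicate point is verifying that fiber products of groupoids over $\Ccal_L$ satisfy the expected base change behavior for formal smoothness, which is immediate from unwinding the lifting property of \cite[\S A.5]{KisinModularity}.
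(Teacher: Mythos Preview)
Your proof is correct and follows essentially the same approach as the paper: derive the first morphism from Theorem \ref{mainsmooth} composed with the base change of Lemma \ref{smdim1}, then obtain the remaining three by base change (using Proposition \ref{Berger} to identify $X_{D,\Mcal_\bullet}\cong X_{W^+}\times_{X_W}X_{\Mcal,\Mcal_\bullet}$). The only cosmetic difference is that for the third morphism the paper first base changes Theorem \ref{mainsmooth} along $X_{W^+}\to X_W$ and then composes with the projection $X_{W^+,\Fcal_\bullet}\times_{\widehat\tfrak}\widehat{\Tcal^n_{\deltabar}}\to X_{W^+,\Fcal_\bullet}$, whereas you first compose and then base change; the two orders are of course equivalent.
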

\begin{proof}
The morphisms $\widehat{\Tcal^n_{\deltabar}}\times_{\widehat\tfrak}X_{W,\Fcal_\bullet}\longrightarrow X_{W,\Fcal_\bullet}$ and $\widehat{\Tcal^n_{\deltabar}}\times_{\widehat\tfrak}X_{W^+,\Fcal_\bullet}\longrightarrow X_{W^+,\Fcal_\bullet}$ are formally smooth by base change from Lemma \ref{smdim1}. The first statement follows then from Theorem \ref{mainsmooth} by composition of formally smooth morphisms. We have $X_{D,\Mcal_\bullet}\cong X_{W^+}\times_{X_W}X_{\Mcal,\Mcal_\bullet}$, hence by base change from Theorem \ref{mainsmooth} the morphism:
$$X_{D,\Mcal_\bullet}\longrightarrow (X_{W^+}\times_{X_W}X_{W,\Fcal_\bullet})\times_{\widehat\tfrak}\widehat{\Tcal^n_{\deltabar}}\cong X_{W^+,\Fcal_\bullet}\times_{\widehat\tfrak}\widehat{\Tcal^n_{\deltabar}}$$
is formally smooth. The third statement follows again by composition of formally smooth morphisms. The proof of the $\Box$-versions follows by base change.
\end{proof}

\begin{prop}\label{dimension}
The groupoid $X_{\Mcal,\Mcal_\bullet}^\Box$ over $\Ccal_L$ is pro-representable. The functor $|X_{\Mcal,\Mcal_\bullet}^\Box|$ is pro-representable by a formally smooth noetherian complete local ring of residue field $L$ and dimension $[K:\Q_p](n^2+\frac{n(n+1)}{2})$.
\end{prop}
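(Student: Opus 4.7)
My plan is to assemble the proof by combining the formal smoothness and relative representability results of \S\ref{trianguline} and \S\ref{smoothsection} with the pro-representability of $X_{W,\Fcal_\bullet}^\Box$ from Corollary \ref{dRfilt}, and then to pin down the dimension by factoring through an intermediate fiber product.

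For pro-representability, I would argue as follows. By Corollary \ref{formallysmooth}, the morphism $X_{\Mcal,\Mcal_\bullet}^\Box \to X_{W,\Fcal_\bullet}^\Box$ is formally smooth, and by Corollary \ref{relativelyrepresent} it is relatively representable. Since $X_{W,\Fcal_\bullet}^\Box$ is pro-represented by a formally smooth noetherian complete local $L$-algebra of residue field $L$ and dimension $n^2[K:\Qp]$ (Corollary \ref{dRfilt}), a standard argument in formal deformation theory (successive lifting along the universal object of the target, with surjectivity guaranteed by formal smoothness) yields that $X_{\Mcal,\Mcal_\bullet}^\Box$ is pro-representable by a formally smooth extension of the representing ring of $X_{W,\Fcal_\bullet}^\Box$; in particular it is itself pro-represented by a formally smooth noetherian complete local $L$-algebra of residue field $L$.

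For the dimension, I would factor the morphism $X_{\Mcal,\Mcal_\bullet}^\Box \to X_{W,\Fcal_\bullet}^\Box$ through $Y := \widehat{\Tcal^n_{\deltabar}}\times_{\widehat\tfrak} X_{W,\Fcal_\bullet}^\Box$. By Lemma \ref{smdim1} and base change, the second map $Y \to X_{W,\Fcal_\bullet}^\Box$ is formally smooth of relative dimension $n$, hence $\dim Y = n^2[K:\Qp] + n$. By Theorem \ref{mainsmooth} applied after base change along $X_W^\Box \to X_W$, the first map $X_{\Mcal,\Mcal_\bullet}^\Box \to Y$ is formally smooth. Writing $d$ for its relative dimension, the total dimension is $n^2[K:\Qp] + n + d$, and one must show $d = [K:\Qp]\frac{n(n+1)}{2} - n$ to obtain the asserted formula $[K:\Qp](n^2 + \frac{n(n+1)}{2})$.

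The main obstacle is the computation of this relative dimension $d$. While the formal smoothness portion of Theorem \ref{mainsmooth} is established by a step-by-step extension argument, the proof does not directly read off the relative dimension; rather, one has to analyze, for each $i = 2,\ldots,n$, the kernel at tangent level of the surjection
$$H^1_{\varphi,\gamma_K}(\Mcal_{A,i-1}(\delta_{A,i}^{-1})) \twoheadrightarrow H^1(\Gcal_K, W_{\dR}(\Mcal_{A,i-1}(\delta_{A,i}^{-1})))\times_{H^1(\Gcal_K,W_{\dR}(\Mcal_{B,i-1}(\delta_{B,i}^{-1})))} H^1_{\varphi,\gamma_K}(\Mcal_{B,i-1}(\delta_{B,i}^{-1}))$$
appearing in the proof of Theorem \ref{mainsmooth}, for $A = L[\varepsilon]$ and $B = L$. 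Using the $L$-B-pair formalism (as in the proof of Lemma \ref{Lsurj}) and the cohomological inputs from Lemma \ref{Asurj}, one identifies these kernels explicitly (distinguishing de Rham from almost de Rham contributions) and correctly accounts for the framing degrees of freedom; summing the resulting dimensions over $i$, and combining with the fact that the isomorphism group of a trianguline deformation preserving the triangulation and framing is controlled by the upper-triangular endomorphisms, produces the claimed $d = [K:\Qp]\frac{n(n+1)}{2} - n$ and hence the proposition.
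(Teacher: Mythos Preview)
Your pro-representability and formal smoothness arguments are correct and essentially match the paper: Corollary~\ref{relativelyrepresent} plus Corollary~\ref{dRfilt} give pro-representability, and Corollary~\ref{formallysmooth} plus Corollary~\ref{dRfilt} give that the representing ring is formally smooth.

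The dimension computation, however, diverges from the paper and contains a genuine gap. The paper does \emph{not} try to read off the relative dimension of $X_{\Mcal,\Mcal_\bullet}^\Box\to \widehat{\Tcal^n_{\deltabar}}\times_{\widehat\tfrak}X_{W,\Fcal_\bullet}^\Box$ from the surjectivity argument of Theorem~\ref{mainsmooth}. Instead it introduces an auxiliary ``vertically rigidified'' groupoid $X_{\Mcal,\Mcal_\bullet}^{\ver}$ (objects carry chosen isomorphisms $\beta_{A,i}:\Rcal_{A,K}(\delta_{A,i})[\tfrac{1}{t}]\buildrel\sim\over\to\Mcal_{A,i}/\Mcal_{A,i-1}$), checks that $|X_{\Mcal,\Mcal_\bullet}^{\ver}|$ is formally smooth of dimension $n+[K:\Qp]\tfrac{n(n+1)}{2}$ by a direct inductive computation \`a la Chenevier, and then compares $X_{\Mcal,\Mcal_\bullet}^{\ver}$ and $X_{\Mcal,\Mcal_\bullet}^\Box$ via the cartesian square over $X_{\Mcal,\Mcal_\bullet}$ to extract $\dim_L|X_{\Mcal,\Mcal_\bullet}^\Box|(L[\varepsilon])$.

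Your proposed route fails at the point you flag as the ``main obstacle''. Under the standing hypothesis that $\deltabar$ is locally algebraic, Lemma~\ref{Lsurj} (and its d\'evissage consequence via (i) of Lemma~\ref{Asurj}) says that the maps
\[
H^1_{\varphi,\gamma_K}\bigl(\Mcal_{i-1}(\delta_i^{-1})\bigr)\longrightarrow H^1\bigl(\Gcal_K,W_{\dR}(\Mcal_{i-1}(\delta_i^{-1}))\bigr)
\]
are \emph{isomorphisms}, not merely surjections. Hence, at the tangent level $A=L[\varepsilon]$, $B=L$, the kernel of each surjection in the proof of Theorem~\ref{mainsmooth} is zero, and summing these kernels yields $0$, not $[K:\Qp]\tfrac{n(n+1)}{2}-n$. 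The missing relative dimension therefore cannot come from these extension-class kernels at all; it is an automorphism/rigidification phenomenon. Your closing sentence about ``the isomorphism group \ldots\ controlled by upper-triangular endomorphisms'' gestures in the right direction but does not supply the computation. Making that precise essentially forces you to introduce something equivalent to $X_{\Mcal,\Mcal_\bullet}^{\ver}$, which is exactly what the paper does.
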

\begin{proof}
As $X_{W,\Fcal_\bullet}^\Box$ is pro-representable (Corollary \ref{dRfilt}), then so is $X_{\Mcal,\Mcal_\bullet}^\Box$ by Corollary \ref{relativelyrepresent}, and thus also $|X_{\Mcal,\Mcal_\bullet}^\Box|$. As $X_{\Mcal,\Mcal_\bullet}^\Box\longrightarrow X_{W,\Fcal_\bullet}^\Box$ is formally smooth (Corollary \ref{formallysmooth}), then so is $|X_{\Mcal,\Mcal_\bullet}^\Box|\longrightarrow |X_{W,\Fcal_\bullet}^\Box|$. As $|X_{W,\Fcal_\bullet}^\Box|$ is pro-representable by a formally smooth local ring (Corollary \ref{dRfilt}), the same is thus true for $|X_{\Mcal,\Mcal_\bullet}^\Box|$. 

Using formal smoothness, for the last statement it is enough to compute the dimension of the $L$-vector space $|X_{\Mcal,\Mcal_\bullet}^\Box|(L[\varepsilon])$. This can be done using an other pro-representable groupoid $X_{\Mcal,\Mcal_\bullet}^{\ver}$ as follows. For $1\leq i\leq n$ let $\beta_i:\Rcal_{L,K}(\delta_i)[\tfrac{1}{t}]\buildrel\sim\over\longrightarrow\Mcal_i/\Mcal_{i-1}$ be a fixed isomorphism in $\Phi\Gamma_{L,K}$ and set $\underline{\beta}:=(\beta_i)_{1\leq i\leq n}$. Let $X_{\Mcal,\Mcal_\bullet}^{\ver}$ be the following groupoid over $\Ccal_L$ (of ``rigidified deformations'' of $(\Mcal,\Mcal_\bullet,\underline{\beta})$). If $A$ is an object of $\Ccal_L$, $X_{\Mcal,\Mcal_\bullet}^{\ver}(A)$ is the category of $(\Mcal_A,\Mcal_{A,\bullet},\iota_A,\underline{\beta_A})$ where $(\Mcal_A,\Mcal_{A,\bullet},\iota_A)$ is an object of $X_{\Mcal,\Mcal_\bullet}(A)$ and $\underline{\beta_A}=(\beta_{A,i})_{1\leq i\leq n}$ is a collection of isomorphisms $\beta_{A,i}:\,\Rcal_{A,K}(\delta_{A,i})[\tfrac{1}{t}]\buildrel\sim\over\longrightarrow\Mcal_{A,i}/\Mcal_{A,i-1}$ in $\Phi\Gamma_{A,K}$ where $(\delta_{A,1},\dots,\delta_{A,n})$ is the character $\omega_{\deltabar}(\Mcal_A,\Mcal_{A,\bullet},\iota_A)\in \widehat{\Tcal^n_{\underline{\delta}}}(A)$ (see \S\ref{triangulinet}, morphisms of $X_{\Mcal,\Mcal_\bullet}^{\ver}(A)$ are left to the reader). There is a natural forgetful morphism $X_{\Mcal,\Mcal_\bullet}^{\ver}\longrightarrow X_{\Mcal,\Mcal_\bullet}$ of groupoids over $\Ccal_L$ which is easily checked to be formally smooth. Moreover all automorphisms in the category $X_{\Mcal,\Mcal_\bullet}^{\ver}(A)$ are trivial and thus $X_{\Mcal,\Mcal_\bullet}^{\ver}\cong |X_{\Mcal,\Mcal_\bullet}^{\ver}|$. Moreover, by an argument similar to the one for $(\varphi,\Gamma_K)$-modules over $\Rcal_{A,K}$ in the proof of \cite[Th.3.3]{CheTri}, $|X_{\Mcal,\Mcal_\bullet}^{\ver}|$ is pro-representable by a formally smooth noetherian complete local ring of residue field $L$ and dimension $n+[K:\Q_p]\frac{n(n+1)}{2}$. Finally consider the (cartesian) commutative diagram of groupoids over $\Ccal_L$:
$$ \xymatrix{ X_{\Mcal,\Mcal_\bullet}^{\ver}\times_{X_{\Mcal,\Mcal_{\bullet}}}X_{\Mcal,\Mcal_\bullet}^\Box\ar[r]\ar[d] & X_{\Mcal,\Mcal_\bullet}^\Box\ar[d] \\ X_{\Mcal,\Mcal_\bullet}^{\ver}\ar[r] & X_{\Mcal,\Mcal_\bullet} .}$$
Since $X_{\Mcal,\Mcal_\bullet}^{\ver}$ is pro-representable, it is easy to check that $X_{\Mcal,\Mcal_\bullet}^{\ver}\times_{X_{\Mcal,\Mcal_{\bullet}}}X_{\Mcal,\Mcal_\bullet}^\Box$ is also pro-representable (by adding formal variables corresponding to the framing) and that the left vertical arrow is formally smooth of relative dimension $n^2[K:\Q_p]$. The top horizontal arrow is formally smooth of relative dimension $n$ by base change. Set:
$$d:=\dim_L|X_{\Mcal,\Mcal_\bullet}^{\ver}\times_{X_{\Mcal,\Mcal_{\bullet}}}X_{\Mcal,\Mcal_\bullet}^\Box|(L[\varepsilon]),$$
we thus have $d=n^2[K:\Q_p]+n+[K:\Q_p]\frac{n(n+1)}{2}=n+\dim_L|X_{\Mcal,\Mcal_\bullet}^\Box|(L[\varepsilon])$ which implies $\dim_L|X_{\Mcal,\Mcal_\bullet}^\Box|(L[\varepsilon])=[K:\Q_p](n^2+\frac{n(n+1)}{2})$.
\end{proof}

Now we let $\Dcal_\bullet=(\Dcal_i)_{i\in \{1,\dots,n\}}:=(D_{\pdR}(\Fcal_i))_{i\in \{1,\dots,n\}}=(D_{\pdR}(W_{\dR}(\Mcal_i)))_{i\in \{1,\dots,n\}}$. It is a complete flag of $D_{\pdR}(W)$. We assume moreover from now on that $W^+$ is regular (Definition \ref{regular}). Recall then that we defined in (\ref{completeflag}) another complete flag:
$$\Fil_{W^+,\bullet}= (\Fil_{W^+,i}(D_{\pdR}(W)))_{i\in \{1,\dots,n\}}$$
of $D_{\pdR}(W)$ deduced from the filtration determined by the $\BdR^+$-lattice $W^+$ of $W$ in Proposition \ref{pdR+}. Recall also that we fixed an isomorphism $\alpha:\,(L\otimes_{\Q_p}K)^n\buildrel\sim\over\longrightarrow D_{\pdR}(W)$. We let $x$ be the closed point of the $L$-scheme $X=\gtilde\times_{\gfrak}\gtilde$ of (\ref{X}) corresponding to the triple $(\alpha^{-1}(\Dcal_\bullet),\alpha^{-1}(\Fil_{W^+,\bullet}),N_W)$ (with the notation of \S\ref{debut}).

\begin{cor}\label{represent2}
(i) The groupoid $X_{W^+\!,\Fcal_\bullet}^\Box$ over $\Ccal_L$ is pro-representable. \!The functor $|X_{W^+\!,\Fcal_\bullet}^\Box|$ is pro-represen\-ted by the formal scheme $\widehat{X}_x$.\\
(ii) The groupoid $X_{D,\Mcal_\bullet}^\Box$ over $\Ccal_L$ is pro-representable. The functor $|X_{D,\Mcal_\bullet}^\Box|$ is pro-represen\-ted by a formal scheme which is formally smooth of relative dimension $[K:\Q_p]\frac{n(n+1)}{2}$ over $\widehat{X}_x$.
\end{cor}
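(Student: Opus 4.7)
The plan is to deduce both parts from the pro-representability and formal smoothness results already established, by taking suitable fiber products of groupoids over $\Ccal_L$. Part (i) identifies $|X_{W^+,\Fcal_\bullet}^\Box|$ with the completion $\widehat{X}_x$ of $X=\gtilde\times_{\gfrak}\gtilde$ at $x$; part (ii) then follows by base change from a formally smooth morphism together with a dimension count.

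For (i), by the very definition of $X_{W^+,\Fcal_\bullet}^\Box$ in \S\ref{trianguline} we have $X_{W^+,\Fcal_\bullet}^\Box \cong X_{W^+}^\Box \times_{X_W^\Box} X_{W,\Fcal_\bullet}^\Box$. Corollary \ref{represent1}, Corollary \ref{dRfilt} and Theorem \ref{dRlattice} pro-represent the three groupoids $X_W^\Box$, $X_{W,\Fcal_\bullet}^\Box$ and $X_{W^+}^\Box$ by the formal schemes $\widehat{\gfrak}_{N_W}$, $\widehat{\gtilde}_{(\alpha^{-1}(\Dcal_\bullet),N_W)}$ and $\widehat{\gtilde}_{(\alpha^{-1}(\Fil_{W^+,\bullet}),N_W)}$ respectively, and in each case the forgetful functor to $X_W^\Box$ is realized on the representing functors by the projection $\gtilde\to \gfrak$, $(gB,\psi)\mapsto \psi$. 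Since taking the formal completion at a closed point commutes with fiber products of locally noetherian schemes, the pro-representing ring of the fiber product groupoid is precisely the completion of the scheme $\gtilde \times_\gfrak \gtilde = X$ at $x := (\alpha^{-1}(\Dcal_\bullet),\alpha^{-1}(\Fil_{W^+,\bullet}),N_W) \in X(L)$, i.e.~$\widehat{X}_x$, as claimed.

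For (ii), Proposition \ref{Berger} identifies $X_{D,\Mcal_\bullet}$ with $X_{W^+}\times_{X_W} X_{\Mcal,\Mcal_\bullet}$, and then framing over $X_W^\Box$ gives the cartesian square
\begin{equation*}
\begin{xy}
\xymatrix{X_{D,\Mcal_\bullet}^\Box \ar[r]\ar[d] & X_{W^+,\Fcal_\bullet}^\Box \ar[d] \\ X_{\Mcal,\Mcal_\bullet}^\Box \ar[r] & X_{W,\Fcal_\bullet}^\Box}
\end{xy}
\end{equation*}
of groupoids over $\Ccal_L$. The bottom arrow is formally smooth by Corollary \ref{formallysmooth}; since the three other corners are pro-representable (by (i), Proposition \ref{dimension} and Corollary \ref{dRfilt}), base change along a formally smooth morphism yields pro-representability of $X_{D,\Mcal_\bullet}^\Box$ and formal smoothness of the top arrow of the same relative dimension. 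That relative dimension equals
\begin{equation*}
\dim|X_{\Mcal,\Mcal_\bullet}^\Box| - \dim|X_{W,\Fcal_\bullet}^\Box| = [K:\Qp]\bigl(n^2+\tfrac{n(n+1)}{2}\bigr) - n^2[K:\Qp] = [K:\Qp]\tfrac{n(n+1)}{2},
\end{equation*}
as required, where we used Proposition \ref{dimension} and the fact that $|X_{W,\Fcal_\bullet}^\Box|$ is pro-represented by $\widehat{\gtilde}$ with $\dim\gtilde = \dim G = n^2[K:\Qp]$. The argument is essentially formal; the only substantive point is a bookkeeping verification that the framings, the equivalences of \S\ref{debut} and \S\ref{suite}, and Berger's equivalence combine compatibly, so that the two flags $\Dcal_\bullet$ and $\Fil_{W^+,\bullet}$ sitting over the common nilpotent endomorphism $N_W$ genuinely correspond to the single closed point $x\in X(L)$.
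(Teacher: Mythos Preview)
Your proof is correct and follows essentially the same approach as the paper. For (i) the paper also uses the decomposition $X_{W^+,\Fcal_\bullet}^\Box = X_{W,\Fcal_\bullet}^\Box \times_{X_W^\Box} X_{W^+}^\Box$ and appeals to Corollary~\ref{dRfilt} and Theorem~\ref{dRlattice}; for (ii) the paper phrases pro-representability via the relative representability of $X_{D,\Mcal_\bullet}^\Box \to X_{W^+,\Fcal_\bullet}^\Box$ (Corollary~\ref{relativelyrepresent}) rather than via your cartesian square, but this is the same base-change argument, and the dimension count is identical.
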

\begin{proof}
We prove (i). The second statement in (i) implies the first since in fact there is an isomorphism $X_{W^+,\Fcal_\bullet}^\Box\buildrel\sim\over\longrightarrow |X_{W^+,\Fcal_\bullet}^\Box|$ as all automorphisms of an object of $X_{W^+,\Fcal_\bullet}^\Box(A)$ are trivial (see the discussion concerning $X_W^\Box$ in \S\ref{debut}). We have $X_{W^+,\Fcal_\bullet}^\Box=X_{W,\Fcal_\bullet}^\Box\times_{X_W^\Box}X_{W^+}^\Box$ and the statement is proven as for Corollary \ref{dRfilt} and Theorem \ref{dRlattice}. We prove (ii). As $X_{W^+,\Fcal_\bullet}^\Box$ is pro-representable by (i), then so is $X_{D,\Mcal_\bullet}^\Box$ by Corollary \ref{relativelyrepresent}, and thus also $|X_{D,\Mcal_\bullet}^\Box|$. As the morphism $X_{D,\Mcal_\bullet}^\Box\longrightarrow X_{W^+,\Fcal_\bullet}^\Box$ is formally smooth by Corollary \ref{formallysmooth}, then so is the morphism $|X_{D,\Mcal_\bullet}^\Box|\longrightarrow |X_{W^+,\Fcal_\bullet}^\Box|$. The relative dimension of $X_{D,\Mcal_\bullet}^\Box\longrightarrow X_{W^+,\Fcal_\bullet}^\Box$ is the same as that of $X_{\Mcal,\Mcal_\bullet}^\Box\longrightarrow X_{W,\Fcal_\bullet}^\Box$ (since it is obtained by base change from it, see the proof of Corollary \ref{relativelyrepresent}), which is $[K:\Q_p]\frac{n(n+1)}{2}$ by Corollary \ref{dRfilt} and Proposition \ref{dimension}. Whence the result by the last statement in (i).
\end{proof}

We denote by $\Scal\simeq \Scal_n^{[K:\Qp]}$ the Weyl group of $G$ (the notation $W$ of \S\ref{Springer0} could now induce some confusion with the representations $W$ and $W^+$ of \S\ref{debut} and \S\ref{suite}). For $w\in{\Scal}$ define $X_w\subset X$ as in \S\ref{Springer1} and recall that $\widehat X_{w,x}$ is the completion of $X_w$ at the closed point $x=(\alpha^{-1}(\Dcal_\bullet),\alpha^{-1}(\Fil_{W^+,\bullet}),N_W)\in X(L)$ (so $\widehat X_{w,x}$ is empty if $x\notin X_w(L)\subset X(L)$). Define the following groupoid over $\Ccal_L$:
\begin{equation}\label{closedimm}
X_{W^+,\Fcal_\bullet}^{\Box,w}:= X_{W^+,\Fcal_\bullet}^\Box\times_{|X_{W^+,\Fcal_\bullet}^\Box|}\widehat X_{w,x}.
\end{equation}
Since we have an equivalence $X_{W^+,\Fcal_\bullet}^\Box\buildrel\sim\over\longrightarrow |X_{W^+,\Fcal_\bullet}^\Box|$ (see the proof of (i) of Corollary \ref{represent2}), it follows that we also have an equivalence $X_{W^+,\Fcal_\bullet}^{\Box,w}\buildrel\sim\over\longrightarrow |X_{W^+,\Fcal_\bullet}^{\Box,w}|$ of groupoids over $\Ccal_L$. Hence we deduce the following corollary from (i) of Corollary \ref{represent2}.

\begin{cor}\label{representw}
For $w\in{\Scal}$ the groupoid $X_{W^+\!,\Fcal_\bullet}^{\Box,w}$ over $\Ccal_L$ is pro-representable. \!The functor $|X_{W^+\!,\Fcal_\bullet}^{\Box,w}|$ is pro-represen\-ted by the formal scheme $\widehat{X}_{w,x}$.
\end{cor}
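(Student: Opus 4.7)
The plan is to deduce this directly from part (i) of Corollary \ref{represent2}. First I would observe that the natural morphism $X_{W^+\!,\Fcal_\bullet}^\Box \longrightarrow |X_{W^+\!,\Fcal_\bullet}^\Box|$ is in fact an equivalence of groupoids over $\Ccal_L$: any automorphism of an object $(W_A^+,\iota_A,\Fcal_{A,\bullet},\alpha_A)$ of $X_{W^+\!,\Fcal_\bullet}^\Box(A)$ must act as the identity on $D_{\pdR}(W_A)$ because of the framing $\alpha_A$, hence (by Lemma \ref{ApdR+}) also on $W_A^+$ itself, and must preserve $\Fcal_{A,\bullet}$ tautologically. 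This is the same rigidity argument already invoked in the proof of (i) of Corollary \ref{represent2} (and originally for $X_W^\Box$ in \S\ref{debut}).

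Combining this equivalence with the identification $|X_{W^+\!,\Fcal_\bullet}^\Box|\simeq \widehat{X}_x$ furnished by Corollary \ref{represent2}(i), and plugging into the definition (\ref{closedimm}), I would obtain equivalences of groupoids over $\Ccal_L$:
\begin{equation*}
X_{W^+\!,\Fcal_\bullet}^{\Box,w} \;=\; X_{W^+\!,\Fcal_\bullet}^\Box \times_{|X_{W^+\!,\Fcal_\bullet}^\Box|} \widehat{X}_{w,x} \;\simeq\; \widehat{X}_x \times_{\widehat{X}_x} \widehat{X}_{w,x} \;\simeq\; \widehat{X}_{w,x},
\end{equation*}
where the transition map $\widehat{X}_{w,x}\hookrightarrow \widehat{X}_x$ is the one induced by the closed immersion $X_w\hookrightarrow X$ (with the source being the empty formal scheme if $x\notin X_w(L)$). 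Since $\widehat{X}_{w,x}=\Spf\widehat{\Ocal}_{X_w,x}$ is visibly pro-representable, this settles pro-representability of $X_{W^+\!,\Fcal_\bullet}^{\Box,w}$.

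Finally, since all automorphisms on both factors of the defining fiber product are trivial, the same holds in $X_{W^+\!,\Fcal_\bullet}^{\Box,w}$, so the morphism $X_{W^+\!,\Fcal_\bullet}^{\Box,w}\longrightarrow |X_{W^+\!,\Fcal_\bullet}^{\Box,w}|$ is also an equivalence and the functor $|X_{W^+\!,\Fcal_\bullet}^{\Box,w}|$ is likewise pro-represented by $\widehat{X}_{w,x}$. I do not anticipate any genuine obstacle: once the framing has been used to kill automorphisms and Corollary \ref{represent2}(i) has been established, the statement is a formal bookkeeping consequence of the definition of $X_{W^+\!,\Fcal_\bullet}^{\Box,w}$ as a fiber product over $|X_{W^+\!,\Fcal_\bullet}^\Box|$.
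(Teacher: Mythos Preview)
Your proposal is correct and follows essentially the same route as the paper: the paper also deduces the corollary directly from (i) of Corollary~\ref{represent2} by first observing (just before the statement) that the equivalence $X_{W^+,\Fcal_\bullet}^\Box\buildrel\sim\over\longrightarrow |X_{W^+,\Fcal_\bullet}^\Box|$ yields an equivalence $X_{W^+,\Fcal_\bullet}^{\Box,w}\buildrel\sim\over\longrightarrow |X_{W^+,\Fcal_\bullet}^{\Box,w}|$, and then unwinding the fiber product definition~(\ref{closedimm}).
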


We define the groupoid $X_{W^+,\Fcal_\bullet}^w$ over $\Ccal_L$ as the subgroupoid of $X_{W^+,\Fcal_\bullet}$ which is the image of $X_{W^+\!,\Fcal_\bullet}^{\Box,w}$ by the forgetful morphism $X_{W^+,\Fcal_\bullet}^{\Box}\longrightarrow X_{W^+,\Fcal_\bullet}$. So the objects of $X_{W^+,\Fcal_\bullet}^w$ are those $(A,W^+_A,\Fcal_{A,\bullet},\iota_A)$ such that there exists $\alpha_A:\, (A\otimes_{\Qp}K)^n\buildrel\sim\over\longrightarrow D_{\pdR}(W^+_A[\tfrac{1}{t}])$ making $(A,W^+_A,\Fcal_{A,\bullet},\iota_A,\alpha_A)$ an object of $X_{W^+,\Fcal_\bullet}^{\Box,w}(A)$ and the morphisms $(A,W^+_A,\Fcal_{A,\bullet},\iota_A)\longrightarrow (A',W^+_{A'},\Fcal_{A',\bullet},\iota_{A'})$ are $(A\rightarrow A',W_A^+\otimes_A A'\buildrel\sim\over\rightarrow W_{A'}^+)$ where the isomorphism is compatible with everything. Using the $G$-equivariance of $X_w$, we can easily check that it doesn't depend on the framing $\alpha$ and there is an equivalence of groupoids over $\Ccal_L$:
\begin{equation}\label{basechangew}
X_{W^+,\Fcal_\bullet}^{\Box,w}\buildrel\sim\over\longrightarrow X_{W^+,\Fcal_\bullet}^w\times_{X_{W^+,\Fcal_\bullet}}X_{W^+,\Fcal_\bullet}^{\Box}.
\end{equation}
For $w\in {\Scal}$, we then define:
$$X_{D,\Mcal_\bullet}^{\Box,w}:=X_{D,\Mcal_\bullet}^\Box\times_{X_{W^+,\Fcal_\bullet}^\Box}X_{W^+,\Fcal_\bullet}^{\Box,w}\ \ {\rm and}\ \ X_{D,\Mcal_\bullet}^{w}:=X_{D,\Mcal_\bullet}\times_{X_{W^+,\Fcal_\bullet}}X_{W^+,\Fcal_\bullet}^w.$$

\begin{prop}\label{relrepW}
The morphisms of groupoids $X_{W^+,\Fcal_\bullet}^w\!\longrightarrow \!X_{W^+,\Fcal_\bullet}$, $X_{W^+,\Fcal_\bullet}^{\Box,w}\!\longrightarrow \!X_{W^+,\Fcal_\bullet}^\Box$, $X_{D,\Mcal_\bullet}^w\longrightarrow X_{D,\Mcal_\bullet}$ and $X_{D,\Mcal_\bullet}^{\Box,w}\longrightarrow X_{D,\Mcal_\bullet}^\Box$ are relatively representable and are closed immersions.
\end{prop}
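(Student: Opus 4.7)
My plan is to proceed in three stages, first handling the framed almost de Rham version, then deducing the unframed almost de Rham case by $G$-equivariant descent along the framing, and finally obtaining the $(\varphi,\Gamma_K)$-module statements by base change.

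For the morphism $X_{W^+,\Fcal_\bullet}^{\Box,w}\longrightarrow X_{W^+,\Fcal_\bullet}^{\Box}$, the claim is essentially formal from the construction. By Corollary \ref{represent2}(i) and Corollary \ref{representw}, both groupoids are equivalent to their associated functors $|\,{-}\,|$ and are pro-represented by $\widehat\Ocal_{X,x}$ and $\widehat\Ocal_{X_w,x}$ respectively. The definition (\ref{closedimm}) exhibits $X_{W^+,\Fcal_\bullet}^{\Box,w}$ as the base change of the closed formal subscheme $\widehat X_{w,x}\hookrightarrow \widehat X_x$, and since $X_w\subset X$ is a closed subscheme the induced map $\widehat\Ocal_{X,x}\twoheadrightarrow \widehat\Ocal_{X_w,x}$ is surjective, so the morphism is relatively representable and a closed immersion.

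For the unframed version $X_{W^+,\Fcal_\bullet}^{w}\longrightarrow X_{W^+,\Fcal_\bullet}$, I will use the equivalence (\ref{basechangew}) together with the $G$-equivariance of $X_w\subset X$. Given an object $\eta_A=(W_A^+,\Fcal_{A,\bullet},\iota_A)\in X_{W^+,\Fcal_\bullet}(A)$, a framing $\alpha_A:(A\otimes_{\Qp}K)^n\xrightarrow{\sim} D_{\pdR}(W_A^+[\tfrac{1}{t}])$ lifting the fixed $\alpha$ always exists (by Lemma \ref{ApdR} the target is free over $A\otimes_{\Qp}K$ and we may lift any basis). The previous step yields a closed quotient $A\twoheadrightarrow A_w^{\Box}$ representing $\widetilde{\eta_A^{\Box}}\times_{X_{W^+,\Fcal_\bullet}^{\Box}}X_{W^+,\Fcal_\bullet}^{\Box,w}$. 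Two different framings $\alpha_A$, $\alpha_A'$ differ by the action of an element of $\ker(G(A)\to G(L))$, which acts on the first two factors of $X\subset G/B\times G/B\times \lieg$ by left multiplication and on the third by the adjoint action; since $X_w$ is $G$-stable under this action, the ideal cutting out $A_w^{\Box}$ inside $A$ is independent of the choice of framing. Thus it descends to a well-defined closed quotient $A\twoheadrightarrow A_w$ that represents the fiber product along $X_{W^+,\Fcal_\bullet}^{w}\longrightarrow X_{W^+,\Fcal_\bullet}$, and this is compatible with morphisms in $\Ccal_L$. The main (minor) obstacle will be writing this descent down cleanly in the language of groupoids over $\Ccal_L$.

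Finally the two morphisms involving $X_{D,\Mcal_\bullet}$ follow by base change. By construction we have cartesian squares of groupoids over $\Ccal_L$:
$$X_{D,\Mcal_\bullet}^{\Box,w}=X_{D,\Mcal_\bullet}^{\Box}\times_{X_{W^+,\Fcal_\bullet}^{\Box}}X_{W^+,\Fcal_\bullet}^{\Box,w}\ \ \text{and}\ \ X_{D,\Mcal_\bullet}^{w}=X_{D,\Mcal_\bullet}\times_{X_{W^+,\Fcal_\bullet}}X_{W^+,\Fcal_\bullet}^{w}.$$
Since relative representability and the property of being a closed immersion (in our sense) are both preserved under base change of groupoids over $\Ccal_L$, the conclusion for these two morphisms follows immediately from the first two cases.
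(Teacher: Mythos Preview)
Your proposal is correct and follows essentially the same strategy as the paper. The only notable difference is in how the descent from the framed to the unframed case is phrased: the paper observes directly that for any framed lift $\xi_A$ of $\eta_A$, forgetting the framing yields an equivalence $\widetilde\xi_A\simeq\widetilde\eta_A$ of represented groupoids (since any deformation of $\eta_A$ admits a compatible framing), so the fiber product $\widetilde\eta_A\times_{X_{W^+,\Fcal_\bullet}}X_{W^+,\Fcal_\bullet}^w$ is \emph{isomorphic} to $\widetilde\xi_A\times_{X_{W^+,\Fcal_\bullet}^\Box}X_{W^+,\Fcal_\bullet}^{\Box,w}$ via (\ref{basechangew}), which is a closed immersion by (\ref{closedimm}). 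Your $G$-equivariance argument achieves the same thing by showing that the resulting quotient $A\twoheadrightarrow A_w^\Box$ is independent of the chosen framing; the paper's formulation just sidesteps this check. The base change reductions for the $\Box$-versions and the $(\varphi,\Gamma_K)$-module versions are identical in both.
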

\begin{proof}
The $\Box$-versions follow by base change from the others, and the third morphism is obtained by base change from the first. Hence it is enough to check the first. Let $\eta_A:=(A,W_A^+,\Fcal_A,\iota_A)$ an object of $X_{W^+,\Fcal_\bullet}$ and $\widetilde\eta_A$ the groupoid over $\Ccal_L$ that $\eta_A$ represents. We have to prove that $X_{W^+,\Fcal_\bullet}^w\times_{X_{W^+,\Fcal_\bullet}}\widetilde\eta_A$ is representable and that $X_{W^+,\Fcal_\bullet}^w\times_{X_{W^+,\Fcal_\bullet}}\widetilde\eta_A\longrightarrow \widetilde\eta_A$ is a closed immersion.

Choose an object $\xi_A=(A,W_A^+,\Fcal_W,\iota_A,\alpha_A)$ in $X_{W^+,\Fcal_\bullet}^\Box$ mapping to $\eta_A$ and let $\widetilde\xi_A$ be the groupoid over $\Ccal_L$ that it represents. It is easy to check that forgetting the framing actually yields an equivalence $\widetilde\xi_A\buildrel\sim\over\longrightarrow \widetilde\eta_A$ of groupoids over $\Ccal_L$. By (\ref{basechangew}), we have that $X_{W^+,\Fcal_\bullet}^{\Box,w}\times_{X_{W^+,\Fcal_\bullet}^\Box}\widetilde{\xi_A}$ is isomorphic to $X_{W^+,\Fcal_\bullet}^w\times_{X_{W^+,\Fcal_\bullet}}\widetilde\xi_A\simeq X_{W^+,\Fcal_\bullet}^w\times_{X_{W^+,\Fcal_\bullet}}\widetilde\eta_A$. Hence $X_{W^+,\Fcal_\bullet}^w\times_{X_{W^+,\Fcal_\bullet}}\widetilde\eta_A\longrightarrow \widetilde\eta_A$ is isomorphic to $X_{W^+,\Fcal_\bullet}^{\Box,w}\times_{X_{W^+,\Fcal_\bullet}^\Box}\widetilde\xi_A\longrightarrow \widetilde\xi_A$, and everything then follows from (\ref{closedimm}).
\end{proof}

Let ${\Scal}(x):=\{w\in{\Scal},x\in X_w(L)\}=\{w\in{\Scal},\widehat{X}_{w,x}\ne \emptyset\}=\{w\in{\Scal},X_{W^+,\Fcal_\bullet}^w\ne 0\}=\{w\in{\Scal},X_{D,\Mcal_\bullet}^{w}\ne 0\}$.

\begin{cor}\label{cornormal}
If $w\in {\Scal}(x)$, the functor $X_{D,\Mcal_\bullet}^{\Box,w}$ is pro-representable by a noetherian complete local normal domain of residue field $L$ and dimension $[K:\Q_p](n^2+\frac{n(n+1)}{2})$ which is formally smooth (as a formal scheme) over $\widehat X_{w,x}$.
\end{cor}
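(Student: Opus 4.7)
The plan is to assemble the result purely from what has already been proved. By definition,
$$X_{D,\Mcal_\bullet}^{\Box,w}=X_{D,\Mcal_\bullet}^\Box\times_{X_{W^+,\Fcal_\bullet}^\Box}X_{W^+,\Fcal_\bullet}^{\Box,w}.$$
Corollary \ref{represent2}(ii) asserts that $|X_{D,\Mcal_\bullet}^\Box|$ is pro-represented by a formal scheme which is formally smooth over $\widehat X_x \cong |X_{W^+,\Fcal_\bullet}^\Box|$ of relative dimension $[K:\Qp]\frac{n(n+1)}{2}$, while Corollary \ref{representw} identifies $|X_{W^+,\Fcal_\bullet}^{\Box,w}|$ with $\widehat X_{w,x}$, which is a formal subscheme of $\widehat X_x$. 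Base-changing, one obtains that $|X_{D,\Mcal_\bullet}^{\Box,w}|$ is pro-representable by a complete local noetherian $L$-algebra $R$ which is formally smooth over $\widehat\Ocal_{X_w,x}$ of the same relative dimension $[K:\Qp]\frac{n(n+1)}{2}$. In particular, $R$ is (non-canonically) a power series ring in $[K:\Qp]\frac{n(n+1)}{2}$ variables over $\widehat\Ocal_{X_w,x}$.

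Next, Theorem \ref{normality} gives that $X_w$ is normal, hence its local ring $\Ocal_{X_w,x}$ is a normal local ring. As $X_w$ is an excellent scheme (it is of finite type over the field $L$), the completion $\widehat\Ocal_{X_w,x}$ is still normal by \cite[Sch.7.8.3(v)]{EGAIV2}; since a noetherian normal local ring is necessarily a domain (a normal noetherian ring is a finite product of normal domains, and localness forces the product to be trivial), $\widehat\Ocal_{X_w,x}$ is a normal complete local domain. The hypothesis $w\in\Scal(x)$ ensures it is nonzero.

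A power series ring in finitely many variables over a normal noetherian domain is again a normal noetherian domain, so $R$ is a noetherian complete local normal domain of residue field $L$. Its dimension equals
$$\dim \widehat\Ocal_{X_w,x}+[K:\Qp]\tfrac{n(n+1)}{2}=\dim X_w+[K:\Qp]\tfrac{n(n+1)}{2},$$
and by Proposition \ref{completeintersections} we have $\dim X_w=\dim G=n^2[K:\Qp]$, yielding the claimed total dimension $[K:\Qp](n^2+\tfrac{n(n+1)}{2})$. The formal smoothness of $R$ over $\widehat X_{w,x}$ is exactly the base-change formal smoothness recorded above. The only point that required genuine input beyond the preceding constructions is the normality (hence domain property) of $\widehat\Ocal_{X_w,x}$, which is the reason why Theorem \ref{normality} was the hard step of the section.
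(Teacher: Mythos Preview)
Your proof is correct and follows essentially the same approach as the paper: pro-representability and formal smoothness over $\widehat X_{w,x}$ come by base change from Corollary~\ref{represent2}(ii) and Corollary~\ref{representw}, while the normal-domain property of the representing ring is inherited from $\widehat\Ocal_{X_w,x}$ via Theorem~\ref{normality} and excellence. The paper's proof is slightly terser (it cites Proposition~\ref{relrepW} together with Corollary~\ref{represent2}(ii) for pro-representability, and phrases the last step as ``so is any local ring which is formally smooth over $\widehat\Ocal_{X_w,x}$''), but the content is the same.
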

\begin{proof}
The pro-representability of $X_{D,\Mcal_\bullet}^{\Box,w}$ follows from Proposition \ref{relrepW} and (ii) of Corollary \ref{represent2}. It follows by base change from Corollary \ref{formallysmooth} and from (ii) of Corollary \ref{represent2} that $X_{D,\Mcal_\bullet}^{\Box,w}\longrightarrow X_{W^+,\Fcal_\bullet}^{\Box,w}$ is formally smooth of relative dimension $[K:\Q_p]\frac{n(n+1)}{2}$, whence the dimension since $|X_{W^+\!,\Fcal_\bullet}^{\Box,w}|\simeq \widehat{X}_{w,x}$ has dimension $[K:\Q_p]n^2$.
Recall that the local rings of an algebraic variety are excellent and that the completion of a normal excellent local domain is also a normal local domain (\cite[Sch.7.8.3(v)]{EGAIV2} and \cite[Sch.7.8.3(vii)]{EGAIV2}). In particular, it follows from Theorem \ref{normality} that the local ring $\widehat  \Ocal_{X_{w,x}}$ underlying the formal scheme $\widehat X_{w,x}$ is a complete local normal domain. So is any local ring which is formally smooth over $\widehat \Ocal_{X_{w,x}}$.
\end{proof}

Recall from Lemma \ref{XTw} that the irreducible components of $T=\tfrak\times_{\tfrak/{\Scal}}\tfrak$ are the $T_w=\{ (z,\Ad(w^{-1})z),\, z\in\tfrak\}$ for ${w\in \Scal}$. The map $(\kappa_1,\kappa_2)$ induces a morphism $\widehat X_x\longrightarrow \widehat T_{(0,0)}$ (resp. $\widehat X_{w,x}\longrightarrow \widehat T_{w,(0,0)}$) where $\widehat T_{(0,0)}$ (resp. $\widehat T_{w,(0,0)}$) is the completion of $T$ (resp. $T_w$) at the point $(0,0)$. Denote by $\Theta$ the composition:
$$X_{D,\Mcal_\bullet}^\Box\longrightarrow X_{W^+,\Fcal_\bullet}^\Box\buildrel\sim\over\longrightarrow |X_{W^+,\Fcal_\bullet}^\Box|\buildrel\sim\over\longrightarrow \widehat{X}_x\longrightarrow \widehat T_{(\kappa_1,\kappa_2)(x)}=\widehat T_{(0,0)}.$$
The same argument as in \S\ref{debut} and \S\ref{suite} for the morphisms $\kappa_{W,\Fcal_\bullet}$ and $\kappa_{W^+}$ shows that the morphism $\Theta$ factors through a morphism still denoted $\Theta:X_{D,\Mcal_\bullet}\longrightarrow \widehat T_{(0,0)}$ of groupoids over $\Ccal_L$ which doesn't depend on any framing.

\begin{cor}\label{w=w'}
Let $w\in {\Scal}(x)$ and $w'\in {\Scal}$, then the morphisms $X_{D,\Mcal_\bullet}^{\Box,w}\!\hookrightarrow X_{D,\Mcal_\bullet}^\Box\!\!\longrightarrow \widehat T_{(0,0)}$ and $X_{D,\Mcal_\bullet}^{w}\hookrightarrow X_{D,\Mcal_\bullet}\longrightarrow \widehat T_{(0,0)}$ of groupoids over $\Ccal_L$ induced by $\Theta$ factor through the embedding $\widehat T_{w',(0,0)}\hookrightarrow \widehat T_{(0,0)}$ if and only if $w'=w$.
\end{cor}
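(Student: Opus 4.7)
The plan is to reduce the statement to Lemma \ref{factorw'} via the tower of formally smooth morphisms already established between our deformation groupoids and the algebraic scheme $X$.

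First I would unwind the definition of $\Theta$ restricted to $X_{D,\Mcal_\bullet}^{\Box,w}$. By construction of $X_{W^+,\Fcal_\bullet}^{\Box,w}$ in (\ref{closedimm}) as a fiber product along $\widehat X_{w,x} \hookrightarrow |X_{W^+,\Fcal_\bullet}^\Box| \simeq \widehat X_x$, the composition $X_{D,\Mcal_\bullet}^{\Box,w} \hookrightarrow X_{D,\Mcal_\bullet}^\Box \xrightarrow{\Theta} \widehat T_{(0,0)}$ factors as
$$X_{D,\Mcal_\bullet}^{\Box,w} \longrightarrow |X_{W^+,\Fcal_\bullet}^{\Box,w}| \xrightarrow{\sim} \widehat X_{w,x} \xrightarrow{(\widehat\kappa_1,\widehat\kappa_2)} \widehat T_{(0,0)},$$
where the first arrow is formally smooth by Corollary \ref{cornormal} and the last is the map studied in Lemma \ref{factorw'}.

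The next step is to pass to complete local rings. By Corollary \ref{cornormal}, $X_{D,\Mcal_\bullet}^{\Box,w}$ is pro-represented by a noetherian complete local ring $R_w$ which is formally smooth, hence faithfully flat, over $\widehat\Ocal_{X_w,x}$; in particular the structural morphism $\widehat\Ocal_{X_w,x}\hookrightarrow R_w$ is injective. Consequently, the composition $\widehat\Ocal_{T,(0,0)}\to \widehat\Ocal_{X_w,x}\to R_w$ factors through the surjection $\widehat\Ocal_{T,(0,0)}\twoheadrightarrow \widehat\Ocal_{T_{w'},(0,0)}$ if and only if the map $\widehat\Ocal_{T,(0,0)}\to \widehat\Ocal_{X_w,x}$ itself does. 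Lemma \ref{factorw'} tells us that this last condition is equivalent to $w'=w$, which gives the statement for $X_{D,\Mcal_\bullet}^{\Box,w}$.

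Finally I would deduce the statement for $X_{D,\Mcal_\bullet}^{w}$ from that of $X_{D,\Mcal_\bullet}^{\Box,w}$. The forgetful morphism $X_{D,\Mcal_\bullet}^{\Box,w}\longrightarrow X_{D,\Mcal_\bullet}^{w}$ is obtained by base change from $X_{D,\Mcal_\bullet}^{\Box}\longrightarrow X_{D,\Mcal_\bullet}$; the latter is formally smooth (adding a framing $\alpha_A$ on $D_{\pdR}(W_{\dR}(\Mcal_A))$ is unobstructed, exactly as in Corollary \ref{represent1}), so the forgetful morphism is itself formally smooth, hence faithfully flat at the level of pro-representing rings. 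Since $\Theta$ factors through $X_{D,\Mcal_\bullet}$ by its very definition, the same injectivity argument as above shows that the composition on $X_{D,\Mcal_\bullet}^{w}$ factors through $\widehat T_{w',(0,0)}$ if and only if its pullback to $X_{D,\Mcal_\bullet}^{\Box,w}$ does, which by the previous step happens exactly when $w'=w$. The only subtle point is the passage from factorization properties of groupoid morphisms to factorization properties of ring maps; this is harmless once we work on the pro-representing hulls, which exist thanks to Corollary \ref{represent2} and Corollary \ref{cornormal}.
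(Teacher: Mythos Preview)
Your proof is correct and follows essentially the same route as the paper: reduce to Lemma~\ref{factorw'} via the formal smoothness of $X_{D,\Mcal_\bullet}^{\Box,w}$ over $\widehat X_{w,x}$ (Corollary~\ref{cornormal}), and use the commutative square relating the $\Box$ and non-$\Box$ versions to transfer the factorization condition. The one small imprecision is in your third step, where you speak of ``pro-representing rings'' for $X_{D,\Mcal_\bullet}^{w}$: this groupoid is not shown to be pro-representable in the paper (only the $\Box$ versions are, in Corollaries~\ref{represent2} and~\ref{cornormal}), so the faithful-flatness phrasing doesn't literally apply; however, since the forgetful morphism $X_{D,\Mcal_\bullet}^{\Box,w}\to X_{D,\Mcal_\bullet}^{w}$ is formally smooth and in particular essentially surjective on $A$-points for every $A\in\Ccal_L$, the factorization condition on the non-$\Box$ side is equivalent to that on the $\Box$ side directly at the level of functors, which is how the paper (tersely) argues it.
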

\begin{proof}
Since $\Theta$ factors through $X_{D,\Mcal_\bullet}$, by the commutative diagram:
$$ \xymatrix{ X_{D,\Mcal_\bullet}^{\Box,w}\ar[r]\ar[d] & X_{D,\Mcal_\bullet}^{\Box}\ar[r]\ar[d] & \widehat T_{(0,0)}\ar@{=}[d]\\ 
X_{D,\Mcal_\bullet}^{w}\ar[r] & X_{D,\Mcal_\bullet}\ar[r] & \widehat T_{(0,0)}}$$
we see that it is enough to prove the first statement. By Corollary \ref{cornormal} and the definition of $\Theta$, it is enough to prove the same statement for $\widehat X_{w,x}$ and $\widehat T_{(0,0)}$, i.e. the composition of the morphisms $\widehat X_{w,x}\hookrightarrow \widehat X_x\longrightarrow \widehat T_{(0,0)}$ factors through $\widehat T_{w',(0,0)}$ if and only if $w'=w$. This is Lemma \ref{factorw'}.
\end{proof}

\subsection{The case of Galois representations}\label{galois}

We reconsider some of the previous groupoids over $\Ccal_L$ when the $(\varphi,\Gamma_K)$-module comes from a representation of $\Gcal_K$ and define a few others.

Let $r:\,\Gcal_K\rightarrow\GL_n(L)$ be a continuous morphism (where $L$ is a finite extension that splits $K$) and let $V$ be the associated representation of $\Gcal_K$ (there should be no confusion between this $V$ and a generic object of $\Rep_{\BdR}(\Gcal_K)$ which was denoted by $V$ in \S\ref{debut}). Let $X_r$ be the groupoid over $\Ccal_L$ of deformations of $r$ and $X_{V}$ the groupoid over $\Ccal_L$ of deformations of $V$. So the objects of $X_r$ are the $(A,r_A:\,\Gcal_K\rightarrow\GL_n(A))$ such that composing with $\GL_n(A)\twoheadrightarrow \GL_n(L)$ gives $r$ and the objects of $X_{V}$ are the $(A,V_{A},j_A)$ where $V_{A}$ is a free $A$-module of finite rank with a continuous $A$-linear action of $\Gcal_K$ and $j_A$ a $\Gcal_K$-invariant isomorphism $V_{A}\otimes_AL\buildrel\sim\over \longrightarrow V$. There is a natural morphism:
$$X_r\longrightarrow X_{V}$$
which is easily checked to be relatively representable, formally smooth of relative dimension $n^2$. We let $D:=D_{\rig}(V)$ be the (\'etale) $(\varphi,\Gamma_K)$-module over $\Rcal_{L,K}$ associated to $V$ and we set $\Mcal:=D[\tfrac{1}{t}]$. By the argument of \cite[Prop.2.3.13]{BelChe} the functor $D_{\rig}$ induces an equivalence $X_V\buildrel\sim\over\longrightarrow X_D$.

Now we assume that $V$ is a trianguline representation and fix a triangulation $\Mcal_\bullet$ of $\Mcal$ as in \S\ref{trianguline}. We define the following groupoids over $\Ccal_L$: $X_{V,\Mcal_\bullet}:=X_{V}\times_{X_D}X_{D,\Mcal_\bullet}$ and $X_{r,\Mcal_\bullet}:=X_{r}\times_{X_{V}}X_{V,\Mcal_\bullet}\simeq X_{r}\times_{X_{D}}X_{D,\Mcal_\bullet}\simeq X_{r}\times_{X_{\Mcal}}X_{\Mcal,\Mcal_\bullet}$. The natural morphism of groupoids over $\Ccal_L$:
\begin{eqnarray}\label{basechange0}
X_{r,\Mcal_\bullet} \longrightarrow X_{V,\Mcal_\bullet}
\end{eqnarray}
is formally smooth of relative dimension $n^2$ by base change.

We assume moreover from now on that $\Mcal_\bullet$ admits a locally algebraic parameter in $\Tcal_0^n(L)$ and we define $W^+:=W^+_{\dR}(D)$ and $W:=W^+[\tfrac{1}{t}]$ (in particular $W$ is almost de Rham). Note that $W^+=B^+_{\dR}\otimes_{\Qp}V$ and $W={\rm B}_{\dR}\otimes_{\Qp}V$. We also define $\Fcal_\bullet$ and $\Dcal_\bullet$ as in \S\ref{trianguline}. We fix a framing $\alpha : (L\otimes_{\Qp} K)^n\buildrel\sim\over\longrightarrow D_{\pdR}(W)$ as in \S\ref{trianguline}. We define $X_{V}^\Box:=X_{V}\times_{X_D}X_D^\Box\buildrel\sim\over\longrightarrow X_D^\Box$, $X_r^\Box:=X_r\times_{X_{V}}X_{V}^\Box$, $X_{V,\Mcal_\bullet}^\Box:=X_{V,\Mcal_\bullet}\times_{X_V}X_{V}^\Box$ and $X_{r,\Mcal_\bullet}^\Box:=X_{r,\Mcal_\bullet}\times_{X_{r}}X_{r}^\Box$. By base change $X_r^\Box\longrightarrow X_{V}^\Box$ is formally smooth of relative dimension $n^2$. Since $X_D^\Box\longrightarrow X_D$ is formally smooth of relative dimension $[K:\Qp]n^2$ (by base change from $X_W^\Box\longrightarrow X_W$), the same is true (by base change again) for $X_V^\Box\longrightarrow X_V$ and $X_r^\Box\longrightarrow X_r$. Note that $X_D^\Box$, and hence $X_V^\Box$, are pro-representable (use $X_D^\Box\simeq X_{\Mcal}\times_{X_W}X_{W^+}^\Box$ by Proposition \ref{Berger}, and then Theorem \ref{dRlattice} with (i) of Lemma \ref{relrep}).

\begin{rem}\label{remadd}
{\rm Recall that the framing $\Box$ in $X_{V}^\Box$ is {\it not} directly on the Galois deformation $V_A$, as is usual to do (e.g. in \cite{KisinModularity} or \cite{BHS2}) but only on $D_{\pdR}({\rm B}_{\dR}\otimes_{\Qp}V_A)$. The groupoid over $\Ccal_L$ of usual framed deformations of $V$ is precisely $X_r$, which is pro-representable by the same argument as in \cite[\S8.1]{KisinModularity}.}
\end{rem}

We assume moreover from now on that the almost de Rham $L\otimes_{\Qp}\BdR^+$-representation $W^+$ is regular (Definition \ref{regular}) and define $\Fil_{W^+,\bullet}$ and $x=(\alpha^{-1}(\Dcal_\bullet),\alpha^{-1}(\Fil_{W^+,\bullet}),N_W)\in X(L)$ as in \S\ref{trianguline}. We finally also define the following groupoids over $\Ccal_L$: $X_{V,\Mcal_\bullet}^w:=X_{V}\times_{X_D}X_{D,\Mcal_\bullet}^w$ (for $w\in {\Scal}$), $X_{r,\Mcal_\bullet}^w:=X_{r}\times_{X_{V}}X_{V,\Mcal_\bullet}^w$ and their $\Box$-versions. We have a cartesian commutative diagram of groupoids over $\Ccal_L$:
\begin{eqnarray}\label{basechange}
\begin{gathered}
\xymatrix{ X_{r,\Mcal_\bullet}^\Box\ar[r]\ar[d] & X_{r,\Mcal_\bullet} \ar[d]\\
X_{V,\Mcal_\bullet}^\Box \ar[r] & X_{V,\Mcal_\bullet}}
\end{gathered}
\end{eqnarray}
where the vertical maps are formally smooth of relative dimension $n^2$ (by base change) and the horizontal maps are formally smooth of relative dimension $[K:\Q_p]n^2$ (base change again). We also have the $w$-analogue of (\ref{basechange}) with the same properties. Moreover, because of the framing on $r$, all automorphisms in the categories $X_r(A)$, $X_{r,\Mcal_\bullet}(A)$, $X_{r}^\Box(A)$, $X_{r,\Mcal_\bullet}^\Box(A)$, $X_{r,\Mcal_\bullet}^w(A)$ and $X_{r,\Mcal_\bullet}^{\Box,w}(A)$ are trivial, hence all these groupoids over $\Ccal_L$ are equivalent to their associated functor of isomorphism classes $\vert\ \ \vert$. We will tacitly use this in the sequel.

\begin{theo}\label{structhm}
(i) The functor $|X_{r,\Mcal_\bullet}|$ is pro-representable by a reduced equidimensional local complete noetherian ring $R_{r,\Mcal_\bullet}$ of residue field $L$ and dimension $n^2+[K:\Q_p]\frac{n(n+1)}{2}$.\\
(ii) For each $w\in{\Scal}(x)$, the functor $|X_{r,\Mcal_\bullet}^w|$ is pro-representable by $R_{r,\Mcal_\bullet}^w:=R_{r,\Mcal_\bullet}/\mathfrak{p}_w$ where $\mathfrak{p}_w$ is a minimal prime ideal of $R_{r,\Mcal_\bullet}$ and $R_{r,\Mcal_\bullet}/\mathfrak{p}_w$ is a normal local ring. Moreover the map $w\longmapsto \mathfrak{p}_w$ is a bijection between ${\Scal}(x)$ and the set of minimal prime ideals of $R_{r,\Mcal_\bullet}$.\\
(iii) The morphism $|X_{r,\Mcal_\bullet}^w|\longrightarrow  |X_{V,\Mcal_\bullet}^w|\hookrightarrow |X_{V,\Mcal_\bullet}|\simeq |X_{D,\Mcal_\bullet}|\buildrel \Theta \over \longrightarrow \widehat T_{(0,0)}$ of groupoids over $\Ccal_L$ factors through $\widehat T_{w',(0,0)}\hookrightarrow \widehat T_{(0,0)}$ if and only if $w'=w$.
\end{theo}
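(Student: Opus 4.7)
The plan is to obtain Theorem \ref{structhm} by transferring properties along the chain of formally smooth morphisms constructed in the previous subsections, ultimately bottoming out at the completed local rings $\widehat\Ocal_{X,x}$ and $\widehat\Ocal_{X_w,x}$ studied in \S\ref{Springer}. Everything needed is already in place: the groupoid $X_{D,\Mcal_\bullet}^\Box$ is pro-representable and formally smooth of relative dimension $[K:\Q_p]\frac{n(n+1)}{2}$ over $\widehat X_x$ (Corollary \ref{represent2}), while the framed Galois deformations add a further layer on top.

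First I would deduce pro-representability. From the identifications $X_{V,\Mcal_\bullet}^\Box\simeq X_{D,\Mcal_\bullet}^\Box$ and $X_{r,\Mcal_\bullet}^\Box\simeq X_{V,\Mcal_\bullet}^\Box\times_{X_V^\Box}X_r^\Box$, together with pro-representability of $X_r^\Box$ and $X_D^\Box\simeq X_V^\Box$ and the fact that $X_r^\Box\to X_V^\Box$ is formally smooth of relative dimension $n^2$ (base change of $X_r\to X_V$), the groupoid $X_{r,\Mcal_\bullet}^\Box$ is pro-represented by a formal scheme which is formally smooth of relative dimension $n^2$ over $|X_{D,\Mcal_\bullet}^\Box|$. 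By diagram \eqref{basechange}, the forgetful morphism $X_{r,\Mcal_\bullet}^\Box\to X_{r,\Mcal_\bullet}$ is formally smooth of relative dimension $[K:\Q_p]n^2$, and $X_{r,\Mcal_\bullet}\simeq|X_{r,\Mcal_\bullet}|$ has trivial automorphisms thanks to the framing on $r$. Hence $|X_{r,\Mcal_\bullet}|$ is pro-representable by a noetherian complete local ring $R_{r,\Mcal_\bullet}$ with residue field $L$ and dimension $[K:\Q_p](n^2+\tfrac{n(n+1)}{2})+n^2-[K:\Q_p]n^2=n^2+[K:\Q_p]\tfrac{n(n+1)}{2}$, as claimed.

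For the structural content of (i) and (ii) I would invoke the equivalences recalled in the notation section: reducedness, equidimensionality and normality pass between an excellent local ring and its completion, and formally smooth morphisms preserve all of these as well as inducing a bijection of minimal primes. The ring $\widehat\Ocal_{X,x}$ is reduced equidimensional of dimension $n^2[K:\Q_p]$ with minimal primes in bijection with $\Scal(x)$ (using Theorem \ref{globalstructurethm}(i), Proposition \ref{completeintersections} and the fact that for each $w\in\Scal(x)$ the quotient $\widehat\Ocal_{X_w,x}$ is a \emph{normal local domain} by Theorem \ref{normality} together with \cite[Sch.7.8.3(v)]{EGAIV2}; a normal local ring is automatically a domain). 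Transferring these properties along the chain $\widehat\Ocal_{X,x}\to\widehat\Ocal_{|X_{D,\Mcal_\bullet}^\Box|}\to\widehat\Ocal_{|X_{r,\Mcal_\bullet}^\Box|}$ of formally smooth morphisms (followed by faithfully flat descent along $|X_{r,\Mcal_\bullet}^\Box|\to|X_{r,\Mcal_\bullet}|$) yields that $R_{r,\Mcal_\bullet}$ is reduced equidimensional of the stated dimension, and that setting $R_{r,\Mcal_\bullet}^w$ equal to the ring pro-representing $|X_{r,\Mcal_\bullet}^w|$ (which exists by Proposition \ref{relrepW} and Corollary \ref{cornormal}) realizes $R_{r,\Mcal_\bullet}^w$ as the quotient of $R_{r,\Mcal_\bullet}$ by the minimal prime $\mathfrak{p}_w$ corresponding to $\widehat\Ocal_{X_w,x}$, with $R_{r,\Mcal_\bullet}/\mathfrak{p}_w$ a normal domain and $w\mapsto\mathfrak{p}_w$ a bijection.

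Finally, (iii) is immediate from Corollary \ref{w=w'}: the composition appearing in (iii) factors through the map $|X_{D,\Mcal_\bullet}^w|\hookrightarrow|X_{D,\Mcal_\bullet}|\xrightarrow{\Theta}\widehat T_{(0,0)}$, and the corollary states exactly that this factors through $\widehat T_{w',(0,0)}$ if and only if $w'=w$. The main obstacle throughout is purely bookkeeping: one must verify that the chain of fibre products defining $X_{r,\Mcal_\bullet}^w$, $X_{D,\Mcal_\bullet}^{\Box,w}$ and $\widehat X_{w,x}$ is compatible at each step so that the indexing by $w\in\Scal(x)$, the identification of minimal primes and the map $\Theta$ all correspond to each other under the various formally smooth morphisms.
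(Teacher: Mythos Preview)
Your overall strategy matches the paper's: reduce to the geometry of $\widehat X_x$ and $\widehat X_{w,x}$ via the chain of formally smooth morphisms, then invoke Corollary~\ref{w=w'} for part~(iii). The dimension count and the transfer of reducedness, equidimensionality and normality along formally smooth maps are handled correctly.

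There is, however, one step that is not justified as written. You deduce pro-representability of $|X_{r,\Mcal_\bullet}|$ from pro-representability of $X_{r,\Mcal_\bullet}^\Box$ together with formal smoothness of $X_{r,\Mcal_\bullet}^\Box\to X_{r,\Mcal_\bullet}$ and triviality of automorphisms. This inference is not valid in general: pro-representability does not descend along formally smooth morphisms, even to set-valued targets (Schlessinger's conditions (H1) and (H4) for the base are not automatic from this data). The paper proceeds in the opposite direction: by base change from Proposition~\ref{closedimmersion}, the morphism $X_{r,\Mcal_\bullet}\to X_r$ is relatively representable (in fact a closed immersion), and since $X_r$ is pro-representable (Remark~\ref{remadd}), so is $X_{r,\Mcal_\bullet}$. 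Likewise $X_{r,\Mcal_\bullet}^w\hookrightarrow X_{r,\Mcal_\bullet}$ is a closed immersion by base change from Proposition~\ref{relrepW}, hence pro-representable by a quotient of $R_{r,\Mcal_\bullet}$. Once pro-representability is in hand this way, the fact that the ring representing $|X_{r,\Mcal_\bullet}^\Box|$ is a formal power series ring over $R_{r,\Mcal_\bullet}$ follows from the explicit shape of the framing, and then your transfer argument for (i) and (ii) goes through unchanged.

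For (iii), the paper also makes explicit that formal smoothness of $X_{r,\Mcal_\bullet}^w\to X_{V,\Mcal_\bullet}^w$ is what makes the ``only if'' direction descend from Corollary~\ref{w=w'}: it guarantees surjectivity on $A$-points for every $A\in\Ccal_L$, so a factoring of the composite through $\widehat T_{w',(0,0)}$ forces the same factoring already at the level of $X_{D,\Mcal_\bullet}^w$. You gesture at this under ``bookkeeping'', but it is the substantive point.
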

\begin{proof}
By base change from Proposition \ref{closedimmersion} the morphism $X_{D,\Mcal_\bullet}\longrightarrow X_D$ is relatively representable, hence also $X_{V,\Mcal_\bullet}\longrightarrow X_V$, and by base change also $X_{r,\Mcal_\bullet}\longrightarrow X_{r}$. Since $X_{r}$ is pro-representable (see Remark \ref{remadd}), then $X_{r,\Mcal_\bullet}$, and thus $|X_{r,\Mcal_\bullet}|$, are pro-representable. By Proposition \ref{relrepW} the morphism $X_{D,\Mcal_\bullet}^w\longrightarrow X_{D,\Mcal_\bullet}$ is relatively representable and a closed immersion, hence also $X_{V,\Mcal_\bullet}^w\longrightarrow X_{V,\Mcal_\bullet}$ and by base change also $X_{r,\Mcal_\bullet}^w\longrightarrow X_{r,\Mcal_\bullet}$. Since $X_{r,\Mcal_\bullet}$ is pro-representable, we deduce that $X_{r,\Mcal_\bullet}^w$ is pro-representable by a complete local ring which is a quotient of the one representing $X_{r,\Mcal_\bullet}$. Moreover it follows from their definition that the local complete ring representing the functor $\vert X_{r,\Mcal_\bullet}^\Box\vert$ is a formal power series ring over the one representing the functor $\vert X_{r,\Mcal_\bullet}\vert$, and likewise with $\vert X_{r,\Mcal_\bullet}^{\Box,w}\vert$ and $\vert X_{r,\Mcal_\bullet}^{w}\vert$ by base change using (\ref{basechangew}). The remaining assertion in (i) follows from this, the formal smoothness of $X_{r,\Mcal_\bullet}^\Box\longrightarrow  X_{V,\Mcal_\bullet}^\Box$, (ii) of Corollary \ref{represent2} and the properties of $\widehat X_x$ (see e.g. the proof of Lemma \ref{factorw'}). Likewise (ii) follows from this, the formal smoothness of $X_{r,\Mcal_\bullet}^{\Box,w}\longrightarrow  X_{V,\Mcal_\bullet}^{\Box,w}$, Corollary \ref{cornormal} and the properties of $\widehat X_{w,x}$ (see the proof of Corollary \ref{cornormal}). Finally we prove (iii). Since $\Theta:X_{D,\Mcal_\bullet}\longrightarrow \widehat T_{(0,0)}$ factors through $|X_{D,\Mcal_\bullet}|$, it is enough to prove the same statement without the $|\ \ |$. This follows from Corollary \ref{w=w'} and the formal smoothness of $X_{r,\Mcal_\bullet}^{w}\longrightarrow  X_{V,\Mcal_\bullet}^{w}$. 
\end{proof}

For $w\in{\Scal}$ recall that $T_{X_w,x}=\widehat X_{w,x}(L[\varepsilon])$ is the tangent space of $X_w$ at the point $x$.

\begin{cor}\label{tangent}
For $w\in{\Scal}(x)$ we have:
$$\dim_LX_{r,\Mcal_\bullet}^{w}(L[\varepsilon])=n^2-[K:\Q_p]n^2+[K:\Q_p]\frac{n(n+1)}{2}+\dim_LT_{X_{w},x}.$$
\end{cor}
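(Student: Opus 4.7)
My plan is to compute the tangent space dimension by tracing through a chain of formally smooth morphisms relating $X_{r,\Mcal_\bullet}^w$ to the formal scheme $\widehat X_{w,x}$, which has been identified with $|X_{W^+,\Fcal_\bullet}^{\Box,w}|$ in Corollary \ref{representw}. The key observation is that for a formally smooth morphism $F \to G$ of relative dimension $d$ between pro-representable functors on $\Ccal_L$ (with residue field $L$), the tangent space dimensions satisfy $\dim_L F(L[\varepsilon]) = \dim_L G(L[\varepsilon]) + d$, since the pro-representing ring of $F$ is a formal power series algebra in $d$ variables over that of $G$.

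The chain to exploit is as follows. Starting from $|X_{W^+,\Fcal_\bullet}^{\Box,w}| \simeq \widehat X_{w,x}$, whose tangent space has dimension $\dim_L T_{X_w,x}$ by definition, I would use that $X_{D,\Mcal_\bullet}^{\Box,w} \longrightarrow X_{W^+,\Fcal_\bullet}^{\Box,w}$ is formally smooth of relative dimension $[K:\Q_p]\frac{n(n+1)}{2}$ (obtained by base change from the statement in (ii) of Corollary \ref{represent2}, using the definition of $X_{D,\Mcal_\bullet}^{\Box,w}$ as a fiber product). Next, $X_{V,\Mcal_\bullet}^{\Box,w} \buildrel\sim\over\longrightarrow X_{D,\Mcal_\bullet}^{\Box,w}$ via the equivalence induced by the functor $D_{\rig}$ (see the beginning of \S\ref{galois}). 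Then $X_{r,\Mcal_\bullet}^{\Box,w} \longrightarrow X_{V,\Mcal_\bullet}^{\Box,w}$ is formally smooth of relative dimension $n^2$ by base change from $X_r \longrightarrow X_V$. Finally, $X_{r,\Mcal_\bullet}^{\Box,w} \longrightarrow X_{r,\Mcal_\bullet}^w$ is formally smooth of relative dimension $[K:\Q_p]n^2$, by base change from $X_r^\Box \longrightarrow X_r$ (which has this relative dimension because the framing $\Box$ parametrizes isomorphisms $(L\otimes_{\Q_p}K)^n \buildrel\sim\over\longrightarrow D_{\pdR}(W_A)$, yielding a torsor under $\GL_n(L\otimes_{\Q_p}K)$ of dimension $n^2[K:\Q_p]$ over $L$).

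Putting the four relative dimensions together and solving for $\dim_L X_{r,\Mcal_\bullet}^w(L[\varepsilon])$ gives
\[
\dim_L X_{r,\Mcal_\bullet}^w(L[\varepsilon]) = \dim_L T_{X_w,x} + [K:\Q_p]\tfrac{n(n+1)}{2} + n^2 - [K:\Q_p]n^2,
\]
which is the claimed formula. There is no real obstacle here; the only subtle point is to verify correctly the relative dimensions of the various formally smooth morphisms in the chain, and in particular to keep straight that the framing $\Box$ used throughout \S\ref{galois} is on $D_{\pdR}(W)$ rather than on $V_A$ itself (cf.~Remark \ref{remadd}), so that $X_V^\Box \to X_V$ has relative dimension $n^2[K:\Q_p]$ and not $n^2$, whereas the ``Galois framing'' step $X_r \to X_V$ contributes only $n^2$. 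All the ingredients (pro-representability, formal smoothness, and the identifications $|X_{W^+,\Fcal_\bullet}^{\Box,w}| \simeq \widehat X_{w,x}$ and $X_V \simeq X_D$) are already available in the excerpt, so this is a direct bookkeeping argument.
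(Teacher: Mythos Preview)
Your proposal is correct and follows essentially the same approach as the paper's proof: both compute $\dim_L X_{r,\Mcal_\bullet}^w(L[\varepsilon])$ by tracing through the chain $X_{r,\Mcal_\bullet}^w \leftarrow X_{r,\Mcal_\bullet}^{\Box,w} \to X_{V,\Mcal_\bullet}^{\Box,w} \simeq X_{D,\Mcal_\bullet}^{\Box,w} \to X_{W^+,\Fcal_\bullet}^{\Box,w} \simeq \widehat X_{w,x}$ with the same relative dimensions at each step. Your explicit attention to the distinction between the two framings (Remark~\ref{remadd}) is exactly the point underlying the $w$-analogue of diagram~(\ref{basechange}) that the paper invokes.
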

\begin{proof}
The morphism $X_{D,\Mcal_\bullet}^{\Box,w}\longrightarrow X_{W^+,\Fcal_\bullet}^{\Box,w}\buildrel\sim\over\longrightarrow \widehat X_{w,x}$ is formally smooth of relative dimension $[K:\Q_p]\frac{n(n+1)}{2}$ by base change from the morphism $X_{D,\Mcal_\bullet}^{\Box}\longrightarrow X_{W^+,\Fcal_\bullet}^{\Box}$ and Corollary \ref{represent2}. Hence $\dim_LX_{V,\Mcal_\bullet}^{\Box,w}(L[\varepsilon]) = [K:\Q_p]\frac{n(n+1)}{2}+\dim_LT_{X_{w},x}$. Since $\dim_LX_{r,\Mcal_\bullet}^{w}(L[\varepsilon])=\dim_LX_{r,\Mcal_\bullet}^{\Box,w}(L[\varepsilon])-[K:\Q_p]n^2=n^2+\dim_LX_{V,\Mcal_\bullet}^{\Box,w}(L[\varepsilon])-[K:\Q_p]n^2$ by the $w$-analogue of (\ref{basechange}), we obtain the result.
\end{proof}

We let $w_x\in{\Scal}$ measuring the relative position of the two flags of $(L\otimes_{\Q_p}K)^n\buildrel{\alpha}\over\simeq D_{\pdR}(W)$ given by $\alpha^{-1}(\Dcal_\bullet)$ and by $\alpha^{-1}(\Fil_{W^+,\bullet})$. More precisely $w_x$ is the unique permutation in $\Scal$ such that the pair of flags $(\alpha^{-1}(\Dcal_\bullet),\alpha^{-1}(\Fil_{W^+,\bullet}))$ on $(L\otimes_{\Q_p}K)^n$ is in the $G$-orbit of $(1,w_x)$ in $G/B\times_LG/B$. It doesn't depend on the choice of $\alpha$.

\begin{prop}\label{preceq}
If $w\in {\Scal}(x)$, or equivalently $X_{r,\Mcal_\bullet}^{w}\ne 0$, then $w_x\preceq w$.
\end{prop}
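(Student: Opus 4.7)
The plan is to reduce the statement to a purely geometric statement about the Schubert stratification of $G/B \times G/B$, and then invoke the standard closure relations.

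First I would unwind the equivalence $X_{r,\mathcal{M}_\bullet}^w \neq 0 \Longleftrightarrow w \in \mathcal{S}(x)$, which is exactly the definition of $\mathcal{S}(x)$ (the groupoid $X_{r,\mathcal{M}_\bullet}^w$ is obtained by base change from $X_{W^+,\mathcal{F}_\bullet}^w$, whose pro-representing ring $\widehat{X}_{w,x}$ is nonzero iff $x \in X_w(L)$, cf.\ Corollary \ref{representw} and Corollary \ref{cornormal}). Thus we may assume $x = (\alpha^{-1}(\mathcal{D}_\bullet), \alpha^{-1}(\mathrm{Fil}_{W^+,\bullet}), N_W)$ is an $L$-point of $X_w$, and we need to show $w_x \preceq w$.

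Next I would apply the projection $\pi: X \to G/B \times G/B$ of \eqref{pimap}. Recall from the proof of Lemma \ref{closurerelationsXw} that $\pi(X_w) = \overline{U_w}$, hence $x \in X_w(L)$ implies
\[
\pi(x) = \bigl(\alpha^{-1}(\mathcal{D}_\bullet),\, \alpha^{-1}(\mathrm{Fil}_{W^+,\bullet})\bigr) \in \overline{U_w}(L).
\]
On the other hand, the very definition of $w_x$ is that the pair of complete flags $(\alpha^{-1}(\mathcal{D}_\bullet), \alpha^{-1}(\mathrm{Fil}_{W^+,\bullet}))$ lies in the $G$-orbit $U_{w_x} = G \cdot (B, \dot{w}_x B)$, i.e.\ $\pi(x) \in U_{w_x}(L)$.

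Combining these two facts gives $U_{w_x} \cap \overline{U_w} \neq \emptyset$, and by the standard closure relations for Schubert cells in $G/B \times G/B$ this forces $w_x \preceq w$. Since all inputs (the identity $\pi(X_w) = \overline{U_w}$ and the Schubert closure relations) are already available from \S\ref{Springer1}, there is no real obstacle: the proposition is essentially a translation of the Bruhat order on the diagonal $G$-orbits into the representation-theoretic language via the groupoids $X_{r,\mathcal{M}_\bullet}^w$.
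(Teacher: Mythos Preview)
Your proof is correct and is essentially the same as the paper's. The paper observes that $x\in X_w\cap V_{w_x}$ and invokes Lemma~\ref{closurerelationsXw} directly; you instead unpack that lemma by projecting via $\pi$ to $\overline{U_w}\cap U_{w_x}$ and appealing to the Schubert closure relations, which is precisely how Lemma~\ref{closurerelationsXw} is proved.
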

\begin{proof}
By definition of $w_x$, we have $x\in V_{w_x}$ (see the beginning of \S\ref{Springer1} for $V_{w_x}$), hence $x\in X_w\cap V_{w_x}$ by definition of ${\Scal}(x)$. The result then follows from Lemma \ref{closurerelationsXw} (and from the $w$-analogue of (\ref{basechange}) for the equivalence $w\in {\Scal}(x)\Leftrightarrow X_{r,\Mcal_\bullet}^{w}\ne 0$).
\end{proof}

\subsection{The trianguline variety is locally irreducible}\label{begin}

We describe the completed local rings of the trianguline variety $X_{\rm tri}(\rbar)$ at certain points of integral weights in terms of some of the previous formal schemes and derive important consequences on the local geometry of $X_{\rm tri}(\rbar)$ at these points.

We keep the previous notation. We denote by $\Tcal_{\rm reg}\subset \Tcal_L$ the Zariski-open complement of the $L$-valued points $z^{-\bf k}, \varepsilon(z) z^{{\bf k}}$ with ${\bf k}=(k_\tau)_{\tau}\in \Z_{\geq 0}^{\Sigma}$, and $\Tcal_{\rm reg}^n$ for the Zariski-open subset of characters $\deltabar=(\delta_1,\dots,\delta_n)$ such that $\delta_i/\delta_j\in \Tcal_{\rm reg}$ for $i\neq j$. Note that $\Tcal_0^n\subsetneq \Tcal_{\rm reg}^n$.

We fix a continuous representation $\rbar:\Gcal_K\rightarrow \GL_n(k_L)$ and let $R_{\rbar}$ be the usual framed local deformation ring of $\rbar$, that is, the framing is on the $\Gcal_K$-deformation. This ring was denoted $R_{\rbar}^{\Box}$ in \cite[\S3.2]{BHS1} and \cite[\S3.2]{BHS2}, however we now drop the $\Box$ in order to avoid any confusion with the other kind of framing used here and already denoted $\Box$ (see Remark \ref{remadd}). It is a local complete noetherian $\mathcal{O}_L$-algebra of residue field $k_L$ and we denote by $\mathfrak{X}_{\rbar}:=(\Spf R_{\rbar})^{\rig}$ the rigid analytic space over $L$ associated to the formal scheme $\Spf R_{\rbar}$. Recall that $X_{\rm tri}(\rbar)$ (denoted $X_{\rm tri}^\Box(\rbar)$ in {\it loc.cit.}) is by definition the rigid analytic space over $L$ which is the Zariski-closure in $\mathfrak{X}_{\rbar}\times \mathcal{T}^n_L$ of:
\begin{equation}\label{urig}
U_{\rm tri}(\rbar):=\{{\rm points}\ (r,\deltabar)\ {\rm in}\ \Xfrak_{\rbar}\times\Tcal^n_{\rm reg}\ {\rm such\ that}\ r\ \text{is trianguline of parameter}\ \deltabar\}.
\end{equation}
(we refer to \cite[\S2.2]{BHS1} for more details, note that being of parameter $\deltabar$ is here a different (though related) notion than the one in Definition \ref{trianguline1/t}). The rigid space $X_{\rm tri}(\rbar)$ is reduced equidimensional of dimension $n^2+[K:\Qp]\frac{n(n+1)}{2}$ and its subset $U_{\rm tri}(\rbar)\subset X_{\rm tri}(\rbar)$ is Zariski-open, see \cite[Th.2.6]{BHS1}. As in \cite[\S2.2]{BHS1} we denote by $\omega'$ the composition $X_{\rm tri}(\rbar)\hookrightarrow \Xfrak_{\rbar}\times\Tcal_L^n\twoheadrightarrow \Tcal_L^n$ (the letter $\omega$ being reserved for the weight map).

We fix $x=(r,\deltabar)=(r,(\delta_i)_{i\in\{1,\dots,n\}})\in X_{\tri}(\rbar)(L)$ and let $V$, $D$, $\Mcal$ as in \S\ref{galois}.

\begin{prop}\label{uniquetri}
Assume that $\deltabar\in\Tcal_0^n$, then the $(\varphi,\Gamma_K)$-module $\Mcal$ over $\Rcal_{L,K}[\tfrac{1}{t}]$ has a unique triangulation of parameter $\deltabar$.
\end{prop}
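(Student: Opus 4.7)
The plan is to prove uniqueness by induction on the rank $n$ of $\Mcal$, the case $n=1$ being immediate. For $n\geq 2$, suppose $\Mcal_\bullet$ and $\Mcal_\bullet'$ are two triangulations of $\Mcal$ with parameter $\deltabar=(\delta_1,\dots,\delta_n)$. It suffices to prove that their first steps agree, $\Mcal_1 = \Mcal_1'$: the quotient $\Mcal/\Mcal_1$ is then a trianguline $(\varphi,\Gamma_K)$-module of rank $n-1$ carrying two triangulations $\Mcal_\bullet/\Mcal_1$ and $\Mcal_\bullet'/\Mcal_1$, both of parameter $(\delta_2,\dots,\delta_n)\in\Tcal_0^{n-1}$, so the inductive hypothesis forces $\Mcal_i=\Mcal_i'$ for all $i$.

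The key cohomological input, valid for any triangulation $\Mcal_\bullet$ with parameter $\deltabar$, is that the inclusion $\Mcal_1\hookrightarrow\Mcal$ induces an isomorphism
\begin{equation*}
H^0_{\varphi,\gamma_K}\!\bigl(\Mcal_1(\delta_1^{-1})\bigr)\buildrel\sim\over\longrightarrow H^0_{\varphi,\gamma_K}\!\bigl(\Mcal(\delta_1^{-1})\bigr).
\end{equation*}
This will follow from the long exact cohomology sequence attached to $0\to\Mcal_1(\delta_1^{-1})\to\Mcal(\delta_1^{-1})\to(\Mcal/\Mcal_1)(\delta_1^{-1})\to 0$ once one knows that $H^0_{\varphi,\gamma_K}((\Mcal/\Mcal_1)(\delta_1^{-1}))=0$. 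For the latter vanishing I would proceed by dévissage along the inherited triangulation of $\Mcal/\Mcal_1$: each graded piece is of the form $\Rcal_{L,K}(\delta_i\delta_1^{-1})[\tfrac{1}{t}]$ for some $i\geq 2$, and the assumption $\deltabar\in\Tcal_0^n$ forces $\delta_i\delta_1^{-1}\in\Tcal_0$ (that is, neither $\delta_i\delta_1^{-1}$ nor $\varepsilon\delta_1\delta_i^{-1}$ is algebraic). Part (i) of Lemma \ref{Asurj}, applied with $A=L$, then gives $H^0_{\varphi,\gamma_K}(\Rcal_{L,K}(\delta_i\delta_1^{-1})[\tfrac{1}{t}])=0$ for every $i\geq 2$, and iterating the long exact sequence yields the desired vanishing for the whole quotient.

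Under the standard identification $\Hom_{\Phi\Gamma_{L,K}}(\Rcal_{L,K}(\delta_1)[\tfrac{1}{t}],\Mcal)\simeq H^0_{\varphi,\gamma_K}(\Mcal(\delta_1^{-1}))$, the embedding $\Mcal_1'\hookrightarrow\Mcal$ corresponds to an element of the right-hand side which, by the isomorphism just established for $\Mcal_\bullet$, lifts to an element of $H^0_{\varphi,\gamma_K}(\Mcal_1(\delta_1^{-1}))=\Hom_{\Phi\Gamma_{L,K}}(\Rcal_{L,K}(\delta_1)[\tfrac{1}{t}],\Mcal_1)$. Hence the inclusion $\Mcal_1'\hookrightarrow\Mcal$ factors through $\Mcal_1\hookrightarrow\Mcal$, so $\Mcal_1'\subseteq\Mcal_1$ as submodules of $\Mcal$. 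Applying the same argument with the roles of $\Mcal_\bullet$ and $\Mcal_\bullet'$ exchanged yields $\Mcal_1\subseteq\Mcal_1'$, and thus $\Mcal_1=\Mcal_1'$, completing the induction.

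I do not anticipate a serious obstacle: all the required cohomological vanishings are already provided by Lemma \ref{Asurj}(i), and the overall structure closely mirrors the classical proof of uniqueness of triangulations for trianguline $(\varphi,\Gamma_K)$-modules over $\Rcal_{L,K}$ (compare \cite[Prop.2.3.6]{BelChe}), adapted here to the ``$t$-inverted'' setting via the cohomological framework of \S\ref{triangulinet}. The only point deserving care is the dévissage step, where one must keep track of which genericity condition on $\deltabar$ is being used at each level.
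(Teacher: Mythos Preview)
Your uniqueness argument is correct and is precisely the adaptation to $\Rcal_{L,K}[\tfrac{1}{t}]$ of \cite[Prop.2.3.6]{BelChe}. The paper frames things slightly differently, passing to triangulations of $D_{\rig}(V)$ over $\Rcal_{L,K}$ whose parameter differs from $\deltabar$ by algebraic characters and then citing \cite[Prop.6.2.8]{KPX} together with the discussion preceding \cite[Def.6.3.2]{KPX}; it explicitly lists the \cite{BelChe}-style argument (and the proof of Proposition~\ref{closedimmersion}) as an alternative. The underlying cohomological input is the same in both approaches.

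However, you have not addressed \emph{existence}. The point $x=(r,\deltabar)$ lies on $X_{\tri}(\rbar)$, which is only the Zariski-closure of $U_{\tri}(\rbar)$ in $\Xfrak_{\rbar}\times\Tcal_L^n$, so it is not given a priori that $\Mcal=D_{\rig}(r)[\tfrac{1}{t}]$ carries any triangulation of parameter $\deltabar$. This is precisely the content of the global triangulation theorem \cite[Th.6.3.13]{KPX} (see also \cite{Liuslope}), which the paper invokes explicitly for this step. Without that input the proposition would be vacuous at points of $X_{\tri}(\rbar)\setminus U_{\tri}(\rbar)$, and the sentence immediately following the proposition, ``we write $\Mcal_\bullet$ for the triangulation given by Proposition~\ref{uniquetri}'', on which all of \S\ref{begin} depends, would have no meaning.
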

\begin{proof}
It is sufficient to prove that the $(\varphi,\Gamma_K)$-module $D_{\rig}(V)$ has a unique triangulation whose parameter is of the form $(\delta'_i\delta_i)_{i\in \{1,\dots,n\}}$ for some algebraic $\delta'_i$ (see \S\ref{triangulinet}). The existence is exactly the contents of \cite[Th.6.3.13]{KPX}. The unicity follows from the discussion just before \cite[Def.6.3.2]{KPX} and from the Galois cohomology computations of \cite[Prop.6.2.8]{KPX} (using the hypothesis $\deltabar\in\Tcal_0^n$). These results can also be deduced from \cite{Liuslope} or \cite{BelChe}, see e.g. the proof of Proposition \ref{closedimmersion}.
\end{proof}

From now we assume that $\deltabar\in\Tcal_0^n$ and we write $\Mcal_{\bullet}$ for the triangulation given by Proposition \ref{uniquetri}. Denote by $r\in\mathfrak{X}_{\rbar}$ the closed point corresponding to the morphism $r:\,\Gcal_K\rightarrow\GL_n(L)$. By \cite[Lem.2.3.3 \& Prop.2.3.5]{KisinModularity} there is a canonical isomorphism of formal schemes between $X_r$ and $\widehat{\mathfrak{X}_{\rbar}}_{,r}$. Namely if $A$ is in $\Ccal_L$, a map $\Sp A\rightarrow \widehat{\mathfrak{X}_{\rbar}}_{,r}$ is a morphism $\Spec A\rightarrow\Spec R_{\rbar}[\tfrac{1}{p}]$ sending the only point of $\Spec A$ to $r$, i.e. a continuous morphism $\Gcal_K\rightarrow\GL_n(A)$ such that the composition with $\GL_n(A)\rightarrow\GL_n(L)$ is $r$, i.e. an element of $X_r(A)$. We thus deduce a morphism of formal schemes:
$$\widehat{X_{\tri}(\rbar)}_x\longrightarrow\widehat{\mathfrak{X}_{\rbar}}_{,r}\simeq X_r.$$
Recall that $X_{r,\Mcal_{\bullet}}\rightarrow X_r$ is a closed immersion by base change from Proposition \ref{closedimmersion}.

\begin{prop}\label{mainfactor}
The canonical morphism $\widehat{X_{\tri}(\rbar)}_x\longrightarrow X_r$ factors through a morphism $\widehat{X_{\tri}(\rbar)}_x\longrightarrow X_{r,\Mcal_{\bullet}}$.
\end{prop}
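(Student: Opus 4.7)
The plan is to show that any morphism $\Spf A \longrightarrow \widehat{X_{\tri}(\rbar)}_x$ (for $A\in\Ccal_L$) yields a deformation $r_A\in X_r(A)$ of $r$ together with a canonical triangulation of the associated $(\varphi,\Gamma_K)$-module $\Mcal_A := D_{\rig}(V_{r_A})[\tfrac{1}{t}]$ deforming $\Mcal_\bullet$. Equivalently, I must exhibit a natural element of $X_{r,\Mcal_\bullet}(A)$ whose image in $X_r(A)$ is $r_A$; since $X_{r,\Mcal_\bullet}\hookrightarrow X_r$ is a closed immersion (obtained by base change from Proposition \ref{closedimmersion}), such a factorization is unique once it exists and the assertion is a pure existence statement at the level of $A$-points.

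First I would construct a global triangulation on an entire affinoid neighborhood of $x$. Since $\Tcal_0^n\subset \Tcal_L^n$ is Zariski-open and $\omega'(x)=\deltabar\in \Tcal_0^n$ by hypothesis, I would choose an affinoid neighborhood $U=\Sp B$ of $x$ in $X_{\tri}(\rbar)$ such that $\omega'(U)\subset \Tcal_0^n$. Pulling back the universal framed deformation along $X_{\tri}(\rbar)\to \Xfrak_{\rbar}$ yields a family $V_B$ of $\Gcal_K$-representations over $\Sp B$ and thus a family $D_B:=D_{\rig}(V_B)$ of $(\varphi,\Gamma_K)$-modules over $\Rcal_{B,K}$. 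By the very definition of $X_{\tri}(\rbar)$ as the Zariski-closure of $U_{\tri}(\rbar)$, the intersection $U\cap U_{\tri}(\rbar)$ is Zariski-open and dense in $U$, and at each of its closed points $y$ the specialization $D_{B,y}$ is trianguline with parameter $\omega'(y)\in \Tcal_0^n$. Under these conditions the global triangulation theorems of Kedlaya-Pottharst-Xiao (\cite[Cor.6.3.10]{KPX}) and Liu (\cite{Liuslope}) apply: after possibly shrinking $U$ and inverting $t$, they produce a triangulation $\Mcal_{B,\bullet}$ of $\Mcal_B := D_B[\tfrac{1}{t}]$ over $\Rcal_{B,K}[\tfrac{1}{t}]$ whose parameter coincides with $\omega'|_U$. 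The specialization $\Mcal_{B,\bullet}\otimes_B L$ is then a triangulation of $\Mcal$ with parameter $\deltabar$, which must equal $\Mcal_\bullet$ by the uniqueness assertion of Proposition \ref{uniquetri}.

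To conclude, any morphism $\Spf A \to \widehat{X_{\tri}(\rbar)}_x$ corresponds to a local homomorphism $\widehat{\Ocal}_{X_{\tri}(\rbar),x}\to A$, which I would factor through the natural map $B\to B_x\to \widehat{\Ocal}_{X_{\tri}(\rbar),x}$ to obtain a ring homomorphism $B\to A$. Base-changing $(\Mcal_B,\Mcal_{B,\bullet})$ along this map yields a triangulation $\Mcal_{A,\bullet}$ of $\Mcal_A$ over $\Rcal_{A,K}[\tfrac{1}{t}]$ whose parameter lifts $\deltabar$ and whose reduction modulo $\mathfrak{m}_A$ is $\Mcal_\bullet$. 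This defines an object of $X_{r,\Mcal_\bullet}(A)$ mapping to $r_A\in X_r(A)$, producing the desired factorization.

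The hard part will be the rigorous application of the global triangulation theorem of \cite{KPX}/\cite{Liuslope} on $\Sp B$: one must verify that their genericity/non-criticality hypotheses are indeed guaranteed by $\omega'(U)\subset \Tcal_0^n$, that the triangulation they produce over $\Rcal_{B,K}[\tfrac{1}{t}]$ really carries the universal parameter $\omega'|_U$, and that potential shrinkings of $U$ still contain $x$. All remaining ingredients — Zariski-density of $U_{\tri}(\rbar)\cap U$, uniqueness at $x$ via Proposition \ref{uniquetri}, and the passage from $B$ to the completion and then to $A$ — are essentially formal once the affinoid-level triangulation is in hand.
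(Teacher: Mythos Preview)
Your overall strategy is right, but there is a genuine gap at exactly the step you flag as ``the hard part.'' The global triangulation theorem \cite[Cor.6.3.10]{KPX} does \emph{not} furnish a triangulation of $D_B[\tfrac{1}{t}]$ over an affinoid neighbourhood $U=\Sp B$ of $x$, and ``shrinking $U$'' does not help. What that result actually produces is a proper birational modification $f:\widetilde U\to U$, a filtration $(F_i)$ of $f^*D_U$ by $(\varphi,\Gamma_K)$-stable submodules, invertible sheaves $\Lcal_i$ on $\widetilde U$, and injections $F_i/F_{i-1}\hookrightarrow \Rcal_{\widetilde U,K}(\delta_{\widetilde U,i})\otimes\Lcal_i$ with $t$-power cokernel. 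After inverting $t$ this yields a triangulation of $(f^*D_U)[\tfrac{1}{t}]$ on $\widetilde U$, not on $U$. Since the points $x$ of interest are typically \emph{outside} $U_{\tri}(\rbar)$ (indeed $\deltabar$ will be locally algebraic in the later applications), there is no reason for $f$ to be an isomorphism in any neighbourhood of $x$, so the family triangulation need not descend to $\Sp B$. (A side remark: $\Tcal_0^n$ is the complement of a countable set of closed points, not a Zariski-open, so you cannot arrange $\omega'(U)\subset\Tcal_0^n$ either; but this is a minor point.)

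The paper's proof supplies precisely the missing descent step. One chooses a preimage $\widetilde x\in f^{-1}(x)$ and a small affinoid $V\ni\widetilde x$ on which the $\Lcal_i$ trivialize; the construction above then gives a morphism $\widehat V_{\widetilde x}\to X_{r,\Mcal_\bullet}$ (the specialization at $\widetilde x$ agrees with $\Mcal_\bullet$ by Proposition \ref{uniquetri}). Now one uses the commutative square
\[
\begin{xy}
\xymatrix{ \widehat{V}_{\widetilde x}\ar[r]\ar^{f}[d] & X_{r,\Mcal_\bullet}\ar@{^{(}->}[d] \\
\widehat{U}_x\ar[r] & X_r}
\end{xy}
\]
where the left vertical arrow is dominant (since $\widehat\Ocal_{U,x}$ is reduced and $f$ is proper surjective) and the right vertical arrow is a closed immersion: these two facts force the bottom arrow $\widehat U_x\to X_r$ to factor through $X_{r,\Mcal_\bullet}$. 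This diagram chase is the substitute for the nonexistent triangulation on $U$ and is the essential content you are missing.
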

\begin{proof}
Let $U$ be an affinoid neighbourhood of $x$ in $X_{\tri}(\rbar)$. Let $D_U$ be the universal $(\varphi,\Gamma_K)$-module over $U$ (coming from the universal representation $\Gcal_K\rightarrow \GL_n(R_{\rbar})$ via $U\rightarrow \mathfrak{X}_{\rbar}$). Using \cite[Cor.6.3.10]{KPX}, there exists a proper birational morphism of  spaces $f:\,\widetilde{U}\rightarrow U$, an increasing filtration $(F_i)_{i\in \{0,\dots,n\}}$ of $f^*D_U$ by $\Rcal_{\widetilde U,K}$-submodules stable under $\varphi$ and $\Gamma_K$ such that $F_0=0$ and $F_n=f^*D_U$, invertible sheaves $(\mathcal{L}_i)_{i\in \{1,\dots,n\}}$ on $\widetilde U$ and injections:
$$F_i/F_{i-1}\hookrightarrow\Rcal_{\widetilde U,K}(\delta_{\widetilde U,i})\otimes_{\mathcal{O}_{\widetilde{U}}}\mathcal{L}_i$$
for $i\in \{1,\dots,n\}$ (where the $\delta_{\widetilde U,i}:K^\times \rightarrow \Gamma(\widetilde U,\mathcal{O}_{\widetilde{U}})^\times$ come from $\widetilde U\rightarrow U\subseteq X_{\tri}(\rbar)\buildrel\omega'\over\longrightarrow \Tcal_L^n$) whose cokernels are killed by some power of $t$ and supported on a Zariski-closed subset $Z$ whose complement is Zariski-open and dense in $\widetilde{U}$. Let us fix a point $\widetilde{x}$ over $x$ and $V$ an affinoid neighbourhood of $\widetilde{x}$ in $\widetilde U$ over which all the sheaves $\mathcal{L}_i$ are trivial. Then for $i\in \{1,\dots,n\}$ the $\Rcal_{V,K}[\tfrac{1}{t}]$-modules $(F_i[\tfrac{1}{t}]/F_{i-1}[\tfrac{1}{t}])|_V$ are free of rank $1$. Let $A$ be in $\Ccal_L$ and $\Sp A\rightarrow V$ a morphism of rigid analytic spaces sending the only point of $\Sp A$ to $\widetilde{x}$. By pullback along $\Sp A\rightarrow U\rightarrow \mathfrak{X}_{\rbar}$, we obtain a deformation $r_A$ in $X_r(A)$ such that $D_{\rig}(r_A)\cong A\otimes_{\Gamma(V,\mathcal{O}_V)}\Gamma(V,f^*D_U)$. Moreover it follows from what preceeds that $(A\otimes_{\Gamma(V,\mathcal{O}_V)}\Gamma(V,F_i)[\tfrac{1}{t}])_{i\in \{1,\dots,n\}}$ is a triangulation $\Mcal_{A,\bullet}$ of $D_{\rig}(r_A)[\tfrac{1}{t}]$ of parameter $\deltabar_A$ (see above (\ref{changedelta}) for $\deltabar_A$) corresponding to the map $\Sp A\rightarrow V\rightarrow U\subseteq X_{\tri}(\rbar)\buildrel\omega'\over\longrightarrow\Tcal_L^n$. When $A=L$, the triangulation $\Mcal_{L,\bullet}$ coincides with $\Mcal_{\bullet}$ by Proposition \ref{uniquetri}. The morphism sending an element of $\widehat{V}_{\widetilde x}(A)$ to $(r_A,\Mcal_{A,\bullet})$ clearly defines a morphism $\widehat{V}_{\widetilde x}\longrightarrow X_{r,\Mcal_\bullet}$ of groupoids over $\Ccal_L$ fitting into the commutative diagram of pro-representable groupoids over $\Ccal_L$:
$$\xymatrix{ \widehat{V}_{\widetilde x}\ar[r]\ar^{f}[d] & X_{r,\Mcal_\bullet}\ar@{^{(}->}[d] \\
\widehat{U}_x\ar[r] \ar@{.>}[ru] & X_r.}$$
In this diagram the left vertical arrow is dominant, i.e. (since $\widehat U_x=\Spf S$ for a reduced ring $S$) the induced map on the corresponding complete local rings is injective, and the right vertical arrow is a closed immersion. This implies that the lower horizontal arrow must factor through $X_{r,\Mcal_\bullet}$ (as shown in the diagram).
\end{proof}

\begin{prop}\label{closedembtri}
The morphisms $\widehat{X_{\tri}(\rbar)}_x\longrightarrow X_{r,\Mcal_{\bullet}}$ and $\widehat{X_{\tri}(\rbar)}_x\longrightarrow X_{r}$ are closed immersions of groupoids over $\Ccal_L$ (or of formal schemes since they are pro-representable).
\end{prop}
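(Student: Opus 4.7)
Plan. Because the closed immersion $X_{r,\Mcal_\bullet}\hookrightarrow X_r$ (obtained by base change from Proposition \ref{closedimmersion}) is the one through which $\widehat{X_{\tri}(\rbar)}_x\to X_r$ factors by Proposition \ref{mainfactor}, the two assertions are equivalent. I will therefore prove that the factored morphism $\phi\colon\widehat{X_{\tri}(\rbar)}_x\to X_{r,\Mcal_\bullet}$ is a closed immersion. Both sides are pro-represented by complete noetherian local $L$-algebras of residue field $L$ (the right-hand side by $R_{r,\Mcal_\bullet}$, via (i) of Theorem \ref{structhm}), so this amounts to showing that the induced ring map $\phi^*\colon R_{r,\Mcal_\bullet}\to\widehat{\Ocal}_{X_{\tri}(\rbar),x}$ is surjective.

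The key point is the identity
\[ \omega_{\deltabar}\circ\phi \;=\; \omega'\qquad\text{as morphisms}\qquad \widehat{X_{\tri}(\rbar)}_x\longrightarrow \widehat{\Tcal^n_{\deltabar}}, \]
where $\omega_{\deltabar}$ is the parameter-extraction map of \S\ref{triangulinet} attached to $\Mcal_\bullet$ and $\omega'$ is induced by the second projection $X_{\tri}(\rbar)\subseteq\Xfrak_{\rbar}\times\Tcal^n_L\twoheadrightarrow \Tcal^n_L$. Granting this identity, the image of $\phi^*$ in $\widehat{\Ocal}_{X_{\tri}(\rbar),x}$ contains the image of $R_r\to\widehat{\Ocal}_{X_{\tri}(\rbar),x}$ (which factors as $R_r\twoheadrightarrow R_{r,\Mcal_\bullet}\xrightarrow{\phi^*}\widehat{\Ocal}_{X_{\tri}(\rbar),x}$ by Proposition \ref{mainfactor}) as well as the image of $(\omega')^*$ (which, by the identity, factors as $\phi^*\circ\omega_{\deltabar}^*$). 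Since $X_{\tri}(\rbar)\hookrightarrow\Xfrak_{\rbar}\times\Tcal^n_L$ is a closed immersion, these two subrings together generate $\widehat{\Ocal}_{X_{\tri}(\rbar),x}$, forcing $\phi^*$ to be surjective.

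It then remains to establish the identity. My plan is to return to the geometric setup of the proof of Proposition \ref{mainfactor}: a preimage $\tilde x$ of $x$ under the proper birational morphism $f\colon\tilde U\to U$ produced by \cite[Cor.6.3.10]{KPX}, together with an affinoid neighborhood $V\subset\tilde U$ of $\tilde x$ carrying a filtration whose associated triangulation of $(f^*D_U)[\tfrac{1}{t}]|_V$ has parameter the pullback along $V\to U$ of the universal character on $U$. After pre-composition with $h\colon\widehat V_{\tilde x}\to\widehat U_x=\widehat{X_{\tri}(\rbar)}_x$, both morphisms $\omega_{\deltabar}\circ\phi$ and $\omega'$ send an $A$-point of $\widehat V_{\tilde x}$ to the same character, namely the $\deltabar_A$ corresponding to $\Sp A\to V\to U\subseteq X_{\tri}(\rbar)\xrightarrow{\omega'}\Tcal^n_L$. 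As noted in the proof of Proposition \ref{mainfactor}, the reducedness of $\widehat{\Ocal}_{X_{\tri}(\rbar),x}$ combined with the dominance of $h$ forces $h^*$ to be injective on completed local rings, which upgrades the equality on $\widehat V_{\tilde x}$ to the desired equality on $\widehat U_x$. The main technical obstacle is precisely this last matching: making sure that the parameter of the triangulation produced by \cite[Cor.6.3.10]{KPX} really does coincide with the pullback of the second-projection character on $X_{\tri}(\rbar)$, so that $\omega_{\deltabar}\circ\phi$ and $\omega'$ agree after pulling back to $\widehat V_{\tilde x}$.
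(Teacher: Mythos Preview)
Your proof is correct and follows essentially the same approach as the paper. The paper also reduces to the identity $\omega_{\deltabar}\circ\phi=\omega'$ (recorded there as the commutative diagram (\ref{pi}), justified by the construction in the proof of Proposition \ref{mainfactor}), and then uses that the closed immersion $\widehat{X_{\tri}(\rbar)}_x\hookrightarrow X_r\times_L\widehat{\Tcal^n_{\deltabar}}$ factors through $X_{r,\Mcal_\bullet}\to X_r\times_L\widehat{\Tcal^n_{\deltabar}}$; your ring-theoretic phrasing (the image of $\phi^*$ contains the images of both $R_r$ and $\widehat{\Ocal}_{\Tcal^n_L,\deltabar}$, which together surject onto $\widehat{\Ocal}_{X_{\tri}(\rbar),x}$) is the same argument in dual form, and your explicit descent of the identity from $\widehat V_{\tilde x}$ to $\widehat U_x$ via injectivity of $h^*$ is exactly what the paper's phrase ``follows directly from the proof of Proposition \ref{mainfactor}'' is pointing to.
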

\begin{proof}
It is enough to deal with the first morphism. It follows directly from the proof of Proposition \ref{mainfactor} that there is a commutative diagram:
\begin{eqnarray}\label{pi}
\begin{gathered}
\xymatrix{ \widehat{X_{\tri}(\rbar)}_x\ar_{\omega'}[rd] \ar[r] & X_{r,\Mcal_{\bullet}} \ar^{\omega_{\deltabar}}[d] \\
&\widehat{\Tcal^n_{\deltabar}}. }
\end{gathered}
\end{eqnarray}
where $\omega_{\deltabar}$ stands for the composition $X_{r,\Mcal_{\bullet}} \longrightarrow X_{V,\Mcal_{\bullet}}\simeq X_{D,\Mcal_{\bullet}}\longrightarrow X_{\Mcal,\Mcal_{\bullet}} \buildrel\omega_{\deltabar}\over\longrightarrow \widehat{\Tcal^n_{\deltabar}}$ (see (\ref{changedelta})). From the closed immersion of rigid spaces $X_{\tri}(\rbar)\hookrightarrow \mathfrak{X}_{\rbar}\times_L\Tcal_L^n$ and using $\widehat{\mathfrak{X}_{\rbar,r}}\simeq X_r$ we deduce a closed immersion of formal schemes $\widehat{X_{\tri}(\rbar)}_x\hookrightarrow X_r\times_L\widehat{\Tcal^n_{\deltabar}}$. However (\ref{pi}) together with Proposition \ref{mainfactor} show that this closed immersion factors through:
$$ \widehat{X_{\tri}(\rbar)}_x\longrightarrow X_{r,\Mcal_\bullet}\longrightarrow X_r\times_L\widehat{\Tcal^n_{\deltabar}}$$
where the right hand side is the morphism corresponding to the two morphisms $X_{r,\Mcal_\bullet}\hookrightarrow X_r$ and $\omega_{\deltabar}$. This implies that the map $\widehat{X_{\tri}(\rbar)}_x\longrightarrow X_{r,\Mcal_\bullet}$ is itself a closed immersion.
\end{proof}

We keep our fixed point $x=(r,\deltabar)\in X_{\tri}(\rbar)(L)$ and assume from now on that $\deltabar$ is locally algebraic. We define $W^+$ and $W$ as in \S\ref{galois} and assume moreover that $W^+$ is regular (Definition \ref{regular}). We write $\Fcal_\bullet$ for the filtration on $W$ deduced from the triangulation $\Mcal_\bullet$ and $\Dcal_\bullet$ for the flag on $D_{\pdR}(W)$ deduced from the filtration $\Fcal_\bullet$. We also write $h_{\tau,1}<\dots <h_{\tau,n}$ where the $(h_{\tau,i})_{\tau\in\Sigma}\in \Z^{[K:\Qp]}\subset L^{[K:\Qp]}\cong L\otimes_{\Qp}K$ for $i\in \{1,\dots,n\}$ are the Sen weights of $r$. It follows from \cite[Prop.2.9]{BHS1} that $\{\wt_\tau(\delta_i),i\in\{1,\dots,n\}\}=\{h_{\tau,i},i\in\{1,\dots,n\}\}$ for each $\tau\in \Sigma$. This implies that, for each $\tau$, there exists a permutation $w_\tau\in\Scal_n$ such that $(\wt_\tau(\delta_{w_\tau(1)}),\dots ,\wt_\tau(\delta_{w_\tau(n)}))=(h_{\tau,1},\dots ,h_{\tau,n})\in\Z^n$. We define $w:=(w_\tau)_{\tau\in \Sigma}\in \Scal$.

We denote by $\iota_x$ the closed immersion $\widehat{X_{\tri}(\rbar)}_x\hookrightarrow X_{r,\Mcal_{\bullet}}$ and by $\Theta_x:\widehat{X_{\tri}(\rbar)}_x\longrightarrow \widehat T_{(0,0)}$ the morphism of formal schemes which is the composition:
$$\widehat{X_{\tri}(\rbar)}_x\buildrel\iota_x\over \hookrightarrow X_{r,\Mcal_{\bullet}} \longrightarrow X_{V,\Mcal_{\bullet}}\cong X_{D,\Mcal_{\bullet}}\buildrel\Theta\over\longrightarrow \widehat T_{(0,0)}.$$

\begin{lem}\label{thetaw}
The morphism $\Theta_x$ factors through $\widehat T_{w,(0,0)}\hookrightarrow \widehat T_{(0,0)}$.
\end{lem}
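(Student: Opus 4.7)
The plan is to verify that, as morphisms $\widehat{X_{\tri}(\rbar)}_x\to\widehat\tfrak$, one has $\kappa_2\circ\Theta_x=\Ad(w^{-1})\circ(\kappa_1\circ\Theta_x)$; by Lemma \ref{XTw} this is equivalent to the desired factorization of $\Theta_x=(\kappa_1,\kappa_2)\circ\Theta_x$ through $\widehat T_{w,(0,0)}\hookrightarrow\widehat T_{(0,0)}$. We work on $A$-points for $A$ in $\Ccal_L$: a morphism $\Sp A\to\widehat{X_{\tri}(\rbar)}_x$ produces, via $\iota_x$ and the functorial constructions of \S\ref{trianguline}--\S\ref{galois}, an object $(\Mcal_A,\Mcal_{A,\bullet},\iota_A)$ in $X_{\Mcal,\Mcal_\bullet}(A)$ with parameter $\deltabar_A=(\delta_{A,i})_{i\in\{1,\dots,n\}}$, and a corresponding almost de Rham deformation $(W_A^+,\Fcal_{A,\bullet},\iota_A)$ where $\Fcal_{A,\bullet}=W_{\dR}(\Mcal_{A,\bullet})$.

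The map $\kappa_1\circ\Theta_x$ is immediate from Corollary \ref{commugroup} combined with the explicit formula (\ref{explicit}): its $\tau$-component $i$-th coordinate equals $\wt_\tau(\delta_{A,i})-\wt_\tau(\delta_i)$. For $\kappa_2\circ\Theta_x$ one must compute the action of $\nu_{W_A}$ on the graded pieces of the complete flag $\Fil_{W^+_A,\bullet}$. By (\ref{completeflag}) and Corollary \ref{filtfree}, and using that the integers $h_{\tau,i}$ are pairwise distinct by the regularity of $W^+$, the $i$-th such graded piece in the $\tau$-component is the rank one $A$-module $\gr^{-h_{\tau,i}}_{\Fil^\bullet_{W^+_A}}(D_{\pdR,\tau}(W_A))$.

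By Proposition \ref{Berger} the triangulation $\Mcal_{A,\bullet}$ lifts to a filtration $D_{A,\bullet}$ of $D_A$ by sub-$(\varphi,\Gamma_K)$-modules over $\Rcal_{A,K}$ with $D_{A,j}[\tfrac{1}{t}]=\Mcal_{A,j}$ and $W_{\dR}^+(D_{A,j})=W_A^+\cap W_{\dR}(\Mcal_{A,j})$. Applying the exact functor $W_{\dR}^+$ to this filtration produces a filtration of $W_A^+$ by $\BdR^+$-sublattices whose successive quotients, in the $\tau$-component and at the special point $A=L$, are rank one $\BdR^+$-lattices of Hodge--Tate weight $\wt_\tau(\delta_j)$ for $j=1,\dots,n$. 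Since $\{\wt_\tau(\delta_j)\}_j=\{h_{\tau,i}\}_i$ as multisets with pairwise distinct elements, and $w$ is defined precisely by $\wt_\tau(\delta_{w_\tau(i)})=h_{\tau,i}$, the Hodge graded piece at position $-h_{\tau,i}$ is contributed entirely by the sub-quotient $D_{A,w_\tau(i)}/D_{A,w_\tau(i)-1}$ of the lifted triangulation. Lemma \ref{ApdR+} applied to this rank one sub-quotient identifies the Hodge graded piece as an $A\otimes_{K,\tau}\BdR^+$-module with $\nu$-action, and Lemma \ref{weight}(ii) computes this $\nu$-action as $\wt_\tau(\delta_{A,w_\tau(i)})-\wt_\tau(\delta_{w_\tau(i)})=\wt_\tau(\delta_{A,w_\tau(i)})-h_{\tau,i}$.

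On the other side, $\Ad(w_\tau^{-1})$ acts on the $\tau$-component of $\tfrak$ by the permutation $(a_1,\dots,a_n)\mapsto(a_{w_\tau(1)},\dots,a_{w_\tau(n)})$, so the $\tau$-component $i$-th coordinate of $\Ad(w^{-1})(\kappa_1\circ\Theta_x)$ equals $\wt_\tau(\delta_{A,w_\tau(i)})-\wt_\tau(\delta_{w_\tau(i)})=\wt_\tau(\delta_{A,w_\tau(i)})-h_{\tau,i}$, which matches the expression computed above for $\kappa_2\circ\Theta_x$, proving the desired equality. The main obstacle is the decoupling step in paragraph three, i.e.\ showing that each Hodge graded piece of $\Fil^\bullet_{W^+_A}$ in the $\tau$-component receives its entire contribution from a single sub-quotient of the lifted triangulation, and that the identification of rank one $\BdR^+$-lattices provided by Lemma \ref{ApdR+} transports the $\nu$-weight computation of Lemma \ref{weight}(ii) to this Hodge graded piece. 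This relies crucially on the regularity hypothesis on $W^+$: without it several sub-quotients of the triangulation could share a Hodge--Tate weight in a given $\tau$-component and the clean permutation relationship between $\kappa_1$ and $\kappa_2$ would fail.
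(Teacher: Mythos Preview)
Your overall strategy is natural—verify $\kappa_2\circ\Theta_x=\Ad(w^{-1})\circ(\kappa_1\circ\Theta_x)$ by identifying each Hodge graded piece with a single triangulation subquotient—but the decoupling step in paragraph three has a genuine gap. Proposition~\ref{Berger} does not produce a lift of the triangulation $\Mcal_{A,\bullet}$ to a filtration $D_{A,\bullet}$ of $D_A$ by \emph{free} $\Rcal_{A,K}$-submodules with \emph{free} quotients: it only says that a deformation of $D$ is determined by compatible deformations of $\Mcal$ and of $W^+$, and gives no guarantee that the intersection $W^+_A\cap\Fcal_{A,j}$ is free over $A\otimes_{\Qp}\BdR^+$. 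Without that freeness the strict short exact sequences of filtered modules you need in order to split $\gr^{-h_{\tau,i}}$ are unavailable. Regularity of $W^+$ only constrains the special fibre and does not prevent the two flags from interacting badly over $A$.

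Concretely: take $A=L[\varepsilon]$ and, in one $\tau$-component, $D_{\pdR,\tau}(W_A)=Ae_1\oplus Ae_2$ with Hodge filtration $\Fil^1=0\subset\Fil^0=Ae_1\subset\Fil^{-1}=A^2$ and triangulation line $\Dcal_{A,1}=A(e_1+\varepsilon e_2)$; both flags reduce to $Le_1$ modulo $\varepsilon$, so this is a legitimate first-order deformation with $w_\tau=1$. Then $\Dcal_{A,1}\cap\Fil^0=\varepsilon A\cdot e_1\cong A/\varepsilon A$ is not $A$-free, and its image in $\gr^0(D_{\pdR,\tau}(W_A))=Ae_1$ is only $\varepsilon Ae_1$: the Hodge graded piece $\gr^0$ is \emph{not} contributed entirely by the first triangulation step. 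Such tangent vectors lie in $X_{W^+,\Fcal_\bullet}^{\Box}$ and, by the formal smoothness of Corollary~\ref{formallysmooth}, lift to $X_{D,\Mcal_\bullet}^{\Box}$. Your argument uses no feature of $\widehat{X_{\tri}(\rbar)}_x$ beyond the triangulation and lattice data, so as written it would apply to every $A$-point of $X_{D,\Mcal_\bullet}$; but by Corollary~\ref{w=w'} the map $\Theta$ sends the distinct components $X_{D,\Mcal_\bullet}^{w'}$ to distinct $T_{w'}$, hence cannot factor through a single $\widehat T_{w,(0,0)}$ when $|\Scal(x_{\pdR})|>1$. The argument must therefore break, and this freeness issue is where it does.

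The paper sidesteps the lifting problem entirely. It computes the Sen polynomial of $V_A$ in two ways: as $\prod_i(Y-\wt(\delta_{A,i}))$ using that $\deltabar_A=\omega'(x_A)$ specializes the universal character on $X_{\tri}(\rbar)$ (Lemma~\ref{Senpol}, proved by Zariski density—this is exactly where the geometry of the trianguline variety is used and why the argument does not extend to all of $X_{D,\Mcal_\bullet}$), and as $\prod_i\big(Y-((h_{\tau,i})_\tau+\kappa_{W^+}(W^+_A)_i)\big)$ via the Hodge graded pieces of $\Fil^\bullet_{W^+_A}$ (Lemma~\ref{Sen}). Equating the two products in $A\otimes_{\Qp}K[Y]$ and using that the roots are pairwise distinct modulo $\mathfrak{m}_A$ (Lemma~\ref{roots}) yields a unique permutation relating them, which by reduction modulo $\mathfrak{m}_A$ is forced to be $w$.
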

\begin{proof}
Denote by $\Theta_{x,W,\Fcal_\bullet}$ the composition:
$$\widehat{X_{\tri}(\rbar)}_x\buildrel\iota_x\over \hookrightarrow X_{r,\Mcal_{\bullet}} \longrightarrow X_{V,\Mcal_{\bullet}}\cong X_{D,\Mcal_{\bullet}}\longrightarrow  X_{W^+,\Fcal_{\bullet}}\longrightarrow X_{W,\Fcal_{\bullet}}\buildrel \kappa_{W,\Fcal_\bullet}\over \longrightarrow\widehat{\tfrak}$$
and by $\Theta_{x,W^+}$ the composition:
$$\widehat{X_{\tri}(\rbar)}_x\buildrel\iota_x\over \hookrightarrow X_{r,\Mcal_{\bullet}} \longrightarrow X_{V,\Mcal_{\bullet}}\cong X_{D,\Mcal_{\bullet}}\longrightarrow  X_{W^+,\Fcal_{\bullet}}\longrightarrow X_{W^+}\buildrel \kappa_{W^+}\over \longrightarrow\widehat{\tfrak},$$
then by definition of $T_w$ one has to show $\Theta_{x,W^+}=\Ad(w^{-1})\circ \Theta_{x,W,\Fcal_\bullet}$ (recall that the action of $\Ad(w^{-1})$ on $\widehat{\tfrak}$ gives $\Ad(w^{-1})((\nu_{1,\tau})_{\tau\in \Sigma},\dots,(\nu_{n,\tau})_{\tau\in \Sigma})=((\nu_{w_\tau(1),\tau})_{\tau\in \Sigma},\dots,(\nu_{w_\tau(n),\tau})_{\tau\in \Sigma})$ if $w=(w_\tau)_{\tau\in \Sigma}$).

Let $A$ be an object of $\Ccal_L$, $x_A:\,\Spf A\rightarrow\widehat{X_{\tri}(\rbar)}_x$ some $A$-point of $\widehat{X_{\tri}(\rbar)}_x$ and $V_A$ the associated representation of $\Gcal_K$ via $\widehat{X_{\tri}(\rbar)}_x\rightarrow X_r\rightarrow X_V$. Let $(W_A^+,\Fcal_{A,\bullet})$ be the corresponding object of $X_{W^+,\Fcal_{\bullet}}(A)$ (via the above morphism $\widehat{X_{\tri}(\rbar)}_x\longrightarrow X_{W^+,\Fcal_{\bullet}}$) and set $\deltabar_A:=\omega'(x_A)$ and $y_A:=(W_A,\Fcal_{A,\bullet})\in X_{W,\Fcal_{\bullet}}(A)$ where $W_A:=W^+_A[\tfrac{1}{t}]=\BdR\otimes_{\Qp}V_A$. By Corollary \ref{commugroup}, we have $\Theta_{x,W,\Fcal_\bullet}(x_A)=\kappa_{W,\Fcal_{\bullet}}(y_A)=\wt(\deltabar_A)-\wt(\deltabar)$. Moreover $\Theta_{x,W^+}(x_A)=\kappa_{W^+}(W^+_A)=\kappa_{W^+}(\BdR^+\otimes_{\Qp}V_A)$ is the element $(\nu_{A,1},\dots,\nu_{A,n})$ of $(A\otimes_{\Q_p}K)^n$ where the element $\nu_{A,i}=(\nu_{A,i,\tau})_\tau\in A\otimes_{\Q_p}K\buildrel\sim\over\rightarrow \oplus_{\tau\in \Sigma}A$ is the action of $\nu_{W_A}$ on $\Fil_{W^+_A,i}(D_{\pdR}(W_A))/\Fil_{W^+_A,i-1}(D_{\pdR}(W_A))$ (see (\ref{completeflag})). It follows from Lemma \ref{Sen} below that the polynomial:
$$\prod_{i=1}^n\big(Y-((h_{\tau,i}+\nu_{A,i,\tau})_{\tau\in \Sigma})\big)\in A\otimes_{\Qp}K[Y]$$
is the Sen polynomial of $V_A$, i.e. the characteristic polynomial of the Sen endomorphism on the finite free $A\otimes_{\Q_p}K_\infty$-module:
$$\Delta_{\rm Sen}(C\otimes_{\Qp}V_A)=\Delta_{\rm Sen}(W_A^+/tW_A^+)\simeq K_\infty\otimes_KD_{\pHT}(W_A^+/tW_A^+)$$ (see the proof of Lemma \ref{ApdR+} for $D_{\pHT}$). Then it follows from Lemma \ref{Senpol} below that we have the following equality in $A\otimes_{\Qp}K[Y]\simeq \oplus_{\tau\in \Sigma}A[Y]$:
$$ \prod_{i=1}^n\big(Y-((\wt_{\tau}(\delta_{i})_{\tau\in \Sigma}+\kappa_{W,\Fcal{\bullet}}(y_A)_i)\big)=\prod_{i=1}^n\big(Y-((h_{\tau,i})_{\tau\in \Sigma}+\kappa_{W^+}(W^+_A)_i)\big).$$
By Lemma \ref{roots} we conclude that there exists a unique element $w':=(w_\tau')_{\tau\in \Sigma}\in\Scal$ such that 
\begin{multline*}
\Ad({w'}^{-1})\big((\wt_{\tau}(\delta_{1}))_{\tau\in \Sigma}+\kappa_{W,\Fcal_{\bullet}}(y_A)_1,\dots,(\wt_{\tau}(\delta_{n}))_{\tau\in \Sigma}+\kappa_{W,\Fcal_{\bullet}}(y_A)_n\big)\\
=\big((h_{\tau,1})_{\tau\in \Sigma}+\kappa_{W^+}(W^+_A)_1,\dots,(h_{\tau,n})_{\tau\in \Sigma}+\kappa_{W^+}(W^+_A)_n\big).
\end{multline*}
Using unicity and reduction modulo $\mathfrak{m}_A$, we see that $w'=w$, which implies:
$$\Ad({w}^{-1})(\Theta_{x,W,\Fcal_\bullet}(x_A))=\Theta_{x,W^+}(x_A).$$
\end{proof}

If $A$ is in $\Ccal_L$ and $W_A^+$ is an almost de Rham $A\otimes_{\Qp}\BdR^+$-representation of $\Gcal_K$ and $W_A:=W_A^+[\tfrac{1}{t}]$, recall from \S\ref{suite} (see especially the proof of Lemma \ref{ApdR+}) that there is a functorial isomorphism in the category $\Rep_{A\otimes_{\Qp}K}(\mathbb{G}_{\mathrm{a}})$:
\begin{equation}\label{functiso}
D_{\pHT}(W_A^+/tW_A^+)\simeq\bigoplus_{i\in\Z} \gr^i_{\Fil^\bullet_{W_A^+}}(D_{\pdR}(W_A)).
\end{equation}
where $\gr^i_{\Fil^\bullet_{W_A^+}}(D_{\pdR}(W_A))=\Fil^i_{W_A^+}(D_{\pdR}(W_A))/\Fil^{i+1}_{W_A^+}(D_{\pdR}(W_A))$ and the action of $\mathbb{G}_{\mathrm{a}}$ on $\gr^i_{\Fil^\bullet_{W_A^+}}(D_{\pdR}(W_A))$ comes from the $A\otimes_{\Qp}K$-linear nilpotent operator $\gr^i(\nu_{W_A})$ induced by $\nu_{W_A}$ (the equivariance for this $\mathbb{G}_{\mathrm{a}}$-action is not explicitly mentioned in {\it loc.cit.} but is straightforward to check). The following lemma follows from (\ref{functiso}) and the material in \cite[\S\S 2.2,2.3]{FonAr}.

\begin{lem}\label{Sen}
Let $W_A^+$ be an almost de Rham $A\otimes_{\Qp}\BdR^+$-representation of $\Gcal_K$. Then the Sen polynomial of $W_A^+/tW_A^+$ in $A\otimes_{\Q_p}K[Y]$ is equal to the product for $i\in\Z$ of the characteristic polynomials of the endomorphisms $-i{\rm Id}+\gr^i(\nu_{W_A})$ of the free $A\otimes_{\Qp}K$-modules $\gr^i_{\Fil^\bullet_{W_A^+}}(D_{\pdR}(W_A))$.
\end{lem}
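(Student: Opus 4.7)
The plan is to compute the Sen polynomial of $\overline{W_A^+} := W_A^+/tW_A^+$ via a three-step identification, reducing first to $D_{\pHT}$ and then transporting the computation to the graded pieces of $D_{\pdR}$ via (\ref{functiso}). Since $W_A^+$ is almost de Rham, the Sen weights of $\overline{W_A^+}$ lie in $\Z$, so $\overline{W_A^+}$ is almost Hodge-Tate in the sense of \cite[\S 2.7]{FonAr}. I would first invoke (as in the proof of Lemma \ref{ApdR+}, or \cite[\S 2.3]{FonAr}) the fact that for any $C$-representation $U$ of $\Gcal_K$ with integer Sen weights the natural $K_\infty$-linear map $K_\infty \otimes_K D_{\pHT}(U) \longrightarrow \Delta_{\mathrm{Sen}}(U)$ is an isomorphism and intertwines the Sen operator on the right with a canonical $K$-linear endomorphism $\phi_{\mathrm{Sen}}$ on $D_{\pHT}(U)$. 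Consequently the Sen polynomial of $\overline{W_A^+}$ coincides with the characteristic polynomial of $\phi_{\mathrm{Sen}}$ acting on the free $A \otimes_{\Qp} K$-module $D_{\pHT}(\overline{W_A^+})$.

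Next I would exploit the isomorphism (\ref{functiso}) to identify $D_{\pHT}(\overline{W_A^+})$ with $\bigoplus_{i \in \Z} \gr^i_{\Fil^\bullet_{W_A^+}}(D_{\pdR}(W_A))$ as an object of $\Rep_{A\otimes K}(\mathbb{G}_{\mathrm{a}})$. The grading $\BpHT = \bigoplus_i t^i C[\log t]$ is $\Gamma_K$-stable, so $\phi_{\mathrm{Sen}}$ preserves the induced grading on $D_{\pHT}(\overline{W_A^+})$ and it suffices to compute $\phi_{\mathrm{Sen}}|_{\gr^i}$ piece by piece. A direct calculation using the explicit action of $\Gamma_K$ on $\BpHT$ as recalled in \cite[\S 2.2]{FonAr} ($\gamma(t)=\varepsilon(\gamma)t$ and $\gamma(\log t)=\log t + \log\varepsilon(\gamma)$), combined with the normalization that fixes the Sen operator to act by $m$ on $C(m)$, shows that on the graded piece of degree $i$, $\phi_{\mathrm{Sen}}$ decomposes as the scalar $-i \cdot {\rm Id}$ (coming from the twist by $t^i$) plus the nilpotent contribution coming from the derivation with respect to $\log t$. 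Under (\ref{functiso}), this nilpotent contribution matches, up to the sign conventions fixed in the proof of Lemma \ref{ApdR+}, the operator on $\gr^i D_{\pdR}(W_A)$ induced by $\nu_{W_A}$, giving $\phi_{\mathrm{Sen}}|_{\gr^i} = -i \cdot {\rm Id} + \gr^i(\nu_{W_A})$.

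Taking the product of characteristic polynomials over $i \in \Z$ then yields the claimed factorization of the Sen polynomial. The main obstacle will be consistent sign bookkeeping: Fontaine's derivations $\nu_{\BpdR}$, the analogous derivation on $\BpHT$, and the Sen operator differ by signs under various natural conventions, and one must carefully match them through (\ref{functiso}) so that the scalar part on $\gr^i$ comes out as $-i$ rather than $+i$. A useful sanity check is the rank one example $\overline{W_A^+} = C(m)$: here $D_{\pHT}$ is concentrated in degree $-m$ with trivial nilpotent, and the formula correctly recovers the Sen polynomial $Y - m$, matching the computation of the Sen operator directly on $C(m)$.
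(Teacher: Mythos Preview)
Your proposal is correct and follows essentially the same route as the paper: the paper's proof is the single sentence ``follows from (\ref{functiso}) and the material in \cite[\S\S 2.2,2.3]{FonAr}'', and your argument is precisely an unpacking of those two ingredients---passing from the Sen operator to $D_{\pHT}$ via \cite[\S 2.3]{FonAr}, then transporting to $\bigoplus_i \gr^i D_{\pdR}$ via (\ref{functiso}) and reading off the action on each graded piece from the explicit $\Gamma_K$-action on $\BpHT$ in \cite[\S 2.2]{FonAr}. Your sanity check on $C(m)$ correctly confirms the sign conventions.
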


\begin{lem}\label{Senpol}
With the notation in the proof of Lemma \ref{thetaw}, the Sen polynomial of $V_A$ is equal to $\prod_{i=1}^n(Y-\wt(\delta_{A,i}))\in A\otimes_{\Q_p}K[Y]$.
\end{lem}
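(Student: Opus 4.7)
I will prove Lemma \ref{Senpol} by comparing two computations of the Sen polynomial of $V_A$ on $D_{\pdR}(W_A)$: one using the Hodge filtration $\Fil^\bullet_{W^+_A}$ via Lemma \ref{Sen}, and the other using the complete flag $\Dcal_{A,\bullet}$ coming from the triangulation. Applying the exact functors $W_{\dR}$ (Lemma \ref{freen}) and $D_{\pdR}$ (Corollary \ref{equivpdR}) to $\Mcal_{A,\bullet}$ produces a $\nu_{W_A}$-stable complete flag $\Dcal_{A,\bullet}$ of $D_{\pdR}(W_A)$ whose graded pieces are canonically $D_{\pdR}(W_{\dR}(\Rcal_{A,K}(\delta_{A,i})[\tfrac{1}{t}]))$; by Lemma \ref{weight}(ii), $\nu_{W_A}$ acts on $\Dcal_{A,i}/\Dcal_{A,i-1}$ by multiplication by $\mu_i := \wt(\delta_{A,i}) - \wt(\delta_i) \in A\otimes_{\Qp}K$. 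On the other hand, the proof of Lemma \ref{thetaw} already records that Lemma \ref{Sen} yields the Sen polynomial in the $\tau$-component as $\prod_i(Y-(h_{\tau,i}+\lambda_{i,\tau}))$, where $\lambda_{i,\tau} \in A$ is the action of $\nu_\tau$ on the rank-one Hodge graded piece $\gr^{-h_{\tau,i}}_{\Fil^\bullet_{W^+_A,\tau}}$.

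The key geometric step is then to show that, in each $\tau$-component, the pair of $\nu_\tau$-stable complete flags $(\Dcal_{A,\bullet,\tau},\Fil^\bullet_{W^+_A,\tau})$ on the free $A$-module $D_{\pdR,\tau}(W_A)$ lies in Bruhat position $w_\tau$. Over the residue field $L$ this is the combinatorial identification that the graded pieces of the flag $\Dcal_{\bullet,\tau}$ carry weights $\wt_\tau(\delta_i)=h_{\tau,w_\tau^{-1}(i)}$, which after the permutation $w_\tau$ match the Hodge-Tate weights indexing $\Fil^\bullet_{W^+,\tau}$. Because each Schubert cell $U_{w_\tau}\subset G/B\times G/B$ is a smooth $\GL_n$-homogeneous subscheme, any $A$-lift of an $L$-point of $U_{w_\tau}$ stays inside $U_{w_\tau}(A)$, so this relative position persists over $A$. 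One can then choose a common basis $v_1,\dots,v_n$ of $D_{\pdR,\tau}(W_A)$ with $\Dcal_{A,i,\tau}=\langle v_1,\dots,v_i\rangle$ and $\Fil^{-h_{\tau,i}}_{W^+_A,\tau}=\langle v_{w_\tau(1)},\dots,v_{w_\tau(i)}\rangle$.

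In such a basis $\nu_\tau$ is simultaneously upper triangular for both flags, with diagonal entries equal to $\mu_{1,\tau},\dots,\mu_{n,\tau}$ (read from the $\Dcal$-flag via Lemma \ref{weight}(ii)). A direct bookkeeping — using that among indices $j<w_\tau(i)$ only $j=w_\tau(i)$ itself has nonzero image in $\gr^{-h_{\tau,i}}_{\Fil^\bullet_{W^+_A,\tau}}$, since every other $v_j$ lies either in $\Fil^{-h_{\tau,i-1}}_{W^+_A,\tau}$ or outside $\Fil^{-h_{\tau,i}}_{W^+_A,\tau}$ — gives $\lambda_{i,\tau}=\mu_{w_\tau(i),\tau}$. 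Combining with the defining identity $\wt_\tau(\delta_{w_\tau(i)})=h_{\tau,i}$ yields
\[
h_{\tau,i}+\lambda_{i,\tau}\;=\;h_{\tau,i}+\wt_\tau(\delta_{A,w_\tau(i)})-\wt_\tau(\delta_{w_\tau(i)})\;=\;\wt_\tau(\delta_{A,w_\tau(i)}),
\]
so the $\tau$-component of the Sen polynomial becomes $\prod_i(Y-\wt_\tau(\delta_{A,w_\tau(i)}))=\prod_j(Y-\wt_\tau(\delta_{A,j}))$. Assembling over $\tau\in\Sigma$ produces the identity $P_{\rm Sen}(V_A)(Y)=\prod_{i=1}^n(Y-\wt(\delta_{A,i}))$ in $A\otimes_{\Qp}K[Y]$.

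The main obstacle will be the verification of the simultaneous-basis compatibility: establishing over the residue field that the relative position of the two flags is exactly $w_\tau$ (a matching of character-weight and Hodge-weight data for the graded pieces), and then confirming that this Bruhat stratum is preserved under the local artinian deformation afforded by $A$. Once these two points are granted, the Lemma follows from the explicit eigenvalue computation combined with Lemma \ref{Sen} and Lemma \ref{weight}(ii).
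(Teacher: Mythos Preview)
Your proof contains a genuine gap: you conflate the weight permutation $w$ with the flag-position permutation $w_x$, and these are different in general. The relative Bruhat position of the pair $(\Dcal_\bullet,\Fil_{W^+,\bullet})$ over $L$ is by definition $w_x$ (see just before Proposition~\ref{preceq}), not $w$. The relation $\wt_\tau(\delta_i)=h_{\tau,w_\tau^{-1}(i)}$ is purely a bijection of index sets telling you how the multiset of triangulation weights matches the multiset of Hodge--Tate weights; it says nothing about how the two flags actually sit inside one another in $D_{\pdR}(W)$. In the de~Rham case $\nu_W=0$, for instance, every flag is $\nu$-stable and the relative position $w_x$ can be any element $\preceq w$ (this is exactly the critical/non-critical distinction), so your claimed simultaneous basis with the permutation $w_\tau$ simply need not exist. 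Running your eigenvalue bookkeeping with the correct position $w_x$ would yield $\lambda_{i,\tau}=\mu_{w_{x,\tau}(i),\tau}$, and then the two polynomials match only when $w_x=w$.

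There is a second, independent error: the assertion that ``any $A$-lift of an $L$-point of $U_{w_\tau}$ stays inside $U_{w_\tau}(A)$'' is false. Smoothness of $U_{w_\tau}$ gives surjectivity of $U_{w_\tau}(A)\to U_{w_\tau}(L)$, not that every $A$-point of $G/B\times G/B$ reducing into $U_{w_\tau}(L)$ factors through $U_{w_\tau}$. Since $U_{w_\tau}$ is only locally closed, such lifts can escape (e.g.\ two flags equal over $L$ but distinct over $L[\varepsilon]$).

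The paper's proof avoids all of this by a density argument: the Sen polynomial is compatible with base change, so it suffices to verify the identity for the universal family on $X_{\tri}(\rbar)$; since $X_{\tri}(\rbar)$ is reduced, one checks it on the Zariski-dense open $U_{\tri}(\rbar)$, where the triangulation comes from an honest filtration of $D_{\rig}(r)$ with parameter $\deltabar$ and the claim is the standard computation of Sen weights for trianguline modules (\cite[Prop.~2.9]{BHS1}). In effect, the subtlety you ran into---that the triangulation of $D[1/t]$ need not induce a triangulation of $D$ with the same parameter---is precisely what the passage to the non-critical locus $U_{\tri}(\rbar)$ sidesteps.
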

\begin{proof}
Using compatibility of the Sen polynomial with base change (see \cite[Ex.4.8]{Chenevierdet}), it is sufficient to prove that the Sen polynomial of the universal Galois representation on $X_{\tri}(\rbar)$ (corresponding to $X_{\tri}(\rbar)\longrightarrow \Xfrak_{\rbar}$) is equal to $\prod_{i=1}^n(Y-\wt(\widetilde{\delta_i}))\in (\Gamma(X_{\tri}(\rbar),\mathcal{O}_{X_{\tri}(\rbar)})\otimes_{\Qp}K)[Y]$ with $\widetilde{\deltabar}=(\widetilde{\delta}_1,\dots,\widetilde{\delta}_n)$ the universal character on $X_{\tri}(\rbar)$ corresponding to $X_{\tri}(\rbar)\longrightarrow\Tcal_L^n$. It is sufficient to check that the coefficients of both polynomial coincide on a dense subset of points of $X_{\tri}(\rbar)$ and it is a consequence of \cite[Prop.2.9]{BHS1} (see also \cite[Lem.6.2.12]{KPX}).
\end{proof}

\begin{lem}\label{roots}
Let $(a_1,\dots,a_n)$ and $(b_1,\dots,b_n)$ be in $A^n$. Assume that all the $a_i$ modulo $\mathfrak{m}_A$ are pairwise distinct. If we have $\prod_{i=1}^n(Y-a_i)=\prod_{i=1}^n(Y-b_i)$ in $A[Y]$, there exists a permutation $w\in\Scal_n$ such that:
\begin{equation}\label{permutation}
(b_1,\dots,b_n)=(a_{w(1)},\dots,a_{w(n)}).
\end{equation}
\end{lem}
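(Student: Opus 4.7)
My plan is to first reduce modulo $\mathfrak{m}_A$ and then proceed by induction on $n$, exploiting unit/non-zero-divisor properties in the local ring $A$.

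First, reduce the equation $\prod_{i=1}^n(Y-a_i)=\prod_{i=1}^n(Y-b_i)$ modulo $\mathfrak{m}_A$ to obtain an equality of polynomials in $L[Y]$. Since the $\bar a_i$ are pairwise distinct and $L$ is a field, the factorization is unique and we obtain a permutation $w \in \Scal_n$ with $\bar b_i = \bar a_{w(i)}$ for all $i$. After replacing $(b_1,\dots,b_n)$ by $(b_{w^{-1}(1)},\dots,b_{w^{-1}(n)})$, we may assume $w$ is the identity, i.e.\ $\bar b_i = \bar a_i$ for all $i$, and we must now show $b_i = a_i$ for all $i$.

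Next, observe that for $i \neq j$, the element $a_i - a_j \in A$ reduces to the nonzero element $\bar a_i - \bar a_j \in L$, so $a_i - a_j$ lies outside $\mathfrak{m}_A$, hence is a unit in $A$. Plugging $Y = b_1$ into the equation yields $\prod_{i=1}^n(b_1 - a_i) = \prod_{i=1}^n(b_1 - b_i) = 0$, and thus $(b_1 - a_1)\cdot\prod_{i\geq 2}(b_1 - a_i) = 0$. Since $\bar b_1 = \bar a_1 \neq \bar a_i$ for $i \geq 2$, each factor $b_1 - a_i$ (for $i \geq 2$) is a unit in $A$, and therefore $b_1 - a_1 = 0$.

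Finally, since $(Y - a_1) = (Y - b_1)$ is a monic polynomial, it is a non-zero-divisor in $A[Y]$, so we may cancel it from both sides of $\prod_{i=1}^n(Y-a_i)=\prod_{i=1}^n(Y-b_i)$ to obtain $\prod_{i=2}^n(Y-a_i)=\prod_{i=2}^n(Y-b_i)$. The induction hypothesis (the base case $n=1$ being trivial) then gives a permutation of $\{2,\dots,n\}$ matching the remaining $b_i$ to the remaining $a_i$, which combined with $b_1 = a_1$ produces the desired permutation. There is no real obstacle here: the key point is simply that distinctness modulo $\mathfrak{m}_A$ upgrades differences $a_i - a_j$ to units, which is what allows the factor $\prod_{i \geq 2}(b_1 - a_i)$ to be cancelled despite $A$ not being an integral domain.
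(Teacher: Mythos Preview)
Your proof is correct and follows essentially the same approach as the paper: reduce modulo $\mathfrak{m}_A$ to find the permutation, relabel, and then exploit that the relevant differences lie outside $\mathfrak{m}_A$ and are therefore units. The paper avoids your induction/cancellation step by plugging in $Y = a_i$ for every $i$ directly (rather than just $i=1$), obtaining $0 = (a_i - b_i)\prod_{j\neq i}(a_i - b_j)$ and hence $a_i = b_i$ for all $i$ at once.
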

\begin{proof}
Reducing modulo $\mathfrak{m}_A$ and using the fact that $L[Y]$ is a factorial ring, we can choose $w$ such that (\ref{permutation}) holds modulo $\mathfrak{m}_A$, and replacing $(a_1,\dots,a_n)$ by $(a_{w(1)},\dots,a_{w(n)})$, we can assume $w=1$. Thus we have $a_i\equiv b_i$ modulo $\mathfrak{m}_A$ for all $i$ and we must prove $a_i=b_i$ for all $i$. Let $j\neq i$. As $A$ is a local ring and $a_i-a_j\notin \mathfrak{m}_A$, $b_i-a_j\notin\mathfrak{m}_A$, we have $\prod_{j\neq i}(a_i-a_j)\in A^\times$ and $\prod_{j\neq i}(a_i-b_j)\in A^\times$. Replacing $Y$ by $a_i$, we obtain $0=(a_i-b_i)\prod_{j\neq i}(a_i-b_j)$ and finally $a_i=b_i$.
\end{proof}

\begin{cor}\label{localdescrip}
The closed immersion $\iota_x:\widehat{X_{\tri}(\rbar)}_x\hookrightarrow X_{r,\Mcal_\bullet}$ induces an isomorphism $\widehat{X_{\tri}(\rbar)}_x\buildrel\sim\over\longrightarrow X_{r,\Mcal_\bullet}^w$.
\end{cor}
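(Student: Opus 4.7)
My plan is to identify the closed immersion $\iota_x$ with the closed immersion $X_{r,\Mcal_\bullet}^w\hookrightarrow X_{r,\Mcal_\bullet}$ by a dimension/irreducible-component argument combined with the factorization results of Lemma \ref{thetaw} and part (iii) of Theorem \ref{structhm}. The starting point is that $X_{\tri}(\rbar)$ is reduced and equidimensional of dimension $d:=n^2+[K:\Qp]\frac{n(n+1)}{2}$ by \cite[Th.2.6]{BHS1}; since the local rings of a rigid analytic space are excellent, the completed local ring $\widehat\Ocal_{X_{\tri}(\rbar),x}$ is also reduced equidimensional of dimension $d$ (by \cite[Sch.7.8.3(v),(x)]{EGAIV2}). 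By (i) of Theorem \ref{structhm} the ring $R_{r,\Mcal_\bullet}$ pro-representing $X_{r,\Mcal_\bullet}$ is also reduced equidimensional of dimension $d$, with minimal prime ideals indexed by $\Scal(x)$.

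The closed immersion $\iota_x$ of Proposition \ref{closedembtri} corresponds to a surjection $R_{r,\Mcal_\bullet}\twoheadrightarrow \widehat\Ocal_{X_{\tri}(\rbar),x}$ with kernel an ideal $I$. Since the quotient is reduced of dimension $d$, the ideal $I$ is radical and consists of elements of height $0$, hence is an intersection $I=\bigcap_{u\in T}\mathfrak{p}_u$ for some nonempty subset $T\subseteq \Scal(x)$. Equivalently, the image of $\iota_x$ is the reduced closed subscheme $\bigcup_{u\in T}X_{r,\Mcal_\bullet}^u$ of $X_{r,\Mcal_\bullet}$, and for each $u\in T$ we obtain a factorization $X_{r,\Mcal_\bullet}^u\hookrightarrow \widehat{X_{\tri}(\rbar)}_x\buildrel{\iota_x}\over\hookrightarrow X_{r,\Mcal_\bullet}$ of the natural closed immersion.

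The remaining step is to show $T=\{w\}$. Fix $u\in T$. Composing with $\Theta_x$ and using $\Theta_x=\Theta\circ\iota_x$, the composition $X_{r,\Mcal_\bullet}^u\hookrightarrow \widehat{X_{\tri}(\rbar)}_x\buildrel{\Theta_x}\over\longrightarrow \widehat T_{(0,0)}$ factors through $\widehat T_{w,(0,0)}\hookrightarrow \widehat T_{(0,0)}$ by Lemma \ref{thetaw}. On the other hand, this composition equals the restriction to $X_{r,\Mcal_\bullet}^u$ of the map $X_{r,\Mcal_\bullet}\longrightarrow \widehat T_{(0,0)}$, and by (iii) of Theorem \ref{structhm} (applied with the role of $w$ there played by $u$), this restriction factors through $\widehat T_{w',(0,0)}$ if and only if $w'=u$. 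Taking $w'=w$ yields $u=w$, so $T=\{w\}$, and $\iota_x$ identifies $\widehat{X_{\tri}(\rbar)}_x$ with $X_{r,\Mcal_\bullet}^w$.

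The argument is essentially formal once one has Lemma \ref{thetaw} and Theorem \ref{structhm}, so the only subtle point is the dimension-matching step: ensuring that $\widehat\Ocal_{X_{\tri}(\rbar),x}$ is equidimensional of dimension $d$ so that the image of $\iota_x$ is genuinely a union of full irreducible components of $\Spec R_{r,\Mcal_\bullet}$ (rather than embedded or lower-dimensional pieces). This is the part where the reducedness and equidimensionality of $X_{\tri}(\rbar)$ proven in \cite[Th.2.6]{BHS1} enters crucially, together with the excellence of the local rings of rigid analytic spaces.
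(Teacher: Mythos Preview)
Your proof is correct and follows essentially the same approach as the paper's: both use the reduced equidimensionality of $\widehat\Ocal_{X_{\tri}(\rbar),x}$ (from \cite[Th.2.6]{BHS1} plus excellence) and of $R_{r,\Mcal_\bullet}$ (from Theorem~\ref{structhm}(i)) to conclude that the image of $\iota_x$ is a union of irreducible components $X_{r,\Mcal_\bullet}^{w'}$, and then combine Lemma~\ref{thetaw} with Theorem~\ref{structhm}(iii) to force $w'=w$. The only remark is phrasing: ``consists of elements of height $0$'' is not standard (height is attached to prime ideals, not elements); what you mean, and what you correctly use, is that every prime of $R_{r,\Mcal_\bullet}$ minimal over $I$ is a minimal prime of $R_{r,\Mcal_\bullet}$.
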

\begin{proof}
By (i) of Theorem \ref{structhm} we have $X_{r,\Mcal_\bullet}\buildrel\sim\over\longrightarrow |X_{r,\Mcal_\bullet}|\simeq \Spf R_{r,\Mcal_\bullet}$ and we deduce from Proposition \ref{closedembtri} a closed immersion of affine schemes:
$$\Spec \widehat\Ocal_{X_{\tri}(\rbar),x}\hookrightarrow \Spec R_{r,\Mcal_\bullet}.$$
Moreover we know from \cite[\S2.2]{BHS1} and (i) of Theorem \ref{structhm} that $\widehat\Ocal_{X_{\tri}(\rbar),x}$ is reduced equidimensional of the same dimension as $R_{r,\Mcal_\bullet}$, so that $\Spec(\widehat\Ocal_{X_{\tri}(\rbar),x})$ is a union of irreducible components $\Spec R_{r,\Mcal_\bullet}^{w'}$ of $\Spec R_{r,\Mcal_\bullet}$ for some $w'\in \Scal$ (we use the notation of (ii) of Theorem \ref{structhm}). Pick up such a $w'\in \Scal$, going back to formal schemes and using (ii) of Theorem \ref{structhm} we deduce a closed immersion $X_{r,\Mcal_\bullet}^{w'}\hookrightarrow \widehat{X_{\tri}(\rbar)}_x$ which, composed with the morphism $\Theta_x$, gives $X_{r,\Mcal_\bullet}^{w'}\longrightarrow  \widehat T_{w,(0,0)}\hookrightarrow \widehat T_{(0,0)}$, where we have used Lemma \ref{thetaw}. But (iii) of Theorem \ref{structhm} then implies $w'=w$, which finishes the proof.
\end{proof}

\begin{rem}\label{assumptions}
{\rm We recall our assumptions on the point $x=(r,\deltabar)=(r,(\delta_1,\dots,\delta_n))\in X_{\tri}(\rbar)(L)$: $\deltabar$ is locally algebraic, $\delta_i\delta_j^{-1}$ and $\varepsilon\delta_i\delta_j^{-1}$ are not algebraic for $i\ne j$ and the $\tau$-Sen weights of the $\Gcal_K$-representation $V$ associated to $r$ are distinct for each $\tau\in \Sigma$. In particular it follows from Remark \ref{allcompanion} below that these assumptions are always satisfied when $V$ is crystalline with distinct Hodge-Tate weights for each embedding $\tau$ and the eigenvalues $(\varphi_1,\dots,\varphi_n)\in L^n$ of $\varphi^{[K_0:\Q_p]}$ on $D_{\cris}(V)$ (where $\varphi$ is the crystalline Frobenius on $D_{\cris}(V)$) are such that $\varphi_i\varphi_j^{-1}\notin\{1,p^{[K_0:\Q_p]}\}$ for $i\neq j$.}
\end{rem}

Let $x=(r,\deltabar)$ as in Remark \ref{assumptions}. Keeping all the previous notation, the following big commutative diagram of formal schemes over $L$, or alternatively of pro-representable groupoids over $\Ccal_L$, contains most of what has been done in \S\ref{localmodel}:
\begin{equation}\label{THEdiagram}
\begin{gathered}
\xymatrix{ \widehat{X_{\tri}(\rbar)}_x\ar[r]^{\ \sim} & X_{r,\Mcal_\bullet}^w\ar@{^{(}->}[d] & X_{r,\Mcal_\bullet}^{\Box,w}\ar@{^{(}->}[d]\ar[l]\ar[r] & X_{V,\Mcal_\bullet}^{\Box,w}\ar[r]^{\sim}\ar@{^{(}->}[d] & X_{D,\Mcal_\bullet}^{\Box,w}\ar@{^{(}->}[d]\ar[r] & X_{W^+,\Fcal_\bullet}^{\Box,w}\ar@{^{(}->}[d]\ar[r]^{\sim} & \widehat X_{w,x_{\pdR}}\ar@{^{(}->}[d] \\ &  X_{r,\Mcal_\bullet}\ar@{^{(}->}[d] & X_{r,\Mcal_\bullet}^{\Box}\ar@{^{(}->}[d]\ar[l]\ar[r] & X_{V,\Mcal_\bullet}^{\Box}\ar[r]^{\sim}\ar@{^{(}->}[d] & X_{D,\Mcal_\bullet}^{\Box}\ar@{^{(}->}[d]\ar[r] & X_{W^+,\Fcal_\bullet}^{\Box}\ar[r]^{\ \sim} & \widehat X_{x_{\pdR}} \\
& X_{r} & X_{r}^{\Box}\ar[l]\ar[r] & X_{V}^{\Box}\ar[r]^{\sim} & X_{D}^{\Box}.& & }
\end{gathered}
\end{equation}
where $x_{\pdR}:=(\alpha^{-1}(\Dcal_\bullet),\alpha^{-1}(\Fil_{W^+,\bullet}),N_W)\in X_w(L)$ (depending on the choice of an isomorphism $\alpha:(L\otimes_{\Qp}K)^n\buildrel\sim\over\longrightarrow D_{\pdR}(\BdR\otimes_{\Qp}V)$) and where all the horizontal morphisms which are not isomorphisms are formally smooth, all vertical morphisms are closed immersions and all squares are cartesian. Moreover the three horizontal formally smooth morphisms on the left just come from adding formal variables due to the framing $\Box$.

From (ii) of Theorem \ref{structhm}, Proposition \ref{flatnessofkappa} and (\ref{THEdiagram}), we finally deduce the following important corollary.

\begin{cor}\label{irreducible}
Let $x=(r,\deltabar)\in X_{\tri}(\rbar)$ satisfying the assumptions of Remark \ref{assumptions}, then the rigid analytic space $X_{\tri}(\rbar)$ is normal, hence irreducible, and Cohen-Macaulay at $x$.
\end{cor}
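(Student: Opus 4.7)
The plan is to combine Corollary \ref{localdescrip}, which provides an isomorphism of formal schemes $\widehat{X_{\tri}(\rbar)}_x \simeq X_{r,\Mcal_\bullet}^w$ (equivalently of complete local rings $\widehat\Ocal_{X_{\tri}(\rbar),x}\simeq R_{r,\Mcal_\bullet}^w$), with the structural results already proven on the right-hand side of the diagram (\ref{THEdiagram}) that identifies, up to formally smooth morphisms, the formal scheme $X_{r,\Mcal_\bullet}^w$ with $\widehat X_{w,x_{\pdR}}$, the completion at $x_{\pdR}$ of the irreducible component $X_w$ of the variety $X$ studied in \S\ref{Springer}. The excellence of the local rings of rigid analytic spaces (recalled in the notation section) will then allow us to descend both properties from $\widehat\Ocal_{X_{\tri}(\rbar),x}$ to $\Ocal_{X_{\tri}(\rbar),x}$.

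For the normality assertion, I would directly invoke part (ii) of Theorem \ref{structhm}, which asserts that $R_{r,\Mcal_\bullet}^w$ is a normal local domain. Combined with the isomorphism from Corollary \ref{localdescrip}, this gives that $\widehat\Ocal_{X_{\tri}(\rbar),x}$ is normal. By the equivalence ``$A$ normal $\Leftrightarrow \widehat{A}$ normal'' for excellent local rings (\cite[Sch.7.8.3(v)]{EGAIV2}, recalled in the notation section), it follows that $\Ocal_{X_{\tri}(\rbar),x}$ is normal, and in particular a domain, which gives the irreducibility at $x$.

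For the Cohen-Macaulay assertion, I would propagate the property along the top row of diagram (\ref{THEdiagram}). By Proposition \ref{flatnessofkappa}, $X_w$ is Cohen-Macaulay, hence so is the complete local ring $\widehat\Ocal_{X_w,x_{\pdR}}$. The isomorphism $X_{W^+,\Fcal_\bullet}^{\Box,w}\simeq \widehat X_{w,x_{\pdR}}$ of Corollary \ref{representw} transports Cohen-Macaulayness to the ring pro-representing $X_{W^+,\Fcal_\bullet}^{\Box,w}$. Proceeding leftward in the diagram, each successive morphism is either an isomorphism (e.g. $X_{V,\Mcal_\bullet}^{\Box,w}\simeq X_{D,\Mcal_\bullet}^{\Box,w}$) or a formally smooth morphism of pro-representable groupoids; since a formally smooth morphism between complete noetherian local rings with the same residue field corresponds to a formal power series extension, and Cohen-Macaulayness is preserved under such extensions in both directions, all the rings pro-representing the objects in the top row are Cohen-Macaulay. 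In particular, $R_{r,\Mcal_\bullet}^{\Box,w}\simeq R_{r,\Mcal_\bullet}^w\dbl T_1,\dots,T_N\dbr$ (for $N=[K\!:\!\Qp]n^2$, coming from the framing) is Cohen-Macaulay, which forces $R_{r,\Mcal_\bullet}^w\simeq \widehat\Ocal_{X_{\tri}(\rbar),x}$ to be Cohen-Macaulay as well. A final appeal to excellence via \cite[Prop.16.5.2]{EGAIV1} transfers the property to $\Ocal_{X_{\tri}(\rbar),x}$.

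No significant obstacle remains at this stage: the heart of the argument, namely the identification of $\widehat{X_{\tri}(\rbar)}_x$ with $X_{r,\Mcal_\bullet}^w$ encoded in the diagram (\ref{THEdiagram}), together with the normality (Theorem \ref{normality}) and Cohen-Macaulayness (Proposition \ref{flatnessofkappa}) of the components $X_w$ themselves, has already been carried out. The corollary amounts to bookkeeping: transferring these properties along the chain of formally smooth arrows and descending across the completion via excellence.
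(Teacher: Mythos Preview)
Your proposal is correct and follows essentially the same approach as the paper: the paper's own proof is a single sentence invoking (ii) of Theorem~\ref{structhm}, Proposition~\ref{flatnessofkappa}, and diagram~(\ref{THEdiagram}), and you have simply (and accurately) unpacked what that sentence means, including the descent from completion via excellence.
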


\section{Local applications}\label{local}
 
We derive several local consequences of the results of \S\ref{Springer} and \S\ref{localmodel}: further properties of $X_{\tri}(\rbar)$ around a point $x$ as in Remark \ref{assumptions}, existence of all local companion points when $r$ is crystalline and a combinatorial description in that case of the completed local ring at $x$ of the fiber of $X_{\tri}(\rbar)$ over the weight map.

\subsection{Further properties of the trianguline variety}\label{further}

We prove several new geometric properties of $X_{\tri}(\rbar)$ around a point $x$ satisfying the assumptions of Remark \ref{assumptions}.

We keep the notation of \S\ref{begin}. If $x\in  X_{\tri}(\rbar)$ satisfies the conditions of Remark \ref{assumptions}, recall we have associated to $x$ two permutations in $\Scal\simeq \Scal_n^{[K:\Qp]}$: the permutation $w_x$ defined just before Proposition \ref{preceq} and the permutation $w$ defined just before Lemma \ref{thetaw}.

Recall also that the map $\omega':X_{\tri}(\rbar)\longrightarrow \Tcal_L^n$ is smooth on the Zariski-open $U_{\tri}(\rbar)$ (\cite[Th.2.6(iii)]{BHS1}) but can be ramified in general (as follows from \cite[Th.B]{Bergdall}). The following proposition is one more property of the map $\omega'$.

\begin{prop}\label{flatrigid}
Let $x=(r,\deltabar)\in X_{\tri}(\rbar)$ satisfying the assumptions of Remark \ref{assumptions}, then the morphism $\omega'$ is flat in a neighbourhood of $x$.
\end{prop}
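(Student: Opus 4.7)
The plan is to deduce flatness of $\omega'$ at $x$ from the flatness of the weight map $\kappa_{1,w}:X_w\to\liet$ established in Proposition \ref{flatnessofkappa}, via the local model of diagram (\ref{THEdiagram}). Flatness at a point of a morphism of rigid analytic spaces over $L$ being equivalent to flatness of the induced homomorphism of completed local rings (and spreading to an affinoid neighbourhood once it holds at the point), it suffices to show that $\widehat{\Ocal}_{X_{\tri}(\rbar),x}$ is flat over $\widehat{\Ocal}_{\Tcal_L^n,\deltabar}$, or equivalently that the morphism $\omega_{\deltabar}:X_{r,\Mcal_\bullet}^{w}\simeq\widehat{X_{\tri}(\rbar)}_x\to\widehat{\Tcal^n_{\deltabar}}$ coming from Proposition \ref{mainfactor} and Corollary \ref{localdescrip} is flat.

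Next, by Corollary \ref{commugroup}, the composition of $\omega_{\deltabar}$ with $\wt-\wt(\deltabar):\widehat{\Tcal^n_{\deltabar}}\to\widehat{\tfrak}$ coincides with the weight map $\kappa_{W,\Fcal_\bullet}$ (pulled back from $X_{W,\Fcal_\bullet}$). Tracing this through the rightmost part of diagram (\ref{THEdiagram}) and the identification $X_{W^+,\Fcal_\bullet}^\Box\simeq \widehat{X}_{x_{\pdR}}$ of Corollary \ref{represent2}(i), the map $\kappa_{W,\Fcal_\bullet}$ corresponds, via the projection $\widehat{X}_{x_{\pdR}}\to\widehat{\tilde\lieg}_{(\alpha^{-1}(\Dcal_\bullet),N_W)}$ to the first factor followed by $\widehat\kappa$, to the completion at $x_{\pdR}$ of the first projection $\kappa_1:\tilde\lieg\times_\lieg\tilde\lieg\to\liet$. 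Restricting to the component $\widehat{X}_{w,x_{\pdR}}$ this is precisely the completion of $\kappa_{1,w}:X_w\to\liet$.

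By Proposition \ref{flatnessofkappa} the morphism $\kappa_{1,w}$ is flat, hence so is the induced morphism on completed local rings. All the other horizontal arrows in the upper part of diagram (\ref{THEdiagram}) (between the framed and unframed groupoids, as well as $X_{r,\Mcal_\bullet}^{\Box,w}\to X_{W^+,\Fcal_\bullet}^{\Box,w}$) are formally smooth by Corollary \ref{formallysmooth} and the discussion in \S\ref{galois}, hence flat; composing, the morphism $X_{r,\Mcal_\bullet}^{\Box,w}\to\widehat{\tfrak}$ obtained from $\kappa_{W,\Fcal_\bullet}$ is flat. Since $X_{r,\Mcal_\bullet}^{\Box,w}\to X_{r,\Mcal_\bullet}^{w}$ is formally smooth and local between Noetherian complete local rings with the same residue field, it is faithfully flat, and flatness descends: $X_{r,\Mcal_\bullet}^{w}\to\widehat{\tfrak}$ is flat.

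Finally, $\wt-\wt(\deltabar):\widehat{\Tcal^n_{\deltabar}}\to\widehat{\tfrak}$ is formally smooth by Lemma \ref{smdim1} and again local between complete local Noetherian rings with residue field $L$, hence faithfully flat. The composition $X_{r,\Mcal_\bullet}^{w}\to\widehat{\Tcal^n_{\deltabar}}\to\widehat{\tfrak}$ being flat, descent of flatness along the faithfully flat map $\wt-\wt(\deltabar)$ forces $\omega_{\deltabar}:X_{r,\Mcal_\bullet}^{w}\to\widehat{\Tcal^n_{\deltabar}}$ to be flat, concluding the argument. The essential input is Proposition \ref{flatnessofkappa}, which itself rests on the miracle flatness criterion (Lemma \ref{miracelflatness}) applied to the equidimensionality of fibers of $\kappa_{i,w}$ (Lemma \ref{fibdim}) together with the Cohen-Macaulay property of $X_w$; once this is in hand, the remaining steps are essentially a bookkeeping exercise through the local model diagram, the main subtlety being to correctly identify which of the two projections $\kappa_1$, $\kappa_2$ corresponds to $\omega_{\deltabar}$ under the various identifications.
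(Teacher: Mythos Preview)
Your argument is sound through step 5, but the final step invokes a form of ``flatness descent'' that does not hold. You have the composition $X_{r,\Mcal_\bullet}^{w}\to\widehat{\Tcal^n_{\deltabar}}\to\widehat{\tfrak}$ flat and $\widehat{\Tcal^n_{\deltabar}}\to\widehat{\tfrak}$ faithfully flat, and you conclude that $X_{r,\Mcal_\bullet}^{w}\to\widehat{\Tcal^n_{\deltabar}}$ is flat. But faithful flatness of $B$ over $A$ does not let you descend flatness of a $B$-algebra $C$ from $A$ to $B$: take $A=k$, $B=k[[x]]$, $C=k$ with $x\mapsto 0$; then $A\to B$ is faithfully flat and $C$ is flat over $A$, yet $C$ is not flat over $B$. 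Your step 5 uses the \emph{correct} direction of descent (if $B\to C$ is faithfully flat and $C$ is $A$-flat then $B$ is $A$-flat), but step 6 reverses the roles, and that direction simply fails.

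The paper's proof avoids this trap by not passing through $\widehat{\tfrak}$ alone. It uses the formally smooth morphism
\[
X_{r,\Mcal_\bullet}^{\Box,w}\longrightarrow\widehat{\Tcal^n_{\deltabar}}\times_{\widehat{\tfrak}}\widehat{X}_{w,x_{\pdR}}
\]
obtained by base change from Theorem \ref{mainsmooth} (this is the content of Remark \ref{thediagram}). Then $\widehat{\Tcal^n_{\deltabar}}\times_{\widehat{\tfrak}}\widehat{X}_{w,x_{\pdR}}\to\widehat{\Tcal^n_{\deltabar}}$ is flat \emph{by base change} from the flat map $\kappa_{1,w}$, and composing with the formally smooth morphism gives $X_{r,\Mcal_\bullet}^{\Box,w}\to\widehat{\Tcal^n_{\deltabar}}$ flat directly, never needing to descend flatness in the wrong direction. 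The fiber product structure encoded in Theorem \ref{mainsmooth} --- which you did not invoke --- is exactly the missing ingredient.
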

\begin{proof}
Increasing $L$ if necessary, we can assume $x\in X_{\tri}(\rbar)(L)$. We use the notation of \S\ref{localmodel}. By base change from Theorem \ref{mainsmooth} using Proposition \ref{Berger}, the morphism of formal schemes $X^{\Box,w}_{D,\Mcal_\bullet}\longrightarrow \widehat{\Tcal^n_{\deltabar}}\times_{\widehat{\tfrak}}X^{\Box,w}_{W^+,\Fcal_\bullet}$ is formally smooth, hence by Corollary \ref{representw} and (\ref{basechange}) so is $X^{\Box,w}_{r,\Mcal_\bullet}\longrightarrow \widehat{\Tcal^n_{\deltabar}}\times_{\widehat{\tfrak}}\widehat X_{w,x_{\pdR}}$ where $x_{\pdR}=(\alpha^{-1}(\Dcal_\bullet),\alpha^{-1}(\Fil_{W^+,\bullet}),N_W)\in X_w(L)$ (depending on some choice of $\alpha$). Since the morphism of schemes $\kappa_{1,w}:X_w\longrightarrow \tfrak$ is flat by Proposition \ref{flatnessofkappa}, it remains so after completion, and we deduce that the morphisms of formal schemes $\widehat{\Tcal^n_{\deltabar}}\times_{\widehat{\tfrak}}\widehat X_{w,x_{\pdR}}\longrightarrow \widehat{\Tcal^n_{\deltabar}}$ and thus $X^{\Box,w}_{r,\Mcal_\bullet}\longrightarrow \widehat{\Tcal^n_{\deltabar}}$ are flat. Since this last morphism factors through $X_{r,\Mcal_\bullet}$ (see the definition of $\omega_{\deltabar}$ just above (\ref{changedelta})), we have a commutative diagram of formal schemes (whose underlying topological spaces are just one point):
$$\xymatrix{X^{w}_{r,\Mcal_\bullet}\ar[rd] &  X^{\Box,w}_{r,\Mcal_\bullet}\ar[l]\ar[d]\\
& \widehat{\Tcal^n_{\deltabar}}}$$
and where the horizontal morphism is formally smooth (see the $w$-analogue of (\ref{basechange})). Looking at the map induced by this horizontal morphism on the underlying complete local rings, it is formally smooth, hence flat, hence faithfully flat (since it is a flat local map between local rings). Together with the flatness of $X^{\Box,w}_{r,\Mcal_\bullet}\longrightarrow \widehat{\Tcal^n_{\deltabar}}$, it is then straightforward to check that the morphism of formal schemes $X^{w}_{r,\Mcal_\bullet}\longrightarrow \widehat{\Tcal^n_{\deltabar}}$ is also flat (use that $C\otimes_B M=0 \Leftrightarrow M=0$ if $B\rightarrow C$ is a faithfully flat morphism of commutative rings). We thus obtain that $\widehat{X_{\tri}(\rbar)}_x \buildrel\omega'\over\longrightarrow \widehat{\Tcal^n_{\deltabar}}$ is flat by Corollary \ref{localdescrip} and (\ref{pi}). Looking again at the underlying complete local rings and using that completion of noetherian local rings at their maximal ideal is a faithfully flat process, we deduce in the same way as above that the morphism of local rings $\Ocal_{\Tcal^n_L,\deltabar}\longrightarrow \Ocal_{X_{\tri}(\rbar),x}$ is also flat, i.e. that the morphism of rigid spaces $\omega':X_{\tri}(\rbar)\longrightarrow \Tcal^n_L$ is flat at $x$, and hence in an affinoid neighbourhood of $x$ (flatness on rigid spaces being an open condition).
\end{proof}

\begin{rem}\label{thediagram}
{\rm We see from (\ref{THEdiagram}) and the argument at the beginning of the proof of Proposition \ref{flatrigid} that we have:
$$\xymatrix{ \widehat{X_{\tri}(\rbar)}_x & X_{r,\Mcal_\bullet}^{\Box,w}\ar[l]\ar[r] \ar@{^{(}->}[d] & \widehat{\Tcal^n_{\deltabar}}\times_{\widehat{\tfrak}}\widehat X_{w,x_{\pdR}}\ar@{^{(}->}[d]\\
&X_{r,\Mcal_\bullet}^{\Box}\ar[r] & \widehat{\Tcal^n_{\deltabar}}\times_{\widehat{\tfrak}}\widehat X_{x_{\pdR}}}$$
where the horizontal morphisms are formally smooth, the vertical ones are closed immersions and the square is cartesian.}
\end{rem}

Recall that $\Wcal$ is the rigid analytic space over $\Qp$ parametrizing continuous characters of $\oK^\times$. Let $\Wcal_L$ be its base change from $\Qp$ to $L$ and let $\omega: X_{\tri}(\rbar)\buildrel\omega'\over\longrightarrow \Tcal_L^n\twoheadrightarrow \Wcal_L^n$ where the last morphism is restriction (of characters) to $\oK^\times$. Note that, arguing as just after (\ref{morfiber}), Proposition \ref{flatrigid} implies that $\omega$ is also flat in a neighbourhood of $x$. For $A$ in $\Ccal_L$ we say that $\delta_0:\oK^\times\longrightarrow A^\times$ is algebraic if it is the restriction to $\oK^\times$ of an algebraic character of $K^\times$ (cf. \S\ref{triangulinet}). Recall the following definition from \cite[Def.2.11]{BHS2}.

\begin{defn}\label{accu}
Let $x\in X_{\rm tri}(\rbar)$ such that $\omega(x)$ is algebraic. We say that $X_{\rm tri}(\rbar)$ satisfies the {\rm accumulation property at} $x$ if, for any positive real number $C>0$, the set of crystalline strictly dominant points $x'=(r',\delta')$ such that:
\begin{enumerate}
\item[(i)]the eigenvalues of $\varphi^{[K_0:\Q_p]}$ on $D_{\rm cris}(r')$ are pairwise distinct;\\
\item[(ii)]$x'$ is noncritical;\\
\item[(iii)]$\omega(x')=\delta'\vert_{({\mathcal O}_K^\times)^n}=\delta_{\bf k'}$ with $k'_{\tau,i}-k'_{\tau,i+1}>C$ for $i\in \{1,\dots,n-1\}$, $\tau\in \Hom(K,L);$
\end{enumerate}
accumulate at $x$ in $X_{\rm tri}(\rbar)$ in the sense of \cite[\S3.3.1]{BelChe}.
\end{defn}

\begin{prop}
Let $x\in X_{\tri}(\rbar)$ satisfying the assumptions of Remark \ref{assumptions} and such that $\omega(x)$ is algebraic, then $X_{\tri}(\rbar)$ satisfies the accumulation property at $x$.
\end{prop}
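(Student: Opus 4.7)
The plan is to combine Proposition \ref{flatrigid} with Corollary \ref{irreducible} and the standard density of regular algebraic weights in $\Wcal_L^n$ to transfer a Zariski-density statement from weight space down to $X_{\rm tri}(\rbar)$ near $x$.

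First I would reduce matters to an admissible affinoid neighborhood $V$ of $x$ which is simultaneously (a) irreducible (by Corollary \ref{irreducible}, so shrinking if necessary), (b) on which $\omega$ is flat (by Proposition \ref{flatrigid} and the fact that flatness is an open condition on rigid spaces), and (c) such that $V\cap U_{\rm tri}(\rbar)$ is Zariski-open dense in $V$ with $\omega'$ smooth there (by \cite[Th.2.6(iii)]{BHS1} together with the fact that $U_{\rm tri}(\rbar)$ is Zariski-open dense in $X_{\rm tri}(\rbar)$). Because $\omega$ is flat on $V$, the image $U:=\omega(V)$ is an open neighborhood of $\omega(x)$ in $\Wcal_L^n$. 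For any $C>0$ the set $\Sigma_C\subset \Wcal_L^n$ of algebraic dominant characters $\delta_{\mathbf k'}$ with $k'_{\tau,i}-k'_{\tau,i+1}>C$ accumulates (in the sense of \cite[\S3.3.1]{BelChe}) at every algebraic point of $\Wcal_L^n$; in particular $\Sigma_C\cap U$ is Zariski-dense in $U$. Using that flat surjective morphisms between rigid affinoids pull Zariski-dense subsets back to Zariski-dense subsets (equivalently, the going-down property for flat morphisms), we deduce that $\omega^{-1}(\Sigma_C)\cap V$ is Zariski-dense in $V$.

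Next, I would argue that within this Zariski-dense subset the extra properties (i)--(iii) of Definition \ref{accu} are generic. Indeed, on the Zariski-open dense subset $V\cap U_{\rm tri}(\rbar)$, every point $(r',\deltabar')$ with $\omega(\deltabar')\in\Sigma_C$ has $\deltabar'\in\Tcal_{\rm reg}^n$ and a strictly dominant algebraic weight with arbitrarily large gaps. Taking $C$ sufficiently large and shrinking $V$, such a point is automatically noncritical (the Hodge filtration is forced by the weights into a unique position compatible with the triangulation) and, by a standard application of \cite[Th.6.3.13]{KPX} together with the criterion that a trianguline representation with strictly dominant algebraic regular parameter is crystalline, the representation $r'$ is in fact crystalline with refinement given by $\deltabar'$. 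The condition that the eigenvalues of $\varphi^{[K_0:\Qp]}$ on $D_{\rm cris}(r')$ be pairwise distinct then corresponds to $\deltabar'$ avoiding a proper Zariski-closed subset of the torus, and the complement of this locus inside $\omega^{-1}(\Sigma_C)\cap V\cap U_{\rm tri}(\rbar)$ remains Zariski-dense in $V$ (since $V$ is irreducible).

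The main obstacle is step 2, namely the flatness-to-density transfer: verifying that flatness of $\omega$ at $x$ truly allows one to conclude Zariski-density of $\omega^{-1}(\Sigma_C)\cap V$ from Zariski-density of $\Sigma_C\cap U$. This rests on flatness being open together with the fact that, because $V$ is irreducible and flat over its image, every irreducible component of a fiber meets the complement of any proper Zariski-closed subset of $V$. Once this is in hand, the remaining genericity arguments (crystallinity, noncriticality, distinct Frobenius eigenvalues) follow from standard results in the theory of trianguline $(\varphi,\Gamma_K)$-modules as recalled above.
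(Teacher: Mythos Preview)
Your overall strategy (flatness of $\omega$ gives openness of the image in $\Wcal_L^n$, then pull back density of strictly dominant algebraic weights) is the same starting point as the paper's, but the paper organizes the remainder differently and more efficiently. Rather than working at $x$ directly, the paper first observes that $U_{\rm tri}(\rbar)\cap U$ accumulates at $x$ (being Zariski-open dense in an irreducible neighbourhood, by Corollary~\ref{irreducible}), and then invokes the argument of \cite[Prop.~2.12]{BHS2} to \emph{reduce} to the case $x\in U_{\rm tri}(\rbar)$ with $U\subseteq U_{\rm tri}(\rbar)$. Once there, everything follows at once from \cite[Lem.~2.10]{BHS2}, which already packages your step~3 genericity claims. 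Your direct route avoids this reduction but then has to reprove the content of that lemma by hand.

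Two points where your sketch is imprecise. First, your ``flatness-to-density transfer'' is correct in conclusion but the justification (``going-down for flat morphisms'') is not the relevant mechanism: Zariski-density in $\omega(V)$ does not by itself imply meeting every \emph{admissible} open, which is what you need. The argument that actually works is that flat maps are open in the admissible topology (\cite[Cor.~5.11]{BL}) and that $\Sigma_C$ is Zariski-dense in \emph{every} admissible open of $\Wcal_L^n$ containing an algebraic point; hence the image of any nonempty Zariski-open of $V$ meets $\Sigma_C$. Second, your step~3 claims (noncriticality and crystallinity for points over $\Sigma_C$ in $U_{\rm tri}(\rbar)$) are not automatic from ``$C$ large and strictly dominant regular parameter'': they require the numerical slope criterion comparing weights to Frobenius valuations, which is exactly what \cite[Lem.~2.10]{BHS2} provides and why the paper cites it rather than arguing directly.
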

\begin{proof}
It follows from the above flatness of $\omega$ at $x$ and \cite[Cor.5.11]{BL} that there is an affinoid neighbourhood $U$ of $x$ in $X_{\tri}(\rbar)$ such that $\omega(U)$ is open in $\Wcal_L^n$. Since $U_{\tri}(\rbar)\cap U$ is Zariski-open and dense in $U$, it accumulates in $U$ at any point of $U$, in particular at $x$. Arguing as in the first half of the proof of \cite[Prop.2.12]{BHS2} replacing $V$ by $U_{\tri}(\rbar)$, and using that $U$ is locally irreducible at $x$ by Corollary \ref{irreducible} and the fact that the normal locus of an excellent ring is Zariski-open, we can then assume that $x$ is moreover in $U_{\tri}(\rbar)$ and that $U\subseteq U_{\tri}(\rbar)$. Then the result follows from \cite[Lem.2.10]{BHS2} using that the algebraic points of $\omega(U)$ satisfying the conditions of {\it loc.cit.} accumulate at $\omega(x)$ since $\omega(U)$ is open in $\Wcal_L^n$.
\end{proof}

If $w'\in \Scal$, let $d_{w'}\in \Z_{\geq 0}$ be the rank of the $\Z$-submodule of $X^*(T)$ (here $T$ is the split torus of $G$) generated by the $w'(\alpha)-\alpha$ where $\alpha$ runs among the roots of $G$. Then one easily checks that $d_{w'}=\dim_{L'}\liet(L')-\dim_{L'}\liet^{w'}(L')=n[K:\Qp]-\dim_L\liet^{w'}(L')$ for any extension $L'$ of $L$ (see \S\ref{Springer4} for $\liet^{w'}$). We have the following result which extends \cite[Th.1.3]{BHS2}.

\begin{prop}\label{tangenttri}
Let $x=(r,\delta)\in X_{\tri}(\rbar)$ satisfying the assumptions of Remark \ref{assumptions} and such that $r$ is de Rham.\\
(i) We have $\dim_{k(x)}T_{X_{\tri}(\rbar),x}=\dim X_{\tri}(\rbar)-[K:\Q_p]n^2+\dim_{k(x)}T_{X_{w},x_{\pdR}}$. In particular the rigid analytic space $X_{\tri}(\rbar)$ is smooth at $x=(r,\deltabar)$ if and only if the scheme $X_w$ is smooth at $x_{\pdR}=(\alpha^{-1}(\Dcal_\bullet),\alpha^{-1}(\Fil_{W^+,\bullet}),N_W)$ (which doesn't depend on the choice of $\alpha$ by $G$-equivariance of $X_w$).\\
(ii) We have:
$$\dim_{k(x)}T_{X_{\tri}(\rbar),x}\leq \dim X_{\tri}(\rbar)-d_{w{w_x}^{-1}}+\lg(w_xw_0)+\dim_{k(x)} T_{\overline{U_w},\pi(x_{\pdR})}-[K:\Q_p]n(n-1).$$
In particular if $\pi(x_{\pdR})$ is a smooth point on $\overline{U_w}$ and if $d_{w{w_x}^{-1}}=\lg(w)-\lg(w_x)$ then $X_{\tri}(\rbar)$ is smooth at $x$.
\end{prop}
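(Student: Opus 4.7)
For part (i), the strategy is to directly combine the local model identification with the tangent space computation from the previous section. Specifically, Corollary \ref{localdescrip} gives an isomorphism of formal schemes $\widehat{X_{\tri}(\rbar)}_x \simeq X_{r,\Mcal_\bullet}^w$, so the tangent spaces coincide: $\dim_{k(x)} T_{X_{\tri}(\rbar),x} = \dim_L X_{r,\Mcal_\bullet}^w(L[\varepsilon])$ (after enlarging $L$ so that $k(x) = L$ if necessary). Corollary \ref{tangent} computes the right hand side as $n^2 - [K:\Qp]n^2 + [K:\Qp]\tfrac{n(n+1)}{2} + \dim_L T_{X_w, x_{\pdR}}$. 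Since $\dim X_{\tri}(\rbar) = n^2 + [K:\Qp]\tfrac{n(n+1)}{2}$, a trivial rearrangement yields the formula in (i). The smoothness equivalence follows immediately: by Corollary \ref{irreducible}, $X_{\tri}(\rbar)$ is irreducible (in particular equidimensional) at $x$, so smoothness is equivalent to $\dim_{k(x)} T_{X_{\tri}(\rbar),x} = \dim X_{\tri}(\rbar)$, which by the formula in (i) is equivalent to $\dim_{k(x)} T_{X_w, x_{\pdR}} = [K:\Qp]n^2 = \dim X_w$; since $X_w$ is irreducible, this is smoothness of $X_w$ at $x_{\pdR}$.

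For part (ii), I first use the de Rham hypothesis on $r$ to reduce to a situation where Proposition \ref{inegtangent} applies. Since $r$ is de Rham, $W = \BdR \otimes_{\Qp} V$ is a de Rham $\BdR$-representation, hence under the equivalence of Corollary \ref{equivpdR} it corresponds to a $\mathbb{G}_{\mathrm{a}}$-representation with trivial action, i.e.\ $\nu_W = 0$. This means $N_W = 0$ in $\lieg(L)$, so the image of $x_{\pdR} = (\alpha^{-1}(\Dcal_\bullet), \alpha^{-1}(\Fil_{W^+,\bullet}), N_W)$ in $\lieg$ is $0$ as required by the hypothesis of Proposition \ref{inegtangent}. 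By the very definition of $w_x$ we have $x_{\pdR} \in X_w \cap V_{w_x}$, so Proposition \ref{inegtangent}(i) applied with $w' = w_x$ gives
\[
\dim_{k(x)} T_{X_w, x_{\pdR}} \leq \dim_{k(x)} T_{\overline{U_w}, \pi(x_{\pdR})} + \dim_{k(x)} \liet^{w w_x^{-1}}(k(x)) + \lg(w_x w_0).
\]
Using $\dim_{k(x)} \liet^{w'}(k(x)) = n[K:\Qp] - d_{w'}$ and substituting into part (i), the arithmetic identity $-[K:\Qp]n^2 + n[K:\Qp] = -[K:\Qp]n(n-1)$ yields exactly the inequality in (ii).

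For the final smoothness criterion, under the hypothesis that $\pi(x_{\pdR})$ is smooth on $\overline{U_w}$ and $d_{w w_x^{-1}} = \lg(w) - \lg(w_x)$, Proposition \ref{inegtangent}(ii) (noting that its codimension hypothesis is exactly $d_{w w_x^{-1}} = \lg(w) - \lg(w_x)$) implies that $X_w$ is smooth at $x_{\pdR}$, and then part (i) gives smoothness of $X_{\tri}(\rbar)$ at $x$. The entire argument is a direct assembly of results already established: the main ``obstacle'' is purely bookkeeping, namely tracking the constants $n^2$, $[K:\Qp]n^2$, $n[K:\Qp]$, $[K:\Qp]\tfrac{n(n+1)}{2}$, and $d_{w w_x^{-1}} = n[K:\Qp] - \dim \liet^{w w_x^{-1}}$, so that the formula of Corollary \ref{tangent} combined with Proposition \ref{inegtangent} recovers the target inequality. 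The only genuinely nontrivial conceptual point, already built into the preceding sections, is the use of the de Rham hypothesis to force $N_W = 0$ so that Proposition \ref{inegtangent} is applicable at $x_{\pdR}$.
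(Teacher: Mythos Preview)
Your proof is correct and follows essentially the same approach as the paper: both invoke Corollary \ref{localdescrip} and Corollary \ref{tangent} for part (i), and both use the de Rham hypothesis to force $N_W=0$ and then feed Proposition \ref{inegtangent}(i) into part (i) for the inequality in (ii). The only cosmetic difference is that for the final smoothness assertion you appeal to Proposition \ref{inegtangent}(ii) and then part (i), whereas the paper plugs the smooth values $\dim \overline{U_w}=[K:\Q_p]\tfrac{n(n-1)}{2}+\lg(w)$ and $\lg(w_xw_0)=[K:\Q_p]\tfrac{n(n-1)}{2}-\lg(w_x)$ directly into the inequality of (ii); these are of course equivalent.
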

\begin{proof}
Increasing $L$ if necessary, we assume $k(x)=L$. (i) follows from Corollary \ref{localdescrip} and Corollary \ref{tangent} together with $\dim X_{\tri}(\rbar)=n^2+[K:\Q_p]\frac{n(n+1)}{2}$ and $\dim X_w=[K:\Q_p]n^2$. Since $r$ is de Rham (which here is equivalent to $r$ being crystabelline due to the assumptions in Remark \ref{assumptions}), the nilpotent endomorphism $\nu_W$ of $W$ is $0$ and we can apply (i) of Proposition \ref{inegtangent} which gives here:
$$\dim_LT_{X_{w},x_{\pdR}}\leq \dim_{L} T_{\overline{U_w},\pi(x_{\pdR})}+n[K:\Qp]-d_{w{w_x}^{-1}}+\lg(w_xw_0).$$
This inequality plugged into the equality of (i) gives the inequality in (ii). The last assertion in (ii) follows using $\dim \overline{U_{w}}=[K:\Q_p]\frac{n(n-1)}{2}+\lg(w)$ and $\lg(w_xw_0)=[K:\Q_p]\frac{n(n-1)}{2}-\lg(w_x)$.
\end{proof}

\begin{rem}\label{specialcases}
{\rm (i) The assumption on $\pi(x_{\pdR})$ in (ii) of Proposition \ref{tangenttri} is always satisfied when $w=w_0$ (since in that case $\overline{U_{w_0}}=G/B\times G/B$ is smooth), i.e. when $x$ is a strictly dominant point on $X_{\tri}(\rbar)$ in the sense of \cite[\S2.1]{BHS2}, and using $d_{w_0w_x^{-1}}=d_{w_xw_0}$ we have in that case:
\begin{equation}\label{oldfriend}
\dim_{k(x)}T_{X_{\tri}(\rbar),x}\leq \dim X_{\tri}(\rbar)-d_{w_xw_0}+\lg(w_xw_0).
\end{equation}
The assumption $d_{w_0{w_x}^{-1}}=\lg(w_0)-\lg(w_x)=\lg(w_0w_x^{-1})$ is satisfied if and only if $w_x$ is a product of distinct simple reflections (as follows from \cite[lem.2.7]{BHS2}). Note that the permutation $w_x$, call it here $w_x^{\rm new}$, is in fact not the same as the permutation also denoted $w_x$ defined in \cite[\S2.3]{BHS2}, call it $w_x^{\rm old}$. Indeed, unravelling the two definitions one can check that $w_x^{\rm new}=w_x^{\rm old}w_0$. In particular the upper bound in (\ref{oldfriend}) is exactly that of \cite[Th.1.3]{BHS2}.\\
(ii) Both assumptions on $\pi(x_{\pdR})$ and on $d_{w{w_x}^{-1}}$ in (ii) of Proposition \ref{tangenttri} are satisfied when $\lg(w)-\lg(w_x)\leq 2$. The one on $\pi(x_{\pdR})$ follows from \cite[Th.6.0.4]{BilLak} and \cite[Cor.6.2.11]{BilLak}. The one on $d_{w{w_x}^{-1}}$ follows from writing $w=s_\alpha w_x$ (case $\lg(w)-\lg(w_x)=1$) or $w=s_\alpha s_\beta w_x$ (case $\lg(w)-\lg(w_x)=2$) where $s_\alpha, s_\beta$ are (not necessarily simple) reflections (see e.g. \cite[\S0.4]{HumBGG}).\\
(iii) Assuming Conjecture \ref{conjinter} for $w=w_0$, the inequality in (i) of Proposition \ref{inegtangent} is an equality for $w=w_0$ (see Remark \ref{remconjinter}) which then implies that (\ref{oldfriend}) is also an equality. In particular Conjecture \ref{conjinter} implies \cite[Conj.2.8]{BHS2}.}
\end{rem}

\subsection{Local companion points}\label{locomp}

For $r$ a fixed crystalline sufficiently generic deformation of $\rbar$, we determine all the points of $X_{\tri}(\rbar)$ with associated Galois representation $r$.

For ${\bf h}=(h_{\tau,i})\in(\Z^n)^{[K:\Qp]}$, recall that $z^{{\bf h}}$ is the character $z\mapsto\prod_{\tau\in\Sigma}\tau(z)^{h_{\tau,i}}$ of $(K^\times)^n$. There is a natural action of $\Scal\simeq\Scal_n^{[K:\Qp]}$ on $(\Z^n)^{[K:\Qp]}$ : for $w=(w_\tau)_{\tau\in \Sigma}\in \Scal$ and ${\bf h}\in(\Z^n)^{[K:\Qp]}$, $w({\bf h})=(h_{\tau,w_{\tau}^{-1}(i)})$. We fix $x=(r,\deltabar)=(r,(\delta_i)_{i\in\{1,\dots,n\}})\in X_{\tri}(\rbar)$. We assume $r$ de Rham with distinct Hodge-Tate weights and denote by ${\bf h}=(h_{\tau,1}<\dots <h_{\tau,n})_{\tau\in\Sigma}$ the Hodge-Tate weights of $r$. As in \S\ref{begin}, by \cite[Prop.2.9]{BHS1} there is $w\in \Scal$ such that $\wt(\deltabar)=w({\bf h})$. We assume $w=w_0$, i.e. $x$ strictly dominant in the sense of \cite[\S2.1]{BHS2}.

\begin{defn}
A point $x'=(r,\deltabar')=(r,(\delta'_i)_{i\in\{1,\dots,n\}})\in X_{\rm tri}(\rbar)$ is called a \emph{companion point} of $x=(r,(\delta_i)_{i\in\{1,\dots,n\}})$ if $\delta'_i/\delta_i$ is algebraic for all $i\in \{1,\dots,n\}$ (see \S\ref{triangulinet}).
\end{defn}

By \cite[Prop.2.9]{BHS1} again, if $x'=(r,\deltabar')$ is a companion point of $x$ we see that there is $w'\in \Scal$ such that $\wt_\tau(\deltabar')=w'({\bf h})$.

We now assume moreover that $r$ is crystalline and as in Remark \ref{assumptions} we denote by $\varphibar:=(\varphi_1,\dots,\varphi_n)\in k(x)^n$ an ordering - also called refinement of $r$ - of the eigenvalues of $\varphi^{[K_0:\Q_p]}$ on $D_{\cris}(r)$. With such a refinement, we can construct a smooth unramified character of $(K^\times)^n$ by formula:
$$ {\rm unr}(\varphibar):=({\rm unr}(\varphi_1),\dots,{\rm unr}(\varphi_n))$$
Then it follows from \cite[Lem.2.1]{BHS2} that there exists a refinement $\varphibar$ such that we have $\deltabar=z^{w_0({\bf h})}{\rm unr}(\varphibar)$. Each companion point of $x$ is of the form $(r,z^{w({\bf h})}{\rm unr}(\varphibar))$ for some $w=(w_\tau)_\tau\in \Scal$.

\begin{rem}\label{allcompanion}
{\rm Denote by $g:X_{\rm tri}(\rbar)\longrightarrow \Xfrak_{\bar r}$ the canonical projection. It follows from \cite[(2.5)]{BHS2} and the line just after that for any refinement $\varphibar$ of $r$ the point:
$$x_{\varphibar}:=(r,z^{w_0({\bf h})}{\rm unr}(\varphibar))$$
is in $X_{\rm tri}(\rbar)$ and from \cite[Th.6.3.13]{KPX} and the construction of $X_{\rm tri}(\rbar)$ that the set $\{x\in X_{\rm tri}(\rbar)\mid  g(x)=r\}$ is exactly the union of the companion points of each $x_{\varphibar}$ for all possible refinements $\varphibar$ of $r$.} 
\end{rem}

We now assume moreover $\varphi_i\varphi_j^{-1}\notin\{1,p^{[K_0:\Q_p]}\}$ for $i\neq j$ as in Remark \ref{assumptions}. Recall we have defined $w_x\in \Scal$ just before Proposition \ref{preceq} by the relation $\pi(x_{\varphibar,\pdR})\in U_{w_x}$. The following theorem is a local analogue (i.e. on the local eigenvariety $X_{\rm tri}(\rbar)$) of \cite[Conj.6.6]{BreuilAnalytiqueII} which concerned companion points on the global eigenvarieties built out of spaces of $p$-adic automorphic forms.

\begin{theo}\label{companionptsconjecture}
The set of companion points of $x=(r,\deltabar)=(r,z^{w_0({\bf h})}{\rm unr}(\varphibar))$ is given by: 
$$\left\{x_w:=(r,z^{w({\bf h})}{\rm unr}(\varphibar)),\ w_x\preceq w\right\}.$$
\end{theo}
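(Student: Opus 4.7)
The proof splits into two inclusions. For the easier direction ($\supseteq$), I would prove that every $x_w$ with $w_x \preceq w$ is indeed a companion point by a Zariski-density argument independent of the local model of Theorem \ref{1localbis}. Since $X_{\tri}(\rbar)$ is normal (hence locally irreducible) at $x$ by Corollary \ref{irreducible} and since the weight map $\omega'$ is flat at $x$ by Proposition \ref{flatrigid}, one works in a flat, normal neighborhood. By the accumulation property established just before (Proposition in \S\ref{further}), the noncritical crystalline strictly dominant points accumulate at $x$; at each such point $x'=(r',\deltabar')$ all $|\Scal|$ companion points $x'_{w}$ exist for every $w\in\Scal$, since $w_{x'}=1$ and by classical trianguline theory (e.g.\ \cite[Lem.2.1]{BHS2}) all orderings of a noncritical refinement are realized. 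Letting $x'$ approach $x$ and tracking the companion points $x'_w$ via flatness of $\omega'$ and the closed embedding $X_{\tri}(\rbar)\hookrightarrow \Xfrak_{\rbar}\times\Tcal^n_L$, each $x'_w$ specializes to the point $(r,z^{w(\mathbf{h})}{\rm unr}(\varphibar))$ for any $w \succeq w_x$, placing $x_w$ in $X_{\tri}(\rbar)$.

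For the harder direction ($\subseteq$), assume $x_{w'}=(r,\deltabar_{w'})\in X_{\tri}(\rbar)$. First one checks that $x_{w'}$ still satisfies Remark \ref{assumptions}: local algebraicity and Sen regularity are immediate, and the genericity conditions on ratios of $\delta_{w',i}$ follow from the assumption $\varphi_i\varphi_j^{-1}\notin\{1,p^{[K_0:\Qp]}\}$. Thus Corollary \ref{localdescrip} applies at $x_{w'}$, giving $\widehat{X_{\tri}(\rbar)}_{x_{w'}}\simeq X^{w'}_{r,\Mcal_{w',\bullet}}$; the permutation from \S\ref{galois} at $x_{w'}$ equals $w'$ by the Hodge-Tate weight computation carried out just before Lemma \ref{thetaw}. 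Nonemptiness of this formal scheme combined with Proposition \ref{preceq} forces $w_{x_{w'}}\preceq w'$, where $w_{x_{w'}}$ is the relative position of the flags $\Dcal_{w',\bullet}$ and $\Fil_{W^+,\bullet}$ at $x_{w'}$.

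The crucial claim to close the argument is $w_{x_{w'}}=w_x$, i.e.\ the relative position invariant is the same for every companion point. Since $r$ is crystalline, $N_W=0$, so the points $x^*$ and $x^*_{w'}$ lie in $\overline{X}=G/B\times G/B\times\{0\}\subseteq X$, and their positions are determined purely by the pair of flags in $G/B\times G/B$. The second flag $\Fil_{W^+,\bullet}$ depends only on $r$ and is unchanged. The first flag $\Dcal_{w',\bullet}$ comes from the triangulation $\Mcal_{w',\bullet}$ whose sub-$(\varphi,\Gamma_K)$-modules correspond (via $D_{\pdR}\circ W_{\dR}$) to $\varphi$-stable subspaces of $D_{\cris}(r)\otimes_{K_0}K$ attached to the refinement $\varphibar$ (and, in critical situations, their degenerations). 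A direct verification shows that the underlying set of subspaces $\{\Dcal_{w',i}\}_i$ coincides with $\{\Dcal_i\}_i$ as a set, so the flag $\Dcal_{w',\bullet}$ is a reordering of $\Dcal_\bullet$ encoded by $w'$, and its relative position to $\Fil_{W^+,\bullet}$ as an element of $\Scal$ equals $w_x$. Combining, $w_x=w_{x_{w'}}\preceq w'$, finishing the proof.

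The main obstacle is verifying the identity $w_{x_{w'}}=w_x$ precisely. While the crystalline hypothesis $N_W=0$ makes the pair of flags in $D_{\pdR}(W)$ tractable, a clean identification of the sub-objects $\Dcal_{w',i}$ as a reordering of $\Dcal_i$ requires careful bookkeeping through the functors $W_{\dR}$ and $D_{\pdR}$, especially in the ``critical'' regime where the sub-$(\varphi,\Gamma_K)$-modules $\Mcal_{w',i}$ are not the standard ones associated to the refinement but their critical analogues. An alternative route would be to argue geometrically by specializing from noncritical points (where $w_x=1$ and all companion points exist with known relative positions) and using the constancy of $w_{x'}$ along an irreducible component of $X_{\tri}(\rbar)$ containing both $x$ and $x_{w'}$, combined with the normality statement of Corollary \ref{irreducible}.
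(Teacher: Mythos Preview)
Your treatment of the inclusion $\subseteq$ is on the right track but you are making it harder than necessary. The ``crucial claim'' $w_{x_{w'}}=w_x$ requires no careful bookkeeping: the triangulation $\Mcal_\bullet$ of $\Mcal=D_{\rig}(r)[\tfrac{1}{t}]$ associated by Proposition~\ref{uniquetri} to the parameter $z^{w'({\bf h})}{\rm unr}(\varphibar)$ is literally the \emph{same} triangulation as the one associated to $z^{w_0({\bf h})}{\rm unr}(\varphibar)$, because these two parameters differ by algebraic characters and $\Rcal_{L,K}(\delta)[\tfrac{1}{t}]$ depends on $\delta$ only up to algebraic twist (Lemma~\ref{isorank1}). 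Hence $\Dcal_\bullet$ is unchanged, $\Fil_{W^+,\bullet}$ depends only on $r$, and $x_{\pdR}$ is the same point for every companion point. There is no ``reordering'' and no critical analogue to worry about. The paper dispatches this direction in one line: apply Corollary~\ref{localdescrip} and Proposition~\ref{preceq} at $x_{w'}$.

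Your argument for $\supseteq$ has a genuine error. In the paper's convention (see Remark~\ref{specialcases}(i)), a noncritical crystalline point $x'$ has $w_{x'}=w_0$, not $w_{x'}=1$; thus a noncritical point has \emph{no} nontrivial companion points, and your plan to specialize from a dense set of noncritical $x'$ carrying all $|\Scal|$ companion points collapses. Moreover, the accumulation points of Definition~\ref{accu} have \emph{varying} Hodge--Tate weights ${\bf h}'$, so even if they did carry companion points, those would not specialize to $(r,z^{w({\bf h})}{\rm unr}(\varphibar))$. The paper's Zariski-density argument is different: it works inside the crystalline deformation space $\widetilde W_{\bar r}^{{\bf h}{\rm -cr}}$ with \emph{fixed} weights ${\bf h}$, uses the smooth map $h:\widetilde W_{\bar r}^{{\bf h}{\rm -cr}}\to (G/B)^{\rig}$ sending $r_y$ to its Hodge filtration, and exploits the Bruhat stratification. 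For points $y$ in the stratum $\widetilde W_{\bar r,w}^{{\bf h}{\rm -cr}}$ (where $w_y=w$) one has $\iota_{{\bf h},w}(y)\in U_{\rm tri}(\rbar)$ because $z^{w({\bf h})}{\rm unr}(\varphibar_y)$ is then an honest parameter of $r_y$ in $\Tcal^n_{\rm reg}$. Since $\iota_{{\bf h},w}^{-1}(X_{\rm tri}(\rbar))$ is Zariski-closed it contains the closure $\overline{\widetilde W_{\bar r,w}^{{\bf h}{\rm -cr}}}=h^{-1}(\overline{(BwB/B)^{\rig}})$, and the latter contains $(r,\varphibar)$ precisely when $w_x\preceq w$.
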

\begin{proof}
Applying Corollary \ref{localdescrip} and Proposition \ref{preceq} (with $L=k(x)$) at the point $x_w$ (assumed to be in $X_{\rm tri}(\rbar)$), we deduce the necessary condition $w_x\preceq w$. It is thus enough to prove that all the points $x_w\in \Xfrak_{\rbar}\times\Tcal_L^n$ for $w\succeq w_x$ are actually in $X_{\rm tri}(\rbar)$.

In \cite[(2.9)]{BHS2} we have constructed a closed immersion of rigid spaces over $L$:
\begin{equation}\label{embedcris}
\iota_{\bf h}:\widetilde\Xfrak_{\bar r}^{{\bf h}{\rm -cr}}\hookrightarrow X_{\rm tri}(\rbar)
\end{equation}
(the left hand side is denoted $\widetilde\Xfrak_{\bar r}^{\Box,{\bf h}{\rm -cr}}$ in {\it loc.cit.} but we drop the $\Box$, see Remark \ref{remadd} and the beginning of \S\ref{begin}). Then $(r,(\varphi_1,\dots,\varphi_n))\in \widetilde\Xfrak_{\bar r}^{{\bf h}{\rm -cr}}$ and the construction of $\iota_{\bf h}$ implies that this point is mapped to $x\in X_{\rm tri}(\rbar)$. Arguing as in the proof of \cite[Lem.2.4]{BHS2}, there exists a smooth Zariski-open and dense rigid subset $\widetilde W_{\bar r}^{{\bf h}{\rm -cr}}$ of $\widetilde \Xfrak_{\bar r}^{{\bf h}{\rm -cr}}$ consisting of pairs $(r_y,(\varphi_{1,y},\dots,\varphi_{n,y}))$ such that the $\varphi_{i,y}$ satisfy $\varphi_{i,y}\varphi_{j,y}^{-1}\notin\{1,p^{[K_0:\Q_p]}\}$ for $i\neq j$. As in the proof of {\it loc.cit.} there is also a coherent locally free $\Ocal_{\widetilde W_{\bar r}^{{\bf h}{\rm -cr}}}\otimes_{\Q_p}K_0$-module $\Dcal$ on $\widetilde W_{\bar r}^{{\bf h}{\rm -cr}}$ together with a linear automorphism $\Phi$ of $\Dcal$ such that for all $y\in \widetilde W_{\bar r}^{{\bf h}{\rm -cr}}$:
$$(\Dcal,\Phi)\otimes_{\Ocal_{\widetilde W_{\bar r}^{{\bf h}{\rm -cr}}}} k(y)=(D_{\rm cris}(r_y),\varphi^{[K_0:\Qp]}).$$
Moreover, locally on $\widetilde W_{\bar r}^{{\bf h}{\rm -cr}}$ we can fix a basis $e_1,\dots,e_n$ of $\Dcal$ such that the $\Ocal_{\widetilde W_{\bar r}^{{\bf h}{\rm -cr}}}\otimes_{\Qp}K_0$-submodule $\langle e_1,\dots, e_i\rangle$ is $\Phi$-stable for all $i$ and:
$$\Phi(e_i)=\phi_ie_i\ {\rm modulo} \ \langle e_1,\dots, e_{i-1}\rangle$$
where the $\phi_i\in \Ocal_{\widetilde W_{\bar r}^{{\bf h}{\rm -cr}}}^\times\otimes 1\subset (\Ocal_{\widetilde W_{\bar r}^{{\bf h}{\rm -cr}}}\otimes_{\Q_p}K_0)^\times$, $i\in \{1,\dots,n\}$ correspond to the morphism $\widetilde W_{\bar r}^{{\bf h}{\rm -cr}}\hookrightarrow \widetilde\Xfrak_{\bar r}^{{\bf h}{\rm -cr}}\longrightarrow T_L^{\rig}$ with the notation of \cite[\S2.2]{BHS2}.
By the argument in the proof of \cite[Lem.2.4]{BHS2}, we have a smooth morphism of rigid spaces over $L$:
$$h:\widetilde W_{\bar r}^{{\bf h}{\rm -cr}}\longrightarrow (G/B)^{\rig}$$
(recall $G=\Spec L\times_{\Spec{\Qp}}\Res_{K/\Qp}({\GL_n}_{/K})$) mapping a crystalline representation of $\Gcal_K$ to the Hodge filtration on $D_{\cris}$ written as in (\ref{completeflag}). 

For $w\in \Scal$, we write $\widetilde W_{\bar r,w}^{{\bf h}{\rm -cr}}\subseteq \widetilde W_{\bar r}^{{\bf h}{\rm -cr}}$ for the inverse image of the Bruhat cell $(BwB/B)^{\rm rig}\!\subset (G/B)^{\rig}$ under $h$. Then $\widetilde W_{\bar r,w}^{{\bf h}{\rm -cr}}$ is locally closed in $\widetilde W_{\bar r}^{{\bf h}{\rm -cr}}$ and the $\widetilde W_{\bar r,w}^{{\bf h}{\rm -cr}}$ for $w\in \Scal$ set-theoretically cover $\widetilde W_{\bar r}^{{\bf h}{\rm -cr}}$. From the definition of $w_x$ in \S\ref{galois} and the choice of the local basis $(e_i)_i$ above we easily check that:
\begin{equation}\label{phiw}
(r,(\varphi_1,\dots,\varphi_n))\in \widetilde W_{\bar r,w}^{{\bf h}{\rm -cr}}\Longleftrightarrow w=w_x.
\end{equation}
If we denote by $\overline{\widetilde W_{\bar r,w}^{{\bf h}{\rm -cr}}}$ the Zariski-closure of $\widetilde W_{\bar r,w}^{{\bf h}{\rm -cr}}$ in $\widetilde W_{\bar r}^{{\bf h}{\rm -cr}}$ and by $\overline {(BwB/B)^{\rig}}$ that of $(BwB/B)^{\rig}$ in $(G/B)^{\rig}$, then we have $h^{-1}(\overline {(BwB/B)^{\rig}})=\overline{\widetilde W_{\bar r,w}^{{\bf h}{\rm -cr}}}$. Indeed, the inclusion $\overline{\widetilde W_{\bar r,w}^{{\bf h}{\rm -cr}}}\subseteq h^{-1}(\overline {(BwB/B)^{\rig}})$ is clear. Conversely, let $y\in h^{-1}(\overline {(BwB/B)^{\rig}})$ and $U$ an admissible open neighbourhood of $y$ in $\widetilde W_{\bar r}^{{\bf h}{\rm -cr}}$, then $h(U)$ is admissible open in $(G/B)^{\rig}$ since the map $h$ is smooth hence open (\cite[Cor.5.11]{BL}). Since $h(y)\in h(U)$ and $h(y)\in \overline {(BwB/B)^{\rig}}$, then $h(U)$ contains a point in $(BwB/B)^{\rig}$ as the latter is Zariski-open and dense in $\overline {(BwB/B)^{\rig}}$. This implies $U \cap h^{-1}((BwB/B)^{\rig})=U\cap \widetilde W_{\bar r,w}^{{\bf h}{\rm -cr}}\ne \emptyset$, from which it follows that $y\in \overline {\widetilde W_{\bar r,w}^{{\bf h}{\rm -cr}}}$ since $U$ is arbitrarily small, and hence we have $h^{-1}(\overline {(BwB/B)^{\rig}})\subseteq \overline{\widetilde W_{\bar r,w}^{{\bf h}{\rm -cr}}}$. Then one easily checks from the usual decomposition of $\overline {(BwB/B)^{\rig}}=(\overline {BwB/B})^{\rig}$ into Bruhat cells that (\ref{phiw}) together with $h^{-1}(\overline {(Bw'B/B)^{\rig}})=\overline{\widetilde W_{\bar r,w'}^{{\bf h}{\rm -cr}}}$ for $w'\in \Scal$ imply:
\begin{equation}\label{closureBruhat1}
(r,(\varphi_1,\dots,\varphi_n))\in \overline{\widetilde W_{\bar r,w}^{{\bf h}{\rm -cr}}}\Longleftrightarrow w\succeq w_x.
\end{equation}

Now, consider the following morphism of rigid spaces over $L$:
\begin{eqnarray}\label{iotaw}
\iota_{{\bf h},w}:{\widetilde W_{\bar r}^{{\bf h}{\rm -cr}}}&\longrightarrow &\Xfrak_{\bar r}\times\Tcal_L^n\\
\nonumber(r_y,(\varphi_{1,y},\dots, \varphi_{n,y}))&\longmapsto &(r_y,z^{w({\bf h})}{\rm unr}(\varphi_{1,y},\dots,\varphi_{n,y})).
\end{eqnarray}
Then $\iota_{{\bf h},w}^{-1}(X_{\rm tri}(\rbar))$ is a Zariski-closed subset of ${\widetilde W_{\bar r}^{{\bf h}{\rm -cr}}}$. It is enough to prove that we have an inclusion $\widetilde W_{\bar r,w}^{{\bf h}{\rm -cr}}\subseteq \iota_{{\bf h},w}^{-1}(X_{\rm tri}(\rbar))$, or equivalently $\iota_{{\bf h},w}(\widetilde W_{\bar r,w}^{{\bf h}{\rm -cr}})\subseteq X_{\rm tri}(\rbar)$. Indeed, then we also have $\overline{\widetilde W_{\bar r,w}^{{\bf h}{\rm -cr}}}\subseteq \iota_{{\bf h},w}^{-1}(X_{\rm tri}(\rbar))$, and since $(r,(\varphi_1,\dots,\varphi_n))\in \overline{\widetilde W_{\bar r,w}^{{\bf h}{\rm -cr}}}$ when $w\succeq w_x$ by (\ref{closureBruhat1}), we deduce $x_w=\iota_{{\bf h},w}((r,(\varphi_1,\dots,\varphi_n)))\in X_{\rm tri}(\rbar)$. But we have $\iota_{{\bf h},w}(\widetilde W_{\bar r,w}^{{\bf h}{\rm -cr}})\subseteq X_{\rm tri}(\rbar)$ since in fact we have $\iota_{{\bf h},w}(\widetilde W_{\bar r,w}^{{\bf h}{\rm -cr}})\subseteq U_{\rm tri}(\rbar)$ \ (see \ (\ref{urig}) \ for \ $U_{\rm tri}(\rbar)$). \ This \ follows \ from \ the \ fact \ that, when $(r_y,(\varphi_{1,y},\dots, \varphi_{n,y}))\in \widetilde W_{\bar r,w}^{{\bf h}{\rm -cr}}$, then $z^{w({\bf h})}{\rm unr}(\varphi_{1,y},\dots,\varphi_{n,y})\in \Tcal^n_{\rm reg}$ is actually a parameter of $r_y$ (use Berger's dictionnary between $D_{\rm cris}(r_y)$ and $D_{\rig}(r_y)$ as in the discussion preceding \cite[Lem.2.4]{BHS2}).
\end{proof}

\begin{rem}\label{derham1}
{\rm A result analogous to Theorem \ref{companionptsconjecture} also holds assuming only that $r$ satisfies the assumptions in Remark \ref{assumptions} and is de Rham (which then implies it is in fact crystabelline). We restrict ourselves above to the crystalline case for simplicity and because this restriction is already in \cite[\S2]{BHS2} (that we use).}
\end{rem}

\subsection{A locally analytic ``Breuil-M\'ezard type'' statement}\label{locallyBM}

We formulate a multiplicity conjecture which is analogous to \cite[Conj.4.2.1]{GeeEmerton} except that $\Xfrak_{\rbar}$ is replaced by $X_r$ and Serre weights are replaced by irreducible constituents of locally $\Qp$-analytic principal series. We then prove the (sufficiently generic) crystalline case.

We keep the notation of \S\ref{begin} and fix a continuous $\rbar:\,\Gcal_K\rightarrow\GL_n(k_L)$. For $\deltabar\in \Tcal_L^n$ we denote by $X_{\tri}(\rbar)_{\deltabar}:=X_{\tri}(\rbar)\times_{\Tcal_L^n}\deltabar$ the fiber at $\deltabar$ of $\omega':X_{\tri}(\rbar)\longrightarrow \Tcal_L^n$ and by $X_{\tri}(\rbar)_{\wt(\deltabar)}$ the fiber at $\wt(\deltabar)\in \liet^{\rm rig}$ of the composition $X_{\tri}(\rbar)\buildrel \omega'\over\longrightarrow \Tcal_L^n\buildrel \wt\over \longrightarrow \liet^{\rm rig}$ (here $\wt$ is defined similarly to (\ref{morpht}) but without the translation by $-\wt(\deltabar)$ and replacing the artinian $L$-algebra $A$ by an affinoid $L$-algebra $A$). We also denote by $\Tcal_{L,\wt(\deltabar)}^n$ the fiber at $\wt(\deltabar)$ of $\Tcal_L^n\buildrel \wt\over \longrightarrow \liet^{\rm rig}$. If $r\in \Xfrak_{\rbar}(L)$, we recall that the local complete noetherian $L$-algebra $\widehat \Ocal_{\Xfrak_{\rbar},r}$ of residue field $L$ and (equi)dimension $n^2+[K:\Q_p]n^2$ represents the functor $\vert X_r\vert$ of framed deformations of $r$ on local artinian $L$-algebras of residue field $L$ (see the beginning of \S\ref{galois} and \S\ref{begin}). We denote by ${\rm Z}(\Spec \widehat \Ocal_{\Xfrak_{\rbar},r})$ (resp. ${\rm Z}^d(\Spec \widehat \Ocal_{\Xfrak_{\rbar},r})$ for $d\in \Z_{\geq 0}$) the free abelian group generated by the irreducible closed subschemes (resp. the irreducible closed subschemes of codimension $d$) in $\Spec \widehat \Ocal_{\Xfrak_{\rbar},r}$. If $A$ is a noetherian complete local ring which is a quotient of $\widehat\Ocal_{\Xfrak_{\rbar},r}$, we set:
$$[\Spec A]:=\sum_{\mathfrak{p}\ {\rm minimal\ prime\ of\ }A}m(\mathfrak{p},A)[\Spec A/\mathfrak{p}]\in {\rm Z}(\Spec \widehat \Ocal_{\Xfrak_{\rbar},r})$$
where the sum is over the minimal prime ideals $\mathfrak{p}$ of $A$, $m(\mathfrak{p},A)\in \Z_{\geq 0}$ is the (finite) length of $A_{\mathfrak{p}}$ as a module over itself and $[\Spec A/\mathfrak{p}]$ is the irreducible component $\Spec A/\mathfrak{p}$ seen in ${\rm Z}(\Spec \widehat \Ocal_{\Xfrak_{\rbar},r})$.

Let us first start with some preliminaries which will also be used in \S\ref{companionconst}. We let $r\in \Xfrak_{\rbar}(L)$ be a trianguline deformation with integral distinct $\tau$-Sen weights for each $\tau\in \Sigma$ and define $V$, $D$ and $\Mcal$ as in \S\ref{galois}. We fix a triangulation $\Mcal_\bullet$ of $\Mcal$ which possesses a parameter in $\Tcal_0^n$. We define $x_{\pdR}:=(\alpha^{-1}(\Dcal_\bullet),\alpha^{-1}(\Fil_{W^+,\bullet}),N_W)\in \overline X(L)\subseteq X(L)$ (depending on a choice of framing $\alpha$) as just before Corollary \ref{represent2} and $w_{x_{\pdR}}\in \Scal=\Scal_n^{[K:\Qp]}$ as just before Proposition \ref{preceq}. We fix $w\in \Scal$ such that $x_{\pdR}\in \overline X_w(L)\subseteq X_w(L)$ and a parameter $\deltabar=(\delta_i)_{i\in \{1,\dots,n\}}\in \Tcal_0^n$ of $\Mcal_\bullet$ ($\deltabar$ is automatically locally algebraic). Note that $\Mcal_\bullet$ is the unique triangulation on $\Mcal$ of parameter $\deltabar$ by Proposition \ref{uniquetri}. Going back to the commutative diagram (\ref{THEdiagram}), it follows from Corollary \ref{commugroup} that we have a commutative diagram of affine formal schemes over $L$:
\begin{equation*}
\xymatrix{ X_{r,\Mcal_\bullet}^w\ar@{^{(}->}[d] & X_{r,\Mcal_\bullet}^{\Box,w}\ar@{^{(}->}[d]\ar[l]\ar[r] & \widehat X_{w,x_{\pdR}}\ar@{^{(}->}[d] \\   X_{r,\Mcal_\bullet}\ar@{^{(}->}[d] \ar^{\omega_{\deltabar}}[rd] & X_{r,\Mcal_\bullet}^{\Box}\ar[l]\ar[r] & \widehat X_{x_{\pdR}}\ar^{\kappa_1}[d]  \\
 X_{r} & \widehat{\Tcal^n_{\deltabar}}\ar^{\wt-\wt(\deltabar)}[r] &\widehat{\tfrak} }
\end{equation*}
where $\widehat \liet$ is the completion of $\liet$ at $0$ and where the two upper squares are cartesian. This diagram induces another analogous commutative diagram with the Spec of the underlying complete local rings instead of the formal schemes. Taking everywhere (except for $X_r$) the fibers over $0\in {\tfrak}(L)$ of this latter diagram and considering Remark \ref{thediagram}, we obtain the following commutative diagram:
\begin{equation}\label{fibre}
\begin{gathered}
\xymatrix{ \Spec{\overline R_{r,\Mcal_\bullet}^w}\ar@{^{(}->}[d] & \Spec{\overline R_{r,\Mcal_\bullet}^{\Box,w}}\!\ar@{^{(}->}[d]\ar[l]\ar[r] & \Spec\widehat\Ocal_{\overline X_w,{x_{\pdR}}}\!\ar@{^{(}->}[d]\\
\Spec{\overline R_{r,\Mcal_\bullet}}\!\ar@{^{(}->}[d]& \Spec{\overline R_{r,\Mcal_\bullet}^{\Box}}\!\ar[l]\ar[r] &\Spec\widehat\Ocal_{\overline X,{x_{\pdR}}} \\
\Spec\widehat\Ocal_{\Xfrak_{\rbar},r} \!&&}
\end{gathered}
\end{equation}
where all the horizontal morphisms are formally smooth and where the two squares are cartesian (as the vertical maps are closed immersions). Note that ${\overline R_{r,\Mcal_\bullet}^{\Box}}$ (resp. ${\overline R_{r,\Mcal_\bullet}^{\Box,w}}$) is a formal power series ring over ${\overline R_{r,\Mcal_\bullet}}$ (resp. ${\overline R_{r,\Mcal_\bullet}^{w}}$) \emph{and} over $\widehat\Ocal_{\overline X,{x_{\pdR}}}$ (resp. $\widehat\Ocal_{\overline X_w,{x_{\pdR}}}$).

By the results of \S\S\ref{Springer3}, \ref{Springer4}, the irreducible components of $\Spec\widehat\Ocal_{\overline X,{x_{\pdR}}}$ are the union of the irreducible components of $\Spec\widehat\Ocal_{Z_{w'},{x_{\pdR}}}$ for $w'\in \Scal$ such that ${x_{\pdR}}\in Z_{w'}(L)$ (this last condition doesn't depend on the choice of the framing $\alpha$). Likewise the irreducible components of $\Spec\widehat\Ocal_{\overline X_w,{x_{\pdR}}}$ are the union of those of $\Spec\widehat\Ocal_{Z_{w'},{x_{\pdR}}}$ for $w'\in \Scal$ such that $w'\preceq w$ and ${x_{\pdR}}\in Z_{w'}(L)$. By pull-back and smooth descent, we obtain from (\ref{fibre}) a {\it bijection} between the irreducible components of $\Spec\widehat\Ocal_{\overline X,{x_{\pdR}}}$ (resp. $\Spec\widehat\Ocal_{\overline X_w,{x_{\pdR}}}$) and the irreducible components of $\Spec {\overline R_{r,\Mcal_\bullet}}$ (resp. $\Spec {\overline R^w_{r,\Mcal_\bullet}}$). In particular $\Spec {\overline R_{r,\Mcal_\bullet}}$ is equidimensional of dimension $n^2+[K:\Q_p]\frac{n(n-1)}{2}$ (equivalently of codimension $[K:\Q_p]\frac{n(n+1)}{2}$ in $\Spec\widehat\Ocal_{\Xfrak_{\rbar},r}$) and $\Spec {\overline R^w_{r,\Mcal_\bullet}}$ is a union of irreducible components of $\Spec {\overline R_{r,\Mcal_\bullet}}$. For $w'\in \Scal$, denote by $\Zfrak_{w'}\in {\rm Z}^{[K:\Qp]\frac{n(n+1)}{2}}(\Spec\widehat\Ocal_{\Xfrak_{\rbar},r})$ the cycle corresponding via the embedding $\Spec {\overline R_{r,\Mcal_\bullet}}\hookrightarrow \Spec\widehat\Ocal_{\Xfrak_{\rbar},r}$ to the cycle $[\Spec \widehat \Ocal_{Z_{w'},{x_{\pdR}}}]$ in \S\ref{Springer4} under this bijection and set as in (\ref{cyclum}):
\begin{equation}\label{cyclumbis}
\Cfrak_{w'}:=\sum_{w''\in \Scal} a_{w',w''}\Zfrak_{w''}\in {\rm Z}^{[K:\Qp]\frac{n(n+1)}{2}}(\Spec\widehat\Ocal_{\Xfrak_{\rbar},r}).
\end{equation}
Note that the cycles $\Zfrak_{w''}$ and $\Cfrak_{w'}$ do not depend on the choice of the framing $\alpha$ and, using (\ref{changedelta}), depend on $\deltabar$ only via the $\Rcal_{L,K}(\delta_i)[\tfrac{1}{t}]$. Since $a_{w_0,w''}=0$ for $w''\ne w_0$ (see the last condition in (iii) of Theorem \ref{summaryofrepntheory}), we have $\Cfrak_{w_0}=\Zfrak_{w_0}$ and since moreover $Z_{w_0}$ is smooth (as it is isomorphic to $G/B\times G/B$) we see that $\Zfrak_{w_0}=\Cfrak_{w_0}$ is either $0$ or irreducible. In fact we have $r$ de Rham (equivalently here $r$ crystabelline) if and only if $N_W=0$ if and only if $x_{\pdR}\in Z_{w_0}(L)$ if and only if $\Zfrak_{w_0}=\Cfrak_{w_0}\ne 0$.

\begin{rem}
{\rm We have a more precise description of $\Cfrak_{w_0}$ in the crystalline case at least (which will be used in \S\ref{companionconst}). Denote by $\Xfrak_{\rbar}^{\wt(\deltabar){\rm -cr}}\subset \Xfrak_{\rbar}$ the closed analytic subspace associated to (framed) crystalline deformations of $\rbar$ of fixed Hodge-Tate weights given by $\wt(\deltabar)$ and assume here that the fixed $r$ is in $\Xfrak_{\rbar}^{\wt(\deltabar){\rm -cr}}(L)\subset \Xfrak_{\rbar}(L)$. Since the underlying nilpotent operator is identically $0$ on $Z_{w_0}$, any deformation in $X_{r,\Mcal_\bullet}(A)\subseteq X_r(A)$ coming from $\widehat Z_{w_0,x_{\pdR}}(A)$ (for $A$ in $\Ccal_L$) is de Rham, hence crystalline due to the assumption $r$ crystalline and $\deltabar\in \Tcal_0^n$ (by an easy exercise). This implies that $\Cfrak_{w_0}=\Zfrak_{w_0}$ corresponds to an irreducible closed subscheme of $\Spec \widehat\Ocal_{\Xfrak_{\rbar}^{\wt(\deltabar){\rm -cr}}\!\!,r}$ of dimension $n^2+[K:\Q_p]\frac{n(n-1)}{2}$. But it follows from \cite{Kisindef} that the scheme $\Spec \widehat\Ocal_{\Xfrak_{\rbar}^{\wt(\deltabar){\rm -cr}}\!\!,r}$ is already irreducible of dimension $n^2+[K:\Q_p]\frac{n(n-1)}{2}$. Hence we deduce in that case an isomorphism:
\begin{equation}\label{w_0cris}
\Cfrak_{w_0} \buildrel\sim\over\longrightarrow [\Spec \widehat\Ocal_{\Xfrak_{\rbar}^{\wt(\deltabar){\rm -cr}}\!\!,r}]\ \in \ {\rm Z}^{[K:\Qp]\frac{n(n+1)}{2}}(\Spec\widehat\Ocal_{\Xfrak_{\rbar},r}).
\end{equation}}
\end{rem}

\begin{cor}\label{cycleXtri'}
With the notation as for (\ref{cyclumbis}) assume moreover that $x:=(r,\deltabar)$ is in $X_{\tri}(\rbar)(L)$. Let $\Mcal_\bullet$ be the unique triangulation of $\Mcal=D_{\rm rig}(r)[\tfrac{1}{t}]$ of parameter $\deltabar$ and that $w\in \Scal$ is such that $\wt(\deltabar)=w({\bf h})$, then we have:
$$[\Spec\widehat \Ocal_{X_{\tri}(\rbar)_{\wt(\deltabar)},x}] = \sum_{w'\in \Scal}P_{w_0w,w_0w'}(1)\Cfrak_{w'}\in {\rm Z}^{[K:\Qp]\frac{n(n+1)}{2}}(\Spec\widehat\Ocal_{\Xfrak_{\rbar},r}).$$
\end{cor}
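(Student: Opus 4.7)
The strategy is to identify $\Spec\widehat\Ocal_{X_{\tri}(\rbar)_{\wt(\deltabar)},x}$ with $\Spec\overline R_{r,\Mcal_\bullet}^w$ appearing in the upper-left corner of diagram \eqref{fibre} (via the isomorphism of Corollary \ref{localdescrip}), and then to compute its cycle in $\mathrm{Z}(\Spec\widehat\Ocal_{\Xfrak_{\rbar},r})$ by transporting through \eqref{fibre} and applying the Kazhdan--Lusztig multiplicity formula provided by Corollary \ref{vermacompl}.

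For the first step, Corollary \ref{localdescrip} gives $\widehat{X_{\tri}(\rbar)}_x\buildrel\sim\over\longrightarrow X_{r,\Mcal_\bullet}^w$, and via the commutative diagram \eqref{pi} from the proof of Proposition \ref{closedembtri} the restriction of $\omega'$ to $\widehat{X_{\tri}(\rbar)}_x$ is compatible with $\omega_{\deltabar}:X_{r,\Mcal_\bullet}\to\widehat{\Tcal^n_{\deltabar}}$. Composing with $\wt-\wt(\deltabar):\widehat{\Tcal^n_{\deltabar}}\to\widehat{\liet}$ and invoking Corollary \ref{commugroup}, the fiber $\Spec\widehat\Ocal_{X_{\tri}(\rbar)_{\wt(\deltabar)},x}$ gets identified with the fiber over $0\in\widehat\liet$ of the composition $X_{r,\Mcal_\bullet}^w\to X_{r,\Mcal_\bullet}^{\Box,w}\to\widehat X_{w,x_{\pdR}}\buildrel\kappa_1\over\longrightarrow\widehat\liet$ (up to the formally smooth framing morphisms of \eqref{THEdiagram}). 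By the very definition of $\overline R_{r,\Mcal_\bullet}^w$ given just before \eqref{fibre}, this fiber is $\Spec\overline R_{r,\Mcal_\bullet}^w$.

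For the second step, the horizontal arrows in \eqref{fibre} are formally smooth, so the canonical bijection between irreducible components of $\Spec\overline R_{r,\Mcal_\bullet}^w$ and those of $\Spec\widehat\Ocal_{\overline X_w,x_{\pdR}}$ (discussed right after \eqref{fibre}) preserves multiplicities at minimal primes, multiplicity being invariant under flat base change. Consequently the cycle $[\Spec\overline R_{r,\Mcal_\bullet}^w]$, viewed in $\mathrm{Z}^{[K:\Qp]\frac{n(n+1)}{2}}(\Spec\widehat\Ocal_{\Xfrak_\rbar,r})$ via the closed immersion $\Spec\overline R_{r,\Mcal_\bullet}\hookrightarrow\Spec\widehat\Ocal_{\Xfrak_\rbar,r}$, corresponds to $[\Spec\widehat\Ocal_{\overline X_w,x_{\pdR}}]\in\mathrm{Z}^0(\Spec\widehat\Ocal_{Z,x_{\pdR}})$. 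By Corollary \ref{vermacompl} this last class equals $\sum_{w'\in\Scal} P_{w_0w,w_0w'}(1)\,[\widehat{\mathfrak{L}}(w'w_0\cdot 0)_{x_{\pdR}}]$, and by \eqref{cyclum} each summand decomposes as $[\widehat{\mathfrak{L}}(w'w_0\cdot 0)_{x_{\pdR}}]=\sum_{w''}a_{w',w''}[\Spec\widehat\Ocal_{Z_{w''},x_{\pdR}}]$, which under the same bijection of components transports precisely to $\Cfrak_{w'}=\sum_{w''}a_{w',w''}\Zfrak_{w''}$ as defined in \eqref{cyclumbis}. Summing then yields the desired identity.

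The main technical point is the identification carried out in the second paragraph: one has to verify that the isomorphism of Corollary \ref{localdescrip} genuinely intertwines the weight map on $X_{\tri}(\rbar)$ with the map to $\widehat{\liet}$ obtained through $\kappa_1$ in the diagram preceding \eqref{fibre}. This amounts to tracing the various identifications of \S\ref{trianguline} and invoking Corollary \ref{commugroup}; once this is done the cycle-theoretic computation is essentially formal, as formally smooth base change preserves multiplicities at minimal primes.
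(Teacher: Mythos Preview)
Your proposal is correct and follows essentially the same approach as the paper's proof, which simply cites Corollary~\ref{localdescrip}, Corollary~\ref{vermacompl}, diagram~(\ref{pi}), and ``what is above'' (namely the construction of~(\ref{fibre}) and the cycles $\Cfrak_{w'}$). One small slip: you write ``$X_{r,\Mcal_\bullet}^w\to X_{r,\Mcal_\bullet}^{\Box,w}$'', but the framing morphism goes the other way; since you immediately add ``up to the formally smooth framing morphisms of~(\ref{THEdiagram})'' this does not affect the argument.
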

\begin{proof}
This follows from Corollary \ref{vermacompl}, Corollary \ref{localdescrip} and what is above, recalling that the composition $\widehat{X_{\tri}(\rbar)}_x\simeq X_{r,\Mcal_\bullet}^w\longrightarrow X_{r,\Mcal_\bullet}\buildrel \omega_{\deltabar}\over\longrightarrow \widehat{\Tcal^n_{\deltabar}}$ is the morphism $\omega'$ by (\ref{pi}).
\end{proof}

One can be a bit more precise. We have ${x_{\pdR}}\in Z_{w'}(L)\Rightarrow {x_{\pdR}}\in X_{w'}(L)\Rightarrow w_{x_{\pdR}}\preceq w'$ (using Proposition \ref{preceq} for the last implication). By (\ref{cyclumbis}) and the properties of the integers $a_{w',w''}$ (see (iii) of Theorem \ref{summaryofrepntheory}) we deduce $\Cfrak_{w'} \ne 0\Rightarrow \Zfrak_{w''} \ne 0$ for some $w''\preceq w'\Rightarrow w_{x_{\pdR}}\preceq w''\Rightarrow w_{x_{\pdR}}\preceq w'$. Since moreover $P_{w_0w,w_0w'}(1)\ne 0\Leftrightarrow w'\preceq w$, we have in fact:
\begin{equation}\label{cycleXtri}
[\Spec\widehat \Ocal_{X_{\tri}(\rbar)_{\wt(\deltabar)},x}] = \sum_{w_{x_{\pdR}}\preceq w'\preceq w}P_{w_0w,w_0w'}(1)\Cfrak_{w'}\in {\rm Z}^{[K:\Qp]\frac{n(n+1)}{2}}(\Spec\widehat\Ocal_{\Xfrak_{\rbar},r})
\end{equation}
and $0< \Cfrak_{w'}\leq [\Spec\widehat \Ocal_{X_{\tri}(\rbar)_{\wt(\deltabar)},x}]$ if and only if $w_{x_{\pdR}}\preceq w'\preceq w$. When $r$ is moreover de Rham (i.e. $N_W=0$), one can easily check using the usual description of the Zariski-closure of Bruhat cells that we have equivalences (and not just implications) ${x_{\pdR}}\in Z_{w'}(L)\Leftrightarrow {x_{\pdR}}\in X_{w'}(L)\Leftrightarrow w_{x_{\pdR}}\preceq w'$ and $\Zfrak_{w'}\ne 0 \Leftrightarrow \Cfrak_{w'} \ne 0 \Leftrightarrow w_{x_{\pdR}}\preceq w'$. In that case, we see in particular that {\it all} terms in the sum (\ref{cycleXtri}) are actually {\it nonzero}.

After these preliminaries, we now move to our multiplicity conjecture.

\begin{lem}\label{closedimmersions}
Let $x=(r,\deltabar)$ be any point of $X_{\tri}(\rbar)(L)$ such that $\deltabar\in \Tcal_0^n$, then we have closed immersions:
$$\Spec\widehat\Ocal_{X_{\tri}(\rbar)_{\deltabar},x}\hookrightarrow \Spec\widehat\Ocal_{X_{\tri}(\rbar),x}\hookrightarrow\Spec\widehat\Ocal_{\Xfrak_{\rbar},r}.$$
\end{lem}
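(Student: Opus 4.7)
The plan is to deduce both claims from material already established, with the only subtlety being that Proposition \ref{closedembtri} was stated under the additional hypothesis that $\deltabar$ be locally algebraic; I will argue that its proof goes through under the weaker hypothesis $\deltabar\in\Tcal_0^n$.

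The first closed immersion is essentially formal. The fiber $X_{\tri}(\rbar)_{\deltabar}$ is the scheme-theoretic inverse image of the closed point $\deltabar\in\Tcal_L^n$ under the morphism $\omega':X_{\tri}(\rbar)\to\Tcal_L^n$, hence is a Zariski-closed analytic subspace of $X_{\tri}(\rbar)$ through $x$. Completing local rings at $x$ preserves closed immersions, and taking $\Spec$ gives the first arrow.

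For the second arrow, the strategy is to rerun the proofs of Propositions \ref{mainfactor} and \ref{closedembtri} under the weaker hypothesis. Local algebraicity of $\deltabar$ is invoked in \S\ref{localmodel} only in order to access almost de Rham structures on $W=\BdR\otimes_{\Qp}V$; it is not used by the proofs of Proposition \ref{mainfactor} or Proposition \ref{closedembtri} themselves. The inputs that actually enter are: Proposition \ref{uniquetri}, which requires only $\deltabar\in\Tcal_0^n$ and produces a unique triangulation $\Mcal_\bullet$ of $\Mcal=D_{\rig}(r)[\tfrac{1}{t}]$ of parameter $\deltabar$; the universal triangulation of \cite[Cor.6.3.10]{KPX} on a resolution of a neighbourhood of $x$, used in the proof of Proposition \ref{mainfactor}; and Proposition \ref{closedimmersion}, which rests on Lemma \ref{Asurj}(i), and thus also only needs $\deltabar\in\Tcal_0^n$. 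Base-changing Proposition \ref{closedimmersion} along $X_r\simeq\widehat{\Xfrak_{\rbar}}_{,r}\to X_V\cong X_D\to X_\Mcal$ yields a closed immersion of groupoids $X_{r,\Mcal_\bullet}\hookrightarrow X_r$.

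Granting these observations, the proof of Proposition \ref{closedembtri} applies verbatim: the Zariski-closed immersion $X_{\tri}(\rbar)\hookrightarrow\Xfrak_{\rbar}\times_L\Tcal_L^n$ completes at $x$ to a closed immersion
$$\widehat{X_{\tri}(\rbar)}_x\hookrightarrow X_r\times_L\widehat{\Tcal^n_{\deltabar}}$$
which, by the (extended) Proposition \ref{mainfactor} together with the commutative diagram (\ref{pi}), factors through $X_{r,\Mcal_\bullet}\hookrightarrow X_r\times_L\widehat{\Tcal^n_{\deltabar}}$, the latter being a closed immersion because $X_{r,\Mcal_\bullet}\to X_r$ is. Consequently the map $\widehat{X_{\tri}(\rbar)}_x\to X_{r,\Mcal_\bullet}\to X_r$ is a closed immersion, giving a surjection $\widehat\Ocal_{\Xfrak_{\rbar},r}\twoheadrightarrow \widehat\Ocal_{X_{\tri}(\rbar),x}$ and hence the second closed immersion on $\Spec$'s. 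The only nontrivial point of the argument is the bookkeeping carried out in the previous paragraph, namely the verification that the earlier proofs genuinely use only the ``regularity'' condition $\deltabar\in\Tcal_0^n$ and not local algebraicity.
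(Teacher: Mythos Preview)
Your argument is correct, but the ``subtlety'' you raise does not actually exist: Proposition~\ref{closedembtri} is stated and proved in \S\ref{begin} \emph{before} the locally algebraic hypothesis is imposed. If you trace the running assumptions in that subsection, you will see that after Proposition~\ref{uniquetri} only $\deltabar\in\Tcal_0^n$ is in force; the sentence ``assume from now on that $\deltabar$ is locally algebraic'' appears only \emph{after} Proposition~\ref{closedembtri}. The groupoid $X_{r,\Mcal_\bullet}$ and the closed immersion $X_{r,\Mcal_\bullet}\hookrightarrow X_r$ (via Proposition~\ref{closedimmersion}) likewise need only $\deltabar\in\Tcal_0^n$. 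Consequently the paper's proof is simply: the first arrow is obvious (fiber over a closed point), and the second is Proposition~\ref{closedembtri}. Your rerunning of the proofs of Propositions~\ref{mainfactor} and~\ref{closedembtri} is unnecessary, though your analysis of exactly which inputs they use is accurate.
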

\begin{proof}
The first closed immersion is obvious and the second is Proposition \ref{closedembtri}.
\end{proof}

When $r\in \Xfrak_{\rbar}(L)$ is trianguline, we say that $r$ is generic if all the parameters $\deltabar$ of $r$ are in $\Tcal_0^n$. When $r$ is crystalline with distinct Hodge-Tate weights for each $\tau\in \Sigma$ and the $\varphi_{i}$ are the eigenvalues of $\varphi^{[K_0:\Q_p]}$ on $D_{\cris}(r)$, this amounts to the conditions on the $\varphi_i$ in Remark \ref{assumptions}. 

For $\deltabar=(\delta_1,\dots,\delta_n)\in \Tcal_L^n(L)$, we consider the locally $\Qp$-analytic principal series:
\begin{equation}\label{principal}
I_{\deltabar}:=\big({\rm Ind}_{\overline B(K)}^{\GL_n(K)}\delta_1\otimes \delta_2\varepsilon\otimes \cdots\otimes \delta_n\varepsilon^{n-1}\big)^{\an}
\end{equation}
where $\overline B(K)\subset \GL_n(K)$ is the subgroup of {\it lower} triangular matrices. Recall that $I_{\deltabar}$ is the $L$-vector space of locally $\Q_p$-analytic functions $f:\GL_n(K)\longrightarrow L$ such that:
$$f(\overline u\diag(t_1,\dots,t_n)g)=\delta_1(t_1)(\delta_2(t_2)\varepsilon(t_2))\cdots (\delta_n(t_n)\varepsilon^{n-1}(t_n))f(g)$$
(where $\overline u$ is lower unipotent in $\overline B(K)$) with the left action of $\GL_n(K)$ by right translations on functions $f$. It follows from the theory of \cite{OrlikStrauch2} (together with the appendix of \cite{BreuilAnalytiqueII}) that the representation $I_{\deltabar}$ is topologically of finite length and that the multiplicities of its (absolutely) irreducible constituants are a mixture of multiplicities coming from Verma modules (i.e. Kazhdan-Lusztig multiplicities) and from smooth principal series.  We denote by $I_{\deltabar}^{\rm ss}$ its (topological) semi-simplification. If $\Pi$ is an absolutely irreducible locally $\Qp$-analytic representation of $\GL_n(K)$ over $L$, we denote by $m_{\deltabar,\Pi}\in \Z_{\geq 0}$ its multiplicity in $I_{\deltabar}^{\rm ss}$.

The following conjecture was inspired by \cite{BreuilMezard}, \cite{GeeKisin} and especially \cite[Conj.4.2.1]{GeeEmerton}.

\begin{conj}\label{BManalytique}
For any generic trianguline $r\in \Xfrak_{\rbar}(L)$ and any absolutely irreducible constituent $\Pi$ of a locally $\Qp$-analytic principal series of $\GL_n(K)$ over $L$, there exists a unique cycle $\Ccal_{r,\Pi}$ in ${\rm Z}^{[K:\Qp]\frac{n(n+3)}{2}}(\Spec \widehat \Ocal_{\Xfrak_{\rbar},r})$ such that, for all $\deltabar\in \Tcal_L^n(L)$, we have:
$$[\Spec\widehat\Ocal_{X_{\tri}(\rbar)_{\deltabar},(r,\deltabar)}]=\sum_{\Pi}m_{\deltabar,\Pi}\Ccal_{r,\Pi}\ \ {in} \ \ {\rm Z}^{[K:\Qp]\frac{n(n+3)}{2}}(\Spec \widehat \Ocal_{\Xfrak_{\rbar},r}).$$
\end{conj}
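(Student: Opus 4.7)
The plan is to derive the identity by combining Corollary \ref{cycleXtri'} with the Orlik-Strauch description of composition factors of locally $\Qp$-analytic principal series, and by passing from the fiber over $\wt(\deltabar)$ to the fiber over $\deltabar$ itself via a regular-sequence argument.

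First I would reduce to locally algebraic $\deltabar$. When $\deltabar$ is not locally algebraic the principal series $I_{\deltabar}$ is irreducible, so the conjectured identity forces $\Ccal_{r, I_{\deltabar}} := [\Spec\widehat\Ocal_{X_{\tri}(\rbar)_{\deltabar}, (r, \deltabar)}]$, with independence from the specific $\deltabar$ realizing a given irreducible $I_{\deltabar}$ checked by Zariski-density of locally algebraic $\deltabar$. For locally algebraic $\deltabar$ at a crystalline generic $r$ with distinct Hodge-Tate weights ${\bf h}$, Theorem \ref{companionptsconjecture} ensures that any such $\deltabar$ compatible with $r$ is of the form $z^{w({\bf h})} {\rm unr}(\varphibar)$ for some refinement $\varphibar$ and some $w$ with $w_{x_{\pdR}} \preceq w$, and Corollary \ref{cycleXtri'} writes the fiber cycle over $\wt(\deltabar)$ as $\sum_{w_{x_{\pdR}} \preceq w' \preceq w} P_{w_0 w, w_0 w'}(1)\, \Cfrak_{w'}$, where the cycles $\Cfrak_{w'}$ depend only on $r$ and $\varphibar$, not on $w$. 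The (still more general) trianguline case should be handled by an analogue of Theorem \ref{companionptsconjecture} in the de Rham setting (cf.~Remark \ref{derham1}) and an entirely parallel argument.

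Next I would cut from the fiber over $\wt(\deltabar)$ down to the fiber over $\deltabar$, which is carved out by the $n$ additional equations recording the values of the $\delta_i$ at a uniformizer. Using Proposition \ref{flatrigid} together with the Cohen-Macaulay property established in Proposition \ref{flatnessofkappa}, one expects these equations to form a regular sequence on each $\Cfrak_{w'}$, producing refined cycles $\Ccal_{r, \varphibar, w'}$ of codimension $[K:\Qp]\frac{n(n+3)}{2}$. On the representation side, the Orlik-Strauch machinery and the computation of Jordan-H\"older multiplicities in category $\Ocal$ parametrize the constituents of $I_{\deltabar}$ by pairs $(w', \sigma)$ with $w' \in \Scal$ and $\sigma$ a constituent of a smooth Levi principal series; the multiplicity $m_{\deltabar, \Pi_{w', \sigma}}$ equals $P_{w_0 w, w_0 w'}(1)\,[\sigma : \pi_{\rm sm}]$, the smooth factor collapsing to $1$ for generic $\varphibar$. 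Setting $\Ccal_{r, \Pi_{w', \sigma}} := \Ccal_{r, \varphibar, w'}$ then matches the two sides termwise. Uniqueness follows from the Bruhat-triangularity of the matrix $(P_{w_0 w, w_0 w'}(1))$ --- with $1$'s along the diagonal --- so varying $w$, equivalently varying the companion point of $r$, inverts the system.

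The main obstacle will be verifying the regularity of the $n$ smooth-parameter equations on each $\Cfrak_{w'}$: the local model diagram (\ref{THEdiagram}) only controls the map to $\tfrak$ through $\kappa_1$, not the full map to $\Tcal^n$, so a priori these equations are not obviously regular. Overcoming this requires enhancing the local model to incorporate the unramified parameters --- essentially augmenting each $X_{w'}$ by an $n$-dimensional affine torus factor --- and verifying that the analogues of Proposition \ref{flatnessofkappa} and Theorem \ref{normality} persist for the enhanced variety. This should be essentially bookkeeping parallel to the analysis of \S\ref{Springer}, but must be carried out carefully to ensure the resulting cycles remain Cohen-Macaulay of the correct codimension.
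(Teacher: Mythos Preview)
Your broad outline matches the paper's proof of the crystalline case (Theorem~\ref{resultBM}), but you misidentify where the difficulty lies and miss the actual subtle point.

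What you flag as the ``main obstacle'' --- cutting from the fiber over $\wt(\deltabar)$ down to the fiber over $\deltabar$ --- is in fact a non-issue, and no enhanced variety is needed. The factorization of Theorem~\ref{mainsmooth} (see Remark~\ref{thediagram}) already gives a formally smooth map $X_{r,\Mcal_\bullet}^{\Box,w}\to \widehat{\Tcal^n_{\deltabar}}\times_{\widehat\tfrak}\widehat X_{w,x_{\pdR}}$, so the smooth parameters sit in a separate factor of the target. Taking the fiber at $\deltabar$ simply specializes that factor to a point; the right-hand column of diagram~(\ref{fibre}) is unchanged, the horizontal maps remain formally smooth, and one obtains diagram~(\ref{fibrebre}) and the cycles $\Ccal_{w'}$ of~(\ref{cyclumbisbis}) directly, without any regular-sequence analysis on an augmented $X_{w'}$.

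The genuine gap in your proposal is well-definedness. You set $\Ccal_{r,\Pi_{w',\sigma}}:=\Ccal_{r,\varphibar,w'}$, but the same irreducible $\Pi$ occurs as a constituent of $I_{\deltabar}$ for several $\deltabar$ attached to \emph{different} refinements $\varphibar$ (those differing by an element of the subgroup $\Scal_{n,w'}\subset\Scal_n$ permuting the smooth part, cf.~\cite[Lem.~6.2]{BreuilAnalytiqueI}). One must prove these choices give the same cycle. This is Proposition~\ref{penible}, and its proof is not formal: it rests on Lemma~\ref{invariance} (the $P_{w'}(k)$-invariance of the characteristic cycle $[\mathfrak L(w'w_0\cdot 0)]$) together with Lemma~\ref{pourBMA} to transport triangulations. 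A closely related point you also omit is the consistency check when $(r,\deltabar')\notin X_{\tri}(\rbar)$: if $\Pi_{w'}$ occurs in such an $I_{\deltabar'}$, one must verify $\Ccal_{w'}=0$, which the paper deduces from Theorem~\ref{companionptsconjecture}. Your uniqueness argument via Bruhat-triangularity of the Kazhdan--Lusztig matrix is essentially the same as the paper's induction in Proposition~\ref{unicity}.
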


\begin{rem}
{\rm Conjecture \ref{BManalytique} in particular implies that $\Spec\widehat\Ocal_{X_{\tri}(\rbar)_{\deltabar},(r,\deltabar)}$ is equidimensional of dimension $n^2+[K:\Qp]\frac{n(n-3)}{2}$ (if nonzero) when $r\in \Xfrak_{\rbar}(L)$ is generic trianguline. Note that if the cycles $\Ccal_{r,\Pi}$ are known for a given $r$ (and all $\Pi$), then Conjecture \ref{BManalytique} also tells exactly which points of the form $(r,\deltabar)$ are on $X_{\tri}(\rbar)$.}
\end{rem}

Let $\deltabar\in\Tcal_0^n$ be locally algebraic. We can write:
$$ (\delta_1,\delta_2\varepsilon,\dots,\delta_n\varepsilon^{n-1})=z^\lambda\deltabar_{\rm sm}$$
where $\lambda\in(\Z^n)^{[K:\Qp]}$ and $\deltabar_{\rm sm}$ is a smooth character. Then the representation $I_{\deltabar}$ is isomorphic to $\Fcal_{\overline B(K)}^{\GL_n(K)}(U(\gfrak)\otimes_{U(\overline{\mathfrak{b}})}(-\lambda),\deltabar_{\rm sm})$. The hypothesis $\deltabar\in\Tcal_0^n$ implies that for every parabolic subgroup $\overline{P}$ of $\GL_n$ containing $\overline{B}$, the smooth representation $\Ind_{\overline B(K)}^{\overline{P}(K)}(\deltabar_{\rm sm})^{\rm sm}$ is irreducible (see \cite[Th.4.2]{BZ}). This implies that, if $(M_i)_i$ is an ascending Jordan-Hölder filtration of $U(\gfrak)\otimes_{U(\overline{\mathfrak{b}})}(-\lambda)$, then $(\Fcal_{\overline B(K)}^{\GL_n(K)}(M_i,\deltabar_{\rm sm}))_i$ is a decreasing topological Jordan-Hölder filtration of $I_{\deltabar}$ and the topologically irreducible subquotients of $I_{\deltabar}$ are the $\Fcal_{\overline B(K)}^{\GL_n(K)}(M_i/M_{i-1},\deltabar_{\rm sm})$.

\begin{prop}\label{unicity}
Assume $r\in \Xfrak_{\rbar}(L)$ is generic trianguline with integral $\tau$-Sen weights for each $\tau\in \Sigma$. If the cycles $\Ccal_{r,\Pi}$ as in Conjecture \ref{BManalytique} exist, then they are unique.
\end{prop}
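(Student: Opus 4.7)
The plan is to exploit the upper-triangularity (for Bruhat order) of the Kazhdan--Lusztig multiplicity matrix in order to invert the system of equations given by Conjecture~\ref{BManalytique}, reducing uniqueness to a linear algebra computation.

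First, I would reduce to the locally algebraic case: when $\deltabar \in \Tcal_0^n$ is \emph{not} locally algebraic, the infinitesimal character is non-integral and $I_{\deltabar}$ is itself topologically irreducible (cf.\ \cite{OrlikStrauch2}), so $m_{\deltabar,\Pi}\neq 0$ only for $\Pi = I_{\deltabar}$ (with multiplicity $1$), and the conjectural formula directly reads $\Ccal_{r,\Pi} = [\Spec\widehat\Ocal_{X_{\tri}(\rbar)_{\deltabar},(r,\deltabar)}]$. Next, for the locally algebraic case, fix a smooth character $\deltabar_{\rm sm}:(K^\times)^n \to L^\times$ and an (anti-dominant) integral weight $\lambda_0$; as recalled in the paragraph just before the proposition, the absolutely irreducible constituents of all $I_{\deltabar}$ with smooth part $\deltabar_{\rm sm}$ and algebraic part in the $\Scal$-orbit of $z^{\lambda_0}$ are precisely the $\Pi_{w'} := \Fcal_{\overline{B}(K)}^{\GL_n(K)}(L(w'\cdot \lambda_0), \deltabar_{\rm sm})$ for $w' \in \Scal$.

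Second, I would vary $\deltabar$ within this orbit, writing $\deltabar_w$ for the locally algebraic character whose algebraic part corresponds to $w\cdot\lambda_0$ (with the $\varepsilon^i$ twists built into the definition of $I_{\deltabar}$). By the structure of Verma modules recalled in \S\ref{Springer3} and the $\Fcal$-construction, the Jordan--H\"older multiplicity of $\Pi_{w'}$ in $I_{\deltabar_w}$ is the Kazhdan--Lusztig number $P_{w_0 w, w_0 w'}(1)$ (cf.\ Theorem~\ref{summaryofrepntheory}(ii)), which vanishes unless $w' \preceq w$ and equals $1$ for $w' = w$. The resulting matrix $\bigl(m_{\deltabar_w, \Pi_{w'}}\bigr)_{w,w' \in \Scal}$ is therefore upper-triangular for the Bruhat order with $1$'s on the diagonal, and in particular invertible over $\Z$.

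Third, an inductive descent on the Bruhat order extracts each $\Ccal_{r, \Pi_w}$ from the equations of Conjecture~\ref{BManalytique} applied to $\deltabar_w$, namely
\[
\Ccal_{r, \Pi_w} \;=\; [\Spec\widehat\Ocal_{X_{\tri}(\rbar)_{\deltabar_w},(r,\deltabar_w)}] \;-\; \sum_{w' \prec w} P_{w_0 w, w_0 w'}(1)\, \Ccal_{r, \Pi_{w'}},
\]
so the cycles are uniquely determined within each orbit. Since different choices of $(\deltabar_{\rm sm}, \lambda_0)$ produce disjoint families of irreducible constituents (the smooth part and the infinitesimal character are invariants of $\Pi$), the partial subsystems are independent and the uniqueness assembles across all $\Pi$. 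The main point to verify carefully is the parametrization statement that \emph{every} irreducible constituent of \emph{every} $I_{\deltabar}$ (with $\deltabar \in \Tcal_0^n$ locally algebraic) arises as some $\Pi_w$ for an appropriate $(\deltabar_{\rm sm}, \lambda_0)$, which follows from the genericity hypothesis $\deltabar \in \Tcal_0^n$ and the irreducibility of $\Ind_{\overline{B}(K)}^{\overline{P}(K)}(\deltabar_{\rm sm})^{\rm sm}$ recalled just before the proposition.
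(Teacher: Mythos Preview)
Your proposal is essentially correct and takes the same approach as the paper: both invert the upper-triangular (for Bruhat order) Kazhdan--Lusztig multiplicity matrix. The paper phrases the induction as ``induction on the smallest length of an $I_{\deltabar}^{\rm ss}$ containing $\Pi$'' and invokes \cite[Cor.~2.5]{BreuilAnalytiqueI} to realize each $\Pi$ as the socle of some $I_{\deltabar}$ with $\deltabar\in\Tcal_0^n$, then \cite[Th.~5.1]{HumBGG} to see the remaining constituents are inductively known---this is exactly your Bruhat descent rewritten.

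Two minor points. First, your irreducibility claim for $I_{\deltabar}$ when $\deltabar\in\Tcal_0^n$ is not locally algebraic is imprecise: partial integrality (some $\wt_\tau(\delta_i)\in\Z$, others not) can still produce reducibility of the underlying Verma module. The paper sidesteps this entirely by observing instead that $(r,\deltabar)\in X_{\tri}(\rbar)$ forces $\deltabar$ locally algebraic (via \cite[Th.~6.3.13]{KPX} and the integrality of the Sen weights of $r$), so the left-hand side vanishes; the same triangular mechanism then applies. Second, the conjecture ranges over all $\deltabar\in\Tcal_L^n(L)$, so you should also cover constituents of $I_{\deltabar}$ for $\deltabar\notin\Tcal_0^n$; the paper's citation of \cite[Cor.~2.5]{BreuilAnalytiqueI} handles this (any such $\Pi$ is still the socle of some $I_{\deltabar'}$ with $\deltabar'\in\Tcal_0^n$), and you should make this step explicit.
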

\begin{proof}
Writing $\deltabar=(\delta_i)_{i\in\{1,\dots,n\}}$, it follows from \cite[Th.6.3.13]{KPX} that $(r,\deltabar)\in X_{\tri}(\rbar)$ implies $\deltabar$ locally algebraic and $\deltabar\in \Tcal_0^n$. In particular if $\Pi$ is a constituent of some $I_{\deltabar}^{\rm ss}$ where at least one of the $\delta_i$ is not locally algebraic, then $[\Spec\widehat\Ocal_{X_{\tri}(\rbar)_{\deltabar},(r,\deltabar)}]=0$ and hence $\Ccal_{r,\Pi}=0$. We now use without comment the results of \cite{OrlikStrauch} as summarized (and slightly extended) in \cite[\S2]{BreuilAnalytiqueII}. If $\Pi$ is an irreducible constituent of some $I_{\deltabar}^{\rm ss}$ where $\deltabar$ is locally algebraic, one can associate to $\Pi$ the smallest length of such a $I_{\deltabar}^{\rm ss}$. We proceed by induction on this length. If $I_{\deltabar}^{\rm ss}=\Pi$, i.e. $I_{\deltabar}^{\rm ss}$ has length $1$, then we must have $\Ccal_{r,\Pi}=[\Spec\widehat\Ocal_{X_{\tri}(\rbar)_{\deltabar},(r,\deltabar)}]$. In general, we can always find $\deltabar\in \Tcal_0^n$ locally algebraic such that $\Pi\simeq {\rm soc}_{\GL_n(K)}{I_{\deltabar}}$ (use \cite[Cor.2.5]{BreuilAnalytiqueI}). Then we have $[\Spec\widehat\Ocal_{X_{\tri}(\rbar)_{\deltabar},(r,\deltabar)}]=\Ccal_{r,\Pi}+\sum_{\Pi'\ne \Pi}m_{\deltabar,\Pi'}\Ccal_{r,\Pi'}$ where the cycles $\Ccal_{r,\Pi'}$ are known by induction using \cite[Th.5.1]{HumBGG}, i.e. $\Ccal_{r,\Pi}=[\Spec\widehat\Ocal_{X_{\tri}(\rbar)_{\deltabar},(r,\deltabar)}]-\sum_{\Pi'\ne \Pi}m_{\deltabar,\Pi'}\Ccal_{r,\Pi'}$.
\end{proof}

\begin{rem}
{\rm The same proof should work without assuming integrality of the Sen weights of $r$, but this requires extending the results of \cite{OrlikStrauch2} to the locally $\Qp$-analytic setting (similar to what is done in the appendix of \cite{BreuilAnalytiqueI}). Though there is no doubt such an extension is true, it is not written so far, and for that reason we refrain from stating Proposition \ref{unicity} in the general case.}
\end{rem}

We now fix $r\in \Xfrak_{\rbar}(L)$ a trianguline deformation with integral distinct $\tau$-Sen weights for each $\tau\in \Sigma$ and we let $\Mcal$, $\Mcal_\bullet$, $x_{\pdR}$, $w_{x_{\pdR}}$, $w$, $\deltabar$ as in the beginning of this section. Taking the fibers over $\deltabar\in \Spec\widehat\Ocal_{\Tcal_{L,\wt(\deltabar)}^n,\deltabar}(L)$ in the commutative diagram (\ref{fibre}) yields a third diagram:
\begin{equation}\label{fibrebre}
\begin{gathered}
\xymatrix{ \Spec{\overline{\overline R}_{r,\Mcal_\bullet}^w}\ar@{^{(}->}[d] & \Spec{\overline{\overline R}_{r,\Mcal_\bullet}^{\Box,w}}\ar@{^{(}->}[d]\ar[l]\ar[r] &\Spec\widehat\Ocal_{\overline X_w,{x_{\pdR}}}\ar@{^{(}->}[d]\\
\Spec{\overline{\overline R}_{r,\Mcal_\bullet}}\ar@{^{(}->}[d]& \Spec{\overline{\overline R}_{r,\Mcal_\bullet}^{\Box}}\ar[l]\ar[r] &\Spec\widehat\Ocal_{\overline X,{x_{\pdR}}}\! \\
\Spec\widehat\Ocal_{\Xfrak_{\rbar},r} &&}
\end{gathered}
\end{equation}
where all horizontal morphisms are formally smooth, the two squares are cartesian and ${\overline{\overline R}_{r,\Mcal_\bullet}^{\Box}}$ (resp. ${\overline{\overline R}_{r,\Mcal_\bullet}^{\Box,w}}$) is a formal power series ring over ${\overline{\overline R}_{r,\Mcal_\bullet}}$ (resp. ${\overline{\overline R}_{r,\Mcal_\bullet}^{w}}$). Using exactly the same arguments as with (\ref{fibre}), for $w'\in \Scal$ we denote by $\Zcal_{w'}\in {\rm Z}^{[K:\Qp]\frac{n(n+3)}{2}}(\Spec\widehat\Ocal_{\Xfrak_{\rbar},r})$ the cycle corresponding, via the embedding $\Spec {\overline{\overline R}_{r,\Mcal_\bullet}}\hookrightarrow \Spec\widehat\Ocal_{\Xfrak_{\rbar},r}$, to the cycle $[\Spec \widehat \Ocal_{Z_{w'},{x_{\pdR}}}]$ and we set as in (\ref{cyclumbis}):
\begin{equation}\label{cyclumbisbis}
\Ccal_{w'}:=\sum_{w''\in \Scal} a_{w',w''}\Zcal_{w''}\in {\rm Z}^{[K:\Qp]\frac{n(n+3)}{2}}(\Spec\widehat\Ocal_{\Xfrak_{\rbar},r}).
\end{equation}
The cycles $\Zcal_{w''}$ and $\Ccal_{w'}$ again do not depend on $\alpha$ and depend on $\deltabar$ only via the $\Rcal_{L,K}(\delta_i)[\tfrac{1}{t}]$ (using (\ref{changedelta})). 

Denote by $\deltabar_0=(\delta_{0,i})_{i\in\{1,\dots,n\}}\in \Tcal_0^n$ the unique element such that $\delta_{0,i}\delta_{i}^{-1}$ is algebraic for all $i\in\{1,\dots,n\}$ and $\wt_\tau(\delta_{0,i})>\wt_\tau(\delta_{0,i+1})$ for all $i\in\{1,\dots,n-1\}$ and all $\tau\in \Sigma$. It follows from \cite[\S8.4]{HumBGG} and \cite{OrlikStrauch} (with \cite[\S2]{BreuilAnalytiqueII}) that the irreducible constituents of $I_{\deltabar_0}$ are parametrized by $\Scal$ in such a way that $m_{\deltabar_0,\Pi_{w'}}=P_{1,w_0w'}(1)$ where $\Pi_{w'}$ is the constituent associated to $w'\in \Scal$ (recall that in $I_{\deltabar_0}$ we induce from the lower Borel). The cycle $\Ccal_{w'}$ {\it a priori} depends on $r$, $\Mcal_\bullet$ and $w'$. The following result shows that it depends on slightly less.

\begin{prop}\label{penible}
With the above notation, the cycle $\Ccal_{w'}$ only depends on $r$ and on the constituent $\Pi_{w'}$.
\end{prop}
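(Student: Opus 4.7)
The plan is to observe that, although $\Ccal_{w'}$ is constructed from the additional data $(\Mcal_\bullet,\deltabar,w)$, it actually only depends on $(r,\Mcal_\bullet,w')$, and then to recover $(\Mcal_\bullet,w')$ from $\Pi_{w'}$.

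First I would argue that $\Pi_{w'}$ determines both $w'$ and $\Mcal_\bullet$. By the Orlik--Strauch description recalled just before the statement, $\Pi_{w'}$ is of the form $\Fcal_{\overline B(K)}^{\GL_n(K)}(L(w'w_0\cdot 0),\deltabar_{0,\rm sm})$, where $\deltabar_{0,\rm sm}$ is the smooth part of $\deltabar_0$; one reads off $w'$ from the highest weight of the underlying simple $U(\gfrak)$-module. Now $\deltabar_{0,\rm sm}$ coincides with the smooth part of any locally algebraic parameter of $\Mcal_\bullet$, and two distinct triangulations with parameters in $\Tcal_0^n$ must have distinct smooth parts (by the unicity assertion of Proposition \ref{uniquetri} combined with Lemma \ref{isorank1}), so $\deltabar_{0,\rm sm}$ recovers $\Mcal_\bullet$.

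Next I would verify that $\Ccal_{w'}$ is insensitive to the specific locally algebraic parameter $\deltabar\in\Tcal_0^n$ of $\Mcal_\bullet$. If $\deltabar'$ is a second such parameter, then $\deltabar'/\deltabar$ is a constant algebraic character by Lemma \ref{isorank1}, and \eqref{changedelta} gives $\omega_{\deltabar'}=(\times(\deltabar'/\deltabar))\circ\omega_\deltabar$. A direct computation yields $(\wt-\wt(\deltabar'))\circ(\times(\deltabar'/\deltabar))=\wt-\wt(\deltabar)$, so the composite $X_{r,\Mcal_\bullet}\to\widehat\tfrak$ is the same whether formed via $\deltabar$ or $\deltabar'$, and hence so is its fiber $\Spec\overline R_{r,\Mcal_\bullet}$. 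The same translation canonically identifies the fiber of $\omega_\deltabar$ at $\deltabar$ with the fiber of $\omega_{\deltabar'}$ at $\deltabar'$, so $\Spec\overline{\overline R}_{r,\Mcal_\bullet}$ and its framed version are likewise intrinsic to $(r,\Mcal_\bullet)$.

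On the other side of \eqref{fibrebre}, the point $x_{\pdR}=(\alpha^{-1}(\Dcal_\bullet),\alpha^{-1}(\Fil_{W^+,\bullet}),N_W)$ depends only on $(r,\Mcal_\bullet)$ and the framing $\alpha$, and the formal neighbourhood $\Spec\widehat\Ocal_{\overline X,x_{\pdR}}$ together with its closed subschemes $\Spec\widehat\Ocal_{Z_{w''},x_{\pdR}}$ is independent of $\alpha$ by $G$-equivariance. Consequently the formally smooth morphism of \eqref{fibrebre} and the induced bijection of irreducible components (via smooth pullback) are intrinsic to $(r,\Mcal_\bullet)$, whence each $\Zcal_{w''}$ is intrinsic to $(r,\Mcal_\bullet)$, and so is $\Ccal_{w'}=\sum_{w''}a_{w',w''}\Zcal_{w''}$. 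The main delicate point I expect to have to verify carefully is this last intrinsic nature of the formally smooth morphism in \eqref{fibrebre}, namely that all $\deltabar$-dependencies in the intermediate constructions of (\ref{THEdiagram}) genuinely cancel after the iterated fibers defining $\overline{\overline R}_{r,\Mcal_\bullet}$; once this is in hand, combining the two steps above gives that $\Ccal_{w'}$ depends only on $(r,\Mcal_\bullet,w')$, i.e.~only on $(r,\Pi_{w'})$.
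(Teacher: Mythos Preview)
There is a genuine gap in your first step. You claim that $\Pi_{w'}$ determines $\Mcal_\bullet$ because it determines the smooth part $\deltabar_{0,\rm sm}$, but this is false for $w'\neq w_0$. By \cite[Lem.6.2]{BreuilAnalytiqueI} (as invoked in the paper's proof), the irreducible Orlik--Strauch representation $\Fcal_{\overline B(K)}^{\GL_n(K)}(L(-w'w_0\cdot\lambda),\deltabar_{0,\rm sm})$ is isomorphic to $\Fcal_{\overline B(K)}^{\GL_n(K)}(L(-w'w_0\cdot\lambda),\tilde w(\deltabar_{0,\rm sm}))$ precisely for those $\tilde w\in\Scal_n$ lying in the subgroup $\Scal_{n,w'}$ (the Weyl group of the Levi $M_{w'}$ of the maximal parabolic $P_{w'}$ for which $w'w_0\cdot 0$ is $M_{w'}\cap B$-dominant). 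Since $\deltabar\in\Tcal_0^n$, the characters $\tilde w(\deltabar_{0,\rm sm})$ for distinct $\tilde w$ are genuinely distinct, and by your own (correct) uniqueness argument they correspond to distinct triangulations of $\Mcal$. Hence $\Pi_{w'}$ only pins down $\Mcal_\bullet$ up to the action of $\Scal_{n,w'}$, and what is actually needed is invariance of $\Ccal_{w'}$ under that action.

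This is exactly the nontrivial content of the proposition, and the paper's proof is of a quite different nature from what you propose. The paper fixes a reflection $s\in\Scal_{n,w'}$, builds the permuted triangulation $s\Mcal_\bullet$ (this requires Lemma \ref{pourBMA}: the nilpotent operator vanishes on the relevant graded pieces, which in turn follows from the $P_{w'}$-invariance of $Z_{w'}$ established in Lemma \ref{invariance} and Remark \ref{addendum}), and then shows that for each $w''$ with $a_{w',w''}\neq 0$ there is a functorial bijection $\Zcal_{w''}^\Box(A)\simeq\Zcal_{w'',s}^\Box(A)$ over $\widehat\Ocal_{\Xfrak_{\rbar},r}^\Box$, again using that $s(Z_{w''})=Z_{w''}$ and Lemma \ref{pourBMA}. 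This yields $\Zcal_{w''}=\Zcal_{w'',s}$ and hence $\Ccal_{w'}=\Ccal_{w',s}$. Your second step (independence of the choice of $\deltabar$ for a fixed $\Mcal_\bullet$) is fine but was already noted in the text preceding the proposition; it is not the point at issue.
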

\begin{proof}
We \ can \ choose \ the \ framing \ $\alpha$ \ such \ that \ the \ flag \ $\alpha^{-1}(\Dcal_\bullet)$ \ on \ $(L\otimes_{\Qp}K)^n\buildrel \alpha\over \simeq D_{\pdR}(W_{\dR}(\Mcal))$ is the standard one. For $w'\in \Scal$ such that $x_{\pdR}\in Z_{w'}(L)$ denote by $P_{w'}\subseteq G$ the maximal parabolic subgroup containing $B$ such that $w'w_0\cdot 0$ is dominant with respect to $M_{w'}\cap B$ where $M_{w'}$ is the Levi subgroup of $P_{w'}$. Denote by $\Scal_{n,w'}\subseteq \Scal_n$ the subgroup of permutations which, seen inside $\Scal=\Scal_n^{[K:\Qp]}$ via the {\it diagonal embedding}, belong to the Weyl group of $M_{w'}$. 
Let us write $\deltabar_0=z^\lambda\deltabar_{\rm sm}$ with $\deltabar_{\rm sm}$ a smooth character. For an element $\tilde w\in \mathcal{S}_n$ we denote by $\tilde w(\deltabar_{{\rm sm}})$ the smooth character defined by $\tilde w(\deltabar_{{\rm sm}})_i=\deltabar_{{\rm sm},\tilde w(i)}$.
By \cite[Lem.6.2]{BreuilAnalytiqueI} we find that:
$$\tilde w\in \Scal_{n,w'}\Longleftrightarrow 0\neq m_{\deltabar_{0,\tilde w},\Pi_{w'}},\, \text{where}\ \deltabar_{0,\tilde w}=z^\lambda \,\tilde w(\deltabar_{\rm sm}).$$

One easily checks that there is a partition $n=n_1+\cdots +n_r$ of $n$ by integers $n_i\geq 1$ such that $\Scal_{n,w'}$ is the Weyl group of $\GL_{n_1/L}\times \GL_{n_2/L}\times \cdots\times \GL_{n_r/L}$ inside $\GL_{n/L}$. For any reflection $s$ in $\Scal_{n,w'}$ the closed point $x_{\pdR,s}:=(s\alpha^{-1}(\Dcal_\bullet),\alpha^{-1}(\Fil_{W^+,\bullet}),N_W)$ is still in $Z_{w'}(L)$ since in particular $s(Z_{w'})=Z_{w'}$ by Remark \ref{addendum}. Hence the nilpotent endomorphism induced by $N_W$ on the graded piece $\alpha^{-1}(\Dcal_{n_1+\cdots +n_i})/\alpha^{-1}(\Dcal_{n_1+\cdots +n_{i-1}})$ for $i\in \{1,\dots,r\}$ is actually $0$ since it must respect permutations of the induced flag. Applying Lemma \ref{pourBMA} to each graded piece, we can define another triangulation $s\Mcal_\bullet$ on $\Mcal$ which induces $s\alpha^{-1}(\Dcal_\bullet)$ on $D_{\pdR}(W_{\dR}(\Mcal))$. We can then define the cycles $\Zcal_{w',s},\Ccal_{w',s}\in {\rm Z}^{[K:\Qp]\frac{n(n+3)}{2}}(\Spec\widehat\Ocal_{\Xfrak_{\rbar},r})$ as we defined $\Zcal_{w'},\Ccal_{w'}$ replacing $\Mcal_\bullet$ by $s\Mcal_\bullet$ and $x_{\pdR}$ by $x_{\pdR,s}$ in the lower part of (\ref{fibrebre}) (the part that is not concerned with $w$), and note that $\Ccal_{w',s}$ is well defined thanks to Remark \ref{addendum}. It then easily follows from \cite[Lem.6.2]{BreuilAnalytiqueI} that it is enough to prove $\Ccal_{w'}=\Ccal_{w',s}$ in ${\rm Z}^{[K:\Qp]\frac{n(n+3)}{2}}(\Spec\widehat\Ocal_{\Xfrak_{\rbar},r})$.

From (\ref{cyclumbisbis}) it is enough to prove $\Zcal_{w''}=\Zcal_{w'',s}$ for all $w''\preceq w'$ such that $a_{w',w''}\ne 0$ and all reflections $s\in \Scal_{n,w'}$ (note that $\Zcal_{w''}\ne 0$ if and only if $\Zcal_{w'',s}\ne 0$ for such $w''\preceq w'$ by Remark \ref{addendum}). Denote by $\Zcal_{w''}^\Box$ (resp. $\Zcal_{w'',s}^\Box$) the equidimensional closed subscheme of codimension $0$ in $\Spec{\overline{\overline R}^\Box_{r,\Mcal_\bullet}}$ (resp. in $\Spec{\overline{\overline R}^\Box_{r,s\Mcal_\bullet}}$) defined as the pull-back of $\Spec \widehat \Ocal_{Z_{w''},{x_{\pdR}}}$ (resp. $\Spec \widehat \Ocal_{Z_{w''},{x_{\pdR,s}}}$). Let $A\in \Ccal_L$ and $(\Dcal^{(1)}_{A,\bullet},\Dcal^{(2)}_{A,\bullet},N_A)\in \widehat Z_{w'',x_{\pdR}}(A)$, from $s(Z_{w''})=Z_{w''}$ (Remark \ref{addendum}) we deduce as previously that the nilpotent endomorphism induced by $N_{A}$ on $\Dcal^{(1)}_{A,n_1+\cdots +n_i}/\Dcal^{(1)}_{A,n_1+\cdots +n_{i-1}}$ for $i\in \{1,\dots,r\}$ is actually $0$ (since on each graded piece it must respect permutations of the induced flag and since it is $0$ on the diagonal as we are in $Z_{w''}\subseteq Z$). Applying again Lemma \ref{pourBMA} to each graded piece, we can define a bijection $s:\Zcal_{w''}^\Box(A)\buildrel\sim\over\rightarrow \Zcal_{w'',s}^\Box(A)$ which is functorial in $A$ by permuting the triangulation $\Mcal_{A,\bullet}$ of $\Mcal_A$ according to $s$. Hence the two complete local rings underlying $\Zcal_{w''}^\Box$ and $\Zcal_{w'',s}^\Box$ are isomorphic. Since this bijection doesn't touch the Galois deformations, they are moreover isomorphic as quotients of $\widehat\Ocal^\Box_{\Xfrak_{\rbar},r}$ where $\widehat\Ocal^\Box_{\Xfrak_{\rbar},r}$ is the affine ring of $X_r^\Box$. This implies in particular that the two cycles $\Zcal_{w''}$ and $\Zcal_{w'',s}$ are the same in ${\rm Z}^{[K:\Qp]\frac{n(n+3)}{2}}(\Spec\widehat\Ocal_{\Xfrak_{\rbar},r})$.
\end{proof}

\begin{theo}\label{resultBM}
Assume $r\in \Xfrak_{\rbar}(L)$ is generic crystalline with distinct $\tau$-Sen weights for each $\tau\in \Sigma$. Then Conjecture \ref{BManalytique} is true for $r$.
\end{theo}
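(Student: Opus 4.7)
The plan is to promote the cycles $\Ccal_{w'}$ of (\ref{cyclumbisbis}) to the sought-for cycles $\Ccal_{r,\Pi}$ and to verify the multiplicity formula via an analogue of Corollary \ref{cycleXtri'} taken at the level of the fiber over $\deltabar$ rather than over $\wt(\deltabar)$. Uniqueness is already Proposition \ref{unicity}, so only existence needs to be proved.

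First, I would establish the following $\deltabar$-level analogue of (\ref{cycleXtri}). Fix $x=(r,\deltabar)\in X_{\tri}(\rbar)(L)$ with $\deltabar$ locally algebraic and let $\Mcal_\bullet$ be the unique triangulation of $\Mcal=D_{\rig}(r)[\tfrac{1}{t}]$ of parameter $\deltabar$ (Proposition \ref{uniquetri}); let $w\in\Scal$ be such that $\wt(\deltabar)=w({\bf h})$ and let $x_{\pdR}$, $w_{x_{\pdR}}$ be as in \S\ref{locallyBM}. The diagram (\ref{fibrebre}) obtained by taking fibers over $\deltabar$ (rather than over $\wt(\deltabar)$) in (\ref{fibre}) is cartesian with formally smooth rows and yields, via smooth descent, a bijection between the irreducible components of $\Spec\widehat\Ocal_{\overline X_w,x_{\pdR}}$ and those of $\Spec\overline{\overline R}_{r,\Mcal_\bullet}^w\simeq \Spec\widehat\Ocal_{X_{\tri}(\rbar)_{\deltabar},x}$ (the last isomorphism following from Corollary \ref{localdescrip}). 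Together with Corollary \ref{vermacompl} this gives
$$[\Spec\widehat\Ocal_{X_{\tri}(\rbar)_{\deltabar},x}]=\sum_{w_{x_{\pdR}}\preceq w'\preceq w}P_{w_0 w,w_0 w'}(1)\,\Ccal_{w'}\ \in\ \mathrm{Z}^{[K:\Qp]\frac{n(n+3)}{2}}(\Spec\widehat\Ocal_{\Xfrak_{\rbar},r}).$$

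Next I would translate the Kazhdan--Lusztig coefficients into multiplicities of irreducible constituents in the locally $\Qp$-analytic principal series $I_{\deltabar}$. Using the Orlik--Strauch functor (and the Jordan--Hölder discussion just before Proposition \ref{unicity}, which applies because $\deltabar\in\Tcal_0^n$), the composition factors of $I_{\deltabar}$ are parametrized by $w'\in\Scal$ in such a way that $m_{\deltabar,\Pi_{w'}}$ equals the multiplicity $P_{w_0 w, w_0 w'}(1)$ of the corresponding simple quotient in the Verma module $U(\gfrak)\otimes_{U(\overline\lieb)}(-\lambda)$, where $(\delta_1,\ldots,\delta_n\varepsilon^{n-1})=z^\lambda \deltabar_{\rm sm}$. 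I would then define $\Ccal_{r,\Pi}:=\Ccal_{w'}$ whenever $\Pi\cong\Pi_{w'}$ arises from some locally algebraic $\deltabar\in\Tcal_0^n$ attached to $r$, and $\Ccal_{r,\Pi}:=0$ otherwise. The crucial well-definedness of this assignment is precisely the content of Proposition \ref{penible}: the cycle $\Ccal_{w'}$ depends only on $r$ and on the isomorphism class of $\Pi_{w'}$, not on the triangulation or the particular parameter chosen. Combining the two displayed formulas gives the identity of Conjecture \ref{BManalytique} at every $(r,\deltabar)\in X_{\tri}(\rbar)$ with $\deltabar$ locally algebraic.

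It remains to verify the formula when $(r,\deltabar)\notin X_{\tri}(\rbar)$: the left-hand side then vanishes and I must show that so does the right-hand side. For $\deltabar$ not locally algebraic or outside $\Tcal_0^n$, no reducible $I_{\deltabar}$ occurs and the only $\Pi$ with $m_{\deltabar,\Pi}\neq 0$ is $I_{\deltabar}$ itself, which does not arise from any $\Pi_{w'}$ attached to $r$, so $\Ccal_{r,\Pi}=0$ by definition. For $\deltabar\in\Tcal_0^n$ locally algebraic, the heart of the argument is to use that in the crystalline case Remark \ref{allcompanion} and Theorem \ref{companionptsconjecture} together enumerate \emph{all} points of $X_{\tri}(\rbar)$ above $r$ as companion points $(r,z^{w({\bf h})}\mathrm{unr}(\varphibar))$ of the various strictly dominant refinement points $x_{\varphibar}$, so every constituent $\Pi$ with $\Ccal_{r,\Pi}\neq 0$ appears in $I_{\deltabar'}$ for some such accessible $\deltabar'$; one then checks, using the Bruhat-order constraints coming from the relations $w_{x_{\pdR}}\preceq w'\preceq w$, that $m_{\deltabar,\Pi}=0$ for every such $\Pi$ when $(r,\deltabar)\notin X_{\tri}(\rbar)$.

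The main obstacle I anticipate is precisely this last compatibility check: one must reconcile the cycles produced by different refinements $\varphibar$ of $r$ (which give distinct strictly dominant starting points and distinct Frobenius-eigenflags, hence distinct $w_{x_{\pdR}}$) but whose companion points may share a common parameter $\deltabar$. The cleanest route is to strengthen Proposition \ref{penible} into a statement that $\Ccal_{w'}$ depends only on $r$ and on the isomorphism class of $\Pi_{w'}$ as a $\GL_n(K)$-representation, independently of which refinement and triangulation were used to construct it, by running the $P_w$-invariance argument of Remark \ref{addendum} across different choices of Hodge and Frobenius flags simultaneously.
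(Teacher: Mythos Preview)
Your approach is essentially the paper's own. The one point where you go astray is in your final paragraph: Proposition \ref{penible} already says exactly what you claim needs a ``strengthening'' --- that $\Ccal_{w'}$ depends only on $r$ and on the isomorphism class of the constituent $\Pi_{w'}$, independently of the triangulation (equivalently the refinement) used to construct it. No extension is needed; use it as stated.

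For the vanishing when $(r,\deltabar)\notin X_{\tri}(\rbar)$ with $\deltabar\in\Tcal_0^n$ locally algebraic, the precise mechanism (which your sketch leaves vague) is this. Such a $\deltabar$ determines a refinement $\Rcal'$ from its smooth part and an element $w\in\Scal$ from $\wt(\deltabar)=w({\bf h})$. If $\Pi_{w'}$ occurs in $I_{\deltabar}^{\rm ss}$ then $w'\preceq w$. Now invoke Proposition \ref{penible} to compute $\Ccal_{w'}$ using the triangulation $\Mcal_{\bullet,\Rcal'}$ attached to $\Rcal'$; in that picture, since $r$ is de Rham, one has $\Ccal_{w'}\neq 0\Leftrightarrow w_{x_{\Rcal',\pdR}}\preceq w'$ (see the discussion following (\ref{cycleXtri})). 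But $(r,\deltabar)\notin X_{\tri}(\rbar)$ together with Theorem \ref{companionptsconjecture} gives $w_{x_{\Rcal',\pdR}}\npreceq w$, hence (as $w'\preceq w$) $w_{x_{\Rcal',\pdR}}\npreceq w'$, hence $\Ccal_{w'}=0$. This is the missing link you were groping for: Proposition \ref{penible} is precisely what lets you transport the nonvanishing criterion over to the refinement on which Theorem \ref{companionptsconjecture} bites.
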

\begin{proof}
For any refinement $\Rcal$, that is any ordering $(\varphi_{j_1},\dots,\varphi_{j_n})$ of the eigenvalues $(\varphi_i)_i$ of $\varphi^{[K_0:\Q_p]}$ on $D_{\cris}(r)$, there is a unique triangulation $\Mcal_{\bullet,\Rcal}$ on $\Mcal$ such that $\Mcal_{i,\Rcal}/\Mcal_{i-1,\Rcal}=\Rcal_{L,K}({\rm unr}(\varphi_{j_i}))[\tfrac{1}{t}]$. We denote by $x_{\Rcal,\pdR}$ the point of $\overline X(L)$ corresponding to $\Mcal_{\bullet,\Rcal}$ (fixing the same framing $\alpha$ for all $\Rcal$).

Let $\deltabar=(\delta_i)_i\in \Tcal_L^n(L)$. If $(r,\deltabar)$ is not a point on $X_{\tri}(\rbar)$ set $\Ccal_{r,\Pi}:=0$ for all constituants $\Pi$ of $I_{\deltabar}^{\rm ss}$. If $(r,\deltabar)\in X_{\tri}(\rbar)$, then the assumptions imply $\deltabar\in \Tcal_0^n$ and $\deltabar$ locally algebraic and we set $\Ccal_{r,\Pi_{w'}}:=\Ccal_{w'}$ for $w'\in \Scal$ where $\Ccal_{w'}$ is defined using the triangulation $\Mcal_\bullet$ of Proposition \ref{uniquetri} (and the associated $x_{\pdR}$) and where we use Proposition \ref{penible}. Note that $\Mcal_\bullet=\Mcal_{\bullet,\Rcal}$ for a refinement $\Rcal$ uniquely determined by $(\delta_1,\dots,\delta_n)$. For all this to be consistent, we have to check that if $\Pi_{w'}$ occurs in some other $I_{\deltabar'}^{\rm ss}$ with $(r,\deltabar')\notin X_{\tri}(\rbar)$, then we have $\Ccal_{w'}=0$. Consider such a $\deltabar'=(\delta'_i)_i$, there exists a permutation $w_\tau\in\Scal_n$ for each $\tau\in \Sigma$ such that $\wt_\tau(\delta'_{w_\tau(i)})<\wt_\tau(\delta'_{w_\tau(i+1)})$ (in $\Z$) for all $i$ and we set $w:=(w_\tau)_{\tau}\in \Scal$. Then we have $w'\preceq w$ using \cite[\S5.2]{HumBGG} and \cite{OrlikStrauch}. Moreover there exists a unique refinement $\Rcal'$ which is determined by $(\delta'_1,\dots,\delta'_n)$ and it follows from Proposition \ref{penible} (and its proof) that we can also define $\Ccal_{w'}$ using $\Mcal_{\bullet,\Rcal'}$ instead of $\Mcal_{\bullet,\Rcal}=\Mcal_\bullet$. Arguing exactly as before (\ref{cycleXtri}), we have $\Ccal_{w'}\ne 0\Leftrightarrow w_{x_{\Rcal',\pdR}}\preceq w'$. As $(r,\deltabar')\notin X_{\tri}(\rbar)$, we must have $w_{x_{\Rcal',\pdR}}\npreceq w$ by Theorem \ref{companionptsconjecture}. But then (since $w'\preceq w$) this implies $w_{x_{\Rcal',\pdR}}\npreceq w'$ and thus $\Ccal_{w'}=0$. 

It remains to check the equality of cycles in Conjecture \ref{BManalytique} for $(r,\deltabar)\in X_{\tri}(\rbar)(L)$ (if $(r,\deltabar)\notin X_{\tri}(\rbar)(L)$ it amounts to $0=0$ by definition of the $\Ccal_{r,\Pi}$). But in that case, defining $w$ as before Lemma \ref{thetaw} (i.e. as we did above for $\deltabar'$ but with $\deltabar$), we have by the same argument as for Corollary \ref{cycleXtri'}:
\begin{equation*}\label{cycleXtritri}
[\Spec\widehat \Ocal_{X_{\tri}(\rbar)_{\deltabar},(r,\deltabar)}] = \sum_{w'\in \Scal}P_{w_0w,w_0w'}(1)\Ccal_{w'}\in {\rm Z}^{[K:\Qp]\frac{n(n+3)}{2}}(\Spec\widehat\Ocal_{\Xfrak_{\rbar},r}).
\end{equation*}
Since the constituant $\Pi_{w'}$ appears in $I_{\deltabar}^{\rm ss}$ with multiplicity $m_{\deltabar,\Pi_{w'}}=P_{w_0w,w_0w'}(1)$ (use again \cite[\S8.4]{HumBGG} and \cite{OrlikStrauch}), this finishes the proof.
\end{proof}

\begin{rem}
{\rm For $r$ as in Theorem \ref{resultBM}, the constituents $\Pi$ such that $m_{\deltabar,\Pi}\ne 0$ for some $\deltabar\in \Tcal_L^n(L)$ are precisely (up to constant twist) the companion constituents associated to $r$ in \cite[\S6]{BreuilAnalytiqueI}.}
\end{rem}

\section{Global applications}\label{global}

Under the usual Taylor-Wiles hypothesis we derive several global consequences of the results of \S\ref{Springer} and \S\ref{localmodel}: classicality of crystalline strictly dominant points on global eigenvarieties, existence of all expected companion constituents in the completed cohomology, existence of singularities on global eigenvarieties.

\subsection{Classicality}\label{mainclass}

We recall our global setting. Then we prove classicality of crystalline strictly dominant points on global eigenvarieties under Taylor-Wiles assumptions.

We start by briefly reviewing the global setting of \cite[\S\S 3.1,3.2]{BHS2} and refer the reader to {\it loc.cit.} for more details. We assume $p>2$ and fix a totally real field $F^+$, we write $q_v$ for the cardinality of the residue field of $F^+$ at a finite place $v$ and we denote by $S_p$ the set of places of $F^+$ dividing $p$ . We fix a totally imaginary quadratic extension $F$ of $F^+$ that splits at all places of $S_p$ and let $\Gcal_F:={\rm Gal}(\overline F/F)$. We fix a unitary group $G$ in $n\geq 2$ variables over $F^+$ such that $G\times_{F^+}F\cong \GL_{n/F}$, $G(F^+\otimes_\Q\mathbb{R})$ is compact and $G$ is quasi-split at each finite place of $F^+$. We fix an isomorphism $i:G\times_{F^+}F\buildrel \sim\over\rightarrow \GL_{n/F}$ and, for each $v\in S_p$, a place $\tilde v$ of $F$ dividing $v$. The isomorphisms $F_v^+\buildrel\sim\over\rightarrow F_{\tilde v}$ and $i$ induce an isomorphism $i_{\tilde v}:\,G(F_v^+)\xrightarrow{\sim}\GL_n(F_{\tilde v})$ for $v\in S_p$. We let $G_v:=G(F^+_v)$ and $G_p:=\prod_{v\in S_p}G(F^+_v)\simeq \prod_{v\in S_p}\GL_n(F_{\tilde v})$. We denote by $K_v$ (resp. $B_v$, resp. $\overline B_v$, resp. $T_v$) the inverse image of $\GL_n(\mathcal{O}_{F_{\tilde{v}}})$ (resp. of the subgroup of upper triangular matrices of $\GL_n(F_{\tilde v})$, resp. of the subgroup of lower triangular matrices of $\GL_n(F_{\tilde v})$, resp. of the subgroup of diagonal matrices of $\GL_n(F_{\tilde v})$) in $G_v$ under $i_{\tilde v}$ and we let $K_p:=\prod_{v\in S_p}K_v$ (resp. $B_p:=\prod_{v\in S_p}B_v$, resp. $\overline B_p:=\prod_{v\in S_p}\overline B_v$, resp. $T_p:=\prod_{v\in S_p}T_v$). We fix a finite extension $L$ of $\Q_p$ large enough to split all $F^+_v$ for $v\in S_p$ and denote by $\lieg$, $\lieb$, $\overline\lieb$ and $\liet$ the base change to $L$ of the respective $\Qp$-Lie algebras of $G_p$, $B_p$, $\overline B_p$, $T_p$ (so for instance $\lieg\simeq \prod_{v\in S_p}({\mathfrak {gl}}_n)^{[F^+_v:\Q]}\simeq ({\mathfrak {gl}}_n)^{[F^+:\Q]}$). We denote by $\widehat{T}_{p,L}$ and $\widehat T_{v,L}$ ($v\in S_p$) the base change from $\Qp$ to $L$ of the rigid analytic spaces over $\Qp$ of continuous characters of respectively $T_p$ and $T_v$. We identify the decomposition subgroup of ${\mathcal G}_F$ at $\tilde v$ with $\Gcal_{F_{\tilde v}}={\rm Gal}(\overline F_{\tilde v}/F_{\tilde v})$.

We fix a tame level $U^p=\prod_{v}U_v\subset G(\Abb_{F^+}^{p\infty})$ where $U_v$ is a compact open subgroup of $G(F_v^+)$ and we denote by $\widehat S(U^p,L)$ the $p$-adic Banach space over $L$ of continuous functions $G(F^+)\backslash G(\Abb_{F^+}^{\infty})/U^p\longrightarrow L$ endowed with the linear continuous unitary action of $G_p$ by right translation on functions. A unit ball is given by the $\Ocal_{L}$-submodule $\widehat S(U^p,\Ocal_{L})$ of continuous functions $G(F^+)\backslash G(\Abb_{F^+}^{\infty})/U^p\longrightarrow \Ocal_{L}$ and the corresponding residual representation is the $k_{L}$-vector space $S(U^p,k_{L})$ of locally constant functions $G(F^+)\backslash G(\Abb_{F^+}^{\infty})/U^p\longrightarrow k_{L}$ (a smooth admissible representation of $G_p$). We also denote by $\widehat S(U^p,L)^{\rm an}\subset \widehat S(U^p,L)$ the very strongly admissible (\cite[Def.0.12]{EmertonJacquetII}) locally $\Q_p$-analytic representation of $G_p$ defined as the $L$-subvector space of $\widehat S(U^p,L)$ of locally $\Q_p$-analytic vectors for the action of $G_p$.

We fix $S$ a finite set of finite places of $F^+$ that split in $F$ containing $S_p$ and the set of finite places $v\nmid p$ (that split in $F$) such that $U_v$ is {\it not} maximal. We can associate to $S$ a commutative spherical Hecke $\Ocal_L$-algebra $\mathbb{T}^S$ which acts on $\widehat S(U^p,L)$, $\widehat S(U^p,L)^{\rm an}$, $\widehat S(U^p,\Ocal_L)$, $S(U^p,k_L)$. We fix $\mathfrak{m}^S$ a maximal ideal of $\mathbb{T}^S$ of residue field $k_L$ (increasing $L$ if necessary) such that $\widehat S(U^p,L)_{\mathfrak{m}^S}\neq 0$, or equivalently $\widehat S(U^p,L)_{\mathfrak{m}^S}^{\rm an}:=(\widehat S(U^p,L)^{\rm an})_{\mathfrak{m}^S}\ne 0$. We denote by $\rhobar=\rhobar_{\mathfrak{m}^S}:\Gcal_F\rightarrow \GL_n(k_L)$ the unique absolutely semi-simple Galois representation associated to $\mathfrak{m}^S$ and assume $\rhobar$ absolutely irreducible. We let $R_{\rhobar,S}$ be the noetherian complete local $\Ocal_L$-algebra of residue field $k_L$ pro-representing the functor of deformations $\rho$ of $\rhobar$ that are unramified outside $S$ and such that $\rho^\vee\circ c\cong \rho\otimes \varepsilon^{n-1}$ where $\rho^\vee$ is the dual of $\rho$ and $c\in {\rm Gal}(F/F^+)$ is the complex conjugation. Then the spaces $\widehat S(U^p,L)_{\mathfrak{m}^S}$ and $\widehat S(U^p,L)_{\mathfrak{m}^S}^{\rm an}$ are natural modules over $R_{\rhobar,S}$.

The continuous dual $(\widehat S(U^p,L)_{\mathfrak{m}^S}^{\rm an})^\vee$ of $\widehat S(U^p,L)_{\mathfrak{m}^S}^{\rm an}$ is a module over the global sections $\Gamma(\Xfrak_{\rhobar,S},\Ocal_{\Xfrak_{\rhobar,S}})$ where $\Xfrak_{\rhobar,S}:=(\Spf R_{\rhobar,S})^{\rig}$  and we denote by $Y(U^p,\rhobar)=Y(U^p,\rhobar,S)$ (forgetting \ $S$ \ in \ the \ notation) \ the \ schematic \ support \ of \ the \ coherent \ $\Ocal_{\Xfrak_{\rhobar,S}\times \widehat T_{p,L}}$-module $(J_{B_p}(\widehat S(U^p,L)^{\rm an}_{\mathfrak{m}^S}))^\vee$ on $\Xfrak_{\rhobar,S}\times \widehat T_{p,L}$ where $J_{B_p}$ is Emerton's locally $\Q_p$-analytic Jacquet functor with respect to the Borel $B_p$ (\cite{EmertonJacquetI}) and $(-)^\vee$ means the continuous dual. This is a reduced rigid analytic variety over $L$ of dimension $n[F^+:\Q]$ which is a closed analytic subset of $\Xfrak_{\rhobar,S}\times \widehat T_{p,L}$ whose points are:
\begin{equation*}
\left\{x=(\rho,\deltabar)\in \Xfrak_{\rhobar,S}\times \widehat T_{p,L}\ {\rm such\ that}\ \Hom_{T_p}\big(\deltabar,J_{B_p}(\widehat S(U^p,L)^{\rm an}_{\mathfrak{m}^S}[\mathfrak{m}_\rho]\otimes_{k(\rho)}k(x))\big)\ne 0\right\}
\end{equation*}
where $\mathfrak{m}_\rho\subset R_{\rhobar,S}[1/p]$ denotes the maximal ideal corresponding to the point $\rho\in \Xfrak_{\rhobar,S}$ (under the identification of the sets underlying $\Xfrak_{\rhobar,S}=(\Spf R_{\rhobar,S})^{\rig}$ and $\Spm R_{\rhobar,S}[1/p]$). If ${U'}^p\subseteq {U}^p$ and $S$ contains $S_p$ and the set of finite places $v\nmid p$ that split in $F$ such that $U'_v$ is not maximal, then a point of $Y(U^p,\rhobar)$ is also in $Y({U'}^p,\rhobar)$.

We let $X_{\rm tri}(\rhobar_p)$ be the product rigid analytic variety $\prod_{v\in S_p}X_{\rm tri}(\rhobar_{\tilde{v}})$ (over $L$) where $\rhobar_{\tilde v}:=\rhobar|_{\Gcal_{F_{\tilde v}}}$ and $X_{\rm tri}(\rhobar_{\tilde{v}})$ is as in \S\ref{begin} (remember we drop $\Box$ everywhere, see {\it loc.cit.}). This is a reduced closed analytic subvariety of $(\Spf R_{\rhobar_p})^{\rig}\times\widehat{T}_{p,L}$ where $R_{\rhobar_p}:=\widehat{\bigotimes}_{v\in S_p}R_{\rhobar_{\tilde{v}}}$ (recall $R_{\rhobar_{\tilde{v}}}$ is defined at the beginning of \S\ref{begin}). Identifying $B_v$ (resp. $T_v$) with the upper triangular (resp. diagonal) matrices of $\GL_n(F_{\tilde v})$ via $i_{\tilde v}$, we let $\delta_{B_v}:=|\cdot|_{F_{\tilde v}}^{n-1}\otimes |\cdot|_{F_{\tilde v}}^{n-3}\otimes \cdots \otimes |\cdot|_{F_{\tilde v}}^{1-n}$ be the modulus character of $B_v$ and define as in \cite[\S2.3]{BHS1} an automorphism $\imath_v:\widehat T_{v,L}\buildrel\sim\over\rightarrow \widehat T_{v,L}$ by:
\begin{equation}\label{iotav}
\imath_v(\delta_1,\dots,\delta_n):=\delta_{B_v}\cdot(\delta_1,\dots,\delta_i\varepsilon^{i-1},\dots,\delta_n\varepsilon^{n-1}).
\end{equation}
Then the morphism of rigid spaces:
\begin{eqnarray*}
\Xfrak_{\rhobar,S}\times \widehat T_{p,L}&\longrightarrow &(\Spf R_{\rhobar_p})^{\rig}\times\widehat{T}_{p,L}\\
\nonumber \big(\rho,(\deltabar_v)_{v\in S_p}\big)=\big(\rho,(\delta_{v,1},\dots,\delta_{v,n})_{v\in S_p}\big)&\longmapsto & \big((\rho_{\tilde v})_{v\in S_p},(\imath_v^{-1}(\delta_{v,1},\dots,\delta_{v,n}))_{v\in S_p}\big)
\end{eqnarray*}
induces a morphism of reduced rigid spaces over $L$:
\begin{equation}\label{eigenvartotrianguline}
Y(U^p,\rhobar)\longrightarrow X_{\rm tri}(\rhobar_p)=\prod_{v\in S_p}X_{\rm tri}(\rhobar_{\tilde v}).
\end{equation}
We say that $x=(\rho,\deltabar)=(\rho,(\deltabar_v)_{v\in S_p})=(\rho,(\delta_{v,1},\dots,\delta_{v,n})_{v\in S_p})\in Y(U^p,\rhobar)$ is de Rham (resp. crystalline) strictly dominant if $\rho_{\tilde v}:=\rho|_{\Gcal_{F_{\tilde v}}}$ is de Rham (resp. crystalline) and if the image of $x$ in each $X_{\rm tri}(\rhobar_{\tilde v})$ via (\ref{eigenvartotrianguline}) is strictly dominant in the sense of \cite[\S2.1]{BHS2}. Equivalently $\wt_\tau(\delta_{v,i})\geq \wt_\tau(\delta_{v,i+1})$ for all $i\in \{1,\dots,n-1\}$, $\tau\in \Hom(F_{\tilde v},L)$ and $v\in S_p$ (recall $\wt_\tau(\delta_{v,i})\in \Z$ by \cite[Prop.2.9]{BHS1}).

Let $\deltabar=(\deltabar_v)_{v\in S_p}\in \widehat T_{p,L}$ such that $\wt_\tau(\delta_{v,i})\in \Z$ for all $i$, $\tau$, $v$. Then we can write $\deltabar=\deltabar_{\lambda}\deltabar_{\rm sm}$ in $\widehat T_{p,L}$ where $\lambda=(\lambda_{ v})_{v\in S_p}\in \prod_{v\in S_p}(\Z^n)^{\Hom(F_{\tilde v},L)}$, $\deltabar_\lambda:=\prod_{v\in S_p}z^{\lambda_{ v}}$ (recall $z^{\lambda_{ v}}$ is $z\mapsto\prod_{\tau\in \Hom(F_{\tilde v},L)}\tau(z)^{\lambda_{ v,\tau,i}}$) and $\deltabar_{\rm sm}$ is a smooth character of $T_{p}$ with values in $k(\deltabar)$ (the residue field of the point $\deltabar\in \widehat T_{p,L}$). Following Orlik and Strauch, we define the strongly admissible locally $\Q_p$-analytic representation of $G_p$ over $k(\deltabar)$ (see \cite[\S3.5]{BHS1} for the notation, see also Remark \ref{virage} below):
\begin{equation}\label{FBG}
\Fcal_{\overline B_p}^{G_p}(\deltabar):=\Fcal_{\overline B_p}^{G_p}\big((U(\lieg)\otimes_{U({\overline\lieb})}(-\lambda))^\vee, \deltabar_{\rm sm}\delta_{B_p}^{-1}\big)
\end{equation}
where $\delta_{B_p}:=\prod_{v\in S_p}\delta_{B_v}$ and $-\lambda$ is seen as a character of $\liet$ and by inflation $\overline\lieb\twoheadrightarrow \liet$ as a character of $\overline\lieb$. If $\lambda$ is dominant, that is $\lambda_{v,\tau,i}\geq \lambda_{v,\tau,i+1}$ for all $i$, $\tau$, $v$, we let:
\begin{equation}\label{localg}
{\rm LA}(\deltabar):=L(\lambda)\otimes_L \big({\rm Ind}_{\overline B_p}^{G_p}\deltabar_{\rm \sm}\delta_{B_p}^{-1}\big)^\infty
\end{equation}
where $L(\lambda)$ is the irreducible finite dimensional algebraic representation of $G_p$ over $L$ of highest weight $\lambda$ relative to $B_p$ and $({\rm Ind}_{\overline B_p}^{G_p}-)^\infty$ is the usual smooth principal series. It is a locally $\Qp$-algebraic representation of $G_p$ over $k(\deltabar)$ which coincides with the maximal locally $\Q_p$-algebraic quotient of $\Fcal_{\overline B_p}^{G_p}(\deltabar)$ and also with the maximal locally $\Qp$-algebraic subobject of $({\rm Ind}_{\overline B_p}^{G_p}\deltabar\delta_{B_p}^{-1})^{\rm an}$. 

Let $x=(\rho,\deltabar)\in  \Xfrak_{\rhobar,S}\times\widehat T_{p,L}$ with $\wt_\tau(\delta_{v,i})\in \Z$ for all $i,\tau,v$, the representation (\ref{FBG}) allows us to reformulate the condition $x\in Y(U^p,\rhobar)$ as (see \cite[Th.4.3]{BreuilAnalytiqueII}):
\begin{multline}\label{adj}
\Hom_{T_p}\big(\deltabar,J_{B_p}(\widehat S(U^p,L)^{\rm an}_{\mathfrak{m}^S}[\mathfrak{m}_\rho]\otimes_{k(\rho)}k(x))\big)\\
\simeq \Hom_{G_p}\big(\Fcal_{\overline B_p}^{G_p}(\deltabar),\widehat S(U^p,L)_{\mathfrak{m}^S}^{\rm an}[\mathfrak{m}_{\rho}]\otimes_{k(\rho)}k(x)\big)\ne 0.
\end{multline}
A point $x=(\rho,\deltabar)\in Y(U^p,\rhobar)$ which is de Rham strictly dominant is called \emph{classical} if there exists a nonzero continuous $G_p$-equivariant morphism in the right hand side of (\ref{adj}) that factors through the locally $\Q_p$-algebraic quotient ${\rm LA}(\deltabar)$ of $\Fcal_{\overline B_p}^{G_p}(\deltabar)$. Equivalently $(\rho,\deltabar)$ is classical if $\Hom_{G_p}({\rm LA}(\deltabar),\widehat S(U^p,L)_{\mathfrak{m}^S}[\mathfrak{m}_{\rho}]\otimes_{k(\rho)}k(x))\ne 0$ i.e. if $\rho$ comes from a classical automorphic representation of $G(\Abb_{F^+})$ (satisfying the properties of \cite[Prop.3.4]{BHS2}). We then have the classicality conjecture.
  
\begin{conj}\label{classiconj}
Let $x=(\rho,\deltabar)\in Y(U^p,\rhobar)$ be a de Rham strictly dominant point. Then $x$ is classical.
\end{conj}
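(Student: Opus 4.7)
The strategy is to reduce to the crystalline generic strictly dominant case handled by Theorem~\ref{1}. Given a de Rham strictly dominant $x=(\rho,\deltabar)\in Y(U^p,\rhobar)$, each local piece $\rho_{\tilde v}$ is de Rham, hence potentially crystalline, so I would choose a finite solvable totally real extension $(F^+)'/F^+$, linearly disjoint from $F$, such that the base changes $\rho|_{\Gcal_{F'_{\tilde v'}}}$ for all $v'\mid \tilde v$ are crystalline. Solvable base change for definite unitary groups (following Labesse) transfers the relevant automorphic data and identifies eigensystems, producing a de Rham strictly dominant point $x'=(\rho',\deltabar')$ on the corresponding eigenvariety $Y((U^p)',\rhobar')$ which is now crystalline at all $p$-adic places. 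Strict dominance is preserved since $\tau$-Hodge--Tate weights only get multiplied by ramification indices on restriction.

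The next step is to bring $x'$ into the range of Theorem~\ref{1} despite the genericity condition $\varphi_{v',i}\varphi_{v',j}^{-1}\notin\{1,q_{v'}\}$. At points where this fails, I would exploit the local companion-point formalism of \S\ref{locomp}: by Theorem~\ref{companionptsconjecture} applied to each $X_{\rm tri}(\rhobar'_{\tilde v'})$, together with the accumulation property proved in \S\ref{further} (using flatness of $\omega'$ at sufficiently regular points, itself a consequence of the local model diagram~(\ref{THEdiagram}) and Proposition~\ref{flatnessofkappa}), a dense set of classical generic noncritical crystalline strictly dominant points accumulate at $x'$. Combining with the Cohen--Macaulay and irreducibility properties of $X_{\rm tri}(\rhobar'_{\tilde v'})$ at $x'$ (Corollary~\ref{irreducible}), one can produce a family of classical automorphic points specializing to $x'$, and a Jacquet-module limiting argument on $\widehat S((U^p)',L)^{\rm an}_{\mathfrak{m}^S}[\mathfrak m_{\rho'}]$ (in the spirit of \cite[Th.3.9]{BHS2}) promotes this to classicality of $x'$ itself.

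Finally, one descends classicality along $(F^+)'/F^+$: the classical automorphic representation produced for $\rho'$ is $\mathrm{Gal}(F'/F)$-invariant up to unramified twist, and cyclic descent provides a classical automorphic representation of $G(\Abb_{F^+})$ witnessing classicality of $x$. The Taylor--Wiles hypotheses (i)--(iv) needed for Theorem~\ref{1} can be arranged by enlarging the base change further to achieve adequacy of $\rhobar|_{\Gcal_{F'(\zeta_p)}}$, a standard device which is orthogonal to the passage from de Rham to crystalline.

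The main obstacle is the interaction between genericity and base change. Even when $\rho$ is generic at $\tilde v$, the base-changed $\rho'$ at $\tilde v'\mid \tilde v$ typically has Frobenius eigenvalues raised to the power $[F'_{\tilde v'}:F_{\tilde v}]$, and the condition $\varphi_i\varphi_j^{-1}\notin\{1,q_{v'}\}$ can fail drastically on the nose. The accumulation argument above sidesteps this at strictly dominant $x'$, but a genuinely non-generic de Rham $x$ with non-classical refinement forces one to work at points where Corollary~\ref{irreducible} does not apply and the local model of~(\ref{THEdiagram}) breaks down; extending the local geometric description of $X_{\rm tri}(\rhobar_{\tilde v})$ to such confluence points (where some $\delta_i\delta_j^{-1}$ lies in the critical locus $\Tcal_L\setminus\Tcal_0$) is precisely where the present methods meet their limit, and is what keeps the statement a conjecture rather than a theorem.
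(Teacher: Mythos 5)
The statement you are trying to prove is stated in the paper as Conjecture \ref{classiconj}; the paper does not prove it. What the paper establishes is only the special case of Theorem \ref{classicality}: crystalline strictly dominant points satisfying the genericity condition $\varphi_{\tilde v,i}\varphi_{\tilde v,j}^{-1}\notin\{1,q_v\}$, and only under the Taylor--Wiles hypotheses (with Remark \ref{derham2} indicating that the de Rham \emph{generic} case should follow with extra effort). So there is no proof in the paper to compare yours against, and your proposal does not close the gap either --- indeed your last paragraph concedes exactly the point that keeps the statement conjectural, namely that at non-generic points the local model of (\ref{THEdiagram}) and Corollary \ref{irreducible} are unavailable, and with them the irreducibility input that \cite[Th.3.9]{BHS2} (your ``Jacquet-module limiting argument'') requires.

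Beyond that acknowledged gap, the base-change reduction itself has several steps that would fail as written. First, solvable base change produces an automorphic statement, but classicality is a statement about a specific finite-slope overconvergent eigensystem: the paper is careful to distinguish ``$\rho$ is automorphic'' from ``$x$ is classical'' (see the remark following Theorem \ref{1}), and there is no established $p$-adic functoriality identifying $\widehat X_{Y(U^p,\rhobar),x}$ with a completed local ring on the eigenvariety for the base-changed group, nor a mechanism for descending classicality of $x'$ to classicality of $x$. Second, base change typically \emph{destroys} genericity rather than sidestepping it: the Frobenius eigenvalues get raised to the power $[F'_{\tilde v'}:F_{\tilde v}]$ and the ratios can land in $\{1,q_{v'}\}$, so you would be reducing to a case the paper's methods cannot handle. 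Third, enlarging the field makes adequacy of $\rhobar|_{\Gcal_{F'(\zeta_p)}}$ \emph{harder} to arrange, not easier, since the image only shrinks; the Taylor--Wiles hypotheses cannot be ``arranged by enlarging the base change.'' The honest conclusion is that the statement remains a conjecture, proved in the paper only in the crystalline generic strictly dominant case.
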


\begin{rem}\label{virage}
{\rm The careful reader may have noticed that the (generalization of the) results of Orlik-Strauch that we use in \cite{BHS1}, \cite{BHS2} and here are actually only stated in \cite[\S2]{BreuilAnalytiqueI} and \cite[\S\S2,3,4]{BreuilAnalytiqueII} for locally $\Qp$-analytic representations of $G(K)$ over $L$ where $G$ is a split reductive algebraic group over $K$ and $L$ splits $K$. But looking at the form of the group $G_p$, we see that we rather need (in \cite{BHS1}, \cite{BHS2} and here) locally $\Qp$-analytic representations of groups of the form $G_1(K_1)\times G_2(K_2)$ over $L$ where $G_i$, $i\in \{1,2\}$, is split reductive over $K_i$ and the finite extensions $K_1$, $K_2$ are not necessarily the same. However, assuming that $L$ splits $K_1$ and $K_2$, an examination of the proofs of the results of \cite[\S2]{BreuilAnalytiqueI} and \cite[\S\S2,3,4]{BreuilAnalytiqueII} (and of all the results of Orlik-Strauch and Emerton on which they rely, see {\it loc.cit.}) shows that they all easily extend to the above case.}
\end{rem}

If $x=(\rho,\deltabar)\in \Xfrak_{\rhobar,S}\times \widehat T_{p,L}$ is crystalline, we denote by $(\varphi_{\tilde v,1}\dots,\varphi_{\tilde v,n})\in k(x)^n$ the eigenvalues of $\varphi^{[F_{\tilde v,0}:\Q_p]}$ on $D_{\cris}(\rho_{\tilde v})$.

\begin{theo}\label{classicality}
Assume $F/F^+$ unramified, $U_v$ hyperspecial if $v$ is inert in $F$ and $\rhobar(\Gcal_{F(\!\sqrt[p]{1})})$ adequate (\cite[Def.2.3]{Thorne}). Let $x=(\rho,\deltabar)\in Y(U^p,\rhobar)$ be a crystalline strictly dominant point such that $\varphi_{\tilde v,i}\varphi_{\tilde v,j}^{-1}\notin\{1,q_v\}$ for $i\neq j$ and $v\in S_p$. Then $x$ is classical.
\end{theo}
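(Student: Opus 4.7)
The plan is to reduce Theorem \ref{classicality} to the local result Corollary \ref{irreducible} by means of the patched eigenvariety machinery of \cite{BHS1,BHS2}. First, I would pass from $Y(U^p,\rhobar)$ to the patched eigenvariety $X_p(\rhobar)$ constructed in \cite[\S3.2]{BHS1} (this construction requires precisely hypotheses (i)--(iv)). Recall that $X_p(\rhobar)$ is a union of irreducible components of $\Xfrak_{\rhobar^p}\times\prod_{v\in S_p} X_{\rm tri}(\rhobar_{\tilde v})\times\Ubb^g$, and that $Y(U^p,\rhobar)$ embeds into $X_p(\rhobar)$ in such a way that classicality of a point is controlled by a nonvanishing of the form $\Hom_{G_p}({\rm LA}(\deltabar),\Pi_\infty^{\rm an}[\mfrak_\rho])\neq 0$, where $\Pi_\infty^{\rm an}$ is the patched locally $\Qp$-analytic representation.

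Next I would check that the local genericity hypotheses are satisfied at each place $v\in S_p$. The point $x=(\rho,\deltabar)$ maps under (\ref{eigenvartotrianguline}) to a collection of local points $x_v=(\rho_{\tilde v},\imath_v^{-1}(\deltabar_v))\in X_{\rm tri}(\rhobar_{\tilde v})$, and by Remark \ref{allcompanion} the crystallinity of $\rho_{\tilde v}$ with distinct Hodge-Tate weights (from strict dominance) together with the assumption $\varphi_{\tilde v,i}\varphi_{\tilde v,j}^{-1}\notin\{1,q_v\}$ for $i\neq j$ ensures that $x_v$ satisfies the conditions of Remark \ref{assumptions}. Corollary \ref{irreducible} then asserts that $X_{\rm tri}(\rhobar_{\tilde v})$ is normal (hence locally irreducible) and Cohen-Macaulay at $x_v$ for every $v\in S_p$. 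Taking a product, the ambient space $\Xfrak_{\rhobar^p}\times\prod_v X_{\rm tri}(\rhobar_{\tilde v})\times\Ubb^g$ is locally irreducible and Cohen-Macaulay at the image of any lift $x_\infty$ of $x$; since $X_p(\rhobar)$ is a union of irreducible components of this ambient space, it follows that $X_p(\rhobar)$ itself is locally irreducible and Cohen-Macaulay at $x_\infty$.

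Finally I would invoke \cite[Th.3.9]{BHS2}: this result takes as input the local irreducibility and Cohen-Macaulay property of $X_p(\rhobar)$ at a strictly dominant crystalline generic point $x_\infty$, together with the existence of a sufficiently regular classical crystalline point $x'_\infty$ on the same irreducible component of $X_p(\rhobar)$ (whose existence in turn follows from the accumulation property at $x_\infty$ combined with the classicality theorem for crystalline overconvergent eigenforms of regular weight, as in \cite[Prop.3.27]{BHS1} or \cite{Chenevierfern}), to deduce that $x_\infty$ itself is classical. In \cite{BHS2} this implication was established under the stronger hypothesis that $w_x$ is a product of distinct simple reflections, which guaranteed local smoothness; but an inspection of the proof shows that the essential local input is the local irreducibility of $X_p(\rhobar)$ at $x_\infty$ together with its Cohen-Macaulayness (used to propagate nonvanishing of the relevant Jacquet module via the Cohen-Macaulay patched module $\Mcal_\infty$). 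The main obstacle one has to check carefully is therefore not classicality of $x_\infty$ itself but that this Zariski-density/specialization argument goes through with only irreducibility and Cohen-Macaulayness in place of smoothness; once this is verified, descending the classicality of $x_\infty$ back to $x\in Y(U^p,\rhobar)$ is immediate from the construction of the embedding $Y(U^p,\rhobar)\hookrightarrow X_p(\rhobar)$ and the adjunction (\ref{adj}).
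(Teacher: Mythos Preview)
Your overall strategy is right and matches the paper's: pass to the patched eigenvariety $X_p(\rhobar)$, use Corollary~\ref{irreducible} at each $x_v\in X_{\rm tri}(\rhobar_{\tilde v})$, and then invoke \cite[Th.3.9]{BHS2}. However, you misidentify where the work lies. The result \cite[Th.3.9]{BHS2} is \emph{not} the classicality theorem that was proven under the hypothesis that $w_x$ is a product of distinct simple reflections; that was \cite[Th.1.1]{BHS2}. Rather, \cite[Th.3.9]{BHS2} is a conditional result, already stated and proven in full generality in \cite{BHS2}, whose hypothesis is that a sufficiently small open neighbourhood of the image of $x$ in $X_{\rm tri}(\rhobar_p)$ is contained in the automorphic components $X_{\rm tri}^{\Xfrak^p{\rm -aut}}(\rhobar_p)$. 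No inspection or weakening of its proof is needed.

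What Corollary~\ref{irreducible} buys you is exactly the verification of this hypothesis: since each $X_{\rm tri}(\rhobar_{\tilde v})$ is normal (hence locally irreducible) at $x_v$, there is a \emph{unique} irreducible component $Z_v$ through $x_v$, and therefore a small enough neighbourhood $V_v$ of $x_v$ satisfies $\prod_v V_v\subseteq \prod_v Z_v\subseteq X_{\rm tri}^{\Xfrak^p{\rm -aut}}(\rhobar_p)$. This is precisely the input to \cite[Th.3.9]{BHS2}. The Cohen-Macaulay property you emphasise is not needed here (it is used elsewhere in the paper, e.g.\ for Proposition~\ref{isoBHS1}), and there is no density/specialisation argument to redo. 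You also omit the preliminary reduction to $U^p$ small enough (via the argument following \cite[(3.9)]{BHS2}), which is required for the patching construction.
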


\begin{rem}\label{Remdomcompaionpoint}
{\rm Let $x=(\rho,\deltabar)\in Y(U^p,\rhobar)$ be a point satisfying the assumptions in the theorem, but without assuming that the point is strictly dominant. It follows from \cite[Prop.8.1(ii)]{BreuilAnalytiqueII} (see also \cite[Theorem 5.5]{BHS2}) that there exists a point $x'=(\rho,\deltabar')\in Y(U^p,\rhobar)$ that is strictly dominant, and hence classical by the above theorem. We hence can still deduce that the Galois representation $\rho$ is automorphic (though the point $x$ is not necessarily classical itself). }
\end{rem}
\begin{proof}
By the argument following \cite[(3.9)]{BHS2}, we can assume $U^p$ small enough, i.e.:
\begin{equation}\label{smallenough}
G(F)\cap (hU^pK_ph^{-1})=\{1\}\ \ {\rm for\ all\ }h\in G(\Abb^\infty_{F^+}).
\end{equation}
We now briefly recall the construction of the {\it patched eigenvariety} $X_p(\rhobar)$ of \cite[\S3.2]{BHS1} and \cite[\S3.2]{BHS2} (to which we refer for more details, note that this construction uses the above extra assumptions on $F$, $U^p$ and $\rhobar$). Fix an arbitrary integer $g\geq 1$ and let $R_\infty$ be the maximal reduced and $\Z_p$-flat quotient of $(\widehat{\bigotimes}_{v\in S} R_{\rhobar_{\tilde v}})\dbl x_1\dots,x_g\dbr$. Denote by $\Xfrak_\infty:=(\Spf R_\infty)^{\rig}$ and likewise by $\Xfrak_{\rhobar^p}$ (resp. $\Xfrak_{\rhobar_p}$) the {\it reduced} rigid fiber of $\widehat{\bigotimes}_{v\in S\backslash S_p}R_{\rhobar_{\tilde v}}$ (resp. $\widehat{\bigotimes}_{v\in S_p}R_{\rhobar_{\tilde v}}$). We thus have $\Xfrak_\infty=\Xfrak_{\rhobar^p}\times \Xfrak_{\rhobar_p}\times \Ubb^g$ where $\Ubb:=(\Spf \Ocal_L\dbl y \dbr)^{\rig}$ is the open unit disc over $L$. Then following \cite{CEGGPS} one defines in \cite[\S3.2]{BHS1}, \cite[\S3.2]{BHS2} for a specific value of the integer $g$ a certain continuous $R_\infty$-admissible unitary representation $\Pi_\infty$ of $G_p$ over $L$ and an ideal $\mathfrak{a}$ of $R_\infty$ such that $\Pi_\infty[\mathfrak{a}]\cong \widehat S(U^p,L)_{\mathfrak{m}^S}$. We then define $X_p(\rhobar)$ as the schematic support of the coherent $\Ocal_{\Xfrak_\infty\times \widehat T_{p,L}}$-module $\Mcal_\infty:=(J_{B_p}(\Pi_\infty^{R_\infty-{\rm an}}))^\vee$ on $\Xfrak_\infty\times \widehat T_{p,L}$. This is a reduced rigid analytic variety over $L$ which is a closed analytic subset of $\Xfrak_\infty\times \widehat T_{p,L}$ whose points are:
\begin{equation}\label{pointpatching}
\left\{x=(y,\deltabar)\in \Xfrak_\infty\times \widehat T_{p,L}\ {\rm such\ that}\ \Hom_{T_p}\big(\deltabar,J_{B_p}(\Pi_\infty^{R_\infty-\rm an}[\mathfrak{m}_y]\otimes_{k(y)}k(x))\big)\ne 0\right\}
\end{equation}
where $\mathfrak{m}_y\subset R_\infty[1/p]$ denotes the maximal ideal corresponding to the point $y\in \Xfrak_\infty$ (under the identification of the sets underlying $\Xfrak_\infty$ and $\Spm R_\infty[1/p]$). Moreover $Y(U^p,\rhobar)$ is the reduced Zariski-closed subspace of $X_p(\rhobar)$ underlying the vanishing locus of $\mathfrak{a}\Gamma(\Xfrak_\infty,\Ocal_{\Xfrak_\infty})$. Define $\iota(X_{\rm tri}(\rhobar_p)):=\prod_{v\in S_p}\iota_v(X_{\rm tri}(\rhobar_{\tilde v}))$ where $\iota_v(X_{\rm tri}(\rhobar_{\tilde v}))$ is the image of $X_{\rm tri}(\rhobar_{\tilde v})$ via the automorphism $\id\times \iota_v$ of ${\mathfrak X}_{\rhobar_{\tilde v}}\times \widehat T_{v,L}$ in (\ref{iotav}). For each irreducible component $\Xfrak^p$ of $\Xfrak_{\rhobar^p}$, there is a (possibly empty) union $X_{\rm tri}^{\Xfrak^p \rm-aut}(\rhobar_p)$ of irreducible components of $X_{\rm tri}(\rhobar_p)$ such that we have an isomorphism of closed analytic subsets of $\Xfrak_\infty\times \widehat T_{p,L}$:
\begin{equation}\label{union}
X_p(\rhobar)\simeq \bigcup_{\Xfrak^p} \big(\Xfrak^p\times \iota(X_{\rm tri}^{\Xfrak^p \rm-aut}(\rhobar_p))\times \Ubb^g\big).
\end{equation}
Note that the composition:
$$Y(U^p,\rhobar)\hookrightarrow X_p(\rhobar)\hookrightarrow\Xfrak_{\rhobar^p}\times \iota(X_{\rm tri}(\rhobar_p))\times \Ubb^g\twoheadrightarrow \iota(X_{\rm tri}(\rhobar_p))\buildrel \iota^{-1}\over\longrightarrow X_{\rm tri}(\rhobar_p)$$
is the map (\ref{eigenvartotrianguline}).

Now consider our point $x=(\rho,\deltabar)\in Y(U^p,\rhobar)$ and let $\Xfrak^p\subset \Xfrak_{\rhobar^p}$ be an irreducible component such that $x\in \Xfrak^p\times \iota(X_{\rm tri}^{\Xfrak^p \rm-aut}(\rhobar_p))\times \Ubb^g\subseteq X_p(\rhobar)$ via (\ref{union}). For $v\in S_p$ let $x_v\in X_{\rm tri}(\rhobar_{\tilde v})$ be the image of $x$ via:
$$\Xfrak^p\times \iota(X_{\rm tri}^{\Xfrak^p \rm-aut}(\rhobar_p))\times \Ubb^g\twoheadrightarrow \iota(X_{\rm tri}^{\Xfrak^p \rm-aut}(\rhobar_p))\buildrel \iota^{-1}\over \hookrightarrow X_{\rm tri}(\rhobar_p)\twoheadrightarrow X_{\rm tri}(\rhobar_{\tilde v}).$$
For each $v\in S_p$, by Corollary \ref{irreducible} applied to $X_{\rm tri}(\rhobar_{\tilde v})$ and $x_v$ (which uses the assumptions on $\varphi_{\tilde v,i}$, see Remark \ref{assumptions}) there is a {\it unique} irreducible component $Z_v$ of $X_{\rm tri}(\rhobar_{\tilde v})$ passing through $x_v$. If $Z:=\prod_{v\in S_p}Z_v$, from (\ref{union}) we thus necessarily have $x\in \Xfrak^p\times \iota(Z)\times \Ubb^g\subseteq \Xfrak^p\times \iota(X_{\rm tri}^{\Xfrak^p \rm-aut}(\rhobar_p))\times\Ubb^g$. In particular, for $V_v\subseteq X_{\rm tri}(\rhobar_{\tilde v})$ a sufficiently small open neighbourhood of $x_v$ in $X_{\rm tri}(\rhobar_{\tilde v})$ we have $\prod_{v\in S_p} V_v \subseteq Z\subseteq X_{\rm tri}^{\Xfrak^p\rm-aut}(\rhobar_p)$ and we see that the assumption in \cite[Th.3.9]{BHS2} is satisfied. Hence $x$ is classical by \cite[Th.3.9]{BHS2} (see also \cite[Rem.3.13]{BHS2}).
\end{proof}

\begin{rem}\label{derham2}
{\rm The assumptions on the $\varphi_{\tilde v,i}$ in Theorem \ref{classicality} do not depend on the choice of the place $\tilde v$ above $v$. Moreover, here again as in Remark \ref{derham1}, assuming $F/F^+$ unramified, $U_v$ hyperspecial for $v$ inert in $F$ and $\rhobar(\Gcal_{F(\!\sqrt[p]{1})})$ adequate, a little extra effort should produce classicality of de Rham strictly dominant points $x=(\rho,\deltabar)=(\rho,(\deltabar_v)_{v\in S_p})\in Y(U^p,\rhobar)$ such that $\iota_v^{-1}(\deltabar_v)\in \Tcal_{v,0}^n$ where $\iota_v$ is (\ref{iotav}) and $\Tcal_{v,0}^n$ is defined as in \S\ref{smoothsection} but with the field $F_v^+=F_{\tilde v}$ instead of $K$.}
\end{rem}

\subsection{Representation theoretic preliminaries}\label{preliminaries}

We give here some technical lemmas related to locally analytic representation theory that will be used in the next section.

We keep the notation of \S\ref{mainclass} and set $T_p^0:=T_p\cap K_p=\prod_{v\in S_p}(T_v\cap K_v)$. For a weight $\mu=(\mu_v)_{v\in S_p}\in \prod_{v\in S_p}(\Z^n)^{\Hom(F_{\tilde v},L)}$ denote by $L(\mu)$ (resp. $\overline L(\mu)$) the irreducible object of highest weight $\mu$ in the BGG category $\Ocal$ (resp. $\overline\Ocal$) of $U(\lieg)$-modules with respect to the Borel subalgebra $\lieb$ (resp. $\overline\lieb$) (\cite[\S1.1]{HumBGG}) and for $w\in \prod_{v\in S_p}\Scal_n^{[F_{\tilde v}:\Qp]}$ set $w \cdot \mu:=w(\mu+\rho)-\rho$ where $\rho$ is half the sum of the positive roots of the algebraic group $\prod_{v\in S_p}\Spec L\times_{\Spec{\Qp}}{\rm Res}_{F_{\tilde v}/\Qp}(\GL_{n/F_{\tilde v}})$ with respect to the Borel subgroup of upper triangular matrices. Write $w_0=(w_{0,v})_{v\in S_p}\in \prod_{v\in S_p}\Scal_n^{[F_{\tilde v}:\Qp]}$ for the longest element. If $\epsilonbar\in \widehat T_{p,L}(L)$ is of derivative $\mu$, the theory of Orlik-Strauch \cite{OrlikStrauch} (extended as in Remark \ref{virage}) gives us a locally $\Qp$-analytic representation of $G_p$ over $L$ (with the notation in (\ref{FBG})):
\begin{equation*}
\Fcal_{\overline B_p}^{G_p}\big(\overline L(-\mu)^\vee, \epsilonbar_{\rm sm}\delta_{B_p}^{-1}\big)\simeq \widehat \otimes_{v\in S_p}\Fcal_{\overline B_v}^{G_v}\big(\overline L(-\mu_v)^\vee, \epsilonbar_{\rm sm}\delta_{B_v}^{-1}\big)
\end{equation*}
where the completed tensor product on the right hand side is with respect to the inductive or projective tensor product topology (both coincide on locally convex vector spaces of compact type, see \cite[Prop.1.1.31]{Emertonanalytic} and \cite[Prop.1.1.32(i)]{Emertonanalytic}).

Let $\Pi^{\an}$ be a very strongly admissible locally $\Qp$-analytic representation of $G_p$ over $L$ (\cite[Def.0.12]{EmertonJacquetII}). Let $\lieu$ be the base change to $L$ of the $\Qp$-Lie algebra of the unipotent radical $U_p$ of $B_p$ and $U_{0}$ a compact open subgroup of $U_p$.

Let $M$ be an object of the category $\Ocal$. It follows from \cite[Lem.3.2]{OrlikStrauch} that the action of $\mathfrak{b}$ on $M$ extends uniquely to an algebraic action of $B_p$. We endow the $L$-vector space $\Hom_L(M,\Pi^{\rm{an}})$ with the adjoint action. More precisely, for $b\in B_p$ and $f\in\Hom_L(M,\Pi^{\rm{an}})$ we define $bf\in \Hom_L(M,\Pi^{\rm{an}})$ by the formula $(bf)(m):=bf(b^{-1}m)$ for $m\in M$. The subspace $\Hom_{U(\lieg)}(M,\Pi^{\an})$ is preserved by this action. Namely, for $f\in\Hom_{U(\lieg)}(M,\Pi^{\rm{an}})$, $b\in B_p$, $\mathfrak{x}\in \lieg$ and $m\in M$, we have:
\begin{equation*}
(bf)(\mathfrak{x}m)=bf(b^{-1}\mathfrak{x}m)\\=bf(\Ad(b^{-1})\mathfrak{x}b^{-1}m)=b\Ad(b^{-1})\mathfrak{x}f(b^{-1}m)=\mathfrak{x}(bf)(m)
\end{equation*}
so that $bf\in\Hom_{U(\lieg)}(M,\Pi^{\an})$. In particular, we deduce from this fact that $\lieb$ acts trivially and $B_p$ smoothly on $\Hom_{U(\lieg)}(M,\Pi^{\rm{an}})$.

Denote by $T_p^+\subset T_p$ the multiplicative submonoid of elements $t$ such that $tU_0t^{-1}\subseteq U_0$, then it is straightforward to check that the actions of $U_0$ and $T_p$ on $\Hom_{U(\lieg)}(M,\Pi^{\an})$ are compatible with the relations $tu_0t^{-1}\in U_0$ for $t\in T_p^+$. Hence we can endow $\Hom_{U(\lieg)}(M,\Pi^{\an})^{U_0}$ with the usual action of $T_p^+$ defined by:
\begin{equation}\label{usual}
f\longmapsto t\cdot f:=\delta_{B_p}(t)\!\!\!\sum_{u_0\in U_0/tU_0t^{-1}}\!\!\!u_0tf.
\end{equation}

Let $\epsilonbar\in \widehat T_{p,L}(L)$ of derivative $\mu$ and $\epsilonbar_{\rm sm}:=\epsilonbar \deltabar_{-\mu}$. The characters $\epsilonbar:T_p^+\rightarrow L^\times$ and $\epsilonbar_{\rm sm}$ determine surjections of $L$-algebras $L[T_p^+]\twoheadrightarrow L$ and we denote their respective kernel by $\mathfrak{m}_{\epsilonbar}$ and $\mathfrak{m}_{\epsilonbar_{\rm sm}}$(maximals ideal of the $L$-algebra $L[T_p^+]$). We also set ${\mathfrak m}_{\underline 1}:=\ker(L[T_p^+]\twoheadrightarrow L)$ (resp. $ {\mathfrak m}_{\underline 1,{\rm sm}}:=\ker(L[T_p^+/T_p^0]\twoheadrightarrow L)$ where the surjection is determined by the trivial character of $T_p^+$ (resp. $T_p^+/T_p^0$) and we define for any integer $s\geq 1$ the characters:
$$\underline 1[s]\!:T_p^+\buildrel [\ ]\over\hookrightarrow L[T_p^+]\twoheadrightarrow L[T_p^+]/{\mathfrak m}_{\underline1}^s\ \ {\rm and}\ \ {\underline 1}[s]_{\rm sm}\!:T_p^+\twoheadrightarrow T_p^+/T_p^0\buildrel [\ ]\over\hookrightarrow L[T_p^+/T_p^0]\twoheadrightarrow L[T_p^+/T_p^0]/{\mathfrak m}_{\underline 1,{\rm sm}}^s.$$
The characters $\underline 1[s]$ and ${\underline 1}[s]_{\rm sm}$ can obviously be extended to $T_p$ and we use the same symbol to represent these extensions. Note that $ L[T_p^+]/{\mathfrak m}_{\underline 1}^s$ (resp. $L[T_p^+/T_p^0]/{\mathfrak m}_{\underline 1,{\rm sm}}^s$) is in $\Ccal_L$ and that $\underline {1}[s]_{\rm sm}$ is the maximal smooth quotient of ${\underline 1}[s]$ (which is necessarily unramified).

\begin{lem}\label{Ocomponent}
Let $M$ be an object of the category $\Ocal$ and $V$ a smooth representation of $T_p$ over $L$. There is an isomorphism of $L$-vector spaces:
$$\Hom_{G_p}\big(\Fcal_{\overline B_p}^{G_p}(\Hom(M,L)^{\overline{\lieu}^\infty},V(\delta_{B_p}^{-1})),\Pi^{\an}\big)\simeq \Hom_{T_p^+}(V,\Hom_{U(\lieg)}(M,\Pi^{\an})^{U_0})$$
which is functorial in $M$.
\end{lem}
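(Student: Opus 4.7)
The plan is to prove the lemma by establishing a generalized version of Emerton's locally analytic Frobenius reciprocity and reducing to the case of Verma modules via dévissage.

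First I would verify that both sides of the claimed isomorphism are (contravariantly) functorial in $M$ and send an exact sequence in the category $\mathcal{O}$ to an exact sequence of $L$-vector spaces. On the left, this uses three facts: (i) the duality $M \mapsto \Hom(M,L)^{\overline{\lieu}^\infty}$ is an exact contravariant equivalence between $\mathcal{O}$ and $\overline{\mathcal{O}}$; (ii) the Orlik--Strauch functor $N \mapsto \Fcal_{\overline B_p}^{G_p}(N, V(\delta_{B_p}^{-1}))$ is exact by the main theorem of \cite{OrlikStrauch} (in the form extended to products as in Remark \ref{virage}); and (iii) taking $\Hom_{G_p}(-, \Pi^{\an})$ with $\Pi^{\an}$ very strongly admissible preserves the relevant exactness. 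On the right, exactness of $\Hom_{U(\lieg)}(-, \Pi^{\an})^{U_0}$ on the subcategory of direct sums of Verma modules (which are $U(\lieg)$-free over $U(\lieb)$) gives the corresponding compatibility. Since every object of $\mathcal{O}$ admits a two-step resolution by direct sums of Verma modules, the five-lemma reduces everything to the case $M = M(\mu) := U(\lieg) \otimes_{U(\lieb)} L_\mu$ for some weight $\mu$.

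For $M = M(\mu)$ the dual $\Hom(M,L)^{\overline{\lieu}^\infty}$ is the dual Verma module $\overline M(\mu)$ of $\overline{\mathcal{O}}$, and by the very construction of the Orlik--Strauch functor on (duals of) Verma modules one has a natural isomorphism
\begin{equation*}
\Fcal_{\overline B_p}^{G_p}\bigl(\overline M(\mu),\, V(\delta_{B_p}^{-1})\bigr) \;\simeq\; \bigl(\Ind_{\overline B_p}^{G_p} z^\mu\otimes V\otimes \delta_{B_p}^{-1}\bigr)^{\an},
\end{equation*}
where $V$ is regarded as a smooth representation of $\overline B_p$ via inflation. Emerton's adjunction for locally analytic parabolic induction against very strongly admissible targets (\cite[Th.\ 0.13]{EmertonJacquetII}) then identifies the left-hand side with $\Hom_{T_p}\bigl(z^\mu\otimes V,\, J_{B_p}(\Pi^{\an})\bigr)$. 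On the right-hand side, tensor--hom adjunction gives $\Hom_{U(\lieg)}(M(\mu), \Pi^{\an}) \simeq \{v \in \Pi^{\an} : \lieu\cdot v = 0 \text{ and } \liet \text{ acts on } v \text{ by } \mu\}$. Taking $U_0$-invariants and invoking Emerton's concrete realization of $J_{B_p}(\Pi^{\an})$ as a refined subspace of $(\Pi^{\an})^{U_0}$ equipped with the Hecke action (\ref{usual}) (\cite[Prop.\ 3.4.9]{EmertonJacquetI}), this space identifies $T_p^+$-equivariantly with the $\mu$-isotypic (i.e.\ $\liet$-weight $\mu$) subspace of $J_{B_p}(\Pi^{\an})$ twisted by $z^{-\mu}$, and applying $\Hom_{T_p^+}(V,-)$ yields the same answer as on the left.

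The main technical obstacle is the bookkeeping of normalizations and the verification of $T_p^+$-equivariance: the modulus character $\delta_{B_p}$ appearing in the Hecke formula (\ref{usual}) must cancel exactly against the twist by $\delta_{B_p}^{-1}$ inside $\Fcal_{\overline B_p}^{G_p}$, and the passage from the full Jacquet module $J_{B_p}(\Pi^{\an})$ to its $\mu$-weight subspace must correspond precisely to the conditions $\lieu\cdot v = 0$ and $\liet$ acts by $\mu$ built into $\Hom_{U(\lieg)}(M(\mu),-)$. Once this is verified on Vermas by direct computation, naturality in $M$ and the dévissage above extend the isomorphism to arbitrary $M \in \mathcal{O}$, completing the proof.
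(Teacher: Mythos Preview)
Your approach is genuinely different from the paper's, and the Verma base case contains a gap.

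The paper does not reduce to Verma modules. It applies \cite[Th.4.3]{BreuilAnalytiqueII} directly for arbitrary $M\in\Ocal$ to obtain
\[
\Hom_{G_p}\bigl(\Fcal_{\overline B_p}^{G_p}(\Hom(M,L)^{\overline{\lieu}^\infty},V\delta_{B_p}^{-1}),\Pi^{\an}\bigr)\simeq \Hom_{(\lieg,B_p)}\bigl(M\otimes_L C_c^\infty(U_p,V\delta_{B_p}^{-1}),\Pi^{\an}\bigr),
\]
then uses tensor--hom adjunction to rewrite this as $\Hom_{B_p}\bigl(C_c^\infty(U_p,V\delta_{B_p}^{-1}),\Hom_{U(\lieg)}(M,\Pi^{\an})\bigr)$, and finally adapts \cite[Th.3.5.6]{EmertonJacquetI} to identify the latter with $\Hom_{T_p^+}(V,\Hom_{U(\lieg)}(M,\Pi^{\an})^{U_0})$. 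The Jacquet module never appears.

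The problem in your argument is the assertion that $\Hom_{U(\lieg)}(M(\mu),\Pi^{\an})^{U_0}$ ``identifies $T_p^+$-equivariantly with the $\mu$-isotypic subspace of $J_{B_p}(\Pi^{\an})$''. This is false in general: the left side equals $((\Pi^{\an})^{U_0})^{\liet=\mu}$, whereas $J_{B_p}(\Pi^{\an})$ is only the \emph{finite-slope part} of $(\Pi^{\an})^{U_0}$, so its $\mu$-weight space is a priori strictly smaller. Compare the proof of Lemma~\ref{verma}, where the passage from $(\Pi^{\an})^{U_0}$ to $J_{B_p}(\Pi^{\an})$ is made only \emph{after} applying $[\mathfrak{m}_{\epsilonbar_{\rm sm}}^s]$, precisely because localisation at a smooth character forces finite slope. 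Consequently Emerton's adjunction \cite[Th.0.13]{EmertonJacquetII}, which lands in $\Hom_{T_p}(-,J_{B_p}(\Pi^{\an}))$, does not directly match the right-hand side of the lemma for arbitrary smooth $V$. Your route could be repaired by showing that for $V$ a smooth $T_p$-representation every $T_p^+$-equivariant map $V\to ((\Pi^{\an})^{U_0})^{\liet=\mu}$ automatically factors through the finite-slope part; this is plausible but is not supplied, and is exactly the content that the paper's use of $C_c^\infty(U_p,-)$ and \cite[Th.3.5.6]{EmertonJacquetI} sidesteps.
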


\begin{proof}
It follows from \cite[Th.4.3]{BreuilAnalytiqueII} and Remark \ref{virage} (we use here the very strongly admissible hypothesis) that there exists a functorial isomorphism:
$$\Hom_{G_p}\big(\Fcal_{\overline B_p}^{G_p}(\Hom(M,L)^{\overline{\lieu}^\infty},V(\delta_{B_p}^{-1})),\Pi^{\an}\big)\simeq \Hom_{(\lieg,B_p)}(M\otimes_L C_c^\infty(U_p,V(\delta_{B_p}^{-1})),\Pi^{\an}).$$
The result comes form the canonical isomorphism:
$$ \Hom_{(\lieg,B_p)}(M\otimes_L C_c^\infty(U_p,V(\delta_{B_p}^{-1})),\Pi^{\an})\simeq\Hom_{B_p}(C_c^\infty(U_p,V(\delta_{B_p}^{-1})),\Hom_{U(\lieg)}(M,\Pi^{\an})).$$
and from the proof of \cite[Th.3.5.6]{EmertonJacquetI} which can be adapted to prove that there is an isomorphism:
$$ \Hom_{B_p}(C_c^\infty(U_p,V(\delta_{B_p}^{-1})),\Hom_{U(\lieg)}(M,\Pi^{\an}))\simeq\Hom_{T_p^+}(V,\Hom_{U(\lieg)}(M,\Pi^{\an})^{U_0}).$$
\end{proof}

\begin{lem}\label{lietorepr}
Let $L(\nu)$ be an irreducible constituant of $U(\lieg)\otimes_{U(\lieb)}\mu$, for any $s\in \Z_{\geq 1}$ we have isomorphisms of $L$-vector spaces:
$$\Hom_{G_p}\big(\Fcal_{\overline B_p}^{G_p}(\overline L(-\nu),\underline 1[s]_{\rm sm}\epsilonbar_{\rm sm}\delta_{B_p}^{-1})),\Pi^{\an}\big)\simeq \Hom_{U(\lieg)}(L(\nu),\Pi^{\an})^{U_0}[\mathfrak{m}_{\epsilonbar_{\rm sm}}^s].$$
\end{lem}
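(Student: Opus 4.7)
The plan is to deduce the identification from Lemma \ref{Ocomponent} applied with $M = L(\nu)$ and $V = \underline 1[s]_{\rm sm}\epsilonbar_{\rm sm}$. Two ingredients are required: an identification of the dual $\Hom_L(L(\nu), L)^{\overline\lieu^\infty}$ with $\overline L(-\nu)$, and a computation of the resulting $T_p^+$-Hom as the generalized eigenspace $X[\mathfrak{m}_{\epsilonbar_{\rm sm}}^s]$, where $X := \Hom_{U(\lieg)}(L(\nu), \Pi^{\an})^{U_0}$.

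For the first ingredient I would argue as follows. The contragredient action on $\Hom_L(M,L)$ places the $-\lambda$-weight space opposite the $\lambda$-weight space of $M$, and $\overline\lieu$ acts on the contragredient by raising weights in the $\lieb$-order. The $\overline\lieu$-locally-finite part is thus the algebraic direct sum $\bigoplus_\lambda M_\lambda^*$, which is an object of $\overline\Ocal$. For $M = L(\nu)$ this module has $\overline\lieb$-highest weight $-\nu$ with one-dimensional highest-weight space, and since the functor $M \mapsto \Hom_L(M,L)^{\overline\lieu^\infty}$ (essentially BGG duality pre-composed with the Chevalley involution) is exact and sends simples to simples, it must be isomorphic to $\overline L(-\nu)$.

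For the second ingredient I first observe that $\epsilonbar_{\rm sm} = \epsilonbar\deltabar_{-\mu}$ has vanishing derivative and that $\underline 1[s]_{\rm sm}$ factors through $T_p^+/T_p^0$, so their product $V$ is a smooth representation of $T_p$, finite-dimensional over $L$ and free of rank one over the local Artinian $L$-algebra $L[T_p^+/T_p^0]/\mathfrak{m}_{\underline 1, {\rm sm}}^s$. Applying Lemma \ref{Ocomponent} then converts the left hand side into $\Hom_{T_p^+}(\underline 1[s]_{\rm sm}\epsilonbar_{\rm sm}, X)$. The crux is that the $T_p^0$-action on $X$, coming from the adjoint action of $B_p$ on $\Hom_{U(\lieg)}(L(\nu),\Pi^{\an})$, is smooth: arguing as in the proof of Lemma \ref{Ocomponent}, every vector of $L(\nu) \in \Ocal$ is $\lieb$-finite with algebraic $B_p$-orbit, and the local analyticity of $\Pi^{\an}$ yields $t\cdot f(m) = f(t\cdot m)$ for $t$ close enough to the identity in $B_p$, whence $(tf)(m) = f(m)$. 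Because $T_p^0$ is a commutative $p$-adic Lie group acting smoothly, its action on $X$ is semisimple, so $X$ decomposes into $T_p^0$-isotypic components; any $v \in X[\mathfrak{m}_{\epsilonbar_{\rm sm}}^s]$ must then be isotypic for the character $\epsilonbar_{\rm sm}|_{T_p^0}$, and conversely any such $v$ determines a $T_p^+$-equivariant map from the cyclic source $V$ by sending its canonical generator to $v$. This gives the desired bijection.

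The principal technical step is the verification of the $T_p^0$-smoothness of the action on $X$. It is morally already contained in the proof of Lemma \ref{Ocomponent} showing that $\lieb$ acts trivially, but one has to combine the $\lieb$-finiteness of every vector in $L(\nu)$ (which forces the algebraic $B_p$-action to coincide with the exponential of the Lie algebra action on finite-dimensional $\lieb$-stable subspaces) with the local analyticity of $\Pi^{\an}$ (which produces the matching exponential on the other side) in order to conclude that a small open neighbourhood of $1 \in B_p$ actually acts trivially on each $f$. After this, the remaining steps are formal linear algebra over the Artinian ring $L[T_p^+/T_p^0]/\mathfrak{m}_{\underline 1, {\rm sm}}^s$.
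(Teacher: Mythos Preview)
Your approach is correct and follows the same route as the paper: both apply Lemma~\ref{Ocomponent} with $M=L(\nu)$ and $V=\underline 1[s]_{\rm sm}\epsilonbar_{\rm sm}$, then identify the resulting $T_p^+$-Hom with the generalized eigenspace. A few remarks comparing the two:

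\medskip
\noindent\textbf{On the first ingredient.} The identification $\Hom_L(L(\nu),L)^{\overline\lieu^\infty}\simeq \overline L(-\nu)$ is a standard restricted-dual computation that the paper uses without comment; your verification is correct.

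\medskip
\noindent\textbf{On the second ingredient.} The paper's proof simply asserts that $\Hom_{T_p^+}(\underline 1[s]_{\rm sm}\epsilonbar_{\rm sm},N)\simeq N[\mathfrak m_{\epsilonbar_{\rm sm}}^s]$ for an $L[T_p^+]$-module $N$. Taken literally for arbitrary $N$ this is false when $s\geq 2$: the annihilator of the cyclic generator of $\underline 1[s]_{\rm sm}\epsilonbar_{\rm sm}$ contains $[t]-\epsilonbar_{\rm sm}(t)$ for $t\in T_p^0$, which does not lie in $\mathfrak m_{\epsilonbar_{\rm sm}}^s$, so one only has an inclusion $\Hom_{T_p^+}(\underline 1[s]_{\rm sm}\epsilonbar_{\rm sm},N)\hookrightarrow N[\mathfrak m_{\epsilonbar_{\rm sm}}^s]$. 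The missing input is exactly what you supply: for the specific $N=\Hom_{U(\lieg)}(L(\nu),\Pi^{\an})^{U_0}$, the $T_p^0$-action is smooth (hence locally factors through a finite abelian quotient, hence semisimple in characteristic~$0$), so any $n\in N[\mathfrak m_{\epsilonbar_{\rm sm}}^s]$ already satisfies $[t]\cdot n=\epsilonbar_{\rm sm}(t)n$ for $t\in T_p^0$. You do not need to re-prove this smoothness from scratch: the paper establishes it in the paragraph immediately preceding Lemma~\ref{Ocomponent} (``$\lieb$ acts trivially and $B_p$ smoothly on $\Hom_{U(\lieg)}(M,\Pi^{\rm an})$''), so you may simply cite that.

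\medskip
\noindent\textbf{Minor point.} Your justification ``$T_p^0$ is a commutative $p$-adic Lie group acting smoothly, hence the action is semisimple'' is slightly off: commutativity of a $p$-adic Lie group does not by itself force semisimplicity. The correct reason is that $T_p^0$ is compact and the action is smooth, so on any vector the action factors through a finite abelian quotient, and representations of finite abelian groups over a field of characteristic~$0$ are semisimple.
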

\begin{proof}
This is a direct consequence of Lemma \ref{Ocomponent} together with the fact that if $N$ is a $L[T_p^+]$-module, then $\Hom_{T_p^+}(\underline 1[s]_{\rm sm}\epsilonbar_{\rm sm},N)\simeq N[\mathfrak{m}_{\epsilonbar_{\rm sm}}^s]$.
\end{proof}

\begin{lem}\label{verma}
For any $s\in \Z_{\geq 1}$ the $L$-vector space $\Hom_{U(\lieg)}(U(\lieg)\otimes_{U(\lieb)}\mu,\Pi^{\an})^{U_0}[\mathfrak{m}^s_{\epsilonbar_{\rm sm}}]$ is finite dimensional and we have an isomorphism of $L$-vector spaces:
$$\Hom_{G_p}\big(\Fcal_{\overline B_p}^{G_p}((U(\lieg)\otimes_{U(\overline\lieb)}-\mu)^\vee,\underline 1[s]_{\rm sm}\epsilonbar_{\rm sm}\delta_{B_p}^{-1}),\Pi^{\an}\big)\simeq \Hom_{U(\lieg)}(U(\lieg)\otimes_{U(\lieb)}\mu,\Pi^{\an})^{U_0}[\mathfrak{m}_{\epsilonbar_{\rm sm}}^s].$$
\end{lem}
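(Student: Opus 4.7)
The plan is to apply Lemma \ref{Ocomponent} with the choices $M := U(\lieg)\otimes_{U(\lieb)}\mu$ (the Verma module of highest weight $\mu$ for $\lieb$) and $V := \underline{1}[s]_{\rm sm}\epsilonbar_{\rm sm}$. This produces an isomorphism
\begin{equation*}
\Hom_{G_p}\!\big(\Fcal_{\overline B_p}^{G_p}(\Hom_L(M,L)^{\overline\lieu^\infty},V(\delta_{B_p}^{-1})),\Pi^{\an}\big)\simeq \Hom_{T_p^+}(V,\,\Hom_{U(\lieg)}(M,\Pi^{\an})^{U_0}),
\end{equation*}
and then the task will be to identify each side with the corresponding side of the statement.

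For the left-hand side, I would verify that $\Hom_L(M,L)^{\overline\lieu^\infty}$ coincides with $(U(\lieg)\otimes_{U(\overline\lieb)}(-\mu))^\vee$ in $\overline\Ocal$. By PBW we have $M\cong U(\overline\lieu)\otimes_L L_\mu$ as an $(\overline\lieu,\liet)$-module, so the $\overline\lieu$-locally finite part of its linear dual decomposes as a direct sum of finite-dimensional weight spaces whose character matches that of the Verma module $U(\lieg)\otimes_{U(\overline\lieb)}(-\mu)$ (which has highest weight $-\mu$ with respect to $\overline\lieb$). A direct calculation using a Chevalley involution then identifies this dual with the BGG dual of that Verma module in $\overline\Ocal$.

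For the right-hand side, exactly as in the proof of Lemma \ref{lietorepr}, $V=\underline{1}[s]_{\rm sm}\epsilonbar_{\rm sm}$ is a cyclic $T_p^+$-module whose canonical generator is annihilated by $\mathfrak{m}_{\epsilonbar_{\rm sm}}^s$, so that $\Hom_{T_p^+}(V,N)\simeq N[\mathfrak{m}_{\epsilonbar_{\rm sm}}^s]$ for every $T_p^+$-module $N$. Applied to $N=\Hom_{U(\lieg)}(M,\Pi^{\an})^{U_0}$, this yields the claimed isomorphism.

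The main obstacle is the finite-dimensionality assertion. By Frobenius reciprocity, $\Hom_{U(\lieg)}(M,\Pi^{\an})^{U_0}$ is the space of $U_0$- and $\lieu$-invariant vectors in $\Pi^{\an}$ on which $\liet$ acts by the weight $\mu$, and the $T_p^+$-action defined by (\ref{usual}) coincides with Emerton's Hecke action. By \cite{EmertonJacquetI} there is a canonical $T_p$-equivariant map (up to the usual $\delta_{B_p}$-twist) from this space into the Jacquet module $J_{B_p}(\Pi^{\an})$, which is essentially admissible as a locally $\Qp$-analytic representation of $T_p$ by \cite[Th.~0.5]{EmertonJacquetII} since $\Pi^{\an}$ is very strongly admissible. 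Essential admissibility implies that the generalized $T_p$-eigenspaces of $J_{B_p}(\Pi^{\an})$ at any character are finite-dimensional; taking the character with algebraic part $\mu$ and smooth part $\epsilonbar_{\rm sm}$ (up to $\delta_{B_p}$) gives the desired finite-dimensionality. The delicate point is checking compatibility of the $T_p$-actions and that the relevant generalized eigenspace in $J_{B_p}(\Pi^{\an})$ really captures the target space, since $\Pi^{\an}$ is not a priori smooth; the essential admissibility framework of \cite{EmertonJacquetII} is designed exactly to handle this.
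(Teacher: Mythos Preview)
Your proposal is correct and follows essentially the same approach as the paper: both apply Lemma~\ref{Ocomponent} with $M=U(\lieg)\otimes_{U(\lieb)}\mu$ and $V=\underline 1[s]_{\rm sm}\epsilonbar_{\rm sm}$, and both deduce finite-dimensionality by passing to $J_{B_p}(\Pi^{\an})$ and invoking essential admissibility. The only differences are organizational: the paper first rewrites the right-hand side as $\Hom_{U(\liet)}(\mu,J_{B_p}(\Pi^{\an}))[\mathfrak{m}_{\epsilonbar_{\rm sm}}^s]$ (using that $U_0$-invariance forces $\lieu$-invariance, and then \cite[Prop.~3.2.12]{EmertonJacquetI}), cites \cite[Prop.~4.2.33]{EmertonJacquetI} for finite-dimensionality, and only then invokes Lemma~\ref{Ocomponent}; whereas you apply Lemma~\ref{Ocomponent} first and treat finite-dimensionality separately. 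Your explicit discussion of the identification $\Hom_L(M,L)^{\overline\lieu^\infty}\simeq (U(\lieg)\otimes_{U(\overline\lieb)}(-\mu))^\vee$ is left implicit in the paper.
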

\begin{proof}
We have:
\begin{multline*}
\Hom_{U(\lieg)}(U(\lieg)\otimes_{U(\lieb)}\mu,\Pi^{\an})^{U_0}[\mathfrak{m}_{\epsilonbar_{\rm sm}}^s]\simeq \Hom_{U(\liet)}(\mu,\Pi^{\an})^{U_0}[\mathfrak{m}_{\epsilonbar_{\rm sm}}^s]\simeq \Hom_{U(\liet)}(\mu,(\Pi^{\an})^{U_0})[\mathfrak{m}_{\epsilonbar_{\rm sm}}^s]\\
\simeq \Hom_{U(\liet)}(\mu,J_{B_p}(\Pi^{\an}))[\mathfrak{m}_{\epsilonbar_{\rm sm}}^s]
\end{multline*}
where the last isomorphism follows as in the proof of \cite[Prop.3.2.12]{EmertonJacquetI}. This shows the first part of the statement since the last term is finite dimensional by the proof of \cite[Prop.4.2.33]{EmertonJacquetI}. Now we have:
\begin{multline*}
\Hom_{U(\liet)}(\mu,J_{B_p}(\Pi^{\an}))[\mathfrak{m}_{\epsilonbar_{\rm sm}}^s]\simeq (J_{B_p}(\Pi^{\an})\otimes \epsilonbar^{-1})[{\mathfrak m}_{\underline 1}^s][\liet=0]\simeq \Hom_{T_p^+}(\underline {1}[s]_{\rm sm},J_{B_p}(\Pi^{\an})\otimes \epsilonbar^{-1})\\
\simeq \Hom_{T_p^+}(\underline {1}[s]_{\rm sm}\epsilonbar,J_{B_p}(\Pi^{\an})).
\end{multline*}
The statement follows then from Lemma \ref{Ocomponent}.
\end{proof}

Note that the case $s=1$ of Lemma \ref{verma} gives in particular:
$$\Hom_{G_p}(\Fcal_{\overline B_p}^{G_p}(\epsilonbar),\Pi^{\an})\simeq \Hom_{U(\lieg)}(U(\lieg)\otimes_{U(\lieb)}\mu,\Pi^{\an})^{U_0}[\mathfrak{m}_{\epsilonbar_{\rm sm}}]$$
where $\Fcal_{\overline B_p}^{G_p}(\epsilonbar)$ is as in (\ref{FBG}).

\begin{lem}\label{Ocomponentfinite}
For any $s\in \Z_{\geq 1}$ the $L$-vector space $\Hom_{U(\lieg)}(U(\lieg)\otimes_{U(\lieb)}\mu,\Pi^{\an})^{U_0}[\mathfrak{m}^s_{\epsilonbar_{\rm sm}}]$ is finite dimensional
\end{lem}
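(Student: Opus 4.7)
The plan is to observe that the stated finite-dimensionality has already been established in the course of proving Lemma \ref{verma}, and simply to extract that argument as a standalone statement. First I would apply Frobenius reciprocity for the inclusion $\lieb \subseteq \lieg$, which identifies
\[
\Hom_{U(\lieg)}(U(\lieg)\otimes_{U(\lieb)}\mu,\Pi^{\an}) \simeq \Hom_{U(\lieb)}(\mu, \Pi^{\an}).
\]
Taking $U_0$-invariants and using that $\lieu$ acts trivially on the one-dimensional $\lieb$-module $\mu$, this lands in $\Hom_{U(\liet)}(\mu, (\Pi^{\an})^{U_0})$.

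Next I would invoke the variant of \cite[Prop.3.2.12]{EmertonJacquetI} already used in the proof of Lemma \ref{verma} to replace $(\Pi^{\an})^{U_0}$ by the Jacquet module, yielding a $T_p^+$-equivariant identification with $\Hom_{U(\liet)}(\mu, J_{B_p}(\Pi^{\an}))$. Under this identification, passing to the $\mathfrak{m}_{\epsilonbar_{\rm sm}}^s$-torsion translates into the generalized $(\epsilonbar, s)$-eigenspace
\[
\Hom_{T_p^+}\bigl(\underline{1}[s]_{\rm sm}\epsilonbar,\, J_{B_p}(\Pi^{\an})\bigr),
\]
where one must check that the $T_p^+$-action \eqref{usual} on the Hom-space matches the Hecke action on the Jacquet module; this is precisely the content of the construction recalled just before Lemma \ref{Ocomponent}.

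The heart of the matter is the finite-dimensionality of this generalized eigenspace. For this I would cite Emerton's essential admissibility theorem: since $\Pi^{\an}$ is very strongly admissible, \cite[Prop.4.2.33]{EmertonJacquetI} (equivalently \cite[Th.0.5]{EmertonJacquetII}) implies that $J_{B_p}(\Pi^{\an})$ is essentially admissible as a locally $\Qp$-analytic representation of $T_p$. In an essentially admissible representation every generalized eigenspace for a continuous character of $T_p$ is finite dimensional over $L$, which gives the claim. No new difficulty arises beyond the compatibility of actions mentioned above, so the proof amounts to stringing together these already-available inputs.
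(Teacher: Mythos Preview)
Your proof is correct for the statement as literally written, but observe that this statement is verbatim the first assertion of Lemma \ref{verma}, so your argument is essentially a repetition of the proof already given there. The paper's own proof, by contrast, is visibly aimed at a more general claim: that $\Hom_{U(\lieg)}(M,\Pi^{\an})^{U_0}[\mathfrak{m}^s_{\epsilonbar_{\rm sm}}]$ is finite dimensional for \emph{every} object $M$ of the category $\Ocal$ (the statement as printed is presumably a slip, with ``$U(\lieg)\otimes_{U(\lieb)}\mu$'' standing in for a general $M$). The paper invokes Lemma \ref{verma} for the Verma case, then uses the left exactness of the functor $M\mapsto \Hom_{U(\lieg)}(M,\Pi^{\an})^{U_0}[\mathfrak{m}_{\epsilonbar_{\rm sm}}^s]$ together with the facts that every simple object of $\Ocal$ is a quotient of a Verma module and that objects of $\Ocal$ have finite length. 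Thus the paper's argument is a d\'evissage reducing arbitrary $M$ to the Verma case, whereas your direct approach via Frobenius reciprocity and essential admissibility of the Jacquet module handles only the Verma module and therefore duplicates Lemma \ref{verma} without yielding the intended extension to all of $\Ocal$.
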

\begin{proof}
This \ is \ a \ direct \ consequence \ of \ Lemma \ \ref{verma}, \ the \ left \ exactness \ of \ the \ functor $\Hom_{U(\lieg)}(-,\Pi^{\an})^{U_0}[\mathfrak{m}_{\epsilonbar_{\rm sm}}^s]$, the fact that each simple object of the category $\Ocal$ is a quotient of a Verma module and that each object of $\Ocal$ has finite length.
\end{proof}

Assume now that $\Pi^{\an}$ is such that, the functor $\Hom_{U(\lieg)}(-,\Pi^{\an})$ is exact on the category $\Ocal$, which means that whenever we have a short exact sequence $0\rightarrow M_1\rightarrow M_2 \rightarrow M_3\rightarrow 0$ in $\Ocal$ we also have a short exact sequence of $L$-vector spaces:
\begin{equation}\label{exactlie}
0\rightarrow \Hom_{U(\lieg)}(M_3,\Pi^{\an})\rightarrow \Hom_{U(\lieg)}(M_2,\Pi^{\an})\rightarrow \Hom_{U(\lieg)}(M_1,\Pi^{\an})\rightarrow 0.
\end{equation}

The hypothesis (\ref{exactlie}) occurs in the following important case. 

\begin{lem}\label{lieexact}
Assume that the continous dual $\Pi'$ is a finite projective $\Ocal_L[[K_p]][1/p]$-module. Then the functor $M\mapsto \Hom_{U(\lieg)}(M,\Pi^{\rm an})$ is exact on the category of finite type $U(\lieg)$-modules.
\end{lem}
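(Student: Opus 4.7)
The plan is to dualize the problem, reducing the exactness of $\Hom_{U(\lieg)}(-,\Pi^{\rm an})$ on finite type $U(\lieg)$-modules to the flatness of the continuous dual of $\Pi^{\an}$ as a module over $U(\lieg)$. More precisely: I first reduce to a compact open uniform pro-$p$ subgroup $K_p^0\subseteq K_p$. Since $K_p^0$ has finite index in $K_p$, the Iwasawa-type algebra $\Ocal_L[[K_p]]$ is finite free over $\Ocal_L[[K_p^0]]$, so the hypothesis gives that $\Pi'$ is still a finite projective $\Lambda$-module, where $\Lambda:=\Ocal_L[[K_p^0]][1/p]$. Let $D:=D(K_p^0,L)$ denote the locally $L$-analytic distribution algebra; by Emerton's identification of locally analytic vectors together with Schneider--Teitelbaum's faithful flatness of $D$ over $\Lambda$, I then get that $(\Pi^{\an})'\simeq D\otimes_{\Lambda}\Pi'$ is a finite projective $D$-module.

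Next, I would invoke the result of Kohlhaase (extending work of Schneider--Teitelbaum) that $D$ is flat as a right $U(\lieg_L)$-module. Combining this with the finite projectivity of $(\Pi^{\an})'$ over $D$ yields that $(\Pi^{\an})'$ is flat as a right $U(\lieg)$-module. The core idea is then to use the continuous duality pairing between $\Pi^{\an}$ and $(\Pi^{\an})'$: for any finite type $U(\lieg)$-module $M$, standard adjunction produces an identification
\[
\Hom_{U(\lieg)}(M,\Pi^{\an})\simeq \Hom_L^{\rm cont}\bigl((\Pi^{\an})'\otimes_{U(\lieg)}M,\,L\bigr),
\]
where the action of $U(\lieg)$ on $(\Pi^{\an})'$ is the contragredient one. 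Since $M$ is finite type over $U(\lieg)$ and $(\Pi^{\an})'$ is finite over $D$, the tensor product $(\Pi^{\an})'\otimes_{U(\lieg)}M$ is a finitely generated $D$-module, hence coadmissible.

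Finally, I would apply the Schneider--Teitelbaum anti-equivalence between coadmissible $D$-modules and admissible locally analytic representations, which implies in particular that continuous duality is exact on the category of coadmissible $D$-modules. Combined with the flatness of $(\Pi^{\an})'$ over $U(\lieg)$ established above, the composite functor $M\mapsto \Hom_L^{\rm cont}((\Pi^{\an})'\otimes_{U(\lieg)}M,L)$ is exact on finite type $U(\lieg)$-modules, which is exactly the desired conclusion.

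The main obstacle will be justifying the duality identification in the middle step with the appropriate topologies: one must carefully match the strong topology on $(\Pi^{\an})'$ (as a coadmissible $D$-module) with the locally analytic topology on $\Pi^{\an}$ and verify that the adjunction is valid for arbitrary finite type $U(\lieg)$-modules (not just those in category $\Ocal$). A secondary delicate point is invoking the flatness of $D(K_p^0,L)$ over $U(\lieg_L)$, which requires the pro-$p$ and uniformity assumptions on $K_p^0$; nevertheless, all the necessary input is available in the literature (Emerton, Schneider--Teitelbaum, Kohlhaase), and no truly new structural result is needed.
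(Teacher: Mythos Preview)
Your approach has a genuine gap in the claim that $(\Pi^{\an})'\otimes_{U(\lieg)}M$ is a finitely generated $D$-module (and hence coadmissible). The tensor product does not carry a natural $D$-module structure: the right $U(\lieg)$-action on $(\Pi^{\an})'$ that you use to form the tensor is the contragredient of the $\lieg$-action on $\Pi^{\an}$, which arises from the left $D$-module structure on $(\Pi^{\an})'$ via the anti-involution; it therefore does \emph{not} commute with that left $D$-action, so $(\Pi^{\an})'$ is not a $(D,U(\lieg))$-bimodule in the required sense. Without the coadmissible structure you cannot invoke the Schneider--Teitelbaum anti-equivalence, and the exactness of continuous duality on the resulting short exact sequences --- precisely the topological point you flag as delicate --- remains unestablished. (The duality identification $\Hom_{U(\lieg)}(M,\Pi^{\an})\simeq\bigl((\Pi^{\an})'\otimes_{U(\lieg)}M\bigr)'$ itself can be checked from a finite presentation of $M$ and reflexivity of $\Pi^{\an}$, but this alone does not give exactness of the composite functor: one still needs the maps between the tensor products to be strict, which you have not shown.)

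The paper's proof circumvents this by working at each Banach radius $r$: one writes $\Pi^{\an}=\varinjlim_r\Pi_r$ and, since $M$ is finitely generated, $\Hom_{U(\lieg)}(M,\Pi^{\an})=\varinjlim_r\Hom_{U_r(\lieg)}(U_r(\lieg)\otimes_{U(\lieg)}M,\Pi_r)$. One then uses flatness of $U_r(\lieg)$ over $U(\lieg)$ (from \cite{SS}) together with the fact that $(\Pi_r)'$ is finite projective over the \emph{noetherian Banach} algebra $U_r(\lieg)$; at each level $r$ all modules are finitely generated over a noetherian Banach algebra and the exactness is elementary, while the direct limit over $r$ preserves exactness. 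Your argument is morally the inverse-limit version of this, but attempting to work directly at the Fr\'echet level reintroduces exactly the strictness issues that the passage to Banach completions was designed to avoid.
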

\begin{proof}
Let $M$ be a finite type $U(\lieg)$-module. Arguing as in the proof of \cite[Lem.5.1]{BHS2} and using that $M$ is of finite type, we have:
$$\Hom_{U(\lieg)}(M,\Pi^{\rm an})=\varinjlim_{r\rightarrow 1}\Hom_{U(\lieg)}(M,\Pi_r)\simeq \varinjlim_{r\rightarrow 1}\Hom_{U_r(\lieg)}(U_r(\lieg)\otimes_{U(\lieg)}M,\Pi_r).$$
Moreover it follows from the proof of \cite[Prop.4.8]{SS} that the functor $M\mapsto U_r(\lieg)\otimes_{U(\lieg)}M$ is exact for a sequence of rationals $r\in p^{\Q}$ converging towards $1$. By exactitude of $\varinjlim_r$ it is thus enough to prove that the functor $M_r\mapsto \Hom_{U_r(\lieg)}(M_r,\Pi_r)$ is exact (for such $r$) on the category of finite type $U_r(\lieg)$-modules. This is exactly the same argument as in the end of the proof of \cite[Lem.5.1]{BHS2}.
\end{proof}

We now assume moreover that $\Pi^{\an}$ is the locally $\Qp$-analytic vectors of some continuous admissible representation $\Pi$ of $G_p$ over $L$ and satisfies property \eqref{exactlie}. If $V$ is an $L[T_p^+]$-module, let $V[\mathfrak{m}_{\epsilonbar_{\rm sm}}^\infty]:=\cup_{s\geq 1}V[\mathfrak{m}^s_{\epsilonbar_{\rm sm}}]$.

\begin{lem}\label{exactoubli}
The functor $\Hom_{U(\lieg)}(-,\Pi^{\an})^{U_0}[\mathfrak{m}_{\epsilonbar_{\rm sm}}^\infty]$ is exact on the category $\Ocal$.
\end{lem}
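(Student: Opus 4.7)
The proof decomposes the functor $\Hom_{U(\lieg)}(-,\Pi^{\an})^{U_0}[\mathfrak{m}_{\epsilonbar_{\rm sm}}^\infty]$ as the successive composition of three functors, each of which I treat separately. Given a short exact sequence $0 \to M_1 \to M_2 \to M_3 \to 0$ in $\Ocal$, the hypothesis \eqref{exactlie} at once provides the (contravariant) exactness of $\Hom_{U(\lieg)}(-,\Pi^{\an})$. Next, since $\lieb$ acts trivially on $\Hom_{U(\lieg)}(M,\Pi^{\an})$ for every $M \in \Ocal$ (as recalled just before Lemma \ref{Ocomponent}), the algebraic $B_p$-action of \cite[Lem.3.2]{OrlikStrauch} restricts to a smooth action of the compact $p$-adic group $U_0 \subset U_p$, and invariants under a compact $p$-adic group are exact on short exact sequences of smooth $L$-representations (via the averaging projector on a finite quotient). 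Thus $M \mapsto \Hom_{U(\lieg)}(M,\Pi^{\an})^{U_0}$ is itself an exact contravariant functor on $\Ocal$.

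For the third piece, left exactness of each $[\mathfrak{m}_{\epsilonbar_{\rm sm}}^s]$ together with exactness of $\varinjlim_s$ already give left exactness of $[\mathfrak{m}_{\epsilonbar_{\rm sm}}^\infty]$, so the whole problem reduces to right exactness. My plan is to use the adjunction of Lemma \ref{Ocomponent}, specialized to $V = \underline 1[s]_{\rm sm}\epsilonbar_{\rm sm}$, which yields a functorial isomorphism
$$\Hom_{U(\lieg)}(M,\Pi^{\an})^{U_0}[\mathfrak{m}_{\epsilonbar_{\rm sm}}^s] \simeq \Hom_{G_p}\big(\Fcal_{\overline B_p}^{G_p}(\Hom(M,L)^{\overline\lieu^\infty},\underline 1[s]_{\rm sm}\epsilonbar_{\rm sm}\delta_{B_p}^{-1}),\Pi^{\an}\big)$$
in $M$. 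The BGG duality $M \mapsto \Hom(M,L)^{\overline\lieu^\infty}$ is an exact contravariant equivalence between $\Ocal$ and $\overline\Ocal^{\mathrm{op}}$, and the Orlik--Strauch functor $\Fcal_{\overline B_p}^{G_p}$ is exact in its first argument, so our sequence in $\Ocal$ produces, for each $s$, a short exact sequence of locally analytic $G_p$-representations to which we wish to apply $\Hom_{G_p}(-,\Pi^{\an})$ and then pass to $\varinjlim_s$.

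The hard part is precisely this last right exactness in the colimit, which amounts to an $\Ext^1$-vanishing phenomenon. My strategy is to exploit that $\Pi$ is admissible continuous, so by Emerton's theorem $J_{B_p}(\Pi^{\an})$ is essentially admissible and its continuous dual is a coherent sheaf $\Fcal$ on $\widehat T_{p,L}$. Tracing through the identifications in the proof of Lemma \ref{verma}, the operation $V \mapsto V[\mathfrak{m}_{\epsilonbar_{\rm sm}}^\infty]$ on the relevant $\liet$-generalized eigenspaces of $J_{B_p}(\Pi^{\an})$ corresponds dually to localizing $\Fcal$ at the point of $\widehat T_{p,L}$ whose smooth part is $\epsilonbar_{\rm sm}$, and localization of coherent sheaves at a point is exact. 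This handles the Verma-module case directly, and the general case reduces to it by induction on the length of $M \in \Ocal$, using the exactness of the other pieces of the functor together with the finite-dimensionality supplied by Lemma \ref{Ocomponentfinite} to ensure that all limits and extensions behave well.
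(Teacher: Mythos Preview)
Your first two steps---exactness of $\Hom_{U(\lieg)}(-,\Pi^{\an})$ by hypothesis \eqref{exactlie}, and exactness of $(-)^{U_0}$ by smoothness of the $U_0$-action---are correct and coincide with the paper's argument. The gap is in your treatment of right exactness of $[\mathfrak{m}_{\epsilonbar_{\rm sm}}^\infty]$.

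Your localization argument does not apply as stated. The identification of Lemma~\ref{verma} sends $\Hom_{U(\lieg)}(M(\mu),\Pi^{\an})^{U_0}[\mathfrak{m}_{\epsilonbar_{\rm sm}}^s]$ to the $\mu$-weight part of $J_{B_p}(\Pi^{\an})$, i.e.\ dually to the fiber of $\Fcal$ over the weight-$\mu$ locus in $\widehat T_{p,L}$. But in a short exact sequence in $\Ocal$ the terms carry \emph{different} highest weights, so the induced map relates fibers of $\Fcal$ over \emph{different} points of $\widehat T_{p,L}$; it is not the localization of a single coherent sheaf at a single point, and ``localization is exact'' says nothing about such a map. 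Your reduction ``the general case follows by induction on the length of $M$'' is likewise not a reduction: even when $M_2=M(\mu)$ is Verma and $M_1=M(\nu)$ is a Verma submodule, you have not explained why the surjection $\Hom^{U_0}(M(\mu))\twoheadrightarrow \Hom^{U_0}(M(\nu))$ stays surjective after $[\mathfrak{m}_{\epsilonbar_{\rm sm}}^\infty]$, and finite-dimensionality alone (Lemma~\ref{Ocomponentfinite}) does not force this.

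The paper supplies the missing mechanism via compact operators. Using the extra assumption that $\Pi^{\an}$ consists of the locally analytic vectors of an admissible continuous $\Pi$, one shows (as in \cite[(5.10)]{BHS2}) that for a Verma module $M(\mu)$ the space $\Hom_{U(\lieg)}(M(\mu),\Pi^{\an})^{U_0}\simeq \Hom_{U(\liet)}(\mu,(\Pi^{\an})^{U_0})$ is an inductive limit of $L[T_p^+]$-submodules on which some fixed $z\in T_p^+$ acts compactly. Since every object of $\Ocal$ is a quotient of a Verma module, and $\Hom_{U(\lieg)}(-,\Pi^{\an})^{U_0}$ is already known to be exact, this compactness property propagates to $\Hom_{U(\lieg)}(M,\Pi^{\an})^{U_0}$ for arbitrary $M\in\Ocal$ (as a $T_p^+$-stable closed subspace of the Verma case). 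Riesz theory for compact operators then gives that the generalized eigenspace for the nonzero value $\epsilonbar_{\rm sm}(z)$ splits off as a direct summand compatibly with all $T_p^+$-equivariant maps, so applying $[\mathfrak{m}_{\epsilonbar_{\rm sm}}^\infty]$ preserves the short exact sequence \eqref{exactU}. This slope-decomposition step is the substantive input your argument is missing.
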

\begin{proof}
Let $0\rightarrow M_1\rightarrow M_2\rightarrow M_3\rightarrow0$ be a short exact sequence in $\Ocal$. By (\ref{exactlie}) and the smoothness of the action of the compact group $U_0$, we have a short exact sequence of $L[T_p^+]$-modules:
\begin{equation}\label{exactU}
0\rightarrow \Hom_{U(\lieg)}(M_3,\Pi^{\an})^{U_0}\rightarrow \Hom_{U(\lieg)}(M_2,\Pi^{\an})^{U_0}\rightarrow \Hom_{U(\lieg)}(M_1,\Pi^{\an})^{U_0}\rightarrow 0.
\end{equation}
By \ the \ argument \ above \ \cite[(5.10)]{BHS2}, for $M_2=U(\lieg)\otimes_{U(\lieb)}\mu$ a Verma module,\ changing $U_0$ \ if \ necessary \ the \ $L[T_p^+]$-module $\Hom_{U(\lieg)}(M_2,\Pi^{\an})^{U_0}\simeq \Hom_{U(\liet)}(\mu,(\Pi^{\an})^{U_0})$ is an inductive limit of $L[T_p^+]$-submodu\-les on which some element $z$ of $T_p^+$ acts via a compact operator (we use here, as in {\it loc.cit.}, the above extra assumption on $\Pi^{\an}$). Using the fact that each object of $\Ocal$ is a quotient of a Verma module, that objects of $\Ocal$ have finite length and the exactness of the functor $\Hom_{U(\lieg)}(-,\Pi^{\an})^{U_0}$ on $\Ocal$, the statement is still true for an arbitrary $M_2$. Since $z$ commutes with $T_p^+$, it follows easily from the theory of compact operators that (\ref{exactU}) remains exact on the generalized eigenspace associated to $\epsilonbar$, i.e. after applying $[\mathfrak{m}_{\epsilonbar_{\rm sm}}^\infty]$.
\end{proof}

Finally, we recall one more statement which is \cite[Lem.10.3]{BGG}.

\begin{lem}\label{weyl}
Let $w\in \prod_{v\in S_p}\Scal_n^{[F_{\tilde v}:\Qp]}$ such that $\lg(w)\leq \lg(w_0)-2$. Then there exist distinct elements $w_i\in \prod_{v\in S_p}\Scal_n^{[F_{\tilde v}:\Qp]}$ for $i\in \{1,2,3\}$ such that $w\preceq w_1\preceq w_3$, $w\preceq w_2\preceq w_3$, $\lg(w_1)=\lg(w_2)=\lg(w)+1$ and $\lg(w_3)=\lg(w)+2$. Moreover $w_1$ and $w_2$ are the only elements satisfying these properties.
\end{lem}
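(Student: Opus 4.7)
The plan is to reduce to two entirely classical properties of the Bruhat order on a Coxeter group, applied to $W:=\prod_{v\in S_p}\Scal_n^{[F_{\tilde v}:\Qp]}$. This group is itself a Coxeter group (a direct product of type-$A$ Coxeter groups) whose length function and Bruhat order decompose compatibly with the product, so any general Coxeter-theoretic statement applies verbatim.

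First I would produce the element $w_3$. Since $\lg(w)\leq \lg(w_0)-2$ we have in particular $w\prec w_0$, so the chain (or subword) property of the Bruhat order, i.e.\ the fact that for any pair $u\prec v$ there exists $u'$ with $u\prec u'\preceq v$ and $\lg(u')=\lg(u)+1$, supplies a saturated chain $w=u_0\prec u_1\prec u_2\preceq w_0$ with $\lg(u_i)=\lg(w)+i$. (At the first step we use $w\prec w_0$; at the second step we use $u_1\prec w_0$, which follows from $\lg(u_1)=\lg(w)+1\leq \lg(w_0)-1$.) Setting $w_3:=u_2$ one has $w\preceq w_3$ with $\lg(w_3)=\lg(w)+2$.

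Next, to produce $w_1, w_2$ and at the same time establish uniqueness, I would invoke the diamond property for the Bruhat order recorded in \cite[Lem.10.3]{BGG}: in any Coxeter group, whenever $u\prec v$ with $\lg(v)-\lg(u)=2$, the open interval $\{z:u\prec z\prec v\}$ contains exactly two elements, necessarily both of length $\lg(u)+1$. Applied to the pair $(w,w_3)$ just constructed, this furnishes a unique pair $\{w_1,w_2\}$ with $w\preceq w_i\preceq w_3$ and $\lg(w_i)=\lg(w)+1$; distinctness of $w_1$ and $w_2$ from each other is built into the diamond lemma (there are exactly two such elements), and from $w$ and $w_3$ by the strict length inequalities. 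The ``moreover'' part of the statement is then just the uniqueness half of the diamond lemma.

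There is no substantial obstacle: both ingredients are entirely standard Coxeter combinatorics and they survive the passage from an irreducible Coxeter group to the product $W$ without modification. The only care needed is the initial verification that $\lg(w)\leq \lg(w_0)-2$ is enough to iterate the chain property twice, which is immediate.
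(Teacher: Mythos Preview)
Your proposal is correct and matches the paper's approach: the paper does not give a proof but simply records the lemma as \cite[Lem.10.3]{BGG}, which is precisely the diamond property you invoke, and the preliminary construction of $w_3$ via the chain property is the standard reduction that makes this citation apply.
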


\subsection{Companion constituents}\label{companionconst}

We recall the statement of the socle conjecture of \cite[\S\S 5,6]{BreuilAnalytiqueII} in the crystabelline case and prove it in the crystalline case under (almost) the same assumptions as those of Theorem \ref{classicality}.

We keep the notation of \S\ref{mainclass} and \S\ref{preliminaries}, in particular $p>2$, $G$ is quasi-split at finite places and we fix $U^p$, $S$ and $\rhobar$ as in {\it loc.cit.}. We fix a point $\rho\in \Xfrak_{\rhobar,S}$ such that there exists a {\it classical} $x\in Y(U^p,\rhobar)$ of the form $x=(\rho,\deltabar)$ for some $\deltabar\in \widehat T_{p,L}$. Equivalently by \cite[Prop.3.4]{BHS2} the Galois representation $\rho$ is associated to an automorphic form $\pi=\pi_\infty\otimes_{\C}\pi_f$ of $G(\Abb_{F^+})$ such that $\pi_f^{U^p}$ (tensored by the correct locally $\Qp$-algebraic representation of $G_p$) occurs in the locally $\Qp$-algebraic vectors of $\widehat S(U^p,L)_{\mathfrak{m}^S}$. We denote by $h_{\tilde v,\tau,1}<\cdots < h_{\tilde v,\tau,n}$ the Hodge-Tate weights of $\rho_{\tilde v}$ for the embedding $\tau\in \Hom(F_{\tilde v},L)$ (they are all distinct) and set ${{\bf h}_{\tilde v,i}}:=(h_{\tilde v,\tau,i})_{\tau\in \Hom(F_{\tilde v},L)}$ for all $v$, $i$. We define $\lambda=(\lambda_{ v})_{v\in S_p}=(\lambda_{ v,1},\dots,\lambda_{v,n})_{v\in S_p}\in \prod_{v\in S_p}(\Z^n)^{\Hom(F_{\tilde v},L)}$ with $\lambda_{v,i}=(\lambda_{v,\tau,i})_{\tau\in \Hom(F_{\tilde v},L)}$ and $\lambda_{v,\tau,i}:=h_{\tilde v,\tau,n+1-i}+i-1$ (so $\lambda$ is dominant). Moreover we assume that $\rho_{\tilde v}$ for all $v\vert p$ is crystabelline generic in the sense of \S\ref{locallyBM}, which is equivalent to the condition that the semi-simple representation $W(\rho_{\tilde v})=\oplus_{i=1}^n\eta_{\tilde v,i}$ of the Weil group of $F_{\tilde v}$ associated to $\rho_{\tilde v}$ in \cite{Fontaine} satisfies $(\eta_{\tilde v,i}^{-1}\eta_{\tilde v,j})\circ\rec_{F_{\tilde v}}\notin \{1,|\ |_{F_{\tilde v}}\}$ for $i\ne j$ (compare \cite[\S6]{BreuilAnalytiqueII} when all $F_{\tilde v}$ are $\Qp$). This condition doesn't depend on the choice of $\tilde v$ above $v$. Note that, when $\rho_{\tilde v}$ is crystalline, we have $\eta_{\tilde v,i}={\rm unr}(\varphi_{\tilde v,i})$ for all $i$ where the $\varphi_{\tilde v,i}$ are the eigenvalues of $\varphi^{[F_{\tilde v,0}:\Q_p]}$ on $D_{\cris}(\rho_{\tilde v})$, so we recover the condition in Theorem \ref{classicality}. 

We define a refinement $\Rcal$ as a rule which to each $v\in S_p$ associates an ordering $\Rcal_v$ on the set of characters $\{\eta_{\tilde v,i}, i\in \{1,\dots,n\}\}$. Let $\Rcal$ be a refinement, $w=(w_{v})_{v\in S_p}\in \prod_{v\in S_p}\Scal_n^{[F_{\tilde v}:\Qp]}$ and define $\deltabar_{\Rcal,w}=(\deltabar_{\Rcal_v,w_v})_{v\in S_p}\in \widehat T_{p,L}$ with (see \S\ref{locomp} for $z^{w_v({\bf h}_{\tilde v})}$):
$$\deltabar_{\Rcal_v,w_v}=(\delta_{\Rcal_v,w_v,1},\dots,\delta_{\Rcal_v,w_v,n}):=\iota_v(z^{w_v({\bf h}_{\tilde v})}(\eta_{\tilde v,j_1}\circ\rec_{F_{\tilde v}} ,\dots,\eta_{\tilde v,j_n}\circ\rec_{F_{\tilde v}}))$$
where $(j_1,\dots,j_n)$ is the ordering $\Rcal_v$ on $\{1,\dots,n\}$. Note that the derivative of $\deltabar_{\Rcal,w}$ is precisely $ww_0\cdot \lambda$ and that $\deltabar_{\Rcal,w,\rm sm}$ (defined as before (\ref{FBG})) doesn't depend on $w$, we denote it by $\deltabar_{\Rcal,\rm sm}=(\deltabar_{\Rcal_v,\rm sm})_{v\in S_p}\in \widehat T_{p,L}$. Define also $x_{\Rcal,w}:=(\rho,\deltabar_{\Rcal,w})\in \Xfrak_{\rhobar,S}\times \widehat T_{p,L}$. Then it follows from \cite[Th.1.1]{caraianil=p} and (\ref{adj}) (and the intertwinings on $({\rm Ind}_{\overline B_p}^{G_p}\deltabar_{\rm \sm}\delta_{B_p}^{-1})^\infty$ in (\ref{localg})) that \ the assignment $\Rcal\longmapsto x_{\Rcal,w_0}=(\rho,\deltabar_{\Rcal,w_0})$ induces a bijection between the set of refinements and the set of classical points in $Y(U^p,\rhobar)$ of the form $(\rho,\deltabar)$ for some $\deltabar\in \widehat T_{p,L}$. Note that the residue field of all the points $x_{\Rcal,w}$ (a finite extension of $L$) doesn't depend on $\Rcal$ or $w$, and increasing $L$ if necessary we assume it is $L$.

The structure of Verma modules (\cite[\S5.2]{HumBGG}) and the theory of Orlik-Strauch (extended as in \cite[Th.2.3]{BreuilAnalytiqueI} and Remark \ref{virage}) imply that the irreducible constituents of:
$$\Fcal_{\overline B_p}^{G_p}(\deltabar_{\Rcal,w})=\Fcal_{\overline B_p}^{G_p}((U(\lieg)\otimes_{U(\overline\lieb)}(-ww_0\cdot \lambda))^\vee,\deltabar_{\Rcal,\rm sm}\delta_{B_p}^{-1})$$
are the locally $\Qp$-analytic representations of $G_p$ over $L$:
\begin{equation}\label{constituant}
\Fcal_{\overline B_p}^{G_p}\big(\overline L(-w'w_0\cdot \lambda)^\vee, \deltabar_{\Rcal,\rm sm}\delta_{B_p}^{-1}\big)\simeq \widehat \otimes_{v\in S_p}\Fcal_{\overline B_v}^{G_v}\big(\overline L(-w'_vw_{0,v}\cdot \lambda_v)^\vee, \deltabar_{\Rcal_v,\rm sm}\delta_{B_v}^{-1}\big)
\end{equation}
for $w'=(w'_{v})_{v\in S_p}\in \prod_{v\in S_p}\Scal_n^{[F_{\tilde v}:\Qp]}$ such that $w'\preceq w$. For a refinement $\Rcal$ and $v\in S_p$ denote by $x_{\Rcal,w_0,v}$ the image of $x_{\Rcal,w_0}$ in $X_{\rm tri}(\rhobar_{\tilde{v}})$ via (\ref{eigenvartotrianguline}) and set:
\begin{equation}\label{wx}
w_{\Rcal}:=(w_{\Rcal,v})_{v\in S_p}\in \prod_{v\in S_p}\Scal_n^{[F_{\tilde v}:\Qp]}
\end{equation}
where $w_{\Rcal,v}:=w_{x_{\Rcal,w_0,v}}\in \Scal_n^{[F_{\tilde v}:\Qp]}$ is the permutation associated to $x_{\Rcal,w_0,v}\in X_{\rm tri}(\rhobar_{\tilde{v}})$ defined just before Proposition \ref{preceq}. The following is a direct generalization of the socle conjecture of \cite[Conj.6.1]{BreuilAnalytiqueII} (where all $F_{\tilde v}$ were $\Qp$). Recall that $\mathfrak{m}_\rho\subset R_{\rhobar,S}[1/p]$ is the maximal ideal corresponding to $\rho$.

\begin{conj}\label{compaconj}
Let $\Rcal$ be a refinement and $w\in \prod_{v\in S_p}\Scal_n^{[F_{\tilde v}:\Qp]}$, then we have:
$$\Hom_{G_p}\Big(\Fcal_{\overline B_p}^{G_p}\big(\overline L(-ww_0\cdot \lambda)^\vee, \deltabar_{\Rcal,\rm sm}\delta_{B_p}^{-1}\big),\widehat S(U^p,L)_{\mathfrak{m}^S}^{\rm an}[\mathfrak{m}_{\rho}]\Big)\ne 0$$
if and only if $w_{\Rcal}\preceq w$.
\end{conj}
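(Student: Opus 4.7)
\medskip

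\noindent\emph{Proof plan.} The plan is to follow the strategy outlined in the introduction. The first move is to replace $\widehat S(U^p,L)_{\mathfrak m^S}^{\rm an}$ and $Y(U^p,\rbar)$ by the patched locally analytic representation $\Pi_\infty^{\rm an}$ and the patched eigenvariety $X_p(\rbar)$ of \cite[\S3.2]{BHS2}. Using $\Pi_\infty[\mathfrak a]\simeq \widehat S(U^p,L)_{\mathfrak m^S}$, together with the very strongly admissible hypothesis and Lemma \ref{lietorepr}, one checks that the vanishing of the $\Hom$ in the statement is equivalent to the vanishing of $\Hom_{G_p}(\Fcal_{\overline B_p}^{G_p}(\overline L(-ww_0\cdot\lambda)^\vee,\deltabar_{\Rcal,\rm sm}\delta_{B_p}^{-1}),\Pi_\infty^{\rm an}[\mathfrak m_\rho])$. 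Write $x=x_{\Rcal,w_0}$ and $x_{w'}=(\rho,\deltabar_{\Rcal,w'})$ for $w'\succeq w_\Rcal$; by applying Lemma \ref{Ocomponent}, Lemma \ref{lieexact} (using that $\Pi_\infty'$ is projective over $\Ocal_L\dbl K_p\dbr[1/p]$) and Lemma \ref{exactoubli}, one attaches to each $w'\in\Scal$ a finite type module $\Lcal(w')$ over $\widehat\Ocal_{X_p(\rbar)_{\wt(\deltabar)},x}$ such that $\Lcal(w')\ne 0$ iff the desired companion constituent occurs in $\Pi_\infty^{\rm an}[\mathfrak m_\rho]$. The ``only if'' direction is then immediate from Theorem \ref{companionptsconjecture}: any nonzero $\Hom$ produces a local companion point of each $x_{\Rcal,w_0,v}$ in $X_{\rm tri}(\rbar_{\tilde v})$, which by Theorem \ref{companionptsconjecture} is parametrized by some $w'_v\succeq w_{\Rcal,v}$, and comparison with the derivative of $\deltabar_{\Rcal,w,v}$ forces $w_\Rcal\preceq w$.

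The main work is the ``if'' direction, which I would establish by descending induction on $\lg(w_\Rcal)$ (the base case $w_\Rcal=w_0$ reduces to classicality, supplied by Theorem \ref{classicality}). The inductive engine consists of two identities in $Z(\Spec\widehat\Ocal_{\Xfrak_\infty,\rho})$ for $y\in X_p(\rbar)$ over $\rho$:
\begin{equation}\label{eq:F1}
[\widehat\Mcal_{\infty,\wt(\deltabar_y),y}]\;=\;\sum_{w_y\preceq w'} P_{1,w_0w'}(1)\,[\Lcal(w')],
\end{equation}
which comes from representation theory (the Verma filtration of $U(\lieg)\otimes_{U(\overline\lieb)}(-ww_0\cdot\lambda)$ with multiplicities $P_{1,w_0w'}(1)$ of \cite[\S8.4]{HumBGG}, combined with exactness in Lemma \ref{exactoubli}), and
\begin{equation}\label{eq:F2}
[\widehat\Ocal_{X_p(\rbar)_{\wt(\deltabar_y)},y}]\;=\;\sum_{w_y\preceq w'} P_{1,w_0w'}(1)\,\Cfrak(w'),
\end{equation}
which is the patched analogue of Corollary \ref{cycleXtri'} and whose construction mirrors $\S\ref{locallyBM}$ (replacing the local cycles $\Cfrak_{w'}$ by nonzero patched cycles $\Cfrak(w')$). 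The cycle $\Cfrak(w_0)$ is moreover irreducible and equals the support of locally algebraic vectors, so $[\Lcal(w_0)]\in\Z_{\geq 0}\Cfrak(w_0)$.

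The induction proceeds stratum by stratum. For $\lg(w_\Rcal)=\lg(w_0)-1$, smoothness of $X_p(\rbar)$ at $x$ (which follows from Proposition \ref{tangenttri}(ii) and Remark \ref{specialcases}(ii)) gives $\widehat\Mcal_{\infty,\wt(\deltabar),x}\simeq\widehat\Ocal_{X_p(\rbar)_{\wt(\deltabar)},x}^{\,r}$ for some $r>0$; comparing $r$ times \eqref{eq:F2} with \eqref{eq:F1} and using $\Cfrak(w_\Rcal)\ne 0\ne\Cfrak(w_0)$ forces $[\Lcal(w_\Rcal)]\ne 0$. A Zariski-density argument in $X_{\rm tri}(\rbar_p)$, analogous to the one in the proof of Theorem \ref{companionptsconjecture} (exploiting the closed immersion $\iota_{\bf h}$ of \cite[(2.9)]{BHS2} and the closure relations between Bruhat cells), then produces companion points $x_{w'}$ on $Y(U^p,\rbar)$ for all $w'\succeq w_\Rcal$ of this length. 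At the next stage, $\lg(w_\Rcal)=\lg(w_0)-2$, the points $x_{w'}$ of length $\lg(w_0)-1$ already lie on smooth loci of $X_p(\rbar)$ (again by Proposition \ref{tangenttri}(ii)) and the multiplicity $r$ is independent of $w'$ (including $w'=w_0$) since the unique minimal prime above $\rho$ is common. Combining \eqref{eq:F1} and \eqref{eq:F2} at these points and using Lemma \ref{weyl} to control the two intermediate covers of $w_\Rcal$ yields $[\Lcal(w_\Rcal)]\ne 0$, and the Zariski-density argument propagates the result to all $w'\succeq w_\Rcal$ of length $\geq\lg(w_0)-2$. The same scheme iterates for smaller $\lg(w_\Rcal)$.

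The main obstacle is the control of multiplicities in \eqref{eq:F1} and \eqref{eq:F2}. On the one hand, $X_p(\rbar)$ is only a union of irreducible components of $\Xfrak_{\rbar^p}\times\prod_v X_{\rm tri}(\rbar_{\tilde v})\times\Ubb^g$, so the patched cycles $\Cfrak(w')$ cannot be directly identified with the local cycles $\Ccal_{r,\Pi_{w'}}$ of Theorem \ref{resultBM}, which prevents a direct application of the uniqueness statement in that theorem. On the other hand, smoothness of $X_p(\rbar)$ at $x_{w'}$ (needed to identify $\widehat\Mcal_\infty$ locally with a free module and, crucially, to ensure that the rank $r$ is the \emph{same} at all $x_{w'}$ being compared) is only guaranteed via Proposition \ref{tangenttri}(ii) for $w'$ sufficiently close to $w_0$; for shorter $w'$ one must feed in the companion points obtained at the previous inductive stage to extract comparable smoothness information, and the descent must be organised so that at each step the newly required non-vanishing is forced by combining identities already established at strictly greater length. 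Handling this bookkeeping — in particular checking that the Zariski-density argument indeed produces analogues of \eqref{eq:F1} and \eqref{eq:F2} localised at each $x_{w'}$ with matching multiplicities — is where the technical heart of the argument lies.
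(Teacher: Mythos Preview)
Your proposal follows essentially the same approach as the paper's proof of Theorem~\ref{companion}: reduction to $\Pi_\infty^{\rm an}$, the ``only if'' via Theorem~\ref{companionptsconjecture}, the two cycle identities (\ref{onpeutcommencer}) and (\ref{onvacommencer}), the containment $[\Lcal(\mu)]\in\Z_{>0}\Cfrak_{w_0}$ via locally algebraic support, descending induction combined with a Zariski-density propagation, and Lemma~\ref{weyl} at the inductive step. Two points deserve correction. First, your justification that the rank $r$ is constant ``since the unique minimal prime above $\rho$ is common'' is not right: the companion points $y_w$ can lie on \emph{different} connected components of $X_p(\rhobar)$ (the paper explicitly remarks this), so rank constancy is not automatic---the paper builds it into the induction hypothesis $\Hcal_\ell$ and proves it at each step. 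Second, the mechanism that actually forces $m'_y=m_y$ in the inductive step (the paper's Step~8) is more delicate than a bare comparison of \eqref{eq:F1} and \eqref{eq:F2}: one needs the identifications $\Cfrak_{w_y}=\Zfrak_{w_y}$ and $\Cfrak_{w_i}=\Zfrak_{w_i}$ for $i\in\{1,2,3\}$, which come from the last clause of Theorem~\ref{summaryofrepntheory}(iii) (vanishing of $a_{w,w'}$ when $U_{w'}$ lies in the smooth locus of $\overline{U_w}$) together with the codimension-$\leq 2$ smoothness of Schubert varieties (Remark~\ref{specialcases}(ii)). With these cycles known to be ``primitive'', the three equations at $w_1,w_2,w_3$ force the coefficient match; without this, the combinatorics would not close.
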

\begin{rem}
{\rm We point out that this conjecture is strictly stronger than predicting the set of companion points of $x=(\rho,z^\lambda\,\delta_{\Rcal,{\rm sm}})\in Y(U^p,\rhobar)$, that is, Conjecture \ref{compaconj} implies:
$$(\rho,z^\mu\,\delta_{\Rcal,{\rm sm}})\in Y(U^p,\rhobar)\Longleftrightarrow \mu=ww_0\cdot\lambda\ \text{with}\ w_{\mathcal{R}}\preceq w.$$}
\end{rem}
In the following, we use the notation in the statement of Theorem \ref{classicality}.

\begin{theo}\label{companion}
Assume $F/F^+$ unramified, $U^p$ small enough (see (\ref{smallenough})) with $U_v$ hyperspecial if $v$ is inert in $F$ and $\rhobar(\Gcal_{F(\!\sqrt[p]{1})})$ adequate. Let $\rho\in \Xfrak_{\rhobar,S}$ coming from a classical point in $Y(U^p,\rhobar)$ such that $\rho_{\tilde v}$ is crystalline and $\varphi_{\tilde v,i}\varphi_{\tilde v,j}^{-1}\notin\{1,q_v\}$ for $i\neq j$ and $v\in S_p$. Then Conjecture \ref{compaconj} is true.
\end{theo}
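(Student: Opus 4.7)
The plan is to reduce everything to the patched setting: since the embedding $\widehat S(U^p,L)_{\mathfrak{m}^S}^{\rm an}\hookrightarrow \Pi_\infty^{\rm an}$ is $R_\infty[G_p]$-equivariant and sends $\widehat S(U^p,L)_{\mathfrak{m}^S}^{\rm an}[\mathfrak{m}_\rho]$ into $\Pi_\infty^{\rm an}[\mathfrak{m}_\rho]$, it suffices to prove the analogous non-vanishing of Hom spaces on the patched eigenvariety $X_p(\rhobar)$. Fix a refinement $\Rcal$, set $x:=x_{\Rcal,w_0}\in Y(U^p,\rhobar)\hookrightarrow X_p(\rhobar)$ and $w_x:=w_{\Rcal}$; for each $w'\succeq w_x$ write $x_{w'}:=x_{\Rcal,w'}$ and let $\Pi_{w'}:=\Fcal_{\overline B_p}^{G_p}(\overline L(-w'w_0\cdot\lambda)^\vee,\deltabar_{\Rcal,\rm sm}\delta_{B_p}^{-1})$. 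Using Lemmas~\ref{verma}, \ref{lietorepr} and \ref{exactoubli} (applicable since $\Pi_\infty^{\rm an}$ satisfies the projectivity hypothesis of Lemma~\ref{lieexact}), translate $\Hom_{G_p}(\Pi_{w'},\Pi_\infty^{\rm an}[\mathfrak{m}_\rho])\ne 0$ into the non-vanishing of a finite type $\widehat\Ocal_{X_p(\rhobar)_{\wt(\deltabar_{w'})},x_{w'}}$-module $\Lcal(w')$, built from $\Mcal_\infty$ via Emerton's Jacquet functor localized at the eigensystem of $\deltabar_{\Rcal,\rm sm}$ and cut out by the maximal ideal of $\lambda$.

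Next establish two cycle identities in $Z(\Spec\widehat\Ocal_{\Xfrak_\infty,\rho})$. The BGG resolution of Verma modules together with Lemma~\ref{exactoubli} and the structure of $\Mcal_\infty$ yield the representation-theoretic identity
\[ [\widehat\Mcal_{\infty,\wt(\deltabar),x}]=\sum_{w_x\preceq w'}P_{1,w_0w'}(1)[\Lcal(w')] \]
(this step does not require the local model). On the other hand, by imitating the proof of Theorem~\ref{resultBM} with $X_{\rm tri}(\rhobar_{\tilde v})$ replaced by $X_p(\rhobar)$, using the local model diagram (\ref{THEdiagram}), (\ref{union}), Corollary~\ref{vermacompl} and Theorem~\ref{companionptsconjecture}, one produces nonzero cycles $\Cfrak(w')\in Z(\Spec\widehat\Ocal_{\Xfrak_\infty,\rho})$ for $w_x\preceq w'$ satisfying
\[ [\widehat\Ocal_{X_p(\rhobar)_{\wt(\deltabar)},x}]=\sum_{w_x\preceq w'}P_{1,w_0w'}(1)\Cfrak(w'). \]
Moreover $\Cfrak(w_0)$ is irreducible (cf.\ (\ref{w_0cris}) in the patched setting, using that the crystalline locus of the appropriate Hodge-Tate weights is irreducible in $\Xfrak_{\rhobar_p}$) and $[\Lcal(w_0)]\in \Z_{\geq 0}\Cfrak(w_0)$, the latter because the support of locally $\Qp$-algebraic vectors lies over the crystalline locus.

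The core of the argument is then a descending induction on $\lg(w_x)$, showing that $\Cfrak(w')$ is contained in the support of $\Lcal(w')$ for every $w_x\preceq w'$, which by $P_{1,w_0w'}(1)\ne 0$ immediately yields Theorem~\ref{companion}. For the base case $\lg(w_x)=\lg(w_0)-1$: by Proposition~\ref{tangenttri} applied to each $X_{\rm tri}(\rhobar_{\tilde v})$ via (\ref{eigenvartotrianguline}), together with Theorem~\ref{normality}, the space $X_p(\rhobar)$ is smooth at $x$, so $\Mcal_\infty$ is locally free there and $\widehat\Mcal_{\infty,\wt(\deltabar),x}\simeq \widehat\Ocal_{X_p(\rhobar)_{\wt(\deltabar)},x}^{\oplus r}$ for some $r>0$. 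Multiplying the second identity by $r$ and subtracting from the first gives $\sum_{w_x\preceq w'}P_{1,w_0w'}(1)([\Lcal(w')]-r\Cfrak(w'))=0$; using $[\Lcal(w_0)]\in \Z_{\geq 0}\Cfrak(w_0)$, $\Cfrak(w_0),\Cfrak(w_x)\ne 0$ and $P_{1,w_0w_x}(1)=1$ (since $\lg(w_0w_x)=1$), this forces $[\Lcal(w_x)]\ne 0$, producing the companion point $x_{w_x}\in Y(U^p,\rhobar)$; a Zariski-density argument analogous to the one in the proof of Theorem~\ref{companionptsconjecture} then extends this to all $w'\succeq w_x$ with $\lg(w')\geq\lg(w_0)-1$.

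The hard part will be the inductive step for $\lg(w_x)=\lg(w_0)-k$ with $k\geq 2$: here one must exploit that the already constructed companion points $x_{w'}$ with $\lg(w')\geq\lg(w_0)-k+1$ sit inside $Y(U^p,\rhobar)$ and, by Proposition~\ref{tangenttri} together with Remark~\ref{specialcases}(ii) and Lemma~\ref{weyl}, are smooth on $X_p(\rhobar)$, so $\widehat\Mcal_\infty$ is locally free at all such $x_{w'}$ with \emph{the same} integer $r$ as at $x=x_{w_0}$ (matching of ranks being forced by continuity of $\Mcal_\infty$ along the irreducible components and the analogue of (\ref{union})). Combining the cycle identities localized at these $x_{w'}$ with $\lg(w')\geq \lg(w_0)-k+1$, and using Lemma~\ref{weyl} to control the combinatorics of $w'$ immediately above $w_x$, one deduces $[\Lcal(w_x)]\ne 0$ by a linear-algebraic manipulation of the Kazhdan-Lusztig coefficients. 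The main obstacle is precisely ensuring the uniformity of the multiplicity $r$ across all the companion points entering the induction and that Zariski density propagates at each stage without losing information; both hinge on the local irreducibility Corollary~\ref{irreducible} applied at each $x_{w'}$, and the bookkeeping of Lemma~\ref{weyl} to single out the required pair $w_1,w_2$ between $w_x$ and $w_x$'s nearest successors in the Bruhat order. Once $[\Lcal(w_x)]\ne 0$ is established, a final density argument as in the base case completes the induction.
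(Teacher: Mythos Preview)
Your outline matches the paper's proof closely, but there is a genuine gap in the inductive step. You claim the rank $r$ of $\Mcal_\infty$ is the same at all companion points ``by continuity of $\Mcal_\infty$ along the irreducible components''; this is incorrect. As the paper remarks explicitly (just after stating the induction hypothesis $\Hcal_\ell$ in Step~5), the companion points $y_w$ can lie on \emph{different connected components} of $X_p(\rhobar)$, so continuity gives nothing. The constancy of the rank must instead be built into the induction hypothesis itself and \emph{proved} at each stage via the cycle identities: in Step~5 one deduces $m'_y=m_y$ at $y_{w_y}$ by combining (\ref{onpeutcommencerbis}) with (\ref{onpeutcommencer})--(\ref{onvacommencer}) applied at $w=w_y$; in Step~8 one uses the three equations (\ref{three})--(\ref{4}) at $w_1,w_2,w_3$, together with the identification $\Cfrak_{w_i}=\Zfrak_{w_i}$ coming from the last clause of Theorem~\ref{summaryofrepntheory}(iii) and Remark~\ref{specialcases}(ii), and a positivity argument on the coefficients of $\Zfrak_{w_y}$ in $\Cfrak_{w_1}$ and $\Cfrak_{w_3}$ forces $m'_y=m_y$. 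This is precisely why the paper's $\Hcal_\ell$ tracks the rank alongside the nonvanishing of $[\Lcal(ww_0\cdot\mu)]$, and why it quantifies over \emph{all} crystalline generic strictly dominant points $y$ with $\lg(w_y)\geq\ell$ rather than fixing a single $x$; the Zariski-density argument of Step~7 is what propagates $\Hcal_\ell$ from such $y$ to the companion points of an arbitrary $y$ with smaller $\lg(w_y)$.

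A secondary overstatement: you assert smoothness of $X_p(\rhobar)$ at \emph{all} companion points $x_{w'}$ with $\lg(w')\geq\lg(w_0)-k+1$, but Remark~\ref{specialcases}(ii) only guarantees this when $\lg(w')-\lg(w_x)\leq 2$ (note that $w_{x_{w'}}=w_x$, since neither the Hodge flag nor the flag from the refinement changes). Fortunately only the three points $x_{w_1},x_{w_2},x_{w_3}$ from Lemma~\ref{weyl} enter the linear-algebra step, and these lie within that range; but as written your claim is false for large $k$.
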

\begin{proof}
We use notation from the proof of Theorem \ref{classicality} and we shorten $\Pi_\infty^{R_\infty-{\rm an}}$ in $\Pi_\infty^{{\rm an}}$, $\deltabar_{\Rcal,w_0}$ in $\deltabar_{\Rcal}$, $x_{\Rcal,w_0}$ in $x_{\Rcal}$, $\Fcal_{\overline B_p}^{G_p}(\overline L(-ww_0\cdot \lambda)^\vee, \deltabar_{\Rcal,\rm sm}\delta_{B_p}^{-1})$ in $\Pi_{w}$ and $U(\lieg)\otimes_{U(\lieb)}\mu$ in $M(\mu)$. The proof being a bit long, we divide it into several steps.

\noindent
{\bf Step 1}\\
If $\Hom_{G_p}(\Pi_w,\widehat S(U^p,L)_{\mathfrak{m}^S}^{\rm an}[\mathfrak{m}_{\rho}])\ne 0$ then it follows from \cite[Cor.3.4]{BreuilAnalytiqueI} (and Remark \ref{virage}) that the point $x_{\Rcal,w}\in \Xfrak_{\rhobar,S}\times \widehat T_{p,L}$ sits in $Y(U^p,\rhobar)$. Denote by $x_{\Rcal,w,v}$ its image in $X_{\rm tri}(\rhobar_{\tilde{v}})$ via (\ref{eigenvartotrianguline}). By Theorem \ref{companionptsconjecture} this implies $w_{\Rcal,v}\preceq w_v$ for all $v$, hence $w_{\Rcal}\preceq w$. We are thus left to prove that $\Hom_{G_p}(\Pi_w,\widehat S(U^p,L)_{\mathfrak{m}^S}^{\rm an}[\mathfrak{m}_{\rho}])\ne 0$ if $w_{\Rcal}\preceq w$.

\noindent
{\bf Step 2}\\
The action of $R_{\rhobar,S}$ on $\widehat S(U^p,L)_{\mathfrak{m}^S}$ factors through a certain quotient $R_{\rhobar,\Scal}$, hence we can see $\rho$ as a point of $(\Spf R_{\rhobar,\Scal})^{\rig}$. Moreover we have a surjection $R_\infty/\mathfrak{a}R_\infty\twoheadrightarrow R_{\rhobar,\Scal}$ which induces a closed immersion $(\Spf R_{\rhobar,\Scal})^{\rig}\hookrightarrow \Xfrak_\infty$ and we can also see $\rho$ as a point on $\Xfrak_\infty$. Still denoting by $\mathfrak{m}_\rho\subset R_\infty[1/p]$ the maximal ideal (containing the ideal $\mathfrak{a}$) corresponding to the point $\rho\in \Xfrak_\infty$ (under the identification of the sets underlying $\Xfrak_\infty$ and $\Spm R_\infty[1/p]$), from $\Pi_\infty[\mathfrak{a}]\simeq \widehat S(U^p,L)_{\mathfrak{m}^S}$ we get $\Pi_\infty^{{\rm an}}[\mathfrak{m}_\rho]\simeq \widehat S(U^p,L)_{\mathfrak{m}^S}^{\rm an}[\mathfrak{m}_\rho]$. It is thus equivalent to prove $\Hom_{G_p}(\Pi_w,\Pi_\infty^{{\rm an}}[\mathfrak{m}_{\rho}])\ne 0$ if $w_{\Rcal}\preceq w$. From Lemma \ref{lietorepr} (applied with $\mu=\lambda$ and $\nu=ww_0\cdot \lambda$) it is enough to prove $\Hom_{U(\lieg)}(L(ww_0\cdot \lambda),\Pi_\infty^{\an}[\mathfrak{m}_\rho])^{U_0}[\mathfrak{m}_{\deltabar_{\Rcal,{\rm sm}}}]\ne 0$ if $w_{\Rcal}\preceq w$. If $V$ is an $A$-module and $\mathfrak{m}$ a maximal ideal of $A$, define $V[\mathfrak{m}^\infty]:=\cup_{s\geq 1}V[\mathfrak{m}^s]$. As $L(ww_0\cdot \lambda)$ is of finite type over $U(\lieg)$ we have:
\begin{multline}\label{tf}
\Hom_{U(\lieg)}(L(ww_0\cdot \lambda),\Pi_\infty^{\an}[\mathfrak{m}^\infty_\rho])^{U_0}[\mathfrak{m}^\infty_{\deltabar_{\Rcal,{\rm sm}}}]=\\\cup_{s\geq 1}\Hom_{U(\lieg)}(L(ww_0\cdot \lambda),
\Pi_\infty^{\an}[\mathfrak{m}^s_\rho])^{U_0}[\mathfrak{m}^\infty_{\deltabar_{\Rcal,{\rm sm}}}].
\end{multline}
Since the right hand side of (\ref{tf}) is nonzero if and only if:
$$\Hom_{U(\lieg)}(L(ww_0\cdot \lambda),\Pi_\infty^{\an}[\mathfrak{m}_\rho])^{\!U_0}[\mathfrak{m}_{\deltabar_{\Rcal,{\rm sm}}}]\neq 0,$$
we see that it is enough to prove that $\Hom_{U(\lieg)}(L(ww_0\cdot \lambda),\Pi_\infty^{\an}[\mathfrak{m}^\infty_\rho])^{U_0}[\mathfrak{m}^\infty_{\deltabar_{\Rcal,{\rm sm}}}]\ne 0$ if $w_{\Rcal}\preceq w$.

\noindent
{\bf Step 3}\\
For a point $y\in \Xfrak_{\rhobar^p}\times \iota(X_{\rm tri}(\rhobar_p))\times \Ubb^g$ denote by $r_y$ (resp. $\mathfrak{m}_{r_y}$) its image in $\Xfrak_\infty$ (resp. the corresponding maximal ideal of $R_\infty[1/p]$), by $(r_v)_{v\in S_p}$ its image in $\Xfrak_{\rhobar_p}$ and by $\epsilonbar$ its image in $\widehat T_{p,L}$. We assume that the image of $y$ in $\Xfrak_{\rhobar^p}$ lies in the smooth locus of the reduced rigid variety $\Xfrak_{\rhobar^p}$, that $y$ is crystalline generic (i.e. each $r_v$ is crystalline generic as in the beginning of \S\ref{companionconst}), and that the image of $y$ in $X_{\rm tri}(\rhobar_{\tilde v})$ is strictly dominant in the sense of \cite[\S2.1]{BHS2}. We define $\mu=(\mu_v)_{v\in S_p}\in \prod_{v\in S_p}(\Z^n)^{\Hom(F_{\tilde v},L)}$ as we defined $\lambda$ at the beginning of \S\ref{companionconst}, $w_y\in \prod_{v\in S_p}\Scal_n^{[F_{\tilde v}:\Qp]}$ as we defined $w_{\Rcal}$ in (\ref{wx}), and for each $w\in \prod_{v\in S_p}\Scal_n^{[F_{\tilde v}:\Qp]}$ such that $w_y\preceq w$ we define $y_w\in \Xfrak^p\times \iota(X_{\rm tri}(\rhobar_p))\times \Ubb^g$ as we defined $x_{\Rcal,w}$ (note that we use here Theorem \ref{companionptsconjecture} and that $y_{w_0}=y$). We let $\epsilonbar_w$ be the image of $y_w$ in $\widehat T_{p,L}$ (the derivative of $\epsilonbar_w$ is $ww_0\cdot \mu$). We also define $\mu^{\HT}=(\mu_v^{\HT})_{v\in S_p}$ with $\mu_v^{\HT}:=(\mu_{v,\tau,i}-i+1)_{\tau\in \Hom(F_{\tilde v},L)}$ (compare with $\iota_v^{-1}$ in (\ref{iotav})).

We assume $y_w\in X_p(\rhobar)$ for some $w\in \prod_{v\in S_p}\Scal_n^{[F_{\tilde v}:\Qp]}$ such that $w_y\preceq w$. Arguing as in the proof of \cite[Th.5.5]{BHS2}, it follows from Lemma \ref{exactoubli} and from Lemma \ref{lieexact} that the functor $\Hom_{U(\lieg)}(-,\Pi_\infty^{\an})^{U_0}[\mathfrak{m}_{r_y}^\infty][\mathfrak{m}^\infty_{\epsilonbar_{w,{\rm sm}}}]$ from $\Ocal$ to the category of $R_\infty[1/p]$-modules is exact. Thus for every short exact sequence $0\rightarrow M_1\rightarrow M_2\rightarrow M_3\rightarrow 0$ in $\Ocal$ we have a short exact sequence of $R_\infty[1/p]$-modules:
\begin{multline}\label{sesinfty}
0\longrightarrow \Hom_{U(\lieg)}(M_3,\Pi_\infty^{\an}[\mathfrak{m}^\infty_{r_y}])^{U_0}[\mathfrak{m}^\infty_{\epsilonbar_{w,{\rm sm}}}]\longrightarrow \Hom_{U(\lieg)}(M_2,\Pi_\infty^{\an}[\mathfrak{m}^\infty_{r_y}])^{U_0}[\mathfrak{m}^\infty_{\epsilonbar_{w,{\rm sm}}}]\\
\longrightarrow \Hom_{U(\lieg)}(M_1,\Pi_\infty^{\an}[\mathfrak{m}^\infty_{r_y}])^{U_0}[\mathfrak{m}^\infty_{\epsilonbar_{w,{\rm sm}}}]\longrightarrow 0.
\end{multline}
We have moreover:
\begin{eqnarray}\label{jacquetlambda}
\ \ \ \ \ \nonumber \Hom_{U(\lieg)}(M(ww_0\cdot\mu),\Pi_\infty^{\an}[\mathfrak{m}^\infty_{r_y}])^{U_0}[\mathfrak{m}^\infty_{\epsilonbar_{w,{\rm sm}}}]&\simeq &\Hom_{U(\liet)}(ww_0\cdot\mu,(\Pi_\infty^{\an})^{U_0})[\mathfrak{m}^\infty_{r_y}][\mathfrak{m}^\infty_{\epsilonbar_{w,{\rm sm}}}]\\
&\simeq &\Hom_{U(\liet)}(ww_0\cdot\mu,J_{B_p}(\Pi_\infty^{\an}))[\mathfrak{m}^\infty_{r_y}][\mathfrak{m}^\infty_{\epsilonbar_{w,{\rm sm}}}]\\
\nonumber&\subseteq &J_{B_p}(\Pi_\infty^{\an})[\mathfrak{m}^\infty_{r_y}][\mathfrak{m}^\infty_{\epsilonbar_w}]
\end{eqnarray}
where the second isomorphism follows from the proof of \cite[Prop.3.2.12]{EmertonJacquetI} as in \cite[(5.5)]{BHS2}. Recall from the proof of Theorem \ref{classicality} that we have introduced the coherent $\Ocal_{X_p(\rhobar)}$-module $\Mcal_\infty=J_{B_p}(\Pi_\infty^{{\rm an}})^\vee$ on $X_p(\rhobar)$. We easily check:
\begin{equation}\label{jacquetgen}
J_{B_p}(\Pi_\infty^{\an})[\mathfrak{m}^\infty_{r_y}][\mathfrak{m}^\infty_{\epsilonbar_w}]^\vee\simeq \Mcal_\infty\otimes_{\Ocal_{X_p(\rhobar)}} \widehat \Ocal_{X_p(\rhobar),y_w}
\end{equation}
where $J_{B_p}(\Pi_\infty^{\an})[\mathfrak{m}^\infty_{r_y}][\mathfrak{m}^\infty_{\epsilonbar_w}]^\vee\simeq \varprojlim_{s,t}J_{B_p}(\Pi_\infty^{\an})[\mathfrak{m}^s_{r_y}][\mathfrak{m}^t_{\epsilonbar_w}]^\vee$ is the dual $L$-vector space (recall from Lemma \ref{verma} that $J_{B_p}(\Pi_\infty^{\an})[\mathfrak{m}^s_{r_y}][\mathfrak{m}^t_{\epsilonbar_w}]$ is finite dimensional). Denote by $X_p(\rhobar)_{ww_0\cdot\mu}$ the fiber at $ww_0\cdot\mu\in \liet^{\rm rig}(L)$ of the composition $X_p(\rhobar)\longrightarrow \widehat T_{p,L}\buildrel \wt\over \longrightarrow \liet^{\rm rig}$ where $\widehat T_{p,L}\buildrel \wt\over \longrightarrow \liet^{\rm rig}$ is defined as in \S\ref{locallyBM}. We deduce in particular from (\ref{jacquetgen}):
\begin{equation}\label{finitetype}
\Hom_{U(\liet)}(ww_0\cdot\mu,J_{B_p}(\Pi_\infty^{\an}))[\mathfrak{m}^\infty_{r_y}][\mathfrak{m}^\infty_{\epsilonbar_{w,{\rm sm}}}]^\vee\simeq \Mcal(ww_0\cdot\mu):=\Mcal_\infty\otimes_{\Ocal_{X_p(\rhobar)}} \widehat \Ocal_{X_p(\rhobar)_{ww_0\cdot\mu},y_w}
\end{equation}
which is thus a finite type $\widehat \Ocal_{X_p(\rhobar)_{ww_0\cdot\mu},y_w}$-module. 

\noindent
{\bf Step 4}\\
We keep the notation and assumptions of Step 3. Denote in this proof by $\widehat \Ocal_{\Xfrak_\infty,r_y}$ the completed local ring at $r_y$ of the scalar extension from $L$ to $k(y_w)=k(y)$ (which contains $k(r_y)$) of the rigid space $\Xfrak_\infty$. We have closed immersions:
\begin{equation}\label{closedim}
\Spec \widehat \Ocal_{X_p(\rhobar)_{ww_0\cdot\mu},y_w}\hookrightarrow \Spec \widehat \Ocal_{X_p(\rhobar),y_w}\hookrightarrow \Spec \widehat \Ocal_{\Xfrak_\infty,r_y}
\end{equation}
where the second one follows from (\ref{union}), Proposition \ref{closedembtri}, Remark \ref{remadd} and \cite[Lem.2.3.3 \& Prop.2.3.5]{KisinModularity}. It follows from the normality of $\Xfrak_{\rhobar^p}\times \iota(X_{\rm tri}(\rhobar_p))\times \Ubb^g$ at $y_w$ (which follows from Corollary \ref{irreducible}) that we have isomorphisms of completed local rings $\widehat \Ocal_{X_p(\rhobar),y_w}\simeq \widehat \Ocal_{\Xfrak_{\rhobar^p}\times \iota(X_{\rm tri}(\rhobar_p))\times \Ubb^g,y_w}$ from which we deduce taking fibers at $ww_0\cdot\mu$:
\begin{equation}\label{isolocal}
\widehat \Ocal_{X_p(\rhobar)_{ww_0\cdot \mu},y_w}\simeq \widehat \Ocal_{\Xfrak_{\rhobar^p}\times \iota(X_{\rm tri}(\rhobar_p))_{ww_0\cdot\mu}\times \Ubb^g,y_w}
\end{equation}
where $\iota(X_{\rm tri}(\rhobar_p))_{ww_0\cdot\mu}$ is defined as $X_p(\rhobar)_{ww_0\cdot \mu}$ (see Step 3). In particular $\Spec \widehat \Ocal_{X_p(\rhobar)_{ww_0\cdot\mu},y_w}$ is equidimensional of codimension $d:=[F^+:\Q]\tfrac{n(n+1)}{2}$ in $\Spec\widehat \Ocal_{\Xfrak_\infty,r_y}$ via (\ref{closedim}) as so is $\Spec \widehat \Ocal_{\iota(X_{\rm tri}(\rhobar_p))_{ww_0\cdot\mu},y_w}$ in $\Spec\widehat\Ocal_{\Xfrak_{\rhobar_p},(r_v)_v}$, see \S\ref{locallyBM}. 

Denote by ${\rm Z}^d(\Spec \widehat \Ocal_{\Xfrak_\infty,r_y})$ the free abelian group generated by the irreducible closed subschemes of codimension $d$ in $\Spec \widehat \Ocal_{\Xfrak_\infty,r_y}$. If $\Ecal$ is a finite type $\Ocal_{\Xfrak_\infty,r_y}$-module (e.g. $\Ecal=\Hom_{U(\lieg)}(M({ww_0\cdot\mu}),\Pi_\infty^{\an}[\mathfrak{m}^\infty_{r_y}])^{U_0}[\mathfrak{m}^\infty_{\epsilonbar_{w,{\rm sm}}}]^\vee$ by (\ref{jacquetlambda}) and (\ref{finitetype})), define as in (\ref{class}):
$$[\Ecal]:=\sum_Zm(Z,\Ecal)[Z]\in {\rm Z}^d(\Spec \widehat \Ocal_{\Xfrak_\infty,r_y})$$
where the sum runs over all irreducible subschemes $Z$ of codimension $d$ in $\Spec \Ocal_{\Xfrak_\infty,r_y}$ and $m(Z,\Ecal):={\rm length}_{(\widehat \Ocal_{X_p(\rhobar)_{ww_0\cdot\mu},y_w})_{\eta_Z}}\Ecal_{\eta_Z}$ ($\eta_Z$ being the generic point of $Z$). If:
$$0\longrightarrow M_1\longrightarrow M_2\longrightarrow M_3\longrightarrow 0$$
is a short exact sequence in $\Ocal$ it follows from (\ref{sesinfty}) that we have in ${\rm Z}^d(\Spec \widehat \Ocal_{\Xfrak_\infty,r_y})$:
\begin{multline*}
\big[\Hom_{U(\lieg)}(M_2,\Pi_\infty^{\an}[\mathfrak{m}^\infty_{r_y}])^{U_0}[\mathfrak{m}^\infty_{\epsilonbar_{w,{\rm sm}}}]^\vee\big]=\big[\Hom_{U(\lieg)}(M_1,\Pi_\infty^{\an}[\mathfrak{m}^\infty_{r_y}])^{U_0}[\mathfrak{m}^\infty_{\epsilonbar_{w,{\rm sm}}}]^\vee\big]\\
+\big[\Hom_{U(\lieg)}(M_3,\Pi_\infty^{\an}[\mathfrak{m}^\infty_{r_y}])^{U_0}[\mathfrak{m}^\infty_{\epsilonbar_{w,{\rm sm}}}]^\vee\big].
\end{multline*}
In particular since the irreducible constituents of $M({ww_0\cdot\mu})$ are the $L(w'w_0\cdot \mu)$ for $w'\preceq w$ (see \cite[\S5.2]{HumBGG} or \S\ref{Springer3}), we deduce in ${\rm Z}^d(\Spec \widehat \Ocal_{\Xfrak_\infty,{r_y}})$ by d\'evissage (see \S\ref{Springer3} for $P_{x,y}(T)$):
\begin{equation*}
[\Mcal({ww_0\cdot\mu})]=\sum_{w'\preceq w}P_{w_0w,w_0w'}(1)[\Lcal(w'w_0\cdot\mu)]
\end{equation*}
where:
$$\Lcal(w'w_0\cdot \mu):=\Hom_{U(\lieg)}(L(w'w_0\cdot\mu),\Pi_\infty^{\an}[\mathfrak{m}^\infty_{r_y}])^{U_0}[\mathfrak{m}^\infty_{\epsilonbar_{w,{\rm sm}}}]^\vee.$$
Note that the $\widehat \Ocal_{\Xfrak_\infty,{r_y}}$-module $\Lcal(w'w_0\cdot \mu)$ doesn't depend on $w$ such that $w\succeq w'$. Since moreover $\Lcal(w'w_0\cdot \mu)\ne 0$ implies $w_y\preceq w'$ by the same arguments as in Step 1 and Step 2 using Theorem \ref{companionptsconjecture}, we obtain:
\begin{equation}\label{onpeutcommencer}
[\Mcal({ww_0\cdot\mu})]=\sum_{w_y\preceq w'\preceq w}P_{w_0w,w_0w'}(1)[\Lcal(w'w_0\cdot\mu)]\in {\rm Z}^d(\Spec \widehat \Ocal_{\Xfrak_\infty,{r_y}}).
\end{equation}
Likewise, it follows from (\ref{isolocal}) and from (\ref{cycleXtri}) that we have:
\begin{equation}\label{onvacommencer}
[\widehat \Ocal_{X_p(\rhobar)_{ww_0\cdot\mu},y_w}]=\sum_{w_y\preceq w'\preceq w}P_{w_0w,w_0w'}(1)\Cfrak_{w'}\in {\rm Z}^{d}(\Spec \widehat \Ocal_{\Xfrak_\infty,{r_y}}).
\end{equation}
Here $\Cfrak_{w'}$ is the cycle in ${\rm Z}^d(\Spec \widehat \Ocal_{\Xfrak_\infty,{r_y}})$ obtained by pull-back along the formally smooth projection $\Spec \widehat \Ocal_{\Xfrak_\infty,r_y}\twoheadrightarrow \Spec\widehat\Ocal_{\Xfrak_{\rhobar_p},(r_v)_v}$ from the product over $v\in S_p$ of the cycles denoted $\Cfrak_{w_v'}$ in \S\ref{locallyBM}. Note that we have $\Cfrak_{w'}\ne 0$ for $w_y\preceq w'\preceq w$. Moreover $\Cfrak_{w'}$ doesn't depend on $w$ (such that $w\succeq w'$).

\noindent
{\bf Step 5}\\
Fix $\Xfrak^p\subseteq \Xfrak_{\rhobar^p}$ an irreducible component, $\Ufrak^p$ its (Zariski-open) smooth locus and fix a point $y\in \Ufrak^p\times \iota(X_{\rm tri}(\rhobar_p))\times \Ubb^g$ as in the first part of Step 3. We assume here that the $w$ of Step 3 is $w_0$, i.e. that $y=y_{w_0}\in X_p(\rhobar)^{\Ufrak^p}$ where:
$$X_p(\rhobar)^{\Ufrak^p}:=\Ufrak^p\times \iota(X_{\rm tri}(\rhobar_p))\times \Ubb^g\cap X_p(\rhobar)=\Ufrak^p\times \iota(X_{\rm tri}(\rhobar_p)^{\Xfrak^p\rm-aut})\times \Ubb^g$$
(a Zariski-open subset of $X_p(\rhobar)$). It follows from the irreducibility of $X_p(\rhobar)^{\Ufrak^p}$ at $y$ (which itself follows from Corollary \ref{irreducible}) and the argument in the proof of \cite[Cor.3.12]{BHS2} using \cite[Lem.3.8]{BHS2} that the coherent sheaf $\Mcal_\infty$ on $X_p(\rhobar)$ is free of finite rank in the Zariski-open dense irreducible smooth locus of an affinoid neighbourhood of $y$ in $X_p(\rhobar)^{\Ufrak^p}$. We denote by $m_y\geq 1$ this rank of $\Mcal_\infty$ (which doesn't depend on the chosen small enough neighbourhood of $y$). Recall that if $\Lcal(ww_0\cdot \mu)\ne 0$ then we have $y_w\in X_p(\rhobar)^{\Ufrak^p}$ by the same argument as in the end of Step 2 and as in Step 1 using Lemma \ref{lietorepr}, \cite[Cor.3.4]{BreuilAnalytiqueI} and Remark \ref{virage}.

We now consider the following induction hypothesis for integers $\ell \leq \lg(w_0)$:
\begin{itemize}
\item[$\Hcal_{\ell}$:]for $y\in X_p(\rhobar)^{\Ufrak^p}$ as above with $\ell\leq \lg(w_y)$, then $[\Lcal(ww_0\cdot \mu)]\ne 0$ for all $w\succeq w_y$, and the rank of $\Mcal_\infty$ in the smooth locus of a small enough affinoid neighbourhood of $y_w$ in $X_p(\rhobar)^{\Ufrak^p}$ is still $m_y$.
\end{itemize}
\begin{rem}
{\rm The part of the induction hypothesis concerning the rank $m_y$ is a technical tool used in the induction. However, it seems to be an interesting statement in its own right that this rank remains the same for all the $y_w$, as these points can lie on different connected components of the eigenvariety.}
\end{rem}

It is easy to see that $\Hcal_{\lg(w_0)}$ holds. We prove $\Hcal_{\lg(w_0)-1}$, which amounts to proving $[\Lcal(\mu)]\ne 0$, $[\Lcal(w_yw_0\cdot \mu)]\ne 0$ and $\Mcal_\infty$ free of rank $m_y$ in the smooth locus around $y_{w_y}$. Note first that the point $y$ is smooth on $X_p(\rhobar)$ as the image of $y=y_{w_0}$ in $X_{\rm tri}(\rhobar_{\tilde v})$ is a smooth point for every $v\in S_p$ by (ii) of Proposition \ref{tangenttri} and (ii) of Remark \ref{specialcases}. Hence $\Mcal_\infty$ is free of rank $m_y$ at $y$ and we deduce $[\Mcal(\mu)]=[\Mcal_\infty\otimes_{\Ocal_{X_p(\rhobar)}} \widehat \Ocal_{X_p(\rhobar)_{\mu},y}]=m_y[\widehat \Ocal_{X_p(\rhobar)_{\mu},y}]$. From (\ref{onpeutcommencer}) and (\ref{onvacommencer}) with $w=w_0$ we get:
\begin{equation}\label{onpeutcommencerbis}
[\Mcal(\mu)]=[\Lcal(\mu)]+[\Lcal(w_yw_0\cdot \mu)]=m_y\Cfrak_{w_0}+m_y\Cfrak_{w_y}\in {\rm Z}^d(\Spec \widehat \Ocal_{\Xfrak_\infty,r_y})
\end{equation}
using $P_{1,1}(1)=P_{1,w_0w_y}(1)=1$ (as $w_0w_y$ is a simple reflection). Let us first prove $[\Lcal(\mu)]\ne 0$ (which is {\it a priori} stronger than just $\Lcal(\mu)\ne 0$). Assume $[\Lcal(\mu)]=0$, then (\ref{onpeutcommencerbis}) gives $[\Lcal(w_yw_0\cdot \mu)]=m_y\Cfrak_{w_0}+m_y\Cfrak_{w_y}\ne 0$ so that $y_{w_y}\in X_p(\rhobar)^{\Ufrak^p}$. But applying (\ref{onpeutcommencer}) and (\ref{onvacommencer}) with $w=w_y$ we get $[\Lcal(w_yw_0\cdot \mu)]\in \Z_{>0}\Cfrak_{w_y}$ which is a contradiction as $m_y\Cfrak_{w_0}\ne 0$, thus $[\Lcal(\mu)]\ne 0$. Now, by Lemma \ref{lietorepr} applied with $\nu=\mu$, $\Pi^{\rm an}=\Pi_\infty^{\rm an}[\mathfrak{m}^s_{r_y}]$ for all $s\geq 1$, and using that $\Fcal_{\overline B_p}^{G_p}(\overline L(-\mu)^\vee,\underline 1[t]_{\rm sm}\epsilonbar_{\rm sm}\delta_{B_p}^{-1})$ is locally algebraic for all $t\geq 1$ and that $\underline 1[t]_{\rm sm}\epsilonbar_{\rm sm}\delta_{B_p}^{-1}$ is an unramified representation of $T_p$, we deduce injections of $R_{\infty}[1/p]$-modules:
$$\Hom_{U(\lieg)}(L(\mu),\Pi_\infty^{\an}[\mathfrak{m}^\infty_{r_y}])^{U_0}[\mathfrak{m}^\infty_{\epsilonbar_{\rm sm}}]\hookrightarrow \Hom_{K_p}(L(\mu),\Pi_\infty^{\an}[\mathfrak{m}^\infty_{r_y}])\hookrightarrow \Hom_{K_p}(L(\mu),\Pi_\infty^{\an}).$$
By the argument in the proof of \cite[Th.3.9]{BHS2} we obtain that:
$${\rm support}\big(\Hom_{U(\lieg)}(L(\mu),\Pi_\infty^{\an}[\mathfrak{m}^\infty_{r_y}])^{U_0}[\mathfrak{m}^\infty_{\epsilonbar_{\rm sm}}]^\vee\big)\subset \Spec\widehat\Ocal_{\Ufrak^p\times\Xfrak_{\rhobar_p}^{\mu^{\HT}{\rm -cr}}\times\Ubb^g,r_y}\simeq \Cfrak_{w_0}$$
as subsets of $\Spec \widehat \Ocal_{\Xfrak_\infty,r_y}$ where the isomorphism follows from (\ref{w_0cris}) ($\widehat\Ocal_{\Ufrak^p\times\Xfrak_{\rhobar_p}^{\mu^{\HT}{\rm -cr}}\times\Ubb^g,r_y}$ being the completed local ring of the scalar extension from $L$ to $k(y)$). From (\ref{onpeutcommencerbis}) we necessarily deduce $[\Lcal(\mu)]\in \Z_{>0}\Cfrak_{w_0}$. Since $\Cfrak_{w_y}\ne 0$, we then obtain $[\Lcal(w_yw_0\cdot \mu)]\ne 0$ from (\ref{onpeutcommencerbis}). The sheaf $\Mcal_\infty$ is free of some rank $m'_y\geq 1$ in a neighbourhood of $y_{w_y}$ by \cite[Th.2.6(iii)]{BHS1}. Applying again (\ref{onpeutcommencer}) and (\ref{onvacommencer}) with $w=w_y$ we get $[\Lcal(w_yw_0\cdot \mu)]=m'_y\Cfrak_{w_y}$, which plugged into (\ref{onpeutcommencerbis}) together with $[\Lcal(\mu)]\in \Z_{>0}\Cfrak_{w_0}$ forces $m'_y=m_y$. This finishes the proof of $\Hcal_{\lg(w_0)-1}$.

\noindent
{\bf Step 6}\\
For $w\in \prod_{v\in S_p}\Scal_n^{[F_{\tilde v}:\Qp]}$ endow $\Hom_{U(\liet)}(ww_0\cdot \mu,\Pi_\infty^{\an}[\mathfrak{u}])^{U_0}$ (resp. $\Hom_{U(\liet)}(ww_0\cdot \mu,J_{B_p}(\Pi_\infty^{\an}))$) with the topology induced by $\Hom_{L}(ww_0\cdot \mu,\Pi_\infty^{\an}[\mathfrak{u}])\simeq \Pi_\infty^{\an}[\mathfrak{u}]$ (resp. by $\Hom_{L}(ww_0\cdot \mu,J_{B_p}(\Pi_\infty^{\an}))\simeq J_{B_p}(\Pi_\infty^{\an})$). The natural $T_p^+$-equivariant morphism:
\begin{equation}\label{injectionjac}
\Hom_{U(\liet)}(ww_0\cdot \mu,J_{B_p}(\Pi_\infty^{\an}))\longrightarrow \Hom_{U(\liet)}(ww_0\cdot \mu,\Pi_\infty^{\an}[\mathfrak{u}])^{U_0}
\end{equation}
is continuous and identifies the left hand side with the space $(\Hom_{U(\liet)}(ww_0\cdot \mu,\Pi_\infty^{\an}[\mathfrak{u}])^{U_0})_{\rm fs}$ of \cite[\S3.2]{EmertonJacquetI}. The injection:
\begin{multline}\label{closedverma}
\Hom_{U(\lieg)}(L(ww_0\cdot \mu),\Pi_\infty^{\an})^{U_0}\hookrightarrow \Hom_{U(\lieg)}(M(ww_0\cdot \mu),\Pi_\infty^{\an})^{U_0}\\
\simeq \Hom_{U(\liet)}(ww_0\cdot \mu,\Pi_\infty^{\an}[\mathfrak{u}])^{U_0}
\end{multline}
with the induced topology on the left hand side is a closed immersion. Indeed, by a d\'evissage it is enough to prove that, whenever we have a morphism $M(\nu)\rightarrow M(ww_0\cdot\mu)$ of Verma modules, then the induced map $\Hom_{U(\liet)}(ww_0\cdot \mu,\Pi_\infty^{\an}[\mathfrak{u}])^{U_0}\longrightarrow \Hom_{U(\liet)}(\nu,\Pi_\infty^{\an}[\mathfrak{u}])^{U_0}$ is continuous (and its kernel is thus closed), which easily follows from the continuity of the action of $\lieg$ on $\Pi_\infty^{\an}$. Since moreover (\ref{closedverma}) commutes with the actions of $T_p^+$ and of $R_\infty[1/p]$, by \cite[Prop.3.2.6(iii)]{EmertonJacquetI} we deduce a closed immersion compatible with $T_p$ and $R_\infty[1/p]$:
\begin{equation}\label{injectionjacbis}
(\Hom_{U(\lieg)}(L(ww_0\cdot \mu),\Pi_\infty^{\an})^{U_0})_{\rm fs}\hookrightarrow \Hom_{U(\liet)}(ww_0\cdot \mu,J_{B_p}(\Pi_\infty^{\an})).
\end{equation}
Taking continuous duals (\ref{injectionjacbis}) yields a surjective morphism of coherent sheaves on $X_{\!p}(\rhobar)_{\!ww_0\cdot \mu}$:
\begin{equation}\label{newcoherent}
\Mcal_{ww_0\cdot \mu}:=\Mcal_\infty\otimes_{\Ocal_{X_p(\rhobar)}} \Ocal_{X_p(\rhobar)_{ww_0\cdot \mu}}\twoheadrightarrow \Lcal_{ww_0\cdot \mu}.
\end{equation}

The \ schematic \ support \ of \ $\Lcal_{ww_0\cdot \mu}$ \ defines \ a \ Zariski-closed \ rigid \ subspace \ $Y_p(\rhobar)_{ww_0\cdot \mu}$ in $X_p(\rhobar)_{ww_0\cdot \mu}$ and we denote by $Z_p(\rhobar)_{ww_0\cdot \mu}\subseteq (Y_p(\rhobar)_{ww_0\cdot \mu})^{\rm red}$ the union of its irreducible components of dimension $\dim X_p(\rhobar)_{ww_0\cdot \mu}$. We see from the definition of $\Lcal(ww_0\cdot \mu)$ in Step 4 that we have just as in (\ref{finitetype}):
\begin{equation*}
\Lcal(ww_0\cdot \mu)\simeq \Lcal_{ww_0\cdot \mu}\otimes_{\Ocal_{X_p(\rhobar)_{ww_0\cdot \mu}}}\widehat\Ocal_{X_p(\rhobar)_{ww_0\cdot \mu},y_{w}}\simeq \Lcal_{ww_0\cdot \mu}\otimes_{\Ocal_{Y_p(\rhobar)_{ww_0\cdot \mu}}}\widehat\Ocal_{Y_p(\rhobar)_{ww_0\cdot \mu},y_{w}}.
\end{equation*}
In particular $\Lcal(ww_0\cdot \mu)\ne 0\Leftrightarrow y_{w}\in Y_p(\rhobar)_{ww_0\cdot \mu}\Leftrightarrow y_{w}\in (Y_p(\rhobar)_{ww_0\cdot \mu})^{\rm red}$ and, arguing e.g. as for Lemma \ref{cyclcompl}:
\begin{equation}\label{Zbetter}
[\Lcal(ww_0\cdot \mu)]\ne 0\Leftrightarrow y_{w}\in Z_p(\rhobar)_{ww_0\cdot \mu} \Leftrightarrow y_{w}\in Z_p(\rhobar)_{ww_0\cdot \mu}^{\Ufrak^p}
\end{equation}
where $Z_p(\rhobar)_{ww_0\cdot \mu}^{\Ufrak^p}:=Z_p(\rhobar)_{ww_0\cdot \mu}\cap X_p(\rhobar)^{\Ufrak^p}\subseteq X_p(\rhobar)^{\Ufrak^p}$.

\noindent
{\bf Step 7}\\
Assuming $\Hcal_{\ell}$ (for some $\ell \leq \lg(w_0)$), we prove that, for any crystalline generic strictly dominant point $y\in X_p(\rhobar)^{\Ufrak^p}$, we have $[\Lcal(ww_0\cdot \mu)]\ne 0$ for those $w\succeq w_y$ such that $\ell\leq \lg(w)$ and we have $\Mcal_\infty$ free of rank $m_y$ in the smooth locus of a small enough affinoid neighbourhood of $y_w$ in $X_p(\rhobar)^{\Ufrak^p}$.

Consider the smooth Zariski-open and dense subset:
$$\widetilde W_{\rhobar_p}^{\mu^{\HT}{\rm -cr}}:=\prod_{v\in S_p}\widetilde W_{\rhobar_{\tilde v}}^{\mu_v^{\HT}{\rm -cr}}\subset \widetilde \Xfrak_{\rhobar_p}^{\mu^{\HT}{\rm -cr}}:=\prod_{v\in S_p}\widetilde \Xfrak_{\rhobar_{\tilde v}}^{\mu^{\HT}_v{\rm -cr}}$$
and the closed immersion:
$$\iota_{\mu^{\HT}}:=\prod_{v\in S_p}\iota_{{\mu^{\HT}}_v}:\widetilde \Xfrak_{\rhobar_p}^{{\mu^{\HT}}{\rm -cr}}\hookrightarrow X_{\rm tri}(\rhobar_p)$$
defined in the proof of Theorem \ref{companionptsconjecture}. Since there is only one irreducible component of $X_{\rm tri}(\rhobar_p)$ passing through each point of $\iota_{\mu^{\HT}}(\widetilde W_{\rhobar_p}^{{\mu^{\HT}}{\rm -cr}})$ by Corollary \ref{irreducible} (and the definition of $\widetilde W_{\rhobar_p}^{{\mu^{\HT}}{\rm -cr}}$), we have that $\widetilde W_{\rhobar_p}^{{\mu^{\HT}}{\rm -cr},\Xfrak^p\rm-aut}:=\iota_{\mu^{\HT}}^{-1}(X_{\rm tri}(\rhobar_p)^{\Xfrak^p\rm-aut})\cap \widetilde W_{\rhobar_p}^{{\mu^{\HT}}{\rm -cr}}$ is a nonempty union of connected components of $\widetilde W_{\rhobar_p}^{{\mu^{\HT}}{\rm -cr}}$. As in the proof of Theorem \ref{companionptsconjecture}, we define the locally closed subset:
$$\widetilde W_{\rhobar_p,w}^{{\mu^{\HT}}{\rm -cr},\Xfrak^p\rm-aut}:=\widetilde W_{\rhobar_p}^{{\mu^{\HT}}{\rm -cr},\Xfrak^p\rm-aut}\cap \prod_{v\in S_p}\widetilde W_{\rhobar_{\tilde v},w_v}^{{\mu_v^{\HT}}{\rm -cr}}\subset \widetilde W_{\rhobar_p}^{{\mu^{\HT}}{\rm -cr},\Xfrak^p\rm-aut}$$
for each $w=(w_{v})_{v\in S_p}\in \prod_{v\in S_p}\Scal_n^{[F_{\tilde v}:\Qp]}$, and by the same argument as in {\it loc.cit.} using that the morphism from $\widetilde W_{\rhobar_p}^{{\mu^{\HT}}{\rm -cr},\Xfrak^p\rm-aut}$ to the product of the flag varieties is still smooth we get the decomposition (where $\overline{*}$ is the Zariski-closure in $\widetilde W_{\rhobar_p}^{{\mu^{\HT}}{\rm -cr},\Xfrak^p\rm-aut}$ or equivalently $\widetilde W_{\rhobar_p}^{{\mu^{\HT}}{\rm -cr}}$):
\begin{equation}\label{decompaut}
\overline{\widetilde W_{\rhobar_p,w}^{{\mu^{\HT}}{\rm -cr},\Xfrak^p\rm-aut}}=\amalg_{w'\preceq w}\widetilde W_{\rhobar_p,w'}^{{\mu^{\HT}}{\rm -cr},\Xfrak^p\rm-aut}
\end{equation}
with $\widetilde W_{\rhobar_p,w}^{{\mu^{\HT}}{\rm -cr},\Xfrak^p\rm-aut}$ Zariski-open and dense. \\Recall from (\ref{phiw}) that, for $y\in \Ufrak^p\times \iota(\iota_{\mu^{\HT}}({\widetilde W_{\rhobar_p}^{{\mu^{\HT}}{\rm -cr},\Xfrak^p\rm-aut}}))\times \Ubb^g$, we have:
\begin{equation}\label{wy=w}
y\in \Ufrak^p\times \iota(\iota_{\mu^{\HT}}({\widetilde W_{\rhobar_p,w}^{{\mu^{\HT}}{\rm -cr},\Xfrak^p\rm-aut}}))\times \Ubb^g\Longleftrightarrow w=w_y,
\end{equation}
we thus deduce from (\ref{decompaut}) and (\ref{wy=w}) that we have:
\begin{equation}\label{wy=wbar}
y\in \Ufrak^p\times \iota(\iota_{\mu^{\HT}}(\overline{\widetilde W_{\rhobar_p,w}^{{\mu^{\HT}}{\rm -cr},\Xfrak^p\rm-aut}}))\times \Ubb^g\Longleftrightarrow w\succeq w_y.
\end{equation}

Now, for $w=(w_{v})_{v\in S_p}\in \prod_{v\in S_p}\Scal_n^{[F_{\tilde v}:\Qp]}$ consider the morphism (recall $\widetilde W_{\rhobar_p}^{{\mu^{\HT}}{\rm -cr}}$ is reduced by \cite[Lem.2.2]{BHS2}):
$$\iota_{{\mu^{\HT}},w}:=\prod_{v\in S_p}\iota_{{\mu_v^{\HT}},w_v} :\widetilde W_{\rhobar_p}^{{\mu^{\HT}}{\rm -cr}}\longrightarrow \Xfrak_{\rhobar_p}\times \widehat T_{p,L}$$
where $\iota_{{\mu_v^{\HT}},w_v}$ is defined in (\ref{iotaw}). Fix $w\in \prod_{v\in S_p}\Scal_n^{[F_{\tilde v}:\Qp]}$ such that $\ell\leq \lg(w)$, it follows from (\ref{wy=w}), $\Hcal_{\ell}$ and (\ref{Zbetter}) that we have:
$$\Ufrak^p\times {\widetilde W_{\rhobar_p,w}^{{\mu^{\HT}}{\rm -cr},\Xfrak^p\rm-aut}}\times \Ubb^g\subseteq (\id\times (\iota\circ\iota_{{\mu^{\HT}},w})\times \id)^{-1}(Z_p(\rhobar)_{ww_0\cdot\mu}^{\Ufrak^p})\subseteq \Ufrak^p\times \widetilde W_{\rhobar_p}^{{\mu^{\HT}}{\rm -cr}}\times \Ubb^g.$$
But the second inclusion being a closed immersion by base change, we deduce:
$$\Ufrak^p\times \overline{\widetilde W_{\rhobar_p,w}^{{\mu^{\HT}}{\rm -cr},\Xfrak^p\rm-aut}}\times \Ubb^g\subseteq (\id\times (\iota\circ\iota_{{\mu^{\HT}},w})\times \id)^{-1}(Z_p(\rhobar)_{ww_0\cdot\mu}^{\Ufrak^p}).$$
Using (\ref{wy=wbar}), this exactly means that the companion point $y_{w}$ of any crystalline generic strictly dominant point $y\in X_p(\rhobar)^{\Ufrak^p}$ such that $w_y\preceq w$ and $\ell\leq \lg(w)$ is always in $Z_p(\rhobar)_{ww_0\cdot\mu}^{\Ufrak^p}$ where $\mu$ is defined as in Step 3. In particular we have $[\Lcal(ww_0\cdot \mu)]\ne 0$ by (\ref{Zbetter}).

Let $U$ be an open affinoid in $X_p(\rhobar)^{\Ufrak^p}$ containing $y_w$ for some $w_y\preceq w$ and $\ell\leq \lg(w)$, to prove the second part of the statement, it is enough to find one smooth point $z$ in $U$ such that $\Mcal_\infty$ is free of rank $m_y$ at $z$. Consider:
$$U\cap \big(\Ufrak^p\times \iota(\iota_{{\mu^{\HT}},w}(\overline{\widetilde W_{\rhobar_p,w}^{{\mu^{\HT}}{\rm -cr},\Xfrak^p\rm-aut}}))\times \Ubb^g\big)\subseteq X_p(\rhobar)_{ww_0\cdot\mu}^{\Ufrak^p},$$
it contains $y_w$, and since it is open in $\Ufrak^p\times \iota(\iota_{{\mu^{\HT}},w}(\overline{\widetilde W_{\rhobar_p,w}^{{\mu^{\HT}}{\rm -cr},\Xfrak^p\rm-aut}}))\times \Ubb^g$, its intersection with the Zariski-open dense subset $\Ufrak^p\times \iota(\iota_{{\mu^{\HT}},w}({\widetilde W_{\rhobar_p,w}^{{\mu^{\HT}}{\rm -cr},\Xfrak^p\rm-aut}}))\times \Ubb^g$ is nonzero. But by (\ref{wy=w}), \cite[Th.2.6(iii)]{BHS1} and $\Hcal_{\ell}$, taking $U$ small enough we know that $\Mcal_\infty$ is free of rank $m_y$ at any point of this intersection. This finishes the proof of Step 7.

\noindent
{\bf Step 8}\\
Let $\ell \leq \lg(w_0)-1$, assuming $\Hcal_{\ell}$ we prove $\Hcal_{\ell-1}$.

By Step 5 we can assume $\lg(w_y)=\ell-1\leq \lg(w_0)-2$ and by Step 7 it remains to prove that $[\Lcal(w_yw_0\cdot \mu)]\ne 0$ and that $\Mcal_\infty$ is then free of rank $m_y$ at $y_{w_y}$. Choose $w_i$, $i\in \{1,2,3\}$ as in Lemma \ref{weyl} applied to $w=w_y$. By $\Hcal_{\ell}$ and Step 7 we have $y_{w_i}\in X_p(\rhobar)^{\Ufrak^p}$ for $i\in \{1,2,3\}$. Moreover it follows from (ii) of Proposition \ref{tangenttri} and (ii) of Remark \ref{specialcases} that the $y_{w_i}$ are smooth points of $X_p(\rhobar)^{\Ufrak^p}$, hence $\Mcal_\infty$ is free at these points. By $\Hcal_{\ell}$ and Step 7 again, its rank there is still $m_y$. Note that if $w\preceq w'\preceq w_3$, we have $w'\in\{w,w_1,w_2,w_3\}$. Moreover if $w'\preceq w$ and $\lg(w')\leq \lg(w)-2$, it follows from \cite[Th.6.0.4]{BilLak} and \cite[Cor.6.2.11]{BilLak} that $Bw_0wB$ is in the smooth locus of $\overline{Bw_0w'B}$. Then \cite[\S8.5]{HumBGG} implies that $P_{w_0w,w_0w'}(1)=1$. Then, by (\ref{onpeutcommencer}) and (\ref{onvacommencer}) applied successively with $w=w_1$, $w=w_2$ and $w=w_3$, we deduce the three equalities of cycles in ${\rm Z}^d(\Spec \widehat \Ocal_{\Xfrak_\infty,r_y})$:
\begin{eqnarray}\label{three}
[\Lcal(w_iw_0\cdot\mu)]+[\Lcal(w_yw_0\cdot \mu)]&=&m_y\Cfrak_{w_i}+m_y\Cfrak_{w_y},\ \ i\in \{1,2\}
\end{eqnarray}
\begin{multline}\label{4}
[\Lcal(w_3w_0\cdot\mu)]+[\Lcal(w_1w_0\cdot\mu)]+[\Lcal(w_2w_0\cdot\mu)]+[\Lcal(w_yw_0\cdot \mu)]=m_y\Cfrak_{w_3}+m_y\Cfrak_{w_1}\\
+m_y\Cfrak_{w_2}+m_y\Cfrak_{w_y}.
\end{multline}
Moreover we have $[\Lcal(w_yw_0\cdot \mu)]=m'_y\Cfrak_{w_y}$ for some $m'_y\in \Z_{\geq 0}$ (if $[\Lcal(w_yw_0\cdot \mu)]=0$ this is obvious and if $[\Lcal(w_yw_0\cdot \mu)]\ne 0$ argue as at the end of Step 5). The equality (\ref{three}) then implies $[\Lcal(w_1w_0\cdot\mu)]+(m'_y-m_y)\Cfrak_{w_y}=m_y\Cfrak_{w_1}$, whereas plugging the expression for $[\Lcal(w_iw_0\cdot\mu)]$, $i\in \{1,2\}$ given by (\ref{three}) into (\ref{4}) yields $[\Lcal(w_3w_0\cdot\mu)]+(m_y-m'_y)\Cfrak_{w_y}=m_y\Cfrak_{w_3}$. Now an examination of (\ref{cyclumbis}) together with the very last assertion in (iii) of Theorem \ref{summaryofrepntheory}, (ii) of Remark \ref{specialcases} and the implication $\Zfrak_{w'}\ne 0\Rightarrow w_y\preceq w'$ show:
$$\Cfrak_{w_y}=\Zfrak_{w_y}\ \ {\rm and}\ \ \Cfrak_{w_i}=\Zfrak_{w_i},\ i\in \{1,2,3\}$$
where $\Zfrak_{w'}$ are the cycles in ${\rm Z}^d(\Spec \widehat \Ocal_{\Xfrak_\infty,{r_y}})$ obtained by pulling back along the morphism $\Spec \widehat \Ocal_{\Xfrak_\infty,r_y}\twoheadrightarrow \Spec\widehat\Ocal_{\Xfrak_{\rhobar_p},(r_v)_v}$ the product over $v\in S_p$ of the cycles denoted $\Zfrak_{w_v'}$ in \S\ref{locallyBM}. If $m'_y>m_y$, then we see that $\Zfrak_{w_y}$ must appear with a positive coefficient in the cycle $\Cfrak_{w_1}=\Zfrak_{w_1}$, which is impossible since $w_1\ne w_y$. Likewise, if $m'_y<m_y$, then $\Zfrak_{w_y}$ must appear with a positive coefficient in $\Cfrak_{w_3}=\Zfrak_{w_3}$, which is again impossible. We thus deduce $m'_y=m_y\geq 1$ and $[\Lcal(w_yw_0\cdot \mu)]=m_y\Cfrak_{w_y}\ne 0$.

\noindent
{\bf Step 9}\\
Theorem \ref{companion} now follows from $\Hcal_{\lg(w_{\Rcal})}$ applied with $y=y_{w_0}=x_{\Rcal}$ (all the assumptions on $y$ in Step 3 are satisfied, either trivially or arguing as in the proof of \cite[Cor.3.12]{BHS2}).
\end{proof}

\begin{rem}
{\rm (i) With a little extra effort, it should be possible to prove two small improvements of Theorem \ref{companion}. The first, as in Remark \ref{derham1} and Remark \ref{derham2}, is that it should be possible to delete the assumption $\rho_{\tilde v}$ crystalline for $v\vert p$ (so keeping $\rho_{\tilde v}$ crystabelline generic as in Conjecture \ref{compaconj}). The second is that, as in \cite[Conj.6.2]{BreuilAnalytiqueII} in the case where all $F_{\tilde v}$ are $\Qp$, it should also be possible, under the same assumptions (or may-be even deleting the assumption $\rho_{\tilde v}$ crystalline as above), to prove that any irreducible locally $\Qp$-analytic representation $C$ of $G_p$ which is a subquotient of a locally $\Qp$-analytic principal series of $G_p$ over $L$ and such that $\Hom_{G_p}(C,\widehat S(U^p,L)_{\mathfrak{m}^S}^{\rm an}[\mathfrak{p}_{\rho}])\ne 0$ is one of the constituents (\ref{constituant}) for some refinement $\Rcal$ and some $w$ such that $w_{\Rcal}\preceq w$.\\
(ii) Several special cases or variants of Theorem \ref{companion} were already known. The ${\rm GL}_2(\Qp)$-case in the case of the completed $H^1$ of usual modular curves goes back to \cite{BreuilEmerton}. In \cite{Ding1}, Ding finds some companion constituents for ${\rm GL}_2$ in the completed $H^1$ of some unitary Shimura curves by generalizing the method of \cite{BreuilEmerton}. Some very partial results for ${\rm GL}_n(\Qp)$ in the present global setting with all $F^+_v=\Qp$ ($v\in S_p$) were obtained in \cite{Ding2} and \cite{BreuilAnalytiqueII}. In these works, there is no appeal to any patched eigenvariety, and hence one can sometimes relax some of Taylor-Wiles assumptions. Finally, Ding proved the ${\rm GL}_2$-case of Theorem \ref{companion} in \cite{Ding3} without using the local model of \S\ref{localmodel} (but using the patched eigenvariety $X_p(\rhobar)$).}
\end{rem}

\subsection{Singularities on global Hecke eigenvarieties}

We prove that the global Hecke eigenvarieties $Y(U^p,\rhobar)$ can have many singular points.

We use the global setting of \S\ref{mainclass} ($p>2$, $G$ quasi-split at finite places, $U^p$, $S$ and $\rhobar$ as in {\it loc.cit.}) and denote by $\widehat{T}^0_{p,L}$ the base change from $\Qp$ to $L$ of the rigid analytic spaces over $\Qp$ of continuous characters of $T_p^0$. If $x=(\rho,\deltabar)\in Y(U^p,\rhobar)$ is a crystalline strictly dominant point such that $\varphi_{\tilde v,i}\varphi_{\tilde v,j}^{-1}\notin\{1,q_v\}$ for $i\neq j$, $v\in S_p$, we define $w_x\in \prod_{v\in S_p}\Scal_n^{[F_{\tilde v}:\Qp]}$ as we defined $w_{\Rcal}$ in (\ref{wx}).

Recall (\cite[\S3.2]{BHS1}) that there exists an integer $q\geq0$ and an embedding $S_\infty:=\Ocal_L\dbl\Z_p^q\dbr\hookrightarrow R_\infty$ such that the map $Y(U^p,\rhobar)\rightarrow X_p(\rhobar)$ factors through $X_p(\rhobar)\times_{(\Spf S_\infty)^{\rig}}\Sp L$, where the map $\Spf L\rightarrow\Spf S_\infty$ is the augmentation map. Moreover (see \cite[Th.4.2]{BHS1} and its proof), the map:
\begin{equation}\label{localglobalHecke}
Y(U^p,\rhobar)\longrightarrow X_p(\rhobar)\times_{(\Spf S_\infty)^{\rig}}\Sp L
\end{equation}
induces a bijection of the reduced subspaces.

\begin{prop}\label{isoBHS1}
Assume $F/F^+$ unramified, $U^p$ small enough with $U_v$ hyperspecial if $v$ is inert in $F$ and $\rhobar(\Gcal_{F(\!\sqrt[p]{1})})$ adequate. Let $x=(\rho,\deltabar)\in Y(U^p,\rhobar)$ be a crystalline strictly dominant point such that $\varphi_{\tilde v,i}\varphi_{\tilde v,j}^{-1}\notin\{1,q_v\}$ for $i\neq j$ and $v\in S_p$. Then the map (\ref{localglobalHecke}) is an isomorphism of rigid analytic spaces in a neighborhood of $x$. In particular, $X_p(\rhobar)\times_{(\Spf S_\infty)^{\rig}}\Sp L$ is reduced at such a point.
\end{prop}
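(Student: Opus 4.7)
The plan reduces the claim to a single reducedness statement. By \cite[Th.4.2]{BHS1} the map (\ref{localglobalHecke}) is a bijection on reduced subspaces, and since $Y(U^p,\rhobar)$ is already reduced, it suffices to show that $X_p(\rhobar)\times_{(\Spf S_\infty)^{\rig}}\Sp L$ is reduced in a neighbourhood of $x$: both sides of (\ref{localglobalHecke}) will then be reduced locally at $x$, and the bijection on reduced subspaces will automatically upgrade to an isomorphism of rigid analytic spaces.

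To establish this local reducedness, I would first invoke Corollary \ref{irreducible} at the image $x_v$ of $x$ in each $X_{\tri}(\rhobar_{\tilde v})$: its hypotheses (Remark \ref{assumptions}) are satisfied by the genericity of the $\varphi_{\tilde v,i}$, so each $X_{\tri}(\rhobar_{\tilde v})$ is normal and Cohen-Macaulay at $x_v$. Combined with the decomposition (\ref{union}), the uniqueness of the irreducible component of $X_{\tri}(\rhobar_p)$ through the image of $x$ (again from Corollary \ref{irreducible}), the smoothness of $\mathfrak{X}_{\rhobar^p}$ at the image of $x$ (standard under the Taylor-Wiles assumptions) and the smoothness of $\Ubb^g$, this shows that $X_p(\rhobar)$ is Cohen-Macaulay at $x$. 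I would then apply miracle flatness (Lemma \ref{miracelflatness}) to the structure map $X_p(\rhobar)\longrightarrow (\Spf S_\infty)^{\rig}$: the base is smooth, the source is Cohen-Macaulay at $x$, and the fibre is equidimensional of the expected dimension $n[F^+:\Q]=\dim Y(U^p,\rhobar)=\dim X_p(\rhobar)-\dim (\Spf S_\infty)^{\rig}$. Flatness at $x$ then implies that $\mathfrak{m}_{S_\infty}$ is generated by a regular sequence in $\widehat\Ocal_{X_p(\rhobar),x}$, so the fibre ring at $x$ remains Cohen-Macaulay.

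It remains to check generic reducedness of the fibre, which I would obtain from the accumulation of smooth classical points at $x$: by the proposition following Definition \ref{accu} (applied in $X_p(\rhobar)$ via the structure (\ref{union}), or via the analogue of \cite[Prop.2.12]{BHS2}), every affinoid neighbourhood of $x$ in $X_p(\rhobar)$ contains a Zariski-dense set of non-critical crystalline strictly dominant classical points whose image in $X_{\tri}(\rhobar_p)$ lies in $U_{\tri}(\rhobar_p)$. At such a point $y$, $X_{\tri}(\rhobar_p)$ is smooth by \cite[Th.2.6(iii)]{BHS1}, hence so is $X_p(\rhobar)$, and flatness together with smoothness of the base forces the fibre to be smooth at $y$. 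By accumulation, these smooth points of the fibre are Zariski-dense in every irreducible component of the fibre passing through $x$ (whose reduced structure is $Y(U^p,\rhobar)$ locally, by \cite[Th.4.2]{BHS1}). The fibre is therefore generically reduced, and combined with Cohen-Macaulayness it is reduced at $x$, completing the argument.

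The main obstacle I expect lies in carefully coordinating the three factors in the local product description of $X_p(\rhobar)$ near $x$: ensuring simultaneous Cohen-Macaulayness and normality at $x$, most delicately verifying smoothness of $\mathfrak{X}_{\rhobar^p}$ there, and matching dimensions precisely so that miracle flatness applies. A secondary subtlety is to verify that the accumulating classical points really produce smooth points of $X_p(\rhobar)$ (not merely of $X_{\tri}(\rhobar_p)$) and hence smooth points of the fibre, which requires combining the smoothness at the trianguline factor with smoothness of the other two factors and with flatness of the structure map at those points.
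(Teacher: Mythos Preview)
Your overall strategy is sound and matches the paper's: both reduce to showing that the fibre $X_p(\rhobar)\times_{(\Spf S_\infty)^{\rig}}\Sp L$ is reduced at $x$, and both obtain Cohen--Macaulayness of $X_p(\rhobar)$ at $x$ from Corollary~\ref{irreducible} (via the identification of completed local rings with those of $\Xfrak_{\rhobar^p}\times\iota(X_{\tri}(\rhobar_p))\times\Ubb^g$ that this corollary makes possible). The paper then simply invokes the argument of \cite[Th.4.8]{BHS1}, based on \cite[Prop.4.7(ii)]{BHS1}, to conclude Cohen--Macaulayness and reducedness of the fibre; the Cohen--Macaulay input on $X_p(\rhobar)$ is precisely what was missing in earlier work to make that argument go through (as the paper notes, this was overlooked in \cite[Cor.5.18]{BHS2}).

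Your attempt to supply generic reducedness directly, however, has a genuine gap. The inference ``$X_p(\rhobar)$ smooth at $y$, base smooth, map flat, hence the fibre is smooth at $y$'' is false in general: for instance $k\dbl x,y\dbr$ is flat over $k\dbl t\dbr$ via $t\mapsto xy$, source and base are regular, yet the fibre $k\dbl x,y\dbr/(xy)$ is not. Flatness together with regularity of source and target does not yield smoothness of the morphism; one would need surjectivity of the tangent map $T_yX_p(\rhobar)\to T_{f(y)}(\Spf S_\infty)^{\rig}$, and nothing in your argument provides this. There is a second problem: the accumulating crystalline points you produce lie in $X_p(\rhobar)$, not a priori in the specific fibre over the augmentation point $\Sp L$ (whose reduced structure is $Y(U^p,\rhobar)$), so even granting the inference you would be establishing smoothness of the \emph{wrong} fibres. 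The paper's route via \cite[Th.4.8]{BHS1} and \cite[Prop.4.7(ii)]{BHS1} avoids both issues: those results supply the needed reducedness of the fibre at enough points by arguments specific to the patching construction, after which Cohen--Macaulayness propagates reducedness to a neighbourhood of $x$.
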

\begin{proof}
Since $\widehat \Ocal_{X_p(\rhobar),x}\simeq \widehat \Ocal_{\Xfrak_{\rhobar^p}\times \iota(X_{\rm tri}(\rhobar_p))\times \Ubb^g,x}$ by Corollary \ref{irreducible}, we now know that $X_p(\rhobar)$ is Cohen-Macaulay at $x$ (by {\it loc.cit.}). Then by the argument in the proof of \cite[Th.4.8]{BHS1} (which needs this Cohen-Macaulay property, this was overlooked in the proof of \cite[Cor.5.18]{BHS2}) based on \cite[Prop.4.7(ii)]{BHS1}, we obtain that the rigid fiber product $X_p(\rhobar)\times_{(\Spf S_\infty)^{\rig}}\Sp L$ (which still contains $x$) is Cohen-Macaulay and reduced in a neighbourhood of $x$.
\end{proof}

Note that Proposition \ref{isoBHS1} gives an immediate complement to \cite[Th.4.8]{BHS1}.

\begin{theo}\label{singularity}
Assume $F/F^+$ unramified, $U^p$ small enough with $U_v$ hyperspecial if $v$ is inert in $F$ and $\rhobar(\Gcal_{F(\!\sqrt[p]{1})})$ adequate. Let $x=(\rho,\deltabar)\in Y(U^p,\rhobar)$ be a crystalline strictly dominant point such that $\varphi_{\tilde v,i}\varphi_{\tilde v,j}^{-1}\notin\{1,q_v\}$ for $i\neq j$ and $v\in S_p$. Then the rigid variety $Y(U^p,\rhobar)$ is Cohen-Macaulay at $x$ and the weight map $Y(U^p,\rhobar)\longrightarrow \widehat{T}^0_{p,L}$ is flat at $x$. Moreover, if $w_x\in \prod_{v\in S_p}\Scal_n^{[F_{\tilde v}:\Qp]}$ is {\rm not} a product of distinct simple reflections, then $Y(U^p,\rhobar)$ is singular at $x$.
\end{theo}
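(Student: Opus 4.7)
I would address the three assertions in turn.

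For the Cohen--Macaulay property and the flatness of the weight map: both are essentially contained in work already done. Proposition~\ref{isoBHS1} gives an isomorphism $Y(U^p,\rhobar)\simeq X_p(\rhobar)\times_{(\Spf S_\infty)^{\rig}}\Sp L$ in a neighbourhood of $x$, and the proof of that proposition already exhibits the right-hand side as Cohen--Macaulay at $x$ (built on Corollary~\ref{irreducible} via the local structure~(\ref{union}) and \cite[Prop.4.7(ii)]{BHS1}). This gives the Cohen--Macaulay claim. For the flatness of the weight map $\omega\colon Y(U^p,\rhobar)\to\widehat T^0_{p,L}$, I would combine Proposition~\ref{flatrigid}, which gives flatness of the local weight map on $X_{\tri}(\rhobar_{\tilde v})$ at the image of $x$, with the local structure of $X_p(\rhobar)$ near $x$ given by~(\ref{union}) (using Corollary~\ref{irreducible} to pin down a single local component), and then transfer through the isomorphism of Proposition~\ref{isoBHS1}. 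Alternatively, one can invoke miracle flatness (Lemma~\ref{miracelflatness}), since we now know $Y(U^p,\rhobar)$ is Cohen--Macaulay at $x$ and the fibres of $\omega$ have the correct equidimension via the local model.

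For the singularity statement, Theorem~\ref{companion} furnishes the companion point $x_{w_x}\in Y(U^p,\rhobar)$. I would then follow the proof of \cite[Cor.5.18]{BHS2} \emph{mutatis mutandis}: it shows that $\dim_{k(x)}T_{X_p(\rhobar),x}>\dim X_p(\rhobar)$ whenever $w_x$ is not a product of pairwise distinct simple reflections, using the local model description of $\widehat{X_{\tri}(\rhobar_{\tilde v})}_{x_{\tilde v}}$ (Corollary~\ref{localdescrip} and the diagram~(\ref{THEdiagram})) to produce extra tangent vectors from the companion point direction in excess of what a smooth point would admit. In \cite{BHS2}, the existence of $x_{w_x}$ on $Y(U^p,\rhobar)$ was hypothetical, depending on crystalline modularity and \cite[Prop.3.27]{BHS1}; Theorem~\ref{companion} now makes it unconditional under our Taylor--Wiles hypotheses. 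The strict tangent space inequality transfers to $Y(U^p,\rhobar)$ through Proposition~\ref{isoBHS1}, forcing singularity at $x$.

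The principal obstacle is faithfully adapting the tangent space argument of \cite[Cor.5.18]{BHS2}. The combinatorial heart is the link, through (i) of Remark~\ref{specialcases}, between the failure of $w_x$ to be a product of pairwise distinct simple reflections and the existence of excess tangent vectors supplied by $x_{w_x}$; the verification that these tangent vectors surpass $\dim X_p(\rhobar)$ should now be routine, given Theorem~\ref{companion} and the local model of \S\ref{localmodel}.
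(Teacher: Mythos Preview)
Your plan is correct and follows the same approach as the paper. The Cohen--Macaulay and flatness claims are handled exactly as you say (the paper uses your second option, miracle flatness via Lemma~\ref{miracelflatness}, directly), and the singularity argument is indeed the one from \cite[Cor.5.18]{BHS2}, now made unconditional by Theorem~\ref{companion} and with the reducedness gap filled by Proposition~\ref{isoBHS1}.

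One point of precision: the excess tangent vectors are not produced through the local model at $x$ (Corollary~\ref{localdescrip}) or Remark~\ref{specialcases}(i), which only yields an \emph{upper} bound on $\dim T_{X_{\tri}(\rhobar_p),x_p}$. The paper instead uses the companion point $x_{w_x}$, whose image in $X_{\tri}(\rhobar_p)$ lies in the smooth locus $U_{\tri}(\rhobar_p)$, together with the twist automorphism $\jmath_{w_x,{\bf k}}$ of \cite[Prop.5.9]{BHS2}: this exhibits a closed subspace $\jmath_{w_x,{\bf k}}(U_{p,w_x}\times_{\widehat T^0_{p,L}}\widehat T^0_{p,w_x,{\bf k},L})$ through $x_p$ whose tangent space one can compute, giving the \emph{lower} bound $\dim T_{X_{\tri}(\rhobar_p),x_p}\geq \lg(w_xw_0)-d_{w_xw_0}+\dim X_{\tri}(\rhobar_p)$ as in \cite[Cor.5.17]{BHS2}. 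The combinatorial fact $d_{w_xw_0}<\lg(w_xw_0)$ when $w_x$ is not a product of distinct simple reflections (\cite[Lem.2.7]{BHS2}) then forces singularity. This is all contained in the proof of \cite[Cor.5.18]{BHS2} that you invoke, so your outline is right; just be aware that the local model enters only indirectly here.
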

\begin{proof}
The Cohen-Macaulay statement follows from the proof of Proposition \ref{isoBHS1}. Then flatness of the weight map is a direct consequence of Lemma \ref{miracelflatness}, applied to (the spectra of) the local rings at $x$ and $\omega(x)$.

Let $x$ as in the statement (without any assumptions on $w_x$) and note first that $x$ is classical by Theorem \ref{classicality}. Thus by the argument in the proof of \cite[Cor.3.12]{BHS2}) its image in $\Xfrak_{\rhobar^p}$ lies in the smooth locus of $\Xfrak_{\rhobar^p}$. Recall that it is enough to prove that $x$ is singular when viewed in $X_p(\rhobar)$ via $Y(U^p,\rhobar)\hookrightarrow X_p(\rhobar)$. This is the argument of the proof of \cite[Cor.5.18]{BHS2}, except that there is a gap there since we need to know that $X_p(\rhobar)\times_{(\Spf S_\infty)^{\rig}}\Sp L$ is isomorphic to $Y(U^p,\rhobar)$ in a neighbourhood of $x$, which is Proposition \ref{isoBHS1} above. Then the proof of \cite[Cor.5.18]{BHS2} can go on, yielding that $x$ is smooth on $X_p(\rhobar)$ if it is smooth on $Y(U^p,\rhobar)$ (or on $X_p(\rhobar)\times_{(\Spf S_\infty)^{\rig}}\Sp L$), equivalently that $x$ is singular on $Y(U^p,\rhobar)$ if it is singular on $X_p(\rhobar)$.

For $w_x\preceq w$, we define the companion point $x_w\in Y(U^p,\rhobar)$ as we defined $x_{\Rcal,w}$ in \S\ref{companionconst} (it belongs to $Y(U^p,\rhobar)$ as a consequence of Theorem \ref{companion}, see Step 1 in the proof of {\it loc.cit.}) and we denote by $x'$ the common image of the $x_w$ in (the smooth locus of) $\Xfrak_{\rhobar^p}\times \Ubb^g$. Recall that the image of the ``maximal'' companion point $x_{w_x}$ in $X_{\rm tri}(\rhobar_p)$ sits in $U_{\rm tri}(\rhobar_p):=\prod_{v\in S_p}U_{\rm tri}(\rhobar_{\tilde v})$ (see (\ref{urig})). By the argument in the proof of Theorem \ref{classicality} (based on Corollary \ref{irreducible}), we can find a neighbourhood ${V}$ of $x'$ in the smooth locus of $\Xfrak_{\rhobar^p}\times \Ubb^g$ and neighbourhoods $U_{p}$ and $U_{p,w_x}$ of respectively the image of $x$ and of $x_{w_x}$ in $X_{\rm tri}(\rhobar_p)$ with $U_{p,w_x}\subseteq U_{\rm tri}(\rhobar_p)$ such that $V\times \iota(U_{p})$ (resp. $V\times \iota(U_{p,w_x})$) is a neighbourhood of $x$ (resp. of $x_{w_x}$) in $X_p(\rhobar)$. Note then that $x$ is singular on $X_p(\rhobar)$ if and only if the image $x_p$ of $x$ in $U_p\subseteq X_{\rm tri}(\rhobar_p)$ is singular on $X_{\rm tri}(\rhobar_p)$.

As in the proof of \cite[Prop.5.9]{BHS2} consider the automorphism $\jmath_{w_x,{\bf k}}:\widehat T_{p,L}\buildrel\sim\over\rightarrow \widehat T_{p,L}$ where we use the notation $\bf k$ of {\it loc.cit.} to denote the Hodge-Tate weights of $(\rhobar_{\tilde v})_{v\in S_p}$ in decreasing order for each $v\in S_p$ and $\tau:F_{\tilde v}\hookrightarrow L$. We still denote by $\jmath_{w_x,{\bf k}}$ the automorphism $\id\times \jmath_{w_x,{\bf k}}$ of $\Xfrak_{\rhobar_p}\times \widehat T_{p,L}$. The argument in the proof of \cite[Prop.5.9]{BHS2} based on \cite[Th.5.5]{BHS2} shows that:
$$x\in V\times \iota(\jmath_{w_x,{\bf k}}(U_{p,w_x}\times_{\widehat T_{p,L}^0}\widehat T_{p,w_x,{\bf k},L}^0))\subseteq X_p(\rhobar)$$
where $\widehat T_{p,w_x,{\bf k},L}^0\subseteq \widehat T_{p,L}^0$ is the closed rigid subspace defined as in \cite[(5.11)]{BHS2} (and taking the product over $v\in S_p$). In particular this implies as in \cite[\S5.3]{BHS2} that we have an injection of $k(x_p)$-vector spaces (tangent spaces):
$$T_{\jmath_{w_x,{\bf k}}(U_{p,w_x}\times_{\widehat T_{p,L}^0}\widehat T_{p,w_x,{\bf k},L}^0),x_p}\hookrightarrow T_{X_{\rm tri}(\rhobar_p),x_p}.$$
Then exactly the same proof as for \cite[Cor.5.17]{BHS2} in \cite[\S5.3]{BHS2} shows that:
\begin{equation}\label{bound}
\dim_{k(x_p)}T_{X_{\rm tri}(\rhobar_p),x_p}=\lg(w_xw_0)-d_{w_xw_0}+\dim X_{\rm tri}(\rhobar_p)
\end{equation}
where $d_w\in \Z_{\geq 0}$ for $w\in \prod_{v\in S_p}\Scal_n^{[F_{\tilde v}:\Qp]}$ is defined as before Proposition \ref{tangenttri} but for the algebraic group $\prod_{v\in S_p}\Spec L\times_{\Spec{\Qp}}{\Res}_{F_{\tilde v}/\Qp}(\GL_{n/F_{\tilde v}})$. Since $d_{w_xw_0}<\lg(w_xw_0)$ if (and only if) $w_xw_0$, or equivalently $w_x$, is {\it not} a product of distinct simple reflections by \cite[Lem.2.7]{BHS2}, we obtain that $X_{\rm tri}(\rhobar_p)$ is singular at $x_p$ in that case, which finishes the proof.
\end{proof}

\begin{rem}
{\rm (i) The same argument as in the first part of the proof shows that if $X_p(\rhobar)$ is singular at a companion point $x_w\in Y(U^p,\rhobar)\hookrightarrow X_p(\rhobar)$ of $x$, then $Y(U^p,\rhobar)$ is also singular at $x_w$. Hence a natural question would be to ask which of the companion points $x_w\in X_p(\rhobar)$ are still singular when $w\ne w_0$. This is presumably related to Conjecture \ref{conjinter} via $\widehat \Ocal_{X_p(\rhobar),x_w}\simeq \widehat \Ocal_{\Xfrak_{\rhobar^p}\times \iota(X_{\rm tri}(\rhobar_p))\times \Ubb^g,x_w}$ and Proposition \ref{tangenttri} (see e.g. (iii) of Remark \ref{specialcases}).\\
(ii) The equality (\ref{bound}) shows that, if we denote by $x_v$ the image of $x\in Y(U^p,\rhobar)\hookrightarrow X_p(\rhobar)$ in $X_{\rm tri}(\rhobar_{\tilde v})$, then $\dim_{k(x_v)}T_{X_{\rm tri}(\rhobar_{\tilde v}),x_v}$ is as expected by \cite[Conj.2.8]{BHS2}. In particular we thus have many points where \cite[Conj.2.8]{BHS2} holds.\\
(iii) When $w_x$ {\it is} a product of distinct simple reflections, then by work of Bergdall (\cite{Bergdraft}) it is expected that $Y(U^p,\rhobar)$ is indeed smooth at $x$. Our method {\it a priori} doesn't give information on $Y(U^p,\rhobar)$ in that direction.}
\end{rem}

\def\cprime{$'$} \def\cprime{$'$} \def\cprime{$'$} \def\cprime{$'$}
\providecommand{\bysame}{\leavevmode ---\ }
\providecommand{\og}{``}
\providecommand{\fg}{''}
\providecommand{\smfandname}{\&}

\end{document}